\definecolor{green}{RGB}{0,150,0}
\newtheorem{theorem}{Theorem}[section]
\newtheorem{lemma}[theorem]{Lemma}
\newtheorem{proposition}[theorem]{Proposition}
\newtheorem{corollary}[theorem]{Corollary}
\newtheorem{definition}[theorem]{Definition}
\newtheorem{problem}[theorem]{Problem}
\newtheorem{remark}[theorem]{Remark}
\newtheorem{remarks}[theorem]{Remarks}
\theoremstyle{definition}
\newtheorem{example}[theorem]{Example}
\numberwithin{equation}{section}
\newcommand{\regressive}{regressive\xspace}
\newcommand{\extensive}{extensive\xspace}
\newcommand{\cut}{\sqsubset}
\newcommand{\cuteq}{\sqsubseteq}
\newcommand{\fibers}{\operatorname{fibers}}
\newcommand{\finer}{\preceq}
\newcommand{\fix}{\operatorname{fix}}
\newcommand{\End}{\operatorname{End}}
\newcommand{\id}{\operatorname{id}}
\newcommand{\one}{1}
\newcommand{\im}{\operatorname{im}}
\newcommand{\C}{\mathbb{C}}
\newcommand{\K}{\mathbb{K}}
\newcommand{\N}{\mathbb{N}}
\newcommand{\Z}{\mathbb{Z}}
\newcommand{\KRR}{\K\RR}
\newcommand{\KLL}{\K\LL}
\newcommand{\lowerset}[1]{{\downarrow\!#1}}
\newcommand{\upperset}[1]{{\uparrow\!#1}}
\newcommand{\len}{\ell}
\newcommand{\ls}[1][s] {{\overleftarrow{#1}}}
\newcommand{\opi}{\overline{\pi}}
\newcommand{\rad}{\operatorname{rad}}
\newcommand{\rank}{\operatorname{rank}}
\newcommand{\sg}[1][n]{{\mathfrak{S}_{#1}}}
\newcommand{\suchthat}{\mid}
\newcommand{\tensorBA}{\otimes_{\K B} \K A}
\newcommand{\Order}{\mathcal{O}}
\newcommand{\type}{\operatorname{type}}
\newcommand{\reduce}{\operatorname{red}}
\newcommand{\sym}{{\operatorname{Sym}}}
\newcommand{\qsym}{{\operatorname{QSym}}}
\newcommand{\fqsym}{{\operatorname{FQSym}}}
\newcommand{\lfix}{\operatorname{lfix}}
\newcommand{\rfix}{\operatorname{rfix}}
\newcommand{\lcoset}[2]{{}^{#2}\!#1}
\newcommand{\rcoset}[2]{\!#1^{#2}}
\newcommand{\induced}[3]{{{#1}\!\!\uparrow}_{#2}^{#3}}
\newcommand{\biheckemonoid}[1][]{M}
\newcommand{\biheckealgebra}[1][W]{\mathcal{H}#1}
\newcommand{\wbiheckealgebra}[1]{\mathcal{H}W^{(#1)}}
\newcommand{\Mone}{{\biheckemonoid_\one}}
\newcommand{\Mzero}{{\biheckemonoid_{w_0}}}
\newcommand{\Sone}{S^\one}
\newcommand{\Szero}{S^{w_0}}
\newcommand{\SHzero}{S^{H_0}}
\newcommand{\Pone}{P^\one}
\newcommand{\Ks}[1]{\mathcal{B_R}(#1)}
\newcommand{\RKs}[1]{\mathcal{RB_R}(#1)}
\newcommand{\DKs}[1]{\mathcal{DB_R}(#1)}
\newcommand{\KKs}[1]{\mathcal{K}^{(#1)}}
\newcommand{\Js}[1]{\mathcal{B_L}(#1)}
\newcommand{\RJs}[1]{\mathcal{RB_L}(#1)}
\newcommand{\JJs}[1]{\mathcal{J}^{(#1)}}
\newcommand{\DJs}[1]{\mathcal{DB_L}(#1)}
\newcommand{\booleanlattice}[1]{{\mathcal{P}(#1)}}
\newcommand{\Des}{{\operatorname{D}_R}}
\newcommand{\Rec}{{\operatorname{D}_L}}
\newcommand{\cDes}{{\overline{\operatorname{D}}_R}}
\newcommand{\cRec}{{\overline{\operatorname{D}}_L}}
\newcommand{\Jblock}[1]{J^{(#1)}}
\newcommand{\Kblock}[1]{K^{(#1)}}
\newcommand{\RR}{\mathcal{R}}
\newcommand{\LL}{\mathcal{L}}
\newcommand{\JJ}{\mathcal{J}}
\newcommand{\HH}{\mathcal{H}}
\newcommand{\BB}{\mathcal{B}}
\newcommand{\KK}{\mathcal{K}}
\newcommand{\pred}[1]{\operatorname{Pred}(#1)}
\newcommand{\join}{\vee}
\newcommand{\meet}{\wedge}
\newcommand{\bigjoin}{\bigvee}
\newcommand{\bigmeet}{\bigwedge}
\newcommand{\irred}{I}
\newcommand{\lowersets}{O}
\newcommand{\regJcl}[1][M]{\mathcal{U}(#1)}
\newcommand{\Top}[1]{\operatorname{top}(#1)}
\newskip\@bigflushglue \@bigflushglue = -100pt plus 1fil
\def\bigcentering{\let\\\@centercr\rightskip\@bigflushglue
\leftskip\@bigflushglue
\parindent\z@\parfillskip\z@skip}
\newcommand{\TODO}[2][To do: ]{\textcolor{red}{\textbf{#1#2}}}
\newcommand{\TODO}[2][]{}
\newenvironment{previewthispiece}{}{}
\begin{document}

\title[The biHecke monoid of a finite Coxeter group]{The biHecke monoid of a finite Coxeter group\\ and its representations}

\author{Florent Hivert}
\address{Laboratoire de Recherche en Informatique (UMR CNRS 8623),
Bâtiment 650, Université Paris Sud 11, 91405 ORSAY CEDEX, France;
CNRS; LITIS (EA 4108); IGM (UMR 8049)}
\email{florent.hivert@lri.fr}

\author{Anne Schilling}
\address{Department of Mathematics, University of California, One
 Shields Avenue, Davis, CA 95616, U.S.A.}
\email{anne@math.ucdavis.edu}

\author{Nicolas M.~Thi\'ery}
\address{Univ Paris-Sud, Laboratoire de Math\'ematiques d'Orsay,
  Orsay, F-91405; CNRS, Orsay, F-91405, France}
\email{Nicolas.Thiery@u-psud.fr}

\keywords{Coxeter groups, Hecke algebras, representation theory,
  blocks of permutation matrices}

\subjclass[2010]{Primary: 20M30 20F55; Secondary: 06D75 16G99 20C08}

\begin{abstract}
  For any finite Coxeter group $W$, we introduce two new objects: its cutting poset
  and its biHecke monoid. The cutting poset, constructed using a
  generalization of the notion of blocks in permutation matrices, almost forms
  a lattice on $W$.
  The construction of the biHecke monoid relies on the usual combinatorial
  model for the $0$-Hecke algebra $H_0(W)$, that is, for the symmetric group,
  the algebra (or monoid) generated by the elementary bubble sort
  operators. The authors previously introduced the Hecke group algebra,
  constructed as the algebra generated simultaneously by the bubble sort and
  antisort operators, and described its representation theory.
  In this paper, we consider instead the \emph{monoid} generated by
  these operators. We prove that it admits $|W|$ simple and projective
  modules. In order to construct the simple
  modules, we introduce for each $w\in W$ a combinatorial module $T_w$
  whose support is the interval $[1,w]_R$ in right weak order. This
  module yields an algebra, whose representation theory generalizes
  that of the Hecke group algebra, with the combinatorics of descents
  replaced by that of blocks and of the cutting poset.
\end{abstract}

\maketitle
\tableofcontents

\section{Introduction}

In this paper we introduce two novel objects for any finite Coxeter
group $W$: its \emph{cutting poset} and its \emph{biHecke monoid}. The
cutting poset is constructed using a generalization of blocks in
permutation matrices to any Coxeter group and is almost a lattice.
The biHecke monoid is generated simultaneously by the sorting and
antisorting operators associated to the combinatorial model of the
$0$-Hecke algebra $H_0(W)$. It turns out that the representation
theory of the biHecke monoid, and in particular the construction of
its simple modules, is closely tied to the cutting poset.

The study of these objects combines methods from and impacts several
areas of mathematics: Coxeter group theory, monoid theory,
representation theory, combinatorics (posets, permutations, descent
sets), as well as computer algebra. The guiding principle is the use
of representation theory, combined with computer exploration, to
extract combinatorial structures from an algebra, and in particular a
monoid algebra, often in the form of posets or lattices. This includes
the structures associated to monoid theory (such as for example Green's relations),
but also goes beyond.  For example, we find connections between the classical
orders of Coxeter groups (left, right, and left-right weak order and
Bruhat order) and Green's relations on our monoids ($\RR$, $\LL$,
$\JJ$, and $\HH$-order and ordered monoids), and these orders play a
crucial r\^ole in the combinatorics and representation theory of the
biHecke monoid.
\medskip

The usual combinatorial model for the $0$-Hecke algebra $H_0(\sg[n])$
of the symmetric group is the algebra (or monoid) generated by the
(anti) bubble sort operators $\pi_1,\ldots,\pi_{n-1}$, where $\pi_i$
acts on words of length $n$ and sorts the letters in positions $i$ and
$i+1$ decreasingly. By symmetry, one can also construct the bubble
sort operators $\opi_1,\ldots,\opi_{n-1}$, where $\opi_i$ acts by
sorting increasingly, and this gives an isomorphic construction
$\overline H_0$ of the $0$-Hecke algebra. This construction
generalizes naturally to any finite Coxeter group $W$.  Furthermore,
when $W$ is a Weyl group, and hence can be affinized, there is an
additional operator $\pi_0$ projecting along the highest root.

In~\cite{Hivert_Thiery.HeckeGroup.2007} the first and last author
constructed the \emph{Hecke group algebra} $\biheckealgebra$ by gluing
together the $0$-Hecke algebra and the group algebra of $W$ along
their right regular representation. Alternatively, $\biheckealgebra$
can be constructed as the \emph{biHecke algebra} of $W$, by gluing
together the two realizations $H_0(W)$ and $\overline H_0(W)$ of the
$0$-Hecke algebra. $\biheckealgebra$ admits a more conceptual
description as the algebra of all operators on $\K W$ preserving left
antisymmetries; the representation theory of $\biheckealgebra$
follows, governed by the combinatorics of descents.
In~\cite{Hivert_Schilling_Thiery.HeckeGroupAffine.2008}, the authors
further proved that, when $W$ is a Weyl group, $\biheckealgebra$ is a
natural quotient of the affine Hecke algebra.

In this paper, following a suggestion of Alain Lascoux, we study the
\emph{biHecke monoid} $\biheckemonoid(W)$, obtained by gluing together
the two $0$-Hecke \emph{monoids}. This involves the combinatorics of
the usual poset structures on $W$ (left, right, left-right, Bruhat
order), as well as the new cutting poset. Building upon the extensive
study of the representation theory of the $0$-Hecke
algebra~\cite{Norton.1979, Carter.1986,Denton.2010.FPSAC,Denton.2011}, we explore the
representation theory of the biHecke monoid. In the process, we prove
that the biHecke monoid is aperiodic and its Borel submonoid fixing
the identity is $\JJ$-trivial. This sparked our interest in the
representation theory of $\JJ$-trivial and aperiodic monoids, and the
general results we found along the way are presented
in~\cite{Denton_Hivert_Schilling_Thiery.JTrivialMonoids}.

We further prove that the simple and projective modules of
$\biheckemonoid$ are indexed by the elements of $W$. In order to
construct the simple modules, we introduce for each $w\in W$ a
combinatorial module $T_w$ whose support is the interval $[1,w]_R$ in
right weak order. This module yields an algebra, whose representation
theory generalizes that of the Hecke group algebra, with the
combinatorics of descents replaced by that of blocks and of the
cutting poset.

Let us finish by giving some additional motivation for the study of the biHecke monoid.
In type $A$, the tower of algebras $(\K\biheckemonoid(\sg))_{n\in \N}$
possesses long sought-after properties. Indeed, it is
well-known that several combinatorial Hopf algebras arise as
Grothendieck rings of towers of algebras. The prototypical example is
the tower of algebras of the symmetric groups which gives rise to the
Hopf algebra $\sym$ of symmetric functions, on the Schur
basis~\cite{MacDonald.SF.1995, Zelevinski.1981}. Another example, due to Krob and
Thibon~\cite{Krob_Thibon.NCSF4.1997}, is the tower of the $0$-Hecke
algebras of the symmetric groups which gives rise to the Hopf algebra
$\qsym$ of quasi-symmetric functions of~\cite{Gessel.QSym.1984}, on
the $F_I$ basis. The product rule on the $F_I$'s is naturally lifted
through the descent map to a product on permutations, leading to the
Hopf algebra $\fqsym$ of free quasi-symmetric
functions~\cite{Duchamp_Hivert_Thibon.2002}.  This calls
for the existence of a tower of algebras $(A_n)_{n\in \N}$, such that
each $A_n$ contains $H_0(\sg)$ and has its simple modules indexed by
the elements of $\sg$. The biHecke monoids $\biheckemonoid(\sg)$, and their Borel
submonoids $\Mone(\sg)$ and $\Mzero(\sg)$, satisfy these properties,
and are therefore expected to yield new representation theoretical
interpretations of the bases of $\fqsym$.

In the remainder of this introduction, we briefly review Coxeter
groups and their $0$-Hecke monoids, introduce the biHecke monoid which
is our main object of study, and outline the rest of the paper.

\subsection{Coxeter groups}

Let $(W, S)$ be a Coxeter group, that is, a group $W$ with a
presentation
\begin{equation}
  W = \langle\, S\ \suchthat\  (ss')^{m(s,s')},\ \forall s,s'\in S\,\rangle\,,
\end{equation}
with $m(s,s') \in \{1,2,\dots,\infty\}$ and $m(s,s)=1$.
The elements $s\in S$ are called \emph{simple reflections}, and the
relations can be rewritten as:
\begin{equation}
  \begin{alignedat}{2}
    s^2 &=\one &\quad& \text{ for all $s\in S$}\,,\\
    \underbrace{ss'ss's\cdots}_{m(s,s')} &=
    \underbrace{s'ss'ss'\cdots}_{m(s,s')} && \text{ for all $s,s'\in S$}\, ,
  \end{alignedat}
\end{equation}
where $\one$ denotes the identity in $W$.

Most of the time, we just write $W$ for $(W,S)$. In general, we follow
the notation of~\cite{Bjorner_Brenti.2005}, and we refer to this
monograph and to~\cite{Humphreys.1990} for details on Coxeter groups
and their Hecke algebras.  Unless stated otherwise, we always assume
that $W$ is finite, and denote its generators by $S=(s_i)_{i\in I}$,
where $I=\{1,2,\ldots, n\}$ is the \emph{index set} of $W$.

The prototypical example is the Coxeter group of type $A_{n-1}$ which is the $n$-th symmetric
group $(W,S) := (\sg[n], \{s_1,\dots,s_{n-1}\})$, where
$s_i$ denotes the \emph{elementary transposition} which exchanges $i$ and $i+1$.
The relations are given by:
\begin{equation}
  \begin{alignedat}{2}
    s_i^2           & = \one                &     & \text{ for } 1\leq i\leq n-1\,,\\
    s_i s_j         & = s_j s_i            &     & \text{ for } |i-j|\geq2\,, \\
    s_i s_{i+1} s_i & = s_{i+1} s_i s_{i+1} &\quad& \text{ for } 1\leq i\leq n-2\,;
  \end{alignedat}
\end{equation}
the last two relations are called the \emph{braid relations}.
When writing a permutation $\mu \in \sg[n]$ explicitly, we use
\emph{one-line notation}, that is the sequence
$\mu_1\mu_2\ldots\mu_n$, where $\mu_i:=\mu(i)$.

A \emph{reduced word} $i_1 \ldots i_k$ for an element $w \in W$
corresponds to a decomposition $w=s_{i_1}\cdots s_{i_k}$ of $w$
into a product of generators in $S$ of minimal length $k=\ell(w)$. A
\emph{(right) descent} of $w$ is an element $i\in I$ such that
$\ell(w s_i) < \ell(w)$. If $w$ is a permutation, this
translates into $w_i > w_{i+1}$. \emph{Left descents} are defined
analogously. The sets of left and right descents of $w$ are denoted
by $\Rec(w)$ and $\Des(w)$, respectively.

For $J\subseteq I$, we denote by $W_J = \langle s_j \mid j \in J
\rangle$ the subgroup of $W$ generated by $s_j$ with $j\in
J$. Furthermore, the longest element in $W_J$ (resp. $W$) is denoted
by $s_J$ (resp. $w_0$).
Any finite Coxeter group $W := \langle s_i \mid i \in I \rangle$ can
be realized as a finite reflection group (see for
example~\cite[Chapter 5.6]{Humphreys.1990} and~\cite[Chapter
4]{Bjorner_Brenti.2005}).  The generators $s_i$ of $W$ can be
interpreted as reflections on hyperplanes in some $|I|$-dimensional
vector space $V$. The simple roots $\alpha_i$ for $i\in I$ form a
basis for $V$; the set of all roots is given by $\Phi := \{w(\alpha_i)
\mid i\in I, w\in W\}$.  One can associate reflections $s_\alpha$ to
all roots $\alpha\in \Phi$.  If $\alpha,\beta\in \Phi$ and $w\in W$,
then $w(\alpha)=\beta$ if and only if $w s_\alpha w^{-1} = s_\beta$
(see~\cite[Chapter 5.7]{Humphreys.1990}).

\subsection{The $0$-Hecke monoid}
\label{ss.0_hecke_monoid}

The \emph{$0$-Hecke monoid} $H_0(W) = \langle \pi_i \mid i \in I
\rangle$ of a Coxeter group $W$ is generated by the \emph{simple
  projections} $\pi_i$ with relations
\begin{equation}
  \begin{alignedat}{2}
    \pi_i^2 &=\pi_i &\quad& \text{ for all $i\in I$,}\\
    \underbrace{\pi_i\pi_j\pi_i\pi_j\cdots}_{m(s_i,s_{j})} &=
    \underbrace{\pi_j\pi_i\pi_j\pi_i\cdots}_{m(s_i,s_{j})} && \text{ for all $i,j\in I$}\ .
  \end{alignedat}
\end{equation}
Thanks to these relations, the elements of $H_0(W)$ are canonically
indexed by the elements of $W$ by setting $\pi_w :=
\pi_{i_1}\cdots\pi_{i_k}$ for any reduced word $i_1 \dots i_k$ of $w$.
We further denote by $\pi_J$ the longest element of the
\emph{parabolic submonoid} $H_0(W_J) := \langle \pi_i \mid i \in J
\rangle$.

As mentioned before, any finite Coxeter group $W$ can be realized as a
finite reflection group, each generator $s_i$ of $W$ acting by
reflection along an hyperplane. The corresponding generator $\pi_i$ of
the $0$-Hecke monoid acts as a \emph{folding}, reflecting away from
the fundamental chamber on one side of the hyperplane and as the
identity on the other side. Both the action of $W$ and of $H_0(W)$
stabilize the set of reflecting hyperplanes and therefore induce an
action on chambers.

The right regular representation of $H_0(W)$, or equivalently the
action on chambers, induce a concrete realization of $H_0(W)$ as a
monoid of operators acting on $W$, with generators $\pi_1,\dots,\pi_n$
defined by:
\begin{equation}
  \label{equation.antisorting_action}
  w. \pi_i := \begin{cases}
    w & \text{if $i \in \Des(w)$,}\\
    ws_i & \text{otherwise.}
  \end{cases}
\end{equation}
In type $A$, $\pi_i$ sorts the letters at positions $i$ and $i+1$
decreasingly, and for any permutation $w$, $w.\pi_{w_0}=n\cdots21$.
This justifies naming $\pi_i$ an \emph{elementary bubble antisorting
  operator}.

Another concrete realization of $H_0(W)$ can be obtained by
considering instead the \emph{elementary bubble sorting operators}
$\opi_1,\dots,\opi_n$, whose action on $W$ are defined by:
\begin{equation}
  \label{equation.sorting_action}
  w. \opi_i := \begin{cases}
    ws_i  & \text{if $i\in \Des(w)$,}\\
    w & \text{otherwise.}
  \end{cases}
\end{equation}
In geometric terms, this is folding toward the fundamental chamber.
In type $A$, and for any permutation $w$, one has
$w.\opi_{w_0}=12\cdots n$.

\begin{remark}
\label{remark.opi_in_terms_of_pi}
For a given $w\in W$, define $v$ by $wv=w_0$, where $w_0$ is the longest element of $W$.
Then
\begin{equation*}
	i\in \Des(w) \; \Longleftrightarrow \; i\notin \Rec(v) \; \Longleftrightarrow \;
	i \notin \Des(v^{-1}) = \Des(w_0w).
\end{equation*}
Hence, the action of $\opi_i$ on $W$ can be expressed from the action of $\pi_i$ on $W$ using
$w_0$:
\begin{equation*}
	w.\opi_i = w_0 [ (w_0w).\pi_i].
\end{equation*}
\end{remark}

\subsection{The biHecke monoid $\biheckemonoid(W)$}

We now introduce our main object of study, the biHecke monoid.
\begin{definition}
\label{definition.biHecke monoid}
  Let $W$ be a finite Coxeter group. The \emph{biHecke monoid} is the
  submonoid of functions from $W$ to $W$ generated simultaneously by
  the elementary bubble sorting and antisorting operators
  of~\eqref{equation.antisorting_action}
  and~\eqref{equation.sorting_action}:
  \begin{equation*}
    \biheckemonoid := \biheckemonoid(W) :=
    \langle \pi_1, \pi_2, \ldots, \pi_n, \opi_1, \opi_2, \ldots, \opi_n \rangle\,.
  \end{equation*}
\end{definition}

As mentioned in~\cite{Hivert_Thiery.HeckeGroup.2007, Hivert_Schilling_Thiery.HeckeGroupAffine.2008}
this monoid admits several natural variants, depending on the choice of the generators:
\begin{equation*}
\begin{split}
	& \langle \pi_1, \pi_2, \ldots, \pi_n, s_1, s_2, \ldots, s_n \rangle,\\
	& \langle \pi_0, \pi_1, \pi_2, \ldots, \pi_n \rangle,
\end{split}
\end{equation*}
where $\pi_0$ is defined when $W$ is a Weyl group and hence can be
affinized. Unlike the algebras they generate, which all coincide with
the biHecke algebra (in particular due to the linear relation $1+s_i=\pi_i+\opi_i$ which expresses
how to recover a reflection by gluing together the two corresponding foldings),
these monoids are all distinct as soon as $W$ is large enough. Another
close variant is the monoid of all strictly order preserving functions
on the Boolean lattice~\cite{Gaucher.2010}. All of these monoids, and
their representation theory, remain to be studied.

\subsection{Outline}

The remainder of this paper consists of two parts: we first introduce
and study the new cutting poset structure on finite Coxeter groups,
and then proceed to the biHecke monoid and its representation theory.

In Section~\ref{section.background}, we recall some basic facts,
definitions, and properties about posets, Coxeter groups, monoids, and
representation theory that are used throughout the paper.

In Section~\ref{section.blocks_cutting_poset}, we generalize the
notion of blocks of permutation matrices to any Coxeter group, and use
it to define a new poset structure on $W$, which we call the
\emph{cutting poset}; we prove that it is (almost) a lattice, and
derive that its M\"obius function is essentially that of the
hypercube.

In Section~\ref{section.M.combinatorics}, we study the combinatorial
properties of $\biheckemonoid(W)$. In particular, we prove that it
preserves left and Bruhat order, derive consequences on the fibers and
image sets of its elements, prove that it is aperiodic, and study Green's relations
and idempotents.

In Section~\ref{section.M1}, our strategy is to consider a ``Borel''
triangular submonoid of $\biheckemonoid(W)$ whose representation
theory is simpler, but with the same number of simple modules, to
later induce back information about the representation theory of
$\biheckemonoid(W)$. Namely, we study the submonoid
$\biheckemonoid_1(W)$ of the elements fixing $\one$ in
$\biheckemonoid(W)$.  This monoid not only preserves Bruhat order, but
furthermore is \regressive.  It follows that it is $\JJ$-trivial (in
fact $\BB$-trivial) which is the desired triangularity property. It is
for example easily derived that $\biheckemonoid_1(W)$ has $|W|$ simple
modules, all of dimension $1$. In fact most of our results about
$\biheckemonoid_1$ generalize to any $\JJ$-trivial monoid, which is
the topic of a separate paper on the representation theory of $\JJ$-trivial
monoids~\cite{Denton_Hivert_Schilling_Thiery.JTrivialMonoids}.
We also provide properties of the Cartan matrix and a combinatorial description
of the quiver of $\Mone$.

In Section~\ref{section.translation_modules}, we construct, for each
$w \in W$, the \emph{translation module} $T_w$ by induction of the
corresponding simple $\K\biheckemonoid_1(W)$-module. It is a quotient of
the indecomposable projective module $P_w$ of $\K\biheckemonoid(W)$, and
therefore admits the simple module $S_w$ of $\K\biheckemonoid(W)$ as
top.  It further admits a simple combinatorial model using the right
classes with the interval $[1,w]_R$ as support, and which passes down
to $S_w$. We derive a formula for the dimension of $S_w$, using an
inclusion-exclusion on the sizes of intervals in $(W, \le_R)$ along
the cutting poset. On the way, we study the algebra
$\wbiheckealgebra{w}$ induced by the action of $\biheckemonoid(W)$ on
$T_w$. It turns out to be a natural $w$-analogue of the Hecke group
algebra, acting not anymore on the full Coxeter group, but on the
interval $[1,w]_R$ in right order.  All the properties of the Hecke
group algebra pass through this generalization, with the combinatorics
of descents being replaced by that of blocks and of the cutting
poset. In particular, $\wbiheckealgebra{w}$ is Morita equivalent to
the incidence algebra of the sublattice induced by the cutting poset
on the interval $[1,w]_\cuteq$.

In Section~\ref{section.M.representation_theory}, we apply
the findings of Sections~\ref{section.M.combinatorics}, \ref{section.M1},
and~\ref{section.translation_modules} to derive results on
the representation theory of $\biheckemonoid(W)$.
We conclude in Section~\ref{section.in_progress} with discussions on
further research in progress.

There are two appendices. Appendix~\ref{appendix.colored_graphs}
summarizes some results on colored graphs which are used in
Section~\ref{section.M.combinatorics} to prove properties of the fibers and image
sets of elements in the biHecke monoid. In Appendix~\ref{appendix.tables}
we present tables of $q$-Cartan invariant and decomposition matrices for
$\biheckemonoid(\sg[n])$ for $n=2,3,4$.

\subsection*{Acknowledgments}

A short version with the announcements of (some of) the results of
this paper was published
in~\cite{Hivert_Schilling_Thiery.BiHeckeMonoid.2010}.  The discovery
and analysis of the cutting poset is due to the last two authors. The
study of the biHecke monoid and of its representation theory is joint
work of all three authors, under an initial suggestion of Alain
Lascoux.

We would like to thank Jason Bandlow, Mahir Can, Brant Jones,
Jean-Christophe Novelli, Jean-\'Eric Pin, Nathan Reading, Vic Reiner,
Franco Saliola, Benjamin Steinberg, and Jean-Yves Thibon for
enlightening discussions. In particular, Franco Saliola pointed us to
the large body of literature on semigroups, and the recent progress in
their representation theory. Special thanks to the anonymous referee
for carefully reading the paper, suggesting many improvements, and
many insightful comments. We are grateful to Aladin Virmaux for
pointing out several misprints and helping to work out the details of
Examples~\ref{example.bihecke.Ip} and~\ref{example.M0.Ip}.

This research was driven by computer exploration, using the
open-source mathematical software \texttt{Sage}~\cite{sage} and its
algebraic combinatorics features developed by the
\texttt{Sage-Combinat} community~\cite{Sage-Combinat}.

AS would like to thank the Universit\'e Paris Sud, Orsay and the Kastler foundation for support
during her visit in November 2008. NT would like to thank the Department of Mathematics at
UC Davis for hospitality during his visit in 2007-2008 and spring 2009.
FH was partly supported by ANR grant 06-BLAN-0380.
AS was in part supported by NSF grants DMS--0652641, DMS--0652652, and DMS--1001256.
NT was in part supported by NSF grants DMS--0652641 and DMS--0652652.

\section{Background}

\label{section.background}

We review some basic facts about partial orders and finite posets in Section~\ref{ss.posets},
finite lattices and Birkhoff's theorem in Section~\ref{ss.birkhoff}, order-preserving functions
in Section~\ref{ss.order_functions}, the usual partial orders on Coxeter groups (left and right weak
order, Bruhat order) in Section~\ref{ss.orders}, and the notion of
$\JJ$-order (and related orders) and aperiodic monoids in Section~\ref{ss.preorders on monoids}.
We also prove a result in Proposition~\ref{proposition.uniqueness.idempotents} about the image
sets of order-preserving and \regressive idempotents on a poset that will be
used later in the study of idempotents of the biHecke monoid.
Sections~\ref{ss.representation_theory} and~\ref{ss.representation_theory_monoid} contain
reviews of some representation theory of algebras and monoids that will be relevant in our study
of translation modules.

\subsection{Finite posets}
\label{ss.posets}

For a general introduction to posets and lattices, we refer the reader
to e.g.~\cite{Pouzet.Ordre,Stanley.1999.EnumerativeCombinatorics1}
or~\cite[Poset, Lattice]{Wikipedia.2010}. Throughout this paper, all posets
are finite.

A partially ordered set (or \emph{poset} for short) $(P,\preceq)$ is a
set $P$ with a binary relation $\preceq$ so that for all $x,y,z\in P$:
\begin{enumerate}[(i)]
\item $x \preceq x$ (reflexivity);
\item if $x \preceq  y$ and $y \preceq x$, then $x = y$ (antisymmetry);
\item if $x \preceq y$ and $y \preceq z$, then $x \preceq z$ (transitivity).
\end{enumerate}
When we exclude the possibility that $x=y$, we write $x \prec y$.

If $x\preceq y$ in $P$, we define the \emph{interval}
\begin{equation*}
  [x,y]_P := \{z \in P \mid x \preceq z \preceq y \}.
\end{equation*}
A pair $(x,y)$ such that $x \prec y$ and there is no $z\in P$ such that
$x\prec z \prec y$ is called a \emph{covering}. We denote coverings by $x\to
y$.  The \emph{Hasse diagram} of $(P,\preceq)$ is the diagram where the
vertices are the elements $x\in P$, and there is an upward-directed edge
between $x$ and $y$ if $x\to y$.

\begin{definition}
\label{definition.convex_connected}
  Let $(P,\preceq)$ be a poset and $X\subseteq P$.
  \begin{enumerate}[(i)]
  \item $X$ is \emph{convex} if for any $x,y\in X$ with $x \preceq  y$ we have
    $[x,y]  \subseteq X$.
  \item $X$ is \emph{connected} if for any $x,y\in X$ with $x \prec  y$ there is a
    path in the Hasse diagram $x=x_0 \rightarrow x_1 \rightarrow
    \cdots \rightarrow x_k = y$ such that $x_i\in X$ for $0\le i\le k$.
  \end{enumerate}
\end{definition}

The M\"obius inversion formula~\cite[Proposition
3.7.1]{Stanley.1999.EnumerativeCombinatorics1} generalizes the
inclusion-exclusion principle to any poset. Namely, there exists a unique
function $\mu$, called the \emph{M\"obius function} of $P$, which assigns an
integer to each ordered pair $x \preceq y$
and enjoys the following property: for any two functions $f,g:
P\to G$ taking values in an additive group $G$,
\begin{equation}
g(x) = \sum_{y\preceq x} f(y)\quad\text{if and only if}\quad
f(y) = \sum_{x\preceq y} \mu(x,y)\ g(x)\,.
\end{equation}
The M\"obius function can be computed thanks to the following recursion:
\begin{equation*}
  \mu(x,y) = \begin{cases}1 & \text{if $x=y$,}\\
    -\sum_{x\preceq z \prec y} \mu(x,z), & \text{for $x\prec y$.}
  \end{cases}
\end{equation*}

\subsection{Finite lattices and Birkhoff's theorem}
\label{ss.birkhoff}

Let $(P,\preceq)$ be a poset. The \emph{meet} $z=\bigmeet A$ of a
subset $A \subseteq P$ is an element such that (1) $z\preceq x$ for
all $x \in A$ and (2) $u \preceq x$ for all $x \in A$ implies that
$u\preceq z$. When the meet exists, it is unique and is denoted by
$\bigmeet A$. The meet of the empty set $A=\{\}$ is the largest
element of the poset, if it exists. The meet of two elements $x,y\in
P$ is denoted by $x \meet y$. A poset $(P, \preceq)$ for which every
pair of elements has a meet is called a \emph{meet-semilattice}. In
that case, $P$ endowed with the meet operation is a commutative
$\JJ$-trivial semigroup, and in fact a monoid with unit the maximal
element of $P$, if the latter exists.

Reversing all comparisons, one can similarly define the \emph{join}
$\bigjoin A$ of a subset $A \subseteq P$ or $x\join y$ of two elements
$x,y\in P$, and \emph{join-semilattices}. A \emph{lattice} is a poset
for which both meets and joins exist for pair of elements. Recall that
we only consider finite posets, so we do not have to worry about the
distinction between lattices and complete lattices.

A lattice $(L,\join, \meet)$ is \emph{distributive} if the following
additional identity holds for all $x, y, z \in L$:
\begin{equation*}
    x \meet (y \join z) = (x \meet y) \join (x \meet z)\,.
\end{equation*}
This condition is equivalent to its dual:
\begin{equation*}
    x \join (y \meet z) = (x \join y) \meet (x \join z)\,.
\end{equation*}

Birkhoff's representation theorem (see e.g.~\cite[Birkhoff's
representation theorem]{Wikipedia.2010}, or~\cite[Theorem
3.4.1]{Stanley.1999.EnumerativeCombinatorics1}) states that any finite
distributive lattice can be represented as a sublattice of a Boolean
lattice, that is a collection of sets stable under union and
intersection. Furthermore, there is a canonical such representation
which we construct now.

An element $z$ in a lattice $L$ is called \emph{join-irreducible} if
$z$ is not the smallest element in $L$ and $z = x \join y$ implies $z
= x$ or $z = y$ for any $x,y\in L$ (and similarly for
meet-irreducible).  Equivalently, since $L$ is finite, $z$ is
join-irreducible if and only if it covers exactly one element in
$L$. We denote by $\irred(L)$ the \emph{poset of join-irreducible
  elements} of $L$, that is the restriction of $L$ to its
join-irreducible elements. Note that this definition still makes sense
for nonlattices. From a monoid point of view, $\irred(L)$ is the
minimal generating set of $L$.

A \emph{lower set} of a poset $P$ is a subset $Y$ of $P$ such that,
for any pair $x\leq y$ of comparable elements of $P$, $x$ is in $Y$
whenever $y$ is. \emph{Upper sets} are defined dually. The family of
lower sets of $P$ ordered by inclusion is a distributive lattice, the
\emph{lower sets lattice} $\lowersets(P)$.
Birkhoff's representation theorem~\cite{Birkhoff.1937} states that any
finite distributive lattice $L$ is isomorphic to the lattice
$\lowersets(\irred(L))$ of lower sets of the poset $\irred(L)$ of its
join-irreducible elements, via the reciprocal isomorphisms:
\begin{displaymath}
  \begin{cases}
    L & \to \lowersets(\irred(L))\\
    x & \mapsto \{y\in \irred(L) \suchthat y \leq x\}
  \end{cases} \qquad \text{and} \qquad
  \bigjoin:
  \begin{cases}
    \lowersets(\irred(L)) & \to L\\
    I & \mapsto \bigjoin I \; .
  \end{cases}
\end{displaymath}

Following Edelman~\cite{Edelman.1986}, a meet-semilattice $L$ is
\emph{meet-distributive} if for every $y\in L$, if $x\in L$ is the
meet of elements covered by $y$ then $[x,y]$ is a Boolean algebra. A
stronger condition is that any interval of $L$ is a distributive
lattice. A straightforward application of Birkhoff's representation
theorem yields that $L$ is then isomorphic to a lower set of
$\lowersets(\irred(L))$.

\subsection{Order-preserving functions}
\label{ss.order_functions}

\begin{definition}
Let $(P,\preceq)$ be a poset and $f:P\to P$ a function.
\begin{enumerate}[(i)]
\item $f$ is called \emph{order-preserving} if $x\preceq y$ implies $f(x)\preceq f(y)$.
We also say $f$ preserves the order $\preceq$.
\item $f$ is called \emph{\regressive} if $f(x) \preceq x$ for all $x\in P$.
\item $f$ is called \emph{\extensive} if $x \preceq f(x)$ for all $x\in P$.
\end{enumerate}
\end{definition}

\begin{lemma}
  \label{lemma.preimage_convex}
  Let $(P,\preceq)$ be a poset and $f:P\to P$ an order-preserving map.
  Then, the preimage $f^{-1}(C)$ of a convex subset $C \subseteq P$ is convex.
  In particular, the preimage of a point is convex.
\end{lemma}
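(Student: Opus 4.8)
The plan is to prove this directly from the definitions, since the statement is essentially a formal consequence of order-preservation. Recall that a subset $C$ is convex if whenever $a, b \in C$ with $a \preceq b$, the whole interval $[a,b]_P$ is contained in $C$. So I would start by taking two elements $x, y \in f^{-1}(C)$ with $x \preceq y$, and an arbitrary element $z$ with $x \preceq z \preceq y$; the goal is to show $z \in f^{-1}(C)$, i.e. $f(z) \in C$.

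Since $f$ is order-preserving, applying $f$ to the chain $x \preceq z \preceq y$ yields $f(x) \preceq f(z) \preceq f(y)$. Now $f(x) \in C$ and $f(y) \in C$ by assumption, and $f(x) \preceq f(y)$, so by convexity of $C$ the interval $[f(x), f(y)]_P$ lies in $C$. Since $f(z)$ belongs to that interval, $f(z) \in C$, hence $z \in f^{-1}(C)$. This establishes that $f^{-1}(C)$ is convex.

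For the final sentence, I would simply note that a singleton $\{p\}$ is trivially convex (the only interval to check is $[p,p]_P = \{p\}$), so applying the first part with $C = \{p\}$ shows that the fiber $f^{-1}(p)$ is convex.

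There is essentially no obstacle here: the argument is a two-line unwinding of definitions, and the only thing to be careful about is the degenerate cases (when $x = y$, or when $f^{-1}(C)$ is empty), which are handled vacuously. I would keep the write-up to a few sentences and not belabor it.
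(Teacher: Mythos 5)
Your proof is correct and follows exactly the same argument as the paper: apply $f$ to a chain $x \preceq z \preceq y$ in the preimage, use order-preservation to get $f(x) \preceq f(z) \preceq f(y)$, and conclude by convexity of $C$. The extra remarks about singletons and degenerate cases are fine but, as you note, not strictly necessary.
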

\begin{proof}
  Let $x,y\in f^{-1}(C)$ with $x\preceq y$.
  Since $f$ is order-preserving, for any $z\in [x,y]$, we have $f(x)\preceq f(z)
  \preceq f(y)$, and therefore $f(z)\in C$.
\end{proof}

\begin{proposition}
  \label{proposition.uniqueness.idempotents}
  Let $(P,\preceq)$ be a poset and $f:P\to P$ be an order-preserving and \regressive idempotent.
  Then, $f$ is determined by its image set. Namely, for $u\in P$ we have:
  \begin{equation*}
    f(u) = \sup_\preceq \bigl( \lowerset{u} \cap \im(f) \bigr),
  \end{equation*}
  the supremum being always well-defined. Here $\lowerset{u}=\{x \in P
  \mid x\preceq u\}$.

  An equivalent statement is that, for $v \in \im(f)$,
  \begin{displaymath}
    f^{-1}(v) = \upperset{v}\  \backslash\
    \bigcup_{\substack{v'\in \im(f)\\ v'\succ v}} \upperset{v'}\,,
  \end{displaymath}
  where $\upperset{v}= \{x\in P \mid x \succeq v\}$.
\end{proposition}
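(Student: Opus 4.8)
The plan is to prove the formula $f(u) = \sup_\preceq(\lowerset{u}\cap\im(f))$ directly, and then to deduce the equivalent dual statement about fibers. Fix $u\in P$. Since $f$ is \regressive, $f(u)\preceq u$, and since $f$ is idempotent, $f(u)=f(f(u))\in\im(f)$; hence $f(u)\in\lowerset{u}\cap\im(f)$, so this set is nonempty. First I would show that $f(u)$ is an upper bound for $\lowerset{u}\cap\im(f)$: take any $v\in\im(f)$ with $v\preceq u$. Because $v\in\im(f)$ and $f$ is idempotent, $f(v)=v$; because $f$ is order-preserving and $v\preceq u$, we get $v=f(v)\preceq f(u)$. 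Combined with the fact that $f(u)$ itself lies in the set, this shows $f(u)$ is the greatest element of $\lowerset{u}\cap\im(f)$, in particular the supremum exists and equals $f(u)$. This is the whole content of the first displayed equation; I do not expect any obstacle here beyond bookkeeping with the three hypotheses.

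For the equivalent statement, I would argue that for $v\in\im(f)$, $u\in f^{-1}(v)$ if and only if $f(u)=v$, and translate this through the formula just proved. If $f(u)=v$ then $v\preceq u$, so $u\in\upperset{v}$; moreover if some $v'\in\im(f)$ with $v'\succ v$ satisfied $u\in\upperset{v'}$, i.e. $v'\preceq u$, then $v'\in\lowerset{u}\cap\im(f)$, so $v'\preceq\sup_\preceq(\lowerset{u}\cap\im(f))=f(u)=v$, contradicting $v'\succ v$. Hence $u$ lies in $\upperset{v}\setminus\bigcup_{v'\in\im(f),\,v'\succ v}\upperset{v'}$. Conversely, suppose $u$ lies in that set. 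Then $v\preceq u$ and $v\in\im(f)$, so $v\in\lowerset{u}\cap\im(f)$ and therefore $v\preceq f(u)$. On the other hand $f(u)\in\lowerset{u}\cap\im(f)$ as observed above, so if $f(u)\neq v$ we would have $f(u)\succ v$ with $f(u)\in\im(f)$, forcing $u\in\upperset{f(u)}$ with $f(u)\succ v$, contradicting the assumption that $u$ avoids all such $\upperset{v'}$. Hence $f(u)=v$, i.e. $u\in f^{-1}(v)$.

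The only point requiring a little care is the implicit claim that the two formulations really are "equivalent," i.e. that knowing the fibers $f^{-1}(v)$ for $v\in\im(f)$ determines $f$ (which is clear, since the fibers partition $P$) and conversely; but this follows immediately once both displayed identities are established, since each expresses $f$ purely in terms of $\im(f)$ and the poset structure. I would close by remarking that Lemma~\ref{lemma.preimage_convex} already guarantees each fiber is convex, which is consistent with — though not needed for — the explicit description above.
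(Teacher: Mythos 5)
Your proof is correct and follows essentially the same route as the paper's: both arguments use regressivity to place $f(u)$ in $\lowerset{u}\cap\im(f)$, idempotency to see that elements of the image below $u$ are fixed points, and order-preservation to conclude $v=f(v)\preceq f(u)$, so that $f(u)$ is the maximum of that set. Your verification of the second displayed identity is more detailed than the paper's (which dismisses it as a straightforward reformulation), but it is the same equivalence.
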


\begin{proof}
  We first prove that $\lowerset{u}\cap \im(f) = f(\lowerset{u})$.
  The inclusion $\supseteq$ follows from the fact that $f$ is \regressive: taking
  $v\in \lowerset{u}$, we have $f(v)\preceq v \preceq u$ and therefore $f(v)
  \in \lowerset{u}\cap \im(f)$.
  The inclusion $\subseteq$ follows from the assumption that $f$ is an idempotent:
  for $v\in\im(f)$ with $v\preceq u$, one has $v=f(v)$, so $v\in f(\lowerset{u})$.

  Since $f$ is order-preserving, $f(\lowerset{u})$ has a unique maximal element,
  namely $f(u)$. The first statement of the proposition follows. The
  second statement is a straightforward reformulation of the first one.
\end{proof}

An \emph{interior operator} (sometimes also called a kernel operator) is a
function $L \to L$ on a lattice $L$ which is order-preserving, \regressive and
idempotent (see e.g.~\cite[Moore~Family]{Wikipedia.2010}).  A subset $A
\subseteq L$ is a \emph{dual Moore family} if it contains the smallest element
$\bot_L$ of $L$ and is stable under joins.  The image set of an interior
operator is a \emph{dual Moore family}. Reciprocally, any dual Moore family
$A$ defines an interior operator by:
\begin{equation}
  \begin{split}
    L & \longrightarrow L\\
    x & \longmapsto \reduce(x) := \bigjoin_{a\in A, a\preceq x} a \; ,
  \end{split}
\end{equation}
where $\bigjoin_{\{\}} = \bot_L$ by convention.

A (dual) Moore family is itself a lattice with the order and join inherited
from $L$. The meet operation usually differs from that of $L$ and is
given by $x\meet_A y=\reduce(x\meet_L y)$.

\subsection{Classical partial orders on Coxeter groups}
\label{ss.orders}

A Coxeter group $W = \langle s_i \mid i \in I \rangle$ comes endowed
with several natural partial orders: left (weak) order, right (weak)
order, left-right (weak) order, and Bruhat order. All of these play an
important role for the representation theory of the biHecke monoid $\biheckemonoid(W)$.

Fix $u,w\in W$. Then, in \emph{right (weak) order},
\begin{equation*}
	u\le_R w \quad \text{if $w=u s_{i_1}\cdots s_{i_k}$ for some $i_j\in I$ and
	$\ell(w)=\ell(u)+k$.}
\end{equation*}
Similarly, in \emph{left (weak) order},
\begin{equation*}
	u\le_L w \quad \text{if $w=s_{i_1}\cdots s_{i_k} u$ for some $i_j\in I$ and
	$\ell(w)=\ell(u)+k$,}
\end{equation*}
and in \emph{left-right (weak) order},
\begin{equation*}
	u\le_{LR} w \quad \text{if $w=s_{i_1}\cdots s_{i_k} u s_{i'_1} \cdots s_{i'_\ell}$
	for some $i_j, i_j'\in I$ and $\ell(w)=\ell(u)+k+\ell$.}
\end{equation*}
Note that left-right order is the transitive closure of the union of left
and right order. Thanks to associativity, this is equivalent to
the existence of a $v\in W$ such that $u\le_L v$ and $v\le_R w$.

Let $w=s_{i_1} s_{i_2} \cdots s_{i_\ell}$ be a reduced expression for
$w$. Then, in \emph{Bruhat order},
\begin{equation*}
	u\le_B w \quad \begin{array}[t]{l}
	\text{if there exists a reduced expression $u = s_{j_1} \cdots s_{j_k}$}\\
	\text{where $j_1 \ldots j_k$ is a subword of $i_1 \ldots i_\ell$.} \end{array}
\end{equation*}

For any finite Coxeter group $W$, the posets $(W,\le_R)$ and $(W,\le_L)$ are
graded lattices~\cite[Section 3.2]{Bjorner_Brenti.2005}. The following
proposition states that any interval is isomorphic to some interval starting
at $1$:
\begin{proposition} \cite[Proposition 3.1.6]{Bjorner_Brenti.2005}
  \label{proposition.interval}
  Let $\Order \in \{L,R\}$, $u\le_\Order w\in W$. Then $[u,w]_\Order \cong
  [1,t]_\Order$ where $t = wu^{-1}$.
\end{proposition}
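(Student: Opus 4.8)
\subsection*{Proof proposal}

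The plan is to exhibit the isomorphism concretely as a one-sided translation. By symmetry it suffices to treat $\Order = R$; the case $\Order = L$ is obtained by replacing left translations with right translations and ``prefix of a reduced word'' with ``suffix of a reduced word'' (equivalently, by conjugating the whole argument through the anti-automorphism $x\mapsto x^{-1}$, which exchanges $\le_R$ and $\le_L$). So I would prove that, for $u\le_R w$, left translation $\lambda_{u^{-1}}\colon v\mapsto u^{-1}v$ restricts to a poset isomorphism $[u,w]_R\cong[1,u^{-1}w]_R$, with inverse $\lambda_u\colon x\mapsto ux$; this yields the statement with $t=u^{-1}w$ (and correspondingly $t=wu^{-1}$ in the left case). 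The only input needed beyond the definition of right order recalled above is its standard reformulation $a\le_R b \iff \ell(b)=\ell(a)+\ell(a^{-1}b)$, together with the elementary fact that every prefix of a reduced word is again reduced.

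First I would check that the two translations have the asserted images. If $u\le_R v\le_R w$, then combining the length identities for $u\le_R v$ and $v\le_R w$ gives $\ell(w)=\ell(u)+\ell(u^{-1}v)+\ell(v^{-1}w)$; since also $\ell(u^{-1}w)=\ell(w)-\ell(u)$ (from $u\le_R w$) and $(u^{-1}v)^{-1}(u^{-1}w)=v^{-1}w$, this rearranges to $\ell(u^{-1}w)=\ell(u^{-1}v)+\ell\bigl((u^{-1}v)^{-1}(u^{-1}w)\bigr)$, i.e.\ $u^{-1}v\in[1,u^{-1}w]_R$. Conversely, if $x\le_R u^{-1}w$, then $\ell(u^{-1}w)=\ell(x)+\ell(x^{-1}u^{-1}w)$, and combining this with $\ell(w)=\ell(u)+\ell(u^{-1}w)$ and the factorisation $w=u\cdot x\cdot(x^{-1}u^{-1}w)$ shows that the concatenation of reduced words for $u$, $x$, $x^{-1}u^{-1}w$ has length $\ell(w)$, hence is a reduced word for $w$; its initial segment representing $ux$ is then reduced too, so $\ell(ux)=\ell(u)+\ell(x)$, i.e.\ $u\le_R ux$, and then $\ell(w)=\ell(ux)+\ell\bigl((ux)^{-1}w\bigr)$ gives $ux\le_R w$. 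Thus $ux\in[u,w]_R$.

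Then $\lambda_{u^{-1}}$ and $\lambda_u$ are manifestly mutually inverse bijections between $[u,w]_R$ and $[1,u^{-1}w]_R$, and it remains to see they are order-preserving. For $\lambda_{u^{-1}}$: if $v_1\le_R v_2$ with $v_1,v_2\in[u,w]_R$, then $\ell(v_2)=\ell(v_1)+\ell(v_1^{-1}v_2)$, while $\ell(v_i)=\ell(u)+\ell(u^{-1}v_i)$ and $v_1^{-1}v_2=(u^{-1}v_1)^{-1}(u^{-1}v_2)$; cancelling $\ell(u)$ yields $\ell(u^{-1}v_2)=\ell(u^{-1}v_1)+\ell\bigl((u^{-1}v_1)^{-1}(u^{-1}v_2)\bigr)$, i.e.\ $u^{-1}v_1\le_R u^{-1}v_2$. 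The same computation for $\lambda_u$, now using the identities $\ell(ux_i)=\ell(u)+\ell(x_i)$ established above, shows $\lambda_u$ is order-preserving. Hence $\lambda_{u^{-1}}$ is a poset isomorphism, completing the $R$ case and, by symmetry, the $L$ case.

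I expect the one genuinely non-formal point to be showing that $\lambda_u$ sends $[1,u^{-1}w]_R$ back into $[u,w]_R$ --- precisely, the step $u\le_R ux$. Every other containment and every order comparison above is pure arithmetic with the identity $a\le_R b\iff\ell(b)=\ell(a)+\ell(a^{-1}b)$, whereas this step is exactly where one must invoke the structural fact that a prefix of a reduced word is reduced. Once that is in hand, the rest is bookkeeping.
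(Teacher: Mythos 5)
Your proof is correct; the paper does not actually prove this statement but cites it from B\"jorner--Brenti, and your argument (left translation by $u^{-1}$, with the length identity $a\le_R b\iff\ell(b)=\ell(a)+\ell(a^{-1}b)$ doing the bookkeeping and the prefix-of-a-reduced-word fact carrying the one substantive step $u\le_R ux$) is exactly the standard proof given in that reference. You also correctly note that for $\Order=R$ the element should be $t=u^{-1}w$ rather than the $wu^{-1}$ appearing in the statement --- consistent with the paper's own definition of $\type([u,w]_R)$ immediately afterwards --- so no issues.
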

This motivates the following definition:
\begin{definition}
  The \emph{type} of an interval in left (resp. right) order is defined to be
  $\type([u,w]_L):=wu^{-1}$ (resp. $\type([u,w]_R):= u^{-1}w$).
\end{definition}

It is easily shown that, if $\Order$ is considered as a colored poset,
then the converse of Proposition~\ref{proposition.interval} holds as well:
\begin{remark} \label{remark.interval} Fix a type $t$. Then, the
  collection of all intervals in left weak order of type $t$ is in
  bijection with $[1,t^{-1} w_0]_R$, and the operators $\pi_i$ and $\opi_i$ act
  transitively on the right on this collection. More precisely:
  $\pi_a$ induces an isomorphism from $[1, ba^{-1}]_L$ to $[a,b]_L$,
  and $\opi_{a^{-1}}$ induces an isomorphism from $[a,b]_L$ to
  $[1,ba^{-1}]_L$.
\end{remark}
\begin{proof}
  Take $u \in [a,b]_L$, and let $s_{i_1} \cdots s_{i_k}$ be a reduced
  decomposition of $a$.
  Let $s_{j_1} \cdots s_{j_\ell}$ be a reduced decomposition of $ua^{-1}= u s_{i_k}\cdots s_{i_1}$.
  Then
  $$u=(s_{j_1} \cdots s_{j_\ell})(s_{i_1}\cdots s_{i_k})$$
  is a reduced decomposition of $u$ and $u.\opi_{a^{-1}} = s_{j_1} \cdots s_{j_\ell} = ua^{-1}$.
  Reciprocially, applying $\pi_a$ to an element
  $u\in [1,ba^{-1}]_L$ progressively builds up a reduced word for $a$. The result follows.
\end{proof}

\subsection{Preorders on monoids}
\label{ss.preorders on monoids}

In 1951 Green~\cite{Green.1951} introduced several preorders on monoids which are essential
for the study of their structures (see for example~\cite[Chapter V]{Pin.2009}).
\emph{Throughout this paper, we only consider finite monoids}.
Define $\le_\RR, \le_\LL, \le_\JJ, \le_\HH$ for $x,y\in M$ as follows:
\begin{equation*}
\begin{split}
	&x \le_\RR y \quad \text{if and only if $x=yu$ for some $u\in M$}\\
	&x \le_\LL y \quad \text{if and only if $x=uy$ for some $u\in M$}\\
	&x \le_\JJ y \quad \text{if and only if $x=uyv$ for some $u,v\in M$}\\
	&x \le_\HH y \quad \text{if and only if $x\le_\RR y$ and $x\le_\LL y$.}
\end{split}
\end{equation*}
These preorders give rise to equivalence relations:
\begin{equation*}
\begin{split}
	&x \; \RR \; y \quad \text{if and only if $xM = yM$}\\
	&x \; \LL \; y \quad \text{if and only if $Mx = My$}\\
	&x \; \JJ \; y \quad \text{if and only if $MxM = MyM$}\\
	&x \; \HH \; y \quad \text{if and only if $x \; \RR \; y$ and $x \; \LL \; y$.}
\end{split}
\end{equation*}
Strict comparisons are defined by $x<_{\RR} y$ if $x\le_{\RR} y$ but
$x\notin\RR(y)$, or equivalently $\RR(x)\subset \RR(y)$, and similarly
for $<_\LL,<_\JJ,<_\HH$.

We further add the relation $\le_\BB$ (and its associated equivalence
relation $\BB$) defined as the finest preorder such that $x\leq_\BB
1$, and
\[
  \text{$x\le_\BB y$ implies that $uxv\le_\BB uyv$ for all $x,y,u,v\in M$.}
\]
(One can view $\le_\BB$ as the intersection of all preorders with the above
property. There exists at least one such preorder, namely $x\le y$ for all $x,y\in M$).
In the semigroup community, this order is sometimes colloquially referred to
as the \emph{multiplicative $\JJ$-order}.

Beware that $1$ is the largest element of those (pre)-orders. This is
the usual convention in the semigroup community, but is the converse
convention from the closely related notions of left/right/left-right/Bruhat order
in Coxeter groups as introduced in Section~\ref{ss.orders}.

\begin{example}
For the $0$-Hecke monoid introduced in Section~\ref{ss.0_hecke_monoid},
$\KK$-order for $\KK\in \{\RR,\LL,\JJ,\BB\}$ corresponds to the reverse
of right, left, left-right and Bruhat order of Section~\ref{ss.orders}. More precisely
for $x,y\in H_0(W)$, $x\le_\KK y$ if and only if $x\ge_K y$ for $\KK\in \{\RR,\LL,\JJ,\BB\}$
and $K\in \{R,L,LR,B\}$ the corresponding letter.
\end{example}

\begin{definition}
Elements of a monoid $M$ in the same $\KK$-equivalence class
are called $\KK$-classes, where $\KK\in \{\RR,\LL,\JJ,\HH,\BB\}$.
The $\KK$-class of $x\in M$ is denoted by $\KK(x)$.

A monoid $M$ is called $\KK$-\emph{trivial} if all $\KK$-classes are
of cardinality one.

An element $x\in M$ is called \emph{regular} if it is $\JJ$-equivalent to an
idempotent.
\end{definition}

An equivalent formulation of $\KK$-triviality is given in terms of
\emph{ordered} monoids. A monoid $M$ is called:
\begin{equation*}
\begin{aligned}
  & \text{\emph{right-ordered}} && \text{if $xy\le x$ for all $x,y\in M$}\\
  & \text{\emph{left-ordered}} && \text{if $xy\le y$ for all $x,y\in M$}\\
  & \text{\emph{left-right-ordered}} && \text{if $xy\leq x$ and $xy\leq y$ for all $x,y\in M$}\\
  & \text{\emph{two-sided-ordered}} && \text{if $xy=yz \leq y$ for all $x,y,z\in M$ with $xy=yz$}\\
  & \text{\emph{ordered with $1$ on top}} && \text{if $x\leq 1$, and $x\le y$ implies $uxv\le uyv$ for all $x,y,u,v\in M$}
\end{aligned}
\end{equation*}
for some partial order $\le$ on $M$.

\begin{proposition}
  \label{proposition.ordered}
  $M$ is right-ordered (resp. left-ordered, left-right-ordered, two-sided-ordered,
  ordered with $1$ on top) if and only if $M$ is $\RR$-trivial
  (resp. $\LL$-trivial, $\JJ$-trivial, $\HH$-trivial, $\BB$-trivial).

  When $M$ is $\KK$-trivial for $\KK\in \{\RR,\LL,\JJ,\HH,\BB\}$, the partial order $\le$ is finer
  than $\le_\KK$; that is for any $x, y\in M$, $x \le_\KK y$ implies $x \leq y$.
\end{proposition}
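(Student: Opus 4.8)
The plan is to prove the equivalence by handling each of the five cases in parallel, since they all follow the same pattern, and then deduce the finer-than-$\le_\KK$ statement at the end.

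\medskip

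For the forward direction (ordered $\Rightarrow$ trivial), suppose first that $M$ is right-ordered, so $xy \le x$ for all $x,y$ and some partial order $\le$. I would take two $\RR$-equivalent elements $x \;\RR\; y$; then $x = yu$ and $y = xv$ for some $u,v \in M$, so $x = yu \le y$ and $y = xv \le x$, and antisymmetry of $\le$ gives $x = y$. The left-ordered/$\LL$-trivial and left-right-ordered/$\JJ$-trivial cases are verbatim the same, using $x = uy$, $y = vx$ in the left case and $x = uyv$, $y = u'xv'$ in the two-sided case. For $\HH$-triviality: if $M$ is $\HH$-trivial, then... wait, I want the converse direction here as well — see below; but for ordered-with-$1$-on-top $\Rightarrow$ $\BB$-trivial, if $x \;\BB\; y$ then $x \le_\BB y$ and $y \le_\BB x$; since $\le_\BB$ is by definition the finest preorder with the stated multiplicative property and $x \le 1$, while $\le$ is \emph{a} partial order with that property, we get $x \le_\BB y \Rightarrow x \le y$ and similarly $y \le x$, hence $x = y$ by antisymmetry. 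The $\HH$-trivial/two-sided-ordered pairing needs a small remark: $xy = yz$ (i.e. the element is in a common position) forces $xy \le y$; if $a \;\HH\; b$ then $a \le_\RR b$ and $b \le_\RR a$ and likewise for $\le_\LL$, and one extracts $a = by$, $b = ax$ along with left-sided witnesses to land in the hypothesis $xy=yz$ — this is the one spot requiring a line of care.

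\medskip

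For the reverse direction (trivial $\Rightarrow$ ordered), the natural candidate order is Green's preorder itself: define $\le$ to be $\le_\RR$ (resp. $\le_\LL$, $\le_\JJ$, $\le_\HH$, $\le_\BB$). By definition each of these is reflexive and transitive (a preorder); $\KK$-triviality says exactly that $x \le_\KK y$ and $y \le_\KK x$ together force $x \;\KK\; y$, hence $x = y$ — so $\le_\KK$ is a genuine partial order. It remains to check the defining inequality of the corresponding ordered-monoid notion: for right order, $xy \le_\RR x$ holds since $xy = x\cdot y$ exhibits $xy$ as a right multiple of $x$; for left order symmetrically; for $\JJ$, $xy \le_\JJ x$ and $xy \le_\JJ y$ both hold trivially; for $\BB$, $\le_\BB$ is multiplicative by construction and $x \le_\BB 1$ by construction, giving "ordered with $1$ on top." The two-sided-ordered case for $\HH$ again needs the extra observation that if $xy = yz$ then $xy \le_\RR y$ (since $xy = yz$) and $xy \le_\LL y$ (since $xy = xy$ is a left multiple of $y$... more precisely $xy = (x)y$), hence $xy \le_\HH y$. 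I expect this $\HH$/two-sided case to be the main obstacle: the definition of "two-sided-ordered" is phrased via the relation $xy = yz$ rather than as a plain inequality, so one must be careful that the partial order witnessing it is $\le_\HH$ and that the bookkeeping of left/right multiples matches the somewhat unusual hypothesis; everything else is routine.

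\medskip

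Finally, for the last sentence: once we know $M$ is $\KK$-trivial, the reverse-direction construction shows $\le_\KK$ is \emph{a} partial order making $M$ ordered in the appropriate sense. For uniqueness/finer-ness, suppose $\le$ is \emph{any} partial order witnessing the ordered property. In the right-ordered case, $x \le_\RR y$ means $x = yu$ for some $u$, and then the defining inequality gives $x = yu \le y$; so $\le_\RR$ refines $\le$ — wait, I need the other direction: $x \le_\RR y \Rightarrow x \le y$, which is exactly what I just wrote, so $\le$ is \emph{coarser}, i.e. $\le_\KK$ implies $\le$. That is the claim: "$\le$ is finer than $\le_\KK$" in the paper's phrasing means $x \le_\KK y \Rightarrow x \le y$, and this is precisely the one-line computation above (repeated mutatis mutandis for $\LL$: $x = uy \le y$; for $\JJ$: $x = uyv \le y$; for $\HH$: both; for $\BB$: by the finest-preorder property of $\le_\BB$ noted earlier). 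So no separate argument is needed beyond unwinding definitions. I will present the five cases together with a remark isolating the $\HH$-case subtlety, rather than writing out five near-identical paragraphs.
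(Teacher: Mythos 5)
Your proposal is correct and follows essentially the same route as the paper: the forward direction uses antisymmetry of the given order on the two Green's witnesses, the reverse direction takes $\le_\KK$ itself as the witnessing partial order, and the finer-ness claim is the one-line unwinding $x=yu\le y$ (etc.). The paper only writes out the right-ordered case and declares the rest similar; your extra care with the $\HH$/two-sided case (deriving $u'b=bu$ from the four Green's witnesses so as to invoke the $xy=yz$ hypothesis) is exactly the detail the paper leaves implicit, and it is handled correctly.
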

\begin{proof}
  We give the proof for right-order as the other cases can be proved in a similar fashion.

  Suppose $M$ is right-ordered and that $x,y\in M$ are in the same
  $\RR$-class. Then $x=ya$ and $y=xb$ for some $a,b\in M$. This
  implies that $x\leq y$ and $y\leq x$ so that $x=y$.
  Conversely, suppose that all $\RR$-classes are singletons. Then
  $x\le_\RR y$ and $y\le_\RR x$ imply that $x=y$, so that the
  $\RR$-preorder turns into a partial order. Hence $M$ is
  right-ordered using $xy \le_\RR x$.
\end{proof}

\begin{definition}
A monoid $M$ is \emph{aperiodic} if there is an integer $N>0$ such that for each $x\in M$,
$x^N=x^{N+1}$.
\end{definition}
Since we are only dealing with finite monoids, it is enough to find such an
$N=N_x$ depending on the element $x$. Indeed, taking $N:=\max\{N_x\}$ gives a
uniform bound.  From this definition it is clear that, for an aperiodic monoid
$M$, the sequence $(x^n)_{n\in\N}$ eventually stabilizes for every $x\in
M$. We write $x^\omega$ for the stable element, which is idempotent, and $E(M)
:= \{x^\omega \mid x\in M\}$ for the set of idempotents.

Equivalent characterizations of (finite) aperiodic monoids $M$ are that they are
$\HH$-trivial, or that the sub-semigroup $S$ of $M$ (the identity of $S$ is
not necessarily the one of $M$), which are also groups, are trivial (see for
example~\cite[VII, 4.2, Aperiodic monoids]{Pin.2009}).  In this sense, the
notion of aperiodic monoids is orthogonal to that of groups as they contain no
group-like structure. On the same token, their representation theory is
orthogonal to that of groups.

As we will see in Section~\ref{ss.aperiodic}, the biHecke monoid $\biheckemonoid(W)$
of Definition~\ref{definition.biHecke monoid} is aperiodic. Its Borel submonoid
$M_1(W)$ of functions fixing the identity is $\JJ$-trivial (see Section~\ref{section.M1}).

\subsection{Representation theory of algebras}
\label{ss.representation_theory}

We refer to~\cite{Curtis_Reiner.1962} for an introduction to
representation theory, and to \cite{Benson.1991} for more advanced
notions such as Cartan matrices and quivers. Here we mostly review
composition series and characters.

Let $A$ be a finite-dimensional algebra. Given an $A$-module $X$, any
strictly increasing sequence $(X_i)_{i\leq k}$ of submodules
\begin{equation*}
  \{0\} = X_0 \subset X_1 \subset X_2 \subset \dots \subset X_k = X
\end{equation*}
is called a \emph{filtration} of $X$. A filtration
$(Y_j)_{i\leq \ell}$ such that, for any $i$, $Y_i = X_j$ for some $j$
is called a \emph{refinement of $(X_i)_{i\le k}$}. A filtration
$(X_i)_{i\leq k}$ without a non-trivial refinement is called a
\emph{composition series}. For a composition series,
each quotient module $X_j/X_{j-1}$ is simple and is called a
\emph{composition factor}. The multiplicity of a simple module $S$ in
the composition series is the number of indices $j$ such that
$X_j/X_{j-1}$ is isomorphic to $S$.  The Jordan-H\"older theorem
states that this multiplicity does not depend on the choice of the
composition series.
Hence, we may define the \emph{generalized character} (or
\emph{character} for short) of a module $X$ as the formal sum
\begin{equation*}
  [X] := \sum_{i\in I} c_i [S_i]\,,
\end{equation*}
where $I$ indexes the simple modules of $A$ and $c_i$ is the
multiplicity of the simple module $S_i$ in any composition series for
$X$.

The additive group of formal sums $\sum_{i\in I} m_i [S_i]$, with
$m_i\in \Z$, is called the \emph{Grothendieck group of the category
of $A$-modules} and is denoted by $G_0(A)$. By definition, the
character verifies that, for any exact sequence
\begin{equation*}
  0 \to X \to Y \to Z \to 0\,,
\end{equation*}
the following equality holds in the Grothendieck group
\begin{equation*}
  [Y] = [X] + [Z]\,.
\end{equation*}
See~\cite{Serre.1977} for more information about Grothendieck groups.
\medskip

Suppose that $B$ is a subalgebra of $A$.  Any $A$-module $X$ naturally
inherits an action from $B$. The thereby constructed $B$-module is called the
\emph{restriction of $X$ to $B$} and its $B$-character $[X]_{B}$ depends only
on its $A$-character $[X]_{A}$. Indeed, any $A$-composition series can be
refined to a $B$-composition series and the resulting multiplicities
depend only on those in the $A$-composition series and in the
composition series of the simple modules of $A$ restricted to $B$. This
defines a $\Z$-linear map $[X]_{A}\mapsto [X]_{B}$, called the
\emph{decomposition map}.  Let $(S^A_i)_{i\in I}$ and $(S^B_j)_{j\in J}$ be
complete families of simple module representatives for $A$ and $B$,
respectively. The matrix of the decomposition map is called the
\emph{decomposition matrix} of $A$ over $B$; its coefficient $(i,j)$ is the
multiplicity of $S_j^B$ as a composition factor of $S^A_i$, viewed as a
$B$-module.

The adjoint construction of restriction is
called \emph{induction}: for any right $B$-module $X$ the space
\begin{equation*}
  \induced{X}{B}{A} := X \otimes_B A
\end{equation*}
is naturally endowed with a right $A$-module structure by right
multiplication by elements of $A$, and is called the \emph{module
  induced by $X$ from $B$ to $A$}.
\medskip

The next subsection, and in particular the statement of
Theorem~\ref{theorem.simple.generic}, requires a slightly more general setting,
where the identity $e$ of $B$ does not coincide with that of $A$. More
precisely, let $B$ be a subalgebra of $eAe$ for some idempotent $e$ of
$A$. Then, for any $A$-module $Y$, the \emph{restriction} of $Y$ to $B$ is
defined as $Ye$, whereas, for any $B$-module $X$, the \emph{induction} of $X$
to $A$ is defined as $\induced{X}{B}{A} := X \otimes_B eA$.

\subsection{Representation theory of monoids}
\label{ss.representation_theory_monoid}
Although representation theory started at the beginning of the 20th
century with groups before being extended to more general algebraic
structures such as algebras, one has to wait until
1942~\cite{Clifford.1942} for the first results on the representation
theory of semigroups and monoids. Renewed interest in this subject was
sparked more recently by the emergence of connections with probability
theory and combinatorics (see e.g.~~\cite{Brown.2000,Saliola.2007}).
Compared to groups, only a few
general results are known, the most important one being the
construction of the simple modules. It is originally due to Clifford,
Munn, and Ponizovski\v \i, and we recall here the construction
of~\cite{Ganyushkin_Mazorchuk_Steinberg.2009} (see also the historical
references therein) from the regular $\JJ$-classes and corresponding
right class modules.

In principle, one should be specific about the ground field $\K$; in
other words, one should consider the representation theory of the
\emph{monoid algebra} $\K M$ of a monoid $M$, and not of the monoid
itself. However, the monoids under study in this paper are aperiodic,
and their representation theory only depends on the characteristic. We
focus on the case where $\K$ is of characteristic $0$. Note that the
general statements mentioned in this section may further require $\K$
to be large enough (e.g. $\K=\C$) for non-aperiodic monoids.

Let $M$ be a finite monoid. Fix a \emph{regular} $\JJ$-class $J$,
that is, a $\JJ$-class containing an idempotent. Consider the sets
\begin{equation*}
  M_{\ge J} := \bigcup_{K\in\JJ(M),\  K\ge_\JJ J} K
  \qquad\text{and}\qquad I_J := M - M_{\ge J}\,.
\end{equation*}
Then, $I_J$ is an ideal of $M$, so that the vector space $\K M_{\ge
  J}$ can be endowed with an algebra structure by identifying it with
the quotient $\K M / \K I_J$. Note that any $\K M_{\ge J}$-module is
then a $\K M$-module.

\begin{definition}
  \label{definition.right_class_module}
  Let $f\in M$. Set $\KRR_<(f) := \K\{b \in fM \mid b <_\RR f\}$. The
  \emph{right class module} of $f$ (also known as \emph{right Schützenberger
    representation}) is the $\K M$-module
  \begin{equation*}
    \KRR(f) := \K fM / \KRR_<(f)\,.
  \end{equation*}
\end{definition}
$\KRR(f)$ is clearly a right module since $\KRR_<(f)$ is a submodule
of $\K fM$. Also, as suggested by the notation, $\RR(f)$ forms a basis
of $\KRR(f)$. Moreover, for a fixed $\JJ$-class $J$ and thanks to
associativity and finiteness, the right class module $\KRR(f)$ does
not depend on the choice of $f\in J$ (up to isomorphism). Our main
tool for studying the representation theory of the biHecke monoid will
be a combinatorial model for its right class modules, which we will
call \emph{translation modules} (see
Section~\ref{subsection.translation}).

We now choose a $\JJ$-class $J$, fix an idempotent $e_J$ in $J$, and set
$\KRR_J:=\KRR(e_J)$. Recall that
\begin{displaymath}
  \RR(e_J) = e_J M \cap J = e_J M_{\geq J} \cap J\,.
\end{displaymath}
Define similarly
\begin{equation*}
  G_J:=G_{e_J}:=e_JMe_J\cap J = e_J M_{\ge J} e_J \cap J\,.
\end{equation*}
Then, $G_J$ is a group which does not depend on the choice of
$e_J$. More precisely, if $e,f$ are two idempotents in $J$, the ideals
$MeM$ and $MfM$ are equal and the groups $G_e$ and $G_f$ are conjugate
and isomorphic. Note that when working with the quotient algebra $\K
M_{\geq J}$, the above equations simplify to:
\begin{displaymath}
  \KRR_J = e_J \K M_{\geq J} \qquad \text{ and } \qquad \K G_J = e_J \K M_{\ge J} e_J\,.
\end{displaymath}

With these notations, the simple $\K M$-modules can be constructed as follows:
\begin{theorem}[Clifford, Munn, Ponizovski\v \i,
  see~\cite{Ganyushkin_Mazorchuk_Steinberg.2009} Theorem 7]
  \label{theorem.simple.generic}
  Let $M$ be a monoid, and $\regJcl$ be the set of its regular
  $\JJ$-classes. For any $J\in\regJcl$, define the right class module
  $\KRR_J$ and groups $G_J$ as above, let $S^J_1,\dots, S^J_{n_J}$ be
  a complete family of simple $\K G_J$-modules, and set
  \begin{equation}
    X^J_i := \Top{\induced{S^J_i}{\K G_J}{\K M_{\ge J}}} =
    \Top{S^J_i \otimes_{\K G_J} e_J \K M_{\ge J}} =
    \Top{S^J_i \otimes_{\K G_J} \KRR_J}\,,
  \end{equation}
  where $\Top{X} := X / \rad{X}$ is the semi-simple quotient of the module
  $X$. Then, $(X^J_i \text{ for } J\in\regJcl \text{ and } i=1,\dots, n_J)$ is
  a complete family of simple $\K M$-modules.
\end{theorem}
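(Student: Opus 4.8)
The plan is to reconstruct the Clifford--Munn--Ponizovski\v{\i} bijection in three stages: (a) attach to every simple $\K M$-module a well-defined \emph{apex}, which is a regular $\JJ$-class; (b) identify, for each regular $\JJ$-class $J$, the simple $\K M$-modules of apex $J$ with the simple $\K M_{\ge J}$-modules not annihilated by $e_J$; and (c) use the idempotent $e_J$ and the associated Schur functor to put the latter in bijection with the simple $\K G_J$-modules, while tracking the explicit form of the correspondence.

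For (a), I would first show that among the two-sided ideals $I$ of $M$ with $\K I\cdot S\ne 0$ there is a least one. Indeed, if $\K I_1 S\ne 0$ and $\K I_2 S\ne 0$ then $\K I_1 S=\K I_2 S=S$ by simplicity, so $\K(I_1\cap I_2)S\supseteq\K I_1\K I_2 S=S\ne 0$; by finiteness this propagates to the intersection $I(S)$ of all such ideals, which therefore still satisfies $\K I(S)\cdot S=S$ and is contained in every ideal acting nonzero on $S$. Since an ideal is the union of the principal down-sets (for $\le_\JJ$) of its maximal $\JJ$-classes, minimality of $I(S)$ forbids two incomparable such classes, so $I(S)=\{x\mid x\le_\JJ J^*\}$ for a unique $\JJ$-class $J^*$, the apex. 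The crucial --- and, I expect, most delicate --- point is that $J^*$ is regular: on quotienting $\K M$ by the span of $\{x\mid x<_\JJ J^*\}$ (an ideal killing $S$ by minimality), the image $\bar I$ of the span of $J^*$ is a two-sided ideal with $\bar I S=S$, hence $\bar I^2 S=\bar I S=S\ne 0$ and so $\bar I^2\ne 0$; but if $J^*$ were a null $\JJ$-class every product of two of its elements would lie strictly below $J^*$, forcing $\bar I^2=0$. This last step is where the Rees--Suschkewitsch structure of $\JJ$-classes enters; short of reproving it, it is classical.

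For (b), fix a regular $\JJ$-class $J$ with idempotent $e_J$. If $S$ is simple over $\K M$ with apex $J$, then $I_J=M\setminus M_{\ge J}$ contains no element of $J$, hence does not contain $I(S)=\{x\le_\JJ J\}$; as every ideal acting nonzero on $S$ contains $I(S)$, this gives $\K I_J\cdot S=0$, so $S$ is a simple $\K M_{\ge J}$-module. Moreover the proper subideal $\{x<_\JJ J\}$ of $I(S)$ annihilates $S$ while $\K I(S)\cdot S=S$, so some element of $J$ acts nonzero, whence so does $e_J$ (any $x\in J$ factors as $u e_J v$). Conversely, a simple $\K M_{\ge J}$-module $S$ with $e_J S\ne 0$, viewed over $\K M$, has apex $\ge_\JJ J$ (since $\K I_J\cdot S=0$) and apex $\le_\JJ J$ (since $e_J S\ne 0$ forces, by minimality, $I(S)\subseteq\{x\le_\JJ J\}$), hence apex exactly $J$. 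Thus the simple $\K M$-modules are partitioned over $J\in\regJcl$ into the sets of those of apex $J$, and each such set is exactly the set of simple $\K M_{\ge J}$-modules on which $e_J$ acts nontrivially; in particular distinct apices yield non-isomorphic modules.

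For (c), set $A:=\K M_{\ge J}$ and $e:=e_J$. One checks $eAe=\K G_J$: for $m\in M$ we have $eme\le_\JJ e$, so $eme$ either lies in the group $\HH$-class $G_J$ or strictly below $J$ --- hence is zero in $A=\K M/\K I_J$ --- and $G_J$ is closed under the monoid product, so $eAe$ really is the group algebra $\K G_J$. Standard idempotent-truncation theory for finite-dimensional algebras then yields a bijection between simple $A$-modules $S$ with $Se\ne 0$ and simple $eAe$-modules, via $S\mapsto Se$, with inverse sending a simple $\K G_J$-module $T$ to $\Top{T\otimes_{\K G_J}eA}$, the unique simple quotient of the induced module $T\otimes_{\K G_J}eA$. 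Since $eA=e\K M_{\ge J}=\KRR_J$ and the radical and top of a $\K M_{\ge J}$-module are unchanged when it is regarded over $\K M$, the simple $\K M$-module of apex $J$ attached to $S^J_i$ is exactly $X^J_i=\Top{S^J_i\otimes_{\K G_J}\KRR_J}$. Assembling (a)--(c): as $J$ ranges over $\regJcl$ and $i$ over $1,\dots,n_J$, the $X^J_i$ form a complete, irredundant family of simple $\K M$-modules.
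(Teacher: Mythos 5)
The paper does not prove this theorem; it quotes it from~\cite{Ganyushkin_Mazorchuk_Steinberg.2009} (Theorem~7), so there is no in-paper argument to compare against. Your reconstruction --- attaching to each simple module a regular apex $\JJ$-class, cutting down to the quotient algebra $\K M_{\ge J}$, and then using the standard idempotent-truncation equivalence between simple $\K M_{\ge J}$-modules not annihilated by $e_J$ and simple $e_J\K M_{\ge J}e_J=\K G_J$-modules, with inverse $T\mapsto\Top{T\otimes_{\K G_J}e_J\K M_{\ge J}}$ --- is precisely the classical Clifford--Munn--Ponizovski\v{\i} argument given in that reference, and it is correct. The only blemishes are cosmetic: you drift between left and right actions ($e_JS$ versus $Se_J$; the paper's modules are right modules, so the relevant nonvanishing condition is $Se_J\ne 0$), and, as you yourself flag, the regularity of the apex is outsourced to the classical fact that a null $\JJ$-class $J$ of a finite semigroup satisfies $J^2\cap J=\emptyset$.
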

In the present paper we only need the very particular case of
aperiodic monoids. The key point is that a monoid is aperiodic if and
only if all the groups $G_J$ are trivial~\cite[Proposition
4.9]{Pin.2009}: $G_J = \{e_J\}$. As a consequence, the only
$\K G_J$-module is the trivial one, $1$, so that the previous
construction boils down to the following theorem:
\begin{theorem}
  \label{theorem.simple_modules.aperiodic}
  Let $M$ be an aperiodic monoid. Choose an idempotent transversal $E=\{e_J\mid
  J\in\regJcl\}$ of the regular $\JJ$-classes. Further set
  \begin{equation}
    X^J := \Top{\induced{1}{\K e_J}{\K M_{\ge J}}} = \Top{e_J\K M_{\ge J}} =
    \Top{\KRR_J}\,.
  \end{equation}
  Then, the family $(X^J)_{J\in\regJcl}$ is a complete family of representatives
  of simple $\K M$-modules. In particular, there are as many isomorphic types of
  simple modules as regular $\JJ$-classes.
\end{theorem}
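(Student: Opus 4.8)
The plan is to deduce the statement directly from Theorem~\ref{theorem.simple.generic}, the general Clifford--Munn--Ponizovski\v\i{} construction, by specializing it to the aperiodic case. The only additional ingredient needed is the characterization of aperiodic finite monoids recalled immediately above: a finite monoid $M$ is aperiodic if and only if all the structure groups $G_J$ attached to its regular $\JJ$-classes are trivial, i.e. $G_J = \{e_J\}$ (Pin, Proposition~4.9). So the present theorem is, strictly speaking, a corollary, and the work consists in unwinding the notation.

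First I would fix, as in the statement, an idempotent transversal $E = \{e_J \mid J\in\regJcl\}$ of the regular $\JJ$-classes, and for each such $J$ form the quotient algebra $\K M_{\ge J}$, the right class module $\KRR_J = e_J \K M_{\ge J}$, and the group $G_J = e_J M_{\ge J} e_J \cap J$ (whose group algebra, computed inside $\K M_{\ge J}$, is $\K G_J = e_J \K M_{\ge J} e_J$, a subalgebra of $e_J \K M_{\ge J} e_J$ with identity $e_J$). Since $M$ is aperiodic, $G_J$ is the trivial group, so $\K G_J = \K e_J$ is the one-dimensional algebra spanned by the idempotent $e_J$; it admits, up to isomorphism, a unique simple module, namely the one-dimensional trivial module $\one$. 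Hence, in the notation of Theorem~\ref{theorem.simple.generic}, $n_J = 1$ and $S^J_1 \cong \one$ for every $J\in\regJcl$.

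Next I would simplify the induced module. By definition $X^J_1 = \Top{\induced{S^J_1}{\K G_J}{\K M_{\ge J}}} = \Top{\one \otimes_{\K e_J} e_J \K M_{\ge J}}$, using the generalized induction along the idempotent $e_J$ set up at the end of Section~\ref{ss.representation_theory}. The $\K M_{\ge J}$-linear map $\one \otimes_{\K e_J} e_J \K M_{\ge J} \to e_J \K M_{\ge J}$ sending $1\otimes e_J m$ to $e_J m$ is an isomorphism, since $e_J$ acts as the identity on both spaces; thus $\induced{\one}{\K e_J}{\K M_{\ge J}} \cong e_J \K M_{\ge J} = \KRR_J$, and therefore $X^J := X^J_1 = \Top{\KRR_J}$, which matches the displayed formula. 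Finally, Theorem~\ref{theorem.simple.generic} asserts that the $X^J_i$, for $J\in\regJcl$ and $1\le i\le n_J$, form a complete irredundant family of simple $\K M$-modules; since each $n_J=1$, this family is precisely $(X^J)_{J\in\regJcl}$, indexed bijectively by the regular $\JJ$-classes, giving the count of simple modules.

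I do not expect any genuine obstacle here: all the substantive content lies in the cited Theorem~\ref{theorem.simple.generic} and in Pin's group-triviality criterion for aperiodicity. The single point deserving care is that the identity $e_J$ of the subalgebra $\K G_J$ is not the identity of $\K M_{\ge J}$, so one must consistently use the generalized restriction/induction along an idempotent rather than the naive tensor product; once that framework is invoked, the isomorphism $\induced{\one}{\K e_J}{\K M_{\ge J}} \cong e_J\K M_{\ge J}$ and hence the whole statement are immediate.
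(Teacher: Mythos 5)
Your proposal is correct and follows exactly the route the paper takes: the paper derives this theorem as an immediate specialization of Theorem~\ref{theorem.simple.generic}, using the fact that a finite monoid is aperiodic if and only if all the groups $G_J$ are trivial, so that the unique simple $\K G_J$-module is the trivial one and the induced module collapses to $e_J\K M_{\ge J}=\KRR_J$. Your extra care with the generalized induction along the idempotent $e_J$ only makes explicit what the paper leaves implicit.
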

Since the top of $\KRR_J$ is simple, one obtains immediately
(see~\cite[Corollary 54.14]{Curtis_Reiner.1962}) the following
corollary.
\begin{corollary}
  \label{corollary.right_class_modules_quotient_of_projective.aperiodic}
  Each regular right class module $\KRR_J$ is indecomposable and a quotient
  of the projective module $P_J$ corresponding to $S_J$.
\end{corollary}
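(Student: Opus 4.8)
The plan is to deduce this from a standard fact in the representation theory of finite-dimensional algebras: a nonzero finitely generated module whose top is simple is automatically indecomposable, and is a homomorphic image of the projective cover of that top. This is precisely \cite[Corollary~54.14]{Curtis_Reiner.1962}, so the real content here is to place $\KRR_J$ into that framework and to identify the relevant projective. Since $\K$ is a field, the algebras $\K M$ and $\K M_{\geq J}$ are finite-dimensional, hence Artinian and in particular semiperfect, so projective covers exist.

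First I would record, directly from Theorem~\ref{theorem.simple_modules.aperiodic}, that $\Top{\KRR_J} = X^J$ is simple; write $S_J := X^J$ for this simple $\K M$-module. For indecomposability I would argue by contradiction using Nakayama's lemma: if $\KRR_J \cong U \oplus V$ with $U$ and $V$ both nonzero, then applying $\Top{\cdot}$ gives $\Top{\KRR_J} \cong \Top{U} \oplus \Top{V}$, and by Nakayama each of $\Top{U}$, $\Top{V}$ is again nonzero, contradicting the simplicity of $\Top{\KRR_J}$. For the projective-cover statement I would note that, by Definition~\ref{definition.right_class_module}, $\KRR_J = \K e_J M / \KRR_<(e_J)$ is a quotient of $\K e_J M = e_J \K M$, which is a direct summand of the regular module $\K M$ and hence projective. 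A projective cover $p\colon P_J \to \KRR_J$ then has domain $P_J$ a direct summand of $e_J\K M$ (lift $e_J\K M \twoheadrightarrow \KRR_J$ through $p$ and use that $p$ is essential to see the lift is onto, then split it by projectivity), so $P_J$ is a projective $\K M$-module; moreover $P_J$ is indecomposable with $\Top{P_J} \cong \Top{\KRR_J} = S_J$ precisely because $S_J$ is simple. Thus $\KRR_J$ is a quotient of the indecomposable projective $\K M$-module attached to $S_J$, i.e.\ of $P_J$.

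The one point I would be careful about is the identification of $P_J$: since the simple module $S_J$ is constructed in Theorem~\ref{theorem.simple_modules.aperiodic} as a $\K M_{\geq J}$-module but regarded as a $\K M$-module, one must make sure that ``the projective $P_J$ corresponding to $S_J$'' is read as its projective cover over $\K M$ itself (a projective cover of $S_J$ over the quotient algebra $\K M_{\geq J}$ need not be $\K M$-projective). The argument above handles this automatically, because $P_J$ is produced as a direct summand of $e_J\K M$. Beyond this bookkeeping I do not expect any genuine obstacle: the corollary really is immediate once Theorem~\ref{theorem.simple_modules.aperiodic} is in hand.
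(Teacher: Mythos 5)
Your proposal is correct and follows essentially the same route as the paper, which simply observes that $\Top{\KRR_J}$ is simple (from Theorem~\ref{theorem.simple_modules.aperiodic}) and invokes \cite[Corollary 54.14]{Curtis_Reiner.1962}. Your write-up merely unpacks that citation (indecomposability via Nakayama, realizing $P_J$ as a summand of $e_J\K M$), and your care about reading $P_J$ as the projective cover over $\K M$ rather than over $\K M_{\geq J}$ is a sensible clarification of the same argument.
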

Note that, for a non-aperiodic finite monoid, each right class module
remains indecomposable even if its top is not necessarily simple
(see~\cite[Corollary~1.10]{Zalcstein.1971}).

It should be noted that the top of a right class module $\KRR_J$ is
easy to compute; indeed, the radical of this module is nothing but
the annihilator of $J$ acting on it. This in turn boils down to the
calculation of the kernel of a matrix as we see below.

\emph{Rees matrix monoids}~\cite{Rees.1940} play an important r\^ole in the representation theory of
monoids, because any $\JJ$-class $J$ of any monoid $M$ is, roughly
speaking, isomorphic to such a monoid. We give here the definition of
aperiodic Rees matrix monoids, which we use in a couple of examples
(see Examples~\ref{example.rees counterexample} and~\ref{example.rees}).

\begin{definition}[Aperiodic Rees matrix monoid] \label{definition.rees}
  Let $P=(p_{ij})$ be an $n\times m$ $0$-$1$-matrix. The
  \emph{aperiodic Rees matrix monoid} $M(P)$ is obtained by endowing
  the disjoint union
  \begin{displaymath}
    \{1\}\ \cup\ \{1,\dots,m\}\times \{1,\dots,n\}\ \cup\ \{0\}
  \end{displaymath}
  with the product
  \begin{displaymath}
    (i,j)(i',j') :=
    \begin{cases}
      (i,j') & \text{ if $p_{ji'}=1$, }\\
      0      & \text{ otherwise,}
    \end{cases}
  \end{displaymath}
  $1$ being neutral and $0$ being the zero element. 

  Note that $(i,j)$ is an idempotent if and only if $p_{j,i}=1$; hence
  $M(P)$ can be alternatively described by specifying which elements $(i,j)$
  are idempotent.
\end{definition}
Without entering into the details, the radical of the unique (up to
isomorphism) non-trivial right class modules of $\K M(P)$ is given by the kernel
of the matrix $P$, and thus the dimension of the non-trivial simple module of
$\K M(P)$ is given by the rank of $P$~\cite{Clifford_Preston.1961,Lallement_Petrich.1969,Rhodes_Zalcstein.1991,
Margolis_Steinberg.2011}.

\section{Blocks of Coxeter group elements and the cutting poset}
\label{section.blocks_cutting_poset}

In this section, we develop the combinatorics underlying the representation
theory of the translation modules studied in
Section~\ref{section.translation_modules}. The key question is:
given $w\in W$, for which subsets $J \subseteq I$ does the
canonical bijection between a Coxeter group $W$ and the Cartesian product $W_J
\times \lcoset{W}{J}$ of a parabolic subgroup $W_J$ by its set of coset
representatives $\lcoset{W}{J}$ in $W$ restrict properly to an interval
$[1,w]_R$ in right order (see Figure~\ref{figure.4312})? In type $A$, the
answer is given by the so-called blocks in the permutation matrix of $w$, and
we generalize this notion to any Coxeter group.

We start with some results on parabolic subgroups and quotients in
Section~\ref{ss.parabolic}, which are used to define \emph{blocks} and
\emph{cutting points} of Coxeter group elements in
Section~\ref{ss.blocks}.  Then, we illustrate the notion of blocks in
type $A$ in Section~\ref{ss.blocks.type_A}, recovering the usual
blocks in permutation matrices.  In Section~\ref{ss.cutting_poset} it
is shown that $(W,\cuteq)$ with the cutting order $\cuteq$ is a poset
(see Theorem~\ref{theorem.cutting_poset}).  In
Section~\ref{ss.lattice} we show that blocks are closed under unions
and intersections, and relate these to meets and joins in left and
right order, thereby endowing the set of cutting points of a Coxeter
group element with the structure of a distributive lattice (see
Theorem~\ref{theorem.lattice}).  In
Section~\ref{ss.indexing_cutting_points}, we discuss various indexing
sets for cutting points, which leads to the notion of $w$-analogues of
descent sets in Section~\ref{ss.w_descents}. Properties of the
\emph{cutting poset} are studied in
Section~\ref{ss.properties.cutting_poset} (see
Theorem~\ref{theorem.cutting}, which also recapitulates the previous
theorems).

Throughout this section $W := \langle s_i \mid i \in I \rangle$
denotes a finite Coxeter group.

\subsection{Parabolic subgroups and cosets representatives}
\label{ss.parabolic}

For a subset $J \subseteq I$, the \emph{parabolic subgroup} $W_J$ of $W$ is
the Coxeter subgroup of $W$ generated by $s_j$ for $j\in J$.  A complete
system of minimal length representatives of the right cosets $W_J w$
(resp. of the left cosets $w W_J$) are given respectively by:
\begin{equation*}
  \begin{split}
    \lcoset{W}{J} &:= \{ x \in W \suchthat \Rec(x) \cap J = \emptyset\}\,,\\
    \rcoset{W}{J} &:= \{ x \in W \suchthat \Des(x) \cap J = \emptyset\}\,.
  \end{split}
\end{equation*}

Every $w\in W$ has a unique decomposition $w = w_J \lcoset{w}{J}$ with
$w_J\in W_J$ and $\lcoset{w}{J} \in \lcoset{W}{J}$. Similarly, there
is a unique decomposition $w = \rcoset{w}{K} {}_Kw$ with ${}_Kw \in
{}_KW=W_K$ and $\rcoset{w}{K} \in \rcoset{W}{K}$.

\begin{lemma}
  \label{lemma.action_on_decomposition}
  Take $w\in W$.
  \begin{enumerate}[(i)]
  \item \label{item.left_decomp}
  For $J\subseteq I$ consider the unique decomposition
  $w=uv$ where $u = w_J$ and $v=\lcoset{w}{J}$. Then, the unique
  decomposition of $ws_k$ is $ws_k = (us_j) v$ if $vs_kv^{-1}$ is a
  simple reflection $s_j$ with $j\in J$ and $ws_k=u (vs_k)$ otherwise.
  \item \label{item.right_decomp}
  For $K\subseteq I$ consider the unique decomposition
  $w=vu$ where $u = {}_Kw$ and $v=\rcoset{w}{K}$. Then, the unique
  decomposition of $s_jw$ is $s_jw = v(s_ku)$ if $v^{-1}s_jv$ is a
  simple reflection $s_k$ with $k\in K$ and $s_jw=(s_j v) u$ otherwise.
  \end{enumerate}
\end{lemma}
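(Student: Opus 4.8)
The plan is to prove part~(\ref{item.left_decomp}) in detail; part~(\ref{item.right_decomp}) then follows by the standard anti-automorphism $w\mapsto w^{-1}$, which swaps left and right cosets and exchanges the roles of $\Des$ and $\Rec$. So fix $w\in W$, $J\subseteq I$, and write $w=uv$ with $u=w_J\in W_J$ and $v=\lcoset{w}{J}\in\lcoset{W}{J}$, so in particular $\Rec(v)\cap J=\emptyset$. The first thing I would record is the length-additivity $\ell(w)=\ell(u)+\ell(v)$, which is the defining property of minimal-length coset representatives. Now I want to identify the minimal-length representative decomposition of $ws_k=uvs_k$.

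The key case split is on whether right-multiplication by $s_k$ lengthens or shortens $v$, and, in the lengthening case, on whether $vs_k$ stays in $\lcoset{W}{J}$. The cleanest route is via the conjugate reflection $t:=vs_kv^{-1}$, a reflection of $W$ (since $v s_\alpha v^{-1}=s_{v(\alpha)}$ for the root $\alpha$ of $s_k$). First I would observe $uvs_k=(vtv^{-1})^{-1}\cdots$ — more simply, $uvs_k = (ut)\,v$ when we rewrite $vs_k=v s_k = (v s_k v^{-1})v = tv$; wait, that gives $uvs_k=u t v=(ut)v$ directly, no condition needed. So the real content is: \emph{when} is this the canonical decomposition, i.e. when is $ut\in W_J$ and $v\in\lcoset{W}{J}$ with the right length count? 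We already know $v\in\lcoset{W}{J}$. The condition $ut\in W_J$ holds precisely when $t=s_j$ is a simple reflection with $j\in J$: indeed if $t=s_j$ with $j\in J$ then $ut=us_j\in W_J$; conversely, if $ut\in W_J$ then $t=u^{-1}(ut)\in W_J$, and I would argue that a reflection of $W$ conjugate into $W_J$ via $v\in\lcoset{W}{J}$ and lying in $W_J$ must in fact be a \emph{simple} reflection $s_j$, $j\in J$ — here is where the hypothesis $\Rec(v)\cap J=\emptyset$ does its work, controlling which reflections $vs_kv^{-1}$ can land in $W_J$. In that case, uniqueness of the parabolic decomposition forces $(ws_k)_J=us_j$ and $\lcoset{(ws_k)}{J}=v$, and one checks lengths add up (using that $\ell(us_j)=\ell(u)\pm1$ is absorbed correctly since $\ell(ws_k)=\ell(w)\pm1$).

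In the complementary case, where $t=vs_kv^{-1}$ is \emph{not} a simple reflection indexed by $J$, I claim $vs_k\in\lcoset{W}{J}$, so that $ws_k=u(vs_k)$ is the canonical decomposition. To see $vs_k\in\lcoset{W}{J}$, i.e. $\Rec(vs_k)\cap J=\emptyset$: take $j\in J$; I must show $\ell(s_j v s_k) > \ell(vs_k)$. If $\ell(vs_k)>\ell(v)$ this is a short computation with the exchange condition together with $\Rec(v)\cap J=\emptyset$, unless multiplying by $s_j$ on the left and $s_k$ on the right conspire — and that conspiracy is exactly the excluded case $s_j v = vs_k$, i.e. $t=s_j\in W_J$. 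If instead $\ell(vs_k)<\ell(v)$, then $vs_k<_R v$, and since $v$ already has no left descent in $J$ and weak order intervals behave well, $vs_k$ inherits $\Rec(vs_k)\cap J=\emptyset$ as well; I would spell this out using the standard fact that $\Rec(vs_k)\subseteq\Rec(v)$ when $vs_k\le_R v$ is a lower cover. Then uniqueness of the decomposition in $W=W_J\cdot\lcoset{W}{J}$ gives the result.

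The main obstacle is the structural claim in the first case: that a reflection $t=vs_kv^{-1}$ which happens to lie in $W_J$, with $v$ a minimal coset representative, is necessarily a \emph{simple} reflection $s_j$ with $j\in J$ (rather than an arbitrary reflection of the parabolic $W_J$). I expect this to follow from a careful root-system argument: writing $\alpha$ for the simple root of $s_k$, the root $v(\alpha)$ is sent by $t=s_{v(\alpha)}\in W_J$ into the root subsystem $\Phi_J$, and the condition $\Rec(v)\cap J=\emptyset$ means $v^{-1}$ sends all simple roots $\alpha_j$, $j\in J$, to positive roots; combining these, $v(\alpha)$ must be a positive root lying in $\Phi_J$ whose reflection is in $W_J$, and a minimality/length argument (or a direct check that $\ell(s_j v)=\ell(v)+1$ forces $v^{-1}(\alpha_j)$ to be a positive root not below any other, hence one can only have $v(\alpha)=\alpha_j$) pins it down to a simple root. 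I would isolate this as the crux and handle it either by the reflection-ordering argument just sketched or, alternatively, by inducting on $\ell(v)$ using Lemma~\ref{lemma.preimage_convex}-style convexity of the coset structure under the generators. The remaining length bookkeeping and the appeal to uniqueness of parabolic decompositions are routine.
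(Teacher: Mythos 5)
Your argument is correct, but it is genuinely more self-contained than the paper's: the paper disposes of this lemma in one line by citing \cite{Bjorner_Brenti.2005} (Lemma~2.4.3 and Proposition~2.4.4, i.e.\ Deodhar's lemma on how minimal coset representatives interact with simple generators, together with the length-additive parabolic factorization $W\cong W_J\times\lcoset{W}{J}$), whereas you essentially reprove those facts from the exchange condition. Your two branches are both complete as sketched: in the branch $t=vs_kv^{-1}=s_j$ with $j\in J$ you only need that $us_j\in W_J$ and $v\in\lcoset{W}{J}$, and uniqueness of the parabolic factorization (which automatically carries length-additivity) finishes it with no separate length bookkeeping; in the other branch, the case $\ell(vs_k)>\ell(v)$ is exactly the standard exchange-condition fact that $s_jv>v$, $vs_k>v$ and $\ell(s_jvs_k)\ne\ell(v)+2$ force $s_jv=vs_k$, and the case $\ell(vs_k)<\ell(v)$ follows from the order-preservation of $w\mapsto\Rec(w)$ along right weak order (Lemma~\ref{lemma.desrec.lattice.morphism} in this paper, or the inversion-set characterization of weak order). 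One correction of emphasis: the claim you single out in your last paragraph as ``the main obstacle'' --- that a reflection $vs_kv^{-1}$ lying in $W_J$ must be a \emph{simple} reflection $s_j$ with $j\in J$ --- is true (a short positive-root argument using $v^{-1}(\alpha_j)>0$ for $j\in J$ gives it), but it is not needed anywhere: the lemma's ``otherwise'' branch is already fully handled by your descent argument, which only uses the excluded equality $s_jv=vs_k$, not any classification of which reflections of $W_J$ can arise. So there is no remaining crux; what you have is a complete proof of the statement the paper delegates to the literature.
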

\begin{proof}
  This follows directly from~\cite[Lemma 2.4.3 and Proposition 2.4.4]{Bjorner_Brenti.2005}.
\end{proof}

Note in particular that, if we are in case~\eqref{item.left_decomp} of
Lemma~\ref{lemma.action_on_decomposition}, we have:
\begin{itemize}
\item If $k$ is a right descent of $w$, then $(ws_k)_J \in [1,w_J]_R$ and
    $\lcoset{(ws_k)}{J}\in [1,\lcoset{w}{J}s_k]_R$.
  \item If $k$ is not a right descent of $w$, then either $s_k$
    skew commutes with $\lcoset{w}{J}$ (that is, there exists an $i$ such that
    $s_i\lcoset{w}{J}=\lcoset{w}{J}s_k)$, or
    $\lcoset{(ws_k)}{J}=\lcoset{w}{J}s_k$. In particular,
    $\lcoset{(ws_k)}{J}\leq_R\lcoset{ws_k}{J}$.
\end{itemize}

\begin{definition}
\label{definition.reduced}
A subset $J \subseteq I$ is \emph{left reduced} with respect to $w$ if $J'\subset J$
implies $\lcoset{w}{J} <_L \lcoset{w}{J'}$ (or  equivalently, if for any $j\in J$, $s_j$ appears
in some and hence all reduced words for $w_J$).

Similarly, $K\subseteq I$ is \emph{right reduced} with respect to $w$ if $K'\subset K$
implies $w^K <_R w^{K'}$.
\end{definition}

\begin{lemma}
  \label{lemma.skew_commutation}
  Let $w\in W$ and $J \subseteq I$ be left reduced with respect to $w$. Then
  \begin{enumerate}
  \item[(i)] \label{item.phiR}
        $v=\lcoset{w}{J} \le_R w$ if and only if there exists $K\subseteq I$ and
        a bijection $\phi_R: J\to K$ such that $s_jv = v s_{\phi_R(j)}$ for all $j\in J$.
  \end{enumerate}
  For $K\subseteq I$ right reduced with respect to $w$, we have
  \begin{enumerate}
  \item[(ii)] \label{item.phiL}
        $v=\rcoset{w}{K} \le_L w$ if and only if there exists $J \subseteq I$ and
        a bijection $\phi_L: K\to J$ such that $vs_k = s_{\phi_L(k)} v$ for all $k\in K$.
  \end{enumerate}
\end{lemma}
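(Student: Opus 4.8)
The two statements (i) and (ii) are dual under the anti-automorphism $w \mapsto w^{-1}$ (which swaps left and right cosets, left and right reduced subsets, and left and right order), so I would prove (i) carefully and then obtain (ii) by this symmetry, with only a remark rather than a repeated argument. For (i), the key object is $v = \lcoset{w}{J}$, the minimal-length representative of the coset $W_J w$, and the hypothesis that $J$ is left reduced with respect to $w$, meaning every $s_j$ with $j \in J$ genuinely occurs in the reduced words of $w_J$.

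\textbf{The easy direction.} Suppose such a subset $K$ and bijection $\phi_R \colon J \to K$ exist with $s_j v = v s_{\phi_R(j)}$ for all $j \in J$. Then for any reduced word $s_{j_1} \cdots s_{j_k}$ of $w_J$ (all letters in $J$), repeatedly pushing $v$ to the left gives $w_J v = s_{j_1} \cdots s_{j_k} v = v \, s_{\phi_R(j_1)} \cdots s_{\phi_R(j_k)}$. Since $w = w_J v = w_J \lcoset{w}{J}$ and this is a reduced factorization, we get $\ell(w) = \ell(w_J) + \ell(v)$; and the right-hand side exhibits $w = v \cdot (\text{word of length } \ell(w_J))$, so $v \le_R w$ provided that second word is reduced — which it is, since it has length $\ell(w) - \ell(v) = \ell(w_J)$ and $\ell(w) = \ell(v w')$ forces $\ell(w') \ge \ell(w) - \ell(v)$ with equality only when $w'$ is reduced and the product is reduced. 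So $v \le_R w$ is immediate. (One should double-check that the $s_{\phi_R(j)}$ are genuinely a reduced word, but this is forced by length count.)

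\textbf{The hard direction.} Conversely, assume $v = \lcoset{w}{J} \le_R w$. We must manufacture $K$ and the bijection $\phi_R$. The natural approach is induction on $\ell(w_J)$, peeling off one simple reflection at a time and invoking Lemma~\ref{lemma.action_on_decomposition}\eqref{item.left_decomp} together with the bulleted consequences following it. Write $w = w_J v$; pick $j \in J$ with $w_J s_?$... more precisely, use a reduced word for $w_J$ and consider $w' = s_j w$ for an appropriate left descent, or better, work with right multiplication: since $v \le_R w$, write $w = v u$ with $\ell(w) = \ell(v) + \ell(u)$, so $\ell(u) = \ell(w_J)$ and $u$ is a reduced word in \emph{some} letters. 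The claim will be that $v$ conjugates the letters of $u$ into the letters of $w_J$. The engine is the dichotomy in Lemma~\ref{lemma.action_on_decomposition}: when we multiply $v$ on the right by a generator $s_k$, either $s_k$ ``skew commutes'' with $v$ (there is $s_i$ with $s_i v = v s_k$) or $\lcoset{(v s_k)}{\emptyset}$... I would set this up by induction on $\ell(u)$: write $u = s_k u'$ with $\ell(u) = 1 + \ell(u')$; then $v s_k \le_R w$ and one analyzes whether $s_k$ skew-commutes past $v$. Left-reducedness of $J$ is exactly what prevents the ``bad'' case where a letter of $w_J$ fails to be hit, forcing $\phi_R$ to be a well-defined bijection rather than merely a partial map. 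I expect the main obstacle to be bookkeeping: ensuring that across the induction the bijectivity is maintained (injectivity is clear from the relation $s_j v = v s_{\phi_R(j)}$ determining $\phi_R(j)$ uniquely given $j$ and $v$; surjectivity onto the correct $K$ is where left-reducedness enters), and handling the case analysis of Lemma~\ref{lemma.action_on_decomposition} cleanly so that every generator appearing in $u$ is accounted for. A cleaner alternative, which I would try first, is to use the characterization of left-reduced via reduced words of $w_J$ directly: take any reduced word $j_1 \cdots j_k$ for $w_J$, so $j_1 \cdots j_k \, (\text{reduced word for } v)$ is a reduced word for $w$; since $v \le_R w$ with $\ell(w) = \ell(v) + k$, there is a reduced word for $w$ of the form $(\text{reduced word for } v)\, k_1 \cdots k_k$. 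Comparing these two reduced words of $w$ via the word property / Matsumoto's theorem and Tits' solution to the word problem, and using that $v$ is the minimal coset representative (so no $s_j$, $j\in J$, is a left descent of $v$), should let one slide the $W_J$-part through $v$ and read off $\phi_R$ as the induced conjugation map, with $K = \{k_1, \ldots, k_k\}$ (automatically distinct indices by the reducedness and the conjugation relation). The subtle point is that conjugation by $v$ need not send $W_J$ into a standard parabolic in general, but here the length additivity $\ell(s_j v) = \ell(v) + 1$ for each $j \in J$ (as $j \notin \Rec(v)$) combined with $s_j v \le_R w$ forces $s_j v = v s_{k(j)}$ for a genuine simple reflection $s_{k(j)}$, which is the whole content.

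\textbf{Deducing (ii).} Apply (i) to $w^{-1}$ in place of $w$. One checks: $\rcoset{W}{K}$ for $w$ corresponds to $\lcoset{W}{K}$ for $w^{-1}$ since $\Des(w) = \Rec(w^{-1})$; $K$ being right reduced with respect to $w$ corresponds to $K$ being left reduced with respect to $w^{-1}$; $v = \rcoset{w}{K} \le_L w$ corresponds to $v^{-1} = \lcoset{(w^{-1})}{K} \le_R w^{-1}$; and the relation $v s_k = s_{\phi_L(k)} v$ is the inverse of $s_k v^{-1} = v^{-1} s_{\phi_L(k)}$, so $\phi_L$ here is exactly the $\phi_R$ produced by part (i) applied to $w^{-1}$. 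This gives (ii) with no further work.
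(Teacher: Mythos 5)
Your forward direction (pushing $v$ leftward through a reduced word for $w_J$ and concluding by a length count) and your derivation of (ii) from (i) via $w\mapsto w^{-1}$ are both correct and coincide with what the paper does. The gap is in the converse direction. The central fact to be established there is that, writing $w=vs_{k_1}\cdots s_{k_\ell}$ reduced, \emph{every} letter $s_{k_i}$ falls into the skew-commuting alternative of Lemma~\ref{lemma.action_on_decomposition}, and neither of your two routes proves this. In route (a) you attribute the exclusion of the ``bad case'' to left-reducedness, but left-reducedness plays no role there. The actual mechanism is global: the sequence of coset representatives $\lcoset{(vs_{k_1}\cdots s_{k_i})}{J}$ is weakly increasing for right order (by the remarks following Lemma~\ref{lemma.action_on_decomposition}), starts at $v$, and ends at $\lcoset{w}{J}=v$, hence is constant --- and constancy is what forces the skew-commutation at every single step. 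Left-reducedness enters only afterwards and for a different purpose: the resulting conjugation map is a priori a bijection $\phi_R\colon J'\to K$ for some subset $J'\subseteq J$, and since $w_J=s_{\phi_R^{-1}(k_1)}\cdots s_{\phi_R^{-1}(k_\ell)}$ lies in $W_{J'}$, left-reducedness forces $J'=J$. Your induction would in any case have to adopt this two-stage structure, because after peeling off one letter the set $J$ need not remain left reduced with respect to the shorter element.

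Route (b) is not salvageable as stated: the claim that $s_jv\le_R w$ for each $j\in J$ is false in general --- it holds precisely when $j$ is a left descent of $w_J$, and already fails for $j=2$, $w=s_1s_2$, $J=\{1,2\}$, $v=1$ in type $A_2$, a case where the lemma's conclusion nevertheless holds. Moreover, even where $s_jv\le_R w$ does hold, the three facts $v\le_R w$, $s_jv\le_R w$ and $\ell(s_jv)=\ell(v)+1$ do not imply $v\le_R s_jv$, and the latter is exactly the assertion that $v^{-1}s_jv$ is a simple reflection. So the ``whole content'' is not the length count you locate it in, but the constancy argument above.
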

\begin{proof}
  Assume first that the bijection $\phi_R$ exists, and write
  $w=s_{j_1}\cdots s_{j_\ell}v$, where the product is reduced and $j_i\in
  J$. Then,
  \begin{equation*}
    w=s_{j_1}\cdots s_{j_\ell}v = s_{j_1}\cdots s_{j_{\ell-1}}vs_{\phi_R(j_\ell)}
    = vs_{\phi_R(j_1)}\cdots s_{\phi_R(j_\ell)}\,,
  \end{equation*}
  where the last product is reduced. Therefore $v\le_R w$.

  Assume conversely that $v=\lcoset{w}{J}\le_R w$, write the reduced expression
  $w=vs_{k_1}\cdots s_{k_\ell}\ge_R v$, and set
  $K=\{k_1,\ldots,k_\ell\}$. By Lemma~\ref{lemma.action_on_decomposition}, the sequence
  \begin{displaymath}
    v=\lcoset{v}{J}, \lcoset{(vs_{k_1})}{J}, \dots,
    \lcoset{(vs_{k_1}\cdots s_{k_\ell})}{J} = \lcoset{w}{J} = v
  \end{displaymath}
  preserves right order, and therefore is constant. Hence,
  at each step $i$
  \begin{displaymath}
    \lcoset{(vs_{k_1}\cdots s_{k_i})}{J} =
    \lcoset{(\lcoset{(vs_{k_1}\cdots s_{k_{i-1}})}{J}s_{k_i})}{J} =
    \lcoset{(vs_{k_i})}{J}=v\,.
  \end{displaymath}
  Applying Lemma~\ref{lemma.action_on_decomposition} again, it follows
  that there is a subset $J'\subseteq J$, and a bijective map $\phi_R:
  J'\to K$ such that $s_jv = v s_{\phi_R(j)}$ for all $j\in J'$.
  Then, $w = s_{\phi_R^{-1}(k_1)}\cdots s_{\phi_R^{-1}(k_\ell)}v$,
  and, since $J$ is left reduced, $J=J'$.

  The second part is the symmetric statement.
\end{proof}

By Lemma~\ref{lemma.action_on_decomposition}, for any $w\in W$ and $J\subseteq I$
we have $[1,w]_R \subseteq [1,w_J]_R [1,\lcoset{w}{J}]_R$ and similarly for any $K\subseteq I$
we have $[1,w]_L \subseteq [1,\rcoset{w}{K}]_L [1,{}_Kw]_L$.

\begin{lemma}
  \label{lemma.order_representative}
  Take $w\in W$, $K\subseteq I$, and assume that $s_iw = w s_k$ for $i\in I$ and $k\in K$,
  where the products are reduced.
  Then, there exists $k'\in K$ such that $s_i w^K = w^K s_{k'}$, where the products are
  again reduced.
\end{lemma}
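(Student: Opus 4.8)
The plan is to reduce everything to Lemma~\ref{lemma.action_on_decomposition}(ii), applied to the canonical decomposition $w = v\,u$ with $v := w^K \in \rcoset{W}{K}$ and $u := {}_Kw \in W_K$. First I would collect the standard facts about minimal coset representatives that will be used: $\ell(v x) = \ell(v) + \ell(x)$ for every $x \in W_K$, and $\Des(v) \cap K = \emptyset$. From the hypothesis that $s_i w = w s_k$ is a reduced product one also records $\ell(s_i w) = \ell(w) + 1$.

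The crux is the observation that $w s_k$ is \emph{already} written in canonical $K$-form: since $u \in W_K$ and $k \in K$ we have $u s_k \in W_K$, and $w s_k = v\,(u s_k)$ with $v \in \rcoset{W}{K}$ and $\ell(v\,u s_k) = \ell(v) + \ell(u s_k)$. Therefore $(s_i w)^K = (w s_k)^K = v = w^K$. Now I would apply Lemma~\ref{lemma.action_on_decomposition}(ii) with $j = i$: it asserts that the canonical decomposition of $s_i w$ is $v\,(s_{k'} u)$ — with $v^{-1} s_i v = s_{k'}$ for some $k' \in K$ — when $v^{-1} s_i v$ is a simple reflection indexed in $K$, and is $(s_i v)\,u$ otherwise. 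Since $(s_i w)^K = v \neq s_i v$, the second alternative is impossible, so we land in the first: $v^{-1} s_i v = s_{k'}$ with $k' \in K$, equivalently $s_i w^K = w^K s_{k'}$.

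Finally I would verify that the product $w^K s_{k'}$ is reduced. From $s_i v = v s_{k'}$ we get $\ell(s_i v) = \ell(v s_{k'}) \in \{\ell(v) - 1,\ \ell(v) + 1\}$, and the value $\ell(v) - 1$ would force $k' \in \Des(v) \cap K$, contradicting $v \in \rcoset{W}{K}$. Hence $\ell(s_i w^K) = \ell(w^K) + 1$, which is precisely the reducedness claim. The only real content — and the step I expect to require the most care — is recognizing that $w s_k$ displays the canonical decomposition relative to $K$, since that is what identifies $(s_i w)^K$ and thereby eliminates one branch of the dichotomy in Lemma~\ref{lemma.action_on_decomposition}(ii); the remainder is routine bookkeeping with lengths of coset representatives.
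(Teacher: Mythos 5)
Your proof is correct and follows essentially the same route as the paper's: both arguments hinge on the identity $(s_iw)^K=(ws_k)^K=w^K$ and then invoke Lemma~\ref{lemma.action_on_decomposition}~(ii) to rule out the branch in which $s_i$ moves the coset representative, forcing $s_iw^K=w^Ks_{k'}$ for some $k'\in K$. Your explicit length check for reducedness (using $\Des(w^K)\cap K=\emptyset$) is a harmless addition that the paper leaves implicit.
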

\begin{proof}
  We have $w^K = (w s_k)^K = (s_i w)^K = (s_i w^K)^K$. Hence, by
  Lemma~\ref{lemma.action_on_decomposition}~\eqref{item.right_decomp} there exists
  $k'\in K$ such that $w^K s_{k'} = s_i w^K$, as desired.
\end{proof}

\subsection{Definition and characterizations of blocks and cutting points}
\label{ss.blocks}

We now come to the definition of \emph{blocks} of Coxeter group elements and
associated \emph{cutting points}. They will lead to a new poset on the Coxeter group $W$,
which we coin the \emph{cutting poset} in Section~\ref{ss.cutting_poset}.

\begin{definition}[Blocks and cutting points]
  \label{definition.block}
  \label{definition.cutting_point}
  Let $w\in W$.  We call $K\subseteq I$ a \emph{right block} (resp. $J
  \subseteq I$ a \emph{left block}) of $w$, if there exists
  $J\subseteq I$ (resp. $K\subseteq I$) such that
  \begin{equation*}
    W_J w = w W_K\,.
  \end{equation*}
  In that case, $v:=w^K$ is called a \emph{cutting point} of $w$,
  which we denote by $v \cuteq w$. Furthermore, $K$ is \emph{proper}
  if $K\neq \emptyset$ and $K\neq I$; it is \emph{nontrivial} if
  $\rcoset{w}{K}\neq w$ (or equivalently ${}_K w\neq 1$); analogous
  definitions are made for left blocks.

  We denote by $\Ks{w}$ the set of all right blocks for $w$, and by $\RKs{w}$
  the set of all (right) reduced (see definition~\ref{definition.reduced})
  right blocks for $w$. The sets $\Js{w}$, $\RJs{w}$ are similarly defined on
  the left.
\end{definition}

Here is an equivalent characterization of blocks which also shows that
cutting points can be equivalently defined using $\lcoset{w}{J}$
instead of $w^K$.
\begin{proposition}
  \label{proposition.equiv_block}
  Let $w\in W$ and $J, K\subseteq I$. Then, the following are equivalent:
  \begin{enumerate}[(i)]
  \item \label{i.block} $W_J w = w W_K$;
  \item \label{i.bijection} There exists a bijection $\phi:K\to J$
    such that $w^K s_k = s_{\phi(k)} w^K$
    (or equivalently $w^K(\alpha_k) = \alpha_{\phi(k)}$)
    for all $k\in K$.
  \end{enumerate}

  Furthermore, when any, and therefore all, of the above hold then,
  \begin{enumerate}[(iii)]
  \item \label{i.JK} $\rcoset{w}{K} = \lcoset{w}{J}$.
  \end{enumerate}
\end{proposition}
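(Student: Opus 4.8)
The plan is to prove the equivalence of \eqref{i.block} and \eqref{i.bijection}, and then derive \eqref{i.JK}, by leveraging the structure of minimal-length coset representatives together with Lemma~\ref{lemma.skew_commutation} and Lemma~\ref{lemma.action_on_decomposition}. The starting observation is that the condition $W_J w = w W_K$ says precisely that left multiplication by $W_J$ and right multiplication by $W_K$ generate the same set of $|W_J| = |W_K|$ elements; equivalently, conjugation by $w^{-1}$ carries the reflection subgroup $W_J$ isomorphically onto $W_K$. Since $W_J$ and $W_K$ are parabolic (generated by simple reflections), and $w^{-1} W_J w = W_K$ as sets of reflections, the set of simple reflections $\{s_j : j \in J\}$ must map under $\alpha \mapsto w^{-1}(\alpha)$ to a set of roots that generate $W_K$; the heart of the matter is to show these images are again \emph{simple} roots (up to sign), yielding the bijection $\phi$.

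For \eqref{i.block}$\Rightarrow$\eqref{i.bijection}, I would first reduce to the coset representative $v := w^K$. Writing $w = w^K \cdot {}_Kw$ with ${}_Kw \in W_K$, the identity $W_J w = w W_K$ becomes $W_J w^K = w^K W_K$ (since ${}_Kw \in W_K$ and $W_K W_K = W_K$). Now $w^K \in \rcoset{W}{K}$ means $w^K$ has no descents in $K$, i.e. $\len(w^K s_k) > \len(w^K)$ for all $k \in K$; symmetrically I want $w^K \in \lcoset{W}{J}$, i.e. no left descents in $J$. Given an element $v$ with $W_J v = v W_K$ and $\len(v s_k) > \len(v)$ for all $k\in K$: for each $k \in K$, the element $v s_k v^{-1}$ lies in $W_J$ and is a reflection, and I claim it must be a \emph{simple} reflection $s_j$. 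Indeed, $v s_k$ is longer than $v$, so $v(\alpha_k) > 0$; but $v s_k v^{-1} = s_{v(\alpha_k)}$ is a reflection in $W_J$, hence $v(\alpha_k)$ is a positive root in the span of the $\alpha_j$, $j\in J$. The minimal-length/no-left-$J$-descent property of $v$ forces $v(\alpha_k)$ to actually be simple: if it were a non-simple positive root $\beta \in \Phi_J^+$, one can find $j\in J$ with $\len(s_j v) < \len(v)$ — which would contradict $v = w^K$ being also a minimal representative on the left (here is where one invokes that, because $W_J v = vW_K$ gives $|W_J v| = |W_K| = |v W_K|$ and $v W_K \ni v$ is the minimal element on the right, a counting/length argument pins down that $v$ is simultaneously the minimal representative $\lcoset{w}{J}$ on the left). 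This also establishes \eqref{i.JK}: $\rcoset{w}{K} = v = \lcoset{w}{J}$. Then $k \mapsto j$ where $s_j = v s_k v^{-1}$ defines the required map $\phi: K \to J$; it is injective since distinct $s_k$ give distinct conjugates, and surjective by the symmetric argument applied to $v^{-1}$, so $\phi$ is a bijection with $w^K s_k = s_{\phi(k)} w^K$.

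For the converse \eqref{i.bijection}$\Rightarrow$\eqref{i.block}: given the bijection $\phi$ with $w^K s_k = s_{\phi(k)} w^K$ for all $k \in K$, it follows immediately that $w^K s_{k_1}\cdots s_{k_r} = s_{\phi(k_1)}\cdots s_{\phi(k_r)} w^K$ for any word, so $w^K W_K = W_J w^K$. Multiplying on the right by ${}_Kw \in W_K$ gives $w^K W_K {}_Kw = w^K W_K = W_J w^K$, but also $w = w^K {}_Kw$ so $w W_K = w^K ({}_Kw W_K) = w^K W_K = W_J w^K$; to finish I need $W_J w^K = W_J w$, which holds because $w = w^K {}_Kw$ with ${}_Kw \in W_K$ and — using again that $W_J w^K = w^K W_K$ — we get $W_J w = W_J w^K {}_Kw = w^K W_K {}_Kw = w^K W_K = W_J w^K = w W_K$. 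The parenthetical equivalence $w^K s_k = s_{\phi(k)} w^K \iff w^K(\alpha_k) = \alpha_{\phi(k)}$ is the standard fact (cited earlier from~\cite[Chapter 5.7]{Humphreys.1990}) that $u s_\alpha u^{-1} = s_{u(\alpha)}$, combined with the sign normalization that $w^K(\alpha_k)$ is a positive root since $w^K$ has no descent at $k$.

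The main obstacle I anticipate is the step showing that $w^K$ is \emph{simultaneously} the minimal-length representative on both sides (\eqref{i.JK}), i.e. that $\rcoset{w}{K} = \lcoset{w}{J}$, and correspondingly that the positive roots $w^K(\alpha_k)$ are genuinely simple rather than merely lying in $\Phi_J^+$. This is exactly the content that Lemma~\ref{lemma.skew_commutation} is designed to handle — it characterizes when $\lcoset{w}{J} \le_R w$ in terms of precisely such a skew-commutation bijection — so the cleanest route is probably to phrase the argument so that Lemma~\ref{lemma.skew_commutation} and Lemma~\ref{lemma.order_representative} do the heavy lifting, rather than re-deriving the root-system combinatorics by hand. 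Everything else is bookkeeping with the unique two-sided coset decomposition $w = w^K \cdot {}_Kw = w_J \cdot \lcoset{w}{J}$.
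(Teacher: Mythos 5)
Your proof is correct and follows essentially the same route as the paper's: the crux in both is that the common set $W_J w^K = w^K W_K$ has a unique minimal-length element, which must simultaneously be the minimal representative $w^K$ of the left $W_K$-coset and the minimal representative $\lcoset{w}{J}$ of the right $W_J$-coset, giving (iii), after which the bijection $\phi$ falls out (the paper reads it off from the length-$(\ell(w^K)+1)$ elements of the coset, you from the simplicity of the positive roots $w^K(\alpha_k)$ --- two phrasings of the same fact). The converse direction (ii)$\Rightarrow$(i) is identical to the paper's iteration of the skew-commutation relation.
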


\begin{proof}
  Suppose \eqref{i.block} holds. Then $W_J \lcoset{w}{J} = w^K W_K$.
  Since $\lcoset{w}{J}$ has no left descents in $J$ and $w^K$ has no right descents
  in $K$, we know that on both sides $\lcoset{w}{J}$ and $w^K$ are the shortest elements
  and hence have to be equal: $\lcoset{w}{J} = w^K$; this proves
  (iii). %
  Furthermore, every reduced expression $w^K s_k$ with
  $k\in K$ must correspond to some reduced expression $s_j
  \lcoset{w}{J}$ for some $j\in J$, and vice versa. Hence there exists
  a bijection $\phi :K\to J$ such that $w^K s_k = s_{\phi(k)}
  \lcoset{w}{J} = s_{\phi(k)} w^K$. Therefore
  point~\eqref{i.bijection} holds.

  Suppose now that point~\eqref{i.bijection} holds. Then, for any
  expression $s_{k_1} \cdots s_{k_\ell} \in W_K$, we have
  \begin{equation*}
    w^K s_{k_1} \cdots s_{k_\ell} =
    s_{\phi(k_1)}w^K s_{k_2} \cdots s_{k_\ell} = \dots =
    s_{\phi(k_1)} \cdots s_{\phi(k_\ell)} w^K\,.
  \end{equation*}
  It follows that
  \begin{equation*}
    w^K W_K = W_J w^K.
  \end{equation*}
  In particular $w\in W_J w^K$ and therefore
  \begin{equation*}
    W_J w = W_J w^K = w^K W_K = w W_K\,.\qedhere
  \end{equation*}
\end{proof}

In general, condition~(iii) of Proposition~\ref{proposition.equiv_block} is only
a necessary, but not sufficient condition for $K$ to be a block.
See Example~\ref{example.block_condition}.

\begin{proposition}
  \label{proposition.tiling}
  If $K$ is a right block of $w$ (or more generally if $w^K=w^{K'}$ with $K'$
  a right block), then the bijection
  \begin{equation*}
    \begin{cases}
      \rcoset{W}{K} \times {}_K W &\to W\\
      (v,u) &\mapsto vu
    \end{cases}
  \end{equation*}
  restricts to a bijection $[1,\rcoset{w}{K}]_L \times [1,{}_K w]_L
  \to [1,w]_L$.

  Similarly, if $J$ is a left block (or more generally if
  $\lcoset{w}{J}=\lcoset{w}{J'}$ with $J'$ a left block), then the
  bijection
  \begin{equation*}
    \begin{cases}
      W_J \times \lcoset{W}{J} &\to W\\
      (u,v) &\mapsto uv
    \end{cases}
  \end{equation*}
  restricts to a bijection $[1,w_J]_R \times [1,\lcoset{w}{J}]_R \to
  [1,w]_R$ (see Figure~\ref{figure.4312}).
\end{proposition}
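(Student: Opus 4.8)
The plan is to prove the ``left'' statement about right blocks; the ``right'' statement about left blocks follows by the symmetric argument (swapping left/right everywhere), so I would only write one and invoke symmetry for the other. By Proposition~\ref{proposition.equiv_block}, condition~\eqref{i.JK}, having $w^K = w^{K'}$ with $K'$ a right block is the relevant hypothesis, and in fact I would reduce immediately to the case where $K$ itself is a right block by replacing $K$ with $K'$ (note $\rcoset{w}{K} = w^{K'} = \rcoset{w}{K'}$ and $K'$ is right reduced with respect to $w$, so nothing is lost). So assume $W_J w = w W_K$, let $v := w^K = \lcoset{w}{J}$ (using Proposition~\ref{proposition.equiv_block}\eqref{i.JK}), and let $u := {}_K w$, so $w = vu$ is the reduced decomposition with $v \in \rcoset{W}{K}$, $u \in W_K$.

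First I would record that the map $(v',u') \mapsto v'u'$ from $\rcoset{W}{K} \times {}_K W \to W$ is a length-additive bijection, which is standard (parabolic cosets) and is exactly the content cited from \cite{Bjorner_Brenti.2005} in Lemma~\ref{lemma.action_on_decomposition}. The claim is that this bijection carries $[1,v]_L \times [1,u]_L$ onto $[1,w]_L$. The inclusion of the image of $[1,v]_L \times [1,u]_L$ into $[1,w]_L$ is the easy direction: if $v' \le_L v$ and $u' \le_L u$, then $v'u' \le_L vu' \le_L vu = w$, where the first step uses that $v' \le_L v$ implies $v'u' \le_L vu'$ (left order is a left-translation-stable way of saying $v = x v'$ reduced, hence $vu' = x v' u'$ reduced since $v'u'$ and $vu'$ are both reduced by length-additivity of the coset factorization), and the second step uses $u' \le_L u$ inside $W_K$ together with length-additivity again. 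I would spell out the reducedness bookkeeping here but it is routine.

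The substantive direction is the reverse inclusion: every $x \le_L w$ factors as $x = v'u'$ with $v' \le_L v$ and $u' \le_L u$. Write $x = v' u'$ with $v' = \rcoset{x}{K}$, $u' = {}_K x \in W_K$; this is the coset factorization of $x$, automatically length-additive. I need $v' \le_L v$ and $u' \le_L u$. The key tool is Lemma~\ref{lemma.order_representative} (together with the block hypothesis in the form of Proposition~\ref{proposition.equiv_block}\eqref{i.bijection}): since $x \le_L w$, there is a reduced chain $w = x_0, x_1, \dots, x_m = x$ with each $x_{t} = s_{i_t} x_{t+1}$ reduced. I would run the argument along this chain, tracking the $K$-coset decomposition $x_t = \rcoset{(x_t)}{K} \cdot {}_K x_t$ and using Lemma~\ref{lemma.action_on_decomposition}\eqref{item.right_decomp}: at each step, either the simple reflection $s_{i_t}$ gets absorbed into the $W_K$-part (so $\rcoset{(x_t)}{K}$ is unchanged and ${}_K x_t$ shrinks by one letter in left order), or it stays in the $\rcoset{W}{K}$-part (so ${}_K x_t$ is unchanged and $\rcoset{(x_t)}{K}$ shrinks). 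In the first case I must know that the letter it's absorbed as actually lies among the generators appearing in $u = {}_K w$; this is where Lemma~\ref{lemma.order_representative} applied to the starting relation plus the bijection $\phi$ from Proposition~\ref{proposition.equiv_block} is used to guarantee that the skew-commutation available on $v = \rcoset{w}{K}$ is inherited. Thus along the whole chain $\rcoset{(x_t)}{K} \le_L v$ and ${}_K x_t \le_L u$ stay true, giving the result at $t = m$.

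The main obstacle I anticipate is precisely this last bookkeeping: showing that the ``absorption'' steps land inside $[1,u]_L$ rather than escaping, i.e.\ that the simple reflections one can strip off the left of $x$ are, after passing through the coset factorization, simple reflections one can strip off the left of $u = {}_K w$. This is controlled by the block condition via Proposition~\ref{proposition.equiv_block}\eqref{i.bijection} and Lemma~\ref{lemma.order_representative}, and it is exactly the point where a merely-necessary condition like Proposition~\ref{proposition.equiv_block}\eqref{i.JK} would fail (cf.\ the remark after that proposition). An alternative, possibly cleaner route: show both sets $[1,w]_L$ and $\{v'u' : v'\le_L v,\ u'\le_L u\}$ have the same cardinality (using that $[1,v]_L \times [1,u]_L \hookrightarrow [1,w]_L$ is already established and injective, and counting via the length generating function / the fact that $|[1,w]_L|$ factors), but establishing the cardinality identity essentially requires the same block input, so I would present the direct chain argument.
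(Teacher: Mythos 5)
There is a genuine gap here: you have the difficulties of the two inclusions exactly reversed. The direction that actually requires the block hypothesis is the one you call easy, namely that the image of $[1,\rcoset{w}{K}]_L \times [1,{}_Kw]_L$ lands inside $[1,w]_L$. The first step of your chain $v'u' \le_L vu' \le_L vu = w$ is indeed routine, but the second step $vu' \le_L vu$ does \emph{not} follow from ``$u'\le_L u$ inside $W_K$ together with length-additivity'': writing $u = yu'$ reduced with $y\in W_K$, one has $vu = (vyv^{-1})(vu')$, so $vu'\le_L vu$ requires $\ell(vyv^{-1}) = \ell(y)$, which is precisely the block condition of Proposition~\ref{proposition.equiv_block}~\eqref{i.bijection} and fails for general $K$. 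Concretely, in $\sg[3]$ take $w = s_2s_1$ and $K=\{1\}$ (not a block), so $v = \rcoset{w}{K} = s_2$ and $u = {}_Kw = s_1$; with $u'=1\le_L u$ one gets $vu' = s_2 \not\le_L s_2s_1 = w$, and indeed the proposition's conclusion fails for this $K$. So the ``routine reducedness bookkeeping'' you defer is exactly the crux, and it cannot be completed without invoking the skew-commutation $w^Ks_k = s_{\phi(k)}w^K$. This is how the paper argues: the map $y\mapsto w^Ky$ is a color-permuting isomorphism from $[1,{}_Kw]_L$ onto $[w^K,w]_L$ in left order, which gives $w^Ku'\le_L w$ and hence the containment.

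Conversely, the direction you call substantive --- that every $x\le_L w$ satisfies $\rcoset{x}{K}\le_L\rcoset{w}{K}$ and ${}_Kx\le_L{}_Kw$ --- holds for \emph{arbitrary} $K\subseteq I$ with no block hypothesis, by exactly the chain argument you sketch: Lemma~\ref{lemma.action_on_decomposition}~\eqref{item.right_decomp} says each length-decreasing left multiplication either strips a letter off the $\rcoset{W}{K}$-factor or strips a letter of $W_K$ off the ${}_KW$-factor, and in the latter case that factor stays in $[1,{}_Kw]_L$ automatically by the length count; there is nothing to verify about ``which generators appear in $u$'', and Lemma~\ref{lemma.order_representative} plays no role. (The paper records this containment for all $K$ in the sentence just before Lemma~\ref{lemma.order_representative}.) Your outline is repairable, but as written the block hypothesis is invoked where it is not needed and omitted from the one step where it is indispensable.
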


\begin{proof}
  By Proposition~\ref{proposition.equiv_block} we know that, if $K$ is
  a right block, then there exists a bijection $\phi:K\to J$ such that
  $w^K s_k = s_{\phi(k)} w^K$. Hence the map $y\mapsto w^K y$ induces
  a skew-isomorphism between $[1,{}_K w]_L$ and $[w^K,w]_L$, where an
  edge $k$ is mapped to edge $\phi(k)$. It follows in particular that
  $uv\le_L w^K \; v \le_L w^K \; {}_K w =w$ for any $u\in [1,w^K]_L$
  and $v\in [1, {}_K w]_L$, as desired.

  Assume now that $K$ is not a block, but $w^K=w^{K'}$ with $K'$ a
  block. Then, $[1,w^K]_L=[1,w^{K'}]_L$ and $[1,{}_K
  w]_L=[1,{}_{K'}w]_L$ and we are reduced to the previous case.

  The second statement can be proved in the same fashion.
\end{proof}

Due to Proposition~\ref{proposition.tiling}, we also say that
$[1,v]_R$ \emph{tiles} $[1,w]_R$ if $v=\lcoset{w}{J}$ for some left
block $J$ (or equivalently $v= \rcoset{w}{K}$ for some right block
$K$).

\begin{proposition}
  \label{proposition.reduced_block}
  Let $w\in W$ and $K$ be right reduced with respect to $w$. Then, the
  following are equivalent:
  \begin{enumerate}[(i)]
  \item \label{i.LR} $K$ is a reduced right block of $w$;
  \item \label{i.red_block} $\rcoset{w}{K}\leq_L w$.
  \end{enumerate}
  The analogous statement can be made for left blocks.
\end{proposition}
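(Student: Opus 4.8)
The plan is to obtain both directions by simply chaining the two results already established, namely Proposition~\ref{proposition.equiv_block} and Lemma~\ref{lemma.skew_commutation}, after first noting that, since $K$ is assumed right reduced with respect to $w$ throughout, the phrase ``$K$ is a reduced right block'' is synonymous with ``$K$ is a right block of $w$''. So the task reduces to showing that $K$ is a right block of $w$ if and only if $\rcoset{w}{K}\le_L w$.

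First I would handle $(i)\Rightarrow(ii)$. Assuming $W_J w = w W_K$ for some $J\subseteq I$, I would invoke Proposition~\ref{proposition.equiv_block}~(iii) to get $\rcoset{w}{K}=\lcoset{w}{J}$, and then observe that the parabolic factorization $w=w_J\,\lcoset{w}{J}$ is length-additive, so $\lcoset{w}{J}\le_L w$; combining these, $\rcoset{w}{K}\le_L w$. (This direction does not even use that $K$ is reduced; alternatively one could read it off the easy implication of Lemma~\ref{lemma.skew_commutation}~(ii) together with the bijection $\phi$ supplied by Proposition~\ref{proposition.equiv_block}~(ii).)

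For $(ii)\Rightarrow(i)$, assuming $\rcoset{w}{K}\le_L w$ and recalling that $K$ is right reduced with respect to $w$, I would apply Lemma~\ref{lemma.skew_commutation}~(ii) to produce a subset $J\subseteq I$ and a bijection $\phi_L: K\to J$ with $\rcoset{w}{K}\,s_k = s_{\phi_L(k)}\,\rcoset{w}{K}$ for all $k\in K$. Since $w^K=\rcoset{w}{K}$, this is exactly condition~(ii) of Proposition~\ref{proposition.equiv_block}, whence condition~(i) of that proposition, $W_J w = w W_K$, holds; so $K$ is a right block of $w$, and being right reduced, a reduced right block. The statement for left blocks is obtained by the mirror argument, using part~(i) of Lemma~\ref{lemma.skew_commutation} and the left-sided reading of Proposition~\ref{proposition.equiv_block}.

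I do not anticipate a genuine obstacle here: all the real work is already done inside the cited lemma and proposition, and the proof is essentially a two-line composition of equivalences. The only points deserving a moment's attention are (a) the identification of ``reduced right block'' with ``right block'' under the standing hypothesis on $K$, and (b) checking that the skew-commutation relation in Lemma~\ref{lemma.skew_commutation}~(ii) is literally condition~(ii) of Proposition~\ref{proposition.equiv_block}, which it is once one writes $v=\rcoset{w}{K}=w^K$.
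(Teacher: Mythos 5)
Your proof is correct and follows essentially the same route as the paper's: the forward direction is read off from Proposition~\ref{proposition.equiv_block}~(iii) together with the length-additive parabolic factorization, and the converse is exactly the chain Lemma~\ref{lemma.skew_commutation}~(ii) $\Rightarrow$ Proposition~\ref{proposition.equiv_block}~(ii) $\Rightarrow$ (i) that the paper invokes. Your two flagged checkpoints (identifying ``reduced right block'' with ``right block'' under the standing hypothesis, and matching the skew-commutation relation with condition~(ii) of Proposition~\ref{proposition.equiv_block} via $v=\rcoset{w}{K}=w^K$) are both sound.
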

See also Proposition~\ref{proposition.cond_Jtiling} for yet another equivalent condition
of reduced blocks.

\begin{proof}[Proof of Proposition~\ref{proposition.reduced_block}]
  If $K$ is a right block, then by
  Proposition~\ref{proposition.equiv_block} we have $\rcoset{w}{K} =
  \lcoset{w}{J}$, where $J$ is the associated left block. In
  particular, $\rcoset{w}{K} = \lcoset{w}{J} \leq_L w$.

  The converse statement follows from Lemma~\ref{lemma.skew_commutation} and
  Proposition~\ref{proposition.equiv_block}.
\end{proof}

\begin{example}
\label{example.reduced_blocks_w0}
  For $w=w_0$, any $K\subseteq I$ is a reduced right block; of course
  $\;\rcoset{w_0}{K} \le_L w_0$ and ${}_K w_0$ is the maximal element
  of the parabolic subgroup $W_K = {}_K W$. The cutting point
  $w^K\cuteq w$ is the maximal element of the right descent class for
  the complement of $K$.

  The associated left block is given by $J=\phi(K)$, where $\phi$ is the
  automorphism of the Dynkin diagram induced by conjugation by $w_0$
  on the simple reflections.  The tiling corresponds to the usual
  decomposition of $W$ into right $W_K$ cosets, or of $W$ into left
  $W_J$ cosets.
\end{example}

\subsection{Blocks of permutations}
\label{ss.blocks.type_A}

In this section we illustrate the notion of blocks and cutting points introduced in the
previous section for type $A$. We show that, for a permutation $w \in \sg[n]$, the
blocks of Definition~\ref{definition.block} correspond to the usual
notion of blocks of the permutation matrix of $w$ (or unions thereof),
and the cutting points $w^K$ for right blocks $K$ correspond to
putting the identity in those blocks.

A \emph{matrix-block} of a permutation $w$ is an interval $[k',k'+1,\dots,k]$
which is mapped to another interval. Pictorially, this corresponds to
a square submatrix of the matrix of $w$ which is again a permutation
matrix (that of the \emph{associated permutation}).  For example, the
interval $[2,3,4,5]$ is mapped to the interval $[4,5,6,7]$ by the
permutation $w=36475812\in \sg[8]$, and is therefore a matrix-block of
$w$ with associated permutation $3142$. Similarly, $[7,8]$ is a
matrix-block with associated permutation $12$:
\begin{center}
\scalebox{.6}[-.6]{
$
  \begin{array}{|c|cccc|c|cc|}
    \hline
     & & & & &\bullet &&\\
    \hline
     & & &\bullet& & & &\\
     &\bullet& & & & & &\\
     & & & &\bullet& & &\\
     & &\bullet& & & & &\\
    \hline
    \bullet& & & & & & &\\
    \hline
     & & & & & & &\bullet\\
     & & & & & &\bullet &\\
    \hline
  \end{array}
$}
\end{center}
For any permutation $w$, the singletons $[i]$ and the full set
$[1,2,\dots,n]$ are always matrix-blocks; the other matrix-blocks of
$w$ are called \emph{proper}.  A permutation with no proper
matrix-block, such as $58317462$, is called \emph{simple}.
See~\cite{Nozaki_Miyakawa_Pogosyan_Rosenberg.1995,Albert_Atkinson_Klazar.2003,Albert_Atkinson.2005}
for a review of simple permutations. Simple permutations are also
strongly related to dimension $2$ posets.

A permutation $w\in \sg[n]$ is \emph{connected} if it does
not stabilize any subinterval $[1,\dots,k]$ with $1\leq k<n$, that is if
$w$ is not in any proper parabolic subgroup $\sg[k]\times \sg[n-k]$.
Pictorially, this means that there are no diagonal matrix-blocks.  A matrix-block is
\emph{connected} if the corresponding induced permutation is
connected. In the above example, the matrix-block $[2,3,4]$ is connected,
but the matrix-block $[7,8]$ is not.

\begin{proposition}
  \label{proposition.blocks.matrix_blocks}
  Let $w\in \sg[n]$. The right blocks of $w$ are in bijection with
  disjoint unions of (nonsingleton) matrix-blocks for $w$; each
  matrix-block with column set $[i,i+1,\dots,k]$ contributes
  $\{i,i+1,\dots,k-1\}$ to the right block; each matrix-block with row
  set $[i,i+1,\ldots,k]$ contributes $\{i,i+1,\dots,k-1\}$ to the left
  block.

  In addition, trivial right blocks correspond to unions of identity
  matrix-blocks.  Also, reduced right blocks correspond to unions of
  connected matrix-blocks.
\end{proposition}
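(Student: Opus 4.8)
The plan is to translate the abstract block condition $W_J w = w W_K$ of Definition~\ref{definition.block} into a statement about the permutation matrix of $w$, using the bijective characterization~\eqref{i.bijection} of Proposition~\ref{proposition.equiv_block}. Recall that in type $A$, the simple reflection $s_i$ corresponds to the root $\alpha_i = e_i - e_{i+1}$, and a subset $K\subseteq I = \{1,\dots,n-1\}$ decomposes uniquely into maximal ``runs'' of consecutive integers; each run $\{i, i+1, \dots, k-1\}$ corresponds to the interval of columns $[i,i+1,\dots,k]$, and the parabolic subgroup $W_K$ is the Young subgroup $\sg[\text{run}_1]\times \sg[\text{run}_2]\times\cdots$ permuting within these column-intervals. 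So the first step is the purely bookkeeping observation that subsets $K$ with no ``isolated'' structure correspond bijectively to disjoint unions of column-intervals of size $\ge 2$, and similarly for $J$ and row-intervals.

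Next I would unwind what $W_J w = w W_K$ means concretely. Multiplying $w$ on the right by $W_K$ permutes the columns of the permutation matrix within each column-interval of $K$; multiplying on the left by $W_J$ permutes rows within each row-interval of $J$. The equation $W_J w = w W_K$ then says precisely that the set of matrices obtained by permuting columns of $w$ inside the $K$-intervals coincides with the set obtained by permuting rows inside the $J$-intervals. I claim this holds if and only if $w$ maps each column-interval of $K$ onto a row-interval of $J$ (as a set), i.e.\ each run of $K$ is the column set of a matrix-block of $w$ and $J$ is the corresponding collection of row sets. One direction is immediate: if each $K$-interval $C$ maps to a row-interval $R$, then permuting columns inside $C$ and permuting rows inside $R$ have the same effect on the submatrix, so the two coset sets agree. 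For the converse, I would use Proposition~\ref{proposition.equiv_block}\eqref{i.bijection}: the bijection $\phi:K\to J$ with $w^K(\alpha_k) = \alpha_{\phi(k)}$ forces $w^K$, and hence $w$ (which differs from $w^K$ by left multiplication by an element of $W_J$, acting within the row-intervals), to send consecutive basis vectors spanning a $K$-run to consecutive basis vectors spanning a $J$-run — this is exactly the matrix-block condition. The cutting point $w^K = \rcoset{w}{K}$ is the minimal-length coset representative, which is obtained from $w$ by sorting each column-block increasingly, i.e.\ by ``putting the identity'' in each matrix-block while keeping the block-to-block (quotient) permutation fixed.

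Finally I would address the two refinements. For trivial blocks: by Definition~\ref{definition.block}, $K$ is trivial iff $\rcoset{w}{K} = w$, i.e.\ $w$ already has no descents inside the $K$-runs, which for a permutation means $w$ is already increasing on each column-interval — equivalently each corresponding matrix-block is an identity matrix (the associated permutation is the identity). For reduced blocks: by Proposition~\ref{proposition.reduced_block}, a right-reduced $K$ is a block iff $\rcoset{w}{K}\le_L w$; combining with Lemma~\ref{lemma.skew_commutation}, reducedness of $K$ means exactly that every $s_k$, $k\in K$, appears in a reduced word for ${}_Kw$, which translates to: the induced permutation on each matrix-block uses every generator of its symmetric group, i.e.\ it is connected in the sense defined above (no diagonal sub-matrix-block). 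So reduced right blocks correspond to disjoint unions of connected matrix-blocks.

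The main obstacle I anticipate is the converse direction of the central equivalence: showing that $W_J w = w W_K$ genuinely forces the matrix-block structure, rather than just row/column-interval structure on each side separately. The danger is a ``non-aligned'' situation where permuting columns of $w$ inside $K$-intervals happens to coincide, as a set of matrices, with permuting rows inside $J$-intervals without $w$ mapping the intervals to each other. Ruling this out cleanly is where I would lean most heavily on the bijection $\phi$ and the root-system reformulation $w^K(\alpha_k)=\alpha_{\phi(k)}$ from Proposition~\ref{proposition.equiv_block}, possibly together with a short counting argument that the $K$-runs and $J$-runs must have matching sizes and be matched up by $w$. Everything else is careful but routine translation between the Young-subgroup picture and the permutation-matrix picture.
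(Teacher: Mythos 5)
Your proposal is correct and follows essentially the same route as the paper's proof: the forward direction by observing that permuting columns within the blocks is matched by permuting rows within the blocks, the converse via the root condition $w^K(\alpha_k)=\alpha_{\phi(k)}$ of Proposition~\ref{proposition.equiv_block}\eqref{i.bijection} forcing identity sub-blocks in $w^K$ (and hence matrix-blocks in $w$), and the same characterizations of trivial and reduced blocks. The ``obstacle'' you flag in the converse is resolved exactly as you anticipate, by the simple-root reformulation.
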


\begin{proof}
  Suppose $w\in \sg[n]$ with a disjoint union of matrix-blocks with
  consecutive column sets $[i_1,\ldots,k_1]$ up to
  $[i_\ell,\ldots,k_\ell]$. Set $K_j=\{i_j,\ldots,k_j-1\}$ for $1\le
  j\le \ell$ and $K=K_1\cup \cdots \cup K_\ell$. Define similarly $J$
  according to the rows of the blocks.

  Then, multiplying $w$ on the right by some element of $W_K$ permutes
  some columns of $w$, while stabilizing each blocks. Therefore, the
  same transformation can be achieved by some permutation of the rows
  stabilising each block, that is by multiplication of $w$ on the left
  by some element of $W_J$. Hence, using symmetry, $W_J w = w W_K$,
  that is $J$ and $K$ are corresponding left and right blocks for $w$.

  Conversely, if $K$ is a right block of $w$, then $w^K$ maps each
  $\alpha_k$ with $k\in K$ to another simple root by
  Proposition~\ref{proposition.equiv_block}.  But then, splitting
  $K=K_1\cup\cdots\cup K_\ell$ into consecutive subsets with $K_j =
  \{i_j,\ldots,k_j-1\}$, the permutation $w^K$ must contain the
  identity permutation in each matrix-block with column indices
  $[i_j,\ldots,k_j]$. This implies that $w$ itself has matrix-blocks
  with column indices $[i_j,\ldots,k_j]$ for $1\le j\le \ell$.

  Note that, in the described correspondence, $w^K=w$ if and only if
  all matrix-blocks contain the identity. This proves the statement
  about trivial right blocks.

  A reduced right block $K$ has the property that for every
  $K'\subset K$ we have $w^{K'} \neq w^K$. This implies that no
  matrix-block is in a proper parabolic subgroup, and hence they are all
  connected.
\end{proof}

\begin{example}
\label{example.4312}
  As in Figure~\ref{figure.4312}, consider the permutation $4312$, whose permutation matrix is:
  \begin{center}
  \scalebox{.6}[-.6]{$
    \begin{array}{|c|c|cc|}
      \hline
      \bullet&&&\\\hline
      &\bullet&&\\    \hline
      & & &\bullet\\
      & &\bullet &\\
      \hline
    \end{array}
  $}
  \end{center}
  The reduced (right)-blocks are $K=\{\}$, $\{1\}$, $\{2,3\}$, and
  $\{1,2,3\}$. The cutting points are $4312$, $3412$, $4123$, and
  $1234$, respectively. The corresponding left blocks are $J=\{\}$,
  $\{3\}$, $\{1,2\}$ and $\{1,2,3\}$, respectively.  The nonreduced
  (right) blocks are $\{3\}$ and $\{1,3\}$, as they are respectively
  equivalent to the blocks $\{\}$ and $\{1\}$. The trivial blocks are
  $\{\}$ and $\{3\}$.
\end{example}

\begin{example}
\label{example.block_condition}
  In general, condition~(iii) of Proposition~\ref{proposition.equiv_block}
  is only a necessary, but not sufficient condition for $K$ to be a block.
  For example, for $w=43125$ (similar to $4312$ of Example~\ref{example.4312}, but
  embedded in $\sg[5]$), $J=\{3,4\}$, and $K=\{1,4\}$, one has
  $\lcoset{w}{J}=\rcoset{w}{K}$ yet neither $J$ nor $K$ are blocks. On
  the other hand (iii) of Proposition~\ref{proposition.equiv_block} becomes both necessary
  and sufficient for reduced blocks.
\end{example}

\begin{figure}
  \centerline{

\tikzstyle{cutting}=[shape=ellipse,draw=black,inner sep=.5pt]
\tikzstyle{interval}=[ultra thick]
\begin{tikzpicture}[baseline=(current bounding box.east), >= fixedsizelatex]
  \node[cutting] (1234) at ( 0, 0) {1234};

  \node (1324) at ( 1, 1) {1324};

  \node (3124) at ( 1.5, 2) {3124};
  \node (1342) at ( 2.5, 2) {1342};

  \node (3142) at ( 3, 3) {3142};
  \node[cutting] (3412) at ( 4, 4) {3412};

  \node (1243) at (-1, 1) {1243};
  \node (1423) at ( 0, 2) {1423};

  \node[cutting] (4123) at (0.5, 3) {4123};
  \node (1432) at ( 1.5, 3) {1432};

  \node (4132) at (2, 4) {4132};

  \node[cutting] (4312) at ( 3, 5) {4312};

  \draw[<-,red,interval]   (1234)  -- (1324);
  \draw[<-,green] (1234)  -- (1243);

  \draw[blue,interval]  (1324)  -- (3124);
  \draw[green,interval] (1324)  -- (1342);

  \draw[red,interval]   (1243)  -- (1423);

  \draw[<-,red]   (1342)  -- (1432);
  \draw[green,interval] (3124)  -- (3142);
  \draw[blue,interval]  (1342)  -- (3142);

  \draw[blue,interval]  (1423)  -- (4123);
  \draw[<-,green,interval] (1423)  -- (1432);

  \draw[red,interval]   (3142)  -- (3412);

  \draw[blue,interval]  (1432)  -- (4132);
  \draw[<-,green,interval] (4123)  -- (4132);

  \draw[<-,blue]  (3412)  -- (4312);
  \draw[red,interval]   (4132)  -- (4312);
\end{tikzpicture} \quad 

\tikzstyle{cutting}=[shape=ellipse,draw=black,inner sep=.5pt]
\tikzstyle{interval}=[ultra thick]
\begin{tikzpicture}[baseline=(current bounding box.east), >=fixedsizelatex]
  \node[cutting] (1234) at ( 0, 0) {1234};

  \node (1324) at (-1, 1) {1324};
  \node (1243) at ( 1, 1) {1243};

  \node (3124) at (-2, 2) {3124};
  \node (1342) at ( 0, 2) {1342};
  \node (1423) at ( 2, 2) {1423};

  \node (3142) at (-1, 3) {3142};
  \node (1432) at ( 1, 3) {1432};
  \node[cutting] (4123) at ( 3, 3) {4123};

  \node[cutting] (3412) at ( 0, 4) {3412};
  \node (4132) at ( 2, 4) {4132};

  \node[cutting] (4312) at ( 1, 5) {4312};

  \draw[<-,red]   (1234)  -- (1324);
  \draw[<-,green,interval] (1234)  -- (1243);

  \draw[blue]  (1324)  -- (3124);
  \draw[green,interval] (1324)  -- (1342);

  \draw[red,interval]   (1243)  -- (1423);

  \draw[green,interval] (3124)  -- (3142);
  \draw[blue]  (1342)  -- (3142);
  \draw[<-,red,interval]   (1342)  -- (1432);

  \draw[blue,interval]  (1423)  -- (4123);
  \draw[<-,green] (1423)  -- (1432);

  \draw[red,interval]   (3142)  -- (3412);

  \draw[blue,interval]  (1432)  -- (4132);
  \draw[<-,green] (4123)  -- (4132);

  \draw[<-,blue,interval]  (3412)  -- (4312);
  \draw[red]   (4132)  -- (4312);
\end{tikzpicture}

  \caption{Two pictures of the interval $[1234,4312]_R$ in right order in $\sg[4]$
    illustrating its proper tilings, for $J:=\{3\}$ and $J:=\{1,2\}$,
    respectively. The thick edges highlight the tiling. The circled
    permutations are the cutting points, which are at the top of the
    tiling intervals.  Blue, red, green lines correspond to $s_1$,
    $s_2$, $s_3$, respectively. See Section~\ref{subsection.bihecke}
    for the definition of the orientation of the edges (this is
    $G^{(4312)}$); edges with no arrow tips point in both directions.}
  \label{figure.4312}
\end{figure}
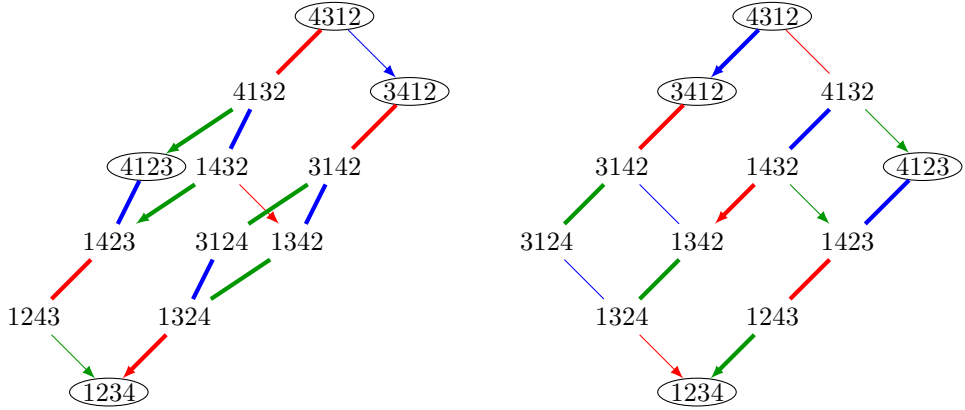

\begin{remark}
\label{remark.union.typeA}
  It is obvious that the union and intersection of overlapping (possibly with a trivial overlap)
  matrix-blocks in $\sg[n]$ are again matrix-blocks; we will see in
  Proposition~\ref{proposition.block.union} that this property generalizes to all types.
\end{remark}

\begin{problem}
  Fix $J \subseteq \{1,2,\ldots,n-1\}$ and enumerate the permutations $w\in \sg[n]$
  for which $J$ is a left block.
\end{problem}

\subsection{The cutting poset}
\label{ss.cutting_poset}

In this section, we show that $(W,\cuteq)$ indeed forms a poset.
We start by showing that for a fixed $u\in W$, the set of elements $w$ such
that $u\cuteq w$ admits a simple description. Recall that for $J \subseteq I$,
we denote by $s_J$ the longest element of $W_J$.
Proposition~\ref{proposition.equiv_block} suggests the following definition.
\begin{definition}
  \label{definition.short_nondescents}
  Let $u\in W$. We call $k\in I$ a \emph{short right nondescent}
  (resp. $j\in I$ a \emph{short left nondescent}) of $u$ if there
  exists $j\in I$ (resp. $k \in I$) such that
  \begin{equation*}
    s_j u = u s_k\, ,
  \end{equation*}
  where the product is reduced (that is $j$ and $k$ are nondescents).
  An equivalent condition is that $u$ maps the simple root $\alpha_k$
  to a simple root (resp. the preimage of $\alpha_j$ is a simple
  root).

  Set further
  \begin{equation*}
    U_u := u W_K= [u, u s_K]_R\, = W_J u= [u,s_J u]_L\,,
  \end{equation*}
  where $K:=K(u)$ (resp. $J:=J(u)$) is the set of short right
  (resp. left) nondescents of $u$.
\end{definition}

Pictorially, one takes left and right order on $W$ and associates to
each vertex $u$ the translate $U_u$ above $u$ of the parabolic
subgroup generated by the short nondescents of $u$, which correspond
to the simultaneous covers of $u$ in both left and right order.

\begin{example}
In type $A$, $i$ is short for $u\in \sg[n]$ if $u(i+1)=u(i)+1$, that is,
there is a $2\times2$ identity block in columns $(i,i+1)$ of the
permutation matrix of $u$. Furthermore $U_u$ is obtained by looking at
all identity blocks in $u$ and replacing each by any permutation
matrix.

The permutation $4312$ of Example~\ref{example.4312} has a single nondescent $3$
which is short, and $U_{4312}=\{4312, 4321\}$.
\end{example}

\begin{proposition}
  \label{proposition.cutting.U}
  $U_u$ is the set of all $w$ such that $u\cuteq w$.

  In particular, it follows that:
  \begin{itemize}
  \item If $u \le_R v \le_R w$ and $u\cuteq w$, then $u\cuteq v$.
  \item If $u\cuteq w$ and $u\cuteq w'$, then $u\cuteq w\join_R w'$.
  \end{itemize}
\end{proposition}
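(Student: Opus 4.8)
The plan is to prove the equality $U_u = \{w \in W \mid u \cuteq w\}$ by double inclusion, using Proposition~\ref{proposition.equiv_block} to translate the block condition $u \cuteq w$ into the existence of a suitable bijection between short nondescent sets.

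First I would unwind the definitions. By definition, $u \cuteq w$ means there is a right block $K$ of $w$ with $u = w^K$, equivalently (Proposition~\ref{proposition.equiv_block}) there exist $K, J \subseteq I$ and a bijection $\phi : K \to J$ with $u s_k = s_{\phi(k)} u$ (reduced) for all $k \in K$, together with $w \in W_J u$ ($= u W_K$). Now the key observation is: if $u s_k = s_{\phi(k)} u$ is a reduced factorization, then by Definition~\ref{definition.short_nondescents} each such $k$ is a short right nondescent of $u$, so $K \subseteq K(u)$, and $w \in u W_K \subseteq u W_{K(u)} = U_u$. This gives the inclusion $\{w \mid u \cuteq w\} \subseteq U_u$. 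For the reverse inclusion, given $w \in U_u = u W_{K(u)}$, I would take $K := K(u)$ and $J := J(u)$; by the very definition of short nondescents, for each $k \in K(u)$ there is a (unique, by length considerations) $j$ with $s_j u = u s_k$ reduced, and the map $k \mapsto j$ is the required bijection $\phi$, since $u$ maps the simple root $\alpha_k$ to the simple root $\alpha_j$ and this correspondence is injective (distinct simple roots have distinct images under $u$) and, running the symmetric argument on the left, surjective onto $J(u)$. Thus $W_J w = w W_K$ with $u = w^K$, i.e. $u \cuteq w$. I should double-check that $u$ really equals $w^K$ and not merely $\lcoset{w}{J}$ — but part~(iii) of Proposition~\ref{proposition.equiv_block} gives $\rcoset{w}{K} = \lcoset{w}{J}$, and since $u$ has no right descents in $K$ (its elements are nondescents) $u$ is the minimal-length representative, so $u = \rcoset{w}{K} = w^K$.

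Once $U_u = \{w \mid u \cuteq w\}$ is established, the two bulleted consequences are immediate. For the first: $u \le_R v \le_R w$ and $u \cuteq w$ force $w \in U_u = u W_{K(u)} = [u, u s_{K(u)}]_R$, which is an interval in right order; since $v$ lies between $u$ and $w$ in right order and intervals in right order are order-convex, $v \in [u, u s_{K(u)}]_R = U_u$, hence $u \cuteq v$. For the second: $u \cuteq w$ and $u \cuteq w'$ mean $w, w' \in u W_{K(u)} = [u, u s_{K(u)}]_R$, an interval of $(W, \le_R)$; since $(W, \le_R)$ is a lattice (cited from~\cite{Bjorner_Brenti.2005}) and this interval is a sublattice (it is $[u, u s_{K(u)}]_R$, and joins of elements below a common upper bound stay below it), the join $w \join_R w'$ again lies in $U_u$, so $u \cuteq w \join_R w'$.

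The main obstacle I anticipate is the surjectivity of $\phi$ in the reverse inclusion — i.e. verifying that when we set $K = K(u)$, the naturally associated left set is exactly $J(u)$ and the correspondence is a genuine bijection rather than just an injection $K(u) \hookrightarrow I$. This requires invoking the symmetry between short left and short right nondescents (the preimage-of-a-simple-root characterization in Definition~\ref{definition.short_nondescents}) and Lemma~\ref{lemma.skew_commutation} or Lemma~\ref{lemma.order_representative} to see that $s_j u = u s_k$ reduced sets up a bijection $K(u) \to J(u)$; the reducedness of all these factorizations, and the fact that $U_u = u W_{K(u)}$ is genuinely equal to the interval $[u, u s_{K(u)}]_R$ (so that $\ell(u s_K) = \ell(u) + \ell(s_K)$ for every subset $K \subseteq K(u)$, not just singletons), is the technical heart and should be checked carefully, presumably by an induction on $\ell(s_K)$ using Lemma~\ref{lemma.action_on_decomposition}.
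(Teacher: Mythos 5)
Your proof is correct and follows essentially the same route as the paper: both directions reduce to Proposition~\ref{proposition.equiv_block}, identifying right blocks $K$ of $w$ with $w^K=u$ as exactly the subsets of the short nondescent set $K(u)$, and the two bulleted consequences then follow from $U_u$ being the interval $[u,us_{K(u)}]_R$ in right order. The surjectivity worry you flag is already absorbed into Definition~\ref{definition.short_nondescents} (which asserts $uW_{K(u)}=W_{J(u)}u$), and in any case Proposition~\ref{proposition.equiv_block}(ii) only requires a bijection onto \emph{some} $J$, so no extra work is needed there.
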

\begin{proof}
  Note that $w$ is in $U_u$ if and only if there exists $K$ such that
  $K\subseteq K(u)$ and $w^K=u$. By
  Proposition~\ref{proposition.equiv_block}, this is equivalent to the
  existence of a block $K$ such that $w^K=u$, that is $u\cuteq w$.
\end{proof}

The following related lemma is used to prove that $(W,\cuteq)$ is a poset.
\begin{lemma}
  \label{lemma.short_descents}
  If $u \cuteq w$, then the set of short nondescents of $w$ is a
  subset of the short nondescents of $u$, namely $K(w) \subseteq
  K(u)$.
\end{lemma}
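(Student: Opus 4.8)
The plan is to exploit the characterization of short nondescents as simple roots being sent to simple roots, combined with the conjugation relation between the bijections $\phi$ attached to blocks. Suppose $u \cuteq w$, so there is a right block $K$ of $w$ with $w^K = u$, and write $w = u \cdot {}_K w$ with ${}_K w \in W_K$. I want to show that every short right nondescent $k$ of $w$ is also one of $u$; by symmetry (left-right), the statement for the full set of short nondescents follows, but in fact it suffices to track the right ones since Definition~\ref{definition.short_nondescents} sets $K(w)$ to be the short right nondescents and we only need $K(w) \subseteq K(u)$ — and $U_u$, hence $\cuteq$, is symmetric anyway.

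First I would take $k \in K(w)$, so by Definition~\ref{definition.short_nondescents} there is some $i \in I$ with $s_i w = w s_k$ reduced, i.e.\ $w(\alpha_k) = \alpha_i$. The goal is to produce $i' \in I$ with $u(\alpha_k) = \alpha_{i'}$ reduced, i.e.\ $k \in K(u)$. The natural move is to apply Lemma~\ref{lemma.order_representative}: from $s_i w = w s_k$ with $k \in K$ (here I need $k \in K$; if $k \notin K$, then $s_k$ commutes past ${}_K w$ trivially and one argues directly that $u(\alpha_k) = w(\alpha_k) = \alpha_i$ since the $W_K$-part acts only on roots indexed by $K$) it follows that there is $k' \in K$ with $s_i w^K = w^K s_{k'}$ reduced, that is $u(\alpha_k) = u(\alpha_{k'})\cdot$(something) — more precisely $s_i u = u s_{k'}$ reduced, so $u(\alpha_{k'}) = \alpha_i$, giving $k' \in K(u)$. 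This is close but I must still connect $k$ and $k'$.

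The key step, and I expect it to be the main obstacle, is showing that one can take $k' = k$, or equivalently that $k$ itself is a short right nondescent of $u$. Here I would use Proposition~\ref{proposition.equiv_block}: since $K$ is a right block of $w$ with cutting point $w^K = u$, there is a bijection $\phi \colon K \to J$ with $u(\alpha_k) = \alpha_{\phi(k)}$ for all $k \in K$. So if $k \in K(w) \cap K$, then $\phi(k) \in I$ and $u(\alpha_k) = \alpha_{\phi(k)}$ with the product reduced (because $k$ is a nondescent of $u = w^K$), which is exactly $k \in K(u)$. For $k \in K(w) \setminus K$: here $k$ is a nondescent of $w$, hence also of $u = w^K$ (since ${}_K w \in W_K$ only creates descents inside $K$, more carefully one uses Lemma~\ref{lemma.action_on_decomposition}\eqref{item.right_decomp} to see $\Des(u) \setminus K \subseteq \Des(w)$), and since $s_k$ commutes with $w$ up to a simple reflection while ${}_K w$ involves only generators in $K \not\ni k$, the relation $s_i w = w s_k$ descends to $s_i w^K = w^K s_k$, i.e.\ $k \in K(u)$. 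Splitting into these two cases and invoking Proposition~\ref{proposition.equiv_block} for the first and Lemma~\ref{lemma.action_on_decomposition} for the second should complete the argument; the delicate point throughout is keeping track of which relations remain \emph{reduced} after cancelling the $W_K$-part, which is precisely what Lemma~\ref{lemma.order_representative} and Lemma~\ref{lemma.action_on_decomposition} are designed to guarantee.
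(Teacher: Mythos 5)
Your case split, and your treatment of $k \in K$ via Proposition~\ref{proposition.equiv_block}, are fine. The gap is the case $k \in K(w)\setminus K$, which you yourself flag as the main obstacle and then dispatch with the claim that ``${}_K w$ involves only generators in $K\not\ni k$'', so that $s_k$ commutes past it and $s_iw = ws_k$ ``descends'' to $s_i w^K = w^K s_k$. This is not valid: $k\notin K$ does not make $s_k$ commute with $W_K$, since adjacency in the Coxeter diagram is what matters, and a general element $v\in W_K$ sends $\alpha_k$ to $\alpha_k+\beta$ with $\beta$ a nonzero nonnegative combination of $\{\alpha_{k'}\}_{k'\in K}$. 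The assertion that ${}_Kw$ actually fixes $\alpha_k$ here is true but is essentially equivalent to what you are trying to prove, so it cannot be assumed; a correct completion is root-theoretic: write $\alpha_i=w\alpha_k=u\bigl({}_Kw\,\alpha_k\bigr)=u\alpha_k+\sum_{k'\in K}c_{k'}\alpha_{\phi(k')}$ with $c_{k'}\ge 0$ (using the bijection $\phi$ of Proposition~\ref{proposition.equiv_block}), rule out $u\alpha_k<0$ (that would put $-u\alpha_k$ in $\Phi_J^+$, hence $us_ku^{-1}\in W_J$ and $k\in K$, a contradiction), and then use that all coefficients of a root share the same sign to force $c_{k'}=0$ and $u\alpha_k=\alpha_i$. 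The auxiliary claim $\Des(u)\setminus K\subseteq\Des(w)$ that you attribute to Lemma~\ref{lemma.action_on_decomposition} is likewise not something that lemma hands you directly.

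Note also that the paper sidesteps this difficulty: it applies Lemma~\ref{lemma.order_representative} to produce, for each $k\in K(w)$, \emph{some} $k'$ with $s_ju=us_{k'}$ reduced (hence $k'\in K(u)$), and concludes $K(w)\subseteq K(u)$ from the injectivity of $k\mapsto k'$ together with the fact that $k'$ remains in $K(w)$ --- a pigeonhole argument that never needs the sharper statement that $k$ itself survives. Your route, once repaired as above, proves that stronger statement, but as written the crucial case rests on a false commutation principle.
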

\begin{proof}
  Let $k \in K(w)$, so that $ws_k = s_jw$ for some $j\in I$ and both sides
  are reduced. It follows from Lemma~\ref{lemma.order_representative} that
  there exists $k'\in K(w)$ such that $s_j u = u s_{k'}$ and both sides are reduced.
  Hence $k'\in K(u)$. Since the map $k\mapsto k'$ is injective it follows that
  $K(w) \subseteq K(u)$.
\end{proof}

\begin{theorem}
  \label{theorem.cutting_poset}
  $(W,\cuteq)$ is a subposet of both left and right order.
\end{theorem}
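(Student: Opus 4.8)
The plan is to verify the three poset axioms for $(W,\cuteq)$ directly from the description of the order given in Proposition~\ref{proposition.cutting.U}, namely $u \cuteq w$ if and only if $w \in U_u$, and simultaneously track the inclusion into left and right order. Reflexivity is immediate: taking $K = \emptyset$ (the empty right block) gives $w^{\emptyset} = w$, so $w \cuteq w$ for every $w$; equivalently $w \in U_w$ since $U_w = w W_{K(w)} \ni w$. For the comparison with left/right order, observe that if $u \cuteq w$ then $w \in U_u = u W_{K(u)} = [u, u s_{K(u)}]_R$, so $u \le_R w$, and likewise $U_u = W_{J(u)} u = [u, s_{J(u)} u]_L$ gives $u \le_L w$; hence $\cuteq$ refines both orders.

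Antisymmetry is then essentially free: if $u \cuteq w$ and $w \cuteq u$, then by the previous paragraph $u \le_R w$ and $w \le_R u$, and since $(W, \le_R)$ is a partial order we conclude $u = w$. The only genuine content is transitivity. Suppose $u \cuteq v$ and $v \cuteq w$; I want $u \cuteq w$, i.e. $u \in U_w = w W_{K(w)}$, equivalently there is a right block $K$ of $w$ with $w^K = u$. From $v \cuteq w$ we have a right block $K_1$ of $w$ with $w^{K_1} = v$, and from $u \cuteq v$ a right block $K_2$ of $v$ with $v^{K_2} = u$. The strategy is to show $K := $ (the union $K_1 \cup (\text{pullback of } K_2)$, appropriately interpreted) is again a right block of $w$ with $w^K = u$. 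The clean way to organize this uses Proposition~\ref{proposition.equiv_block}: $v = w^{K_1}$ sends each $\alpha_k$, $k \in K_1$, to a simple root, and $u = v^{K_2}$ sends each $\alpha_k$, $k \in K_2$, to a simple root. Here Lemma~\ref{lemma.short_descents} is the pivotal input: it gives $K(v) \subseteq K(u)$... more precisely, one applies it in the form $K(w) \subseteq K(v) \subseteq K(u)$, which controls how nondescents propagate. Combined with Proposition~\ref{proposition.cutting.U}'s characterization $U_v = v W_{K(v)}$ and the observation that $u \cuteq v$ forces $K(v) \supseteq$ the block witnessing $u \cuteq v$, one can assemble a single right block $K$ of $w$ with $w^K = u$: concretely, lift $K_2 \subseteq I$ through the Dynkin-diagram bijection $\phi_1 : K_1 \to J_1$ associated to the block $K_1$ of $w$ to get a subset of $I$, take its union with the part of $K_1$ still needed, and check via Proposition~\ref{proposition.equiv_block}(ii) that the resulting set maps simple roots to simple roots under $w^K$.

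The main obstacle will be transitivity, and specifically making the "composition of blocks" argument precise — one must be careful that $v^{K_2} = u$ only sees $K_2$ as a subset of the index set $I$ relative to $v$, and to realize $u$ as a cutting point of $w$ one needs to transport $K_2$ back to a block of $w$ compatible with $K_1$. The key technical lemmas that make this go through are Lemma~\ref{lemma.short_descents} (nondescents only shrink as one goes up the cutting order) and Proposition~\ref{proposition.cutting.U} (the sets $U_u$ are exactly upward intervals $u W_{K(u)}$ in right order and $W_{J(u)} u$ in left order), together with Proposition~\ref{proposition.equiv_block}'s root-theoretic criterion, which turns the whole matter into checking that a certain subset of $I$ is carried to simple roots. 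I expect the cleanest write-up to do transitivity entirely inside right order: show that $u \le_R v \le_R w$ together with $u \cuteq w$... wait, that is the wrong direction; rather, use that $v \cuteq w$ gives $K(w) \subseteq K(v)$ and $w^{K_1} = v$, so that the block $K_2 \subseteq K(v)$ of $v$ pulls back (since $K(w) \subseteq K(v)$, the extra generators in $K_2 \setminus K(w)$ are exactly accounted for by $K_1$) to a block $K \subseteq K(w)$... this bookkeeping is the heart of the proof and where care is required, but it is purely combinatorial once Proposition~\ref{proposition.equiv_block} and Lemma~\ref{lemma.short_descents} are in hand.
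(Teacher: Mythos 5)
Your treatment of reflexivity, antisymmetry, and the inclusion into left and right order is correct and matches the paper's: reflexivity via the empty block, the inclusion via $U_u = uW_{K(u)} = [u,us_{K(u)}]_R = W_{J(u)}u$, and antisymmetry inherited from the antisymmetry of right order. The gap is in transitivity, which you correctly single out as the only substantive step but do not actually carry out. Worse, the mechanism you sketch --- lifting $K_2$ through the Dynkin-diagram bijection $\phi_1\colon K_1\to J_1$ and unioning it with ``the part of $K_1$ still needed'' --- is a red herring. No transport of $K_2$ is required: $K_2$ is already, as a subset of $I$, a set of short right nondescents of $u$. Indeed, since $K_2$ is a right block of $v$ with $v^{K_2}=u$, Proposition~\ref{proposition.equiv_block} gives a bijection $\phi$ with $us_k = s_{\phi(k)}u$ reduced for every $k\in K_2$ (the products are reduced because $u=v^{K_2}$ has no right descent in $K_2$), so $K_2\subseteq K(u)$ directly.

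The argument that closes the gap is shorter than you anticipate. With $u=v^{K_2}$ and $v=w^{K_1}$, the same reasoning applied to the block $K_1$ of $w$ gives $K_1\subseteq K(v)$, and Lemma~\ref{lemma.short_descents} applied to $u\cuteq v$ gives $K(v)\subseteq K(u)$; hence $K_1\cup K_2\subseteq K(u)$. Writing $w = v\cdot{}_{K_1}w = u\cdot{}_{K_2}v\cdot{}_{K_1}w$ with ${}_{K_2}v\in W_{K_2}$ and ${}_{K_1}w\in W_{K_1}$ shows $w\in uW_{K_1\cup K_2}\subseteq uW_{K(u)}=U_u$, and Proposition~\ref{proposition.cutting.U} then yields $u\cuteq w$. (Equivalently, as the paper phrases it, $K_1\cup K_2$ is itself a right block of $w$ with $w^{K_1\cup K_2}=u$.) So your choice of key lemmas is exactly right, but the ``bookkeeping'' you defer is precisely the content of the proof, and the particular bookkeeping you propose would lead you astray.
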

\begin{proof}
  The relation $\cuteq$ is reflexive since $v$ is a cutting point of
  $v$ with right block $\emptyset$; hence $v \cuteq v$. Applying
  Proposition~\ref{proposition.equiv_block}, it is a subrelation of
  left and right order: if $v \cuteq w$ then $v=w^K\le_Rw$ for some
  $K$ and $v=\lcoset{w}{J}\le_Lw$ for some $J$. Antisymmetry follows
  from the antisymmetry of left (or right) order.

  For transitivity, let $v\cuteq w$ and $w\cuteq z$.  Then $v=w^K$ and
  $w=z^{K'}$ for some right block $K$ of $w$ and $K'$ of $z$. We claim
  that $v= z^{K\cup K'}$ with $K\cup K'$ a right block of
  $z$. Certainly $k\not \in \Des(v)$ for $k\in K$ since $v=w^K$. Since
  $w=z^{K'}$ with $K'$ a block of $z$, all $k'\in K'$ are short
  nondescents of $w$ and hence by Lemma~\ref{lemma.short_descents}
  also short nondescents of $v$. This proves the claim. Therefore
  $v\cuteq z$.
\end{proof}

\begin{example}
The cutting poset for $\sg[3]$ and $\sg[4]$ is given in
Figure~\ref{figure.cutting_poset}. As we can see on those figures, the cutting
poset is not the intersection of the right and left order since $w_0$ is
maximal for left and right order but not for cutting poset.
\end{example}

\begin{sidewaysfigure}
  \begin{bigcenter}
   \vspace{7cm}

    \scalebox{.7}{\begin{tikzpicture}[>=latex,line join=bevel,]
  \def\b{\bullet} 
  \def\a{}

  \node (213) at (206bp,102bp) [draw,draw=none] {$\left(\begin{array}{rrr}\a & \b & \a \\\b & \a & \a \\\a & \a & \b\end{array}\right)$};
  \node (312) at (34bp,102bp) [draw,draw=none] {$\left(\begin{array}{rrr}\a & \b & \a \\\a & \a & \b \\\b & \a & \a\end{array}\right)$};
  \node (231) at (120bp,102bp) [draw,draw=none] {$\left(\begin{array}{rrr}\a & \a & \b \\\b & \a & \a \\\a & \b & \a\end{array}\right)$};
  \node (132) at (292bp,102bp) [draw,draw=none] {$\left(\begin{array}{rrr}\b & \a & \a \\\a & \a & \b \\\a & \b & \a\end{array}\right)$};
  \node (123) at (163bp,22bp) [draw,draw=none] {$\left(\begin{array}{rrr}\b & \a & \a \\\a & \b & \a \\\a & \a & \b\end{array}\right)$};
  \node (321) at (77bp,182bp) [draw,draw=none] {$\left(\begin{array}{rrr}\a & \a & \b \\\a & \b & \a \\\b & \a & \a\end{array}\right)$};

  \draw [<-] (321) ..controls (61bp,152bp) and (56bp,142bp)  .. (312);
  \draw [<-] (231) ..controls (136bp,72bp) and (141bp,62bp)  .. (123);
  \draw [<-] (321) ..controls (93bp,152bp) and (98bp,142bp)  .. (231);
  \draw [<-] (213) ..controls (190bp,72bp) and (185bp,62bp)  .. (123);
  \draw [<-] (132) ..controls (242bp,71bp) and (223bp,59bp)  .. (123);
  \draw [<-] (312) ..controls (84bp,71bp) and (103bp,59bp)  .. (123);
\end{tikzpicture}}

    \scalebox{.4}{\begin{tikzpicture}[>=latex,line join=bevel,]
\def\b{\bullet} 
\def\a{}
\node (4132) at (1368bp,207bp) [draw,draw=none] {$\left(\begin{array}{rrrr}\a& \b& \a& \a\\\a& \a& \a& \b\\\a& \a& \b& \a\\\b& \a& \a& \a\end{array}\right)$};
  \node (1324) at (654bp,117bp) [draw,draw=none] {$\left(\begin{array}{rrrr}\b& \a& \a& \a\\\a& \a& \b& \a\\\a& \b& \a& \a\\\a& \a& \a& \b\end{array}\right)$};
  \node (4321) at (1062bp,297bp) [draw,draw=none] {$\left(\begin{array}{rrrr}\a& \a& \a& \b\\\a& \a& \b& \a\\\a& \b& \a& \a\\\b& \a& \a& \a\end{array}\right)$};
  \node (2431) at (756bp,207bp) [draw,draw=none] {$\left(\begin{array}{rrrr}\a& \a& \a& \b\\\b& \a& \a& \a\\\a& \a& \b& \a\\\a& \b& \a& \a\end{array}\right)$};
  \node (3142) at (756bp,117bp) [draw,draw=none] {$\left(\begin{array}{rrrr}\a& \b& \a& \a\\\a& \a& \a& \b\\\b& \a& \a& \a\\\a& \a& \b& \a\end{array}\right)$};
  \node (3412) at (1062bp,117bp) [draw,draw=none] {$\left(\begin{array}{rrrr}\a& \a& \b& \a\\\a& \a& \a& \b\\\b& \a& \a& \a\\\a& \b& \a& \a\end{array}\right)$};
  \node (3214) at (399bp,207bp) [draw,draw=none] {$\left(\begin{array}{rrrr}\a& \a& \b& \a\\\a& \b& \a& \a\\\b& \a& \a& \a\\\a& \a& \a& \b\end{array}\right)$};
  \node (2134) at (246bp,117bp) [draw,draw=none] {$\left(\begin{array}{rrrr}\a& \b& \a& \a\\\b& \a& \a& \a\\\a& \a& \b& \a\\\a& \a& \a& \b\end{array}\right)$};
  \node (2341) at (960bp,117bp) [draw,draw=none] {$\left(\begin{array}{rrrr}\a& \a& \a& \b\\\b& \a& \a& \a\\\a& \b& \a& \a\\\a& \a& \b& \a\end{array}\right)$};
  \node (4123) at (1164bp,117bp) [draw,draw=none] {$\left(\begin{array}{rrrr}\a& \b& \a& \a\\\a& \a& \b& \a\\\a& \a& \a& \b\\\b& \a& \a& \a\end{array}\right)$};
  \node (1243) at (42bp,117bp) [draw,draw=none] {$\left(\begin{array}{rrrr}\b& \a& \a& \a\\\a& \b& \a& \a\\\a& \a& \a& \b\\\a& \a& \b& \a\end{array}\right)$};
  \node (3124) at (450bp,117bp) [draw,draw=none] {$\left(\begin{array}{rrrr}\a& \b& \a& \a\\\a& \a& \b& \a\\\b& \a& \a& \a\\\a& \a& \a& \b\end{array}\right)$};
  \node (1342) at (858bp,117bp) [draw,draw=none] {$\left(\begin{array}{rrrr}\b& \a& \a& \a\\\a& \a& \a& \b\\\a& \b& \a& \a\\\a& \a& \b& \a\end{array}\right)$};
  \node (1234) at (603bp,27bp) [draw,draw=none] {$\left(\begin{array}{rrrr}\b& \a& \a& \a\\\a& \b& \a& \a\\\a& \a& \b& \a\\\a& \a& \a& \b\end{array}\right)$};
  \node (4312) at (1164bp,207bp) [draw,draw=none] {$\left(\begin{array}{rrrr}\a& \a& \b& \a\\\a& \a& \a& \b\\\a& \b& \a& \a\\\b& \a& \a& \a\end{array}\right)$};
  \node (4213) at (1266bp,207bp) [draw,draw=none] {$\left(\begin{array}{rrrr}\a& \a& \b& \a\\\a& \b& \a& \a\\\a& \a& \a& \b\\\b& \a& \a& \a\end{array}\right)$};
  \node (1423) at (552bp,117bp) [draw,draw=none] {$\left(\begin{array}{rrrr}\b& \a& \a& \a\\\a& \a& \b& \a\\\a& \a& \a& \b\\\a& \b& \a& \a\end{array}\right)$};
  \node (4231) at (1062bp,207bp) [draw,draw=none] {$\left(\begin{array}{rrrr}\a& \a& \a& \b\\\a& \b& \a& \a\\\a& \a& \b& \a\\\b& \a& \a& \a\end{array}\right)$};
  \node (1432) at (603bp,207bp) [draw,draw=none] {$\left(\begin{array}{rrrr}\b& \a& \a& \a\\\a& \a& \a& \b\\\a& \a& \b& \a\\\a& \b& \a& \a\end{array}\right)$};
  \node (3241) at (858bp,207bp) [draw,draw=none] {$\left(\begin{array}{rrrr}\a& \a& \a& \b\\\a& \b& \a& \a\\\b& \a& \a& \a\\\a& \a& \b& \a\end{array}\right)$};
  \node (2413) at (144bp,117bp) [draw,draw=none] {$\left(\begin{array}{rrrr}\a& \a& \b& \a\\\b& \a& \a& \a\\\a& \a& \a& \b\\\a& \b& \a& \a\end{array}\right)$};
  \node (2143) at (144bp,207bp) [draw,draw=none] {$\left(\begin{array}{rrrr}\a& \b& \a& \a\\\b& \a& \a& \a\\\a& \a& \a& \b\\\a& \a& \b& \a\end{array}\right)$};
  \node (2314) at (348bp,117bp) [draw,draw=none] {$\left(\begin{array}{rrrr}\a& \a& \b& \a\\\b& \a& \a& \a\\\a& \b& \a& \a\\\a& \a& \a& \b\end{array}\right)$};
  \node (3421) at (960bp,207bp) [draw,draw=none] {$\left(\begin{array}{rrrr}\a& \a& \a& \b\\\a& \a& \b& \a\\\b& \a& \a& \a\\\a& \b& \a& \a\end{array}\right)$};
  \draw [<-] (4321) ..controls (1103bp,261bp) and (1115bp,251bp)  .. (4312);
  \draw [<-] (4321) ..controls (1021bp,261bp) and (1009bp,251bp)  .. (3421);
  \draw [<-] (4321) ..controls (1062bp,262bp) and (1062bp,253bp)  .. (4231);
  \draw [<-] (3214) ..controls (380bp,172bp) and (374bp,162bp)  .. (2314);
  \draw [<-] (1432) ..controls (584bp,172bp) and (578bp,162bp)  .. (1423);
  \draw [<-] (3241) ..controls (899bp,171bp) and (911bp,161bp)  .. (2341);
  \draw [<-] (1243) ..controls (87bp,92bp) and (90bp,91bp)  .. (93bp,90bp) .. controls (177bp,63bp) and (439bp,39bp)  .. (1234);
  \draw [<-] (2143) ..controls (185bp,171bp) and (197bp,161bp)  .. (2134);
  \draw [<-] (4231) ..controls (1103bp,171bp) and (1115bp,161bp)  .. (4123);
  \draw [<-] (3421) ..controls (960bp,172bp) and (960bp,163bp)  .. (2341);
  \draw [<-] (1324) ..controls (635bp,82bp) and (629bp,72bp)  .. (1234);
  \draw [<-] (2413) ..controls (189bp,92bp) and (192bp,91bp)  .. (195bp,90bp) .. controls (319bp,49bp) and (472bp,34bp)  .. (1234);
  \draw [<-] (3142) ..controls (695bp,82bp) and (673bp,69bp)  .. (1234);
  \draw [<-] (1432) ..controls (685bp,186bp) and (746bp,168bp)  .. (1342);
  \draw [<-] (3214) ..controls (418bp,172bp) and (424bp,162bp)  .. (3124);
  \draw [<-] (4312) ..controls (1123bp,171bp) and (1111bp,161bp)  .. (3412);
  \draw [<-] (1423) ..controls (571bp,82bp) and (577bp,72bp)  .. (1234);
  \draw [<-] (4213) ..controls (1225bp,171bp) and (1213bp,161bp)  .. (4123);
  \draw [<-] (3421) ..controls (1001bp,171bp) and (1013bp,161bp)  .. (3412);
  \draw [<-] (4231) ..controls (1021bp,171bp) and (1009bp,161bp)  .. (2341);
  \draw [<-] (2314) ..controls (393bp,92bp) and (396bp,91bp)  .. (399bp,90bp) .. controls (450bp,69bp) and (509bp,51bp)  .. (1234);
  \draw [<-] (2143) ..controls (103bp,171bp) and (91bp,161bp)  .. (1243);
  \draw [<-] (4132) ..controls (1323bp,183bp) and (1320bp,181bp)  .. (1317bp,180bp) .. controls (1284bp,164bp) and (1245bp,148bp)  .. (4123);
  \draw [<-] (2431) ..controls (801bp,183bp) and (804bp,181bp)  .. (807bp,180bp) .. controls (849bp,161bp) and (863bp,163bp)  .. (2341);
  \draw [<-] (2134) ..controls (291bp,92bp) and (294bp,91bp)  .. (297bp,90bp) .. controls (383bp,59bp) and (489bp,42bp)  .. (1234);
  \draw [<-] (4123) ..controls (1119bp,92bp) and (1116bp,91bp)  .. (1113bp,90bp) .. controls (1030bp,63bp) and (768bp,39bp)  .. (1234);
  \draw [<-] (3412) ..controls (1017bp,92bp) and (1014bp,91bp)  .. (1011bp,90bp) .. controls (888bp,49bp) and (735bp,34bp)  .. (1234);
  \draw [<-] (3124) ..controls (511bp,82bp) and (533bp,69bp)  .. (1234);
  \draw [<-] (4312) ..controls (1164bp,172bp) and (1164bp,163bp)  .. (4123);
  \draw [<-] (1342) ..controls (813bp,92bp) and (810bp,91bp)  .. (807bp,90bp) .. controls (757bp,69bp) and (697bp,51bp)  .. (1234);
  \draw [<-] (2341) ..controls (915bp,92bp) and (912bp,91bp)  .. (909bp,90bp) .. controls (823bp,59bp) and (717bp,42bp)  .. (1234);
\end{tikzpicture}}
    \end{bigcenter}
    \caption{The cutting posets for the symmetric groups $\sg[3]$ and
      $\sg[4]$.  Each permutation is represented by its permutation
      matrix, with the bullets marking the positions of the ones.
      Notice the Boolean sublattice appearing as the interval between
      the identity permutation at the bottom and the maximal
      permutation at the top; its elements are the minimal elements of
      the descent classes.}
  \label{figure.cutting_poset}
\end{sidewaysfigure}

\subsection{Lattice properties of intervals}
\label{ss.lattice}
In this section we show that the set of blocks and the set of cutting points $\{u\suchthat u\cuteq w\}$
of a fixed $w\in W$ are endowed with the structure of distributive lattices
(see Theorem~\ref{theorem.lattice}).

We begin with a lemma which gives some properties of blocks that are contained in each other.

\begin{lemma}
  \label{lemma.subblocks}
  Fix $w\in W$. Let $K\subseteq K'$ be two right blocks of $w$ and
  $J\subseteq J'$ be the corresponding left blocks, so that
  \begin{equation*}
    W_J  w = w W_K,   \quad
    W_{J'}w = w W_{K'}, \quad
    \lcoset{w}{J } = w^K   \cuteq w, \quad \text{ and } \quad
    \lcoset{w}{J'} = w^{K'} \cuteq w\,.
  \end{equation*}
  Then,
  \begin{enumerate}[(i)]
  \item \label{item.LR.block} $w^{K'}\leq_R w^K$ and $w^{K'}\leq_L w^K$;
  \item \label{item.Kp.upper} $K'$ is a right block of $w^K$ and $w^{K'}\cuteq w^K$;
  \item \label{item.K.lower} $K$ is a right block of ${}_{K'}w$ and ${}_{K'} w^K \cuteq
    {}_{K'} w$.\\
    Furthermore $K$ is reduced for ${}_{K'}w$ if and only
    if it is reduced for $w$.
  \end{enumerate}
  The same statements hold for left blocks.
\end{lemma}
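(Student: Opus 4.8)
The plan is to use Proposition~\ref{proposition.equiv_block} as the main engine: a subset $K$ is a right block of an element $x$ precisely when $x^K$ sends each simple root $\alpha_k$, $k\in K$, to another simple root, i.e.\ there is a bijection $\phi\colon K\to J$ with $x^K s_k = s_{\phi(k)} x^K$ for all $k\in K$. First I would set up notation cleanly: write $a := w^K = \lcoset{w}{J}$ and $b := w^{K'} = \lcoset{w}{J'}$, with associated bijections $\phi_K\colon K\to J$ and $\phi_{K'}\colon K'\to J'$ coming from Proposition~\ref{proposition.equiv_block} applied to $w$.

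For part~\eqref{item.LR.block}, since $K\subseteq K'$ we have $W_K\subseteq W_{K'}$, and the coset decompositions $w = a\,{}_Kw = b\,{}_{K'}w$ together with the fact that $\lcoset{W}{K'}\subseteq \lcoset{W}{K}$ (fewer constraints means more coset representatives) give $b \le_R a$; dually, using $J\subseteq J'$ and $a=\lcoset{w}{J}$, $b=\lcoset{w}{J'}$ on the left, one gets $b\le_L a$. Concretely, $b$ is the minimal-length element of $W_{K'} w$, hence also of $W_K w$ after a further reduction, so $b\le_R a$; I would phrase this via the uniqueness of minimal coset representatives and the inclusion $W_K\subseteq W_{K'}$.

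For part~\eqref{item.Kp.upper}, I need $K'$ to be a right block of $a = w^K$, i.e.\ that $a^{K'}$ sends each $\alpha_{k'}$, $k'\in K'$, to a simple root. The key observation is that $a^{K'} = (w^K)^{K'} = w^{K'} = b$: indeed $\lcoset{W}{K'}\subseteq\lcoset{W}{K}$ since $K\subseteq K'$, so taking the $K'$-minimal representative of the $K$-minimal representative just gives the $K'$-minimal representative of $w$. Now Proposition~\ref{proposition.equiv_block}~\eqref{i.bijection} for $w$ already provides the bijection $\phi_{K'}\colon K'\to J'$ with $b\,s_{k'} = s_{\phi_{K'}(k')}\,b$, and since $b = a^{K'}$ this is exactly the criterion that $K'$ is a right block of $a$, with cutting point $a^{K'} = b = w^{K'}$; hence $w^{K'}\cuteq w^K$.

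For part~\eqref{item.K.lower}, write $w = b\,{}_{K'}w$ and recall ${}_{K'}w\in W_{K'}$. I would show $K$ is a right block of ${}_{K'}w$ by checking that $({}_{K'}w)^K$ maps each $\alpha_k$, $k\in K$, to a simple root. Here the cleanest route is to track the decomposition: since $a = w^K$ and $b = w^{K'}$ with $b\le_R a$ by~\eqref{item.LR.block}, one has $a = b\,t$ for some $t\le_R {}_{K'}w$ with $\ell(a) = \ell(b)+\ell(t)$, and in fact $t = {}_{K'}(a)$, i.e.\ $t = ({}_{K'}w)^K$ after identifying the relevant coset representatives — this is where Lemma~\ref{lemma.action_on_decomposition}~\eqref{item.right_decomp} and the skew-commutation bijections must be combined carefully. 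Then the bijection $\phi_K\colon K\to J$ for $w$ restricts/transfers to a bijection witnessing that $K$ is a block of ${}_{K'}w$, with cutting point ${}_{K'}w^K = {}_{K'}(w^K)\cuteq {}_{K'}w$. For the final "reduced" clause, I would use the characterization that $K$ is reduced for an element $x$ iff every $s_k$ with $k\in K$ actually occurs in a reduced word for $x_K$ (equivalently ${}_Kx\neq 1$ "on all of $K$"); since ${}_K w$ and ${}_K({}_{K'}w)$ generate the same relevant portion — precisely, ${}_{K'}w$ and $w$ have the same $W_K$-component up to the action that does not touch $K$ — the reducedness of $K$ is unchanged. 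This transfer of reducedness is the step I expect to be the main obstacle, since it requires pinning down exactly how the $W_K$-part of $w$ survives inside ${}_{K'}w$; I would isolate it as a short sublemma using Lemma~\ref{lemma.skew_commutation} and Lemma~\ref{lemma.order_representative}. Finally, the statements for left blocks follow by the symmetry $w\mapsto w^{-1}$ (or directly by the left-right symmetric versions of all the lemmas invoked), so I would just remark on this rather than repeat the argument.
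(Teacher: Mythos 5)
Your proposal is correct and follows essentially the same route as the paper: all three parts rest on Proposition~\ref{proposition.equiv_block} together with the identities $(w^K)^{K'}=w^{K'}$ and $({}_{K'}w)^K={}_{K'}(w^K)$, and the reducedness claim reduces in both treatments to ${}_K({}_{K'}w)={}_Kw$. The only cosmetic difference is in part~(iii): the paper cancels $w^{K'}$ on the left of the double-coset identity $W_J w = wW_K$ after rewriting $W_J w^{K'}=w^{K'}W_{\phi_{K'}^{-1}(J)}$, whereas you verify that $({}_{K'}w)^K$ sends each $\alpha_k$ to the simple root $\alpha_{\phi_{K'}^{-1}(\phi_K(k))}$ --- the same composition of the two bijections, phrased at the level of roots rather than subgroups.
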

\begin{proof}
  \eqref{item.LR.block} holds because $w^{K'} = (w^{K})^{K'} \leq_R w^K \leq_R w$, and
  similarly on the left.

  \eqref{item.Kp.upper} is a trivial consequence of~\eqref{item.LR.block} and
  Proposition~\ref{proposition.cutting.U}.

  For~\eqref{item.K.lower}, first note that $({}_{K'}w)^K={}_{K'}(w^K)$, so that the
  notation ${}_{K'} w^K$ is unambiguous. Consider the bijection $\phi$
  from $K'$ to $J'$ of Proposition~\ref{proposition.equiv_block}, and
  note that $W_J w^{K'} = w^{K'} W_{\phi^{-1}(J)}$. Therefore,
  \begin{equation*}
    w^{K'} {}_{K'}w W_K = w W_K = W_J\; w = W_J\; w^{K'} {}_{K'}w =
    w^{K'} W_{\phi^{-1}(J)} \; {}_{K'}w\,.
  \end{equation*}
  Simplifying by $w^{K'}$ on the left, one obtains that
  \begin{equation*}
    {}_{K'}w \; W_K = W_{\phi^{-1}(J)} \; {}_{K'}w\,,
  \end{equation*}
  proving that $K$ is also a block of ${}_{K'}w$. The reduction
  statement is trivial.
\end{proof}

We saw in Remark~\ref{remark.union.typeA} that the set of blocks is closed under
unions and intersections in type $A$. This holds for general type.

\begin{proposition}
  \label{proposition.block.union}
  The set $\Ks{w}$ (resp. $\Js{w}$) of right (resp. left) blocks is
  stable under union and intersection. Hence, it forms a distributive
  sublattice of the Boolean lattice $\booleanlattice{I}$.
\end{proposition}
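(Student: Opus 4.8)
The plan is to recast the block condition as a purely group-theoretic statement and then reduce the claim to two classical facts about standard parabolic subgroups. By definition, $K\subseteq I$ is a right block of $w$ if and only if $W_J w = w W_K$ for some $J$, which is the same as $w W_K w^{-1}=W_J$; equivalently, $w W_K w^{-1}$ is a standard parabolic subgroup of $W$. Since a standard parabolic subgroup $W_J$ determines $J$ uniquely, this also exhibits the map sending a right block $K$ to its associated left block $J$ as the map $W_K\mapsto w W_K w^{-1}=W_J$ at the level of parabolic subgroups. An entirely symmetric remark holds for left blocks; concretely, the left blocks of $w$ are the right blocks of $w^{-1}$.

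Next I would invoke the two standard identities for standard parabolic subgroups of a Coxeter group: $W_A\cap W_B=W_{A\cap B}$ (this is where the combinatorics of Coxeter groups genuinely enters; see e.g.~\cite{Bjorner_Brenti.2005}, the point being that the set of simple reflections occurring in a reduced word of an element of $W_A$ is well defined and contained in $A$), and $\langle W_A,W_B\rangle=W_{A\cup B}$ (immediate, since $W_A=\langle s_a\mid a\in A\rangle$). Let $K_1,K_2\in\Ks{w}$ with associated left blocks $J_1,J_2$, so $w W_{K_i} w^{-1}=W_{J_i}$. Since conjugation by $w$ is a group automorphism of $W$, it commutes both with intersection of subgroups and with passing to the subgroup generated by a family; hence
\[
w W_{K_1\cap K_2} w^{-1}=w(W_{K_1}\cap W_{K_2})w^{-1}=W_{J_1}\cap W_{J_2}=W_{J_1\cap J_2}
\]
and
\[
w W_{K_1\cup K_2} w^{-1}=w\langle W_{K_1},W_{K_2}\rangle w^{-1}=\langle W_{J_1},W_{J_2}\rangle=W_{J_1\cup J_2}.
\]
Therefore $K_1\cap K_2$ and $K_1\cup K_2$ are again right blocks of $w$, with associated left blocks $J_1\cap J_2$ and $J_1\cup J_2$; the same computation (or the symmetry above) gives the corresponding statement for $\Js{w}$.

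This shows $\Ks{w}$ and $\Js{w}$ are subsets of the Boolean lattice $\booleanlattice{I}$ closed under $\cap$ and $\cup$, hence sublattices of $\booleanlattice{I}$; and any sublattice of a distributive lattice is distributive, so both are distributive lattices. I do not anticipate a serious obstacle: once the conjugation reformulation is in place the argument is formal, and the only non-formal ingredient is the classical equality $W_A\cap W_B=W_{A\cap B}$. If one preferred to avoid citing that identity and stay closer to the machinery already developed, one could instead derive the two closure properties from Lemma~\ref{lemma.subblocks} together with the root-permuting bijection $\phi$ of Proposition~\ref{proposition.equiv_block}, by tracking how the bijections attached to $K_1$ and to $K_2$ glue on $K_1\cup K_2$ and restrict on $K_1\cap K_2$; but the conjugation argument is considerably shorter.
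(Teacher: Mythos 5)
Your proof is correct and follows essentially the same route as the paper: both arguments reduce the claim to the identities $W_A\cap W_B=W_{A\cap B}$ and $\langle W_A,W_B\rangle=W_{A\cup B}$ together with the fact that conjugation by $w$ is an automorphism carrying $W_{K_i}$ to $W_{J_i}$, the paper merely writing out the conjugation of a generic alternating product instead of phrasing it as ``conjugation commutes with intersection and generation.'' The concluding step (a sublattice of a distributive lattice is distributive) is also identical.
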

\begin{proof}
  Let $K$ and $K'$  be right blocks for $w\in W$, and $J$ and $J'$
  be the corresponding left blocks, so that:
  \begin{equation*}
  	w W_K = W_J w \quad \text{and} \quad w W_{K'} = W_{J'} w.
  \end{equation*}

  Take $u\in W_{K\cap K'} = W_K \cap W_{K'}$. Then, $w u w^{-1}$ is
  both in $W_J$ and $W_{J'}$ and therefore in $W_J\cap W_{J'} =
  W_{J\cap J'}$. This implies $w W_{K\cap K'} w^{-1}\subseteq W_{J\cap J'}$.
  By symmetry, the inclusion $w^{-1} W_{J\cap J'} w \subseteq
  W_{K\cap K'}$ holds as well, and therefore $W_{J\cap J'} w = w
  W_{K\cap K'}$. In conclusion, $K\cap K'$ is a right block, with $J\cap
  J'$ as corresponding left block.

  Now take $u \in W_{K \cup K'} = \langle W_K, W_{K'}\rangle$, and
  write $u$ as a product $u_1 u'_1 u_2 u'_2\cdots u_\ell u'_\ell$, where
  $u_i \in W_K$ and $u'_i \in W_{K'}$ for all $1\le i\le \ell$. Then, for each $i$,
  $w u_i w^{-1} \in W_{J}$ and $w u'_i w^{-1} \in W_{J'}$. By
  composition, $w u w^{-1}\in W_{J} W_{J'} W_{J} W_{J'}\cdots W_J
  W_{J'} \subseteq W_{J\cup J'}$. Using symmetry as above, we conclude
  that $w W_{K\cup K'} = W_{J\cup J'} w$. In summary, $K\cup K'$ is a
  right block, with $J\cup J'$ as corresponding left block.

  Finally, since blocks are stable under union and intersection, they form a sublattice
  of the Boolean lattice. Any sublattice of a distributive lattice is distributive.
\end{proof}

Next we relate the union and intersection operation on blocks with the
meet and join operations in right and left order. We start with the following
general statement which must be classical, though we have not found it
in the literature.
\begin{lemma}
  \label{lemma.intersection_join}
  Take $w\in W$ and $J,J',K,K'\subseteq I$. Then
  \begin{equation*}
    w^{K\cap K'} = w^K \join_R w^{K'} \qquad \text{and} \qquad
    \lcoset{w}{J\cap J'} = \lcoset{w}{J} \join_L \lcoset{w}{J'}\,.
  \end{equation*}
\end{lemma}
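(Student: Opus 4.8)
The claim is that $w^{K\cap K'}=w^K\join_R w^{K'}$, i.e.\ that the coset representative obtained from the smaller parabolic $W_{K\cap K'}$ is the right-order join of the two representatives $w^K$ and $w^{K'}$. My first step would be to establish the easy inequality $w^K\le_R w^{K\cap K'}$ (and the same with $K'$): this is just the observation that $W_{K\cap K'}\subseteq W_K$, so the coset $wW_K$ is a union of $W_{K\cap K'}$-cosets, and the minimal representative of the big coset lies below the minimal representative of any sub-coset; concretely, write $w^K=w^{K\cap K'}\cdot u$ for suitable $u\in W_K$ via the decomposition along $W_{K\cap K'}$, and check lengths add. Hence $w^{K\cap K'}$ is a common upper bound for $w^K$ and $w^{K'}$ in right order, so $w^K\join_R w^{K'}\le_R w^{K\cap K'}$ (recall $(W,\le_R)$ is a lattice, Section~\ref{ss.orders}).

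For the reverse inequality I need any common upper bound $z$ of $w^K$ and $w^{K'}$ in right order to dominate $w^{K\cap K'}$. The cleanest route is to use the characterization of $w^K$ as the unique element of $\rcoset{W}{K}$ in its coset, together with right order: $w^K\le_R z$ means $z=w^K v$ reduced with $v\in W$, and one controls the right descents. Actually the slickest approach is via left descent sets and the roots: working with non-descents, $k\notin\Des(w^K)$ for $k\in K$, and I want to show $z$ has no right descent in $K\cap K'$ that obstructs $w^{K\cap K'}\le_R z$. I would instead prove the statement through its left-sided twin and a symmetry/duality argument, or — more robustly — reduce to the case $w=w_0$ plus Proposition~\ref{proposition.interval}: every interval $[1,w]_R$ is of the form handled by restricting to a lower interval, and for $w_0$ the statement is the classical fact that $w_0^{K\cap K'}$ is the longest element of the descent class indexed by $I\setminus(K\cap K')$, which is visibly the right-order join of $w_0^K$ and $w_0^{K'}$ since descent classes under $\le_R$ form exactly the Boolean lattice $\booleanlattice{I}$ (as noted in the caption of Figure~\ref{figure.cutting_poset}). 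The general case then follows because $x\mapsto x^K$ is compatible with passing to the parabolic quotient.

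Concretely, the key steps in order are: (1) show $W_{K\cap K'}=W_K\cap W_{K'}$ and deduce $w^K,w^{K'}\le_R w^{K\cap K'}$ with an explicit reduced factorization through $w^{K\cap K'}$; (2) conclude $w^K\join_R w^{K'}\le_R w^{K\cap K'}$; (3) for the reverse direction, let $z:=w^K\join_R w^{K'}$ and show $z\in\rcoset{W}{K\cap K'}$, i.e.\ $\Des(z)\cap(K\cap K')=\emptyset$ — this uses that $z\ge_R w^K$ and $z\ge_R w^{K'}$ and the fact (Proposition~\ref{proposition.interval} / standard weak-order lattice combinatorics) that the join in right order has left descent set, hence right non-descent constraints, inherited appropriately; (4) combine with the fact that $z\le_R w$ (since both $w^K,w^{K'}\le_R w$ and $w$ is an upper bound) to write $w=z\cdot w'$ reduced and check $z$ represents the same coset $wW_{K\cap K'}$, forcing $z=w^{K\cap K'}$ by uniqueness of the minimal coset representative. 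The left-order statement $\lcoset{w}{J\cap J'}=\lcoset{w}{J}\join_L\lcoset{w}{J'}$ is then the mirror image (replace $\Des$ by $\Rec$, right by left).

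\textbf{Main obstacle.} The delicate point is step~(3): proving that the right-order join $z$ of two coset representatives $w^K$, $w^{K'}$ again has empty right descent set on $K\cap K'$. This is a statement about how descent sets behave under joins in weak order, and the natural tool is the inversion-set description of right weak order (where $u\le_R v$ iff the inversion set of $u$ is contained in that of $v$, and joins correspond to the smallest "closed" inversion set containing both) — I expect the crux to be verifying that adding the minimal set of inversions to make $\mathrm{Inv}(w^K)\cup\mathrm{Inv}(w^{K'})$ biclosed does not introduce any simple root of $K\cap K'$ as an inversion, which should follow from $K\cap K'$ being the common non-descent support, but making this rigorous in arbitrary type (rather than just type $A$ where one can argue with the permutation matrix) is where the real work lies. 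An alternative that sidesteps the inversion-set bookkeeping is to invoke Proposition~\ref{proposition.interval} to reduce to $w=w_0$ and use the Boolean structure of descent classes; I would likely present that reduction as the primary argument.
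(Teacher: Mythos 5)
Your skeleton agrees with the paper's: establish $w^K,\,w^{K'}\le_R w^{K\cap K'}$, deduce $v:=w^K\join_R w^{K'}\le_R w^{K\cap K'}$, then prove the reverse inequality by controlling the descents of $v$ in $K\cap K'$. The problem is that the proposal stops exactly where the proof has to happen: your step~(3) is left as an acknowledged obstacle, and neither suggested route closes it. The inversion-set/biclosed-set route is never carried out. The route you say you would present as primary — reduction to $w=w_0$ via Proposition~\ref{proposition.interval} — does not work as described: that proposition identifies the poset $[u,w]_R$ with $[1,u^{-1}w]_R$, but it does not transport the assignment $K\mapsto w^K$ (which depends on $w$ itself, not just on the poset $[1,w]_R$) into the corresponding assignment for $w_0$, and the phrase ``$x\mapsto x^K$ is compatible with passing to the parabolic quotient'' is essentially the statement to be proved. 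That no soft compatibility argument can be expected is underlined by the paper's remark immediately after this lemma that the companion statement for unions \emph{fails} for general $K,K'$. (Also, your displayed factorization $w^K=w^{K\cap K'}\cdot u$ is backwards; the correct one is $w^{K\cap K'}=w^K u$ with $u\in W_K$ and lengths adding, which is what gives $w^K\le_R w^{K\cap K'}$.)

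The missing step has a short direct proof, which is essentially the paper's. Since $W_{K\cap K'}\subseteq W_K$, write $w^{K\cap K'}=w^Kb$ reduced with $b\in W_K$; then $v\in[w^K,w^Kb]_R$, which by Proposition~\ref{proposition.interval} equals $\{w^Ka \suchthat a\le_R b\}\subseteq w^KW_K$. So $v=w^Ka$ with $a\in W_K$ and $\ell(v)=\ell(w^K)+\ell(a)$. If $k\in \Des(v)\cap K\cap K'$, then $k\in\Des(a)$ and $vs_k=w^K(as_k)$ is still reduced, so $vs_k\ge_R w^K$; symmetrically $vs_k\ge_R w^{K'}$, contradicting that $v$ is the \emph{least} upper bound. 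Hence $\Des(v)\cap(K\cap K')=\emptyset$. The same factorization gives $v\in wW_K\cap wW_{K'}=wW_{K\cap K'}$, so $v$ lies in the same coset as $w^{K\cap K'}$; together with $v\le_R w^{K\cap K'}$ and length-minimality of $w^{K\cap K'}$ in that coset, this forces $v=w^{K\cap K'}$. This last point also tightens your step~(4): knowing only that $v\le_R w^{K\cap K'}$ and $\Des(v)\cap (K\cap K')=\emptyset$ would not suffice (the identity satisfies both), so the coset membership you mention in passing really must be proved, and the factorization $v=w^Ka$ above is what provides it.
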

\begin{proof}
  We include a proof for the sake of completeness. By
  Lemma~\ref{lemma.subblocks} (i), $w^K, w^{K'} \le_R w^{K\cap K'}$,
  and therefore $v \le_R w^{K\cap K'}$, where $v=w^K\join_R w^{K'}$.
  Suppose that $v$ has a right descent $k\in K\cap K'$.  Then $vs_k$
  is still bigger than $w^K$ and $w^{K'}$ in right order, a
  contradiction to the definition of $v$. Hence $w^{K\cap K'} = w^K
  \join_R w^{K'}$, as desired. The statement on the left follows by
  symmetry.
\end{proof}

\begin{corollary}
  Take $w\in W$. Let $K, K' \subseteq I$ be two right blocks of $w$
  and $J, J' \subseteq I$ the corresponding left blocks. Then, for the right
  block $K\cap K'$ (resp. left block $J\cap J'$)
  \begin{equation*}
    w^{K\cap K'} = \lcoset{w}{J\cap J'} = w^K \join_R w^{K'} = \lcoset{w}{J} \join_L \lcoset{w}{J'}\,.
  \end{equation*}
\end{corollary}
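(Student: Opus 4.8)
The plan is to assemble the claimed chain of equalities directly from the three preceding results, since each link has already been established. First I would invoke Proposition~\ref{proposition.block.union}: because $K$ and $K'$ are right blocks of $w$ with corresponding left blocks $J$ and $J'$, the intersection $K\cap K'$ is again a right block of $w$, and the left block corresponding to it is precisely $J\cap J'$. This is the key structural fact that lets us even speak of ``the right block $K\cap K'$'' and ``the left block $J\cap J'$'' as a matching pair.

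Next I would apply part~\eqref{i.JK} of Proposition~\ref{proposition.equiv_block} to this matching pair $(K\cap K',\, J\cap J')$: whenever $K$ is a right block of $w$ with corresponding left block $J$, one has $\rcoset{w}{K}=\lcoset{w}{J}$. Specializing to $K\cap K'$ yields the middle equality
\begin{equation*}
  w^{K\cap K'} = \lcoset{w}{J\cap J'}\,.
\end{equation*}

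Finally I would quote Lemma~\ref{lemma.intersection_join}, which gives $w^{K\cap K'} = w^K \join_R w^{K'}$ on the right and $\lcoset{w}{J\cap J'} = \lcoset{w}{J}\join_L \lcoset{w}{J'}$ on the left, for arbitrary subsets (no block hypothesis is even needed there). Stringing the four expressions together through the common value $w^{K\cap K'}=\lcoset{w}{J\cap J'}$ produces exactly the asserted identity. There is essentially no obstacle here: the statement is a corollary in the literal sense, a one-line consequence of Proposition~\ref{proposition.block.union}, Proposition~\ref{proposition.equiv_block}\eqref{i.JK}, and Lemma~\ref{lemma.intersection_join}; the only thing to be careful about is making sure the left/right block correspondence used when applying Proposition~\ref{proposition.equiv_block} to $K\cap K'$ is indeed the one supplied by Proposition~\ref{proposition.block.union}, namely $J\cap J'$, rather than some a priori different left block.
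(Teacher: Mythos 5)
Your proof is correct and is precisely the intended derivation: the paper states this as an unproved corollary because it follows immediately from Proposition~\ref{proposition.block.union} (whose proof explicitly identifies $J\cap J'$ as the left block corresponding to $K\cap K'$), Proposition~\ref{proposition.equiv_block}(iii), and Lemma~\ref{lemma.intersection_join}. Your closing remark about checking that the left block paired with $K\cap K'$ is indeed $J\cap J'$ is exactly the right point to be careful about, and it is settled in the proof of Proposition~\ref{proposition.block.union}.
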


The analogous statement of Lemma~\ref{lemma.intersection_join} for
unions fails in general: take for example $w=4231$ and $K=\{3\}$ and
$K'=\{1,2\}$, so that $w^K=4213$ and $w^{K'}=2341$; then $w^{K\cup K'}
= 1234$, but $w^K \meet_R w^{K'} = 2134$. However, it holds for
blocks:
\begin{lemma}
  \label{lemma.blocks_union_meet}
  Take $w\in W$. Let $K, K' \subseteq I$ be two right blocks of $w$
  and $J, J' \subseteq I$ the corresponding left blocks. Then, for the
  right block $K\cup K'$ (resp. left block $J \cup J'$):
  \begin{equation*}
    w^{K\cup K'} = \lcoset{w}{J\cup J'} = w^K \meet_R w^{K'} = \lcoset{w}{J} \meet_L \lcoset{w}{J'}\,.
  \end{equation*}
  Furthermore, $K\cup K'$ is reduced whenever $K$ and $K'$ are reduced, and
  similarly for the left blocks.
\end{lemma}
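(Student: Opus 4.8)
The plan is to reduce everything to statements already established, chaining together Proposition~\ref{proposition.block.union} and Lemma~\ref{lemma.intersection_join} applied on the appropriate side. First I would observe that by Proposition~\ref{proposition.block.union}, $K\cup K'$ is a right block of $w$ with corresponding left block $J\cup J'$; hence by condition~(iii) of Proposition~\ref{proposition.equiv_block} we immediately get the first equality $w^{K\cup K'} = \lcoset{w}{J\cup J'}$. So the real content is identifying $w^{K\cup K'}$ with the two meets $w^K \meet_R w^{K'}$ and $\lcoset{w}{J}\meet_L\lcoset{w}{J'}$.

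The key step is a dualization trick. The failure of the naive ``union'' analogue of Lemma~\ref{lemma.intersection_join} shown in the displayed counterexample is precisely because $w^{K\cup K'}$ need not be $w^K\meet_R w^{K'}$ for arbitrary $K,K'$; but when $K,K'$ are genuine blocks we have the extra rigidity that $w^{K}\cuteq w$ and $w^{K'}\cuteq w$, and Proposition~\ref{proposition.cutting.U} tells us that the set $\{u : u\cuteq w\}$ equals $U_u$-type intervals and is stable under $\join_R$. I would instead run the argument ``from below'': apply Lemma~\ref{lemma.intersection_join} not to $w$ but to the minimal cutting point obtained by intersecting. Concretely, set $v := w^{K\cup K'} = \lcoset{w}{J\cup J'}$. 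By Lemma~\ref{lemma.subblocks}\eqref{item.LR.block}, $v\le_R w^K$ and $v\le_R w^{K'}$, so $v\le_R w^K\meet_R w^{K'}$. For the reverse inequality, note that $w^K$ and $w^{K'}$ are both cutting points $\cuteq w$, and $K\cup K'$ is the union of the corresponding blocks; one shows any common lower bound $z$ of $w^K,w^{K'}$ in right order that is itself $\le_R w$ must lie below $v$, by checking descent sets: the right descents avoided by $v$ are exactly those in $K\cup K'$, and $z\le_R w^K$ forces $z$ to avoid $K$ while $z\le_R w^{K'}$ forces it to avoid $K'$. Since $w^K\meet_R w^{K'}\le_R w$ (both factors are $\le_R w$ and right order is a lattice, with the meet of things below $w$ still below $w$), it qualifies, giving $w^K\meet_R w^{K'}\le_R v$. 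The left-order statement follows by the symmetric argument, or simply because $\lcoset{w}{J\cup J'} = w^{K\cup K'}$ already.

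Finally, for the reducedness claim: if $K$ and $K'$ are reduced right blocks, I would argue $K\cup K'$ is reduced by contradiction. If $K\cup K'$ were not reduced, there is a proper subset $K''\subsetneq K\cup K'$ with $w^{K''} = w^{K\cup K'}$; intersecting $K''$ with $K$ and with $K'$ (using Proposition~\ref{proposition.block.union}, these are again blocks) and applying Lemma~\ref{lemma.intersection_join} together with the fact that $w^{K\cup K'}\le_R w^K$, one forces $K\cap K''$ or $K'\cap K''$ to drop the element witnessing non-reducedness, contradicting that $K$ (resp.\ $K'$) is reduced. I expect the main obstacle to be this last reducedness bookkeeping — making the intersection argument precise so that a defect in $K\cup K''$ genuinely propagates to a defect in one of $K$ or $K'$ — since the block-theoretic definition of ``reduced'' (Definition~\ref{definition.reduced}) is about reduced words for $w_K$, and one has to translate that combinatorial condition through the union/intersection operations carefully. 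The right-order/meet identifications, by contrast, should be routine once the descent-set characterization of $\le_R$ between cutting points is used.
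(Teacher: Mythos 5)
Your skeleton matches the paper's (the easy inequality $w^{K\cup K'} \le_R w^K \meet_R w^{K'}$ from Lemma~\ref{lemma.subblocks}, then a descent argument for the reverse), but the descent argument is carried out on the wrong side and its key claim is false. You assert that $z \le_R w^K$ forces $z$ to ``avoid $K$'', i.e.\ $\Des(z)\cap K=\emptyset$. This fails: in the paper's running example $w=4312$, the set $K=\{2,3\}$ is a reduced right block with $w^K=4123=s_3s_2s_1$, and $z=1243=s_3\le_R 4123$ has the right descent $3\in K$. (Likewise ``the right descents avoided by $v$ are exactly those in $K\cup K'$'' is an overstatement; one only knows $\Des(v)\cap(K\cup K')=\emptyset$.) The correct characterization is on the \emph{left}: for $u\in[1,w]_R$ one has $u\le_R \lcoset{w}{J}=w^K$ if and only if $\Rec(u)\cap J=\emptyset$, where $J$ is the corresponding left block (Lemma~\ref{lemma.condition_interval}, a consequence of Proposition~\ref{proposition.tiling}). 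With that substitution your argument does close: any common lower bound $z$ of $w^K$ and $w^{K'}$ lies in $[1,w]_R$ and has $\Rec(z)$ disjoint from $J$ and from $J'$, hence from $J\cup J'$, hence $z\le_R\lcoset{w}{J\cup J'}=w^{K\cup K'}$. The paper implements the same idea differently: it translates the interval $[w^{K\cup K'},w]_R$ down to the identity using Lemma~\ref{lemma.subblocks}~\eqref{item.K.lower}, so that one may assume $w^{K\cup K'}=1$, $[1,w]_R\subseteq W_{K\cup K'}$ and $J\cup J'=K\cup K'$, and then observes that the first letter of any reduced word for $w^K\meet_R w^{K'}$ would have to lie in $J\cup J'$ while being excluded from both $J$ and $J'$.

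The reducedness claim also has a real hole, as you suspected. If $K\cup K'$ is not reduced, the witnessing subset $K''\subsetneq K\cup K'$ with $w^{K''}=w^{K\cup K'}$ need not be a block, so Proposition~\ref{proposition.block.union} does not let you form blocks $K\cap K''$ and $K'\cap K''$, and the propagation argument you sketch has no starting point. The paper sidesteps this by using the other formulation of reducedness from Definition~\ref{definition.reduced}: $K$ is reduced exactly when every $k\in K$ occurs in every reduced word of ${}_Kw$; since ${}_Kw\le_L{}_{K\cup K'}w$ (a consequence of $w^{K\cup K'}\le_R w^K$), every reduced word of ${}_{K\cup K'}w$ contains every $k\in K$, and symmetrically every $k'\in K'$, so $K\cup K'$ is reduced.
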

\begin{proof}
  By symmetry, it is enough to prove the statements for right blocks.

  By Lemma~\ref{lemma.subblocks}~\eqref{item.LR.block}, $w^{K\cup K'} \le_R w^K,
  w^{K'}$, and therefore $w^{K\cup K'} \le_R w^K \meet_R w^{K'}$.

  Note that the interval $[w^{K\cup K'}, w]_R$ contains all the
  relevant points: $w^K$, $w^{K'}$, and $w^K \meet_R w^{K'}$. Consider
  the translate of this interval obtained by dividing on the left by
  $w^{K\cup K'}$, or equivalently by using the map $u\mapsto {}_{K\cup
    K'}u$. By Lemma~\ref{lemma.subblocks}~\eqref{item.K.lower}, $K$ and $K'$ are
  still blocks of ${}_{K\cup K'} w$. From now on, we may therefore
  assume without loss of generality that $w^{K\cup K'}=1$. It follows
  at once that $[1,w]_R$ lies in the parabolic subgroup $W_{K\cup K'}$
  and that $J\cup J'=K\cup K'$.

  If $w^K \meet_R w^{K'}=1=w^{K\cup K'}$, then we are done. Otherwise,
  let $i\in K\cup K'=J\cup J'$ be the first letter of some reduced
  word for $w^K \meet_R w^{K'}$. Since $w^K \meet_R w^{K'}$ is in the
  interval $[1,w^K]_R$, $i$ cannot be in $J$; by symmetry $i$ cannot
  be in $J'$ either, a contradiction.

  Assume further that $K$ and $K'$ are reduced. Then, any $k\in K$
  appears in any reduced word for ${}_K w$, and therefore in any
  reduced word for ${}_{K\cup K'}w$ since ${}_K w \leq_L {}_{K\cup K'}
  w$. By symmetry, the same holds for $k'\in K'$. Hence $K\cup K'$ is
  reduced.
\end{proof}

\begin{theorem}
\label{theorem.lattice}
  The map $K\mapsto w^K$ (resp. $J\mapsto \lcoset{w}{J}$) defines a
  lattice antimorphism from the lattice $\Ks{w}$ (resp. $\Js{w}$) of
  right (resp. left) blocks of $w$ to both right and left order on
  $W$.

  The set of cutting points for $w$, which is the image set
  \begin{equation*}
    \{w^K \mid K\in \Ks{w}\}=\{\lcoset{w}{J} \mid J\in \Js{w}\}
  \end{equation*}
  of the previous map, is a distributive sublattice of right (resp. left)
  order.
\end{theorem}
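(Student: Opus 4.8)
The plan is to assemble the results already established in this section; the substantive work is carried entirely by Lemmas~\ref{lemma.intersection_join} and~\ref{lemma.blocks_union_meet}, so what remains is mostly to phrase things correctly. Fix $w\in W$ and consider the map $\phi\colon\Ks{w}\to W$, $K\mapsto w^{K}$ (and its left-hand analogue $J\mapsto\lcoset{w}{J}$).

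First I would record the two structural identities. For right blocks $K,K'$ of $w$, Lemma~\ref{lemma.intersection_join} gives
\begin{equation*}
  \phi(K\cap K') \;=\; w^{K}\join_R w^{K'} \;=\; \phi(K)\join_R\phi(K'),
\end{equation*}
while Lemma~\ref{lemma.blocks_union_meet} gives
\begin{equation*}
  \phi(K\cup K') \;=\; w^{K}\meet_R w^{K'} \;=\; \phi(K)\meet_R\phi(K'),
\end{equation*}
and the same identities hold verbatim with $\join_L,\meet_L$ in place of $\join_R,\meet_R$ and with $J\mapsto\lcoset{w}{J}$ in place of $K\mapsto w^{K}$. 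Since the right-hand sides $w^{K\cap K'}$ and $w^{K\cup K'}$ are again cutting points of $w$ (Proposition~\ref{proposition.block.union} ensures $K\cap K'$ and $K\cup K'$ are blocks), the image set
\begin{equation*}
  L \;:=\; \{w^{K}\mid K\in\Ks{w}\} \;=\; \{\lcoset{w}{J}\mid J\in\Js{w}\}
\end{equation*}
(the two descriptions agreeing by Proposition~\ref{proposition.equiv_block}\eqref{i.JK}, which identifies $w^{K}=\rcoset{w}{K}$ with $\lcoset{w}{J}$ for corresponding blocks) is closed under the ambient operations $\meet_R$ and $\join_R$ of $(W,\le_R)$, hence is a sublattice of right order; likewise of left order. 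Combined with order-reversal (Lemma~\ref{lemma.subblocks}\eqref{item.LR.block}), the two displayed identities are precisely the assertion that $\phi$ is a lattice antimorphism from $\Ks{w}$ onto $L$, viewed inside either $(W,\le_R)$ or $(W,\le_L)$; the statement for $J\mapsto\lcoset{w}{J}$ is the mirror image.

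It then remains to see that $L$ is distributive. By Proposition~\ref{proposition.block.union}, $\Ks{w}$ is a distributive lattice, and by the previous paragraph $\phi$ is a surjective lattice homomorphism from $\Ks{w}$ onto $L^{\mathrm{op}}$, the lattice obtained from $L$ by interchanging meet and join. Distributivity is an identity, hence inherited by homomorphic images, so $L^{\mathrm{op}}$ is distributive; since the distributive law is self-dual, $L$ itself is distributive. This completes the argument.

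There is no genuine obstacle remaining, but two points deserve care in the writeup. First, ``$L$ is a sublattice of right order'' must be read as closure under the \emph{ambient} operations $\meet_R,\join_R$ of $(W,\le_R)$ — which is exactly what Lemmas~\ref{lemma.intersection_join} and~\ref{lemma.blocks_union_meet} supply, by exhibiting these joins and meets as the cutting points $w^{K\cap K'}$ and $w^{K\cup K'}$. Second, distributivity of $L$ cannot be read off from the inclusion $L\subseteq W$, since $(W,\le_R)$ is not itself distributive; one genuinely needs the homomorphic-image argument transported from $\Ks{w}$ rather than an ``ambient sublattice of a distributive lattice'' argument.
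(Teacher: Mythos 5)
Your proposal is correct and follows essentially the same route as the paper: the antimorphism property is exactly the combination of Lemmas~\ref{lemma.intersection_join} and~\ref{lemma.blocks_union_meet}, and distributivity of the image is deduced from Proposition~\ref{proposition.block.union} via the fact that a homomorphic image (here, the opposite of one) of a distributive lattice is distributive. Your closing remark — that one cannot argue via "sublattice of a distributive ambient lattice" since weak order is not distributive — is a correct and worthwhile clarification of why the homomorphic-image argument is needed.
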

\begin{proof}
  The first statement is the combination of
  Lemmas~\ref{lemma.intersection_join}
  and~\ref{lemma.blocks_union_meet}. The second statement follows from
  Proposition~\ref{proposition.block.union}, since the quotient of a
  distributive sublattice by a lattice morphism is a distributive
  lattice.
\end{proof}

\begin{corollary}
  \label{corollary.interval}
  Every interval of $(W,\cuteq)$ is a distributive sublattice and an induced
  subposet of both left and right order.
\end{corollary}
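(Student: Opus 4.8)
The plan is to identify each $\cuteq$-interval $[u,w]_\cuteq$ with an ordinary interval inside the lattice of cutting points of its top element $w$, and then quote Theorem~\ref{theorem.lattice}. Fix $u\cuteq w$ and write $C_w := \{w^K \mid K\in\Ks{w}\}$ for the set of cutting points of $w$; by the definition of $\cuteq$ (Definition~\ref{definition.cutting_point}) this is exactly $\{v\in W \mid v\cuteq w\}$, and by Theorem~\ref{theorem.lattice} it is a distributive sublattice of both $(W,\le_R)$ and $(W,\le_L)$. Note that $w=w^{\emptyset}\in C_w$ is the greatest element of $C_w$ for either order, since every cutting point lies below $w$ in both weak orders, and $u\in C_w$ because $u\cuteq w$.

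\textbf{Step 1: realizing the interval.} I would first check that
\[
  [u,w]_\cuteq \;=\; C_w\cap[u,w]_R \;=\; C_w\cap[u,w]_L .
\]
The inclusions $[u,w]_\cuteq\subseteq C_w\cap[u,w]_R$ and $[u,w]_\cuteq\subseteq C_w\cap[u,w]_L$ follow from Theorem~\ref{theorem.cutting_poset}, which says that $\cuteq$ refines both $\le_R$ and $\le_L$. Conversely, if $v\in C_w$ and $u\le_R v$, then $u\le_R v\le_R w$ with $u\cuteq w$, so the first bullet of Proposition~\ref{proposition.cutting.U} gives $u\cuteq v$, hence $v\in[u,w]_\cuteq$. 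For the left-order case one uses the analogous statement for $\le_L$: by Definition~\ref{definition.short_nondescents} the set $U_u=\{v \mid u\cuteq v\}$ equals the left-order interval $[u,s_{J(u)}u]_L$, so $w\in U_u$ together with $u\le_L v\le_L w$ forces $v\in U_u$, i.e. $u\cuteq v$.

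\textbf{Step 2: concluding.} Since $w=\max C_w$ and $u\in C_w$, the set $C_w\cap[u,w]_R$ is the principal filter of $u$ in $C_w$, hence a sublattice of $C_w$ with least element $u$ and greatest element $w$; that is, it is the interval $[u,w]$ computed in the distributive lattice $C_w$. An interval of a distributive lattice is again a distributive lattice, and because $C_w$ is a sublattice of $(W,\le_R)$ its meet and join agree with those of right weak order, so $[u,w]_\cuteq$ is a distributive sublattice of $(W,\le_R)$; the same argument with $\le_L$ in place of $\le_R$ gives the statement for left order. Finally, on $C_w$ the relation $\cuteq$ coincides with $\le_R$ and with $\le_L$: one direction is Theorem~\ref{theorem.cutting_poset}, and for the other, if $a,b\in C_w$ with $a\le_R b$ then $a\le_R b\le_R w$ with $a\cuteq w$, whence $a\cuteq b$ by Proposition~\ref{proposition.cutting.U} (and likewise on the left). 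Restricting to $[u,w]_\cuteq\subseteq C_w$ shows that the $\cuteq$-order on the interval is exactly the order induced from right, respectively left, weak order, which is the ``induced subposet'' assertion.

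\textbf{Main difficulty.} There is no serious obstacle here, as this is a corollary of Theorems~\ref{theorem.cutting_poset} and~\ref{theorem.lattice} together with Proposition~\ref{proposition.cutting.U}. The only point needing a word of care is the left-order counterpart of the first bullet of Proposition~\ref{proposition.cutting.U}, which is not stated explicitly; it is immediate from the description of $U_u$ as a left-order interval in Definition~\ref{definition.short_nondescents}, or alternatively from Lemma~\ref{lemma.subblocks}\,(i), which already records that the cutting-point map is simultaneously order-reversing for $\le_R$ and $\le_L$.
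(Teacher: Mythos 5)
Your proof is correct and follows essentially the same route as the paper: reduce to the set of cutting points of the top element, invoke Theorem~\ref{theorem.lattice} for distributivity, and verify that $\cuteq$, $\le_R$ and $\le_L$ coincide on that set (you do this via Proposition~\ref{proposition.cutting.U} and the interval description of $U_u$, where the paper instead runs the block-containment equivalences $w^K\le_L w^{K'}\Leftrightarrow K'\subseteq K\Leftrightarrow w^K\cuteq w^{K'}$; the content is the same). Your explicit handling of a general interval $[u,w]_\cuteq$ simply spells out the paper's ``without loss of generality'' reduction to $[1,w]_\cuteq$.
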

\begin{proof}
  Take an interval in $(W,\cuteq)$; without loss of generality, we may
  assume that it is of the form $[1,w]_\cuteq = \{w^K \mid K\in
  \RKs{w}\}$. The interval $[1,w]_\cuteq$ is not only a subposet of
  left (resp. right) order, but actually the induced subposet; indeed
  for $K$ and $K'$ right reduced blocks, and $J$ and $J'$ the
  corresponding left blocks,
  \begin{equation*}
    w^K \leq_L w^{K'}  \Leftrightarrow w^K \leq_R w^{K'}
    \Leftrightarrow J'\subseteq J \Leftrightarrow K'\subseteq K
    \Leftrightarrow w^K \leq_\cuteq w^{K'}\,.
  \end{equation*}
  Therefore, using Theorem~\ref{theorem.lattice}, it is a
  distributive sublattice of left (resp. right) order.
\end{proof}

Let us now consider the lower covers in the cutting poset for a fixed $w\in W$.
They correspond to nontrivial blocks $J$ which are minimal for inclusion, and in
particular reduced.
\begin{lemma}
\label{lemma.minimal}
  Each minimal nontrivial (left) block $J$ for $w\in W$ contains at least one
  element which is in no other minimal nontrivial block for $w$.
\end{lemma}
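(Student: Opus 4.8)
The plan is to argue by contradiction. Suppose $J$ is a minimal nontrivial left block of $w$ such that every $j\in J$ lies in some \emph{other} minimal nontrivial left block. The first step I would take is to record the following characterization of trivial left blocks: a left block $J'$ is trivial (i.e.\ $w_{J'}=1$) if and only if $\Rec(w)\cap J'=\emptyset$. Indeed $w_{J'}=1$ means exactly $w=\lcoset{w}{J'}$, i.e.\ $w\in\lcoset{W}{J'}=\{x\suchthat \Rec(x)\cap J'=\emptyset\}$. The virtue of this reformulation is that triviality then becomes manifestly stable under unions.

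Next, for each $j\in J$ I would choose a minimal nontrivial left block $J_j\neq J$ with $j\in J_j$, and set $T_j:=J\cap J_j$. By Proposition~\ref{proposition.block.union}, $T_j\in\Js{w}$, and $T_j\subseteq J$. If $T_j=J$ then $J\subseteq J_j$; since $J_j$ is minimal among nontrivial left blocks and $J$ is a nontrivial left block contained in it, this forces $J=J_j$, contradicting $J_j\neq J$. Hence $T_j\subsetneq J$, and since $J$ is minimal among nontrivial left blocks, the block $T_j$ must be trivial. On the other hand $j\in J$ and $j\in J_j$ give $j\in T_j$, so that $J=\bigcup_{j\in J}T_j$.

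Finally I would combine the two steps: writing $J$ as the union of the trivial blocks $T_j$ and using the characterization above, $\Rec(w)\cap J=\bigcup_{j\in J}\bigl(\Rec(w)\cap T_j\bigr)=\emptyset$, so $J$ is itself a trivial left block --- contradicting the choice of $J$. Therefore some element of $J$ lies in no other minimal nontrivial left block, as claimed. (By symmetry the same argument applies to right blocks.)

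I do not anticipate a genuine obstacle here: the proof only uses that intersections of blocks are blocks (Proposition~\ref{proposition.block.union}), the minimality hypothesis, and the elementary description of trivial blocks via $\Rec(w)$ from Section~\ref{ss.parabolic}. The one point deserving a sentence of care is precisely that last characterization, since it is what makes triviality closed under union; by contrast, the condition $w_{J'}=1$ is not visibly inherited by superblocks without passing through $\Rec(w)$. One could also note, as in the paragraph preceding the lemma, that minimal nontrivial blocks are automatically reduced, but this is not needed for the argument.
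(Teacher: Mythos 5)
Your proof is correct and follows essentially the same route as the paper's: write $J$ as the union of its intersections with the other minimal nontrivial blocks, note each such intersection is a proper sub-block of $J$ and hence trivial by minimality, and conclude that $J$ itself would be trivial. The only difference is that you justify the step ``a union of trivial blocks is trivial'' explicitly via the characterization $\Rec(w)\cap J'=\emptyset$, a detail the paper leaves implicit; this is a worthwhile clarification but not a different argument.
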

\begin{proof}
  Assume otherwise. Then, $J$ is the union of its intersections with
  the other nontrivial blocks. Each such intersection is necessarily
  a trivial block, and a union of trivial blocks is a trivial block.
  Therefore, $J$ is a trivial block, a contradiction.
\end{proof}

\begin{corollary}
  \label{corollary.hypercube}
  The semilattice of unions of minimal nontrivial blocks for a fixed
  $w\in W$ is free.
\end{corollary}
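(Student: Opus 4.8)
The statement asserts that the join-semilattice generated under union by the minimal nontrivial (left) blocks of $w$ is free, i.e.\ isomorphic to the Boolean lattice on the (finite) set of those blocks. Concretely, write $J_1,\dots,J_p$ for the pairwise distinct minimal nontrivial left blocks of $w$; there are finitely many since they are subsets of $I$. The plan is to show that the map
\[
  A \longmapsto \bigcup_{i\in A} J_i, \qquad A\subseteq\{1,\dots,p\},
\]
is injective. By Proposition~\ref{proposition.block.union} the set of left blocks of $w$ is closed under union, so the image of this map is a sub-join-semilattice of the Boolean lattice $\booleanlattice{I}$ sitting inside $\Js{w}$; hence injectivity is precisely the assertion that this semilattice of unions is free on $p$ generators.

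For injectivity I would invoke Lemma~\ref{lemma.minimal}: for each $i$ choose an element $x_i\in J_i$ that lies in no other minimal nontrivial block $J_j$ with $j\neq i$. Now suppose $A\neq A'$ satisfy $\bigcup_{i\in A}J_i=\bigcup_{i\in A'}J_i=:J$, and pick an index $i_0$ in the symmetric difference of $A$ and $A'$, say $i_0\in A\setminus A'$. Then $x_{i_0}\in J_{i_0}\subseteq J=\bigcup_{i\in A'}J_i$, so $x_{i_0}\in J_j$ for some $j\in A'$; since $i_0\notin A'$ we have $j\neq i_0$, contradicting the choice of $x_{i_0}$. Therefore $A=A'$, and the map is injective.

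There is essentially no obstacle here beyond correctly interpreting ``free'': the combinatorial content has already been extracted in Lemma~\ref{lemma.minimal}. The only point requiring a moment's care is whether one adjoins the empty union (the trivial block $\emptyset$, arising from $A=\emptyset$) as a bottom element; the argument above is unaffected either way, so the semilattice of unions of minimal nontrivial blocks is the free join-semilattice on the $J_i$ (respectively, with a bottom adjoined), as claimed.
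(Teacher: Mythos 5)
Your proof is correct and follows exactly the route the paper takes: the paper simply declares the corollary ``a straightforward consequence of Lemma~\ref{lemma.minimal},'' and your argument (choosing for each minimal nontrivial block an element belonging to no other, then deducing injectivity of $A\mapsto\bigcup_{i\in A}J_i$) is precisely the spelled-out version of that consequence. The paper also notes an alternative derivation from the distributivity of the interval $[1,w]_\cuteq$ (Corollary~\ref{corollary.interval}), but your chosen route is the primary one.
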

\begin{proof}
  This is a straightforward consequence of Lemma~\ref{lemma.minimal}.
  Alternatively, this property is also a direct consequence of
  Corollary~\ref{corollary.interval}, since it holds in general for
  any distributive lattice.
\end{proof}

\subsection{Index sets for cutting points}
\label{ss.indexing_cutting_points}

Recall that by Theorem~\ref{theorem.lattice} the cutting points of $w$
form a distributive lattice. Hence, by Birkhoff's representation
theorem, they can be indexed by some collection of subsets closed
under unions and intersections.
We therefore now aim at finding a suitable choice of indexing scheme
for the cutting points of $w$. More precisely, for each $w$, we are
looking for a pair $(\KKs{w}, \phi^{(w)})$, where $\KKs{w}$ is a
subset of some Boolean lattice (typically $\booleanlattice{I}$) such
that $\KKs{w}$ ordered by inclusion is a lattice, and
\begin{equation*}
  \phi^{(w)}\ :\ \KKs{w} \longrightarrow [1,w]_\cuteq
\end{equation*}
is an isomorphism (or antimorphism) of lattices.

Here are some of the desirable properties of this indexing:
\begin{enumerate}
\item The indexing gives a Birkhoff's representation of the lattice of
  cutting points of $w$. Namely, $\KKs{w}$ is a sublattice of the
  chosen Boolean lattice, and unions and intersections of indices
  correspond to joins and meets of cutting points.
\item The isomorphism $\phi^{(w)}$ is given by the map $J\mapsto
  \lcoset{w}{J}$. In that case the choice amounts to defining a
  section of those maps.
\item The indexing generalizes the usual combinatorics of descents.
\item The indices are blocks: $\KKs{w}\subseteq\Js{w}$.
\item We may actually want to have two indexing sets $\KKs{w}$ and
  $\KKs{w}$, one on the left and one on the right, with a natural
  isomorphism between them.
\item The index of $u$ in $\KKs{w}$ does not depend on $w$ (as long as
  $u$ is a cutting point of $w$). One may further ask for this index
  to not depend on $W$, so that the indexing does not change through
  embedding of parabolic subgroups.
\end{enumerate}
Unfortunately, there does not seem to be an ideal choice satisfying all of
these properties at once, and we therefore propose several imperfect
alternatives.

\subsubsection{Indexing by reduced blocks}

The first natural choice is to take reduced blocks as indices; then,
$\KKs{w}=\RKs{w}$ (and similarly $\JJs{w}=\RJs{w}$ on the left). This
indexing scheme satisfies most of the desired properties, except that
it does not provide a Birkhoff representation, and depends on $w$.
\begin{remark}
  By Lemma~\ref{lemma.blocks_union_meet}, if $K,K'\subseteq I$ are
  reduced right blocks for $w$, then $K\cup K'$ is also
  reduced. However, this is not necessarily the case for $K\cap K'$:
  consider for example the permutation $w=4231$, $K=\{1,2\}$ and
  $K'=\{2,3\}$; then $K\cap K'=\{2\}$ is a block which is equivalent
  to the reduced block $\{\}$: $4231^{\{2\}}=4231=4231^{\{\}}$.

  The union $K\cup K'$ of two blocks may be reduced even when the
  blocks are not both reduced. Consider for example the permutation
  $w=4312$ as in Figure~\ref{figure.4312}. Then $K=\{1,3\}$ and
  $K'=\{2,3\}$ are blocks and their union $K\cup K'=\{1,2,3\}$ is
  reduced, yet $K$ is not reduced.
\end{remark}

\begin{proposition}
  \label{proposition.cutting_points.sublattice}
  The poset $(\RKs{w}, \subseteq)$ of reduced right blocks is a
  distributive lattice, with the meet and join operation given
  respectively by:
  \begin{equation*}
    K\join K' = K \cup K' \qquad \text{and} \qquad
    K\meet K' = \reduce(K \cap K') \; ,
  \end{equation*}
  where, for a block $K$, $\reduce(K)$ is the unique largest reduced block
  contained in $K$.

  The map $\phi^{(w)}: K\mapsto w^K$ restricts to a lattice
  antiisomorphism from the lattice $\Ks{w}$ of reduced right blocks of
  $w$ to $[1,w]_\cuteq$.

  The same statements hold on the left.
\end{proposition}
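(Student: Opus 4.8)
The plan is to build everything on top of Proposition~\ref{proposition.block.union} and Lemmas~\ref{lemma.intersection_join} and~\ref{lemma.blocks_union_meet}, which already do the hard geometric work. Since $\RKs{w}$ is a subset of the distributive lattice $\Ks{w}$ of all right blocks, it suffices to exhibit operations making $\RKs{w}$ a lattice and to show they coincide with the stated formulas. For the join: Lemma~\ref{lemma.blocks_union_meet} explicitly states that the union of two reduced right blocks is again a reduced right block, so $K\join K' := K\cup K'$ is well-defined inside $\RKs{w}$, and it is automatically the least upper bound of $K$ and $K'$ in $(\RKs{w},\subseteq)$ because it is already the least upper bound in the ambient Boolean lattice (and it lands in $\RKs{w}$). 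For the meet: $K\cap K'$ is a right block by Proposition~\ref{proposition.block.union} but need not be reduced (cf.\ the remark preceding the proposition with $w=4231$), so I would first verify that for any right block $L$ there is a unique largest reduced right block $\reduce(L)$ contained in $L$; then set $K\meet K' := \reduce(K\cap K')$.

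The existence of $\reduce(L)$ is the one genuinely new point, and I expect it to be the main (minor) obstacle. The natural approach: among all reduced right blocks contained in $L$, it is enough to show this family is closed under union, because then its maximum element is their total union. Closure under union is exactly Lemma~\ref{lemma.blocks_union_meet} again (union of reduced blocks is reduced), provided one also checks the union stays inside $L$, which is automatic. One should also observe that $\reduce(L)$ is nonempty as a concept even when it equals $\emptyset$: the empty set is always a reduced right block (trivially, $w^\emptyset = w$ and there is nothing to reduce), so the family is nonempty and $\reduce(L)$ is well-defined. With this in hand, $K\meet K'$ is the greatest lower bound of $K$ and $K'$ in $(\RKs{w},\subseteq)$: any reduced right block $L$ with $L\subseteq K$ and $L\subseteq K'$ satisfies $L\subseteq K\cap K'$, hence $L = \reduce(L) \subseteq \reduce(K\cap K')$ by maximality of the reduction, so $\reduce(K\cap K')$ dominates all common lower bounds and is itself one.

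Distributivity of $(\RKs{w},\subseteq)$ then follows: I would argue that $K\mapsto w^K$ is injective on $\RKs{w}$ (two reduced right blocks with the same cutting point are equal, since a reduced block is determined by the set of simple reflections appearing in reduced words for ${}_K w$, equivalently by $\rcoset{w}{K} = \lcoset{w}{J}\le_L w$ via Proposition~\ref{proposition.reduced_block}), and that under this injection joins go to $\meet_R$ and meets go to $\join_R$ by Lemmas~\ref{lemma.blocks_union_meet} and~\ref{lemma.intersection_join} respectively — here one must check $w^{\reduce(K\cap K')} = w^{K\cap K'}$, which holds by definition of $\reduce$ since $\reduce(K\cap K')$ and $K\cap K'$ are equivalent blocks (same cutting point). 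Thus $K\mapsto w^K$ is a bijective order-reversing map from $(\RKs{w},\subseteq)$ onto $[1,w]_\cuteq$ which carries $\join$/$\meet$ to $\meet_R$/$\join_R$; since $[1,w]_\cuteq$ is a distributive sublattice of right order by Theorem~\ref{theorem.lattice} (and Corollary~\ref{corollary.interval}), $(\RKs{w},\subseteq)$ is a distributive lattice and $\phi^{(w)}$ is a lattice antiisomorphism. The left-hand statements follow by the symmetry already used throughout Section~\ref{ss.lattice}. Finally I would note that the claim "$\Ks{w}$ of reduced right blocks" in the proposition statement should read $\RKs{w}$; the antiisomorphism is onto $[1,w]_\cuteq$, not onto all cutting points of all elements.
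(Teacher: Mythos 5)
Your proof is correct and follows essentially the same route as the paper, which merely packages your argument by observing that $\RKs{w}$ is a dual Moore family of the Boolean lattice (it contains $\emptyset$ and is closed under unions by Lemma~\ref{lemma.blocks_union_meet}), hence automatically a lattice with join the union and meet $K\meet K'=\reduce(K\cap K')$, and then deduces the antiisomorphism and distributivity from Lemma~\ref{lemma.blocks_union_meet} and Theorem~\ref{theorem.lattice} exactly as you do. The only spot worth tightening is your claim that $w^{\reduce(K\cap K')}=w^{K\cap K'}$ holds ``by definition'': it requires the one-line observation that the support of ${}_{L}w$ (the set of letters occurring in its reduced words) is itself a reduced block contained in $L:=K\cap K'$ with the same cutting point, hence is contained in $\reduce(L)$, so that $w^{L}\le_R w^{\reduce(L)}\le_R w^{L}$ by Lemma~\ref{lemma.subblocks}.
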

\begin{proof}
  By Proposition~\ref{proposition.block.union} and
  Lemma~\ref{lemma.blocks_union_meet}, $\RKs{w}$ is a dual Moore
  family of the Boolean lattice of $I$, or even of $\Ks{w}$.
  Therefore, using Section~\ref{ss.posets}, it is a lattice, with
  the given join and meet operations.

  The lattice antiisomorphism of property follows from
  Lemma~\ref{lemma.blocks_union_meet} and the coincidence of right
  order and $\cuteq$ on $[1,w]_\cuteq$ (Theorem~\ref{theorem.lattice}).
\end{proof}

\subsubsection{Indexing by largest blocks}

The indexing by reduced blocks corresponds to the section of the
lattice morphism $K\mapsto w^K$ by choosing the smallest block $K$ in
the fiber of a cutting point $u$. Instead, one could choose the
largest block in the fiber of $u$, which is given by the set of short
nondescents of $u$.  This indexing scheme is independent of $w$. Also,
by the same reasoning as above, the indexing sets $\JJs{w}$ come
endowed with a natural lattice structure. However, it does not give a
Birkhoff representation: the meet is given by intersection,
but the join is not given by union (take $w=2143$; its cutting
points are $1234$, $1243$, $2134$, and $2143$, indexed respectively by
$\{1,2,3\}$, $\{1\}$, $\{3\}$, and $\{\}$).

\subsubsection{Birkhoff's representation using non-blocks}

We now relax the condition for the indices to be blocks. That is, we
consider $K\mapsto w^K$ as a function from the full Boolean lattice
$\booleanlattice{I}$ to the minimal coset representatives of $w$.  Beware that
this map is no longer a lattice antimorphism; yet, the fiber of any $u$ still
admits a largest set $K=\cDes(u)\subseteq I$, which is the complement of the
right descent set of $u$. One can define a similar indexing on the left by
$J=\cRec(u)$. These indexings are independent of $w$ and provide a Birkhoff
representation for the lattice of cutting points (see
Proposition~\ref{proposition.cutting_birkhoff}).  Define
\begin{equation}
\label{equation.complements_descents}
	\DJs{w} = \{\cRec(u)\suchthat u\cuteq w\} \qquad \text{and} \qquad
  	\DKs{w} = \{\cDes(u)\suchthat u\cuteq w\} \; .
\end{equation}

\begin{remark}
  Since $\cRec(u)$ and $\cDes(u)$ are not necessarily blocks anymore,
  the bijection between $\cRec(u)$ and $\cDes(u)$ is not induced
  anymore by a bijection at the level of descents: for example, for
  $u=3142$, one has $\cRec(u)=\{1,3\}$ and $\cDes(u)=\{2\}$.
\end{remark}

\begin{remark}
  Using $\Des(u)$ instead of $\cDes(u)$ would give an isomorphism
  instead of an antiisomorphism, and make the indexing further
  independent of $W$, at the price of slightly cluttering the notation
  $w^K$ for cutting points.
\end{remark}

\begin{proposition}[Birkhoff representation for the lattice of cutting points]
  \label{proposition.cutting_birkhoff}
  The set $\DKs{w}$ of Equation~\eqref{equation.complements_descents}
  is a sublattice of the Boolean lattice, and the maps $K\mapsto w^K$
  and $u\mapsto \cDes(u)$ form a pair of reciprocal lattice
  antiisomorphisms with the lattice of cutting points of $w$.
  The same statement holds on the left.
\end{proposition}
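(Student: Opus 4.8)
The plan is to deduce the right-hand statement from the reduced-block description of the cutting-point lattice established in Proposition~\ref{proposition.cutting_points.sublattice}; the left-hand statement is the mirror image. Write $C:=[1,w]_\cuteq=\{u\in W\suchthat u\cuteq w\}$ for the set of cutting points of $w$; by Theorem~\ref{theorem.lattice} and Corollary~\ref{corollary.interval} it is a distributive sublattice of $(W,\le_R)$, and by Proposition~\ref{proposition.cutting_points.sublattice} the map $\phi^{(w)}\colon K\mapsto w^K$ is a lattice antiisomorphism from $\RKs{w}$ (with join $K\cup K'$ and meet the reduction of $K\cap K'$) onto $C$. We will show that $u\mapsto\cDes(u)$ factors as $\bar\phi\circ(\phi^{(w)})^{-1}$, where $\bar\phi\colon\RKs{w}\to\booleanlattice{I}$ is the map $K\mapsto\cDes(w)\cup K$, and that $\bar\phi$ is an injective morphism of lattices. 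It will then follow that $\DKs{w}$, being the image of $\bar\phi$, is a sublattice of $\booleanlattice{I}$, and that $\cDes=\bar\phi\circ(\phi^{(w)})^{-1}$ and $K\mapsto w^K$ are reciprocal lattice antiisomorphisms between $C$ and $\DKs{w}$.

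Two ingredients are needed, and the delicate one is the first. \emph{(a) Descent formula}: for every right block $K$ of $w$,
\begin{equation*}
  \Des(w^K)=\Des(w)\setminus K\,,\qquad\text{equivalently}\qquad\cDes(w^K)=\cDes(w)\cup K\,.
\end{equation*}
To prove it, write $w=w^K\cdot{}_Kw$ with ${}_Kw\in W_K$ and lengths adding, and let $\phi\colon K\to J$ be the bijection of Proposition~\ref{proposition.equiv_block} with $w^K(\alpha_k)=\alpha_{\phi(k)}$ for $k\in K$; since $w^KW_K(w^K)^{-1}=W_J$, the element $w^K$ maps the parabolic root subsystem $\Phi_K$ onto $\Phi_J$. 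As $w^K=\rcoset{w}{K}$ has no right descent in $K$, it suffices to check, for $i\notin K$, that $w(\alpha_i)$ and $w^K(\alpha_i)$ have the same sign (recall that $i$ is a right descent of $v$ exactly when $v(\alpha_i)$ is a negative root). For such an $i$, the element ${}_Kw\in W_K$ sends $\alpha_i$ to $\alpha_i$ plus a nonnegative combination of $\{\alpha_k\}_{k\in K}$, so $w(\alpha_i)=w^K\bigl({}_Kw(\alpha_i)\bigr)=w^K(\alpha_i)+\gamma$ with $\gamma$ a nonnegative combination of $\{\alpha_j\}_{j\in J}$; on the other hand $w^K(\alpha_i)\notin\Phi_J$, since otherwise $\alpha_i=(w^K)^{-1}\bigl(w^K(\alpha_i)\bigr)\in\Phi_K$ would force $i\in K$. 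Hence $w^K(\alpha_i)$ has a nonzero coordinate, in the basis of simple roots, at some index outside $J$, a coordinate that $\gamma$ leaves untouched, so $w(\alpha_i)$ has the same sign there; being roots, $w(\alpha_i)$ and $w^K(\alpha_i)$ then have the same overall sign. \emph{(b) Section property}: if $u\cuteq w$ then $w^{\cDes(u)}=u$. Indeed $u=w^K$ for some right block $K$, and $\Des(u)\cap K=\emptyset$, so $K\subseteq\cDes(u)$ and $u$ lies in the coset $wW_{\cDes(u)}$, of which $w^{\cDes(u)}$ is the minimal-length representative; writing $u=w^{\cDes(u)}\,t$ with $t\in W_{\cDes(u)}$ and lengths adding, any right descent of $t$ would be a right descent of $u$ lying in $\cDes(u)$, which is impossible, so $t=1$. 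In particular $\cDes$ is injective on $C$, and $\DKs{w}=\{\cDes(w^K)\suchthat K\in\RKs{w}\}$.

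Assembling these: by (a), $\bar\phi=\cDes\circ\phi^{(w)}$, so $\bar\phi$ is injective (a bijection followed by a map injective on $C$) and has image $\DKs{w}$. It is a morphism of lattices from $\RKs{w}$ to $(\booleanlattice{I},\cup,\cap)$. For joins this is immediate, $\bar\phi(K\cup K')=\cDes(w)\cup K\cup K'=\bar\phi(K)\cup\bar\phi(K')$, since $K\cup K'$ is the join of $K,K'$ in $\RKs{w}$. For meets, write $K\meet K'$ for the meet of $K,K'$ in $\RKs{w}$; using the antiisomorphism $\phi^{(w)}$, Lemma~\ref{lemma.intersection_join}, the fact that $K\cap K'$ is a block (Proposition~\ref{proposition.block.union}), and (a), one gets $\bar\phi(K\meet K')=\cDes\bigl(\phi^{(w)}(K\meet K')\bigr)=\cDes(w^K\join_R w^{K'})=\cDes(w^{K\cap K'})=\cDes(w)\cup(K\cap K')=(\cDes(w)\cup K)\cap(\cDes(w)\cup K')=\bar\phi(K)\cap\bar\phi(K')$. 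Hence $\DKs{w}$ is a sublattice of $\booleanlattice{I}$ and $\bar\phi\colon\RKs{w}\to\DKs{w}$ is a lattice isomorphism; composing with the antiisomorphism $(\phi^{(w)})^{-1}\colon C\to\RKs{w}$ shows that $u\mapsto\cDes(u)$ is a lattice antiisomorphism $C\to\DKs{w}$, whose inverse sends $K=\cDes(u)\in\DKs{w}$ to $w^K=w^{\cDes(u)}=u$ by the section property. The left-hand statement follows by the mirror argument, with $\RJs{w}$, $\cRec$, and $\DJs{w}$ replacing $\RKs{w}$, $\cDes$, and $\DKs{w}$.
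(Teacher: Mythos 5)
Your proof is correct, but it takes a genuinely different route from the paper's. The paper's argument is three lines: the section property $w^{\cDes(u)}=u$ holds essentially by construction (since $\cDes(u)$ is the largest set in the fiber of $u$ under $K\mapsto w^K$), and the lattice antimorphism property of $\cDes$ on the cutting points is obtained by restricting the classical fact (Lemma~\ref{lemma.desrec.lattice.morphism}, due to Le Conte de Poly-Barbut) that $u\mapsto\Des(u)$ is a lattice morphism from all of weak order onto the Boolean lattice, to the sublattice $[1,w]_\cuteq$ of weak order. You avoid that lemma entirely and instead establish the explicit descent formula $\Des(w^K)=\Des(w)\setminus K$ for any right block $K$ via a root-system computation (which is correct: the coordinates of $w^K(\alpha_i)$ outside $J$ are unchanged by adding $\gamma\in\sum_{j\in J}\N\alpha_j$, and $w^K(\alpha_i)\notin\Phi_J$ for $i\notin K$ since $w^K(\Phi_K)=\Phi_J$), then verify the lattice-morphism property of $K\mapsto\cDes(w)\cup K$ by hand on $\RKs{w}$ using Lemma~\ref{lemma.intersection_join} and Proposition~\ref{proposition.block.union}; your proof of the section property by a length-additivity argument is also sound. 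What the paper's approach buys is brevity and the reuse of a known global structural fact about weak order; what yours buys is self-containedness and the sharper explicit description $\DKs{w}=\{\cDes(w)\cup K\suchthat K\in\RKs{w}\}$, exhibiting $\DKs{w}$ as the translate by $\cDes(w)$ of the reduced-block lattice, which makes the Birkhoff representation completely concrete.
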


The proof of this Proposition uses the following property of left and
right order (recall that $[1,w]_\cuteq$ is a sublattice thereof).
\begin{lemma}[{\cite[Lemme 5]{PolyBarbut.1994}}]
  \label{lemma.desrec.lattice.morphism}
  The maps
  \begin{displaymath}
    \begin{cases}
      (W, \leq_L) & \to \booleanlattice{I}\\
      w           &\mapsto \Des(w)
    \end{cases} \qquad
    \begin{cases}
     (W, \leq_R) & \to \booleanlattice{I}\\
      w           &\mapsto \Rec(w)
    \end{cases}
  \end{displaymath}
  are surjective lattice morphisms.
\end{lemma}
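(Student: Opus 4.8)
The plan is to reduce everything to the map $\Rec\colon(W,\le_R)\to\booleanlattice{I}$: the anti-automorphism $w\mapsto w^{-1}$ is a lattice isomorphism $(W,\le_R)\to(W,\le_L)$ with $\Rec(w)=\Des(w^{-1})$, so it suffices to treat this one map. The single tool I would use throughout is the elementary characterisation
\[
  i\in\Rec(w)\iff \ell(s_iw)<\ell(w)\iff w^{-1}(\alpha_i)\in\Phi^-\iff s_i\le_R w,
\]
the last equivalence holding because $s_i\le_R w$ means precisely that $w$ has a reduced word beginning with $s_i$. In other words $\Rec(w)$ is exactly the set of atoms of the lattice $(W,\le_R)$ lying below $w$, so the statement unwinds to: $\Rec$ is onto, and every atom $s_i$ of $(W,\le_R)$ is join-prime (the meet half being formal).

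First I would handle the formal parts. Surjectivity: for $J\subseteq I$ the longest element $s_J$ of the parabolic subgroup $W_J$ satisfies $\Rec(s_J)=J$, since for $i\in J$ the element $s_is_J$ lies in $W_J$ and is shorter than its longest element $s_J$, while for $i\notin J$ the element $s_J\in W_J$ stabilises $\Phi^+\setminus\Phi_J^+$, so $s_J(\alpha_i)\in\Phi^+$ and $\ell(s_is_J)>\ell(s_J)$. Monotonicity $u\le_R w\Rightarrow\Rec(u)\subseteq\Rec(w)$ is immediate, since $i\in\Rec(u)$ gives $s_i\le_R u\le_R w$; it yields $\Rec(u)\cup\Rec(w)\subseteq\Rec(u\join_R w)$ and $\Rec(u\meet_R w)\subseteq\Rec(u)\cap\Rec(w)$, and for the opposite inclusion on meets one notes that if $i\in\Rec(u)\cap\Rec(w)$ then $s_i$ is a common lower bound of $u$ and $w$ in right order, hence $s_i\le_R u\meet_R w$. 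So $\Rec(u\meet_R w)=\Rec(u)\cap\Rec(w)$.

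The real content is that $\Rec$ preserves joins. I would isolate the following as the crux: \emph{if $v\le_R w$, $i\in\Rec(w)$, and $i\notin\Rec(v)$, then $v\le_R s_iw$.} Granting this, the join case is immediate: if $i\in\Rec(u\join_R w)$ but $i\notin\Rec(u)$ and $i\notin\Rec(w)$, then applying the crux to $v=u\le_R u\join_R w$ and to $v=w\le_R u\join_R w$ gives $u,w\le_R s_i(u\join_R w)$, hence $u\join_R w\le_R s_i(u\join_R w)$, contradicting $\ell(s_i(u\join_R w))=\ell(u\join_R w)-1$. Together with the reverse inclusion from monotonicity this gives $\Rec(u\join_R w)=\Rec(u)\cup\Rec(w)$, completing the proof.

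To prove the crux I would run a short length computation. Write $w=vv'$ with $\ell(w)=\ell(v)+\ell(v')$ (this is what $v\le_R w$ says). As $i\notin\Rec(v)$ the root $\beta:=v^{-1}(\alpha_i)$ is positive, and $v^{-1}s_iv=s_\beta$, so $v^{-1}(s_iw)=s_\beta v'$; as $i\in\Rec(w)$ we have $(v')^{-1}(\beta)=w^{-1}(\alpha_i)\in\Phi^-$, whence $\ell(s_\beta v')<\ell(v')$, i.e.\ $\ell(s_\beta v')\le\ell(v')-1$. Then from $s_iw=v\cdot(s_\beta v')$,
\[
  \ell(v)+\ell(v')-1=\ell(s_iw)\le\ell(v)+\ell(s_\beta v')\le\ell(v)+\ell(v')-1 ,
\]
so equality holds throughout; in particular $\ell(s_iw)=\ell(v)+\ell(v^{-1}s_iw)$, which is exactly $v\le_R s_iw$. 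The ingredients are only the length description of $\le_R$, the facts $\ell(s_ix)=\ell(x)\pm1$ and $\ell(s_\beta x)<\ell(x)\iff x^{-1}(\beta)\in\Phi^-$, and $v^{-1}s_iv=s_{v^{-1}(\alpha_i)}$. I expect this crux lemma, together with setting up the $\Rec$/$\le_R$ dictionary, to be the only part requiring thought; everything else is bookkeeping. (This is Poly-Barbut's Lemme~5, so the point of the argument above is merely to make it self-contained.)
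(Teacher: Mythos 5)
The paper does not prove this lemma; it simply cites Poly-Barbut's Lemme~5, so there is no in-paper argument to compare against. Your self-contained proof is correct: the dictionary $\Rec(w)=\{i\in I\suchthat s_i\le_R w\}$ reduces surjectivity and the meet identity to formalities (with $\Rec(s_J)=J$ handled correctly via the stability of $\Phi^+\setminus\Phi_J^+$ under $W_J$), and the crux lemma --- if $v\le_R w$, $i\in\Rec(w)$, $i\notin\Rec(v)$, then $v\le_R s_iw$ --- is exactly the right lifting-type statement to make joins work; your length computation with $\beta=v^{-1}(\alpha_i)$, $v^{-1}s_iv=s_\beta$, and $\ell(s_\beta v')<\ell(v')$ forced by $w^{-1}(\alpha_i)\in\Phi^-$ is sound, and the squeeze $\ell(v)+\ell(v')-1\le\ell(v)+\ell(s_\beta v')\le\ell(v)+\ell(v')-1$ does yield $v\le_R s_iw$. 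The reduction of the $\Des$/left-order statement to the $\Rec$/right-order statement via $w\mapsto w^{-1}$ is also valid. No gaps.
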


\begin{proof}[Proof of Proposition~\ref{proposition.cutting_birkhoff}]
  By construction, $\cRec$ is a section of $K\mapsto w^K$, and these
  maps form a pair of reciprocal bijections between $\DJs{w}$ and the
  cutting points of $w$. Using
  Lemma~\ref{lemma.desrec.lattice.morphism}, the map $\cRec$ is a
  lattice antimorphism. Therefore its image set $\DKs{w}$ is a
  sublattice of the Boolean lattice. The argument on the left is the
  same.
\end{proof}

\subsection{A $w$-analogue of descent sets}
\label{ss.w_descents}

For each $w\in W$, we now provide a definition of a
$w$-analogue on the interval $[1,w]_R$ of the usual combinatorics of
(non)descents on $W$. From now on, we assume that we have chosen an
indexation scheme so that the cutting points of $w$ are given by
$(w^K)_{K\in \KKs{w}}$ or equivalently by $(\lcoset{w}{J})_{J\in
  \JJs{w}}$.

\begin{lemma}
  \label{lemma.condition_interval}
  Take a cutting point of $w$, and write it as $w^K=\lcoset{w}{J}$ for
  some $J,K\subseteq I$, which are not necessarily blocks. Then:
  \begin{enumerate}[(i)]
  \item \label{item.rec} for $u\in[1,w]_R$, $u\in [1,\lcoset{w}{J}]_R$ if and only if
           $\Rec(u)\cap J = \emptyset$;
  \item \label{item.des} for $u\in[1,w]_L$, $u\in [1,w^K]_L$ if and only if
           $\Des(u)\cap K = \emptyset$.
  \end{enumerate}
\end{lemma}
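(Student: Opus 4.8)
The plan is to prove part~(i) and then obtain part~(ii) by the left-right symmetry, exactly as in the analogous pairs of statements earlier in Section~\ref{section.blocks_cutting_poset}. So I focus on~(i): for $u\in[1,w]_R$, we want $u\in[1,\lcoset{w}{J}]_R \iff \Rec(u)\cap J=\emptyset$.

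First I would dispose of the forward implication. If $u\le_R \lcoset{w}{J}$, then since $\lcoset{w}{J}\in\lcoset{W}{J}$ has $\Rec(\lcoset{w}{J})\cap J=\emptyset$ by the description of minimal coset representatives in Section~\ref{ss.parabolic}, and since $\Rec$ is a lattice morphism on $(W,\le_R)$ by Lemma~\ref{lemma.desrec.lattice.morphism} (in particular order-preserving), we get $\Rec(u)\subseteq\Rec(\lcoset{w}{J})$, hence $\Rec(u)\cap J=\emptyset$. Actually I should be slightly careful: the cleanest self-contained argument is that $u\le_R \lcoset{w}{J}\le_R w$ means $u\in\lcoset{W}{J}$ as well (any element below a minimal coset representative in right order is again a minimal coset representative, since a left descent in $J$ of $u$ would, by the subword/exchange property, force one on $\lcoset{w}{J}$), and then $\Rec(u)\cap J=\emptyset$ is immediate by definition of $\lcoset{W}{J}$.

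The substantive direction is the converse: given $u\in[1,w]_R$ with $\Rec(u)\cap J=\emptyset$, show $u\le_R \lcoset{w}{J}$. Here the natural tool is the parabolic decomposition $u=u_J\,\lcoset{u}{J}$: since $\Rec(u)\cap J=\emptyset$ we have $u_J=\one$, i.e. $u=\lcoset{u}{J}\in\lcoset{W}{J}$. Now I would invoke Proposition~\ref{proposition.tiling}: writing $w^K=\lcoset{w}{J}$ — and here is where I must use that $w^K=w^{K'}$ (equivalently $\lcoset{w}{J}=\lcoset{w}{J'}$) for some \emph{block} $K'$ (resp. $J'$), which is part of the hypothesis that $w^K$ is a cutting point of $w$ — the bijection $(u',v')\mapsto u'v'$ restricts to a bijection $[1,w_J]_R\times[1,\lcoset{w}{J}]_R \to [1,w]_R$. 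Since $u\in[1,w]_R$, write $u=u'v'$ accordingly with $u'\in[1,w_J]_R\subseteq W_J$ and $v'\in[1,\lcoset{w}{J}]_R\subseteq\lcoset{W}{J}$; by uniqueness of the parabolic decomposition and $u\in\lcoset{W}{J}$, we get $u'=\one$ and $u=v'\le_R\lcoset{w}{J}$, as desired.

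The main obstacle I anticipate is bookkeeping around the hypothesis ``$J,K$ need not be blocks'': the statement only assumes $w^K=\lcoset{w}{J}$ is a cutting point, so the tiling bijection of Proposition~\ref{proposition.tiling} must be applied through its ``or more generally'' clause (via an actual block representative), and I must make sure the parabolic decomposition $W_J\times\lcoset{W}{J}$ referenced there is with respect to the same $J$ — which it is, since $\lcoset{w}{J}=\lcoset{w}{J'}$ implies $[1,\lcoset{w}{J}]_R=[1,\lcoset{w}{J'}]_R$ and $W_J$-cosets and $W_{J'}$-cosets both give the containment $[1,w]_R\subseteq[1,w_J]_R[1,\lcoset{w}{J}]_R$ noted after Lemma~\ref{lemma.skew_commutation}; combined with the reverse inclusion from Proposition~\ref{proposition.tiling} applied to the block, equality holds. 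Once that is pinned down, the rest is routine. Part~(ii) then follows verbatim by exchanging left and right (replacing $\le_R$ by $\le_L$, $\Rec$ by $\Des$, $\lcoset{w}{J}$ by $w^K$, and using the right-coset decomposition $w=w^K\,{}_Kw$ and the first half of Proposition~\ref{proposition.tiling}).
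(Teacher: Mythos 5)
Your proof is correct and follows essentially the same route as the paper: both arguments reduce to Proposition~\ref{proposition.tiling} (applied through its ``or more generally'' clause, since the cutting point equals $\lcoset{w}{J'}$ for an actual left block $J'$) together with the uniqueness of the decomposition $u = u_J\,\lcoset{u}{J}$, so that $u\in[1,\lcoset{w}{J}]_R$ iff the $W_J$-component is trivial iff $\Rec(u)\cap J=\emptyset$. Your extra care about the non-block hypothesis and the separate treatment of the forward direction are just more explicit versions of what the paper compresses into one sentence.
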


\begin{proof}
  This is a straightforward corollary of Proposition~\ref{proposition.tiling}: any element
  $u$ of $[1,w]_R$ can be written uniquely as a product $u'v$ with
  $u'\in W_J$ and $v\in [1,\lcoset{w}{J}]_R$. So $u$ is in
  $[1,\lcoset{w}{J}]_R$ if and only if $u'=1$, which in turn is
  equivalent to $v$ having no descents in $J$. This
  proves~\eqref{item.rec}. The argument for~\eqref{item.des} is
  analogous.
\end{proof}

\begin{example}
  For $w=w_0$, $\lcoset{w}{J}$ is the maximal element of a left
  descent class, and $[1,\lcoset{w}{J}]_R$ gives all elements of $W$
  whose left descent set is a subset of the left descent set of $w$.
\end{example}

\begin{definition}[$w$-nondescent sets]
  \label{definition.max_block}
  For $u\in [1,w]_R$ define $\Jblock{w}(u)$ to be the index
  $J\in\JJs{w}$ of the lowest cutting point $\lcoset{w}{J}$ such that
  $u\in [1,\lcoset{w}{J}]_R$ (or the equivalent condition of
  Lemma~\ref{lemma.condition_interval}). Define similarly
  $\Kblock{w}(u)$ as the index in $\KKs{w}$ of this cutting point.
\end{definition}

\begin{example}
  When $w=w_0$, $\Jblock{w_0}(u)$ and $\Kblock{w_0}(u)$ are
  respectively the sets $\cRec(u)$ and $\cDes(u)$ of left and right
  nondescents of $u$.
\end{example}

\begin{problem}
  Given $J$, describe all the elements $w\in W$ such that $J$ is a
  left block. This essentially only depends on $\lcoset{w}{J}$.
\end{problem}

\subsection{Properties of the cutting poset}
\label{ss.properties.cutting_poset}

In this section we study the properties of the cutting poset $(W,\cuteq)$
of Theorem~\ref{theorem.cutting_poset} for the cutting
relation $\cuteq$ introduced in Definition~\ref{definition.block} (see
also Figure~\ref{figure.cutting_poset}). The following theorem
summarizes the results.

\begin{theorem}
  \label{theorem.cutting}
  $(W,\cuteq)$ is a meet-distributive meet-semilattice with $\one$ as
  minimal element, and a subposet of both left and right order.

  Every interval of $(W,\cuteq)$ is a distributive sublattice and a
  sublattice of both left and right order.

  Let $w\in W$ and denote by $\pred{w}$ the set of its $\cuteq$-lower
  covers. Thanks to meet-distributivity, the meet-semilattice $L_w$
  generated by $\pred{w}$ using $\meet_\cuteq$ (or equivalently
  $\meet_L$, $\meet_R$ if viewed as a sublattice of left or right
  order) is free, that is isomorphic to a Boolean lattice.

  In particular, the M\"obius function of $(W,\cuteq)$ is given by
  $\mu(u,w)=(-1)^{r(u,w)}$ if $u\in L_w$ and $0$ otherwise, where
  $r(u,w):=|\{v\in\pred{w}\mid u\cuteq v\}|$.
\end{theorem}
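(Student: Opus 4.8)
The statement bundles together several claims, most of which have essentially been proved in the preceding subsections; the task is mainly one of assembly, together with the genuinely new Möbius-function computation at the end. First I would dispose of the structural claims. That $(W,\cuteq)$ is a subposet of both left and right order is exactly Theorem~\ref{theorem.cutting_poset}. That $\one$ is the minimal element is immediate: $\one = w^I$ where $I$ is always a right block of $w$ (take $J=I$; then $W_I w = W = w W_I$), so $\one \cuteq w$ for every $w$, and $\one$ sits at the bottom of right order. That every interval is a distributive sublattice and a sublattice of both left and right order is Corollary~\ref{corollary.interval} (combined with Theorem~\ref{theorem.lattice}). For the meet-semilattice claim: given $u, u' \cuteq w$, write $u = w^K$, $u' = w^{K'}$ for right blocks $K, K'$; by Proposition~\ref{proposition.block.union}, $K \cup K'$ is again a right block of $w$, and by Lemma~\ref{lemma.blocks_union_meet}, $w^{K\cup K'} = w^K \meet_R w^{K'}$, which lies in $[1,w]_\cuteq$ and is therefore the meet in $\cuteq$ as well (since on $[1,w]_\cuteq$ the orders $\cuteq$, $\le_R$, $\le_L$ coincide by Corollary~\ref{corollary.interval}). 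So any two elements below $w$ have a meet, hence $(W,\cuteq)$ is a meet-semilattice. Meet-distributivity in the sense of Edelman: for a fixed $w$, the interval $[\one,w]_\cuteq$ is a distributive lattice (Theorem~\ref{theorem.lattice}), so a fortiori every interval inside it is distributive; by the remark in Section~\ref{ss.birkhoff} this gives meet-distributivity, and it also yields that $[\one,w]_\cuteq$ embeds as a lower set of $\lowersets(\irred([\one,w]_\cuteq))$.

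**The freeness of $L_w$.** This is the crux of the combinatorial part. Fix $w$ and let $\pred{w} = \{v_1,\dots,v_m\}$ be its lower covers in $\cuteq$; these correspond to the minimal nontrivial right (equivalently left) blocks of $w$, by definition of cover and by Lemma~\ref{lemma.blocks_union_meet} (a nonminimal nontrivial block is a proper union of smaller ones, hence not a cover). Let $J_1,\dots,J_m$ be the corresponding minimal nontrivial left blocks. I want to show the meet-semilattice $L_w$ generated by $\{v_1,\dots,v_m\}$ under $\meet_\cuteq$ is Boolean of rank $m$. Under the antiisomorphism $J \mapsto \lcoset{w}{J}$ of Theorem~\ref{theorem.lattice}, $\meet_\cuteq$ on cutting points corresponds to $\cup$ on left blocks (Lemma~\ref{lemma.blocks_union_meet}), so $L_w$ corresponds to the join-subsemilattice of $\Js{w}$ generated by $\{J_1,\dots,J_m\}$ under union. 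By Lemma~\ref{lemma.minimal}, each $J_i$ contains an element $j_i$ lying in no other $J_{i'}$; the $j_i$ are therefore distinct, and the assignment $\{J_{i_1},\dots,J_{i_r}\} \mapsto \{j_{i_1},\dots,j_{i_r}\}$ shows that distinct subsets of $\{J_1,\dots,J_m\}$ give distinct unions (the union $J_{i_1}\cup\dots\cup J_{i_r}$ contains exactly the $j_i$ with $i \in \{i_1,\dots,i_r\}$). Hence the union-semilattice is free on the $m$ generators, i.e.\ isomorphic to the Boolean lattice $\booleanlattice{\{1,\dots,m\}}$, and so is $L_w$. (Alternatively, as the excerpt notes in Corollary~\ref{corollary.hypercube}, freeness of unions of minimal elements holds in any distributive lattice, and $[\one,w]_\cuteq$ is distributive — but I would keep the explicit argument since it identifies the rank.)

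**The Möbius function.** With $L_w \cong \booleanlattice{\pred{w}}$ in hand, I compute $\mu(u,w)$ for $u \cuteq w$ by induction on the length of the interval $[u,w]_\cuteq$, using the recursion $\mu(u,w) = -\sum_{u \cuteq z \cut w} \mu(u,z)$. First, observe the key reduction: $[u,w]_\cuteq$ is isomorphic (as a poset, via Lemma~\ref{lemma.subblocks}(iii), which identifies cutting points of $w$ above $u$ with cutting points of ${}_{K'}w$ where $u = w^{K'}$) to an interval of the form $[\one, w']_\cuteq$; more to the point, an element $z$ with $u \cuteq z \cuteq w$ lies in $L_w$ if and only if it is a join (in $\cuteq$-terms, a meet under $\meet_\cuteq$... careful — I mean: $L_w$ is generated by $\pred w$ under $\meet_\cuteq$, so $z \in L_w$ iff $z = v_{i_1}\meet_\cuteq\dots\meet_\cuteq v_{i_r}$ for some subset) and then $r(u,w) = |\{v \in \pred w : u \cuteq v\}|$ counts the atoms of the Boolean lattice $L_w$ that lie weakly above $u$. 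The standard fact is that the Möbius function of a finite lattice $L$ vanishes unless the top element is a join of atoms, and on a Boolean lattice of rank $k$ the Möbius function from bottom to top is $(-1)^k$. So: if $u \notin L_w$, I must show $\mu(u,w) = 0$; if $u \in L_w$, I must show $\mu(u,w) = (-1)^{r(u,w)}$. For the case $u \in L_w$: the subinterval of $L_w$ above $u$ is itself Boolean of rank $r(u,w)$ (it is the upper interval in a Boolean lattice, hence Boolean), and I claim $\mu_{(W,\cuteq)}(u,w)$ equals $\mu_{L_w}(u,w)$ — this needs the crosscut-type argument: every $z$ with $u\cuteq z \cuteq w$ satisfies $z \join_\cuteq v = w$... no. The clean way: use Philip Hall's theorem or, better, the observation that $w$ covers exactly the $v_i$'s and apply Weisner's theorem / Rota's crosscut theorem to the interval $[u,w]_\cuteq$. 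Rota's crosscut theorem says $\mu(u,w) = \sum_{B} (-1)^{|B|}$ over subsets $B$ of the set of coatoms... but here it's cleaner to use atoms: actually I'll use the dual. Let me instead argue by induction directly. The main obstacle, and the step I would spend the most care on, is precisely justifying that the Möbius recursion over $[u,w]_\cuteq$ "sees only" $L_w$ — i.e.\ that the contributions of $z \notin L_w$ cancel. This follows from: (a) by induction $\mu(u,z) = 0$ for $z \notin L_w$ with $z \cut w$ (applying the result to the smaller interval $[u,z]_\cuteq \cong [\one, z']_\cuteq$, noting $u \notin L_z$ when $z \notin L_w$ — this compatibility of the "$L$" construction with subintervals is the delicate point and uses Lemma~\ref{lemma.subblocks} plus the characterization of $\pred{z}$ via minimal blocks of ${}_{K'}w$); and (b) for $z \in L_w$ with $z \cut w$, by induction $\mu(u,z) = (-1)^{r(u,z)}$ if $u \in L_z$ and $0$ otherwise, while $u \in L_z$ iff $u\in L_w$ and $u \cuteq z$. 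Summing over $z \in L_w$, $z \cut w$ — which range over all proper subsets of $\pred{w}$ under $\meet_\cuteq$, i.e.\ a Boolean lattice minus its top — the alternating sum telescopes by the binomial identity to $-(-1)^{r(u,w)-1}\cdot(\text{correction}) = (-1)^{r(u,w)}$ when $u\in L_w$, and to $0$ when $u \notin L_w$ (in the latter case no $z \cut w$ in $L_w$ has $u \cuteq z$, and among $z \notin L_w$ the Möbius values vanish, forcing $\mu(u,w) = -\mu(u,w)\cdot 0 \dots$ — more precisely all summands vanish except none, giving $\mu(u,w)=0$... I need to also rule out $u\cuteq z$ for $z\in L_w$ when $u\notin L_w$, which holds because $L_z \subseteq L_w$). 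I expect the bookkeeping in (a) — the stability of the Boolean skeleton $L_{(-)}$ under passing to subintervals — to be the real work; everything else is the Möbius recursion plus the binomial theorem.
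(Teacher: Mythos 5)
Your assembly of the structural claims follows the paper's own route: Theorem~\ref{theorem.cutting_poset} for the subposet statement, Corollary~\ref{corollary.interval} for the claim about intervals, and Lemma~\ref{lemma.minimal} for the freeness of $L_w$, exactly as in Corollary~\ref{corollary.hypercube}. One small omission there: you only produce $u\meet_\cuteq u'$ when $u$ and $u'$ lie below a common $w$, whereas a meet-semilattice needs a meet for \emph{every} pair. The missing case is handled as in Lemma~\ref{lemma.cutting.meet}: the set of common lower bounds of $v,v'$ is nonempty (it contains $\one$), and any two of its elements have a join since they share the upper bound $v$; iterating, the set has a maximum, which is the meet. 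This is routine to repair.

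The M\"obius computation, however, rests on two claims that are false. Claim (a) asserts that $\mu(u,z)=0$ for every $z\notin L_w$ with $u\cuteq z\cut w$, justified by ``$u\notin L_z$ when $z\notin L_w$''. This already fails for $z=u$ when $u\notin L_w$ (one always has $u\in L_u$ and $\mu(u,u)=1$), and it fails non-degenerately too. Note that at this stage your argument uses nothing about the cutting poset beyond the facts that $[u,w]_\cuteq$ is a distributive lattice and that its coatoms generate a free meet-semilattice; so if the deduction were sound it would prove the formula for an arbitrary finite distributive lattice by the same induction. But in the divisor lattice of $12$ (elements $1,2,3,4,6,12$), with $u=1$ and $w=12$, one has $L_{12}=\{12,6,4,2\}$, so $z=3\notin L_{12}$, yet $1\in L_3=\{3,1\}$ and $\mu(1,3)=-1\neq 0$. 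Claim (b), ``$u\in L_z$ iff $u\in L_w$ and $u\cuteq z$'' (and the auxiliary ``$L_z\subseteq L_w$''), fails in the same example for $z=6$: here $1\in L_6=\{6,3,2,1\}$ although $1\notin L_{12}$. The contributions of the $z\notin L_w$ cancel only in the aggregate, never term by term, so your telescoping does not close. The paper sidesteps the induction entirely by applying Rota's Crosscut Theorem to the lattice $[u,w]_\cuteq$ with the crosscut of coatoms: $\mu(u,w)=\sum_B(-1)^{|B|}$, summed over subsets $B$ of $\pred{w}\cap\upperset{u}$ with $\bigmeet B=u$. Freeness of $L_w$ guarantees that distinct subsets of coatoms have distinct meets, so this sum has exactly one term, $B=\{v\in\pred{w}\mid u\cuteq v\}$, when $u\in L_w$ (giving $(-1)^{r(u,w)}$), and no terms otherwise (giving $0$). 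If you insist on a recursive proof, you must establish the aggregate cancellation over $z\notin L_w$ directly, which amounts to reproving the crosscut theorem.
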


This M\"obius function is used in Section~\ref{subsection.bihecke} to
compute the size of the simple modules of $\K \biheckemonoid$.

Since $(W,\cuteq)$ is almost a distributive lattice, Birkhoff's
representation theorem suggests to embed it in the distributive
lattice $\lowersets(\irred((W,\cuteq)))$ of the lower sets of its
join-irreducible elements (note that a block is join-irreducible if
there is only one minimal nontrivial block below it).
\begin{problem}
  Describe the set $\irred(W,\cuteq)$ of join-irreducible elements of
  $(W,\cuteq)$.
\end{problem}
\begin{problem}
  Determine the distributive lattice associated with the cutting poset
  from the join-irreducibles, via Birkhoff's theory.
\end{problem}

The join-irreducible elements of $(\sg,\cuteq)$, for $n$ small, are
counted by the sequence $0,1,4,16,78,462,3224$.
Figure~\ref{figure.cutting_poset} seems to suggest that they form a
tree, but this already fails for $n=5$. We now briefly comment on the
simplest join-irreducible elements, namely the immediate successors
$w$ of $\one$ in the cutting poset. Equivalent statements are that $w$
admits exactly two reduced blocks $\{\}$ and $B$, possibly with $B=I$,
or that the simple module $S_w$ is of dimension $|[1,w]_R|-1$.  For a
Coxeter group $W$, we denote by $S(W)$ the set of elements $w\ne \one$
having no proper reduced blocks, and $T(W)$ those having exactly two
reduced blocks. Note that $T(W)$ is the disjoint union of the $S(W_J)$
for $J\subseteq I$.
\begin{example}
  \label{example.cutting_poset.almost_minimal}
  In type $A$, a permutation $w\in S(\sg)$ is uniquely obtained by
  taking a simple permutation, and inflating each $\one$ of its
  permutation matrix by an identity matrix. An element of $T(\sg)$
  has a block diagonal matrix with one block in $S(\sg[m])$ for $m\leq
  n$, and $n-m$ $1\times 1$ blocks. This gives an easy way to
  construct the generating series for $S(\sg)_{n\in \N}$ and for
  $T(\sg)_{n\in \N}$ from that of the simple permutations given
  in~\cite{Albert_Atkinson.2005}.
\end{example}

We now turn to the proof of Theorem~\ref{theorem.cutting}, starting
with some preliminary results.

\begin{lemma}
  \label{lemma.cutting.join}
  $(W,\cuteq)$ is a partial join-semilattice. Namely, when the join
  exists, it is unique and given by the join in left and in right order:
  \begin{displaymath}
    v \join_\cuteq v' = v\join_L v' = v\join_R v'\,.
  \end{displaymath}
\end{lemma}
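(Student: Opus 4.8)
The plan is to show that if $v, v' \cuteq w$ and $v, v'$ admit an upper bound in $(W,\cuteq)$, then $v \join_L v'$ (which equals $v \join_R v'$ by Lemma~\ref{lemma.intersection_join}, since both $v$ and $v'$ are cutting points of a common element) is itself a cutting point of $w$, and is the least such upper bound. First I would record the easy direction: the join in $\cuteq$, if it exists, must be $\leq$-above $v$ and $v'$ in both left and right order, since $\cuteq$ is a subposet of both (Theorem~\ref{theorem.cutting_poset}); hence it is $\geq_L v\join_L v'$ and $\geq_R v\join_R v'$. So it suffices to prove that $z := v\join_L v' = v\join_R v'$ is itself in $U_v \cap U_{v'}$, i.e.\ that $v\cuteq z$ and $v'\cuteq z$, for then $z$ is an upper bound of $v,v'$ in $\cuteq$ dominated by every other upper bound.

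The key step is therefore: given $v \cuteq z$ fails in general, but we are assuming $v,v'$ have \emph{some} common $\cuteq$-upper bound, say $v,v'\cuteq t$. I would use the characterization from Proposition~\ref{proposition.cutting.U}: $v\cuteq t$ and $v'\cuteq t$ together with the fact that $U$ is closed under $\join_R$ (the second bullet of Proposition~\ref{proposition.cutting.U}) gives $v\cuteq v\join_R v'$ and $v'\cuteq v\join_R v'$ directly — wait, that bullet says $u\cuteq w$ and $u\cuteq w'$ implies $u\cuteq w\join_R w'$, which is about a common \emph{lower} cutting point, not what is needed here. So instead I would argue as follows. Since $v\cuteq t$ and $v'\cuteq t$, and $z = v\join_R v' \leq_R t$ (as $t$ is an upper bound in right order too), the first bullet of Proposition~\ref{proposition.cutting.U} (``if $u\le_R v\le_R w$ and $u\cuteq w$ then $u\cuteq v$'', applied with $w:=t$) gives $v\cuteq z$ from $v\le_R z\le_R t$ and $v\cuteq t$; likewise $v'\le_R z\le_R t$ and $v'\cuteq t$ give $v'\cuteq z$. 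Hence $z$ is a common $\cuteq$-upper bound of $v$ and $v'$.

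Finally, $z$ is the \emph{least} such: any common $\cuteq$-upper bound $t'$ of $v,v'$ is in particular a common upper bound in right (and left) order, so $z = v\join_R v' \leq_R t'$ and $z = v\join_L v' \leq_L t'$; but on the interval $[1,t']_\cuteq$ the orders $\leq_L$, $\leq_R$, and $\cuteq$ all coincide (Corollary~\ref{corollary.interval}, since $v,v',z,t'$ all lie in $[1,t']_\cuteq$ — using $v\cuteq z\cuteq t'$ shown above, here I should double-check $z\cuteq t'$, which follows by the same Proposition~\ref{proposition.cutting.U} argument from $z\le_R t'$, $v\cuteq t'$... actually one needs $z\cuteq t'$ directly; note $v\cuteq z$ and $z\le_R t'$ with $v\cuteq t'$ only gives information about $v$, so instead argue $z\cuteq t'$ via transitivity once $z\cuteq$ is placed inside a known interval), so $z\cuteq t'$ and $z$ is the join. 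I expect the main obstacle to be exactly this bookkeeping: carefully justifying that $z\cuteq t'$ for an arbitrary common upper bound $t'$, which is what makes $z$ the genuine supremum rather than merely a minimal upper bound; the cleanest route is probably to invoke Corollary~\ref{corollary.interval} to identify $[1,t']_\cuteq$ as an induced sublattice of right order, so that $v\join_R v'$ computed in $W$ lands in $[1,t']_\cuteq$ and is automatically the $\cuteq$-join.
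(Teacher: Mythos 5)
Your proposal is correct and, in the end, rests on exactly the same key step as the paper: applying Corollary~\ref{corollary.interval} to the interval $[1,t']_\cuteq$ below an arbitrary common successor, which identifies it as a sublattice of left and right order and hence forces $v\join_R v'=v\join_L v'$ to lie in it and be the $\cuteq$-join. Your intermediate detour through the first bullet of Proposition~\ref{proposition.cutting.U} to establish $v,v'\cuteq z$ is valid but redundant, since the sublattice property already gives it.
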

\begin{proof}
  Take $v$ and $v'$ with at least one common successor. Applying
  Corollary~\ref{corollary.interval} to the interval $[1,w]_\cuteq$ for any
  such common successor $w$, one obtains $v,v'\cuteq v\join_R v'=v\join_L v'
  \cuteq w$. Therefore, $v\join_R v'=v\join_L v'$ is the join of $v$
  and $v'$ in the cutting order.
\end{proof}

\begin{lemma}
  \label{lemma.cutting.meet}
  $(W,\cuteq)$ is a meet-semilattice. Namely, for $v, v'\in W$
  \begin{displaymath}
    v \meet_\cuteq v' = \bigjoin_{u\cuteq v,v'} u\,,
  \end{displaymath}
  where $\bigjoin$ is the join for the cutting order (or equivalently
  for left or right order). If further $v$ and $v'$ have a common
  successor, then
  \begin{displaymath}
    v \meet_\cuteq v' = v \meet_R v' = v\meet_L v'\,.
  \end{displaymath}
\end{lemma}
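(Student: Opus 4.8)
The plan is to establish that $(W,\cuteq)$ is a meet-semilattice by showing that for any $v,v'\in W$, the set of common $\cuteq$-lower bounds $\{u \mid u\cuteq v,\, u\cuteq v'\}$ has a maximum element, and that this maximum is exactly $\bigjoin_{u\cuteq v,v'} u$. The key ingredient is Proposition~\ref{proposition.cutting.U}: if $u\cuteq v$ and $u\cuteq v'$ then $u\cuteq u\join_R u'$ --- wait, more precisely, the relevant consequence is that the set of common lower bounds is closed under $\join_R$ (equivalently $\join_L$), which follows because if $u_1,u_2 \cuteq v$ then, by applying Proposition~\ref{proposition.cutting.U} twice, $u_1\join_R u_2 \cuteq v$, and similarly $u_1\join_R u_2\cuteq v'$. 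Since $\one\cuteq v$ and $\one\cuteq v'$ always (the empty block), the set of common lower bounds is nonempty, and being a finite join-closed subset of the lattice $(W,\le_R)$, it has a unique maximum, namely the join of all its elements. This maximum is then by definition the meet $v\meet_\cuteq v'$, so $(W,\cuteq)$ is a meet-semilattice and the displayed formula $v\meet_\cuteq v' = \bigjoin_{u\cuteq v,v'} u$ holds.

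**The second claim.** For the refined statement when $v$ and $v'$ have a common $\cuteq$-successor $w$: here I would invoke Corollary~\ref{corollary.interval}, which says that $[1,w]_\cuteq$ is an induced subposet and a sublattice of both left and right order. Since $v,v'\in [1,w]_\cuteq$ and this interval is a genuine (distributive) sublattice of $(W,\le_R)$ and of $(W,\le_L)$, the meet of $v$ and $v'$ computed inside $[1,w]_\cuteq$ coincides with $v\meet_R v'$ and with $v\meet_L v'$. It remains to check that this meet inside $[1,w]_\cuteq$ agrees with the global meet $v\meet_\cuteq v'$ defined above. This is immediate: any common $\cuteq$-lower bound $u$ of $v$ and $v'$ satisfies $u\cuteq v\cuteq w$, hence $u\cuteq w$ by transitivity (Theorem~\ref{theorem.cutting_poset}), so $u\in[1,w]_\cuteq$; thus the set of common lower bounds is the same whether computed globally or inside the interval, and therefore so is its supremum.

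**Main obstacle.** I do not expect a serious obstacle here, as all the heavy lifting has been done in Proposition~\ref{proposition.cutting.U}, Theorem~\ref{theorem.cutting_poset}, and Corollary~\ref{corollary.interval}. The one point requiring a little care is verifying that the join $\bigjoin$ appearing in the formula is well-defined and can legitimately be computed as $\join_R$ or $\join_L$: this needs the observation (from Lemma~\ref{lemma.cutting.join} and Corollary~\ref{corollary.interval}) that whenever a family of elements has a common $\cuteq$-upper bound, their $\cuteq$-join exists and equals both their right-order join and their left-order join. Since our family of common lower bounds of $v,v'$ has $v$ (and $v'$) as common $\cuteq$-upper bound, Lemma~\ref{lemma.cutting.join} applies and the iterated join is unambiguous. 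The argument is then a short assembly of these facts.
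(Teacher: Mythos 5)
Your proof is correct and follows essentially the same route as the paper: the set of common $\cuteq$-lower bounds of $v,v'$ is nonempty and closed under joins (which coincide with $\join_R$ and $\join_L$), hence has a maximum, and the refined statement follows from Corollary~\ref{corollary.interval} applied to $[1,w]_\cuteq$. One small slip: the closure claim ``$u_1,u_2\cuteq v$ implies $u_1\join_R u_2\cuteq v$'' does not follow from Proposition~\ref{proposition.cutting.U} (whose second bullet fixes the \emph{lower} element and varies the upper ones, the opposite of what you need here), but rather from Lemma~\ref{lemma.cutting.join} together with the fact that the join of two elements is below any common upper bound --- which is exactly what you invoke, correctly, in your final paragraph.
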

\begin{proof}
  The first part follows from a general result. Namely, for any poset,
  the following statements are equivalent (see
  e.g. \cite[Proposition~7.3]{Pouzet.Ordre}):
  \begin{enumerate}[(i)]
  \item Any bounded nonempty part has an upper bound.
  \item Any bounded nonempty part has a lower bound.
  \end{enumerate}
  Here we reprove this fact for the sake of self-containment. Take $u$
  and $u'$ two common cutting points for $v$ and $v'$. Then, using
  Lemma~\ref{lemma.cutting.join}, their join exists and $u
  \join_\cuteq u' = u \join_R u'=u\join_L u'$ is also a cutting point
  for $v$ and $v'$. The first statement follows by repeated iteration
  over all common cutting points.

  Now assume that $v$ and $v'$ have a common successor $w$. Then
  applying Corollary~\ref{corollary.interval} to the interval $[1,w]_\cuteq$,
  we find that $v\meet_R v'=v\meet_L v'$ is the meet of $v$ and $v'$
  in the cutting order.
\end{proof}

\begin{proof}[Proof of Theorem~\ref{theorem.cutting}]
  $(W,\cuteq)$ is a meet-semilattice by
  Lemma~\ref{lemma.cutting.meet}. Meet-distributivity follows from
  Corollary~\ref{corollary.hypercube}. The
  argument is in fact general: any poset with a minimal element $1$
  such that all intervals $[1,x]$ are distributive lattices and such
  that any two elements admit either a join or no common successor is
  a meet-distributive meet-semilattice (see~\cite{Edelman.1986} for
  literature on such). The end of the first statement is
  Theorem~\ref{theorem.cutting_poset}.

  The statement about intervals is Corollary~\ref{corollary.interval}.

  The $\cuteq$-lower covers of an element $w$ correspond to the
  nontrivial blocks of $w$ which are minimal for inclusion.
  The top part $L_w$ of an interval $[1,w]_\cuteq$ is further described in
  Corollary~\ref{corollary.hypercube}, through the bijection $\phi^{(w)}$
  between blocks of $w$ and the interval $[1,w]_\cuteq$ of
  Proposition~\ref{proposition.cutting_points.sublattice}. The value of
  $\mu(u,w)$ depends only on this interval, and we conclude the remaining
  statements using Rota's Crosscut Theorem~\cite{Rota.1964} on M\"obius
  functions for lattices (see also~\cite[Theorem 1.3]{Blass_Sagan.1997}).
\end{proof}

\section{Combinatorics of $\biheckemonoid(W)$}
\label{section.M.combinatorics}

In this section we study the combinatorics of the biHecke monoid
$\biheckemonoid(W)$ of a finite Coxeter group $W$. In particular, we
prove in Sections~\ref{ss.left} and~\ref{ss.bruhat} that its elements
preserve left order and Bruhat order, and derive in
Section~\ref{ss.fiber_image} properties of their image sets and
fibers. In Sections~\ref{ss.aperiodic} and~\ref{ss.idempotents},
we prove the key combinatorial ingredients
for the enumeration of the simple modules of $\K \biheckemonoid(W)$ in
Section~\ref{section.M.representation_theory}: $\biheckemonoid(W)$ is
aperiodic and its $\JJ$-classes of idempotents are indexed by $W$.
Finally, in Section~\ref{ss.green_relations} we study Green's relations as introduced in
Section~\ref{ss.preorders on monoids} and involutions on $\biheckemonoid(W)$ in
Section~\ref{subsection.involution}.

\subsection{Preservation of left order}
\label{ss.left}

Recall that $\biheckemonoid(W)$ is defined by its right action on
elements in $W$ by~\eqref{equation.antisorting_action}
and~\eqref{equation.sorting_action}. The following key proposition,
illustrated in Figure~\ref{figure.preservation_weak_order}, states
that it therefore preserves properties on the left.

\begin{proposition}
  \label{proposition.weak_order}
  Take $f\in \biheckemonoid(W)$, $w\in W$, and $j\in I$. Then, $(s_j
  w).f$ is either $w.f$ or $s_j(w.f)$.
\end{proposition}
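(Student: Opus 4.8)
The plan is to reduce the statement to the case where $f$ is a single generator $\pi_i$ or $\opi_i$, and then verify it by a direct case analysis. First I would observe that the property ``for all $w$, $(s_jw).f \in \{w.f,\ s_j(w.f)\}$'' is preserved under composition of functions: if $f$ and $g$ both have this property, then for $h = fg$ we compute $(s_jw).h = ((s_jw).f).g$, which equals $(w.f).g = w.h$ or $(s_j(w.f)).g$; in the latter case, applying the property of $g$ with the element $w.f$ in place of $w$, we get either $(w.f).g = w.h$ or $s_j((w.f).g) = s_j(w.h)$. Hence the set of $f\in\biheckemonoid(W)$ satisfying the conclusion is a submonoid (it clearly contains the identity), so it suffices to check the generators $\pi_i$ and $\opi_i$ for $i\in I$.

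For a single generator, I would argue using the decomposition of Lemma~\ref{lemma.action_on_decomposition}. Fix $i\in I$ and $j\in I$, and consider $w$ and $s_jw$. Apply part~\eqref{item.right_decomp} of Lemma~\ref{lemma.action_on_decomposition} with $K=\{i\}$: write $w = \rcoset{w}{\{i\}}\,{}_{\{i\}}w$, and note that $w.\pi_i$ and $w.\opi_i$ are governed entirely by whether $i\in\Des(w)$, equivalently by the second factor ${}_{\{i\}}w \in W_{\{i\}} = \{1, s_i\}$. Passing from $w$ to $s_jw$, Lemma~\ref{lemma.action_on_decomposition}\eqref{item.right_decomp} tells us that either $s_jw = (s_jv)u$ (the ``generic'' case, where the second factor $u = {}_{\{i\}}w$ is unchanged and $i\in\Des(w) \Leftrightarrow i\in\Des(s_jw)$), or $s_jw = v(s_ku)$ with $k\in\{i\}$, i.e. $s_jw = v(s_iu)$ (the ``skew'' case, where the $\Des_i$-status flips and $s_jv = vs_i$). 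In the generic case, $(s_jw).\pi_i = (s_jv)(u.\pi_i) = s_j(v(u.\pi_i)) = s_j(w.\pi_i)$ when $j\notin\Des(sub\text{-expression})$... more carefully: in the generic case $s_jv$ is reduced over $v$ or $v$ is reduced over $s_jv$, and in either subcase $(s_jw).\pi_i$ is obtained from $w.\pi_i$ by left-multiplication by $s_j$ (with a length change matching that from $w$ to $s_jw$), giving $s_j(w.\pi_i)$; and similarly for $\opi_i$. In the skew case, $s_jv = vs_i$, so $s_jw = v s_i u$; here $i\in\Des(s_jw) \Leftrightarrow i\notin\Des(w)$, and one checks directly from~\eqref{equation.antisorting_action} and~\eqref{equation.sorting_action} that $(s_jw).\pi_i = v\cdot((s_iu).\pi_i)$ and $w.\pi_i = v\cdot(u.\pi_i)$, and that $(s_iu).\pi_i = u.\pi_i$ in all four combinations of the two values of $u\in\{1,s_i\}$ — so $(s_jw).\pi_i = w.\pi_i$ in this case (and likewise for $\opi_i$).

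I expect the main obstacle to be the bookkeeping in the single-generator case: one must be careful that left-multiplication by $s_j$ on $w$ really does induce left-multiplication by $s_j$ on the image $w.\pi_i$ in the generic case (this is where the geometry of foldings, or equivalently Lemma~\ref{lemma.action_on_decomposition}, does the work), and that the flip case genuinely yields $w.f = (s_jw).f$. An alternative, perhaps cleaner, route that I would keep in reserve is the geometric one: $\pi_i$ and $\opi_i$ are foldings of the Coxeter arrangement, hence commute appropriately with the reflection $s_j$ acting on the left (on chambers), and a folding applied after an $s_j$-reflection either agrees with the unfolded image or with its $s_j$-reflection — but making ``commute appropriately'' precise still amounts to the same case check. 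Either way, once the generator case is settled, the submonoid observation from the first paragraph immediately upgrades it to all $f\in\biheckemonoid(W)$, completing the proof.
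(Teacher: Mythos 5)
Your proof is correct, and the reduction step is exactly the paper's: both arguments observe that the property is stable under composition of functions and hence only needs to be checked on the generators $\pi_i$, $\opi_i$. Where you diverge is in the single-generator case. The paper isolates this as Lemma~\ref{lemma.weak_order} and proves it by pure $0$-Hecke associativity, writing $(s_jw).\pi_i = \one.(\pi_j(\pi_w\pi_i)) = \one.(\pi_j\pi_{w'})$ with $w'=w.\pi_i$, which immediately yields the dichotomy together with the precise criterion for which branch occurs (namely $j\in\Rec(w.\pi_i)$ or not); the $\opi_i$ case is then deduced by conjugating with $w_0$ via Remark~\ref{remark.opi_in_terms_of_pi}. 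You instead invoke Lemma~\ref{lemma.action_on_decomposition}\eqref{item.right_decomp} with $K=\{i\}$, writing $w=vu$ with $v=\rcoset{w}{\{i\}}$ and $u\in\{1,s_i\}$, and check the ``skew'' case ($s_jv=vs_i$, giving $(s_jw).f=w.f$) and the ``generic'' case ($s_jw=(s_jv)u$, giving $(s_jw).f=s_j(w.f)$ by plain associativity of the group product --- the length bookkeeping you worry about is actually immaterial here) separately. Your route treats $\pi_i$ and $\opi_i$ symmetrically through the same coset decomposition and needs no appeal to $w_0$, at the cost of a case analysis; the paper's route is shorter and records, as a by-product, exactly when the two alternatives $w.f$ and $s_j(w.f)$ coincide. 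Both ultimately rest on the exchange condition, so the difference is one of packaging rather than substance.
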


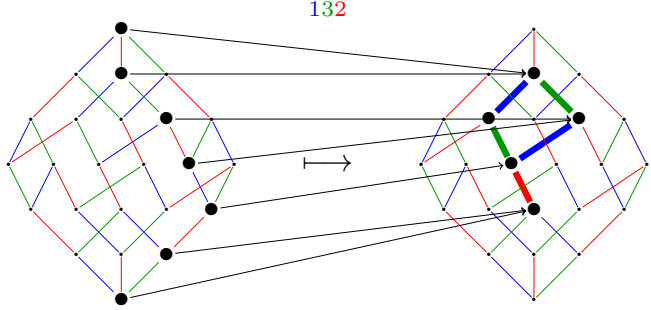
\begin{figure}
  \centerline{\scalebox{1.2}{{\def\fatedge{0pt}
$\stackrel{{\color{blue}1}{\color{green}3}{\color{red}\overline{2}}}{
\begin{tikzpicture}[xscale=.5,yscale=.5,inner sep=0pt,baseline=(current bounding box.east), remember picture]
  \node (1234) at (0, 0) {$\bullet$};
  \node (2134) at (-1, 1) {.};
  \node (2314) at (-1, 2) {.};
  \node (3214) at (-1.50000000000000, 3) {.};
  \node (2341) at (1.50000000000000, 3) {$\bullet$};
  \node (3241) at (1, 4) {$\bullet$};
  \node (3421) at (1, 5) {.};
  \node (4321) at (0, 6) {$\bullet$};
  \node (2431) at (2, 4) {.};
  \node (4231) at (0, 5) {$\bullet$};
  \node (1324) at (0, 1) {.};
  \node (3124) at (-2, 2) {.};
  \node (1342) at (2, 2) {$\bullet$};
  \node (3142) at (-0.500000000000000, 3) {.};
  \node (3412) at (0, 4) {.};
  \node (4312) at (-1, 5) {.};
  \node (1432) at (2.50000000000000, 3) {.};
  \node (4132) at (-1, 4) {.};
  \node (1243) at (1, 1) {$\bullet$};
  \node (2143) at (0, 2) {.};
  \node (2413) at (0.500000000000000, 3) {.};
  \node (4213) at (-2, 4) {.};
  \node (1423) at (1, 2) {.};
  \node (4123) at (-2.50000000000000, 3) {.};
  \draw [color=blue,line width=0pt] (2134) -- (1234);
  \draw [color=blue,line width=\fatedge] (2314) -- (1324);
  \draw [color=blue,line width=\fatedge] (3214) -- (3124);
  \draw [color=red,line width=0pt] (3214) -- (2314);
  \draw [color=blue,line width=\fatedge] (2341) -- (1342);
  \draw [color=blue,line width=\fatedge] (3241) -- (3142);
  \draw [color=red,line width=0pt] (3241) -- (2341);
  \draw [color=blue,line width=0pt] (3421) -- (3412);
  \draw [color=red,line width=0pt] (3421) -- (2431);
  \draw [color=blue,line width=0pt] (4321) -- (4312);
  \draw [color=red,line width=0pt] (4321) -- (4231);
  \draw [color=green,line width=0pt] (4321) -- (3421);
  \draw [color=blue,line width=\fatedge] (2431) -- (1432);
  \draw [color=green,line width=\fatedge] (2431) -- (2341);
  \draw [color=blue,line width=\fatedge] (4231) -- (4132);
  \draw [color=green,line width=\fatedge] (4231) -- (3241);
  \draw [color=red,line width=\fatedge] (1324) -- (1234);
  \draw [color=red,line width=\fatedge] (3124) -- (2134);
  \draw [color=red,line width=\fatedge] (1342) -- (1243);
  \draw [color=red,line width=\fatedge] (3142) -- (2143);
  \draw [color=red,line width=0pt] (3412) -- (2413);
  \draw [color=red,line width=0pt] (4312) -- (4213);
  \draw [color=green,line width=0pt] (4312) -- (3412);
  \draw [color=red,line width=0pt] (1432) -- (1423);
  \draw [color=green,line width=\fatedge] (1432) -- (1342);
  \draw [color=red,line width=0pt] (4132) -- (4123);
  \draw [color=green,line width=\fatedge] (4132) -- (3142);
  \draw [color=green,line width=0pt] (1243) -- (1234);
  \draw [color=blue,line width=0pt] (2143) -- (1243);
  \draw [color=green,line width=0pt] (2143) -- (2134);
  \draw [color=blue,line width=\fatedge] (2413) -- (1423);
  \draw [color=green,line width=\fatedge] (2413) -- (2314);
  \draw [color=blue,line width=\fatedge] (4213) -- (4123);
  \draw [color=green,line width=\fatedge] (4213) -- (3214);
  \draw [color=green,line width=\fatedge] (1423) -- (1324);
  \draw [color=green,line width=\fatedge] (4123) -- (3124);
\end{tikzpicture}
 \raisebox{-.5ex}{$\qquad \longmapsto\qquad $} 
\begin{tikzpicture}[xscale=.5,yscale=.5,inner
sep=0pt,baseline=(current bounding box.east), remember picture]
  \node (image1234) at (0, 0) {.};
  \node (image2134) at (-1, 1) {.};
  \node (image2314) at (-1, 2) {.};
  \node (image3214) at (-1.50000000000000, 3) {.};
  \node (image2341) at (1.50000000000000, 3) {.};
  \node (image3241) at (1, 4) {$\bullet$};
  \node (image3421) at (1, 5) {.};
  \node (image4321) at (0, 6) {.};
  \node (image2431) at (2, 4) {.};
  \node (image4231) at (0, 5) {$\bullet$};
  \node (image1324) at (0, 1) {.};
  \node (image3124) at (-2, 2) {.};
  \node (image1342) at (2, 2) {.};
  \node (image3142) at (-0.500000000000000, 3) {$\bullet$};
  \node (image3412) at (0, 4) {.};
  \node (image4312) at (-1, 5) {.};
  \node (image1432) at (2.50000000000000, 3) {.};
  \node (image4132) at (-1, 4) {$\bullet$};
  \node (image1243) at (1, 1) {.};
  \node (image2143) at (0, 2) {$\bullet$};
  \node (image2413) at (0.500000000000000, 3) {.};
  \node (image4213) at (-2, 4) {.};
  \node (image1423) at (1, 2) {.};
  \node (image4123) at (-2.50000000000000, 3) {.};
  \draw [color=blue,line width=0pt] (image2134) -- (image1234);
  \draw [color=blue,line width=0pt] (image2314) -- (image1324);
  \draw [color=blue,line width=0pt] (image3214) -- (image3124);
  \draw [color=red,line width=0pt] (image3214) -- (image2314);
  \draw [color=blue,line width=0pt] (image2341) -- (image1342);
  \draw [color=blue,line width=2pt] (image3241) -- (image3142);
  \draw [color=red,line width=0pt] (image3241) -- (image2341);
  \draw [color=blue,line width=0pt] (image3421) -- (image3412);
  \draw [color=red,line width=0pt] (image3421) -- (image2431);
  \draw [color=blue,line width=0pt] (image4321) -- (image4312);
  \draw [color=red,line width=0pt] (image4321) -- (image4231);
  \draw [color=green,line width=0pt] (image4321) -- (image3421);
  \draw [color=blue,line width=0pt] (image2431) -- (image1432);
  \draw [color=green,line width=0pt] (image2431) -- (image2341);
  \draw [color=blue,line width=2pt] (image4231) -- (image4132);
  \draw [color=green,line width=2pt] (image4231) -- (image3241);
  \draw [color=red,line width=0pt] (image1324) -- (image1234);
  \draw [color=red,line width=0pt] (image3124) -- (image2134);
  \draw [color=red,line width=0pt] (image1342) -- (image1243);
  \draw [color=red,line width=2pt] (image3142) -- (image2143);
  \draw [color=red,line width=0pt] (image3412) -- (image2413);
  \draw [color=red,line width=0pt] (image4312) -- (image4213);
  \draw [color=green,line width=0pt] (image4312) -- (image3412);
  \draw [color=red,line width=0pt] (image1432) -- (image1423);
  \draw [color=green,line width=0pt] (image1432) -- (image1342);
  \draw [color=red,line width=0pt] (image4132) -- (image4123);
  \draw [color=green,line width=2pt] (image4132) -- (image3142);
  \draw [color=green,line width=0pt] (image1243) -- (image1234);
  \draw [color=blue,line width=0pt] (image2143) -- (image1243);
  \draw [color=green,line width=0pt] (image2143) -- (image2134);
  \draw [color=blue,line width=0pt] (image2413) -- (image1423);
  \draw [color=green,line width=0pt] (image2413) -- (image2314);
  \draw [color=blue,line width=0pt] (image4213) -- (image4123);
  \draw [color=green,line width=0pt] (image4213) -- (image3214);
  \draw [color=green,line width=0pt] (image1423) -- (image1324);
  \draw [color=green,line width=0pt] (image4123) -- (image3124);
\end{tikzpicture}
\begin{tikzpicture}[xscale=.5,yscale=-.5,inner sep=0pt,baseline=(current bounding box.east), remember picture, overlay]
  \draw [color=black, line width = 0pt,->] (1234) -- (image2143);
  \draw [color=black, line width = 0pt,->] (1243) -- (image2143);
  \draw [color=black, line width = 0pt,->] (1342) -- (image3142);
  \draw [color=black, line width = 0pt,->] (2341) -- (image3241);
  \draw [color=black, line width = 0pt,->] (3241) -- (image3241);
  \draw [color=black, line width = 0pt,->] (4231) -- (image4231);
  \draw [color=black, line width = 0pt,->] (4321) -- (image4231);
\end{tikzpicture}
}$}}}
  \caption{A partial picture of the graph of the element
    $f:=\pi_1\pi_3\opi_2$ of the monoid $\biheckemonoid(\sg[4])$. On both sides, the
    underlying poset is left order of $\sg[4]$ (with $\one$ at the
    bottom, and the same color code as in Figure~\ref{figure.4312});
    on the right, the bold dots depict the image set of $f$. The arrows
    from the left to the right describe the image of each point along
    some chain from $\one$ to $w_0$.}
  \label{figure.preservation_weak_order}
\end{figure}

The proof of Proposition~\ref{proposition.weak_order} is a consequence
of the associativity of the $0$-Hecke monoid and relies on the following
lemma, which is a nice algebraic (partial) formulation of the Exchange
Property~\cite[Section 1.5]{Bjorner_Brenti.2005}.

\begin{lemma}
  \label{lemma.weak_order}
  Let $w\in W$ and $i,j\in I$ such that $j \not \in \Rec(w)$. Then
  \begin{equation*}
    (s_jw).\pi_i =
    \begin{cases}
      w.\pi_i       & \text{if $j \in \Rec(w.\pi_i)$,}\\
      s_j (w.\pi_i) & \text{otherwise.}
    \end{cases}
  \end{equation*}
  The same result holds with $\pi_i$ replaced by $\opi_i$.
\end{lemma}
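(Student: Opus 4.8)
The plan is to reduce the $\pi_i$ statement to a one-line computation inside the $0$-Hecke monoid $H_0(W)$, and then to deduce the $\opi_i$ statement from it through the $w_0$-duality of Remark~\ref{remark.opi_in_terms_of_pi}. The starting point is the observation, recalled in Section~\ref{ss.0_hecke_monoid}, that the action~\eqref{equation.antisorting_action} of $\pi_1,\dots,\pi_n$ on $W$ is the right regular representation of $H_0(W)$ transported along the canonical bijection $w\leftrightarrow\pi_w$; in formulas, $\pi_w\pi_i=\pi_{w.\pi_i}$ for all $w\in W$ and $i\in I$. Apart from associativity, the only ingredients I will use are two standard facts about reduced words: for $u\in W$, (a) if $j\in\Rec(u)$ then $u$ has a reduced word beginning with $s_j$, so $\pi_u=\pi_j\pi_{s_ju}$ and hence $\pi_j\pi_u=\pi_j^2\pi_{s_ju}=\pi_j\pi_{s_ju}=\pi_u$ (using $\pi_j^2=\pi_j$); and (b) if $j\notin\Rec(u)$ then $s_j$ prepended to any reduced word of $u$ is again reduced, so $\pi_j\pi_u=\pi_{s_ju}$.

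For the $\pi_i$ case, fix $w$ with $j\notin\Rec(w)$. By (b) applied to $u=w$ we get $\pi_{s_jw}=\pi_j\pi_w$, and therefore
\[
  \pi_{(s_jw).\pi_i}=\pi_{s_jw}\,\pi_i=\pi_j\,\pi_w\,\pi_i=\pi_j\,\pi_{w.\pi_i}\,.
\]
Applying (a) and (b) once more, this time with $u=w.\pi_i$, the right-hand side is $\pi_{w.\pi_i}$ when $j\in\Rec(w.\pi_i)$ and $\pi_{s_j(w.\pi_i)}$ otherwise. Since $w\mapsto\pi_w$ is injective, this is exactly the claimed dichotomy. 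Observe that the hypothesis $j\notin\Rec(w)$ is used precisely in invoking (b) for $w$.

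For the $\opi_i$ case, the operators $\opi_1,\dots,\opi_n$ satisfy the same defining relations and, by Remark~\ref{remark.opi_in_terms_of_pi}, the action~\eqref{equation.sorting_action} is again a right-regular-representation realization of $H_0(W)$, now under the twisted identification $w\leftrightarrow\opi_{w_0w}$; concretely $w.\opi_i=w_0[(w_0w).\pi_i]$. Writing $v:=w_0w$ and $s_{j'}:=w_0s_jw_0$ (conjugation by $w_0$ permutes the simple reflections), one gets $(s_jw).\opi_i=w_0[(w_0s_jw).\pi_i]=w_0[(s_{j'}v).\pi_i]$, which reduces the $\opi_i$-statement to a comparison of $(s_{j'}v).\pi_i$ with $v.\pi_i$ of exactly the same shape as the $\pi_i$ case. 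The one point requiring care — and the main obstacle I anticipate — is that $w_0$-conjugation sends the hypothesis $j\notin\Rec(w)$ to $j'\in\Rec(v)$, so what is actually needed is the companion of the $\pi_i$-computation valid when $j\in\Rec(\cdot)$; in that regime the same $H_0$-manipulation still applies, but one must separate off the degenerate configuration $s_jw=ws_i$, equivalently $w(\alpha_i)=\alpha_j$ (so that $i$ is a short right nondescent of $w$), in which $w.\opi_i=w$ and $s_jws_i=w$, hence $(s_jw).\opi_i=w=w.\opi_i$; this last verification is a one-line length argument via the Exchange Property~\cite[Section~1.5]{Bjorner_Brenti.2005}. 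Everything else is routine bookkeeping, the only real risk being to keep the left/right conventions and the direction of $\Rec$ versus $\Des$ consistent throughout.
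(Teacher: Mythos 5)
Your treatment of the $\pi_i$ case is exactly the paper's proof: both arguments rest on the identity $w.\pi_v=\one.(\pi_w\pi_v)$ (equivalently $\pi_{w.\pi_i}=\pi_w\pi_i$), associativity in $H_0(W)$, and the two facts $\pi_j\pi_u=\pi_u$ for $j\in\Rec(u)$ and $\pi_j\pi_u=\pi_{s_ju}$ for $j\notin\Rec(u)$. That half is correct and needs no further comment.

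For the $\opi_i$ case you are in fact more careful than the paper, whose entire argument is ``follows from Remark~\ref{remark.opi_in_terms_of_pi} and $w_0s_j=s_{j'}w_0$''. You correctly observe that conjugation by $w_0$ turns the hypothesis $j\notin\Rec(w)$ into $j'\in\Rec(v)$ for $v=w_0w$, so the $\pi_i$ statement cannot be applied verbatim to the pair $(v,j')$; one must apply it to $(s_{j'}v,\,j')$ and unwind. If you carry this out to the end, the discriminant you obtain is $j\in\Rec\bigl((s_jw).\opi_i\bigr)$, not $j\in\Rec(w.\opi_i)$, and the two disagree exactly in your ``degenerate configuration'' $s_jw=ws_i$ (both reduced): there $(s_jw).\opi_i=w=w.\opi_i$ while $j\notin\Rec(w.\opi_i)$, so the displayed case distinction, read literally with $\pi_i$ replaced by $\opi_i$, assigns this situation to the wrong branch. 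A concrete instance is $W=\sg[2]$, $w=\one$, $i=j=1$: then $(s_1).\opi_1=\one=w.\opi_1$, whereas $\Rec(w.\opi_1)=\emptyset$ would force the answer $s_1$. Your computation in that configuration is therefore right, but you should state explicitly that it corrects the literal wording of the lemma rather than absorb it silently as a ``special case''; the dichotomy $(s_jw).\opi_i\in\{w.\opi_i,\,s_j(w.\opi_i)\}$, which is all that Proposition~\ref{proposition.weak_order} and everything downstream actually uses, does hold in all cases. One small simplification: the degenerate case does not require the Exchange Property --- it is just the branch $j'\in\Rec\bigl((s_{j'}v).\pi_i\bigr)$ of the $\pi_i$ lemma applied to $s_{j'}v$, so the same $H_0$ bookkeeping covers it.
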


\begin{proof}
  Recall that for any $w,v\in W$, $w.\pi_v=\one.(\pi_w\pi_v)$.
  Set $w'=w.\pi_i$. Then
  \begin{multline*}
    (s_j w).\pi_i = 1 . (\pi_{s_j w} \pi_i) = 1 . ((\pi_j\pi_w) \pi_i)
    = \one. (\pi_j (\pi_{w}\pi_{i})) = \one. (\pi_j \pi_{w'})\\
   = \begin{cases}
      \one. \pi_{w'}     =    w' & \text{if $j \in \Rec(w')$,}\\
      \one. \pi_{s_j w'} = s_jw' & \text{otherwise.}
    \end{cases}
  \end{multline*}
  The result for $\opi_i$ follows from Remark~\ref{remark.opi_in_terms_of_pi} and the
  fact that $w_0s_j = s_{j'}w_0$ for some $j'\in I$ by Example~\ref{example.reduced_blocks_w0}
  and Lemma~\ref{lemma.skew_commutation} with $w=w_0$ and $K=\{j\}$.
\end{proof}

\begin{proof}[Proof of Proposition~\ref{proposition.weak_order}]
  Any element $f\in \biheckemonoid(W)$ can be written as a product of $\pi_i$ and
  $\opi_i$. Lemma~\ref{lemma.weak_order} describes the action of $\pi_i$ and $\opi_i$
  on the Hasse diagram of left order. By applying induction, each $\pi_i$ and $\opi_i$ in the
  expansion of $f$ satisfies all desired properties, and hence so does $f$
  (the statement holds trivially for the identity).
\end{proof}

\begin{proposition}
  \label{proposition.weak_order2}
  For $f \in \biheckemonoid(W)$, the following holds:
  \begin{enumerate}[(i)]
  \item \label{item.weak_order}
  $f$ preserves left order:
    \begin{equation*}
     	w \le_L w' \quad \Rightarrow \quad w.f \le_L w'.f \quad \text{for $w,w'\in W$.}
    \end{equation*}
  \item
    \label{item.weak_order.maxchain}
    Take $w\le_L w'$ in $W$, and consider a maximal chain
    \begin{displaymath}
      w.f=v_1\stackrel{i_1}{\rightarrow}
      v_2\stackrel{i_2}{\rightarrow} \cdots
      \stackrel{i_{k-1}}{\rightarrow} v_k=w'.f\,.
    \end{displaymath}
    Then, there is a maximal chain:
    \begin{multline}
      w=
      u_{1,1}\rightarrow \dots \rightarrow u_{1,\ell_1}
      \ \stackrel{i_1}{\rightarrow}\
      u_{2,1}\rightarrow \dots \rightarrow u_{2,\ell_2}
      \ \stackrel{i_2}{\rightarrow}\
      \quad \dots\\
      \dots\quad
      \ \stackrel{i_{k-1}}{\rightarrow}\
      u_{k,1}\rightarrow \dots \rightarrow u_{k,\ell_k} = w'\,,
    \end{multline}
    such that $u_{j,l}.f = v_j$ for all $1\leq j\leq k$ and $1\leq l\leq \ell_j$.
  \item
    \label{item.length_contracting}
    $f$ is length contracting; namely, for $w\le_Lw'$:
    \begin{equation*}
     	\len(w'.f) - \len(w.f) \le \len(w') - \len(w).
    \end{equation*}
    Furthermore, when equality holds, $(w'.f) (w.f)^{-1} = w' w^{-1}$.
  \item
    \label{item.image_set}
    Let $J=[a,b]_L$ be an interval in left order. Then the image of $J$ under $f$ denoted by $J.f$
    has $a.f$ and $b.f$ as minimal and maximal element, respectively. Furthermore, $J.f$
    is connected. If $\len(b.f) - \len(a.f) = \len(b) - \len(a)$,
    then $J.f$ is isomorphic to $J$, that is $x.f = (xa^{-1})(a.f)$
    for $x\in J$.
  \end{enumerate}
\end{proposition}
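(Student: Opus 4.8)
The plan is to derive all four statements from Lemma~\ref{lemma.weak_order} (equivalently Proposition~\ref{proposition.weak_order}) by induction on the length of $f$ as a word in the generators $\pi_i, \opi_i$. First I would observe that it suffices to prove the entire proposition for a single generator $g\in\{\pi_i,\opi_i\}$, since all the listed properties are stable under composition: preservation of left order is obviously transitive; statement~\eqref{item.weak_order.maxchain} composes by concatenating the refined chains; the length-contraction inequality~\eqref{item.length_contracting} adds up, and the equality case $(w'g.h)(wg.h)^{-1}=w'w^{-1}$ chains because equality forces equality at each stage; and~\eqref{item.image_set} is a consequence of~\eqref{item.weak_order}, \eqref{item.weak_order.maxchain}, and~\eqref{item.length_contracting} applied to the interval. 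So the core is the one-step analysis.

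For a single generator $g$, statement~\eqref{item.weak_order} is immediate: by Lemma~\ref{lemma.weak_order}, if $w\to s_jw$ is a cover in left order with $j\notin\Rec(w)$, then $(s_jw).g$ is either $w.g$ or $s_j(w.g)$, and in the latter case $j\notin\Rec(w.g)$ so it is again a cover; applying this along a maximal chain from $w$ to $w'$ shows $w.g\le_L w'.g$ and simultaneously produces the refined maximal chain of~\eqref{item.weak_order.maxchain}, where the $u_{j,l}$ are the successive elements of the chain from $w$ to $w'$, grouped according to whether $g$ advances the image or leaves it fixed. This also immediately gives the length-contraction inequality~\eqref{item.length_contracting}: the length of the image increases by exactly the number of ``advancing'' steps, which is at most the total number of steps $\len(w')-\len(w)$. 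For the equality case: if the lengths match, every step along the chain from $w$ to $w'$ was advancing, meaning at each cover $v\to s_{j}v$ we are in the second case of Lemma~\ref{lemma.weak_order}, i.e. $(s_jv).g = s_j(v.g)$. Composing these identities along the chain yields $w'.g = t\,(w.g)$ where $t = w'w^{-1}$ written as the product of the simple reflections labelling the chain read bottom to top; hence $(w'.g)(w.g)^{-1} = w'w^{-1}$, as claimed. Then for a general $f = g_1\cdots g_k$ the equality $\len(w'.f)-\len(w.f)=\len(w')-\len(w)$ forces equality at each intermediate stage, and one composes the $k$ identities $(\,\cdot\,)_{i+1} = (\,\cdot\,)_i$-preserving-cosets to conclude.

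Finally, for~\eqref{item.image_set} with $J=[a,b]_L$: that $a.f$ and $b.f$ are the minimum and maximum of $J.f$ follows from~\eqref{item.weak_order} applied to $a\le_L x\le_L b$ for all $x\in J$, once we know $J.f\subseteq [a.f,b.f]_L$; this containment also comes from~\eqref{item.weak_order}. Connectedness of $J.f$ follows from~\eqref{item.weak_order.maxchain}: any two elements of $J.f$ lie on the image of a maximal chain of $J$ (extend to chains through $a$ and $b$), and the refined chain shows the images form a saturated chain in left order, so they are connected in the Hasse diagram and lie in $J.f$. For the isomorphism claim when $\len(b.f)-\len(a.f)=\len(b)-\len(a)$: by Proposition~\ref{proposition.interval} we may translate and assume $a=1$, so $J=[1,b]_L$ and the hypothesis reads $\len(b.f)=\len(b)-\len(a.f)\cdot 0$\,---\,more precisely after translating by $a$ we reduce to the case where equality of lengths holds on the nose; then for any $x\in J$ the sub-interval $[1,x]_L$ also satisfies the equality case of~\eqref{item.length_contracting} (since lengths can only drop, and the total already achieves the maximum, no drop can occur on any piece), whence $(x.f)(1.f)^{-1} = x\cdot 1^{-1} = x$, i.e. $x.f = x\,(1.f)$; undoing the translation gives $x.f = (xa^{-1})(a.f)$ for all $x\in J$.

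The main obstacle I anticipate is the bookkeeping in~\eqref{item.weak_order.maxchain}\,---\,making precise that the refined chain exists and that the labels $i_1,\dots,i_{k-1}$ of the original chain reappear as the ``advancing'' covers of the refinement, in the correct order, with the flat stretches $u_{j,1}\to\cdots\to u_{j,\ell_j}$ all mapping to the single element $v_j$. This requires running the one-generator argument carefully and then an induction on word length for $f$ in which the chain gets progressively refined, generator by generator; the subtlety is that a flat stretch for $g_1\cdots g_{m}$ may get subdivided further when $g_{m+1}$ is applied, but it never gets coarser, so the inductive hypothesis is maintained. Everything else is a routine consequence of Lemma~\ref{lemma.weak_order} and the additivity of lengths.
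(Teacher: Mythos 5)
Your proof is correct and follows essentially the same route as the paper, whose own proof simply records that (i) and (ii) are direct consequences of Proposition~\ref{proposition.weak_order} by induction on a word for $f$, that (iii) follows from (ii), and that (iv) follows from (i)--(iii) applied to $a\le_L x$ for all $x\in[a,b]_L$; your one-generator analysis (a cover $v\to s_jv$ upstairs either collapses or maps to a cover of the same color downstairs) and the composition of the resulting fiber partitions are exactly the intended content of that induction. One small remark on your closing paragraph: the direction of the refinement is stated backwards --- pushing a fixed upstairs chain down through successive generators can only \emph{merge} adjacent flat stretches (the fiber partition coarsens, it never subdivides), and the subdivision you describe occurs only when the induction is read in the lifting direction, peeling generators off from the right --- but this does not affect the argument, which correctly produces, for every maximal chain from $w$ to $w'$, a saturated chain from $w.f$ to $w'.f$ together with the required lift.
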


\begin{proof}
  \eqref{item.weak_order} and~\eqref{item.weak_order.maxchain} are direct consequences of
  Proposition~\ref{proposition.weak_order}, using induction.

  \eqref{item.length_contracting} follows from~\eqref{item.weak_order.maxchain}.

  \eqref{item.image_set} follows from~\eqref{item.weak_order}, \eqref{item.weak_order.maxchain},
  and~\eqref{item.length_contracting} applied to $a\le_L x$ for all $x\in [a,b]_L$.
\end{proof}

\subsection{Preservation of Bruhat order}
\label{ss.bruhat}

Recall the following well-known property of Bruhat order of Coxeter groups.
\begin{proposition}[{Lifting Property~\cite[p.35]{Bjorner_Brenti.2005}}]
  \label{proposition.lifting}
  Suppose $u<_B v$ and $i\in \Des(v)$ but $i\not\in \Des(u)$. Then,
  $u\leq_B vs_i$ and $us_i\leq_B v$.
\end{proposition}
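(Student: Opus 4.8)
The plan is to derive both inequalities from the subword characterization of Bruhat order (in the conventions of~\cite{Bjorner_Brenti.2005}): for $u,v\in W$, one has $u\le_B v$ if and only if \emph{some}, equivalently \emph{every}, reduced expression for $v$ admits a reduced subexpression whose product is $u$.

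First I would exploit the hypothesis $i\in\Des(v)$ to fix a reduced expression for $v$ that ends in $s_i$, say $v=s_{i_1}\cdots s_{i_{\ell-1}}s_i$ with $\ell=\ell(v)$; then $vs_i=s_{i_1}\cdots s_{i_{\ell-1}}$ is reduced of length $\ell-1$. Since $u\le_B v$, this particular reduced word for $v$ carries a reduced subexpression $\sigma$ with product $u$. The crucial point is that $\sigma$ cannot involve the last letter $s_i$: otherwise $u$ would factor as $u=u's_i$ with $u'$ reduced of length $\ell(u)-1$, forcing $i\in\Des(u)$ and contradicting the hypothesis. Hence $\sigma$ is a reduced subexpression of $s_{i_1}\cdots s_{i_{\ell-1}}$, which is precisely the statement $u\le_B vs_i$.

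For the second inequality $us_i\le_B v$, I would simply append the final position of $v$ to $\sigma$, producing a subexpression $\sigma'$ of $v$ with product $us_i$ and cardinality $\ell(u)+1$. Because $i\notin\Des(u)$ we have $\ell(us_i)=\ell(u)+1=|\sigma'|$, so $\sigma'$ is reduced, and therefore $us_i\le_B v$.

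The one point requiring care — and the only real obstacle — is that we rely on the version of the subword property valid for \emph{every} reduced expression of $v$, since we need it for the specific word chosen to end in $s_i$; this is standard (see~\cite[Ch.~2]{Bjorner_Brenti.2005}) but indispensable here. Once that is in hand the argument is pure length bookkeeping. An alternative, which avoids invoking the ``every reduced word'' strengthening, is an induction on $\ell(v)$ based directly on the Exchange Property, but the subword route is the shortest.
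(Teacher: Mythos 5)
The paper does not prove this statement: it simply cites it as the Lifting Property from Björner--Brenti. Your argument is correct and is essentially the standard textbook proof: fix a reduced word for $v$ ending in $s_i$ (possible since $i\in\Des(v)$), invoke the strong form of the subword property ($u\le_B v$ iff \emph{every} reduced word for $v$ contains a reduced subword for $u$), observe that the reduced subword for $u$ cannot occupy the last position because $i\notin\Des(u)$, and then read off both conclusions — $u\le_B vs_i$ from the subword sitting inside the prefix, and $us_i\le_B v$ from appending the final letter, which stays reduced precisely because $i\notin\Des(u)$. All the length bookkeeping checks out, and your remark that the ``every reduced word'' strengthening is the one essential input is exactly right.
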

The next proposition is a consequence of the Lifting Property.

\begin{proposition}
  \label{proposition.bruhat}
  The elements $f$ of $\biheckemonoid(W)$ preserve Bruhat order. That is for
  $u,v\in W$
  \begin{equation*}
  	u\leq_B v \quad \Longrightarrow \quad u.f\leq_B v.f.
  \end{equation*}
\end{proposition}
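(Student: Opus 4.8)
The plan is to reduce, as in the proof of Proposition~\ref{proposition.weak_order}, to the case where $f$ is a single generator $\pi_i$ or $\opi_i$, since any $f\in\biheckemonoid(W)$ is a product of such generators and Bruhat order is preserved under composition. So it suffices to show: if $u\le_B v$ then $u.\pi_i\le_B v.\pi_i$, and likewise for $\opi_i$. By Remark~\ref{remark.opi_in_terms_of_pi} the action of $\opi_i$ is conjugate by $w_0$ to the action of $\pi_i$ (up to the relabelling $w\mapsto w_0w$), and since left multiplication by $w_0$ is an anti-automorphism of Bruhat order while the relabelling $j\mapsto j'$ of Example~\ref{example.reduced_blocks_w0} just permutes generators, the $\opi_i$ case will follow from the $\pi_i$ case; I would state this reduction explicitly but quickly.

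For the generator $\pi_i$, first I would dispose of the trivial sub-case: if $i\notin\Des(v)$, then $v.\pi_i=vs_i>_B v\ge_B u\ge_B u.\pi_i$ (using that $u.\pi_i$ is either $u$ or $us_i$, and $us_i$, being obtained from $u$ by the folding, satisfies $us_i\le_B vs_i$ by a routine subword argument, or more cleanly: $us_i\le_B v\le_B vs_i$ when $i\notin\Des(u)$, and $us_i<_B u\le_B v\le_B vs_i$ when $i\in\Des(u)$). The real work is the case $i\in\Des(v)$, so $v.\pi_i=v$. Here there are two sub-cases according to whether $i\in\Des(u)$. If $i\in\Des(u)$ too, then $u.\pi_i=u\le_B v=v.\pi_i$ and we are done. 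If $i\notin\Des(u)$, then $u.\pi_i=us_i$, and I must show $us_i\le_B v$. But this is exactly the conclusion of the Lifting Property (Proposition~\ref{proposition.lifting}): from $u\le_B v$ — noting that if $u=v$ then $i\in\Des(u)$, contradiction, so in fact $u<_B v$ — together with $i\in\Des(v)$ and $i\notin\Des(u)$, we get $us_i\le_B v$, as desired.

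So the only genuine obstacle is the case $i\in\Des(v)$, $i\notin\Des(u)$, and that is handled directly by the Lifting Property, which the paper has already recalled. The main care point is being careful about the strict-versus-weak inequality ($u<_B v$ vs.\ $u=v$) when invoking Proposition~\ref{proposition.lifting}, and making sure the opposite sub-case $i\notin\Des(v)$ is argued cleanly; I expect the cleanest route there is again to split on whether $i\in\Des(u)$ and use $us_i\le_B v\le_B vs_i$ in one branch and $us_i<_B u\le_B v\le_B vs_i$ in the other. The whole argument is short: reduction to generators, the $\opi$-to-$\pi$ reduction via $w_0$, and then a four-way case split on $(i\in\Des(u)?,\ i\in\Des(v)?)$ with the Lifting Property doing the one nontrivial case.
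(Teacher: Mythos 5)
Your core argument is the same as the paper's: reduce to the generators, observe that the only configuration needing real work is $i\notin\Des(u)$ together with $i\in\Des(v)$, and settle it with the Lifting Property (Proposition~\ref{proposition.lifting}), which gives exactly $us_i\le_B v$. Your derivation of the $\opi_i$ case from the $\pi_i$ case through $w.\opi_i=w_0[(w_0w).\pi_i]$ and the Bruhat antiautomorphism $v\mapsto w_0v$ is a correct variant; the paper instead handles $\opi_i$ in parallel with $\pi_i$, using the other half of the Lifting Property's conclusion, namely $u\le_B vs_i$. (No relabelling of generators is needed for your reduction: the formula of Remark~\ref{remark.opi_in_terms_of_pi} carries the same index $i$ on both sides.)

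However, your treatment of the case $i\notin\Des(v)$ contains a false step. First, the chain $v.\pi_i>_B v\ge_B u\ge_B u.\pi_i$ has its last inequality backwards, since $u.\pi_i\ge_B u$ always. More seriously, in the sub-case $i\notin\Des(u)$ your ``cleaner'' chain $us_i\le_B v\le_B vs_i$ starts from a false inequality: take $W=\sg[3]$, $u=\one$, $v=s_1$, $i=2$; then $us_2=s_2\not\le_B s_1=v$, even though $s_2\le_B s_1s_2=vs_2$ as required. The conclusion $us_i\le_B vs_i$ does hold, and your first-mentioned justification (the subword argument: append the letter $i$ to a reduced word of $u$ sitting as a subword inside a reduced word of $v$) is correct; alternatively, apply the Lifting Property once more, to $u\le_B vs_i$ with $i\in\Des(vs_i)$ and $i\notin\Des(u)$. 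With that repair the proof is complete and agrees with the paper's.
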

\begin{proof}
  It suffices to show the property for $\pi_i$ and $\opi_i$ since they generate $\biheckemonoid(W)$.
  For these, the claim of the proposition is trivial if $i$ is a right descent of $u$, or $i$ is not a right
  descent of $v$. Otherwise, we can apply the Lifting Property:
  \begin{gather*}
    u.\pi_i = us_i \leq_B v = v.\pi_i,\\
    u.\opi_i = u \leq_B vs_i = v.\opi_i .\qedhere
  \end{gather*}
\end{proof}

\begin{remark}
  By Lemma~\ref{lemma.preimage_convex}, the preimage of a point is a convex set,
  but need not be an interval.  For example, the preimage of $s_1s_3\in \sg[4]$ (or $2143$ in
  one-line notation) of $f= \opi_1 \pi_2 \pi_1 \pi_3 \opi_2 \opi_3 \opi_1 \opi_2$ is
  \begin{equation*}
  \{ 2413, 2341, 4213, 3412, 3241, 2431, 4312, 4231, 3421, 4321\},
  \end{equation*}
  which in Bruhat order has two maximal elements $2413$ and $2341$ and hence is
  not an interval.
\end{remark}

The next result is a corollary of Proposition~\ref{proposition.weak_order2}.
\begin{corollary}
  \label{corollary.Bruhat_regressive}
  Let $f \in \biheckemonoid(W)$.
  \begin{enumerate}[(i)]
  \item \label{item.regressive}
  If $\one.f=\one$, then $f$ is \regressive for Bruhat order: $w.f\le_B w$ for all $w\in W$.
  \item \label{item.extensive}
  If $w_0.f=w_0$, then $f$ is \extensive for Bruhat order: $w.f\ge_B w$ for all $w\in W$.
  \end{enumerate}
\end{corollary}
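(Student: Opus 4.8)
The plan is to deduce both parts of Corollary~\ref{corollary.Bruhat_regressive} from Proposition~\ref{proposition.weak_order2}, by upgrading the length estimate of part~\eqref{item.length_contracting} to an honest Bruhat inequality. Concretely, I would first prove the auxiliary statement: \emph{for every $f\in\biheckemonoid(W)$ and every $u\le_L v$ in $W$, one has $(v.f)(u.f)^{-1}\le_B vu^{-1}$}. (The left-hand side makes sense because $u.f\le_L v.f$ by Proposition~\ref{proposition.weak_order2}~\eqref{item.weak_order}.) Granting this, the two assertions of the corollary are mere specializations: part~\eqref{item.regressive} is the case $u=\one$, since then $(w.f)(\one.f)^{-1}=w.f\le_B w$ as soon as $\one.f=\one$; and part~\eqref{item.extensive} is the case $u=w$, $v=w_0$, after one application of a standard symmetry of Bruhat order (spelled out below).

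To prove the auxiliary statement, I would fix a maximal chain in left order
\[
  u.f=v_1\stackrel{i_1}{\rightarrow}v_2\stackrel{i_2}{\rightarrow}\cdots\stackrel{i_{k-1}}{\rightarrow}v_k=v.f\,,
\]
so that $(v.f)(u.f)^{-1}=s_{i_{k-1}}\cdots s_{i_1}$, a reduced word because the chain is maximal. Proposition~\ref{proposition.weak_order2}~\eqref{item.weak_order.maxchain} then produces a maximal chain in left order from $u$ to $v$ whose sequence of edge labels, read from $v$ downward to $u$, is a reduced word for $vu^{-1}$, and in which the labels of the \emph{effective} edges (those changing the image) are, in order from the top, exactly $i_{k-1},\ldots,i_1$. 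Hence the reduced word $(i_{k-1},\ldots,i_1)$ for $(v.f)(u.f)^{-1}$ is a subword of a reduced word for $vu^{-1}$, and the subword definition of Bruhat order (Section~\ref{ss.orders}) gives $(v.f)(u.f)^{-1}\le_B vu^{-1}$. The step I expect to be the main obstacle is the orientation bookkeeping: one must match up which end of a maximal chain in left order produces a reduced word rather than its mirror image, and verify that a subword relation between two sequences is preserved when both are reversed. A wrong choice of orientation would silently break the argument, so I would isolate this as a lemma and double-check it against Figure~\ref{figure.preservation_weak_order} and small cases.

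It remains to carry out part~\eqref{item.extensive} precisely. With $u=w$, $v=w_0$ and $w_0.f=w_0$, the auxiliary statement yields $w_0(w.f)^{-1}\le_B w_0w^{-1}$. I would then invoke the well-known facts (see~\cite{Bjorner_Brenti.2005}) that $x\mapsto x^{-1}$ is an automorphism and $x\mapsto w_0x$ an anti-automorphism of $(W,\le_B)$: applying the anti-automorphism turns the last inequality into $(w.f)^{-1}\ge_B w^{-1}$, and then the automorphism gives $w.f\ge_B w$, i.e.\ $f$ is \extensive for Bruhat order. An alternative would be to observe that $f\mapsto\hat f$, defined by $w.\hat f:=w_0\bigl((w_0w).f\bigr)$, is an automorphism of $\biheckemonoid(W)$ exchanging $\pi_i$ and $\opi_i$ via Remark~\ref{remark.opi_in_terms_of_pi}, which converts the hypothesis $w_0.f=w_0$ into $\one.\hat f=\one$ and thereby reduces~\eqref{item.extensive} to~\eqref{item.regressive}; but since this too ultimately rests on the $w_0$-symmetry of Bruhat order, I would present the shorter argument.
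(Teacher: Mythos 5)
Your proposal is correct and follows essentially the same route as the paper: both rest on Proposition~\ref{proposition.weak_order2}~\eqref{item.weak_order.maxchain} to lift a maximal left-order chain between images to one between preimages, conclude via the subword property of Bruhat order, and handle part~\eqref{item.extensive} by the chain from $w.f$ to $w_0.f$ together with the Bruhat antiautomorphism $v\mapsto w_0v$ and automorphism $v\mapsto v^{-1}$. Packaging the argument as a single auxiliary statement for general $u\le_L v$ is a harmless reorganization of the paper's two specializations, and your orientation bookkeeping for the reduced words is sound.
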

\begin{proof}
  First suppose that $\one.f=\one$. Let $w.f=s_{i_k}\cdots s_{i_1}$ be a reduced decomposition
  of $w.f$. This defines a maximal chain
  \begin{equation*}
    \one.f=\one =
    v_0     \stackrel{i_1}{\rightarrow} \cdots \stackrel{i_{k-2}}{\rightarrow}
    v_{k-2} \stackrel{i_{k-1}}{\rightarrow}
    v_{k-1}   \stackrel{i_k}{\rightarrow} v_{k}=w.f
  \end{equation*}
  in left order. By
  Proposition~\ref{proposition.weak_order2}~\eqref{item.weak_order.maxchain}
  there is a larger chain from $1$ to $w$ so that there is a reduced word
  for $w$ which contains $s_{i_k}\cdots s_{i_1}$ as a subword. Hence by the subword
  property of Bruhat order $w.f \le_B w$. This proves~\eqref{item.regressive}.

  Now let $w_0.f=w_0$. By similar arguments as above, constructing a maximal chain from $w.f$
  to $w_0.f$ in left order, one finds that $w_0(w.f)^{-1} \le_B w_0 w^{-1}$.
  By~\cite[Proposition 2.3.4]{Bjorner_Brenti.2005}, the map $v\mapsto w_0v$ is a
  Bruhat antiautomorphism and by the subword property $v\mapsto v^{-1}$ is a Bruhat
  automorphism. This implies $w\le_B w.f$ as desired for~\eqref{item.extensive}.
\end{proof}

\subsection{Fibers and image sets}
\label{ss.fiber_image}

Viewing elements of the biHecke monoid $\biheckemonoid(W)$ as
functions on $W$, we now study properties of their fibers and image
sets.

\begin{proposition}
  \label{proposition.image_set.idempotent}
  \mbox{}
  \begin{enumerate}[(i)]
  \item \label{item.connected_image}
  The image set $\im(f)$ for any $f\in \biheckemonoid(W)$ is connected
  (see Definition~\ref{definition.convex_connected}) with a unique minimal and
  maximal element in left order.
  \item \label{item.image_set.idempotent}
  The image set of an idempotent in $\biheckemonoid(W)$ is an interval in left order.
  \end{enumerate}
\end{proposition}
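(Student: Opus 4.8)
The plan is to derive both parts from Proposition~\ref{proposition.weak_order2}, applied to suitable intervals in left order. Part~\eqref{item.connected_image} is immediate: the whole group $W$ is the interval $[\one,w_0]_L$ in left order, so applying Proposition~\ref{proposition.weak_order2}~\eqref{item.image_set} to $J:=[\one,w_0]_L$ shows at once that $\im(f)=J.f$ is connected, with $\one.f$ as minimal element and $w_0.f$ as maximal element.

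For part~\eqref{item.image_set.idempotent}, set $m:=\one.f$ and $M:=w_0.f$. Since $f$ is idempotent, $\im(f)$ is precisely the set of fixed points of $f$; in particular $m.f=m$, $M.f=M$, and by part~\eqref{item.connected_image} the set $\im(f)$ has $m$ as least and $M$ as greatest element, so $\im(f)\subseteq[m,M]_L$. It therefore suffices to prove that $\im(f)$ is convex in left order, since a convex subset with a least and a greatest element coincides with the interval between them. Concretely, I would prove the stronger claim: whenever $x,y\in\im(f)$ with $x\le_L y$ and $z\in[x,y]_L$, then $z.f=z$.

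The engine of the claim is the length estimate of Proposition~\ref{proposition.weak_order2}~\eqref{item.length_contracting} combined with the rigidity clause of Proposition~\ref{proposition.weak_order2}~\eqref{item.image_set}. Since $x.f=x$ and $y.f=y$, applying length-contraction to $x\le_L z$ gives $\len(z.f)\le\len(z)$, and applying it to $z\le_L y$ gives $\len(z.f)\ge\len(z)$; hence $\len(z.f)=\len(z)$, so that $\len(z.f)-\len(x.f)=\len(z)-\len(x)$. The ``furthermore'' part of Proposition~\ref{proposition.weak_order2}~\eqref{item.image_set} applied to $J:=[x,z]_L$ then says that $f$ restricts to an isomorphism from $J$ onto $J.f$ with $w.f=(wx^{-1})(x.f)$ for all $w\in J$; taking $w=z$ and using $x.f=x$ gives $z.f=(zx^{-1})x=z$. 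Thus $\im(f)$ is convex and equals $[m,M]_L$. I do not anticipate a serious obstacle, since Proposition~\ref{proposition.weak_order2} does the heavy lifting; the one point to watch is that a connected set with a least and a greatest element need not be an interval, so one genuinely has to upgrade connectedness to convexity, which is exactly what the length-rigidity statement delivers.
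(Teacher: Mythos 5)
Your proof is correct. Part~(i) is exactly the paper's argument. For part~(ii) you take a genuinely different route: the paper also starts from Proposition~\ref{proposition.weak_order2}~\eqref{item.image_set} to identify $\one.e$ and $w_0.e$ as the least and greatest elements of $\im(e)$, but then invokes the chain-lifting statement \eqref{item.weak_order.maxchain} --- every maximal chain in left order from $\one.e$ to $w_0.e$ lifts to a chain of preimages, and idempotency forces one such lift to contain the original chain, so every maximal chain between the endpoints lies in $\im(e)$ and hence so does the whole interval. You instead identify $\im(f)$ with $\fix(f)$ and prove convexity directly: squeezing the length-contraction inequality \eqref{item.length_contracting} from both sides of a fixed-point sandwich $x\le_L z\le_L y$ gives $\len(z.f)=\len(z)$, and the rigidity clause of \eqref{item.image_set} then forces $z.f=(zx^{-1})(x.f)=z$. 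Your version is arguably tighter --- it makes explicit the step the paper compresses into ``there must be such a chain which contains the original chain,'' and it cleanly isolates the passage from connectedness to convexity, which is indeed the point that genuinely needs an argument. Both proofs ultimately rest on the same Proposition~\ref{proposition.weak_order2}, so the difference is one of mechanism rather than of underlying input.
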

\begin{proof}
The first statement follows immediately from
Proposition~\ref{proposition.weak_order2}~\eqref{item.image_set} with $J=[1,w_0]_L$.

For the second statement, let $e\in \biheckemonoid(W)$ be an idempotent with image set $\im(e)$. By
Proposition~\ref{proposition.weak_order2}~\eqref{item.image_set}
with $J=[1,w_0]_L$, we have that $1.e$ (resp. $w_0.e$) is the minimal (resp. maximal) element
of $\im(e)$. Then by Proposition~\ref{proposition.weak_order2}~\eqref{item.weak_order.maxchain},
for every maximal chain in left order between $1.e$ and $w_0.e$, there is a maximal chain in left
order of preimage points. Since $e$ is an idempotent, there must be such a chain which contains
the original chain. Hence all chains in left order between $1.e$ and $w_0.e$ are in $\im(e)$,
proving that $\im(e)$ is an interval.
\end{proof}

Note that the above proof, in particular
Proposition~\ref{proposition.weak_order2}~\eqref{item.weak_order.maxchain}, heavily uses
the fact that the edges in left order are colored.

\begin{definition}
  For any $f\in \biheckemonoid(W)$, we call the set of fibers of $f$, denoted by
  $\fibers(f)$, the (unordered) set-partition of $W$ associated by the
  equivalence relation $w \equiv w'$ if $w.f=w'.f$.
\end{definition}

\begin{proposition}
  \label{proposition.contract_fibers}
  Take $f\in \biheckemonoid(W)$, and consider the Hasse diagram of
  left order contracted with respect to the fibers of $f$. Then, this
  graph is isomorphic to left order restricted on the image set.
\end{proposition}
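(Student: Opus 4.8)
The plan is to exhibit the claimed isomorphism explicitly: the map sending a fiber class of $f$ to its common image under $f$ is a bijection from $\fibers(f)$ to $\im(f)$ by definition, and the content of the statement is that this bijection is an isomorphism of the contracted Hasse diagram onto the Hasse diagram of left order restricted to $\im(f)$. So the real work is to show that two fibers $A$ and $B$ are joined by an edge in the contracted diagram if and only if their images $v_A := a.f$ (for $a\in A$) and $v_B := b.f$ (for $b\in B$) are joined by an edge in left order on $\im(f)$, i.e. cover each other there.

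First I would recall that by Lemma~\ref{lemma.preimage_convex} (applied via Proposition~\ref{proposition.weak_order2}\eqref{item.weak_order}, so that $f$ is order-preserving for $\le_L$, and each point is convex, hence its fiber is convex), each fiber is a convex, and in fact connected, subset of $(W,\le_L)$; connectedness follows from Proposition~\ref{proposition.weak_order2}\eqref{item.image_set} applied to the interval $J=[a',b']_L$ spanned by a fiber, whose image is the single point $v_A$, forcing $J.f$ to be that point and $J$ itself to lie in the fiber (so the fiber equals a genuine interval of left order). Thus contracting each fiber to a point is well-behaved. For the forward direction, suppose there is an edge in the contracted diagram between $A$ and $B$; unravelling the definition of the contracted Hasse diagram, this means there exist $w\in A$, $w'\in B$ with $w\to w'$ a covering in left order, say $w' = s_j w$ with $\ell(w')=\ell(w)+1$. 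Applying $f$ and using Proposition~\ref{proposition.weak_order}, $v_B = w'.f = (s_j w).f$ is either $w.f = v_A$ or $s_j(w.f) = s_j v_A$; since $v_A\ne v_B$ (different fibers), we get $v_B = s_j v_A$, and because $f$ is length contracting (Proposition~\ref{proposition.weak_order2}\eqref{item.length_contracting}) with $\ell(w')-\ell(w)=1$, we must have $\ell(v_B)-\ell(v_A)\in\{0,1\}$, and as $v_A\ne v_B$ it is exactly $1$; hence $v_A\to v_B$ is a covering in left order, and both lie in $\im(f)$, so this is an edge of left order restricted to $\im(f)$.

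For the converse direction — which I expect to be the main obstacle — suppose $v_A \to v_B$ is a covering in left order with $v_A, v_B\in\im(f)$, say $v_B = s_j v_A$. I would pick any $a\in A$ with $a.f = v_A$ and use Proposition~\ref{proposition.weak_order2}\eqref{item.weak_order.maxchain}: taking the chain $v_A \stackrel{j}{\to} v_B$ and lifting it, there is a maximal chain in left order from $a$ through some sequence of covers, all mapping to $v_A$, then one edge colored $j$, then more edges all mapping to $v_B$, ending at some $b$ with $b.f = v_B$; the vertex of this chain just before the $j$-edge lies in $A$ and the vertex just after lies in $B$, and they are joined by a covering in left order. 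Hence there is an edge between $A$ and $B$ in the contracted diagram. The delicate point here is precisely that Proposition~\ref{proposition.weak_order2}\eqref{item.weak_order.maxchain} produces a lift of an arbitrary maximal chain in $\im(f)$ starting at a chosen preimage of its bottom — this is exactly the statement that handles the colored-edge bookkeeping — so no additional argument beyond invoking it is needed. Combining the two directions shows the bijection $A\mapsto v_A$ carries the edge relation of the contracted Hasse diagram exactly onto the covering relation of $(\im(f),\le_L)$, which is what we wanted; since both are finite posets (the contracted one inherits a partial order from $\le_L$ because $f$ is order-preserving and fibers are convex), matching Hasse diagrams gives the poset isomorphism.
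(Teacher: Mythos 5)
Your overall strategy is the right one and is essentially the paper's: the published proof is just a pointer to Appendix~\ref{appendix.colored_graphs}, whose content, unpacked, amounts to your two directions --- Proposition~\ref{proposition.weak_order} makes each Hasse edge of left order either collapse or map to an edge of the same color (your forward direction, which is correct as written), and Proposition~\ref{proposition.weak_order2}~\eqref{item.weak_order.maxchain} provides the edge-surjectivity (your converse). The converse, however, contains a genuine gap. Proposition~\ref{proposition.weak_order2}~\eqref{item.weak_order.maxchain} lifts a maximal chain from $w.f$ to $w'.f$ only once \emph{both} endpoints $w\le_L w'$ in $W$ are given; it does not, as you assert in your ``delicate point'' sentence, lift a chain in $\im(f)$ starting at an arbitrarily chosen preimage of its bottom. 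Applied with $w=a$ and the two-element chain $v_A\to v_B$, it presupposes an element $b\ge_L a$ with $b.f=v_B$, which you never produce, and whose existence is not immediate. The repair is short: extend $v_A\to v_B$ to a maximal chain from $v_A$ up to $w_0.f$ (possible because $v_B\le_L w_0.f$, the latter being the maximum of the image since $f$ preserves left order), and apply \eqref{item.weak_order.maxchain} with $w=a$, $w'=w_0$; the first block transition of the lifted chain is then a Hasse edge, of the correct color, from an element of $A$ to an element of $B$. (Equivalently, take $w=1$ and $w'$ any preimage of $v_B$, using a maximal chain from $1.f$ to $v_B$ whose last edge is $v_A\to v_B$.)

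A secondary inaccuracy: your preliminary claim that each fiber is connected, indeed ``a genuine interval of left order'', is false. For $W=\sg[3]$ and $f=\pi_1$, the fiber $f^{-1}(312)=\{132,312\}$ consists of two elements that are incomparable in left order and not joined by any Hasse edge, so it is neither an interval nor connected as an induced subgraph; Lemma~\ref{lemma.preimage_convex} only gives convexity, which is vacuous here, and a general fiber need not have a unique minimal element, so ``the interval spanned by a fiber'' is not even well defined. Fortunately nothing in the rest of your argument uses this: the contracted graph and the bijection $A\mapsto v_A$ make sense for arbitrary fibers, and the two directions above are all that is needed.
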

\begin{proof}
  See Appendix~\ref{appendix.colored_graphs} on colored graphs.
\end{proof}

\begin{proposition}
  \label{proposition.fibers_image_set}
  Any element $f\in \biheckemonoid(W)$ is characterized by its set of
  fibers and $1.f$.
\end{proposition}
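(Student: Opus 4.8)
The plan is to reconstruct $f$ from the data $(\fibers(f), 1.f)$ by propagating the value $1.f$ outward along left order, using Proposition~\ref{proposition.weak_order} at each step. First I would fix an arbitrary $w\in W$ and choose a chain $\one = w_0' \to w_1' \to \cdots \to w_m' = w$ in the Hasse diagram of left order, so each step is multiplication on the left by a simple reflection $s_{j_t}$. By Proposition~\ref{proposition.weak_order}, for each $t$ we have $w_{t}'.f \in \{\, w_{t-1}'.f,\ s_{j_t}(w_{t-1}'.f)\,\}$; thus, knowing $w_{t-1}'.f$, there are at most two candidates for $w_t'.f$, and they are distinguished by whether $j_t$ is a left descent of $w_{t-1}'.f$: indeed $s_{j_t}(w_{t-1}'.f)$ either strictly increases or strictly decreases the length, and only the length-increasing option can differ from $w_{t-1}'.f$ while still being a possible value — but we need a way to decide which of the two actually occurs.

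The key observation that makes the decision purely combinatorial is Proposition~\ref{proposition.contract_fibers}: the Hasse diagram of left order contracted along the fibers of $f$ is isomorphic (as a colored poset) to left order restricted to $\im(f)$. Concretely, this says that $w_t'.f = w_{t-1}'.f$ precisely when $w_{t-1}'$ and $w_t'$ lie in the same fiber of $f$, and $w_t'.f = s_{j_t}(w_{t-1}'.f)$ (a genuine $j_t$-colored cover in $\im(f)$) precisely when they lie in different fibers. So the procedure is: walk along the chain, and at each edge consult $\fibers(f)$ to see whether the two endpoints are in the same block; if so, keep the current value; if not, apply $s_{j_t}$ on the left (which is forced to produce the unique left-order neighbor of the current value reached by a $j_t$-colored edge, hence is well-defined). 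Starting from $1.f$, this determines $w.f$ unambiguously. Since $w$ was arbitrary and left order is connected with minimum $\one$, $f$ is completely determined. One should also remark that the reconstruction does not depend on the chosen chain — but this is automatic, since we have exhibited, for each $w$, a well-defined output, and any valid reconstruction must agree with the actual function $f$, so all chains give the same answer.

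The main obstacle is establishing the "different fibers $\Rightarrow$ apply $s_{j_t}$, and the result is well-defined" half rigorously, i.e. that when $w_{t-1}'$ and $w_t'$ are in different fibers, the value $w_t'.f$ is forced and equals the unique element covering $w_{t-1}'.f$ via a $j_t$-edge in $\im(f)$. This is exactly what Proposition~\ref{proposition.contract_fibers} provides: the contracted Hasse diagram being isomorphic to left order on $\im(f)$ means the image of the edge $w_{t-1}' \xrightarrow{j_t} w_t'$ is a genuine $j_t$-colored edge of $\im(f)$ out of $w_{t-1}'.f$, and left order on a Coxeter group has at most one $j_t$-colored cover out of any vertex, so the target is unique. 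Thus the whole argument is a short induction along chains in left order, with Propositions~\ref{proposition.weak_order} and~\ref{proposition.contract_fibers} doing the real work; no new combinatorics is needed beyond carefully phrasing the inductive step.
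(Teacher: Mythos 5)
Your proof is correct and follows essentially the same route as the paper: reconstruct $f$ by propagating the value $1.f$ upward through left order, using the fibers to decide at each colored edge whether the image value stays put or moves along the corresponding edge (the paper phrases this as the contracted graph of Proposition~\ref{proposition.contract_fibers} being isomorphic to left order on $\im(f)$, with the isomorphism forced by connectedness and the uniqueness of a given color leaving each node). In fact your inductive step needs only Proposition~\ref{proposition.weak_order}: if the two endpoints of a $j_t$-edge lie in different fibers, the value $s_{j_t}(w_{t-1}'.f)$ is already forced as the only remaining alternative, so the appeal to Proposition~\ref{proposition.contract_fibers} for uniqueness of the target is not strictly necessary.
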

\begin{proof}
  Fix a choice of fibers. Contract the left
  order with respect to the fibers. By Proposition~\ref{proposition.contract_fibers}
  this graph has to be isomorphic to the left order on the image set.

  Once the lowest element in the image set $1.f$ is fixed, this isomorphism is forced,
  since by Proposition~\ref{proposition.image_set.idempotent}~\eqref{item.connected_image}
  the graphs are (weakly) connected, have a unique minimal element, and there is at most one
  arrow of a given color leaving each node.
\end{proof}

Proposition~\ref{proposition.fibers_image_set} makes it possible to
visualize nontrivial elements of the monoid (see Figure~\ref{figure.some_elements}).
\begin{figure}
  \begin{bigcenter}
    \scalebox{1.2}{\fbox{$\stackrel{{\color{blue}1}}{
\begin{tikzpicture}[xscale=.5,yscale=.5,inner sep=0pt,baseline=(current bounding box.east)]
  \node (1234) at (0, 0) {.};
  \node (2134) at (-1, 1) {.};
  \node (2314) at (-1, 2) {.};
  \node (3214) at (-1.50000000000000, 3) {.};
  \node (2341) at (1.50000000000000, 3) {.};
  \node (3241) at (1, 4) {.};
  \node (3421) at (1, 5) {.};
  \node (4321) at (0, 6) {.};
  \node (2431) at (2, 4) {.};
  \node (4231) at (0, 5) {.};
  \node (1324) at (0, 1) {.};
  \node (3124) at (-2, 2) {.};
  \node (1342) at (2, 2) {.};
  \node (3142) at (-0.500000000000000, 3) {.};
  \node (3412) at (0, 4) {.};
  \node (4312) at (-1, 5) {.};
  \node (1432) at (2.50000000000000, 3) {.};
  \node (4132) at (-1, 4) {.};
  \node (1243) at (1, 1) {.};
  \node (2143) at (0, 2) {.};
  \node (2413) at (0.500000000000000, 3) {.};
  \node (4213) at (-2, 4) {.};
  \node (1423) at (1, 2) {.};
  \node (4123) at (-2.50000000000000, 3) {.};
  \draw [color=blue,line width=0pt] (2134) -- (1234);
  \draw [color=blue,line width=2pt] (2314) -- (1324);
  \draw [color=blue,line width=2pt] (3214) -- (3124);
  \draw [color=red,line width=0pt] (3214) -- (2314);
  \draw [color=blue,line width=2pt] (2341) -- (1342);
  \draw [color=blue,line width=2pt] (3241) -- (3142);
  \draw [color=red,line width=0pt] (3241) -- (2341);
  \draw [color=blue,line width=2pt] (3421) -- (3412);
  \draw [color=red,line width=2pt] (3421) -- (2431);
  \draw [color=blue,line width=2pt] (4321) -- (4312);
  \draw [color=red,line width=2pt] (4321) -- (4231);
  \draw [color=green,line width=0pt] (4321) -- (3421);
  \draw [color=blue,line width=2pt] (2431) -- (1432);
  \draw [color=green,line width=2pt] (2431) -- (2341);
  \draw [color=blue,line width=2pt] (4231) -- (4132);
  \draw [color=green,line width=2pt] (4231) -- (3241);
  \draw [color=red,line width=2pt] (1324) -- (1234);
  \draw [color=red,line width=2pt] (3124) -- (2134);
  \draw [color=red,line width=2pt] (1342) -- (1243);
  \draw [color=red,line width=2pt] (3142) -- (2143);
  \draw [color=red,line width=2pt] (3412) -- (2413);
  \draw [color=red,line width=2pt] (4312) -- (4213);
  \draw [color=green,line width=0pt] (4312) -- (3412);
  \draw [color=red,line width=2pt] (1432) -- (1423);
  \draw [color=green,line width=2pt] (1432) -- (1342);
  \draw [color=red,line width=2pt] (4132) -- (4123);
  \draw [color=green,line width=2pt] (4132) -- (3142);
  \draw [color=green,line width=2pt] (1243) -- (1234);
  \draw [color=blue,line width=0pt] (2143) -- (1243);
  \draw [color=green,line width=2pt] (2143) -- (2134);
  \draw [color=blue,line width=2pt] (2413) -- (1423);
  \draw [color=green,line width=2pt] (2413) -- (2314);
  \draw [color=blue,line width=2pt] (4213) -- (4123);
  \draw [color=green,line width=2pt] (4213) -- (3214);
  \draw [color=green,line width=2pt] (1423) -- (1324);
  \draw [color=green,line width=2pt] (4123) -- (3124);
\end{tikzpicture}
 \raisebox{-.5ex}{$\ \mapsto\ $} 
\begin{tikzpicture}[xscale=.5,yscale=.5,inner sep=0pt,baseline=(current bounding box.east)]
  \node (1234) at (0, 0) {.};
  \node (2134) at (-1, 1) {$\bullet$};
  \node (2314) at (-1, 2) {.};
  \node (3214) at (-1.50000000000000, 3) {$\bullet$};
  \node (2341) at (1.50000000000000, 3) {.};
  \node (3241) at (1, 4) {$\bullet$};
  \node (3421) at (1, 5) {.};
  \node (4321) at (0, 6) {$\bullet$};
  \node (2431) at (2, 4) {.};
  \node (4231) at (0, 5) {$\bullet$};
  \node (1324) at (0, 1) {.};
  \node (3124) at (-2, 2) {$\bullet$};
  \node (1342) at (2, 2) {.};
  \node (3142) at (-0.500000000000000, 3) {$\bullet$};
  \node (3412) at (0, 4) {.};
  \node (4312) at (-1, 5) {$\bullet$};
  \node (1432) at (2.50000000000000, 3) {.};
  \node (4132) at (-1, 4) {$\bullet$};
  \node (1243) at (1, 1) {.};
  \node (2143) at (0, 2) {$\bullet$};
  \node (2413) at (0.500000000000000, 3) {.};
  \node (4213) at (-2, 4) {$\bullet$};
  \node (1423) at (1, 2) {.};
  \node (4123) at (-2.50000000000000, 3) {$\bullet$};
  \draw [color=blue,line width=0pt] (2134) -- (1234);
  \draw [color=blue,line width=0pt] (2314) -- (1324);
  \draw [color=blue,line width=2pt] (3214) -- (3124);
  \draw [color=red,line width=0pt] (3214) -- (2314);
  \draw [color=blue,line width=0pt] (2341) -- (1342);
  \draw [color=blue,line width=2pt] (3241) -- (3142);
  \draw [color=red,line width=0pt] (3241) -- (2341);
  \draw [color=blue,line width=0pt] (3421) -- (3412);
  \draw [color=red,line width=0pt] (3421) -- (2431);
  \draw [color=blue,line width=2pt] (4321) -- (4312);
  \draw [color=red,line width=2pt] (4321) -- (4231);
  \draw [color=green,line width=0pt] (4321) -- (3421);
  \draw [color=blue,line width=0pt] (2431) -- (1432);
  \draw [color=green,line width=0pt] (2431) -- (2341);
  \draw [color=blue,line width=2pt] (4231) -- (4132);
  \draw [color=green,line width=2pt] (4231) -- (3241);
  \draw [color=red,line width=0pt] (1324) -- (1234);
  \draw [color=red,line width=2pt] (3124) -- (2134);
  \draw [color=red,line width=0pt] (1342) -- (1243);
  \draw [color=red,line width=2pt] (3142) -- (2143);
  \draw [color=red,line width=0pt] (3412) -- (2413);
  \draw [color=red,line width=2pt] (4312) -- (4213);
  \draw [color=green,line width=0pt] (4312) -- (3412);
  \draw [color=red,line width=0pt] (1432) -- (1423);
  \draw [color=green,line width=0pt] (1432) -- (1342);
  \draw [color=red,line width=2pt] (4132) -- (4123);
  \draw [color=green,line width=2pt] (4132) -- (3142);
  \draw [color=green,line width=0pt] (1243) -- (1234);
  \draw [color=blue,line width=0pt] (2143) -- (1243);
  \draw [color=green,line width=2pt] (2143) -- (2134);
  \draw [color=blue,line width=0pt] (2413) -- (1423);
  \draw [color=green,line width=0pt] (2413) -- (2314);
  \draw [color=blue,line width=2pt] (4213) -- (4123);
  \draw [color=green,line width=2pt] (4213) -- (3214);
  \draw [color=green,line width=0pt] (1423) -- (1324);
  \draw [color=green,line width=2pt] (4123) -- (3124);
\end{tikzpicture}
}$}

    \scalebox{1.2}{\fbox{$\stackrel{{\color{red}2}}{
\begin{tikzpicture}[xscale=.5,yscale=.5,inner sep=0pt,baseline=(current bounding box.east)]
  \node (1234) at (0, 0) {.};
  \node (2134) at (-1, 1) {.};
  \node (2314) at (-1, 2) {.};
  \node (3214) at (-1.50000000000000, 3) {.};
  \node (2341) at (1.50000000000000, 3) {.};
  \node (3241) at (1, 4) {.};
  \node (3421) at (1, 5) {.};
  \node (4321) at (0, 6) {.};
  \node (2431) at (2, 4) {.};
  \node (4231) at (0, 5) {.};
  \node (1324) at (0, 1) {.};
  \node (3124) at (-2, 2) {.};
  \node (1342) at (2, 2) {.};
  \node (3142) at (-0.500000000000000, 3) {.};
  \node (3412) at (0, 4) {.};
  \node (4312) at (-1, 5) {.};
  \node (1432) at (2.50000000000000, 3) {.};
  \node (4132) at (-1, 4) {.};
  \node (1243) at (1, 1) {.};
  \node (2143) at (0, 2) {.};
  \node (2413) at (0.500000000000000, 3) {.};
  \node (4213) at (-2, 4) {.};
  \node (1423) at (1, 2) {.};
  \node (4123) at (-2.50000000000000, 3) {.};
  \draw [color=blue,line width=2pt] (2134) -- (1234);
  \draw [color=green,line width=2pt] (2143) -- (2134);
  \draw [color=blue,line width=2pt] (2314) -- (1324);
  \draw [color=blue,line width=0pt] (3214) -- (3124);
  \draw [color=red,line width=2pt] (3214) -- (2314);
  \draw [color=blue,line width=2pt] (2341) -- (1342);
  \draw [color=blue,line width=2pt] (3241) -- (3142);
  \draw [color=red,line width=2pt] (3241) -- (2341);
  \draw [color=red,line width=2pt] (3421) -- (2431);
  \draw [color=red,line width=0pt] (4321) -- (4231);
  \draw [color=green,line width=2pt] (4321) -- (3421);
  \draw [color=blue,line width=2pt] (2431) -- (1432);
  \draw [color=green,line width=0pt] (2431) -- (2341);
  \draw [color=blue,line width=2pt] (4231) -- (4132);
  \draw [color=green,line width=2pt] (4231) -- (3241);
  \draw [color=red,line width=0pt] (1324) -- (1234);
  \draw [color=red,line width=2pt] (3124) -- (2134);
  \draw [color=red,line width=2pt] (1342) -- (1243);
  \draw [color=red,line width=2pt] (3142) -- (2143);
  \draw [color=red,line width=2pt] (3412) -- (2413);
  \draw [color=red,line width=2pt] (4312) -- (4213);
  \draw [color=red,line width=2pt] (1432) -- (1423);
  \draw [color=green,line width=0pt] (1432) -- (1342);
  \draw [color=red,line width=2pt] (4132) -- (4123);
  \draw [color=green,line width=2pt] (4132) -- (3142);
  \draw [color=green,line width=2pt] (1243) -- (1234);
  \draw [color=blue,line width=2pt] (2143) -- (1243);
  \draw [color=blue,line width=2pt] (2413) -- (1423);
  \draw [color=green,line width=2pt] (2413) -- (2314);
  \draw [color=blue,line width=0pt] (4213) -- (4123);
  \draw [color=green,line width=2pt] (4213) -- (3214);
  \draw [color=green,line width=2pt] (1423) -- (1324);
  \draw [color=green,line width=2pt] (4312) -- (3412);
  \draw [color=blue,line width=2pt] (4321) -- (4312);
  \draw [color=blue,line width=2pt] (3421) -- (3412);
  \draw [color=green,line width=2pt] (4123) -- (3124);
\end{tikzpicture}
 \raisebox{-.5ex}{$\ \mapsto\ $} 
\begin{tikzpicture}[xscale=.5,yscale=.5,inner sep=0pt,baseline=(current bounding box.east)]
  \node (1234) at (0, 0) {.};
  \node (2134) at (-1, 1) {.};
  \node (2314) at (-1, 2) {$\bullet$};
  \node (3214) at (-1.50000000000000, 3) {$\bullet$};
  \node (2341) at (1.50000000000000, 3) {.};
  \node (3241) at (1, 4) {.};
  \node (3421) at (1, 5) {$\bullet$};
  \node (4321) at (0, 6) {$\bullet$};
  \node (2431) at (2, 4) {$\bullet$};
  \node (4231) at (0, 5) {.};
  \node (1324) at (0, 1) {$\bullet$};
  \node (3124) at (-2, 2) {.};
  \node (1342) at (2, 2) {.};
  \node (3142) at (-0.500000000000000, 3) {.};
  \node (3412) at (0, 4) {$\bullet$};
  \node (4312) at (-1, 5) {$\bullet$};
  \node (1432) at (2.50000000000000, 3) {$\bullet$};
  \node (4132) at (-1, 4) {.};
  \node (1243) at (1, 1) {.};
  \node (2143) at (0, 2) {.};
  \node (2413) at (0.500000000000000, 3) {$\bullet$};
  \node (4213) at (-2, 4) {$\bullet$};
  \node (1423) at (1, 2) {$\bullet$};
  \node (4123) at (-2.50000000000000, 3) {.};
  \draw [color=blue,line width=0pt] (2134) -- (1234);
  \draw [color=blue,line width=2pt] (2314) -- (1324);
  \draw [color=blue,line width=0pt] (3214) -- (3124);
  \draw [color=red,line width=2pt] (3214) -- (2314);
  \draw [color=blue,line width=0pt] (2341) -- (1342);
  \draw [color=blue,line width=0pt] (3241) -- (3142);
  \draw [color=red,line width=0pt] (3241) -- (2341);
  \draw [color=blue,line width=2pt] (3421) -- (3412);
  \draw [color=red,line width=2pt] (3421) -- (2431);
  \draw [color=blue,line width=2pt] (4321) -- (4312);
  \draw [color=red,line width=0pt] (4321) -- (4231);
  \draw [color=green,line width=2pt] (4321) -- (3421);
  \draw [color=blue,line width=2pt] (2431) -- (1432);
  \draw [color=green,line width=0pt] (2431) -- (2341);
  \draw [color=blue,line width=0pt] (4231) -- (4132);
  \draw [color=green,line width=0pt] (4231) -- (3241);
  \draw [color=red,line width=0pt] (1324) -- (1234);
  \draw [color=red,line width=0pt] (3124) -- (2134);
  \draw [color=red,line width=0pt] (1342) -- (1243);
  \draw [color=red,line width=0pt] (3142) -- (2143);
  \draw [color=red,line width=2pt] (3412) -- (2413);
  \draw [color=red,line width=2pt] (4312) -- (4213);
  \draw [color=green,line width=2pt] (4312) -- (3412);
  \draw [color=red,line width=2pt] (1432) -- (1423);
  \draw [color=green,line width=0pt] (1432) -- (1342);
  \draw [color=red,line width=0pt] (4132) -- (4123);
  \draw [color=green,line width=0pt] (4132) -- (3142);
  \draw [color=green,line width=0pt] (1243) -- (1234);
  \draw [color=blue,line width=0pt] (2143) -- (1243);
  \draw [color=green,line width=0pt] (2143) -- (2134);
  \draw [color=blue,line width=2pt] (2413) -- (1423);
  \draw [color=green,line width=2pt] (2413) -- (2314);
  \draw [color=blue,line width=0pt] (4213) -- (4123);
  \draw [color=green,line width=2pt] (4213) -- (3214);
  \draw [color=green,line width=2pt] (1423) -- (1324);
  \draw [color=green,line width=0pt] (4123) -- (3124);
\end{tikzpicture}
}$}


    \scalebox{1.2}{\fbox{$\stackrel{{\color{blue}1}{\color{green}3}{\color{red}\overline{2}}}{
\begin{tikzpicture}[xscale=.5,yscale=.5,inner sep=0pt,baseline=(current bounding box.east)]
  \node (1234) at (0, 0) {.};
  \node (2134) at (-1, 1) {.};
  \node (2314) at (-1, 2) {.};
  \node (3214) at (-1.50000000000000, 3) {.};
  \node (2341) at (1.50000000000000, 3) {.};
  \node (3241) at (1, 4) {.};
  \node (3421) at (1, 5) {.};
  \node (4321) at (0, 6) {.};
  \node (2431) at (2, 4) {.};
  \node (4231) at (0, 5) {.};
  \node (1324) at (0, 1) {.};
  \node (3124) at (-2, 2) {.};
  \node (1342) at (2, 2) {.};
  \node (3142) at (-0.500000000000000, 3) {.};
  \node (3412) at (0, 4) {.};
  \node (4312) at (-1, 5) {.};
  \node (1432) at (2.50000000000000, 3) {.};
  \node (4132) at (-1, 4) {.};
  \node (1243) at (1, 1) {.};
  \node (2143) at (0, 2) {.};
  \node (2413) at (0.500000000000000, 3) {.};
  \node (4213) at (-2, 4) {.};
  \node (1423) at (1, 2) {.};
  \node (4123) at (-2.50000000000000, 3) {.};
  \draw [color=blue,line width=0pt] (2134) -- (1234);
  \draw [color=blue,line width=2pt] (2314) -- (1324);
  \draw [color=blue,line width=2pt] (3214) -- (3124);
  \draw [color=red,line width=0pt] (3214) -- (2314);
  \draw [color=blue,line width=2pt] (2341) -- (1342);
  \draw [color=blue,line width=2pt] (3241) -- (3142);
  \draw [color=red,line width=0pt] (3241) -- (2341);
  \draw [color=blue,line width=0pt] (3421) -- (3412);
  \draw [color=red,line width=0pt] (3421) -- (2431);
  \draw [color=blue,line width=0pt] (4321) -- (4312);
  \draw [color=red,line width=0pt] (4321) -- (4231);
  \draw [color=green,line width=0pt] (4321) -- (3421);
  \draw [color=blue,line width=2pt] (2431) -- (1432);
  \draw [color=green,line width=2pt] (2431) -- (2341);
  \draw [color=blue,line width=2pt] (4231) -- (4132);
  \draw [color=green,line width=2pt] (4231) -- (3241);
  \draw [color=red,line width=2pt] (1324) -- (1234);
  \draw [color=red,line width=2pt] (3124) -- (2134);
  \draw [color=red,line width=2pt] (1342) -- (1243);
  \draw [color=red,line width=2pt] (3142) -- (2143);
  \draw [color=red,line width=0pt] (3412) -- (2413);
  \draw [color=red,line width=0pt] (4312) -- (4213);
  \draw [color=green,line width=0pt] (4312) -- (3412);
  \draw [color=red,line width=0pt] (1432) -- (1423);
  \draw [color=green,line width=2pt] (1432) -- (1342);
  \draw [color=red,line width=0pt] (4132) -- (4123);
  \draw [color=green,line width=2pt] (4132) -- (3142);
  \draw [color=green,line width=0pt] (1243) -- (1234);
  \draw [color=blue,line width=0pt] (2143) -- (1243);
  \draw [color=green,line width=0pt] (2143) -- (2134);
  \draw [color=blue,line width=2pt] (2413) -- (1423);
  \draw [color=green,line width=2pt] (2413) -- (2314);
  \draw [color=blue,line width=2pt] (4213) -- (4123);
  \draw [color=green,line width=2pt] (4213) -- (3214);
  \draw [color=green,line width=2pt] (1423) -- (1324);
  \draw [color=green,line width=2pt] (4123) -- (3124);
\end{tikzpicture}
 \raisebox{-.5ex}{$\ \mapsto\ $} 
\begin{tikzpicture}[xscale=.5,yscale=.5,inner sep=0pt,baseline=(current bounding box.east)]
  \node (1234) at (0, 0) {.};
  \node (2134) at (-1, 1) {.};
  \node (2314) at (-1, 2) {.};
  \node (3214) at (-1.50000000000000, 3) {.};
  \node (2341) at (1.50000000000000, 3) {.};
  \node (3241) at (1, 4) {$\bullet$};
  \node (3421) at (1, 5) {.};
  \node (4321) at (0, 6) {.};
  \node (2431) at (2, 4) {.};
  \node (4231) at (0, 5) {$\bullet$};
  \node (1324) at (0, 1) {.};
  \node (3124) at (-2, 2) {.};
  \node (1342) at (2, 2) {.};
  \node (3142) at (-0.500000000000000, 3) {$\bullet$};
  \node (3412) at (0, 4) {.};
  \node (4312) at (-1, 5) {.};
  \node (1432) at (2.50000000000000, 3) {.};
  \node (4132) at (-1, 4) {$\bullet$};
  \node (1243) at (1, 1) {.};
  \node (2143) at (0, 2) {$\bullet$};
  \node (2413) at (0.500000000000000, 3) {.};
  \node (4213) at (-2, 4) {.};
  \node (1423) at (1, 2) {.};
  \node (4123) at (-2.50000000000000, 3) {.};
  \draw [color=blue,line width=0pt] (2134) -- (1234);
  \draw [color=blue,line width=0pt] (2314) -- (1324);
  \draw [color=blue,line width=0pt] (3214) -- (3124);
  \draw [color=red,line width=0pt] (3214) -- (2314);
  \draw [color=blue,line width=0pt] (2341) -- (1342);
  \draw [color=blue,line width=2pt] (3241) -- (3142);
  \draw [color=red,line width=0pt] (3241) -- (2341);
  \draw [color=blue,line width=0pt] (3421) -- (3412);
  \draw [color=red,line width=0pt] (3421) -- (2431);
  \draw [color=blue,line width=0pt] (4321) -- (4312);
  \draw [color=red,line width=0pt] (4321) -- (4231);
  \draw [color=green,line width=0pt] (4321) -- (3421);
  \draw [color=blue,line width=0pt] (2431) -- (1432);
  \draw [color=green,line width=0pt] (2431) -- (2341);
  \draw [color=blue,line width=2pt] (4231) -- (4132);
  \draw [color=green,line width=2pt] (4231) -- (3241);
  \draw [color=red,line width=0pt] (1324) -- (1234);
  \draw [color=red,line width=0pt] (3124) -- (2134);
  \draw [color=red,line width=0pt] (1342) -- (1243);
  \draw [color=red,line width=2pt] (3142) -- (2143);
  \draw [color=red,line width=0pt] (3412) -- (2413);
  \draw [color=red,line width=0pt] (4312) -- (4213);
  \draw [color=green,line width=0pt] (4312) -- (3412);
  \draw [color=red,line width=0pt] (1432) -- (1423);
  \draw [color=green,line width=0pt] (1432) -- (1342);
  \draw [color=red,line width=0pt] (4132) -- (4123);
  \draw [color=green,line width=2pt] (4132) -- (3142);
  \draw [color=green,line width=0pt] (1243) -- (1234);
  \draw [color=blue,line width=0pt] (2143) -- (1243);
  \draw [color=green,line width=0pt] (2143) -- (2134);
  \draw [color=blue,line width=0pt] (2413) -- (1423);
  \draw [color=green,line width=0pt] (2413) -- (2314);
  \draw [color=blue,line width=0pt] (4213) -- (4123);
  \draw [color=green,line width=0pt] (4213) -- (3214);
  \draw [color=green,line width=0pt] (1423) -- (1324);
  \draw [color=green,line width=0pt] (4123) -- (3124);
\end{tikzpicture}
}$}

    \scalebox{1.2}{\fbox{$\stackrel{{\color{red}\overline{2}}{\color{blue}\overline{1}}{\color{red}2}{\color{green}\overline{3}}}{
\begin{tikzpicture}[xscale=.5,yscale=.5,inner sep=0pt,baseline=(current bounding box.east)]
  \node (1234) at (0, 0) {.};
  \node (2134) at (-1, 1) {.};
  \node (2314) at (-1, 2) {.};
  \node (3214) at (-1.50000000000000, 3) {.};
  \node (2341) at (1.50000000000000, 3) {.};
  \node (3241) at (1, 4) {.};
  \node (3421) at (1, 5) {.};
  \node (4321) at (0, 6) {.};
  \node (2431) at (2, 4) {.};
  \node (4231) at (0, 5) {.};
  \node (1324) at (0, 1) {.};
  \node (3124) at (-2, 2) {.};
  \node (1342) at (2, 2) {.};
  \node (3142) at (-0.500000000000000, 3) {.};
  \node (3412) at (0, 4) {.};
  \node (4312) at (-1, 5) {.};
  \node (1432) at (2.50000000000000, 3) {.};
  \node (4132) at (-1, 4) {.};
  \node (1243) at (1, 1) {.};
  \node (2143) at (0, 2) {.};
  \node (2413) at (0.500000000000000, 3) {.};
  \node (4213) at (-2, 4) {.};
  \node (1423) at (1, 2) {.};
  \node (4123) at (-2.50000000000000, 3) {.};
  \draw [color=blue,line width=0pt] (2134) -- (1234);
  \draw [color=blue,line width=0pt] (2314) -- (1324);
  \draw [color=blue,line width=0pt] (3214) -- (3124);
  \draw [color=red,line width=0pt] (3214) -- (2314);
  \draw [color=blue,line width=2pt] (2341) -- (1342);
  \draw [color=blue,line width=2pt] (3241) -- (3142);
  \draw [color=red,line width=0pt] (3241) -- (2341);
  \draw [color=blue,line width=2pt] (3421) -- (3412);
  \draw [color=red,line width=0pt] (3421) -- (2431);
  \draw [color=blue,line width=2pt] (4321) -- (4312);
  \draw [color=red,line width=0pt] (4321) -- (4231);
  \draw [color=green,line width=0pt] (4321) -- (3421);
  \draw [color=blue,line width=2pt] (2431) -- (1432);
  \draw [color=green,line width=0pt] (2431) -- (2341);
  \draw [color=blue,line width=2pt] (4231) -- (4132);
  \draw [color=green,line width=0pt] (4231) -- (3241);
  \draw [color=red,line width=0pt] (1324) -- (1234);
  \draw [color=red,line width=0pt] (3124) -- (2134);
  \draw [color=red,line width=0pt] (1342) -- (1243);
  \draw [color=red,line width=0pt] (3142) -- (2143);
  \draw [color=red,line width=0pt] (3412) -- (2413);
  \draw [color=red,line width=0pt] (4312) -- (4213);
  \draw [color=green,line width=0pt] (4312) -- (3412);
  \draw [color=red,line width=0pt] (1432) -- (1423);
  \draw [color=green,line width=0pt] (1432) -- (1342);
  \draw [color=red,line width=0pt] (4132) -- (4123);
  \draw [color=green,line width=0pt] (4132) -- (3142);
  \draw [color=green,line width=2pt] (1243) -- (1234);
  \draw [color=blue,line width=0pt] (2143) -- (1243);
  \draw [color=green,line width=2pt] (2143) -- (2134);
  \draw [color=blue,line width=0pt] (2413) -- (1423);
  \draw [color=green,line width=2pt] (2413) -- (2314);
  \draw [color=blue,line width=0pt] (4213) -- (4123);
  \draw [color=green,line width=2pt] (4213) -- (3214);
  \draw [color=green,line width=2pt] (1423) -- (1324);
  \draw [color=green,line width=2pt] (4123) -- (3124);
\end{tikzpicture}
 \raisebox{-.5ex}{$\ \mapsto\ $} 
\begin{tikzpicture}[xscale=.5,yscale=.5,inner sep=0pt,baseline=(current bounding box.east)]
  \node (1234) at (0, 0) {.};
  \node (2134) at (-1, 1) {.};
  \node (2314) at (-1, 2) {.};
  \node (3214) at (-1.50000000000000, 3) {.};
  \node (2341) at (1.50000000000000, 3) {.};
  \node (3241) at (1, 4) {.};
  \node (3421) at (1, 5) {.};
  \node (4321) at (0, 6) {.};
  \node (2431) at (2, 4) {.};
  \node (4231) at (0, 5) {.};
  \node (1324) at (0, 1) {$\bullet$};
  \node (3124) at (-2, 2) {.};
  \node (1342) at (2, 2) {.};
  \node (3142) at (-0.500000000000000, 3) {.};
  \node (3412) at (0, 4) {.};
  \node (4312) at (-1, 5) {.};
  \node (1432) at (2.50000000000000, 3) {.};
  \node (4132) at (-1, 4) {.};
  \node (1243) at (1, 1) {.};
  \node (2143) at (0, 2) {.};
  \node (2413) at (0.500000000000000, 3) {$\bullet$};
  \node (4213) at (-2, 4) {.};
  \node (1423) at (1, 2) {$\bullet$};
  \node (4123) at (-2.50000000000000, 3) {.};
  \draw [color=blue,line width=0pt] (2134) -- (1234);
  \draw [color=blue,line width=0pt] (2314) -- (1324);
  \draw [color=blue,line width=0pt] (3214) -- (3124);
  \draw [color=red,line width=0pt] (3214) -- (2314);
  \draw [color=blue,line width=0pt] (2341) -- (1342);
  \draw [color=blue,line width=0pt] (3241) -- (3142);
  \draw [color=red,line width=0pt] (3241) -- (2341);
  \draw [color=blue,line width=0pt] (3421) -- (3412);
  \draw [color=red,line width=0pt] (3421) -- (2431);
  \draw [color=blue,line width=0pt] (4321) -- (4312);
  \draw [color=red,line width=0pt] (4321) -- (4231);
  \draw [color=green,line width=0pt] (4321) -- (3421);
  \draw [color=blue,line width=0pt] (2431) -- (1432);
  \draw [color=green,line width=0pt] (2431) -- (2341);
  \draw [color=blue,line width=0pt] (4231) -- (4132);
  \draw [color=green,line width=0pt] (4231) -- (3241);
  \draw [color=red,line width=0pt] (1324) -- (1234);
  \draw [color=red,line width=0pt] (3124) -- (2134);
  \draw [color=red,line width=0pt] (1342) -- (1243);
  \draw [color=red,line width=0pt] (3142) -- (2143);
  \draw [color=red,line width=0pt] (3412) -- (2413);
  \draw [color=red,line width=0pt] (4312) -- (4213);
  \draw [color=green,line width=0pt] (4312) -- (3412);
  \draw [color=red,line width=0pt] (1432) -- (1423);
  \draw [color=green,line width=0pt] (1432) -- (1342);
  \draw [color=red,line width=0pt] (4132) -- (4123);
  \draw [color=green,line width=0pt] (4132) -- (3142);
  \draw [color=green,line width=0pt] (1243) -- (1234);
  \draw [color=blue,line width=0pt] (2143) -- (1243);
  \draw [color=green,line width=0pt] (2143) -- (2134);
  \draw [color=blue,line width=2pt] (2413) -- (1423);
  \draw [color=green,line width=0pt] (2413) -- (2314);
  \draw [color=blue,line width=0pt] (4213) -- (4123);
  \draw [color=green,line width=0pt] (4213) -- (3214);
  \draw [color=green,line width=2pt] (1423) -- (1324);
  \draw [color=green,line width=0pt] (4123) -- (3124);
\end{tikzpicture}
}$}

  \end{bigcenter}
  \caption{The elements $f=\pi_1$, $\pi_2$, $\pi_1\pi_3\opi_2$ and
    $\opi_2\opi_1\pi_2\opi_3$ of $\biheckemonoid(\sg[4])$. As in
    Figure~\ref{figure.preservation_weak_order},
    the underlying poset on both sides is left order on $\sg[4]$, and the
    bold dots on the right sides depict the image set of $f$. On the
    left side, an edge between two elements of $W$ is thick if they
    are not in the same fiber. This information completely describes
    $f$; indeed $u=\one$ on the left is mapped to the lowest element
    of the image set on the right; each time one moves $u$ up along a
    thick edge on the left, its image $u.f$ is moved up along the
    edge of the same color on the right.}
  \label{figure.some_elements}
\end{figure}
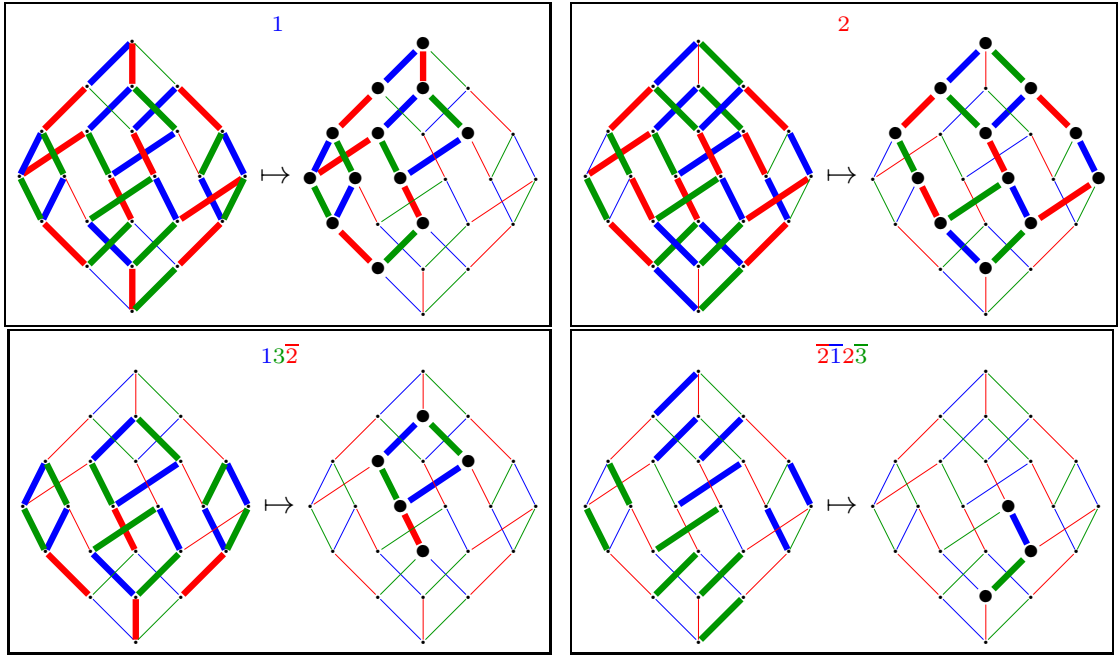

Recall that a set-partition $\Lambda=\{\Lambda_i\}$ is said to be
finer than the set-partition $\Lambda'=\{\Lambda'_i\}$ if for all $i$
there exists a $j$ such that $\Lambda_i\subseteq \Lambda'_j$.  This is
denoted by $\Lambda \finer \Lambda'$. The refinement relation is a
partial order.

For $f\in \biheckemonoid(W)$, define the \emph{type} of $f$ by
\begin{equation}
  \type(f) := \type([\one.f,w_0.f]_L) = (w_0.f)(\one.f)^{-1}\,.
\end{equation}
The \emph{rank} of $f\in \biheckemonoid(W)$ is the cardinality of the image set $\im(f)$.

\begin{lemma}
  \label{lemma.right_module}
  Fix $f\in \biheckemonoid(W)$. For $h=fg\in f\biheckemonoid(W)$, one has:
  \begin{enumerate}
  \item $\fibers(f) \finer \fibers(h)$
  \item $\type(h) \leq_B \type(f)$
  \item $\rank(h) \leq \rank(f)$.
  \end{enumerate}
  Furthermore, the following are equivalent
  \begin{enumerate}[(i)]
  \item $\fibers(h) = \fibers(f)$
  \item $\rank(h)  = \rank(f)$
  \item $\type(h)  = \type(f)$
  \item $\ell(w_0.h)-\ell(\one.h) = \ell(w_0.f) - \ell(\one.f)$.
  \end{enumerate}
  If any, and therefore all, of the above hold, then $h$ is completely
  determined (within $f\biheckemonoid(W)$) by $\one.h$.
\end{lemma}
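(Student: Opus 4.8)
The plan is to prove the three monotonicity statements first, then establish the chain of equivalences, and finally deduce the determination claim from Proposition~\ref{proposition.fibers_image_set}. For the monotonicity of fibers: if $u.f = u'.f$, then applying $g$ on the right gives $u.h = (u.f).g = (u'.f).g = u'.h$, so every fiber of $f$ is contained in a fiber of $h$, i.e.\ $\fibers(f)\finer\fibers(h)$. Consequently the image set of $h=fg$ is $(\im f).g$, which has cardinality at most $|\im f|$, giving $\rank(h)\le\rank(f)$. For the type, observe that $\im(h)$ has minimal element $\one.h = (\one.f).g$ and maximal element $w_0.h = (w_0.f).g$ by Proposition~\ref{proposition.image_set.idempotent}~\eqref{item.connected_image} (or directly from Proposition~\ref{proposition.weak_order2}~\eqref{item.image_set} applied to $g$ restricted to the interval $[\one.f, w_0.f]_L$). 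Now $\one.f \le_L w_0.f$, so $\len(w_0.h) - \len(\one.h) \le \len(w_0.f) - \len(\one.f)$ by the length-contracting property Proposition~\ref{proposition.weak_order2}~\eqref{item.length_contracting}; moreover when this inequality is an equality, the same proposition gives $(w_0.h)(\one.h)^{-1} = (w_0.f)(\one.f)^{-1}$, which says exactly $\type(h) = \type(f)$. Since $\len(\type(h)) = \len(w_0.h)-\len(\one.h)$ and likewise for $f$, and $\type(h)\le_B \type(f)$ follows because $\im(h)$ is obtained by applying the Bruhat-order-preserving (Proposition~\ref{proposition.bruhat}) and length-contracting map $g$ — more carefully, $\type(h) = (w_0.f).g \, [(\one.f).g]^{-1}$ and the equality case of Proposition~\ref{proposition.weak_order2}~\eqref{item.length_contracting} shows this is $\le_B \type(f)$ whenever it is not already equal, using the subword characterization of Bruhat order on a maximal chain.

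For the equivalences, the strategy is to run the cycle
$(\mathrm{i}) \Rightarrow (\mathrm{ii}) \Rightarrow (\mathrm{iv}) \Leftrightarrow (\mathrm{iii}) \Rightarrow (\mathrm{i})$.
The implication $(\mathrm{i})\Rightarrow(\mathrm{ii})$ is immediate: equal fiber sets force equal numbers of fibers, hence equal ranks. For $(\mathrm{ii})\Rightarrow(\mathrm{iv})$: by Proposition~\ref{proposition.contract_fibers}, left order contracted along $\fibers(f)$ is isomorphic to left order on $\im(f)$, and likewise for $h$; since $g$ maps $\im(f)$ onto $\im(h)$ compatibly with this picture and $\rank(h)=\rank(f)$ forces the restriction of $g$ to $\im(f)$ to be a bijection onto $\im(h)$, and $g$ is order-preserving and length-contracting on $[\one.f,w_0.f]_L$, a bijective order-preserving map between these two intervals (viewed inside $W$) that contracts length must in fact be length-preserving between the extreme elements — here one uses that a length-contracting bijection of a graded poset interval onto another with the same underlying vertex set structure cannot strictly decrease the gap between bottom and top. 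This gives $\len(w_0.h)-\len(\one.h)=\len(w_0.f)-\len(\one.f)$, which is $(\mathrm{iv})$. The equivalence $(\mathrm{iii})\Leftrightarrow(\mathrm{iv})$ holds because $\type(\cdot)$ is an element of $W$ whose length is precisely $\len(w_0.\cdot)-\len(\one.\cdot)$, combined with $\type(h)\le_B\type(f)$: two Bruhat-comparable elements of equal length are equal. Finally $(\mathrm{iii})\Rightarrow(\mathrm{i})$: if $\type(h)=\type(f)$, then by the equality case of Proposition~\ref{proposition.weak_order2}~\eqref{item.length_contracting} applied along any maximal chain from $\one.f$ to $w_0.f$, the map $g$ restricted to the interval $[\one.f,w_0.f]_L$ is an isomorphism onto $[\one.h,w_0.h]_L$ (this is the "furthermore" clause of Proposition~\ref{proposition.weak_order2}~\eqref{item.image_set}); in particular $g$ is injective on $\im(f)\subseteq[\one.f,w_0.f]_L$, so $u.h=u'.h$ implies $u.f = u'.f$, i.e.\ $\fibers(h)=\fibers(f)$.

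For the last sentence: assuming the equivalent conditions hold, $h\in\biheckemonoid(W)$ has $\fibers(h)=\fibers(f)$, which is fixed data, so by Proposition~\ref{proposition.fibers_image_set} $h$ is determined by $\one.h$ alone. The main obstacle I anticipate is the careful bookkeeping in $(\mathrm{ii})\Rightarrow(\mathrm{iv})$: turning "$g$ is a bijection between the image sets" into "$g$ preserves the length gap between the extreme elements" requires invoking that the contracted left-order graphs are isomorphic as \emph{colored} graphs (Proposition~\ref{proposition.contract_fibers} and Appendix~\ref{appendix.colored_graphs}), so that a maximal chain in $\im(f)$ lifts to a maximal chain in $[\one.f,w_0.f]_L$ whose image is a maximal chain in $[\one.h,w_0.h]_L$ of the same length — the length-contracting inequality then forces equality. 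Everything else is a direct assembly of Propositions~\ref{proposition.weak_order2}, \ref{proposition.bruhat}, \ref{proposition.contract_fibers}, and \ref{proposition.fibers_image_set}.
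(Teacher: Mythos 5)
Your proposal is correct and follows essentially the same route as the paper: the monotonicity claims and the dichotomy ``either $\type(fg)=\type(f)$ or the length gap strictly drops, in which case $\type(fg)<_B\type(f)$'' via Proposition~\ref{proposition.weak_order2}~\eqref{item.length_contracting}--\eqref{item.image_set} and Proposition~\ref{proposition.bruhat}, with the final determination claim from Proposition~\ref{proposition.fibers_image_set}. The only place you go beyond the paper's (very terse) proof is in spelling out why rank equality forces the length gap to be preserved -- your saturated-chain count inside the connected set $\im(f)$ is exactly the argument the paper compresses into ``the second case occurs precisely when $\rank(fg)<\rank(f)$,'' and it is sound.
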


\begin{proof}
  For $f,g\in \biheckemonoid(W)$, the statement $\fibers(f) \finer \fibers(fg)$ is obvious.

  By Proposition~\ref{proposition.weak_order2}~\eqref{item.length_contracting}
  and~\eqref{item.image_set}, we know that for $f,g\in \biheckemonoid(W)$ either
  $\type(fg)=\type(f)$, or $\ell(w_0.(fg))-\ell(\one.(fg)) <
  \ell(w_0.f) - \ell(\one.f)$. In the latter case by
  Proposition~\ref{proposition.bruhat} $\type(fg) <_B \type(f)$.  The
  second case occurs precisely when $\fibers(f)$ is strictly finer than
  $\fibers(fg)$, or equivalently $\rank(fg)<\rank(f)$.

  The last statement, that if $\fibers(h)=\fibers(f)$ then $h$ is determined by $\one.h$,
  follows from Proposition~\ref{proposition.fibers_image_set}.
\end{proof}

\subsection{Aperiodicity}
\label{ss.aperiodic}

Recall from Section~\ref{ss.preorders on monoids} that a monoid $M$ is
called \emph{aperiodic} if for any $f\in M$, there exists $k>0$ such
that $f^{k+1}=f^k$. Note that, in this case, $f^\omega :=
f^k=f^{k+1}=\dots$ is an idempotent.

\begin{proposition}
  \label{proposition.pseudo_idempotent}
  The biHecke monoid $\biheckemonoid(W)$ is \emph{aperiodic}.
\end{proposition}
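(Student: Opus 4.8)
The plan is to show directly that every generator, and more generally every element $f \in \biheckemonoid(W)$, becomes idempotent after finitely many iterations. The key tool will be \emph{the rank function} $\rank(f) = |\im(f)|$, together with the length statistic $\ell(w_0.f) - \ell(\one.f)$ controlling the ``spread'' of the image, both of which are monotone under right multiplication by Lemma~\ref{lemma.right_module}. First I would observe that for any $f$, the sequence $\rank(f) \geq \rank(f^2) \geq \rank(f^3) \geq \cdots$ is weakly decreasing (this is item (3) of Lemma~\ref{lemma.right_module} applied with $h = f \cdot f^{k-1}$), and therefore stabilizes: there is a $k$ with $\rank(f^{k+1}) = \rank(f^k)$, hence $\rank(f^m) = \rank(f^k)$ for all $m \geq k$. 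Fix such a $k$ and set $g = f^k$, so that $\rank(fg) = \rank(g)$. It then suffices to prove that $f g = g$, because this gives $f^{k+1} = f^k$ immediately.

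The second step is to exploit the equivalence in Lemma~\ref{lemma.right_module}: since $\rank(fg) = \rank(g)$, we are in the situation where $\fibers(fg) = \fibers(g)$, $\type(fg) = \type(g)$, and moreover $fg$ is completely determined within $g\biheckemonoid(W)$ by the single value $\one.(fg)$. Wait — this is where I need to be careful about which monoid the ``determined by $\one.h$'' statement applies to. Instead, I would set it up the other way: consider $h := f^k = g$ and $h' := f^{k+1} = fg$; both lie in $f\biheckemonoid(W)$ (indeed $g = f \cdot f^{k-1}$ and $fg = f \cdot f^{k}$), and they have equal rank, so by the last clause of Lemma~\ref{lemma.right_module} each is completely determined \emph{within $f\biheckemonoid(W)$} by its value at $\one$. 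Hence it remains only to show $\one.f^k = \one.f^{k+1}$, i.e.\ that the orbit of $\one$ under iterating $f$ stabilizes.

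The third step handles this scalar claim. The point $\one.f^n$ is the image of $\one$ under $n$-fold iteration; since $f$ preserves Bruhat order (Proposition~\ref{proposition.bruhat}) and $\one$ is the Bruhat-minimum, we have $\one \le_B \one.f \le_B (\one.f).f = \one.f^2 \le_B \cdots$, so $(\ell(\one.f^n))_{n\ge 1}$ is weakly increasing; being bounded by $\ell(w_0)$, it stabilizes, and since a Bruhat-increasing sequence of elements with eventually constant length is eventually constant, the orbit $\one.f^n$ stabilizes. Increasing $k$ if necessary so that $k$ is past both the stabilization point of $\rank(f^\bullet)$ and of $\one.f^\bullet$, we get $\one.f^{k+1} = \one.f^k$ together with $\rank(f^{k+1}) = \rank(f^k)$, whence by Lemma~\ref{lemma.right_module} $f^{k+1} = f^k$. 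This proves aperiodicity.

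The main obstacle I anticipate is not any single hard estimate but rather assembling Lemma~\ref{lemma.right_module} correctly: one must verify that both $f^k$ and $f^{k+1}$ genuinely lie in the coset $f\biheckemonoid(W)$ used in that lemma (which requires $k \geq 1$, harmless) and that ``determined by the value at $\one$'' is being invoked with the two elements in the \emph{same} such coset — a subtlety that is easy to state loosely but needs the explicit bookkeeping above. If one preferred to avoid Lemma~\ref{lemma.right_module} altogether, an alternative route is to argue that once both the fibers and $\one.f^k$ have stabilized, Proposition~\ref{proposition.fibers_image_set} (an element is characterized by its fibers together with $\one.f$) forces $f^{k+1} = f^k$ directly; this is essentially the same argument repackaged, and either form completes the proof.
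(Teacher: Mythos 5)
Your proof is correct, but it follows a genuinely different route from the paper's. The paper works entirely with left order: by Proposition~\ref{proposition.weak_order2}~(iv), $\im(f^k)$ has minimum $a_k=\one.f^k$ and maximum $b_k=w_0.f^k$ in left order; nestedness of the images forces both sequences to stabilize, the stabilized endpoints are then fixed points of $f$, and the length-contracting property (iii)--(iv) propagates this to show that the whole interval $[a_N,b_N]_L$ consists of fixed points, whence $f^{N+1}=f^N$ (and, as a bonus used in the subsequent corollary, $\fix(f)=[a_N,b_N]_L$ is a left-order interval). You instead combine two stabilizing scalar invariants --- the rank, which is weakly decreasing along powers, and the Bruhat orbit $\one.f^n$, which is weakly increasing and hence stabilizes by the length bound --- and then invoke the ``fibers plus value at $\one$'' characterization (Proposition~\ref{proposition.fibers_image_set}) to conclude. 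This avoids the left-order geometry of image sets entirely, at the cost of not producing the description of $\fix(f)$ as an interval. One small point of bookkeeping: your invocation of the last clause of Lemma~\ref{lemma.right_module} inside the coset $f\biheckemonoid(W)$ is not quite right, since that clause requires $\rank(h)=\rank(f)$ for the base element $f$ of the coset, and $\rank(f^k)$ may well be strictly smaller than $\rank(f)$; the correct application takes $f^k$ as the base element, viewing $f^{k+1}=f^k\cdot f\in f^k\biheckemonoid(W)$, which gives $\fibers(f^{k+1})=\fibers(f^k)$ from the rank equality. Your closing alternative via Proposition~\ref{proposition.fibers_image_set} does exactly this and closes the argument cleanly.
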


\begin{proof}
  From Proposition~\ref{proposition.weak_order2}~\eqref{item.image_set}, we know that
  $\im(f^k)$ has a minimal element $a_k=1.f^k$ and a maximal element $b_k=w_0.f^k$ in
  left order. Since $\im(f^{k+1}) \subseteq \im(f^k)$, we have $a_{k+1} \geq_L a_k$ and
  $b_{k+1} \leq_L b_k$. Therefore, both sequences $a_k$ and $b_k$ must ultimately be
  constant.

  This implies that, for $N$ big enough, $a_N$ and $b_N$ are fixed
  points. Applying Proposition~\ref{proposition.weak_order2}~\eqref{item.length_contracting}
  yields that all elements in $[a_N,b_N]_L$ are fixed points under $f$. It
  follows successively that $\im(f^N) = [a_N,b_N]_L$, $f^N=f^{N+1}=\cdots$,
  and $\fix(f) = [a_N,b_N]_L$.
\end{proof}

\begin{corollary}
  The set of fixed points of an element $f\in \biheckemonoid(W)$ is an
  interval in left order.
\end{corollary}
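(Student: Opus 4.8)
The plan is to show that $\fix(f)$ coincides with the image set $\im(f^\omega)$ of the idempotent power $f^\omega$ of $f$ (which exists by Proposition~\ref{proposition.pseudo_idempotent}), and then invoke Proposition~\ref{proposition.image_set.idempotent}~\eqref{item.image_set.idempotent}, which says precisely that the image set of an idempotent of $\biheckemonoid(W)$ is an interval in left order. In fact this is already implicit in the proof of Proposition~\ref{proposition.pseudo_idempotent}, where it is observed that $\fix(f)=[a_N,b_N]_L$ for $N$ large; the corollary merely records this consequence, and the role of the proof is to make the identification of $\fix(f)$ with an image set explicit.

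For the inclusion $\fix(f)\subseteq\im(f^\omega)$: if $w.f=w$, then by a trivial induction $w.f^k=w$ for every $k\ge 1$, so in particular $w=w.f^\omega\in\im(f^\omega)$. For the reverse inclusion $\im(f^\omega)\subseteq\fix(f)$: fix an integer $N$ with $f^{N+1}=f^N=f^\omega$ (such an $N$ exists since $\biheckemonoid(W)$ is aperiodic), and write an arbitrary element of $\im(f^\omega)$ as $w=v.f^N$ for some $v\in W$; then $w.f=v.f^{N+1}=v.f^N=w$, so $w$ is a fixed point of $f$. Hence $\fix(f)=\im(f^\omega)$.

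Finally, $f^\omega$ is an idempotent of $\biheckemonoid(W)$, so by Proposition~\ref{proposition.image_set.idempotent}~\eqref{item.image_set.idempotent} its image set $\im(f^\omega)$ is an interval in left order; explicitly, applying Proposition~\ref{proposition.weak_order2}~\eqref{item.image_set} to $J=[\one,w_0]_L$ identifies it with $[\one.f^\omega,\, w_0.f^\omega]_L$. Therefore $\fix(f)$ is an interval in left order, as claimed. The argument presents no real obstacle; the only point to be slightly careful about is, when proving $\im(f^\omega)\subseteq\fix(f)$, to use a power $N$ large enough that $f^N=f^{N+1}$ rather than an arbitrary power of $f$, so that applying one more copy of $f$ does not change the value.
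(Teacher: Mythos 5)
Your proof is correct and follows exactly the paper's argument: the paper's own proof is the one-line observation that $\fix(f)=\im(f^\omega)$ combined with Proposition~\ref{proposition.image_set.idempotent}~\eqref{item.image_set.idempotent}. You have merely filled in the (easy) verification that $\fix(f)=\im(f^\omega)$, which the paper leaves implicit, and that verification is sound.
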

\begin{proof}
  The set of fixed point of $f$ is the image set of $f^\omega$, which by
  Proposition~\ref{proposition.image_set.idempotent}~\eqref{item.image_set.idempotent}
  is an interval in left order.
\end{proof}

\subsection{Idempotents}
\label{ss.idempotents}

We now study the properties of idempotents in $\biheckemonoid(W)$.
\begin{proposition}\mbox{}
  \label{proposition.uniqueness_fix1}
  \begin{enumerate}[(i)]
  \item \label{e.uniqueness}
    For $w\in W$
    \begin{equation*}
      e_w := \pi_{w^{-1} w_0} \opi_{w_0 w}
    \end{equation*}
    is the unique idempotent such that $1.e_w=1$ and $w_0.e_w=w$. Its
    image set is $[1,w]_L$, and it satisfies:
    \begin{equation*}
      u.e_w = \max_{\le_B} \bigl( [1,u]_B \cap [1,w]_L \bigr) \; .
    \end{equation*}
  \item \label{e.uniqueness.dual}  Similarly, for $w\in W$,
    \begin{equation*}
      \tilde{e}_w := \opi_{w^{-1}} \pi_{w}
    \end{equation*}
    is the unique idempotent with image set $[w,w_0]_L$, and it
    satisfies a dual formula.
  \item \label{e.shift}
    Furthermore,
    \begin{equation*}
      e_{a,b} := \opi_{a^{-1}} e_{ba^{-1}} \pi_a
    \end{equation*}
    is an idempotent with image set $[a,b]_L$.
  \end{enumerate}
\end{proposition}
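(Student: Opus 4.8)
The plan is to establish the three parts in order, since each builds on the previous one. For part~\eqref{e.uniqueness}, I would first check that $e_w = \pi_{w^{-1}w_0}\opi_{w_0w}$ does what is claimed on the two distinguished points $\one$ and $w_0$. Using the realizations~\eqref{equation.antisorting_action} and~\eqref{equation.sorting_action}, one has $\one.\pi_v = v$ for every $v\in W$ (the antisorting operators build up a reduced word), so $\one.\pi_{w^{-1}w_0} = w^{-1}w_0$; then applying $\opi_{w_0w}$ and using Remark~\ref{remark.opi_in_terms_of_pi} together with $w^{-1}w_0 \le_R \cdots$ and length considerations gives $\one.e_w = \one$. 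Dually $w_0.\opi_{w_0w}$-type computations give $w_0.e_w = w$. The key structural input is Proposition~\ref{proposition.weak_order2}\eqref{item.image_set} applied to $J = [\one,w_0]_L$: it forces $\im(e_w)$ to be connected with minimum $\one.e_w = \one$ and maximum $w_0.e_w = w$, hence $\im(e_w) \subseteq [\one,w]_L$, and since $\one.e_w = \one$ has length $0$ while $w_0.e_w = w$ has length $\ell(w) = \ell(w)-\ell(\one)$, the equality case of that proposition shows $\im(e_w)$ is order-isomorphic to $[\one,w_0]_L$ restricted appropriately — more precisely, that $e_w$ restricted to $[\one,w]_L$ is the identity, so $\im(e_w) = [\one,w]_L$ and $e_w$ is idempotent (its fixed-point set equals its image).

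For the explicit formula $u.e_w = \max_{\le_B}([\one,u]_B\cap[\one,w]_L)$, I would use Corollary~\ref{corollary.Bruhat_regressive}\eqref{item.regressive}: since $\one.e_w=\one$, $e_w$ is regressive for Bruhat order, so $u.e_w \le_B u$, and $u.e_w \in \im(e_w) = [\one,w]_L$, hence $u.e_w \in [\one,u]_B\cap[\one,w]_L$. To see it is the maximum, I would take any $v$ in that intersection; then $v \le_L w$ so $v$ is a fixed point of $e_w$, i.e. $v.e_w = v$, and $v \le_B u$ implies $v = v.e_w \le_B u.e_w$ by Proposition~\ref{proposition.bruhat}. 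That the intersection has a \emph{unique} maximal element needs a short argument — it follows because $[\one,w]_L$ is exactly the fixed-point set, $e_w$ maps $[\one,u]_B$ into it, and $e_w$ is order-preserving for Bruhat order with $u.e_w$ dominating every fixed point below $u$; alternatively one can invoke that $u.e_w$ as just constructed already exhibits the maximum. Uniqueness of the idempotent with the prescribed values on $\one$ and $w_0$ then follows from Proposition~\ref{proposition.fibers_image_set} (or Lemma~\ref{lemma.right_module}): an idempotent $e$ with $\one.e=\one$, $w_0.e=w$ has $\type(e) = w$ and $\one.e$ fixed, so it is completely determined.

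Part~\eqref{e.uniqueness.dual} is the image of part~\eqref{e.uniqueness} under the order-reversing symmetry $u \mapsto w_0 u$ (equivalently $w_0w \mapsto \cdots$), which swaps $\pi$'s and $\opi$'s via Remark~\ref{remark.opi_in_terms_of_pi} and reverses left weak order; I would simply note $\tilde e_w$ has $w_0.\tilde e_w = w_0$, apply Corollary~\ref{corollary.Bruhat_regressive}\eqref{item.extensive}, and transport the argument. For part~\eqref{e.shift}, set $e_{a,b} := \opi_{a^{-1}} e_{ba^{-1}} \pi_a$. By Remark~\ref{remark.interval}, $\pi_a$ induces an isomorphism $[\one,ba^{-1}]_L \to [a,b]_L$ and $\opi_{a^{-1}}$ induces the inverse isomorphism $[a,b]_L \to [\one,ba^{-1}]_L$; composing, $e_{a,b}$ acts on $[a,b]_L$ as the conjugate of the identity $e_{ba^{-1}}$ restricted to $[\one,ba^{-1}]_L$, hence as the identity on $[a,b]_L$, and more generally one checks $\pi_a \circ (\text{stuff}) \circ \opi_{a^{-1}}$ is idempotent because $e_{ba^{-1}}$ is and the two conjugating maps compose (in the relevant direction) to the identity on the image. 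The main obstacle I anticipate is being careful with the lengths and reducedness in the $\pi$/$\opi$ computations — in particular verifying $\one.e_w=\one$ and $w_0.e_w = w$ cleanly, since $\pi_{w^{-1}w_0}$ and $\opi_{w_0w}$ are longest elements of $0$-Hecke monoids and one must track how reduced words interleave; also the uniqueness-of-maximum claim in the Bruhat formula requires a little care rather than being automatic. Everything else is a matter of invoking Propositions~\ref{proposition.weak_order2}, \ref{proposition.bruhat}, \ref{proposition.fibers_image_set}, Corollary~\ref{corollary.Bruhat_regressive}, and Remark~\ref{remark.interval}.
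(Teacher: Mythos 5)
There is a genuine gap at the crucial step of part~\eqref{e.uniqueness}, namely where you argue that $e_w$ is idempotent with image exactly $[1,w]_L$. You invoke the equality case of Proposition~\ref{proposition.weak_order2}~\eqref{item.image_set} for the interval $J=[\one,w_0]_L$, but that equality case requires $\len(w_0.e_w)-\len(\one.e_w)=\len(w_0)-\len(\one)$, i.e.\ $\len(w)=\len(w_0)$, which fails for every $w\neq w_0$ (and if it did apply it would say $e_w$ is the identity on all of $W$, which is false). Applying it instead to $J=[\one,w]_L$ is circular, since you would first need $w.e_w=w$. What that proposition actually gives you is only the inclusion $\im(e_w)\subseteq[\one,w]_L$. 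The correct tool --- which you do use for part~\eqref{e.shift}, and which is exactly the paper's argument --- is Remark~\ref{remark.interval}: with $a=w^{-1}w_0$ and $b=w_0$ one has $ba^{-1}=w$, so $\pi_{w^{-1}w_0}$ maps $[\one,w]_L$ bijectively onto $[w^{-1}w_0,w_0]_L$ by $u\mapsto uw^{-1}w_0$ and $\opi_{w_0w}$ maps it back by $v\mapsto vw_0w$; hence $e_w$ fixes $[\one,w]_L$ pointwise, is idempotent, and has image $[\one,w]_L$. This also disposes of the reduced-word bookkeeping you were worried about for $\one.e_w=\one$ and $w_0.e_w=w$.

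A secondary problem is your uniqueness argument. Proposition~\ref{proposition.fibers_image_set} determines an element from its \emph{fibers} together with $\one.f$, and Lemma~\ref{lemma.right_module} only pins down an element within a fixed right ideal $f\biheckemonoid$; knowing merely $\type(e)=w$ and $\one.e=\one$ does not determine $e$ by either result. The paper instead notes that any idempotent $f$ with $\one.f=\one$ and $w_0.f=w$ is Bruhat-regressive and order-preserving, has image an interval in left order by Proposition~\ref{proposition.image_set.idempotent}~\eqref{item.image_set.idempotent} (hence equal to $[\one,w]_L$), and then applies Proposition~\ref{proposition.uniqueness.idempotents}: such an idempotent is determined by its image set, and the $\max_{\le_B}$ formula drops out. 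Your own argument for the formula $u.e_w=\max_{\le_B}([\one,u]_B\cap[\one,w]_L)$ is correct and is essentially a re-proof of Proposition~\ref{proposition.uniqueness.idempotents} in this case; if you run it for an \emph{arbitrary} idempotent with the prescribed values at $\one$ and $w_0$ (using Proposition~\ref{proposition.image_set.idempotent} to identify its image), uniqueness follows at no extra cost, and you can drop the appeal to Proposition~\ref{proposition.fibers_image_set}. Parts~\eqref{e.uniqueness.dual} and~\eqref{e.shift} are handled as in the paper.
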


\begin{proof}
  \eqref{e.uniqueness}: Clearly, the image of $e_w$ is a subset of
  $[1,w]_L$. Applying Remark~\ref{remark.interval} shows that
  $[1,w]_L$ is successively mapped bijectively to $[w^{-1} w_0,w_0]_L$
  and back to $[1,w]_L$. So $e_w$ is an idempotent with image set
  $[1,w]_L$.
  Reciprocally, let $f$ be an idempotent such that $1.f=1$ and
  $w_0.f=w$. Then, by Proposition~\ref{proposition.bruhat} $f$
  preserves Bruhat order and by
  Corollary~\ref{corollary.Bruhat_regressive}~\eqref{item.regressive}
  $u.f\leq_B u$ for all $u\in W$. Furthermore, by
  Proposition~\ref{proposition.image_set.idempotent}, the image set of
  $f$ is the interval $[1,w]_L$.
  Using Proposition~\ref{proposition.uniqueness.idempotents},
  uniqueness and the given formula follow.

  Statement~\eqref{e.uniqueness.dual} is dual to~\eqref{e.uniqueness} and is proved similarly.

  \eqref{e.shift}: The image set of $e_{ba^{-1}}$ is $[1,ba^{-1}]_L$;
  hence the image set of $e_{a,b}$ is a subset of $[a,b]_L$. We
  conclude by checking that $[a,b]_L$ is mapped bijectively at each
  step $\opi_{a^{-1}}$, $e_{ba^{-1}}$ and $\pi_a$ (see also Remark~\ref{remark.interval}),
  and therefore consists of fixed points.
\end{proof}

\begin{remark}
  \label{remark.rmul_e}
  For $f\in \biheckemonoid(W)$, $f e_v = f e_{u.e_v}$, where $u=w_0.f$.
\end{remark}
\begin{proof}
  Use the formula of Proposition~\ref{proposition.uniqueness_fix1}~\eqref{e.uniqueness}.
\end{proof}

\begin{corollary}
  \label{corollary.e}
  For $u,w \in W$, the intersection $[1,u]_B \cap [1,w]_L$ is a $\le_L$-lower~
  set with a unique maximal element $v$ in Bruhat order. The maximum is given
  by $v=u.e_w$.
\end{corollary}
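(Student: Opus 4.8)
The plan is to derive this as a quick consequence of Proposition~\ref{proposition.uniqueness_fix1}~\eqref{e.uniqueness} combined with the general Proposition~\ref{proposition.uniqueness.idempotents} on order-preserving regressive idempotents, after disposing of the (elementary) lower-set claim separately.

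First I would check that $[1,u]_B \cap [1,w]_L$ is a $\le_L$-lower set. The interval $[1,w]_L$ is obviously a $\le_L$-lower set, so it is enough to see that $[1,u]_B$ is one too. This uses only the standard containment of left weak order in Bruhat order: if $x \le_L y$ and $y \le_B u$, then $x \le_B y \le_B u$, so $x \in [1,u]_B$. An intersection of two $\le_L$-lower sets is again a $\le_L$-lower set, giving the first assertion.

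Next I would observe that $e_w$ qualifies as an input to Proposition~\ref{proposition.uniqueness.idempotents} for the poset $(W,\le_B)$: it is an idempotent and has image set $[1,w]_L$ by Proposition~\ref{proposition.uniqueness_fix1}~\eqref{e.uniqueness}; it preserves Bruhat order by Proposition~\ref{proposition.bruhat}; and, since $\one.e_w=\one$, it is \regressive for Bruhat order by Corollary~\ref{corollary.Bruhat_regressive}~\eqref{item.regressive}. Applying Proposition~\ref{proposition.uniqueness.idempotents} with $P=(W,\le_B)$ and $f=e_w$ then yields
\[
  u.e_w \;=\; \sup\nolimits_{\le_B}\bigl(\lowerset{u}\cap\im(e_w)\bigr)
        \;=\; \sup\nolimits_{\le_B}\bigl([1,u]_B\cap[1,w]_L\bigr),
\]
the supremum being always well-defined; in particular $[1,u]_B\cap[1,w]_L$ has a unique maximal element in Bruhat order, which is $v=u.e_w$. (The formula $v=u.e_w$ is in fact already recorded in the statement of Proposition~\ref{proposition.uniqueness_fix1}~\eqref{e.uniqueness}, so the corollary mainly repackages it while additionally noting the $\le_L$-lower-set structure.)

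There is no real obstacle here; the only points requiring care are verifying that the three hypotheses of Proposition~\ref{proposition.uniqueness.idempotents} (order-preserving, \regressive, idempotent, all with respect to $\le_B$) are met by $e_w$, and invoking $\le_L\,\subseteq\,\le_B$ for the lower-set part — both are routine.
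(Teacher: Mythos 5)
Your proposal is correct and follows essentially the same route as the paper: the paper gives no separate proof of this corollary, treating it as a restatement of the formula in Proposition~\ref{proposition.uniqueness_fix1}~\eqref{e.uniqueness}, whose own proof assembles exactly the ingredients you list (Proposition~\ref{proposition.bruhat} for order-preservation, Corollary~\ref{corollary.Bruhat_regressive}~\eqref{item.regressive} for regressiveness, and Proposition~\ref{proposition.uniqueness.idempotents} applied to $(W,\le_B)$). Your additional verification of the $\le_L$-lower-set claim via the containment $\le_L\,\subseteq\,\le_B$ is the intended elementary argument.
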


\subsection{Green's relations}
\label{ss.green_relations}

We have now gathered enough information about the combinatorics of
$\biheckemonoid(W)$ to give a partial description of its Green's
relations which will be used in the study of the representation theory
of $\biheckemonoid(W)$. As an example, Figure~\ref{figure.j_graph_A2}
completely describes the Green's relations $\LL$, $\RR$, $\JJ$, and
$\HH$ for $\biheckemonoid(\sg[3])$. In the sequel, we describe
$\RR$-classes for general elements, as well as $\JJ$-order on regular
elements. In particular, we obtain that the $\JJ$-classes of
idempotents are indexed by the elements of $W$, and that $\JJ$-order
on regular classes is given by left-right order $<_{LR}$ on
$W$. Note that the latter is \emph{not} a lattice, unlike for the variety $\mathcal{DA}$
(which consists of all aperiodic monoids all of whose simple modules are dimension $1$;
see e.g.~\cite{Ganyushkin_Mazorchuk_Steinberg.2009}).

\begin{figure}
  \scalebox{.6}{\begin{previewthispiece}
  \pgfdeclarelayer{nodes}
  \pgfsetlayers{main,nodes}
  \begin{tikzpicture}[auto,xscale=.4,yscale=-.28,line width=1pt]
    \tikzstyle{every node}=[fill=white,inner sep=0pt]

    \node(one)	at (50,0) {};

    \node(1)	at (35, 7) {};
    \node(-1)	at (35,17) {};
    \node(2)	at (65, 7) {};
    \node(-2)	at (65,17) {};

    \node(-121)	at (28,30) {};
    \node(-12)	at (28,40) {};
    \node(-1-2)	at (28,50) {};

    \node(21)	at (42,30) {};
    \node(2-1)	at (42,40) {};
    \node(2-1-2)	at (42,50) {};

    \node(12)	at (58,30) {};
    \node(1-2)	at (58,40) {};
    \node(1-2-1)	at (58,50) {};

    \node(-212)	at (72,30) {};
    \node(-21)	at (72,40) {};
    \node(-2-1)	at (72,50) {};

    \node(-1-2-1)	at (50,87) {};
    \node(-2-1-2)	at (50,87) {};

    \node(-1-21)	at (35,80) {};
    \node(-2-12)	at (65,80) {};

    \node(21-2)	at (35,70) {};
    \node(12-1)	at (65,70) {};

    \node(121)	at (50,63) {};

    \input{Fig/J-graph-A2-inner}
  \end{tikzpicture}
\end{previewthispiece}}
  \caption{The graph of $\JJ$-order for $\biheckemonoid(\sg[3])$. The vertices are the $23$ elements
    of $\biheckemonoid(\sg[3])$, each drawn as in Figure~\ref{figure.some_elements}. The edges give both the left
    and right Cayley graph of $\biheckemonoid(\sg[3])$; for example, there is an arrow
     $f\stackrel{\times\pi_1}\longrightarrow g$ if $g=f \pi_1$,
     and an arrow $f\stackrel{\pi_1\times\pi_1}\longrightarrow g$ if $g=f \pi_1=\pi_1f$.
     The picture also highlights the $\JJ$-classes of $\biheckemonoid(\sg[3])$, and
     the corresponding eggbox pictures (i.e. the decomposition of the $\JJ$-classes into
     $\LL$ and $\RR$-classes); namely, from top to bottom, there is
     one $\JJ$-class of size $1 = 1\times 1$, two $\JJ$-classes of
     size $2=1\times 2$, two $\JJ$-classes of size $6=2\times 3$, and
     one $\JJ$-class of size $6=1\times6$, where $n\times m$ gives the
     dimension of the eggbox picture. In other words the $\JJ$-class
     splits into $n$ $\RR$-classes of size $m$ and also into $m$
     $\LL$-classes of size $n$.
     This example is specific in that all $\JJ$-classes are regular.
   }
  \label{figure.j_graph_A2}
\end{figure}

\begin{proposition}
  \label{proposition.characterization_R_class}
  Two elements $f,g\in \biheckemonoid(W)$ are in the same
  $\RR$-class if and only if they have the same fibers.
  In particular, the $\RR$-class of $f$ is given by
  \begin{equation} \label{equation.R_class_rank}
    \RR(f) = \{h\in f\biheckemonoid(W) \suchthat \rank(h)=\rank(f)\}
    = \{ f_u \suchthat u\in [1,\type(f)^{-1}w_0]_R \}\,,
  \end{equation}
  where $f_u$ is the unique element of $\biheckemonoid(W)$ such that
  $\fibers(f_u)=\fibers(f)$ and $\one.f_u=u$.
\end{proposition}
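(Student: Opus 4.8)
The plan is to prove Proposition~\ref{proposition.characterization_R_class} in two directions, using the combinatorial results on fibers and image sets accumulated in Sections~\ref{ss.fiber_image} and~\ref{ss.left}.

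\medskip

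\noindent\textbf{Setup and easy inclusions.}
First I would recall that two elements $f,g$ are $\RR$-equivalent if and only if $f\biheckemonoid(W) = g\biheckemonoid(W)$, equivalently $g\in f\biheckemonoid(W)$ and $f\in g\biheckemonoid(W)$. If $f\;\RR\;g$, write $g=fh$ and $f=gh'$ for some $h,h'\in\biheckemonoid(W)$. By Lemma~\ref{lemma.right_module}, $\fibers(f)\finer\fibers(fh)=\fibers(g)$ and symmetrically $\fibers(g)\finer\fibers(f)$; since $\finer$ is a partial order, $\fibers(f)=\fibers(g)$. This establishes one implication. For the converse, suppose $\fibers(f)=\fibers(g)$; I must produce $h$ with $g=fh$ (and then by symmetry $h'$ with $f=gh'$, giving $f\;\RR\;g$). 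This is where the real work is, and it is packaged most cleanly by first proving the displayed description~\eqref{equation.R_class_rank} of the $\RR$-class.

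\medskip

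\noindent\textbf{Proving the description~\eqref{equation.R_class_rank}.}
Consider the set $A:=\{h\in f\biheckemonoid(W)\mid\rank(h)=\rank(f)\}$. By Lemma~\ref{lemma.right_module}, for $h\in f\biheckemonoid(W)$ the conditions $\rank(h)=\rank(f)$, $\fibers(h)=\fibers(f)$, and $\type(h)=\type(f)$ are all equivalent; moreover in that case $h$ is completely determined within $f\biheckemonoid(W)$ by $\one.h$. So $A$ is precisely the set of elements of $f\biheckemonoid(W)$ sharing $f$'s fibers, and it injects into $W$ via $h\mapsto\one.h$. Next I identify the image of this injection. Since $\fibers(h)=\fibers(f)$, Proposition~\ref{proposition.contract_fibers} shows that left order contracted along these fibers is isomorphic to left order on $\im(h)$, and $\im(h)=[\one.h,w_0.h]_L$ with $\len(w_0.h)-\len(\one.h)=\len(w_0.f)-\len(\one.f)$ (equality in the length-contracting inequality); hence by Proposition~\ref{proposition.weak_order2}\eqref{item.image_set} applied to $J=[\one,w_0]_L$, $h$ acts as the translation $x\mapsto (x(\one.h)^{-1})\cdots$ on its image, and in particular $\type(h)=\type(f)=:t$. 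Writing $u=\one.h$, the requirement that $\im(h)=[u,ut]_L\subseteq W$ is automatically satisfiable, and by Remark~\ref{remark.interval} the condition $[u,ut]_L\subseteq W$ imposes no constraint beyond $u\in W$; but we also need $h\in f\biheckemonoid(W)$. Here I would invoke Proposition~\ref{proposition.uniqueness_fix1}: the idempotents $e_{a,b}=\opi_{a^{-1}}e_{ba^{-1}}\pi_a$ with image set $[a,b]_L$ give, by right-multiplying $f$, the machinery to move $\one.f$ around. Concretely, for $u\le_R \type(f)^{-1}w_0$ (so that $t=u^{-1}(ut)$ still fits, i.e. $ut\le_R w_0$ which is equivalent to $u\le_R t^{-1}w_0$ — wait, one checks $u\in[1,t^{-1}w_0]_R \iff ut\in W$ with $\ell(ut)=\ell(u)+\ell(t)$), the element $f_u:=f\,e_{u,ut}$ lies in $f\biheckemonoid(W)$, has the same fibers as $f$ (by Lemma~\ref{lemma.right_module}, since its rank equals $\rank(f)$ — one must check $\rank(f e_{u,ut})=\rank(f)$, which follows because $e_{u,ut}$ restricted to $\im(f)=[\one.f,w_0.f]_L$ is injective when the types match), and satisfies $\one.f_u=u$. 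Conversely any $h\in A$ has $\one.h$ of this form. So $A=\{f_u\mid u\in[1,t^{-1}w_0]_R\}$, which is exactly~\eqref{equation.R_class_rank}.

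\medskip

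\noindent\textbf{Closing the loop.}
It remains to see $A=\RR(f)$. The inclusion $\RR(f)\subseteq A$ was the easy direction above (same fibers, hence same rank). For $A\subseteq\RR(f)$: take $h=f_u\in A$; we have $h\in f\biheckemonoid(W)$, so $h\le_\RR f$. For the reverse, note $f=f_{\one.f}$ and $\one.f\in[1,t^{-1}w_0]_R$, and I claim $f\in h\biheckemonoid(W)$, i.e. $f=h\,e_{\one.f,\,w_0.f}$ or a similar right multiple; this follows by the same argument that produced $f_u$ from $f$, now producing $f$ from $h$ (the roles of the two endpoints are symmetric since $h$ and $f$ have the same type and same fibers). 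Hence $f\le_\RR h$, so $f\;\RR\;h$, giving $A\subseteq\RR(f)$ and therefore $A=\RR(f)$.

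\medskip

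\noindent\textbf{Main obstacle.}
The delicate point is verifying that right-multiplying $f$ by the idempotent $e_{u,ut}$ (for an admissible $u$) genuinely preserves the fibers — equivalently preserves the rank — rather than collapsing some of them. This needs the observation that $e_{u,ut}$ acts as an isomorphism of left-order-intervals from $[u,ut]_L$ onto itself (Proposition~\ref{proposition.uniqueness_fix1}\eqref{e.shift} plus Remark~\ref{remark.interval}), combined with the fact that $\im(f)=[\one.f,w_0.f]_L$ is an interval of the \emph{same type} $t$, so that the composite $f\,e_{u,ut}$ relabels $\im(f)$ bijectively onto $[u,ut]_L$ and in particular does not merge fibers. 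Once this length/type bookkeeping is in place, everything else is a routine application of Lemma~\ref{lemma.right_module} and Proposition~\ref{proposition.fibers_image_set}.
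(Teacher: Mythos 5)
Your overall architecture (easy direction via Lemma~\ref{lemma.right_module} plus antisymmetry of $\finer$; hard direction by constructing, for each $u\in[1,\type(f)^{-1}w_0]_R$, an element of $f\biheckemonoid(W)$ with the same fibers as $f$ and $\one$-image $u$) is the same as the paper's, but the element you construct is the wrong one, and this happens exactly at the point you yourself flag as delicate. You set $f_u:=f\,e_{u,ut}$ (presumably $e_{u,tu}$ is intended, since $[u,ut]_L$ is in general not an interval of type $t$), and you justify fiber-preservation by noting that $e_{u,tu}$ fixes $[u,tu]_L$ pointwise and that $\im(f)$ is an interval of the same type $t$. That is not sufficient: $e_{u,tu}$ restricted to a \emph{different} interval of type $t$ need not be injective. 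Concretely, in $W=\sg[3]$ take $f=e_{s_2s_1,w_0}$, so that $\im(f)=[s_2s_1,w_0]_L$ and $t=s_1$, and take $u=s_2\in[1,t^{-1}w_0]_R=[1,s_2s_1]_R$. Then $e_{s_2,s_1s_2}=\opi_2\,e_{s_1}\,\pi_2$, and one checks directly that it sends both $s_2s_1$ and $w_0$ to $s_1s_2$ (indeed $2\notin\Des(s_2s_1)$ so $s_2s_1.\opi_2=s_2s_1$, while $w_0.\opi_2=s_2s_1$; both are then sent to $s_1$ by $e_{s_1}$ and to $s_1s_2$ by $\pi_2$). Hence $f\,e_{s_2,s_1s_2}$ has rank $1$ instead of $2$ and is not the desired $f_{s_2}$.

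The repair, which is what the paper does, is to translate through the bottom interval $[1,t]_L$: the correct element is $f\,\opi_{(\one.f)^{-1}}\,\pi_u$. By Remark~\ref{remark.interval}, $\opi_{(\one.f)^{-1}}$ restricts to an isomorphism from $\im(f)=[\one.f,\,t\cdot\one.f]_L$ onto $[1,t]_L$, and $\pi_u$ restricts to an isomorphism from $[1,t]_L$ onto $[u,tu]_L$; the composite therefore preserves the fibers of $f$ and moves $\one.f$ to $u$, and Proposition~\ref{proposition.fibers_image_set} then identifies the result with $f_u$. The same two-step translation closes your final loop as well, since $f=f_u\,\opi_{u^{-1}}\pi_{\one.f}\in f_u\biheckemonoid(W)$; as written, that step again leans on the flawed construction. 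The remainder of your argument --- using Lemma~\ref{lemma.right_module} to equate equality of rank, of type and of fibers within $f\biheckemonoid(W)$, and identifying the index set $[1,\type(f)^{-1}w_0]_R$ --- is sound.
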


\begin{proof}
  It is a general easy fact about monoids of functions that elements
  in the same $\RR$-class have the same fibers (see also
  Lemma~\ref{lemma.right_module}). Reciprocally, if $g$ has the same
  fibers as $f$, then one can use Remark~\ref{remark.interval} to
  define $g'=g\opi_{(\one.g)^{-1}}\pi_{\one.f}$ such that $\fibers(g')
  = \fibers(f)$ and $\one.g'=\one.f$. Also by
  Proposition~\ref{proposition.fibers_image_set}, $f=g'\in g\biheckemonoid(W)$, and
  similarly, $g\in f\biheckemonoid(W)$.

  Equation~\eqref{equation.R_class_rank} follows using
  Lemma~\ref{lemma.right_module} and Remark~\ref{remark.interval}.
\end{proof}

\begin{lemma}
  \label{lemma.conjugate}
  Let $e$ and $f$ be idempotents of $\biheckemonoid(W)$ with respective
  image sets $[a,b]_L$ and $[c,d]_L$. Then, $f \leq_\JJ e$ if and only
  if $dc^{-1} \le_{LR} ba^{-1}$.

  In particular, two idempotents $e$ and $f$ are $\JJ$-equivalent if and
  only if the intervals $[a,b]_L$ and $[c,d]_L$ are of the same type:
  $dc^{-1} = ba^{-1}$.

  The above properties extend to any two regular elements (elements
  whose $\JJ$-class contains an idempotent).
\end{lemma}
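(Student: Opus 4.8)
### Proof plan for Lemma~\ref{lemma.conjugate}

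The plan is to reduce everything to statements about idempotents and their image sets, then translate $\JJ$-order into left-right order on $W$ via the explicit description of the image sets as intervals $[a,b]_L$. First I would recall that, by Proposition~\ref{proposition.image_set.idempotent}~\eqref{item.image_set.idempotent}, the image set of any idempotent is an interval in left order, and by Proposition~\ref{proposition.uniqueness_fix1}~\eqref{e.shift} every interval $[a,b]_L$ is realized by the explicit idempotent $e_{a,b} = \opi_{a^{-1}} e_{ba^{-1}} \pi_a$; moreover by Proposition~\ref{proposition.uniqueness_fix1} together with Proposition~\ref{proposition.uniqueness.idempotents} (order-preserving $+$ \regressive for Bruhat order forces uniqueness from the image set) an idempotent is determined by its image interval. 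So it suffices to work with the $e_{a,b}$'s. Note also that $e_{a,b}$ has type $ba^{-1}$ in the sense of $\type$, since $\one.e_{a,b}=a$ and $w_0.e_{a,b}=b$.

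For the direction ``$f\le_\JJ e$ implies $dc^{-1}\le_{LR} ba^{-1}$'': writing $f = x e y$ for some $x,y\in\biheckemonoid(W)$, I would first reduce to $f = x e$ (then separately handle right multiplication) using the fact that $\le_\JJ$ is generated by $\le_\RR$ and $\le_\LL$. For the $\le_\RR$ step, $f = ey$ with $e$ idempotent: then $\im(f)\subseteq\im(e)=[a,b]_L$, and by Lemma~\ref{lemma.right_module} applied to $h=ey\in e\biheckemonoid(W)$ we get $\type(f)\le_B\type(e)$, hence in particular $dc^{-1}\le_R$-reachable; but more precisely, since $f$ maps into $[a,b]_L$ and $f$ preserves left order (Proposition~\ref{proposition.weak_order2}), the image interval $[c,d]_L$ of $f$ sits inside $[a,b]_L$, and $dc^{-1} = \type([c,d]_L)$ while any sub-interval of $[a,b]_L$ in left order has type $\le_{LR} ba^{-1}$ — here I would invoke Proposition~\ref{proposition.interval} (type of a left interval) plus the observation that $c\ge_L a$, $d\le_L b$ together give $dc^{-1}\le_{LR}ba^{-1}$ by cancelling. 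For the $\le_\LL$ step, $f = xe$, one argues symmetrically on the right using that $f$ preserves Bruhat order and the length-contracting property: $\rank(f)\le\rank(e)$ forces the type to only shrink in left-right order, using Lemma~\ref{lemma.right_module} again (or its left analogue applied via an involution, cf. Section~\ref{subsection.involution}).

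For the converse, ``$dc^{-1}\le_{LR} ba^{-1}$ implies $f\le_\JJ e$'': given $dc^{-1}\le_{LR}ba^{-1}$, write $ba^{-1} = s_{i_1}\cdots s_{i_k}\, (dc^{-1})\, s_{i'_1}\cdots s_{i'_\ell}$ reduced, so there is an intermediate $t$ with $dc^{-1}\le_R t\le_L ba^{-1}$; using Remark~\ref{remark.interval} and the fact that $\pi$'s and $\opi$'s translate intervals isomorphically, I would explicitly build elements $x,y\in\biheckemonoid(W)$ with $x\, e_{a,b}\, y = e_{c,d}$: namely conjugate $e_{a,b}$ by suitable products of $\pi$'s and $\opi$'s to slide the image interval $[a,b]_L$ down to $[c,d]_L$ (this is exactly what Proposition~\ref{proposition.uniqueness_fix1}~\eqref{e.shift} does for the base case, and composing such shifts with the $0$-Hecke generators handles the left-right order relation). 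Then $e_{c,d}\le_\JJ e_{a,b}$, and since $f$ is $\JJ$-equivalent to $e_{c,d}$ and $e$ to $e_{a,b}$ (both being determined by their image intervals), $f\le_\JJ e$ follows. The $\JJ$-equivalence statement is then the antisymmetric case ($dc^{-1}\le_{LR}ba^{-1}$ and $ba^{-1}\le_{LR}dc^{-1}$, hence equal since $\le_{LR}$ is a partial order). Finally, the extension to arbitrary regular elements is immediate: a regular element is $\JJ$-equivalent to an idempotent, $\JJ$-order only depends on the $\JJ$-class, and the type of a regular element equals that of the idempotent in its class by Lemma~\ref{lemma.right_module}.

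I expect the main obstacle to be the converse direction — concretely producing the sliding maps $x,y$ realizing $e_{a,b}\mapsto e_{c,d}$ under two-sided multiplication when $dc^{-1}\le_{LR}ba^{-1}$ in the full generality of a left-\emph{and}-right move. One must be careful that multiplying an idempotent by $\pi_i$ or $\opi_i$ on either side really does move the image interval in the controlled way predicted by Remark~\ref{remark.interval}, and that the composite lands exactly on $[c,d]_L$ rather than a larger interval; keeping track of which factor acts on the left versus the right of the function (recalling the monoid acts on $W$ on the \emph{right}, so left multiplication in $\biheckemonoid(W)$ post-composes) is the delicate bookkeeping. The forward direction is more routine given Lemma~\ref{lemma.right_module} and the preservation results of Section~\ref{section.M.combinatorics}.
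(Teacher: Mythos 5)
Your second half (from $dc^{-1}\le_{LR}ba^{-1}$ to $f\le_\JJ e$) and the extension to regular elements essentially follow the paper's route: one picks a subinterval $[c',d']_L$ of $[a,b]_L$ isomorphic to $[c,d]_L$, takes $u,v$ inducing the reciprocal bijections as in Remark~\ref{remark.interval}, and writes $f=fuev=(fu)ev$; note that pre-composing with $f$ is what makes the composite land exactly back on $f$, so your worry about hitting $e_{c,d}$ on the nose disappears. The first half, however, has a genuine gap. To begin with, the containment is backwards: for $f=ey$ (the $\le_\RR$ step) one has $\im(ey)=[a,b]_L.y$, which need not be contained in $[a,b]_L$ (already in $\sg[2]$, $\pi_1=\opi_1\pi_1$ has image $\{s_1\}\not\subseteq\{1\}=\im(\opi_1)$); it is left multiplication $f=xe$ that gives $\im(f)\subseteq\im(e)$. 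So the premise that ``$[c,d]_L$ sits inside $[a,b]_L$, hence $c\ge_L a$ and $d\le_L b$'' is simply false: the idempotents $e_w$ and $\tilde e_{w^{-1}w_0}$ of Example~\ref{example.e.etilde} are $\JJ$-equivalent with image intervals that are translates of each other, not nested.

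More seriously, even after correcting the containments your chain does not close. Splitting $f=uev$ as $f\le_\LL ev\le_\RR e$ and invoking Lemma~\ref{lemma.right_module} only gives $\type(ev)\le_B\type(e)$, and then $[c,d]_L\subseteq\im(ev)$ gives $dc^{-1}\le_{LR}\type(ev)$; concatenating an $\le_{LR}$ comparison with a $\le_B$ comparison yields only $dc^{-1}\le_B ba^{-1}$, which is strictly weaker than the claimed $dc^{-1}\le_{LR}ba^{-1}$ (Bruhat order is strictly coarser than left-right weak order). The missing idea is to exploit the idempotency of $f$ on both sides at once: writing $f=uev$ with $u$ replaced by $ue$ so that $\im(u)\subseteq[a,b]_L$, one gets $f=ff=f(uev)$, so $uev$ restricts to the identity on $\im(f)=[c,d]_L$. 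The length-contracting property of Proposition~\ref{proposition.weak_order2}~\eqref{item.length_contracting} and~\eqref{item.image_set} then forces $u$ and $v$ to induce reciprocal isomorphisms between $[c,d]_L$ and the subinterval $[c.u,d.u]_L$ of $[a,b]_L$, and only then does $dc^{-1}\le_{LR}ba^{-1}$ follow via Proposition~\ref{proposition.interval}. This round-trip argument is the heart of the paper's proof of this direction and is absent from your plan.
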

\begin{proof}
  First note that an interval $[c,d]_L$ is isomorphic to a
  subinterval of $[a,b]_L$ if and only $dc^{-1} \le_{LR} ba^{-1}$.
  This follows from Proposition~\ref{proposition.interval} and the fact that $[c,d]_L$ is
  a subinterval of $[a,b]_L$ if and only if $[ca^{-1},da^{-1}]_L$ is a
  subinterval of $[1,ba^{-1}]_L$. But then $dc^{-1}$ is a subfactor
  of $ba^{-1}$.

  Assume first that $dc^{-1} \le_{LR} ba^{-1}$, and let $[c',d']_L$ be a
  subinterval of $[a,b]_L$ isomorphic to $[c,d]_L$. Using
  Proposition~\ref{proposition.interval}, take $u,v\in \biheckemonoid(W)$ which induce
  reciprocal bijections between $[c,d]_L$ and $[c',d']_L$. Then, $f=fuev$, so that $f$
  is $\JJ$-equivalent to $e$.

  Reciprocally, assume that $f=u e v$ with $u,v\in \biheckemonoid(W)$. Without loss of
  generality, we may assume that $u=ue$ so that $\im(u)\subseteq
  [a,b]_L$. Set $c'=c.u$ and $d'=d.u$. Since $f = ff = f uv$, and
  using Proposition~\ref{proposition.weak_order2}, the functions $u$
  and $v$ must induce reciprocal isomorphisms between $[c,d]_L$ and
  $[c',d']_L$, the latter being a subinterval of $[a,b]_L$. Therefore,
  $dc^{-1} \le_{LR} ba^{-1}$.

  To conclude, note that a regular element has the same type as any
  idempotent in its $\JJ$-class.
\end{proof}

\begin{corollary}
  \label{corollary.transversal}
  The idempotents $(e_w)_{w\in W}$ form a complete set of
  representatives of regular $\JJ$-classes in $\biheckemonoid(W)$.
\end{corollary}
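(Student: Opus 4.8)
The plan is to show that the idempotents $(e_w)_{w\in W}$ meet every regular $\JJ$-class exactly once, using Lemma~\ref{lemma.conjugate} to translate $\JJ$-equivalence into equality of types of image intervals. First I would record that each $e_w$ is indeed an idempotent of $\biheckemonoid(W)$, with image set the interval $[1,w]_L$, by Proposition~\ref{proposition.uniqueness_fix1}\eqref{e.uniqueness}; in particular $e_w$ is a regular element, so its $\JJ$-class is a regular $\JJ$-class. Thus the family $(e_w)_{w\in W}$ consists of representatives of (some) regular $\JJ$-classes, and it remains to check that this assignment $w\mapsto [\text{$\JJ$-class of }e_w]$ is both injective and surjective.

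For injectivity, suppose $e_w$ and $e_{w'}$ are $\JJ$-equivalent. By Lemma~\ref{lemma.conjugate}, two idempotents are $\JJ$-equivalent iff their image intervals have the same type; here the image intervals are $[1,w]_L$ and $[1,w']_L$, whose types are $\type([1,w]_L)=w\cdot 1^{-1}=w$ and likewise $w'$. Hence $w=w'$, giving injectivity.

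For surjectivity, let $J$ be any regular $\JJ$-class of $\biheckemonoid(W)$ and pick an idempotent $f\in J$. By Proposition~\ref{proposition.image_set.idempotent}\eqref{item.image_set.idempotent}, the image set of $f$ is an interval $[c,d]_L$ in left order; set $w := dc^{-1}$, which by Proposition~\ref{proposition.interval} is the type of this interval, and note $w\in W$. Then $[1,w]_L$ is an interval of the same type as $[c,d]_L$, so by Lemma~\ref{lemma.conjugate} (its ``$\JJ$-equivalent iff same type'' clause, applied to the two idempotents $f$ and $e_w$ whose image intervals are $[c,d]_L$ and $[1,w]_L$) we get $f\ \JJ\ e_w$, i.e. $e_w\in J$. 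Therefore every regular $\JJ$-class contains some $e_w$, completing surjectivity and hence the proof.

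I do not expect a genuine obstacle here: the statement is essentially a bookkeeping consequence of Proposition~\ref{proposition.uniqueness_fix1}, Proposition~\ref{proposition.image_set.idempotent}, and Lemma~\ref{lemma.conjugate}. The only point requiring a little care is making sure that when we invoke Lemma~\ref{lemma.conjugate} in the surjectivity step we are comparing the right idempotents and that the type computation $\type([1,w]_L)=w$ is used consistently (the convention $\type([u,w]_L)=wu^{-1}$ with $u=\one$); everything else is immediate.
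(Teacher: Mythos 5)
Your proof is correct and follows essentially the same route the paper intends: the corollary is stated as an immediate consequence of Lemma~\ref{lemma.conjugate}, combined with Proposition~\ref{proposition.uniqueness_fix1} (each $e_w$ is an idempotent with image $[1,w]_L$, hence of type $w$) and Proposition~\ref{proposition.image_set.idempotent} (any idempotent has an interval $[c,d]_L$ as image, hence a well-defined type $dc^{-1}$), exactly as you argue. Your injectivity/surjectivity bookkeeping and the type computation $\type([1,w]_L)=w$ are both right.
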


\begin{example}
\label{example.e.etilde}
  For $w\in W$, the idempotents $e_w$ and $\tilde{e}_{w^{-1} w_0}$ are in the same $\JJ$-class.
  This follows immediately from Lemma~\ref{lemma.conjugate}, or by direct computation
   using the explicit expressions for $e_w$ and $\tilde{e}_{w^{-1} w_0}$ in
   Proposition~\ref{proposition.uniqueness_fix1}:
   \begin{equation*}
   \begin{split}
     &e_w = e_w^2 = \pi_{w^{-1} w_0} \opi_{w_0 w} \pi_{w^{-1} w_0} \opi_{w_0 w}
                 = \pi_{w^{-1} w_0} \tilde{e}_{w^{-1} w_0} \opi_{w_0 w}\,,\\
     & \tilde{e}_{w^{-1} w_0} = \tilde{e}_{w^{-1} w_0}^2 =
     \opi_{w_0 w}\pi_{w^{-1} w_0}\opi_{w_0 w}\pi_{w^{-1} w_0} =
     \opi_{w_0 w} e_w \pi_{w^{-1} w_0}\,.
   \end{split}
   \end{equation*}
 \end{example}

\begin{corollary}
  The image of a regular element is an interval in left order.
\end{corollary}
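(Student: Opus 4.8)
The plan is to reduce the statement to the already-established fact (Proposition~\ref{proposition.image_set.idempotent}~\eqref{item.image_set.idempotent}) that the image set of an idempotent is an interval in left order. So the first step is: given a regular element $f\in\biheckemonoid(W)$, by definition there is an idempotent $e$ with $f\ \JJ\ e$, and I may as well take $e=e_w$ for a suitable $w\in W$ by Corollary~\ref{corollary.transversal}; its image set is the interval $[1,w]_L$. Now $f\ \JJ\ e$ means $f=uev$ and $e=u'fv'$ for some $u,v,u',v'\in\biheckemonoid(W)$.

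Next I would extract, from the proof of Lemma~\ref{lemma.conjugate}, the stronger structural fact that when two elements are $\JJ$-equivalent their image sets have the same type and are carried isomorphically onto one another by elements of the monoid. Concretely: writing $e=u'fv'$ and $f=uev$, and normalizing $u=ue$ as in the proof of Lemma~\ref{lemma.conjugate}, one gets that $u$ and $v$ induce reciprocal isomorphisms (for left order, by Proposition~\ref{proposition.weak_order2}) between $\im(f)$ and a subinterval $[c',d']_L$ of $\im(e)=[1,w]_L$. Since an isomorphism of posets sends intervals to intervals, and since $\im(f)$ is the isomorphic image under $v$ of the genuine interval $[c',d']_L\subseteq[1,w]_L$, it follows that $\im(f)$ is itself an interval in left order.

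Alternatively — and this may be cleaner to write — I would invoke aperiodicity: by Proposition~\ref{proposition.pseudo_idempotent} the monoid $\biheckemonoid(W)$ is aperiodic, so $f^\omega$ is an idempotent. A regular element is $\JJ$-equivalent to an idempotent, and in an aperiodic (hence $\HH$-trivial) monoid a regular element $f$ satisfies that $f$ and $f^\omega$ have the same image set whenever $f$ lies in the same $\RR$-class as $f^\omega$; but in general $\im(f)$ need only be order-isomorphic to $\im(f^\omega)$. Since $\im(f^\omega)$ is an interval in left order by Proposition~\ref{proposition.image_set.idempotent}~\eqref{item.image_set.idempotent}, and since by Lemma~\ref{lemma.conjugate} (extended to regular elements) $\im(f)$ and $\im(f^\omega)$ are intervals of the same type — each carried onto the other by monoid elements acting as left-order isomorphisms — we again conclude that $\im(f)$ is an interval in left order.

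The main obstacle is making precise the claim that the monoid elements realizing the $\JJ$-equivalence genuinely restrict to \emph{poset isomorphisms} (not merely bijections) between $\im(f)$ and a subinterval of $\im(e_w)$; this is exactly where Proposition~\ref{proposition.weak_order2}~\eqref{item.image_set} (the length-preserving case, giving $x.f=(xa^{-1})(a.f)$) and the argument already run inside the proof of Lemma~\ref{lemma.conjugate} do the work, so I would cite that proof rather than redo it. Once the isomorphism is in hand, the conclusion that $\im(f)$ is an interval in left order is immediate, since the order-isomorphic image of an interval is an interval. I expect the whole proof to be two or three sentences: apply Corollary~\ref{corollary.transversal} and Lemma~\ref{lemma.conjugate} to transport the interval structure of $\im(e_w)=[1,w]_L$ (Proposition~\ref{proposition.image_set.idempotent}) to $\im(f)$.

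\begin{proof}
  Let $f\in\biheckemonoid(W)$ be regular, so its $\JJ$-class contains an
  idempotent, which by Corollary~\ref{corollary.transversal} we may take to be
  $e_w$ for some $w\in W$; by
  Proposition~\ref{proposition.uniqueness_fix1}~\eqref{e.uniqueness} its image
  set is the interval $[1,w]_L$, which is an interval in left order (and, in any
  case, $\im(e_w)$ is an interval by
  Proposition~\ref{proposition.image_set.idempotent}~\eqref{item.image_set.idempotent}).
  Since $f\ \JJ\ e_w$, the argument in the proof of
  Lemma~\ref{lemma.conjugate} produces $u,v\in\biheckemonoid(W)$ inducing
  reciprocal isomorphisms, for left order, between $\im(f)$ and a subinterval
  $[c',d']_L$ of $[1,w]_L$. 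As the image of an interval under a poset
  isomorphism is again an interval, $\im(f)$ is an interval in left order.
\end{proof}
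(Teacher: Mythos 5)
Your overall strategy---transporting the interval structure of $\im(e_w)=[1,w]_L$ to $\im(f)$ along the $\JJ$-equivalence---is reasonable, but the key step as you state it does not follow from what you cite. The reciprocal-isomorphism argument inside the proof of Lemma~\ref{lemma.conjugate} hinges on the identity $f=ff=fuv$, i.e.\ on $f$ being \emph{idempotent}; for a merely regular $f$ that computation is unavailable, so you cannot directly extract from that proof a pair $u,v$ inducing reciprocal left-order isomorphisms between $\im(f)$ and a subinterval of $[1,w]_L$. (Likewise, invoking ``Lemma~\ref{lemma.conjugate} extended to regular elements'' to assert that $\im(f)$ and $\im(f^\omega)$ are \emph{intervals} of the same type is circular: the extension clause of that lemma concerns the type $\type(f)=(w_0.f)(\one.f)^{-1}$, not the shape of the image set, which is exactly what this corollary is supposed to establish. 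Note also that $f^\omega$ need not even lie in the $\JJ$-class of $f$.)

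The missing ingredient is the rank equality: since $f$ and $e_w$ are mutually $\JJ$-comparable, each is a factor of the other as a function on $W$, whence $|\im(f)|=|\im(e_w)|=|[1,w]_L|$. With that in hand there are two quick finishes. The paper's: $\im(f)\subseteq[\one.f,w_0.f]_L$ by Proposition~\ref{proposition.weak_order2}~\eqref{item.weak_order} and~\eqref{item.image_set}, and this interval has type $\type(f)=\type(e_w)$, hence cardinality $|[1,w]_L|$, so the containment is an equality. Alternatively, closer to your plan: write $e_w=u'fv'$, observe that $e_w=e_w(u'fv')$ forces $u'f$ to map $[1,w]_L$ injectively into $\im(f)$, hence (by the rank equality) bijectively onto it; then the length-preserving case of Proposition~\ref{proposition.weak_order2}~\eqref{item.image_set} shows this bijection is a left translation $x\mapsto x(\one.u'f)$, so $\im(f)$ is an interval. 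Either way, the cardinality count is what closes the gap you flagged but did not fill.
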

\begin{proof}
  A regular element has the same type, and same size of image set as
  any idempotent in its $\JJ$-class.
\end{proof}

\begin{remark}
  The reciprocal is false: In type $B_3$, the element
  $\opi_1\opi_3\opi_2\pi_1\opi_3\opi_2\opi_1$ has the interval $[1,
  s_2s_3s_2]_L$ as image set, but it is not regular. The same holds in
  type $A_4$ with the element
  $\pi_2\pi_1\opi_4\pi_3\opi_2\opi_1\opi_3\pi_4\opi_2\opi_3\opi_4$.
\end{remark}

\begin{problem}
  Describe $\LL$-classes in general, and $\LL$-order, $\RR$-order, as well as
  $\JJ$-order on nonregular elements.
\end{problem}

\subsection{Involutions and consequences}
\label{subsection.involution}

Define an involution $*$ on $W$ by
\begin{equation*}
	w\mapsto w^* := w_0 w,
\end{equation*}
where $w_0$ is the maximal element of $W$. Moreover, define the \emph{bar} map
$\biheckemonoid(W) \to \biheckemonoid(W)$ as the conjugacy by $*$: for a
given $f\in \biheckemonoid (W)$
\begin{equation*}
  w.\overline{f} := (w^*.f)^* \qquad \text{for all $w\in W$.}
\end{equation*}

\begin{proposition}
  The bar involution is a monoid endomorphism of $\biheckemonoid(W)$ which
  exchanges $\pi_i$ and $\opi_i$.
\end{proposition}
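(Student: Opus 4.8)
The plan is to verify the two defining properties of the bar map: that it is a monoid morphism (multiplicative and unital), and that it swaps the generators $\pi_i$ and $\opi_i$. I would begin with the second, more concrete, claim, since it is essentially Remark~\ref{remark.opi_in_terms_of_pi}. By Example~\ref{example.reduced_blocks_w0} and Lemma~\ref{lemma.skew_commutation} applied with $w=w_0$ and $K=\{i\}$, conjugation by $w_0$ permutes the simple reflections: there is $i'\in I$ with $w_0 s_i = s_{i'} w_0$, equivalently $w^* = w_0 w$ exchanges the roles of $i$ and $i'$ in the descent bookkeeping. Concretely, $i \in \Des(w)$ iff $i' \in \Des(w^*)$ is false; rather, using Remark~\ref{remark.opi_in_terms_of_pi}, $i \in \Des(w) \Leftrightarrow i \notin \Des(w_0 w) = \Des(w^*)$. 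Then I would compute $w.\overline{\pi_i} = (w^*.\pi_i)^*$ directly from~\eqref{equation.antisorting_action}: if $i\in\Des(w^*)$ then $w^*.\pi_i = w^*$, so $w.\overline{\pi_i} = w$; if $i\notin\Des(w^*)$ then $w^*.\pi_i = w^* s_i$, so $w.\overline{\pi_i} = w_0 w^* s_i = w s_i$. Comparing with~\eqref{equation.sorting_action} and the equivalence $i\in\Des(w^*) \Leftrightarrow i\notin\Des(w)$, this is exactly $w.\opi_i$. Hence $\overline{\pi_i} = \opi_i$, and the symmetric computation (or applying the argument to $\opi_i$, or noting $*$ is an involution) gives $\overline{\opi_i} = \pi_i$.

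Next I would check that the bar map sends $\biheckemonoid(W)$ into itself and is a monoid endomorphism. Since $*:W\to W$ is a bijection, the conjugate $\overline{f}\colon w\mapsto (w^*.f)^*$ of any function $f\colon W\to W$ is again a function $W\to W$, and $f\mapsto\overline{f}$ is a bijective anti-\,/iso-morphism of the full transformation monoid: one checks $\overline{fg} = \overline{f}\,\overline{g}$ because $w.\overline{fg} = (w^*.(fg))^* = ((w^*.f).g)^*$, and writing $v = w^*.f$ we get $((v).g)^* = ((v^{**}).g)^* = (v^*)^{*}$-type unwinding, i.e. $((v).g)^* = (v^*).\overline{g}$ with $v^* = w.\overline{f}$, so $w.\overline{fg} = (w.\overline{f}).\overline{g}$. (Here I use $v^{**}=v$.) Also $\overline{\id} = \id$ since $(w^*)^* = w$. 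Thus bar is a monoid endomorphism of the full transformation monoid. Since $\biheckemonoid(W)$ is generated by $\pi_1,\dots,\pi_n,\opi_1,\dots,\opi_n$, and bar permutes this generating set by the first part, bar maps $\biheckemonoid(W)$ onto itself; combined with being a monoid morphism, it is a monoid endomorphism of $\biheckemonoid(W)$ exchanging $\pi_i$ and $\opi_i$, as claimed.

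I do not expect a serious obstacle here; the statement is essentially bookkeeping. The one point requiring care is the descent translation under $w\mapsto w^*=w_0w$: I must be careful to use the correct form, namely that $i\in\Des(w)$ is equivalent to $i\notin\Des(w_0w)$ (this is the content of Remark~\ref{remark.opi_in_terms_of_pi}, since $\Des(w_0w) = \Des((w^{-1}w_0)^{-1})$ and right descents of $v^{-1}$ equal left descents of $v$; alternatively it follows from $\ell(w_0 w s_i) = \ell(w_0) - \ell(w s_i)$). Getting the direction of this equivalence right is what makes the computation land on $\opi_i$ rather than something spurious. Everything else — that $*$ is an involution, that conjugation by a bijection is a monoid automorphism of the transformation monoid, that a morphism permuting a generating set preserves the generated submonoid — is routine.
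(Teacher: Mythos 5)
Your proof is correct and follows the same route as the paper's (very terse) argument: conjugation by a bijection of $W$ is an automorphism of the full transformation monoid, and a direct descent computation using $i\in\Des(w)\Leftrightarrow i\notin\Des(w_0w)$ shows that bar swaps $\pi_i$ and $\opi_i$, hence stabilizes $\biheckemonoid(W)$. You merely spell out the step the paper dismisses as ``easy to see,'' and your bookkeeping of the descent equivalence is the correct one.
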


\begin{proof}
  This is a consequence of the general fact that for any permutation $\phi$ of
  $W$, conjugation by $\phi$ is an automorphism of the monoid of maps from
  $W$ to itself. Moreover, it is easy to see that bar exchanges $\pi_i$ and
  $\opi_i$, so that it fixes $\biheckemonoid(W)$.
\end{proof}

The previous proposition has some interesting consequences when applied to idempotents:
For any $w\in W$, the bar involution is a bijection from $e_w\biheckemonoid(W)$ to
$\overline{e}_w\biheckemonoid(W)$.
But $\overline{e}_w$ fixes $w_0$ and sends $1=w_0^*$ to $w^*$, so that
$\overline{e}_w = e_{w^*, w_0} = \tilde{e}_{w_0 w}$. The latter is in turn
$\JJ$-equivalent to $e_{w_0 w^{-1} w_0}$ by Example~\ref{example.e.etilde}.
This implies the following result.

\begin{corollary}
  The ideals  $e_w\biheckemonoid(W)$ and $e_{w_0 w^{-1} w_0}\biheckemonoid(W)$ are in bijection.
\end{corollary}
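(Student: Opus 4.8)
The plan is to read off the corollary directly from the discussion immediately preceding it, which already establishes all the needed ingredients; the only thing to do is to assemble them into a clean deductive chain. First I would recall the setup: the bar involution $f \mapsto \overline{f}$, being conjugation by the permutation $*\colon w \mapsto w_0 w$ of the set $W$, is a monoid automorphism of the full transformation monoid on $W$, and by the preceding proposition it restricts to a monoid automorphism of $\biheckemonoid(W)$ that swaps $\pi_i \leftrightarrow \opi_i$. In particular, for any $f$, bar gives a bijection $e_w\biheckemonoid(W) \to \overline{e_w}\,\biheckemonoid(W)$ of right ideals, since $\overline{\cdot}$ is an automorphism and hence $\overline{e_w\biheckemonoid(W)} = \overline{e_w}\,\overline{\biheckemonoid(W)} = \overline{e_w}\,\biheckemonoid(W)$.

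Next I would identify the idempotent $\overline{e_w}$. Since bar is a monoid endomorphism, $\overline{e_w}$ is again an idempotent; I would compute its minimal and maximal image points in left order. Using $w.\overline{f} = (w^*.f)^*$ with $w^* = w_0 w$: the image of $\one$ under $\overline{e_w}$ is $(w_0.e_w)^* = w_0(w_0.e_w) = w_0 w$ (using $w_0 . e_w = w$ from Proposition~\ref{proposition.uniqueness_fix1}), and the image of $w_0$ is $(\one.e_w)^* = w_0(\one.e_w) = w_0$. So $\overline{e_w}$ is an idempotent with $\one.\overline{e_w} = w_0 w$ and $w_0.\overline{e_w} = w_0$; by Proposition~\ref{proposition.uniqueness_fix1}\eqref{e.uniqueness.dual} (or \eqref{e.shift}) this is exactly $\tilde{e}_{w_0 w}$, equivalently $e_{w_0 w,\, w_0}$, whose image set is $[w_0 w, w_0]_L$.

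Then I would invoke the $\JJ$-class identifications already in place. By Example~\ref{example.e.etilde}, $\tilde{e}_{v^{-1} w_0}$ is $\JJ$-equivalent to $e_v$; taking $v$ with $v^{-1} w_0 = w_0 w$, i.e.\ $v = w_0 w^{-1} w_0$, we get $\overline{e_w} = \tilde{e}_{w_0 w} \;\JJ\; e_{w_0 w^{-1} w_0}$. Since $\JJ$-equivalent idempotents generate the same two-sided ideal, they also generate the same right ideal up to isomorphism; more concretely, $\overline{e_w}$ and $e_{w_0 w^{-1} w_0}$ are both idempotents in the same $\RR$-class-containing $\JJ$-class, and by Proposition~\ref{proposition.characterization_R_class} (together with Lemma~\ref{lemma.conjugate}, which shows the $\JJ$-class depends only on the type of the image interval, here $w_0(w_0 w)^{-1} = w^{-1} w_0$) the right ideals $\overline{e_w}\,\biheckemonoid(W)$ and $e_{w_0 w^{-1} w_0}\biheckemonoid(W)$ are in bijection (both are determined, as sets of functions, by the common fiber-count data via $\rank$ and type). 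Chaining the two bijections $e_w\biheckemonoid(W) \cong \overline{e_w}\,\biheckemonoid(W) \cong e_{w_0 w^{-1} w_0}\biheckemonoid(W)$ yields the claim.

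The main obstacle, such as it is, is bookkeeping rather than mathematics: one must be careful that ``in bijection'' in the corollary means bijection of the underlying sets (not $\biheckemonoid(W)$-module isomorphism), so that the step ``$\JJ$-equivalent idempotents have right ideals in bijection'' is legitimate — this needs Proposition~\ref{proposition.characterization_R_class}, which says the size of $f\biheckemonoid(W)$ is $|[1,\type(f)^{-1}w_0]_R|$ and depends only on $\type(f)$, so two idempotents of the same type have right ideals of equal cardinality. I would also double-check the $*$-conjugation identities $w_0 s_j = s_{j'} w_0$ used implicitly (cf.\ the proof of Lemma~\ref{lemma.weak_order}) so that $\overline{\pi_i} = \opi_i$ holds on the nose, and verify the index arithmetic $v = w_0 w^{-1} w_0$ once more against Example~\ref{example.e.etilde}. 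None of this is hard; the corollary is essentially a formal consequence of the automorphism property of bar plus the earlier $\JJ$-class computations.
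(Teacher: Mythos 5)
Your proof follows the paper's argument exactly: the bar involution gives a bijection $e_w\biheckemonoid(W)\to\overline{e}_w\biheckemonoid(W)$, the evaluation at $\one$ and $w_0$ identifies $\overline{e}_w=\tilde{e}_{w_0w}$, and Example~\ref{example.e.etilde} supplies the $\JJ$-equivalence with $e_{w_0w^{-1}w_0}$. The only cosmetic difference is your appeal to Proposition~\ref{proposition.characterization_R_class} (which describes $\RR$-classes, not full principal right ideals) for the last step; the cleaner justification, equally implicit in the paper, is that the explicit factorizations $e_w=\pi_{w^{-1}w_0}\,\tilde e_{w^{-1}w_0}\,\opi_{w_0w}$ and $\tilde e_{w^{-1}w_0}=\opi_{w_0w}\,e_w\,\pi_{w^{-1}w_0}$ from Example~\ref{example.e.etilde} make left multiplication by $\opi_{w_0w}$ and $\pi_{w^{-1}w_0}$ mutually inverse bijections between the two right ideals.
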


\section{The Borel submonoid $\biheckemonoid_1(W)$ and its representation theory}
\label{section.M1}

In the previous section, we outlined the importance of the idempotents
$(e_w)_{w\in W}$. A crucial feature is that they live in a ``Borel'' submonoid
$\Mone(W) \subseteq \biheckemonoid(W)$ of elements of the biHecke monoid
$\biheckemonoid(W)$ which fix the identity:
\begin{equation*}
  \Mone(W) := \{ f\in \biheckemonoid(W) \mid \one.f = \one\}\, .
\end{equation*}

In this section we study this monoid and its representation theory, as an
intermediate step toward the representation theory of $\biheckemonoid(W)$ (see
Section~\ref{section.translation_modules}). For the representation theory of
$\biheckemonoid(W)$, it is actually more convenient to work with the submonoid
fixing $w_0$ instead of $\one$:
\begin{equation*}
  \Mzero(W) := \{ f\in \biheckemonoid(W) \mid w_0.f = w_0\}\, .
\end{equation*}
However, since both monoids $\Mone(W)$ and $\Mzero(W)$ are isomorphic under
the involution of Section~\ref{subsection.involution} and since the
interaction of $\Mone(W)$ with Bruhat order is notationally simpler, we focus
on $\Mone(W)$ in this section.
\medskip

\noindent\textbf{Note.}  In the remainder of this paper, unless explicitly
stated, we fix a Coxeter group $W$ and use the short-hand notation
$\biheckemonoid:=\biheckemonoid(W)$, $\Mone:=\Mone(W)$ and
$\Mzero:=\Mzero(W)$.

\medskip
From the definition it is clear that $\Mone$ is indeed a submonoid
which contains the idempotents $(e_w)_{w\in W}$.  Furthermore, by
Proposition~\ref{proposition.bruhat} and
Corollary~\ref{corollary.Bruhat_regressive} its elements are both
order-preserving and \regressive for Bruhat order. In fact, a bit more
can be said.
\begin{remark}
  For $w\in W$, $w.\Mone$ is the interval $[1,w]_B$ in Bruhat order.
\end{remark}
\begin{proof}
  By Corollary~\ref{corollary.Bruhat_regressive}, for $f \in \Mone$, we have
  $w.f \leq_B w$. Take reciprocally $v\in [1,w]_B$. Then, using
  Proposition~\ref{proposition.uniqueness_fix1}, $w.e_v = v$.
\end{proof}

As a consequence of the preservation and regressiveness on Bruhat
order, $\Mone$ is \emph{an ordered monoid with $1$ on top}. Namely, for
$f,g \in \Mone$, define the relation $f\leq g$ if $w.f \leq_B w.g$ for
all $w\in W$. Then, $\le$ defines a partial order on $\Mone$ such that
$f\leq 1$, $fg \leq f$ and $fg \leq g$ for all $f,g \in \Mone$. In
other words, $\Mone$ is $\BB$-trivial
(see~\cite[Proposition~2.2]{Denton_Hivert_Schilling_Thiery.JTrivialMonoids},
as well as Section~2.5 there) and in particular $\JJ$-trivial.

In the next two subsections, we first study the combinatorics of $\Mone$ and then apply
the general results on the representation theory of $\JJ$-trivial monoids
of~\cite{Denton_Hivert_Schilling_Thiery.JTrivialMonoids} to $\Mone$.

\subsection{$\JJ$-order on idempotents and minimal generating set}

Recall from Section~\ref{ss.preorders on monoids} that $\JJ$-order is
the partial order $\leq_\JJ$ defined by $f\leq_\JJ g$ if there exists
$x,y \in \Mone$ such that $f=xgy$.  The restriction of $\JJ$-order
to idempotents has a very simple description:
\begin{proposition} \label{proposition.star}
  For $u,v\in W$, the following are equivalent:
  \begin{align*}
  \bullet\ e_u e_v &= e_u&&
  \bullet\ u \leq_L v \\
  \bullet\ e_v e_u &= e_u &&
  \bullet e_u \le_{\mathcal{J}} e_v.
  \end{align*}
  Moreover, $(e_ue_v)^\omega=e_{u\meet_L v}$, where $u\meet_L v$ is the
    meet (or greatest lower bound) of $u$ and $v$ in left order.
\end{proposition}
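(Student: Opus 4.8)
The plan is to establish the four equivalent conditions by a short cycle of implications, then deduce the formula for $(e_ue_v)^\omega$. Recall from Proposition~\ref{proposition.uniqueness_fix1}~\eqref{e.uniqueness} that $e_w$ is the unique idempotent with $1.e_w=1$ and $w_0.e_w=w$, that its image set is $[1,w]_L$, and that $u.e_w = \max_{\le_B}([1,u]_B\cap[1,w]_L)$. In particular, $e_w$ fixes every element of $[1,w]_L$ and, since every element of $\Mone$ is $\le_B$-regressive by Corollary~\ref{corollary.Bruhat_regressive}, $e_w$ fixes \emph{only} the elements of $[1,w]_L$ (its image set equals its fixed point set because it is idempotent). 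First I would prove $e_ue_v=e_u\iff u\le_L v$: if $u\le_L v$ then $[1,u]_L\subseteq[1,v]_L$, so $e_v$ fixes every point of $\im(e_u)$, whence $e_ue_v$ and $e_u$ agree on all of $W$ (for any $w$, $w.e_u\in\im(e_u)\subseteq\fix(e_v)$, so $w.e_ue_v=w.e_u$); conversely, if $e_ue_v=e_u$ then $u=w_0.e_u=w_0.e_ue_v=(w_0.e_u).e_v=u.e_v$, so $u\in\im(e_v)=[1,v]_L$, i.e. $u\le_L v$. Symmetrically I would prove $e_ve_u=e_u\iff u\le_L v$: one direction is $1.e_ve_u = (1.e_v).e_u = 1.e_u = 1$ and $w_0.e_ve_u = (w_0.e_v).e_u = v.e_u$, and when $u\le_L v$ this equals $u$ since $e_u$ fixes $[1,v]_L\supseteq$... wait, rather one checks directly $v.e_u = \max_{\le_B}([1,v]_B\cap[1,u]_L)$ and since $u\le_L v$ implies $u\le_B v$, this max is $u$; then uniqueness of the idempotent with these fixed endpoints gives $e_ve_u=e_u$. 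For the converse, $e_ve_u=e_u$ forces $w_0.e_u = w_0.e_ve_u = v.e_u \le_L v$ ... here one uses that $v.e_u\in\im(e_u)=[1,u]_L$ forces $u = w_0.e_u = v.e_u$, and $v.e_u\le_B v$ together with $v.e_u=u$ gives only $u\le_B v$; to upgrade to $u\le_L v$ I instead argue via the already-proven first equivalence together with the observation below, or note that $v.e_u=u$ together with $e_u$ being $\le_B$-regressive and the explicit max-formula forces $u\in[1,v]_L$.

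The cleanest route is probably: (a) $u\le_L v\iff e_ue_v=e_u$ as above; (b) $e_ue_v=e_u\iff e_ve_u=e_u$, which I would get by symmetry of the condition "$e_u$ fixes $\im(e_u)$ and $\im(e_v)\supseteq\im(e_u)$" — more precisely, $e_ue_v=e_u$ says $\im(e_u)\subseteq\fix(e_v)=\im(e_v)$, i.e. $[1,u]_L\subseteq[1,v]_L$, and this is manifestly the same condition that yields $e_ve_u=e_u$ (for $w\in W$, $w.e_ve_u = \max_{\le_B}([1,w.e_v]_B\cap[1,u]_L)$; when $[1,u]_L\subseteq[1,v]_L$ one shows this equals $w.e_u$ by comparing with $\max_{\le_B}([1,w]_B\cap[1,u]_L)$, using that $[1,u]_L\subseteq\im(e_v)$ so intersecting the $\le_B$-lower set $[1,w]_B$ with $[1,v]_L$ first does not shrink $[1,u]_L$). (c) $e_u\le_\JJ e_v\iff u\le_L v$: if $u\le_L v$ then $e_u=e_ue_v$ gives $e_u\le_\JJ e_v$ directly; conversely if $e_u=x\,e_v\,y$ for some $x,y\in\Mone$, then $\im(e_u)=\im(x e_v y)\subseteq \im(e_v)\cdot$... more carefully, $w.e_u = ((w.x).e_v).y$, and since $(w.x).e_v\in[1,v]_L$ and $\Mone$ preserves left order (Proposition~\ref{proposition.weak_order2}~\eqref{item.weak_order}), $w.e_u\le_L v.y\le_B v$ — hmm, to conclude I would rather use $w_0.e_u=u$ and chase: $u = w_0.e_u = ((w_0.x).e_v).y$; since $w_0.x\le_B w_0$, $(w_0.x).e_v\in[1,v]_L$, and $y$ is $\le_B$-regressive, so $u\le_B$ something in $[1,v]_L.y$; this only gives a Bruhat bound, so instead I would invoke $\BB$-triviality of $\Mone$ together with Proposition~\ref{proposition.ordered} to replace $\le_\JJ$ by the partial order $\le$ and argue on fixed sets. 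The honest cleanest argument for (c)'s hard direction: $e_u\le_\JJ e_v$ and $e_v\le_\JJ e_u$ would force $e_u\ \JJ\ e_v$, but $\Mone$ is $\JJ$-trivial, so $e_u=e_v$, i.e. $u=v$; in general $e_u\le_\JJ e_v$ means $\im(e_u)\subseteq M_1\,e_v\,M_1$-image, and since every element of $\Mone$ is order-preserving and regressive for Bruhat order while $e_v$ has image set exactly $[1,v]_L$, one gets $\im(e_u)\subseteq[1,v]_L$ hence $u\le_L v$.

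Finally, for the formula $(e_ue_v)^\omega=e_{u\meet_L v}$: the monoid $\biheckemonoid(W)$ is aperiodic (Proposition~\ref{proposition.pseudo_idempotent}), so $(e_ue_v)^\omega$ is a well-defined idempotent of $\Mone$ (it fixes $1$ since $e_u,e_v$ do). I would identify it via Proposition~\ref{proposition.uniqueness_fix1}~\eqref{e.uniqueness}, which says an idempotent of $\Mone$ is determined by its image, or equivalently by its fixed point set. Set $w:=u\meet_L v$. Then $w\le_L u$ and $w\le_L v$, so by the established equivalences $e_we_u=e_w=e_ue_w$ and likewise with $v$; hence $e_w(e_ue_v)=e_w$ and $(e_ue_v)e_w=e_w$, which forces $\im(e_w)\subseteq\fix(e_ue_v)=\fix((e_ue_v)^\omega)$, i.e. $[1,w]_L\subseteq\im((e_ue_v)^\omega)$. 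Conversely, any fixed point $x$ of $e_ue_v$ satisfies $x=x.e_ue_v$, so $x\in\im(e_v)=[1,v]_L$ and, applying $e_u$, $x=(x.e_u).e_v$ with $x.e_u\le_B x$; iterating and using regressiveness forces $x$ to be fixed by $e_u$ as well (the $\le_B$-decreasing sequence $x, x.e_u, x.e_ue_v,\dots$ stabilizes at $x$), so $x\in\im(e_u)\cap\im(e_v)=[1,u]_L\cap[1,v]_L=[1,u\meet_L v]_L=[1,w]_L$ because left order is a lattice (Section~\ref{ss.orders}). Thus $\fix((e_ue_v)^\omega)=[1,w]_L$, and by uniqueness $(e_ue_v)^\omega=e_w=e_{u\meet_L v}$. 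The main obstacle I anticipate is the hard direction of the $\le_\JJ$ equivalence and, relatedly, keeping the distinction between Bruhat bounds and left-order bounds straight; the resolution in each case is to pass from $\le_B$-containment of image sets to $\le_L$-containment using that $\im(e_w)$ is precisely the interval $[1,w]_L$ and that idempotents' image sets coincide with their fixed point sets.
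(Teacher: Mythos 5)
Your route is genuinely different from the paper's, which disposes of this proposition by citing general results on $\JJ$-trivial monoids from~\cite{Denton_Hivert_Schilling_Thiery.JTrivialMonoids} together with Proposition~\ref{proposition.uniqueness_fix1}; you instead argue directly from the explicit formula $x.e_w=\max_{\le_B}\bigl([1,x]_B\cap[1,w]_L\bigr)$ and the identification $\im(e_w)=\fix(e_w)=[1,w]_L$. Most of this is sound: both directions of $e_ue_v=e_u\iff u\le_L v$, the implication $u\le_L v\Rightarrow e_ve_u=e_u$, the easy direction $u\le_L v\Rightarrow e_u\le_\JJ e_v$, and the identification of $(e_ue_v)^\omega$ (your use of Bruhat antisymmetry to get $\fix(e_ue_v)=[1,u]_L\cap[1,v]_L=[1,u\meet_L v]_L$, followed by the uniqueness statement of Proposition~\ref{proposition.uniqueness_fix1}, is correct) all go through.

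The genuine gap is the implication $e_u\le_\JJ e_v\Rightarrow u\le_L v$, together with the converse $e_ve_u=e_u\Rightarrow u\le_L v$, which you rightly note reduces to the same issue. Every argument you offer for these extracts only Bruhat information: from $e_u=xe_vy$ with $y\in\Mone$ regressive for Bruhat order you get $\im(e_u)\subseteq [1,v]_L.y\subseteq[1,v]_B$, i.e.\ $u\le_B v$, not $\im(e_u)\subseteq[1,v]_L$ as asserted; likewise $v.e_u=u$ only yields $u\le_B v$. The announced ``resolution'' (upgrading $\le_B$-containment to $\le_L$-containment) is precisely the step that is never carried out, and regressiveness alone cannot supply it since $[1,v]_L.y\not\subseteq[1,v]_L$ in general. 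The repair is algebraic, via $\JJ$-triviality of $\Mone$. For the second bullet: $e_ve_u=e_u$ gives $e_ue_ve_u=e_u(e_ve_u)=e_u$, so $e_u=(e_ue_v)\cdot e_u$ and $e_ue_v=e_u\cdot e_v$ show that $e_u$ and $e_ue_v$ are $\JJ$-equivalent, hence equal, and your first equivalence applies. For the $\le_\JJ$ bullet: from $e_u=(xe_vy)^\omega$ and the identities $(ab)^\omega a=(ab)^\omega$, $b(ab)^\omega=(ab)^\omega$ (valid in $\RR$-trivial, resp.\ $\LL$-trivial, monoids by comparing the two sides in $\RR$-, resp.\ $\LL$-order) one gets $e_ux=e_u$ and $ye_u=e_u$, whence $e_ue_ve_u=(e_ux)e_v(ye_u)=e_u$, and the same $\JJ$-triviality argument yields $e_ue_v=e_u$. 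This is exactly the content of the cited lemmas that your write-up implicitly needs but does not prove.
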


\begin{proof}
This follows from~\cite[Theorem 3.4, Lemma 3.6]{Denton_Hivert_Schilling_Thiery.JTrivialMonoids}
and Proposition~\ref{proposition.uniqueness_fix1}.
\end{proof}

As a consequence the following definition, which plays a central role
in the representation theory of $\JJ$-trivial monoids
(see~\cite{Denton_Hivert_Schilling_Thiery.JTrivialMonoids}), makes
sense.
\begin{definition} \label{definition.lfix_rfix}
  For any element $x\in \Mone$, define
  \begin{equation*}
    \lfix(x) := \min_{\le_L} \{u\in W \suchthat e_ux=x\} \quad \text{and} \quad
    \rfix(x) := \min_{\le_L} \{u\in W \suchthat xe_u=x\} = w_0.x
  \end{equation*}
  the $\min$ being taken for the left order.
\end{definition}

Interestingly, $\Mone$ can be defined as the submonoid of
$\biheckemonoid$ generated by the idempotents $(e_w)_{w\in W}$, and in
fact the subset of these idempotents indexed by Grassmannian elements
(an element $w\in W$ is \emph{Grassmannian} if it has at most one
descent).
\begin{theorem}
  \label{theorem.M1.generating_set}
  $\Mone$ has a unique minimal generating set which
  consists of the
  idempotents $e_w$ where $w^{-1} w_0$ is right Grassmannian.
\end{theorem}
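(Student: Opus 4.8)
The plan is to establish two things: first, that the idempotents $\{e_w \mid w\in W\}$ generate $\Mone$; second, that among these, the generating set can be reduced exactly to those $e_w$ with $w^{-1}w_0$ right Grassmannian, and that this reduced set is minimal and unique.

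For the first claim, I would argue that $\Mone$ is $\BB$-trivial (hence $\JJ$-trivial), as already noted in the excerpt right before Proposition~\ref{proposition.star}, and invoke the general structure theory of $\JJ$-trivial monoids from~\cite{Denton_Hivert_Schilling_Thiery.JTrivialMonoids}: in a $\JJ$-trivial monoid, the minimal generating set is contained in the set of irreducible elements, and every element is a product of ``covers'' in $\JJ$-order. The key point is to show that every non-idempotent $f\in\Mone$ factors as $f=ge$ (or $f = e g$) with $e$ an idempotent and $g$ strictly $\JJ$-above $f$; iterating and using that idempotents generate all idempotents, one gets that $\Mone$ is generated by idempotents. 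Concretely, I would use Remark~\ref{remark.rmul_e}: for $f\in\Mone$ with $u=w_0.f$, we have $fe_u = f$, so $f = f e_u$; and if $f\neq e_u$, one shows $f <_\JJ$-factors appropriately — here the regressiveness for Bruhat order and the explicit formula $u.e_w = \max_{\le_B}([1,u]_B\cap[1,w]_L)$ from Proposition~\ref{proposition.uniqueness_fix1} should let me peel off one idempotent at a time and decrease a suitable statistic (e.g. $\sum_{w} \ell(w) - \ell(w.f)$).

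For the second claim, the mechanism is that $e_u$ is redundant in a generating set precisely when $e_u$ can be written as a product of \emph{other} idempotents $e_v$ — equivalently, by Proposition~\ref{proposition.star}, when $u$ is \emph{not} meet-irreducible in $(W,\le_L)$, i.e. $u = v \meet_L v'$ for some $v,v'$ strictly $\le_L$-above $u$, since $(e_ve_{v'})^\omega = e_{v\meet_L v'}$. So the minimal generating set corresponds to the meet-irreducible elements of left weak order — those $u$ covered by exactly one element in $\le_L$, i.e. with exactly one left ascent. Since left weak order on $W$ is anti-isomorphic to itself via $u\mapsto u^{-1}w_0$ (or one uses the standard fact that $[1,u]_L\cong[u^{-1}w_0, w_0]$-type statements, cf. Remark~\ref{remark.interval} and Proposition~\ref{proposition.interval}), the condition ``$u$ has exactly one left ascent'' translates to ``$u^{-1}w_0$ has at most one right descent'', which is precisely the condition that $u^{-1}w_0$ is right Grassmannian. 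Uniqueness of the minimal generating set then follows from the general fact (again from~\cite{Denton_Hivert_Schilling_Thiery.JTrivialMonoids}, or directly since in a $\JJ$-trivial monoid the irreducibles form the unique minimal generating set) that an irreducible element of a $\JJ$-trivial monoid lies in \emph{every} generating set.

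The main obstacle I anticipate is the first claim — proving that idempotents generate $\Mone$ — rather than the Grassmannian reduction, which is essentially a translation of the combinatorics of meet-irreducibles in left order through known anti-automorphisms. For the generation statement I will need to be careful that the factorization $f = (\text{idempotent})\cdot g$ actually produces a $g$ that is again in $\Mone$ and is genuinely simpler; one clean route is to show $f = e_{u_1} e_{u_2}\cdots e_{u_k}$ where $u_1 \ge_L u_2 \ge_L \cdots$ is the chain $w_0.f, w_0.(fe_{\cdot}), \dots$ obtained by tracking the images of a maximal chain from $\one$ to $w_0$ under $f$, using Proposition~\ref{proposition.fibers_image_set} (an element of $\Mone$ is determined by its fibers together with $\one.f=\one$) to verify the product reconstructs $f$. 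I would verify this identity on a maximal chain of left order using Lemma~\ref{lemma.weak_order} and the explicit action of the $e_w$, which reduces everything to a bookkeeping argument about intervals $[1,w]_L$ and $[1,w]_B$.
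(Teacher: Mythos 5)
Your second half---reducing the generating set to the meet-irreducible elements of left order via $(e_ve_{v'})^\omega=e_{v\meet_L v'}$ (Proposition~\ref{proposition.star}), translating meet-irreducibility into the Grassmannian condition, and getting uniqueness from the general theory of $\JJ$-trivial monoids---is essentially the paper's argument and is sound (modulo the small point that elements with \emph{zero} left nondescents, i.e.\ $w_0$ itself with $e_{w_0}=1$, are also counted, which is why the type $A$ count is $2^n-n$ rather than $2^n-n-1$). The genuine gap is in the first half: the claim that the idempotents $e_w$ generate $\Mone$ at all. Your route (a) is circular as stated: $f=fe_u$ with $u=w_0.f$ does not produce a factorization of $f$ into strictly ``simpler'' pieces, and there is no evident way to extract a $g$ with $f=ge_u$ and $g$ strictly above $f$ without already knowing $f$ is a product of idempotents. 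Indeed, Proposition~\ref{propositions.M1.idempotent.generated}, which produces exactly the kind of chain factorization $e_{w_1}\cdots e_{w_k}$ you describe, is proved in the paper by \emph{starting from} an expression of $f$ as a product of idempotents and reducing it---so it cannot be used to establish generation. Your route (b) (reconstructing $f$ from the images of a maximal chain and matching fibers via Proposition~\ref{proposition.fibers_image_set}) is a plausible program but is not carried out: you give no concrete recipe for the $u_i$ and no argument that the candidate product has the same fibers as $f$ on all of $W$ rather than on one chain. Note also that \emph{a priori} $\Mone$ could have non-idempotent irreducible elements; only the generation claim rules this out, so it really is the crux.

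The paper's proof of generation is quite different and more concrete: it inducts on the length of $f$ as a word in the $\pi_i$ and $\opi_i$. Since $\one.f=\one$, any expression for $f$ contains some $\opi_i$; writing $f=(\pi_w\opi_i)h$ with the prefix chosen so that $i\notin\Des(w)$, one inserts $\opi_{w^{-1}}\pi_w$ (which acts trivially on the image $[w,w_0]_L$ of the prefix) to get $f=(\pi_w\opi_i\opi_{w^{-1}})(\pi_wh)=e_{w_0(ws_i)^{-1}}\,(\pi_wh)$, where $\pi_wh$ lies in $\Mone$ and is strictly shorter. This absorbs one $\opi_i$ per step into an explicit idempotent and terminates. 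If you want to complete your proof, either adopt this word-length induction or supply the missing fiber computation in your route (b); as written, the generation step would not go through.
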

In type $A_{n-1}$ this minimal generating set is of size $2^n - n$
(which is the number of Grassmannian elements in this case~\cite{manivel.2001}).

\begin{proof}
  Define the length $\len(f)$ of an element $f \in \biheckemonoid$ as the length
  of a minimal expression of $f$ as a product of the generators
  $\pi_i$'s and $\opi_i$'s. We now prove by induction on the length
  that $\Mone$ is generated by $\{e_w \suchthat w\in W\}$.

  Take an element $f \in \Mone$ of length $l$. If $l=0$ we are
  done. Otherwise, since $\one.f=\one$, an expression of $f$ as a product of
  $\pi_i$'s and $\opi_i$ contains at least one $\opi_i$. Write $f=gh$
  where $g=\pi_w\opi_i$ for some $w\in W$ and $h \in \biheckemonoid$ so that
  $\len(w)+1+\len(h)=l$.

  \begin{quote}
  \textbf{Claim:} $f=e_{w_0(ws_i)^{-1}} \pi_wh$ and $\pi_wh\in \Mone$.
  \end{quote}

  It follows from the claim that $\len(\pi_w h)<l$, and hence since $\pi_wh\in \Mone$
  we can apply induction to conclude that $\Mone$ is generated by $\{e_w \mid w\in W\}$.

  Let us prove the claim. By minimality of $l$, $i$ is not a descent
  of $w$ (otherwise, we would obtain a shorter expression for $f$:
  $f=\pi_w \opi_i h = \pi_{w'}\pi_i\opi_i h = \pi_{w'}\opi_i
  h$ where $\ell(w')<\ell(w)$). Therefore, $\one.g=\one.(\pi_w\opi_i) = w$. Since
  $f\in \Mone$ it
  follows that $w.h=\one$ and therefore $\pi_w h \in \Mone$. It further follows that
  $\opi_{w^{-1}}\pi_w$ acts trivially on the image set $[w,w_0]_L$
  of $g$, and therefore $f = g\opi_{w^{-1}}\pi_w h$. Note that $g\opi_{w^{-1}}
  =\pi_w\opi_i \opi_{w^{-1}}=  \pi_w \pi_i \opi_i \opi_{w^{-1}}= e_{w_0(ws_i)^{-1}}$.

  By Proposition~\ref{proposition.star}, the idempotents of $\Mone$ are generated
  by the meet-irreducible idempotents $e_w$ in $\mathcal{J}$ order. Here $x$ is meet-irreducible
  if and only if  $x=a$ or $x=b$ whenever $x=a\meet b$ for some $a,b\in \Mone$.
  These meet-irreducible elements are indexed by the elements $w$ of
  $W$ that are meet-irreducible in left order (or equivalently that
  have at most one left nondescent, that is, $w_0 w^{-1}$ is right
  Grassmannian).

  The uniqueness of the minimal generating set is true for any $\JJ$-trivial monoid with a
  minimal generating set~\cite[Theorem 2]{Doyen.1984}~\cite[Theorem 1]{Doyen.1991}.
\end{proof}

Actually one can be much more precise:
\begin{proposition}\label{propositions.M1.idempotent.generated}
  Any element $f\in \Mone$ can be written as a product $e_{w_1}\cdots
  e_{w_k}$, where:
  \begin{itemize}
  \item $w_1>_B\dots>_B w_k$ is a chain in Bruhat order such that any two
    consecutive terms $w_i$ and $w_{i+1}$ are incomparable in left order;
  \item $w_i = \rfix(e_{w_1}\cdots e_{w_i}) = \lfix(e_{w_i}\cdots e_{w_k}) $\,.
  \end{itemize}
\end{proposition}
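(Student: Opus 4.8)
The statement refines Theorem~\ref{theorem.M1.generating_set} by controlling \emph{which} idempotents appear and in what order. The natural strategy is a greedy/canonical construction driven by the two statistics $\lfix$ and $\rfix$ from Definition~\ref{definition.lfix_rfix}. Given $f\in\Mone$, set $w_1 := \rfix(f) = w_0.f$ (which makes sense by Corollary~\ref{corollary.Bruhat_regressive}\eqref{item.regressive}, as $f$ is regressive for Bruhat order so $w_0.f\le_B w_0$). I would first show $e_{w_1} f = f$: indeed $f e_{w_1} = f$ by the very definition of $\rfix$, and by Proposition~\ref{proposition.star} combined with the $\JJ$-triviality (hence $\BB$-triviality) of $\Mone$ and the results of~\cite{Denton_Hivert_Schilling_Thiery.JTrivialMonoids} (Theorem~3.4, Lemma~3.6 cited in the proof of Proposition~\ref{proposition.star}), one gets the symmetric statement $e_{w_1}f=f$. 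Then I would peel off the leftmost idempotent: write $f = e_{w_1} f'$ where $f' := \cdots$ is again in $\Mone$ and has strictly smaller length (here one reuses exactly the length-induction machinery from the proof of Theorem~\ref{theorem.M1.generating_set}, which in fact produces $f = e_{w}\,\pi_w h$ with $w=w_0(ws_i)^{-1}=\rfix(f)$ in the notation there, and $\pi_w h$ shorter). Iterating yields $f=e_{w_1}\cdots e_{w_k}$ with $w_i = \rfix(e_{w_1}\cdots e_{w_i})$ by construction, and by induction $w_i = \lfix(e_{w_i}\cdots e_{w_k})$, using the dual description of $\lfix$ and Proposition~\ref{proposition.star} again.

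\textbf{Establishing the chain conditions.} Once the factorization is in hand, I would verify $w_1 >_B w_2 >_B \cdots >_B w_k$. Since $w_i = \rfix(e_{w_1}\cdots e_{w_i})$ and the partial products form a $\le_\JJ$-decreasing (indeed $\le_\BB$-decreasing) sequence in $\Mone$, and since $w\mapsto w_0.f$ is monotone for the $\BB$-order of $\Mone$ versus Bruhat order (elements of $\Mone$ preserve and are regressive for Bruhat order), one gets $w_{i+1} = w_0.(e_{w_1}\cdots e_{w_{i+1}}) \le_B w_0.(e_{w_1}\cdots e_{w_i}) = w_i$; strictness follows because otherwise $e_{w_{i+1}}$ could be absorbed, contradicting minimality of the length / the greedy construction. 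For the incomparability in left order of consecutive terms: if $w_i$ and $w_{i+1}$ were comparable in left order, say $w_{i+1}\le_L w_i$, then by Proposition~\ref{proposition.star} we would have $e_{w_i}e_{w_{i+1}} = e_{w_i}$, allowing us to delete $e_{w_{i+1}}$ and shorten the product; if instead $w_i\le_L w_{i+1}$, then $e_{w_{i+1}}e_{w_i}=e_{w_{i+1}}$ would let us merge from the other side, again contradicting the canonicity/minimality. So the incomparability is forced by the same absorption identities of Proposition~\ref{proposition.star} that drive Theorem~\ref{theorem.M1.generating_set}.

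\textbf{The main obstacle.} The delicate point is making the inductive peeling produce exactly $\rfix$ and $\lfix$ \emph{simultaneously}, i.e.\ checking the second bullet $w_i = \rfix(e_{w_1}\cdots e_{w_i}) = \lfix(e_{w_i}\cdots e_{w_k})$ for \emph{all} $i$ at once rather than just at the ends. The clean way is a two-sided induction: after extracting $w_1=\rfix(f)$ and writing $f = e_{w_1} g$ with $g\in\Mone$ shorter, apply the induction hypothesis to $g$ to get $g = e_{w_2}\cdots e_{w_k}$ with the stated properties relative to $g$; then one must upgrade $w_2 = \rfix(e_{w_2}) \leftarrow$ wait, to $w_2 = \rfix(e_{w_1}e_{w_2})$, which requires showing $\rfix(e_{w_1}e_{w_2}) = \rfix(e_{w_2})$ when $w_1,w_2$ are Bruhat-comparable but left-incomparable — this is where Proposition~\ref{proposition.star}'s formula $(e_ue_v)^\omega = e_{u\meet_L v}$ and the explicit action formula $u.e_w = \max_{\le_B}([1,u]_B\cap[1,w]_L)$ of Proposition~\ref{proposition.uniqueness_fix1} do the real work. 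I expect roughly a page of careful bookkeeping there, but no genuinely new idea beyond what is already assembled in Sections~\ref{ss.idempotents} and this subsection together with the cited $\JJ$-trivial monoid results; the rest is formal.
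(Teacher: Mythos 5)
There is a genuine gap, and it starts at the anchor of your greedy construction. The proposition forces the \emph{leftmost} factor to be $e_{\lfix(f)}$ (take $i=1$ in the second bullet) and the \emph{rightmost} to be $e_{\rfix(f)}=e_{w_0.f}$; you instead set $w_1:=\rfix(f)=w_0.f$ and try to peel it off on the left. The identity you invoke to justify this, $e_{\rfix(f)}f=f$, is false for every non-idempotent $f\in\Mone$: it would say $\lfix(f)\le_L\rfix(f)$, hence $\lfix(f)\le_B\rfix(f)$, which together with Corollary~\ref{corollary.Mone.lfixrfix} forces $\lfix(f)=\rfix(f)$, i.e.\ $f$ idempotent. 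Proposition~\ref{proposition.star} only gives the two-sided absorption $e_ue_v=e_u\Leftrightarrow e_ve_u=e_u$ for a product of \emph{two idempotents}; it does not transport $fe_u=f$ into $e_uf=f$ for general $f$. Even after correcting the orientation to $w_1:=\lfix(f)$, the step ``write $f=e_{w_1}f'$ with $f'$ strictly shorter'' is not delivered by the machinery you cite: the length induction in the proof of Theorem~\ref{theorem.M1.generating_set} extracts a word-dependent idempotent $e_{w_0(ws_i)^{-1}}$, not $e_{\lfix(f)}$, and the bare identity $e_{\lfix(f)}f=f$ produces no descent. Finally, your outline never explains how to repair a pair of consecutive indices that are \emph{not} Bruhat comparable, which is one of the three conditions to be enforced.

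The paper's proof avoids all of this by working in the opposite direction: start from \emph{any} factorization $f=e_{w_1}\cdots e_{w_k}$ (existence is Theorem~\ref{theorem.M1.generating_set}) and apply local rewritings whenever a condition fails, each of which strictly decreases either the number of factors or the tuple of indices term-by-term in Bruhat order, so the process terminates. The three rewritings are: if $u\not>_B v$ for consecutive factors, use Remark~\ref{remark.rmul_e} to replace $e_ue_v$ by $e_ue_{u.e_v}$ with $u.e_v<_Bv$; if $u<_Lv$ (or $v<_Lu$), absorb one factor via Proposition~\ref{proposition.star}; and if $\lfix(e_{w_i}\cdots e_{w_k})=u\ne w_i$, then $u<_Lw_i$ and $e_{w_i}\cdots e_{w_k}=e_ue_{w_{i+1}}\cdots e_{w_k}$ (dually for $\rfix$ of prefixes). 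The first of these, based on Remark~\ref{remark.rmul_e}, is the key idea missing from your proposal; without it there is no mechanism to create the Bruhat chain.
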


\begin{proof}
  Start from any expression $e_{w_1}\cdots e_{w_k}$ for $f$. We show
  that if any of the conditions of the proposition is not satisfied,
  the expression can be reduced to a strictly smaller (in length, or
  in Bruhat, term by term) expression, so that induction can be
  applied.

  \begin{itemize}
  \item If $u\not >_B v$, then by Remark~\ref{remark.rmul_e} $e_u e_v =
    e_u e_{u.e_v}$ with $u.e_v <_B v$.
  \item If $u<_L v$, then $e_u e_v=e_u$, and similarly on the right.
  \item If the left symbol $e_u$ for $e_{w_i}\cdots e_{w_k}$ is not
    $e_{w_i}$, then $u<_L w_i$ and
    \begin{equation*}
      e_{w_i}\cdots e_{w_k}=
      e_ue_{w_i}\cdots e_{w_k}=e_u e_{w_{i+1}} \cdots e_{w_k}\,.
    \end{equation*}
    Similarly on the right.\qedhere
  \end{itemize}
\end{proof}

\begin{corollary}
  \label{corollary.Mone.lfixrfix}
  For $f\in \Mone$, $\lfix(f) \geq_B \rfix(f)$, with equality if and
  only if $f$ is an idempotent.
\end{corollary}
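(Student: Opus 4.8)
The plan is to read essentially everything off Proposition~\ref{propositions.M1.idempotent.generated}. Given $f\in\Mone$, write $f = e_{w_1}\cdots e_{w_k}$ as in that proposition, so that $w_1 >_B \cdots >_B w_k$ is a chain in Bruhat order whose consecutive terms are incomparable in left order, and $w_i = \rfix(e_{w_1}\cdots e_{w_i}) = \lfix(e_{w_i}\cdots e_{w_k})$ for every $i$. Specializing to $i=1$ gives $\lfix(f) = w_1$, and specializing to $i=k$ gives $\rfix(f) = w_k$. Since the $w_i$ form a chain in Bruhat order we have $w_1 \ge_B w_k$, which is precisely $\lfix(f) \ge_B \rfix(f)$. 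This establishes the inequality.

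Next I would treat the equality case. Suppose first that $f$ is an idempotent. As $f\in\Mone$ we have $\one.f=\one$, so setting $w := w_0.f = \rfix(f)$, the uniqueness part of Proposition~\ref{proposition.uniqueness_fix1}\eqref{e.uniqueness} forces $f = e_w$. For this idempotent I would then compute both fixators directly: $\rfix(e_w) = w_0.e_w = w$ by Proposition~\ref{proposition.uniqueness_fix1}\eqref{e.uniqueness}; and by Proposition~\ref{proposition.star}, $e_u e_w = e_w$ holds if and only if $w \le_L u$, so $\{u\in W \suchthat e_u e_w = e_w\} = \upperset{w}$ in left order, whose $\le_L$-minimum is $w$ itself, i.e.\ $\lfix(e_w) = w$. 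Hence $\lfix(f) = w = \rfix(f)$.

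Finally, for the converse, suppose $\lfix(f) = \rfix(f)$. In the notation above this reads $w_1 = w_k$. Since $w_1 >_B \cdots >_B w_k$ is a \emph{strictly} decreasing chain in Bruhat order, the equality $w_1 = w_k$ forces $k = 1$, so $f = e_{w_1}$ is an idempotent. The whole argument is a short bookkeeping exercise built on Proposition~\ref{propositions.M1.idempotent.generated}, and I do not anticipate a genuine obstacle; the only point meriting care is that the chain furnished by that proposition is strict, which is exactly what makes $w_1 = w_k$ collapse the factorization to a single idempotent.
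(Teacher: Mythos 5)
Your proposal is correct and is exactly the argument the paper intends: the corollary is stated without proof as an immediate consequence of Proposition~\ref{propositions.M1.idempotent.generated}, reading off $\lfix(f)=w_1$ and $\rfix(f)=w_k$ from the strict Bruhat chain, with equality forcing $k=1$. Your extra verification that idempotents of $\Mone$ are exactly the $e_w$ (via Proposition~\ref{proposition.uniqueness_fix1}) and that $\lfix(e_w)=\rfix(e_w)=w$ (via Proposition~\ref{proposition.star}) correctly fills in the details the paper leaves implicit.
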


\begin{lemma} \label{lemma.lfix}
If $v\le_B u$ in Bruhat order and $u'=\lfix(e_ue_v)$, then
\begin{equation*}
	v \le_B u' \quad \text{and} \quad u' \le_L u.
\end{equation*}
\end{lemma}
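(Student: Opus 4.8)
The statement concerns $u' = \lfix(e_ue_v)$ when $v \le_B u$. The plan is to compute $\lfix(e_ue_v)$ concretely using the combinatorics of the idempotents $e_w$ developed in Proposition~\ref{proposition.uniqueness_fix1} and Proposition~\ref{proposition.star}, together with the structural description of elements of $\Mone$ as products of idempotents in Proposition~\ref{propositions.M1.idempotent.generated}. The key observation is that $e_ue_v$ is an element of $\Mone$ whose left fixpoint and right fixpoint are governed by the first and last symbols of a canonical reduced-by-Bruhat expression; since $e_ue_v$ is already a product of two idempotents, $\rfix(e_ue_v) = w_0.(e_ue_v) = (w_0.e_u).e_v = u.e_v$, and by Corollary~\ref{corollary.Mone.lfixrfix} we have $\lfix(e_ue_v) \ge_B \rfix(e_ue_v) = u.e_v$.

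\textbf{Step 1: Identify $u'$ as $u$ or something below it in left order.} Since $v \le_B u$, Remark~\ref{remark.rmul_e} does not immediately simplify $e_u e_v$ (that remark handles $u \not>_B v$); instead I would run the reduction procedure of the proof of Proposition~\ref{propositions.M1.idempotent.generated} on the length-two expression $e_u e_v$. Either $u >_B u.e_v$ and $u, u.e_v$ are incomparable in left order — in which case $\lfix(e_u e_v) = u$ directly and $u' = u \le_L u$ trivially, with $v \le_B u = u'$ by hypothesis — or $u \le_L u.e_v$. But $u.e_v \le_B u$ always (regressiveness, Corollary~\ref{corollary.Bruhat_regressive}), and if additionally $u \le_L u.e_v$ then, since left order refines Bruhat order, $u \le_B u.e_v \le_B u$ forces $u.e_v = u$, i.e. $u \le_L v$ by Proposition~\ref{proposition.star}, and then $e_u e_v = e_u$, so $u' = \lfix(e_u) = u$ and again $u' = u$, with $v \le_B u$ immediate.

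\textbf{Step 2: Handle the remaining case $u' \neq u$.} In the generic situation the reduction of $e_u e_v$ produces $e_{u'}e_{v'}$ with $u' = u.e_v \le_B u$ — wait, this would give $u' <_L u$ is what must be checked. Here I would use the explicit formula $u.e_v = \max_{\le_B}([1,u]_B \cap [1,v]_L)$ from Proposition~\ref{proposition.uniqueness_fix1}\eqref{e.uniqueness}. Call this element $m := u.e_v$. Since $m \in [1,v]_L$ we have $m \le_L v$, and since $m \in [1,u]_B$ we have $m \le_B u$. Now $e_u e_v = e_u e_m$ by Remark~\ref{remark.rmul_e}. If $u >_B m$ then — because $m \le_L v \le$ (no): I need $m$ and $u$ incomparable in left order for the expression $e_u e_m$ to be "reduced" in the sense of Proposition~\ref{propositions.M1.idempotent.generated}; if $m \le_L u$ then $e_u e_m = e_m$ (Proposition~\ref{proposition.star}), so $u' = m$, and then $u' = m \le_L u$ is exactly what we want while $v \le_B u'$ needs separate argument. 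If $m$ and $u$ are incomparable in left order, then $e_u e_m$ is already in canonical form with left symbol $e_u$, so $\lfix(e_u e_m) = u$, returning to Step~1.

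\textbf{Step 3: The inequality $v \le_B u'$.} This is the part I expect to be the main obstacle, since the cases where $u' <_L u$ strictly are precisely where $u'$ lies strictly below $u$ and we must still control it against $v$ in Bruhat order. The plan is to write $e_u e_v = e_{u'} g$ where $g \in \Mone$, apply $w_0$: then $w_0.(e_{u'}g) = u.e_v = m \le_B$ everything, but more usefully, since $\lfix(e_u e_v) = u'$ means $e_{u'}$ is a left factor, we get $e_{u'}(e_u e_v) = e_u e_v$, hence $u' \le_L u$ by Proposition~\ref{proposition.star} applied to $e_{u'}e_u$ (the left symbol identity), giving the second inequality. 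For $v \le_B u'$: note $e_u e_v = e_u e_v e_v$ (idempotency), so $e_{u'}$ being $\lfix$ means $e_{u'} \le_\JJ e_v$... no. Instead: $v = w_0.e_v \ge_B w_0.(e_u e_v) = m$, and I would show $v \le_B u'$ by observing that $e_{u'}$ must "see" all of $[1,v]_B$: concretely $u'.e_v = w_0.(e_{u'}e_v)$ and since $e_{u'}e_v$ appears as a factor arrangement of $e_u e_v$... The cleanest route is probably: from $e_{u'} e_u e_v = e_u e_v$ and Proposition~\ref{propositions.M1.idempotent.generated}'s canonical form, $u'$ is the maximal-in-Bruhat element such that $e_{u'}$ left-fixes $e_u e_v$; since $e_v$ itself left-fixes $e_u e_v$ precisely when $v \le_L u$... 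I would instead argue directly that $v \le_B u$ and $v \le_L v$ give $v \in [1,u]_B$, and trace through which cutting/fixpoint element $u'$ equals, using that in all branches above $u'$ was either $u$ or $m = \max_{\le_B}([1,u]_B \cap [1,v]_L)$; since $v \ge_L m$ need not give $v \ge_B m$ in general, the genuine work is to show the $\lfix$ (not the $\rfix$) lands above $v$, which I would extract from the minimality in $\le_L$ in Definition~\ref{definition.lfix_rfix} combined with the fact that $e_v (e_u e_v) = e_u e_v$ would force $v \ge_L u'$ only if $v \ge_L u'$, whereas what we have is $e_{u'} x = x$ with $u'$ minimal — so $u' \le_L$ every $w$ with $e_w x = x$, in particular this does not directly bound $u'$ below. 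I therefore expect the real proof uses Lemma~\ref{lemma.lfix}'s intended application context (it is invoked later) and a direct computation: $u' = \lfix(e_u e_v)$, and one checks $e_{u'} \cdot$ fixing forces, via the formula $u.e_v = m$ and $u'.(\text{something}) $, that $m \le_B u'$ and $m \le_L v$ propagate; combined with $v \le_B u$ this should close by a Bruhat-order diamond/lifting argument (Proposition~\ref{proposition.lifting}) applied along a chain from $m$ up to $v$ inside $[1,u]_B$.
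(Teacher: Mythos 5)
Your proposal does not close the key inequality $v \le_B u'$: you correctly identify it as ``the main obstacle'' in Step~3 and then end with speculation about lifting arguments rather than an argument. The frustrating part is that you already hold every ingredient. In your opening paragraph you note $u' = \lfix(e_ue_v) \ge_B \rfix(e_ue_v) = w_0.(e_ue_v) = u.e_v$ via Corollary~\ref{corollary.Mone.lfixrfix}, and in Step~2 you set $m := u.e_v = \max_{\le_B}\bigl([1,u]_B \cap [1,v]_L\bigr)$ and observe $m \le_L v$ and $m \le_B u$. What you never observe is that the hypothesis $v \le_B u$ places $v$ itself in the set $[1,u]_B \cap [1,v]_L$, so $v \le_B m$; combined with $m \le_L v$, hence $m \le_B v$ (left order refines Bruhat order by the subword property, so your parenthetical ``$v \ge_L m$ need not give $v \ge_B m$'' is mistaken), this forces $m = v$, and then $v = m \le_B u'$ follows at once from the inequality you already wrote down. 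This is essentially the paper's proof, phrased there as $v = w_0.(e_ue_v) = w_0.(e_{u'}e_ue_v) = u'.(e_ue_v) \le_B u'$, the last step by Bruhat-regressiveness of $\Mone$ (Corollary~\ref{corollary.Bruhat_regressive}); no lifting-property or chain argument is needed.

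The first inequality $u' \le_L u$ also requires none of your case analysis on canonical forms: since $e_u$ is idempotent, $e_u(e_ue_v) = e_ue_v$, so $u$ belongs to the set $\{w \in W \suchthat e_w(e_ue_v) = e_ue_v\}$ over which $\lfix$ takes the minimum for left order, whence $u' \le_L u$ directly from Definition~\ref{definition.lfix_rfix}. Your Steps~1 and~2, which try to pin down $u'$ exactly by running the reduction procedure of Proposition~\ref{propositions.M1.idempotent.generated}, are not needed for the statement and in any case do not terminate in a determination of $u'$; the appeal to Proposition~\ref{proposition.star} at the start of Step~3 to deduce $u' \le_L u$ from ``$e_{u'}$ is a left factor'' is likewise not a valid derivation as written, since that proposition compares two idempotents rather than an idempotent against a general element it fixes.
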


\begin{proof}
By Definition~\ref{definition.lfix_rfix}, $u'\le _L u$ since $e_u (e_u e_v) = e_u e_v$ and
for $\Mone$ the minimum is measured in left order. Also by
Proposition~\ref{proposition.uniqueness_fix1}
\begin{equation*}
v=w_0.e_u e_v = w_0.e_{u'} e_u e_v \le_B u'.\qedhere
\end{equation*}
\end{proof}

\begin{lemma}
If $u$ covers $v$ in Bruhat order and $u'=\lfix(e_u e_v)$, then either $u'=u$, or $u'=v$
and $e_u e_v=e_v e_u$.
\end{lemma}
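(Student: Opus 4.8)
We know $v \lessdot u$ in Bruhat order, so the interval $[v,u]_B$ consists exactly of $\{v,u\}$. Set $u' = \lfix(e_u e_v)$. By Lemma~\ref{lemma.lfix} we have $v \le_B u'$ and $u' \le_L u$; in particular $u' \le_B u$. Since $[v,u]_B = \{v,u\}$, the element $u'$ must equal either $u$ or $v$. So the entire content of the lemma is: \emph{if} $u' = v$, \emph{then} $e_u e_v = e_v e_u$. This is the step I expect to carry the weight of the proof.

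\textbf{Plan.} Assume $u' = \lfix(e_u e_v) = v$. By definition of $\lfix$ (Definition~\ref{definition.lfix_rfix}), this means $e_v (e_u e_v) = e_u e_v$, i.e. $e_v e_u e_v = e_u e_v$. I want to deduce $e_v e_u = e_u e_v$. First observe that since $v \le_B u$, Proposition~\ref{proposition.star} does \emph{not} directly apply (that proposition is about left order, not Bruhat order), so I cannot simply invoke $e_v e_u = e_u$ or the like. Instead I will use the structural description of $e_u e_v$. By Remark~\ref{remark.rmul_e}, $e_u e_v = e_u e_{u.e_v}$; since $v \le_B u$, Proposition~\ref{proposition.uniqueness_fix1}\eqref{e.uniqueness} gives $u.e_v = \max_{\le_B}([1,u]_B \cap [1,v]_L)$. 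Because $v \le_B u$, we have $v \in [1,u]_B \cap [1,v]_L$, so $u.e_v = v$, confirming $e_u e_v$ is ``honest'' in the sense that $w_0.(e_u e_v) = w_0.e_v = v$ and the left-fixing datum is $u' = v$ by hypothesis. Now I will argue that $e_u e_v$ is an idempotent: compute $(e_u e_v)^2 = e_u (e_v e_u) e_v = e_u e_v$, where the middle equality $e_u(e_v e_u)e_v = e_u e_v$ would follow if $e_v e_u e_v$ can be reduced — but this is circular.

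\textbf{Better route via Corollary~\ref{corollary.Mone.lfixrfix} and Proposition~\ref{propositions.M1.idempotent.generated}.} Here is the approach I would actually commit to. We have $\rfix(e_u e_v) = w_0.(e_u e_v) = w_0.e_v = v$ and, under the hypothesis $u' = v$, also $\lfix(e_u e_v) = v$. By Corollary~\ref{corollary.Mone.lfixrfix}, $\lfix(f) = \rfix(f)$ for $f \in \Mone$ holds \emph{if and only if} $f$ is an idempotent. Since $\lfix(e_u e_v) = v = \rfix(e_u e_v)$, we conclude $e_u e_v$ is idempotent, and moreover its image set must be $[1, \rfix]_L = [1,v]_L$ wait — more carefully, an idempotent $f$ of $\Mone$ with $\rfix(f) = w_0.f = v$ is precisely $e_v$ by the uniqueness statement in Proposition~\ref{proposition.uniqueness_fix1}\eqref{e.uniqueness} (it is the unique idempotent fixing $1$ with $w_0.f = v$). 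Hence $e_u e_v = e_v$. By the symmetric argument applied to $e_v e_u$: its rfix is $w_0.(e_v e_u) = w_0.(e_u$ restricted$\dots)$ — actually $\rfix(e_v e_u) = w_0.e_v e_u$, and $\lfix(e_v e_u) \ge_B \rfix$; I would show $\lfix(e_v e_u) = v$ similarly (since $e_v$ is the left factor and $v \le_B u$ forces, via Lemma~\ref{lemma.lfix} with roles examined, that the left fix is again pinned between $u$ — no: here the left factor is $e_v$ so $\lfix(e_v e_u) \le_L v$, and $\rfix(e_v e_u) = w_0.(e_v e_u) = (w_0.e_u).e_v = u.e_v = v$). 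Thus $\lfix(e_v e_u) \le_L v$ and $\ge_B \rfix(e_v e_u) = v$, forcing $\lfix(e_v e_u) = v = \rfix(e_v e_u)$, so $e_v e_u$ is the idempotent $e_v$. Therefore $e_u e_v = e_v = e_v e_u$, which is the desired conclusion.

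\textbf{Main obstacle.} The delicate point is the chain of implications forcing $\lfix(e_v e_u) = v$: I must be careful that $\lfix(e_v e_u) \le_L v$ (immediate since $e_v(e_v e_u) = e_v e_u$) together with $\lfix \ge_B \rfix = v$ genuinely pins it down — this uses that $v \le_L v$ and $v \le_B v$ are the only elements $\le_L v$ that are also $\ge_B v$, which is trivially true since only $v$ itself satisfies $v \ge_B v$. So in fact $\lfix(e_v e_u) = v$ is forced cleanly. The one computation to double-check is $\rfix(e_v e_u) = w_0.(e_v e_u) = (w_0.e_v).e_u$? No: $w_0.(e_v e_u)$ means apply $e_v$ first then $e_u$, so it is $(w_0.e_v).e_u = v.e_u$; and since $v \le_B u$ wait we need $v \le_B u$ gives... $v.e_u = \max_{\le_B}([1,v]_B \cap [1,u]_L)$ by Proposition~\ref{proposition.uniqueness_fix1} — this equals $v$ only if $v \in [1,u]_L$, i.e. $v \le_L u$, which need not hold. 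So the symmetric argument is \emph{not} symmetric, and this is the real subtlety: I will instead derive $e_u e_v = e_v e_u$ directly from $e_u e_v = e_v$ by showing $e_v e_u = e_v$ as well — but $e_v e_u = e_v$ iff $v \le_L u$ by Proposition~\ref{proposition.star}, which we may not have. Hence the correct target is likely $u' = v \Rightarrow e_u e_v = e_v e_u$ with \emph{both} sides equal to something, and I expect the intended proof shows $e_u e_v = e_v$ (via the idempotency argument above) and separately, using $e_u e_v e_u$ and Bruhat covering, that $e_v e_u = e_v$ too, the latter forced because $w_0.(e_v e_u) \le_B v$ combined with $e_v e_u \ge e_v e_u e_v$-type inequalities and $[v,u]_B = \{v,u\}$ squeezes $e_v e_u$ into being $e_v$. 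Pinning down this last squeeze — that the image of $e_v e_u$ cannot be all of $[1,u]_L$ but must collapse to $[1,v]_L$ — is the step I would spend the most care on, using Proposition~\ref{proposition.image_set.idempotent} and the fact that $v \lessdot u$ leaves no room for an intermediate idempotent.
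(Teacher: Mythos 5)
Your reduction to the case $u'=v$ is correct, and your argument that $\lfix(e_ue_v)=\rfix(e_ue_v)=v$ forces $e_ue_v$ to be the idempotent $e_v$ is valid. But the proof is not finished: you never establish $e_ve_u=e_v$ (equivalently, the commutativity), and you say so yourself at the end, leaving the ``squeeze'' on $e_ve_u$ as the step you would still need to carry out. That is a genuine gap, and it is exactly where the content of the lemma lies.

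The missing step is in fact already in your hands. In your setup you record, from Lemma~\ref{lemma.lfix}, that $u'\le_L u$. So in the case $u'=v$ you know $v\le_L u$ --- in \emph{left} order, not merely Bruhat order. You then dismiss Proposition~\ref{proposition.star} on the grounds that it ``is about left order, not Bruhat order,'' but that is precisely why it applies here: $v\le_L u$ is one of its equivalent conditions, and it immediately yields $e_ue_v=e_v=e_ve_u$. This is the paper's entire argument for the second case, and it makes your idempotency detour (and the attempted symmetric argument, which you correctly observe breaks down without $v\le_L u$) unnecessary. So the approach is salvageable in one line, but as written the proposal does not prove the commutativity claim.
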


\begin{proof}
By Lemma~\ref{lemma.lfix}, we have that either $u'=u$ or $u'=v$, since $u$ covers $v$
in Bruhat order. When $u'=v$, we have again by Lemma~\ref{lemma.lfix} that
$v\le_L u$. Hence $e_u e_v=e_v = e_v e_u$.
\end{proof}

\subsection{Representation theory}

In this subsection, we specialize general results about the
representation theory of finite $\JJ$-trivial monoids to describe some of
the representation theory of the Borel submonoid $\Mone$, such as its
simple modules, radical, Cartan invariant matrix and quiver. The description also applies to $\Mzero$,
mutatis mutandis. We follow the presentation
of~\cite{Denton_Hivert_Schilling_Thiery.JTrivialMonoids} (also see this paper for the proofs), though many
of the general results have been previously known; see for
example~\cite{Almeida_Margolis_Steinberg_Volkov.2009,Clifford_Preston.1961,Ganyushkin_Mazorchuk_Steinberg.2009,
Lallement_Petrich.1969,Rhodes_Zalcstein.1991}
and references therein.

\subsubsection{Simple modules and radical}
For each $w\in W$ define $\Sone_w$ (written $\Szero_w$ for $\Mzero$) to
be the one-dimensional vector space with basis $\{\epsilon_w\}$
together with the right operation of any $f\in \Mone$ given by
\begin{equation*}\label{eq.szero}
  \epsilon_w.f :=
  \begin{cases}
    \epsilon_w & \text{if $w.f=w$}, \\
    0          & \text{otherwise}.
  \end{cases}
\end{equation*}
The basic features of the representation theory of $\Mone$
can be stated as follows:
\begin{theorem}
  The radical of $\K\Mone$ is the ideal with basis $f^\omega-f$
  for $f \in \Mone$ non-idempotent.  The quotient of $\K\Mone$ by
  its radical is commutative. Therefore, all simple $\K \Mone$-modules
  are one-dimensional. In fact, the family $\{\Sone_w\}_{w\in W}$ forms
  a complete system of representatives of the simple $\K \Mone$-modules.
\end{theorem}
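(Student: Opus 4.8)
The statement to prove is the structural theorem on the representation theory of $\Mone$: the radical of $\K\Mone$ has basis $\{f^\omega-f : f\in\Mone,\ f \text{ non-idempotent}\}$, the quotient by the radical is commutative, hence all simple modules are one-dimensional, and $\{\Sone_w\}_{w\in W}$ is a complete system of representatives. Since the excerpt has already established (just before the statement) that $\Mone$ is $\JJ$-trivial — in fact $\BB$-trivial, being an ordered monoid with $1$ on top — the natural route is simply to invoke the general representation theory of $\JJ$-trivial monoids developed in \cite{Denton_Hivert_Schilling_Thiery.JTrivialMonoids}, which is precisely the companion paper cited for this purpose. So the proof is essentially a translation: verify the hypotheses of the general theorem, then read off the conclusions. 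I would not reprove the general $\JJ$-trivial theory here.

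First I would recall that $\Mone$ is a finite $\JJ$-trivial monoid (established via Proposition~\ref{proposition.ordered} together with the preservation/regressiveness of Bruhat order from Proposition~\ref{proposition.bruhat} and Corollary~\ref{corollary.Bruhat_regressive}, giving $f\le 1$, $fg\le f$, $fg\le g$). For any finite $\JJ$-trivial monoid $M$, the general theory gives: the set of idempotents $E(M)$, equipped with the natural partial order, indexes the simple modules; each simple module is one-dimensional, realized as $S_e$ with $x$ acting by $1$ if $e=x^\omega$ lies appropriately and $0$ otherwise (more precisely via the $\lfix$ statistic); the radical of $\K M$ is spanned by the differences $f^\omega - f$ over non-idempotent $f$; and $\K M/\rad \K M\cong \K^{E(M)}$ is commutative. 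Then the only remaining point specific to $\Mone$ is the identification $E(\Mone)=\{e_w : w\in W\}$, so that the simple modules are indexed by $W$ rather than by an abstract set of idempotents.

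The key step, therefore, is to show that the idempotents of $\Mone$ are exactly $(e_w)_{w\in W}$, and that these are pairwise distinct. Distinctness is immediate since $w_0.e_w = w$ by Proposition~\ref{proposition.uniqueness_fix1}. For the surjectivity: if $e\in\Mone$ is idempotent, then $1.e=1$ and, by Proposition~\ref{proposition.image_set.idempotent}\eqref{item.image_set.idempotent} together with Corollary~\ref{corollary.Bruhat_regressive}, its image set is an interval $[1,w]_L$ with $w = w_0.e$; then by the uniqueness clause of Proposition~\ref{proposition.uniqueness_fix1}\eqref{e.uniqueness}, $e = e_w$. This makes the bijection between $E(\Mone)$ and $W$ explicit. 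I would also note, as the statement before the theorem (Definition~\ref{definition.lfix_rfix}) and Proposition~\ref{proposition.star} already set up the required combinatorial bookkeeping ($e_ue_v=e_u \iff u\le_L v$, $(e_ue_v)^\omega = e_{u\meet_L v}$), that the action on $\Sone_w$ defined by $\epsilon_w.f = \epsilon_w$ if $w.f=w$ and $0$ otherwise is well-defined, associative, and matches the simple module of the general theory attached to the idempotent $e_w$ — the condition $w.f=w$ being equivalent to $e_w f = e_w$, i.e.\ to $w = w_0.f \cdots$; concretely, $w.f = w$ iff $f$ does not move $w$, which by the general construction is exactly when $\epsilon_w$ is fixed.

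The main obstacle is essentially expository rather than mathematical: making sure the indexing conventions of \cite{Denton_Hivert_Schilling_Thiery.JTrivialMonoids} (which index simple modules by idempotents, or equivalently by the values of $\lfix$) are correctly matched with the $W$-indexing here via $w\mapsto e_w$, and checking that the explicit module $\Sone_w$ given by the displayed formula is isomorphic to the abstractly constructed simple. Once the bijection $E(\Mone)\leftrightarrow W$ is in hand and the $\lfix$ statistic is recognized (via Proposition~\ref{proposition.star}) to take all values in $W$, everything else is a direct citation. I would therefore structure the proof as: (1) cite that $\Mone$ is finite $\JJ$-trivial; (2) cite the general radical/commutativity/simple-module results; (3) prove $E(\Mone)=\{e_w\}_{w\in W}$ and hence reindex; (4) verify $\Sone_w$ is the corresponding simple.
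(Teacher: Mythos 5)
Your proposal is correct and follows essentially the same route as the paper: the paper states this theorem as a direct specialization of the general representation theory of finite $\JJ$-trivial monoids from~\cite{Denton_Hivert_Schilling_Thiery.JTrivialMonoids}, using the $\BB$-triviality of $\Mone$ established just beforehand and the identification of the idempotents of $\Mone$ with $(e_w)_{w\in W}$ via Proposition~\ref{proposition.uniqueness_fix1}. Your explicit verification that $E(\Mone)=\{e_w\}_{w\in W}$ and that the displayed module $\Sone_w$ matches the abstract simple attached to $e_w$ is exactly the bookkeeping the paper leaves implicit.
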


\subsubsection{Cartan matrix and projective modules}
The projective modules and Cartan invariants can be described as follows:
\begin{theorem}
  There is an explicit basis $(b_x)_{x\in\Mone}$ of
  $\K\Mone$ such that, for all $w\in W$,
  \begin{itemize}
  \item the family $\{b_x \suchthat \text{$x\in \Mone$ with
      $\lfix(x)=w$} \}$ is a basis for the right indecomposable projective module $\Pone_w$
    associated to $\Sone_w$;
  \item the family $\{b_x \suchthat \rfix(x) = w\}_{x \in \Mone}$ is a basis for the left
    indecomposable projective module associated to $\Sone_w$.
  \end{itemize}
  Moreover, the Cartan invariant of $\K\Mone$ defined by $c_{u,v} :=
  \dim(e_u \K\Mone e_v)$ for $u,v \in W$ is given by $c_{u,v} =
  |C_{u,v}|$, where
  \begin{equation*}
    C_{u,v} := \{f\in \Mone \suchthat
                \lfix(f)=u\text{ and } \rfix(f)=v \}\,.
  \end{equation*}

  In particular, the Cartan matrix of $\K\Mone$ is upper-unitriangular
  with respect to Bruhat order.
\end{theorem}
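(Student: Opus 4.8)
The plan is to obtain the statement by specializing to $\Mone$ the general representation theory of finite $\JJ$-trivial monoids established in~\cite{Denton_Hivert_Schilling_Thiery.JTrivialMonoids}, and then to read off the unitriangularity from the combinatorial facts already proved about $\lfix$ and $\rfix$. The first thing I would do is check that all hypotheses of the general theory are met: $\Mone$ is $\JJ$-trivial (in fact $\BB$-trivial), and by Proposition~\ref{proposition.star} its idempotents are exactly the $e_w$, $w\in W$, with $e_ue_v=e_u\iff u\le_L v$ and $(e_ue_v)^\omega=e_{u\meet_L v}$. Hence the set of idempotents fixing a given $x\in\Mone$ on the left (resp.\ on the right) is closed under $\meet_L$, so it has a least element for left order; this shows that $\lfix$ and $\rfix$ of Definition~\ref{definition.lfix_rfix} are well defined and agree with the abstract $\lfix$, $\rfix$ used in~\cite{Denton_Hivert_Schilling_Thiery.JTrivialMonoids}. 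So all the machinery of that paper applies.

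Next I would invoke the structure theorems of that paper. For a finite $\JJ$-trivial monoid $M$, one constructs by a triangular ``star'' orthogonalization of the monoid basis an explicit basis $(b_x)_{x\in M}$ of $\K M$ whose multiplicative behaviour is governed by $\lfix$ and $\rfix$; the general result then asserts that $\{b_x\mid \lfix(x)=w\}$ is a basis of the right indecomposable projective covering $\Sone_w$, that $\{b_x\mid \rfix(x)=w\}$ is a basis of the corresponding left projective, and that $c_{u,v}=\dim(e_u\K M e_v)=\#\{f\in M\mid \lfix(f)=u,\ \rfix(f)=v\}$. Applying this verbatim with $M=\Mone$ — recalling that $\Sone_w$ is the simple module indexed by $w$ and that $\rfix(x)=w_0.x$ — gives the two basis assertions and the identity $c_{u,v}=|C_{u,v}|$. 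The analogous statements for $\Mzero$ follow either by the same argument or by transport along the bar isomorphism $\Mone\cong\Mzero$ of Section~\ref{subsection.involution}.

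Finally, for the ``in particular'' clause I would argue directly. If $c_{u,v}\neq0$ then $C_{u,v}\neq\emptyset$, so there is $f\in\Mone$ with $\lfix(f)=u$ and $\rfix(f)=v$, whence $u\ge_B v$ by Corollary~\ref{corollary.Mone.lfixrfix}; thus the Cartan matrix is triangular for Bruhat order. On the diagonal, any $f\in C_{u,u}$ has $\lfix(f)=\rfix(f)$, so $f$ is idempotent by the equality case of Corollary~\ref{corollary.Mone.lfixrfix}, and since $1.f=1$ and $w_0.f=\rfix(f)=u$, Proposition~\ref{proposition.uniqueness_fix1}~\eqref{e.uniqueness} forces $f=e_u$. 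Hence $C_{u,u}=\{e_u\}$ and $c_{u,u}=1$, so the matrix is upper-unitriangular with respect to Bruhat order.

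The main obstacle is not any single hard step but the careful matching of conventions: one must make sure that the abstract $\lfix$/$\rfix$, the star-triangulated basis, and the projective and Cartan formulas of~\cite{Denton_Hivert_Schilling_Thiery.JTrivialMonoids} specialize, under the semilattice identification of idempotents with $(W,\meet_L)$ from Proposition~\ref{proposition.star}, to precisely the objects defined here — in particular that the minimum defining $\lfix$ is taken for left order and is compatible with $\JJ$-order on idempotents. Once this dictionary is fixed, the proof reduces to the citation together with the short triangularity argument above.
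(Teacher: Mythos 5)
Your proposal is correct and follows essentially the same route as the paper: the paper's proof simply cites the general results for $\JJ$-trivial monoids in~\cite[Section~3.4]{Denton_Hivert_Schilling_Thiery.JTrivialMonoids} and concludes with Corollary~\ref{corollary.Mone.lfixrfix}. Your additional explicit verification that the diagonal entries equal $1$ (via the equality case of that corollary together with Proposition~\ref{proposition.uniqueness_fix1}~\eqref{e.uniqueness}) is exactly the step the paper leaves implicit, and it is carried out correctly.
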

\begin{proof}
  Apply~\cite[Section~3.4]{Denton_Hivert_Schilling_Thiery.JTrivialMonoids}
  and conclude with Corollary~\ref{corollary.Mone.lfixrfix}.
\end{proof}

\begin{remark}
  \label{remark.characters.M1}
  In terms of characters, the previous theorem can be restated as
  \begin{equation}
    [\Pone_u] = \sum_{f\in \Mone, \lfix(f)=u} [\Sone_{w_0.f}]\, ,
  \end{equation}
  which gives the following character for the right regular representation
  \begin{equation}
    [\K\Mone] = \sum_{f\in \Mone} [\Sone_{w_0.f}]\,.\qedhere
  \end{equation}
\end{remark}

\begin{problem}
  Describe the Cartan matrix and projective modules of $\K\Mone$ more
  explicitly, if at all possible in terms of the combinatorics of the
  Coxeter group $W$.
\end{problem}

\subsubsection{Quiver}
We now turn to a description of the quiver of $\K \Mone$ in terms of the
combinatorics of left and Bruhat order. Recall that $\Mone$ is a
submonoid of the monoid of regressive and order preserving
functions. As such, it is not only $\JJ$-trivial but also ordered with
$1$ on top, that is $\BB$-trivial (see~\cite[Section~2.5 and
Proposition~2.2]{Denton_Hivert_Schilling_Thiery.JTrivialMonoids}).
By~\cite[Theorem 3.35 and Corollary~3.44]{Denton_Hivert_Schilling_Thiery.JTrivialMonoids}
we know that the vertices of the quiver of a $\JJ$-trivial monoid generated by idempotents are
labeled by its idempotents $(e_x)_x$ and there is an edge from vertex $e_x$ to vertex $e_z$, if $q:=e_xe_z$
is not idempotent, has $\lfix(q) = x $ and $\rfix(q) = z$, and
does not admit any factorization $q=uv$ which is non-trivial: $eu\neq e$ and $vf\neq f$.
By~\cite[Proposition 3.31]{Denton_Hivert_Schilling_Thiery.JTrivialMonoids} the condition that
$q$ has a non-trivial factorization is equivalent to $q$ having a compatible factorization $q=uv$,
meaning that $u,v$ are non-idempotents and $\lfix(q)=\lfix(u)$, $\rfix(u)=\lfix(v)$ and $\rfix(v)=\rfix(q)$.

Let $e_x,e_y,e_z \in \Mone$ be idempotents.
Call $e_y$ an \emph{intermediate factor} for $q:=e_x e_z$ if $e_x e_y
e_z=e_xe_z$. Call further $e_y$ a \emph{non-trivial intermediate
  factor} if $e_xe_y\ne e_x$, and $e_ye_z\ne e_z$.

\begin{lemma}
  \label{lemma.M1.quiver}
  The quiver of $\K \Mone$ is the graph with $W$ as vertex set and edges
  $(x,z)$ for all $x\ne z$ such that $q:=e_xe_z$ satisfies $\lfix(q) = x$ and
  $\rfix(q) = z$ and admits no non-trivial intermediate
  factor $e_y$ with $y \in W$. 
\end{lemma}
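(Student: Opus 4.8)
The plan is to deduce Lemma~\ref{lemma.M1.quiver} from the general description of the quiver of a $\JJ$-trivial monoid recalled just above it, namely~\cite[Theorem 3.35 and Corollary~3.44]{Denton_Hivert_Schilling_Thiery.JTrivialMonoids} together with~\cite[Proposition 3.31]{Denton_Hivert_Schilling_Thiery.JTrivialMonoids}. The only gap between the cited statements and the claimed one is that the cited statements phrase the non-existence of a non-trivial factorization of $q=e_xe_z$ in terms of \emph{compatible factorizations} $q=uv$ with $u,v$ non-idempotent, $\lfix(q)=\lfix(u)$, $\rfix(u)=\lfix(v)$, $\rfix(v)=\rfix(q)$; whereas Lemma~\ref{lemma.M1.quiver} phrases it in terms of the existence of a \emph{non-trivial intermediate factor} $e_y$, i.e.\ an idempotent with $e_xe_ye_z=e_xe_z$, $e_xe_y\ne e_x$, $e_ye_z\ne e_z$. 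So the heart of the proof is to show these two conditions are equivalent for $\Mone$, using that $\Mone$ is generated by idempotents (Theorem~\ref{theorem.M1.generating_set}) and that $\BB$-triviality lets us reason with Bruhat/left order (Proposition~\ref{proposition.star}, Corollary~\ref{corollary.Mone.lfixrfix}).

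First I would recall/set up the vertex and edge conditions: since $\Mone$ is $\JJ$-trivial and generated by idempotents, by~\cite[Corollary~3.44]{Denton_Hivert_Schilling_Thiery.JTrivialMonoids} the quiver vertices are indexed by the idempotents $(e_w)_{w\in W}$, hence by $W$ via Corollary~\ref{corollary.transversal} / Proposition~\ref{proposition.uniqueness_fix1}, and there is an arrow $x\to z$ (with $x\ne z$) exactly when $q:=e_xe_z$ is non-idempotent, $\lfix(q)=x$, $\rfix(q)=z$, and $q$ has no compatible (equivalently, no non-trivial) factorization. Then I would prove: \emph{$q=e_xe_z$ admits a non-trivial intermediate idempotent factor $e_y$ if and only if $q$ admits a non-trivial factorization $q=uv$.} The easy direction: given a non-trivial intermediate factor $e_y$, set $u:=e_xe_y$ and $v:=e_ye_z$, so $uv=e_xe_ye_z=q$; one checks $\lfix(u)=x=\lfix(q)$ (since $e_xu=u$ and minimality, using $e_xe_y\ne e_x$ forces $\lfix(u)$ not below... — here I would invoke Lemma~\ref{lemma.lfix} and Proposition~\ref{proposition.star}), $\rfix(v)=z=\rfix(q)$ dually, and $\rfix(u)=\lfix(v)$ needs a short argument: both should equal $y$ provided $e_y$ is chosen suitably; if not exactly $y$ one replaces $y$ by $\rfix(u)$ and shows it still works, or one appeals to the fact that $\lfix(q)=\lfix(u)$, $\rfix(v)=\rfix(q)$, $uv=q$ already witness non-triviality in the sense of~\cite[Proposition 3.31]{Denton_Hivert_Schilling_Thiery.JTrivialMonoids} once $u,v$ are non-idempotent (which follows from $e_xe_y\ne e_x$ and $e_ye_z\ne e_z$ via Proposition~\ref{proposition.star}, since $e_xe_y$ idempotent would force $e_xe_y=e_{x\meet_L y}$ and then $e_xe_ye_z=e_xe_z$ would not be "non-trivial").

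For the reverse direction — the main obstacle — suppose $q=e_xe_z=uv$ is a non-trivial (hence compatible, by~\cite[Proposition 3.31]{Denton_Hivert_Schilling_Thiery.JTrivialMonoids}) factorization with $u,v$ non-idempotent, $\lfix(q)=\lfix(u)=x$, $\rfix(u)=\lfix(v)=:y$, $\rfix(v)=\rfix(q)=z$. I want to produce an idempotent intermediate factor. The natural candidate is $e_y$. Using Propositions~\ref{propositions.M1.idempotent.generated} and~\ref{proposition.star}: $u$ has $\rfix(u)=y$ means $ue_y=u$, so $e_xe_y \le_\JJ ...$; more directly, from $\lfix(v)=y$ we get $e_yv=v$, and from $\rfix(u)=y$ we get $ue_y=u$, hence $q=uv=ue_yv$, so $e_y$ is an intermediate factor in the sense $u e_y v = q$; to land on the exact form $e_xe_ye_z=q$ I would use $\lfix(u)=x\Rightarrow e_xu=u$ and $\rfix(v)=z\Rightarrow ve_z=v$, and then show $e_xe_ye_z=e_x(e_yv)\,?$ — this requires relating $e_y e_z$ and $v$, which is where a genuine argument is needed rather than formal manipulation: I expect to need that $e_ye_z$ and $v$ have the same image, or to use Proposition~\ref{proposition.fibers_image_set}, or simply to run the whole equivalence at the level of the general $\JJ$-trivial theory (the notion "non-trivial intermediate factor" is essentially a repackaging of "compatible factorization"). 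Finally I would check non-triviality of $e_y$: $e_xe_y=e_x$ would give $x\le_L y$ by Proposition~\ref{proposition.star}, forcing $u=e_xu=e_xe_yu=...$ to be related to $e_x$ in a way contradicting $u$ non-idempotent with $\lfix(u)=x$; similarly $e_ye_z\ne e_z$. Assembling the two directions gives the stated reformulation, and combining with the cited general theorem yields Lemma~\ref{lemma.M1.quiver}. I expect the delicate point throughout to be bookkeeping the $\lfix$/$\rfix$ values under these products, for which Proposition~\ref{propositions.M1.idempotent.generated} (canonical chain form), Lemma~\ref{lemma.lfix}, and Remark~\ref{remark.rmul_e} are the right tools.
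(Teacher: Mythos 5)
Your setup is right: the content of the lemma is precisely the equivalence, for $q=e_xe_z$, between admitting a non-trivial intermediate idempotent factor $e_y$ and admitting a non-trivial (equivalently compatible) factorization $q=uv$, and the forward implication is indeed immediate by taking $u:=e_xe_y$ and $v:=e_ye_z$ (the required conditions $e_xu\ne e_x$ and $ve_z\ne e_z$ are literally $e_xe_y\ne e_x$ and $e_ye_z\ne e_z$, so your bookkeeping of $\lfix$ and $\rfix$ there is unnecessary). But the reverse implication, which you correctly identify as the heart of the matter, is left open in your proposal, and the candidate you suggest does not suffice. Taking the single idempotent $e_y$ with $y:=\rfix(u)=\lfix(v)$ only gives $q=ue_yv$, not the required identity $e_xe_ye_z=e_xe_z$; and even after one establishes that this $e_y$ is an intermediate factor, it may well be a \emph{trivial} one (i.e.\ $e_ye_z=e_z$ may hold), in which case it proves nothing.

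The paper closes this gap by working with the whole canonical idempotent chain rather than a single candidate. Write $u=e_xe_{y_1}\cdots e_{y_k}$ and $v=e_{y_k}\cdots e_{y_\ell}e_z$ via Proposition~\ref{propositions.M1.idempotent.generated} (the two chains share $e_{y_k}$ since $\rfix(u)=\lfix(v)$). Then \emph{every} $e_{y_i}$ is automatically an intermediate factor by $\BB$-triviality, via the sandwich $e_xe_z=e_xe_{y_1}\cdots e_{y_\ell}e_z\le_\BB e_xe_{y_i}e_z\le_\BB e_xe_z$. If some $e_{y_i}$ is a non-trivial intermediate factor we are done; otherwise, since $e_xe_{y_i}=e_x$ is impossible (because $x>_B y_i$), triviality forces $e_{y_i}e_z=e_z$ for all $i$, whence $v=e_{y_k}\cdots e_{y_\ell}e_z=e_z$, contradicting $ve_z\ne e_z$. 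This sandwich-plus-contradiction argument over the full chain is the missing idea; the alternatives you float (image sets via Proposition~\ref{proposition.fibers_image_set}, or Remark~\ref{remark.rmul_e}) do not substitute for it.
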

\begin{proof}
  Take $q:=e_x e_z$ admitting a non-trivial intermediate factor $e_y$.
  Then $q$ admits a non-trivial factorization $q=(e_xe_y)(e_ye_z)$ in the sense
  of~\cite[Definition~3.25]{Denton_Hivert_Schilling_Thiery.JTrivialMonoids},
  and is therefore not in the quiver.

  Reciprocally, assume that $q$ admits a compatible factorization,
  that is $q=uv$ with $\lfix(u) = x$, $\rfix(u) =\lfix(v)$, $\rfix(v) = z$.
  By~\cite[Lemma~3.29]{Denton_Hivert_Schilling_Thiery.JTrivialMonoids},
  this factorization is non-trivial: $e_x u\ne e_x$ and $ve_z \ne e_z$.
  Using Proposition~\ref{propositions.M1.idempotent.generated}, write
  $u$ and $v$ as $u=e_x e_{y_1}\cdots e_{y_k}$ and $v= e_{y_k}\cdots
  e_{y_\ell}e_z$, with $x>_B y_1>_B\cdots >_B y_\ell>_B z$.  Then,
  $e_x e_{y_i} e_z=e_xe_z$ for any $i$; indeed, since $\Mone$ is
  $\BB$-trivial,
  \begin{displaymath}
    e_x e_z = e_x e_{y_1}\cdots e_{y_\ell} e_z \leq_{\BB} e_x e_{y_i}
    e_z \leq_\BB e_xe_z\,.
  \end{displaymath}
  If any $e_{y_i}$ is a non-trivial intermediate factor for
  $q$, we are done by setting $y=y_i$. Otherwise, for any $i$
  $e_{y_i}e_z =e_z$ ($e_x e_{y_i}=e_x$ is impossible since $x>_B
  y_i$).  But then, $v=e_{y_k}\cdots e_{y_\ell} e_z = e_z$, a
  contradiction.
\end{proof}

\begin{problem}
  Can Lemma~\ref{lemma.M1.quiver} be generalized to any $\BB$-trivial
  monoid? Its statement has been tested successfully on the $0$-Hecke
  monoid in type $A_1-A_6$, $B_3-B_4$, $D_4-D_5$, $H_3-H_4$, $G_2$,
  $I_{135}$, $F_4$.
\end{problem}

Lemma~\ref{lemma.M1.quiver} admits a combinatorial reformulation in
terms of the combinatorics of $W$. For $x,y,z\in W$ such that $x>_Bz$,
call $y\in W$ an \emph{intermediate factor} for $x,z$ if $[1,y]_L$
intersects all intervals $[c,a]_B$ with $a\in [1,x]_L$ and $c \in
[1,z]_L$ non-trivially. Call further $y$ a \emph{non-trivial
  intermediate factor} if $x>_B y >_B z$ and $y\not>_L z$.
\begin{theorem}
  \label{theorem.M1.quiver}
  The quiver of $\K \Mone$ is the graph with $W$ as vertex set, and edges
  $(x,z)$ for all $x>_B z$ and $x\not>_L z$ admitting no non-trivial
  intermediate factor $y$. 
  Each such edge can be associated with the element $q:=e_x e_z$ of the monoid.

  In particular, the quiver of $\K \Mone$ is acyclic and every cover $x
  \succ_B z$ in Bruhat order which is not a cover in left order
  contributes one edge to the quiver.
\end{theorem}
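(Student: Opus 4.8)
The plan is to derive Theorem~\ref{theorem.M1.quiver} directly from the monoid-theoretic description in Lemma~\ref{lemma.M1.quiver} by translating each of its conditions into the stated combinatorial language on $W$. First I would record the dictionary: by Proposition~\ref{proposition.uniqueness_fix1}, the idempotent $e_x$ has image set $[1,x]_L$ and acts by $u.e_x = \max_{\le_B}([1,u]_B\cap[1,x]_L)$; consequently for $q:=e_xe_z$ and $a\in W$ we have $a.q = \max_{\le_B}\bigl([1,\,a.e_x]_B\cap[1,z]_L\bigr)$ with $a.e_x = \max_{\le_B}([1,a]_B\cap[1,x]_L)$. From this, $\lfix(q)=x$ means (by Proposition~\ref{proposition.star} and Definition~\ref{definition.lfix_rfix}) that $x$ is the $\le_L$-minimal $u$ with $e_ue_xe_z=e_xe_z$; since $e_ue_x=e_x$ iff $x\le_L u$, the condition $\lfix(q)=x$ amounts to: no proper left-order predecessor $u<_L x$ satisfies $e_u e_z = e_x e_z$, which I would show is equivalent to $x\not>_L z$ together with $e_xe_z$ genuinely ``seeing'' $x$ (i.e.\ $q\ne e_z$); similarly $\rfix(q)=z$ reduces, via $\rfix(q)=w_0.q$, to $w_0.q = z$, i.e.\ $\max_{\le_B}([1,\,w_0.e_x]_B\cap[1,z]_L)=\max_{\le_B}([1,x]_B\cap[1,z]_L)=z$, which forces $z\le_B x$, hence $x>_B z$ (strictness since $x\ne z$). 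So the pair $(\lfix(q),\rfix(q))=(x,z)$ with $x\ne z$ is equivalent to ``$x>_B z$ and $x\not>_L z$''.

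Next I would translate the ``no non-trivial intermediate factor'' clause. In Lemma~\ref{lemma.M1.quiver}, $e_y$ is a non-trivial intermediate factor when $e_xe_ye_z=e_xe_z$, $e_xe_y\ne e_x$, $e_ye_z\ne e_z$. Using $e_xe_y=e_x$ iff $x\le_L y$ and $e_ye_z=e_z$ iff $y\le_L z$ (Proposition~\ref{proposition.star}), the non-triviality is $x\not\le_L y$ and $z\not\le_L y$; combined with the fact (from $e_xe_ye_z=e_xe_z$ and $\BB$-triviality, as in the proof of Lemma~\ref{lemma.M1.quiver}) that $z\le_B y\le_B x$, and ruling out equalities, I get $x>_B y>_B z$ and $y\not>_L z$ (the condition $x\le_L y$ being already excluded by $y<_B x$, leaving $x\not\le_L y$ automatic, and $z\not\le_L y$ being exactly $y\not>_L z$). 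It remains to check that the defining condition $e_xe_ye_z=e_xe_z$ is equivalent to the stated combinatorial property of $y$: that $[1,y]_L$ meets every interval $[c,a]_B$ with $a\in[1,x]_L$, $c\in[1,z]_L$ non-trivially. This is the crux of the translation, and I expect it to be the main obstacle. The idea is to use the explicit formula $a.q=\max_{\le_B}\bigl([1,\,a.e_x]_B\cap[1,z]_L\bigr)$ and the analogous one for $a.(e_xe_ye_z)$, together with Corollary~\ref{corollary.e} (each $[1,u]_B\cap[1,w]_L$ is a $\le_L$-lower set with a unique Bruhat-maximal element), to show that inserting $e_y$ changes the composite precisely when there is some $a\le_L x$ for which the Bruhat-maximal element $c$ of $[1,a]_B\cap[1,x]_L$ — which lies in $[1,x]_L$ — when further intersected through $e_y$ then $e_z$ produces a strictly smaller element than $c.e_z$; and $c.e_y$ differs from ``the right value'' exactly when $[1,y]_L$ fails to reach up to the relevant element of $[c',a']_B$ in one of the intervals described. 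Making the quantifiers match ($a$ ranging over $[1,x]_L$, $c$ over $[1,z]_L$) will require care; I would phrase it as: $e_xe_ye_z=e_xe_z$ iff for all $a\le_L x$, $\bigl(\max_{\le_B}([1,a]_B\cap[1,x]_L)\bigr).e_ye_z=\bigl(\max_{\le_B}([1,a]_B\cap[1,x]_L)\bigr).e_z$, then unfold $.e_y$ and $.e_z$ via the max-formula and recognize the failure set.

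Finally, the closing assertions of the theorem are quick corollaries once the graph is identified: acyclicity is inherited from Lemma~\ref{lemma.M1.quiver} / \cite[Corollary~3.44]{Denton_Hivert_Schilling_Thiery.JTrivialMonoids} since the quiver of any $\JJ$-trivial (indeed $\BB$-trivial) monoid is acyclic; alternatively it follows directly because every edge $(x,z)$ has $x>_B z$, so a cycle would give a strict Bruhat cycle. For the last sentence, suppose $x\succ_B z$ is a Bruhat cover that is not a left-order cover — in particular $x\not>_L z$ (indeed $x\not\ge_L z$, else by $x>_B z$ and the cover, $z\le_L x$ would make it..., here one must note that a left cover is a Bruhat cover so if $z<_L x$ were non-trivial it would contradict coverage — more precisely, $z\le_L x$ with $z\ne x$ implies $z<_B x$ and one uses that Bruhat cover plus $z\le_L x$ forces $z\lessdot_L x$, contradiction). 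Then any intermediate factor $y$ with $x>_B y>_B z$ is impossible since $x$ covers $z$ in Bruhat order, so there is vacuously no non-trivial intermediate factor, and $(x,z)$ is an edge, associated to $q=e_xe_z$. I would write this up as: \emph{the translation of Lemma~\ref{lemma.M1.quiver} is routine given the dictionary above, except for the equivalence between $e_xe_ye_z=e_xe_z$ and the interval-meeting condition on $[1,y]_L$, which we establish by unwinding the explicit action formula of Proposition~\ref{proposition.uniqueness_fix1} on a generating chain and invoking Corollary~\ref{corollary.e}.}
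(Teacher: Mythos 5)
Your overall strategy is exactly the paper's: reduce to Lemma~\ref{lemma.M1.quiver} and translate each monoidal condition into the combinatorics of $W$. The translations of $\lfix(q)=x$, $\rfix(q)=z$ and of non-triviality via Proposition~\ref{proposition.star} are essentially right (one point you only wave at deserves to be made explicit: if $\lfix(e_xe_z)\ne x$, say $e_ue_z=e_xe_z$ with $u<_L x$, then $e_u$ is itself a non-trivial intermediate factor with $x>_Bu>_Bz$ and $u\not>_Lz$, so the ``no non-trivial intermediate factor'' hypothesis already excludes this case and the two indexings of edges agree). The genuine gap is that the one step you yourself identify as ``the crux'' --- the equivalence between $e_xe_ye_z=e_xe_z$ and the condition that $[1,y]_L$ meets every interval $[c,a]_B$ with $a\in[1,x]_L$ and $c\in[1,z]_L$ --- is announced but not proven. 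What you offer for it (``unfold $.e_y$ and $.e_z$ via the max-formula and recognize the failure set'', with an undefined interval $[c',a']_B$) is a statement of intent rather than an argument, and as phrased it does not explain why the interval condition, which quantifies over all pairs $(a,c)$, should be equivalent to an identity of functions that one only needs to test at the single value $c=a.e_z$ for each $a$.

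The missing argument is short, and it is where the content of the theorem beyond Lemma~\ref{lemma.M1.quiver} actually lives. Forward direction: given $a\in[1,x]_L$ and $c\in[1,z]_L$ with $c\le_B a$, put $b:=a.e_y\in[1,y]_L$; then $b\le_B a$ by regressivity, and $b\ge_B b.e_z=a.e_ye_z=a.e_z\ge_B c$, the middle equality because $a$ is fixed by $e_x$ and $e_xe_ye_z=e_xe_z$, and the last inequality because $c$ lies in $[1,a]_B\cap[1,z]_L$ while $a.e_z$ is the Bruhat maximum of that set (Proposition~\ref{proposition.uniqueness_fix1}); hence $b\in[c,a]_B\cap[1,y]_L$. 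Converse: given $a\in[1,x]_L$, set $c:=a.e_z$ and choose $b\in[c,a]_B\cap[1,y]_L$; then $a.e_y\ge_B b$ since $b\in[1,a]_B\cap[1,y]_L$, so $a.e_ye_z\ge_B b.e_z\ge_B c=a.e_z$ by preservation of Bruhat order, while $a.e_ye_z\le_B a.e_z$ holds automatically by regressivity and order preservation; equality for all $a\in\im(e_x)$ gives $e_xe_ye_z=e_xe_z$. With this equivalence in hand, your remaining deductions (acyclicity from $x>_Bz$ on every edge, and the vacuous absence of intermediate factors for Bruhat covers that are not left covers) are correct and match the paper.
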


\begin{proof}
  Consider a non-idempotent product $e_x e_z$. Using
  Proposition~\ref{propositions.M1.idempotent.generated}, we may
  assume without loss of generality that $x>_B z$ and $x\not>_L z$,
  and furthermore that $\lfix(e_xe_z) = x$ and $\rfix(e_xe_z)=z$.

  We now show that the combinatorial definition of intermediate factor
  on an element of $y\in W$ is a reformulation of the monoidal one on
  the idempotent $e_y$ of $\Mone$.

  Assume that $e_y$ is an intermediate factor for $e_x e_z$, that is
  $e_x e_y e_z =e_x e_z$. Take $a\in [1,x]_L$ and $c\in [1,z]_L$ with
  $a\geq_B c$, and write $b=a.e_y\in [1,y]_L$. Using
  Proposition~\ref{proposition.uniqueness_fix1}, $a\geq_B b$ and
  $a.e_z \geq_B c$. Furthermore, since $a$ is in the image set of
  $e_x$, one has $b.e_z = a.e_y.e_z = a.e_z\geq_B c$. Therefore,
  $[1,y]_L$ intersects $[c,a]_B$ at least in $b$.
  Hence, $y$ is an intermediate factor for $x,z$.

  For the reciprocal, take any $a\in [1,x]_L$. Since $\Mone$ preserves
  Bruhat order and is regressive, $a.e_y.e_z \leq_B a.e_z$. Set
  $c=a.e_z$, and take $b$ in $[c,a]_B \cap [1,y]_L$. Using
  Proposition~\ref{proposition.uniqueness_fix1},
  \begin{displaymath}
    a.e_y.e_z \geq_B b.e_z \geq_B c=a.e_z\,,
  \end{displaymath}
  and equality holds. Hence, $e_y$ is an intermediate factor for
  $e_x,e_y$: $e_xe_ye_z = e_xe_z$.

  The combinatorial reformulation of non-triviality for intermediate
  factors is then straightforward using
  Proposition~\ref{propositions.M1.idempotent.generated}.
\end{proof}

\begin{problem}
  Exploit the interrelations between left order and Bruhat order to
  find a more satisfactory combinatorial description of the quiver of
  $\K\Mone$.
\end{problem}

\subsubsection{Connection with the representation theory of the $0$-Hecke monoid}

Recall that the $0$-Hecke monoid $H_0(W)$ is a submonoid of $\Mzero(W)$. As a
consequence any $\K \Mzero(W)$-module is a $H_0(W)$-module and one can consider
the decomposition map $G_0(\Mzero(W)) \to G_0(H_0(W))$. It is given by the
following formula:
\begin{proposition}
  \label{proposition.restriction.M.H0}
  For $w\in W$, let $\Szero_w$ be the simple $\K \Mzero(W)$-module defined by
  \begin{equation*}
    \epsilon_w.f :=
    \begin{cases}
      \epsilon_w & \text{if $w.f=w$}, \\
      0          & \text{otherwise}.
    \end{cases}
  \end{equation*}
  Furthermore, for $J\subseteq I$, let $\SHzero_J$ be the simple
  $H_0(W)$-module defined by
  \begin{equation*}
    \mu_J.\pi_i :=
    \begin{cases}
      \mu_I      & \text{if $i \in J$}, \\
      0          & \text{otherwise}.
    \end{cases}
  \end{equation*}
  Then, the restriction of $\Szero_w$ to $H_0(W)$ is isomorphic to
  $\SHzero_{\Des(w)}$. The decomposition map $G_0(\Mzero(W)) \to
  G_0(H_0(W))$ is therefore given by $[\Szero_w] \mapsto [\SHzero_{\Des(w)}]$.
\end{proposition}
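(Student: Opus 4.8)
\textbf{Proof plan for Proposition~\ref{proposition.restriction.M.H0}.}
The plan is to compute directly how the generators $\pi_i$ of $H_0(W)$ act on the one-dimensional space $\Szero_w$, viewed now as an $H_0(W)$-module by restriction, and to match the result against the definition of $\SHzero_J$. Since $H_0(W)=\langle \pi_i\mid i\in I\rangle$ sits inside $\Mzero(W)$, the restricted action is completely determined by the action of each $\pi_i$, so this is really just a matter of chasing the definition of $\Szero_w$ through equation~\eqref{equation.antisorting_action}.

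First I would recall that $\epsilon_w.\pi_i=\epsilon_w$ if $w.\pi_i=w$ and $0$ otherwise, and that by~\eqref{equation.antisorting_action}, $w.\pi_i=w$ precisely when $i\in\Des(w)$ (and $w.\pi_i=ws_i\ne w$ otherwise). Hence $\epsilon_w.\pi_i=\epsilon_w$ if $i\in\Des(w)$ and $\epsilon_w.\pi_i=0$ if $i\notin\Des(w)$. Now set $J:=\Des(w)$. The module $\SHzero_J$ is one-dimensional with basis $\{\mu_J=\mu_I\}$ (these are literally the same vector; the subscript $I$ in the definition is just the fixed name for the basis vector, and for $J=\Des(w)$ we have $\mu_J.\pi_i=\mu_I=\mu_J$ when $i\in J$ and $0$ otherwise). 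Comparing the two actions term by term on generators, the linear map $\epsilon_w\mapsto \mu_{\Des(w)}$ is an isomorphism of $H_0(W)$-modules $\Szero_w\xrightarrow{\sim}\SHzero_{\Des(w)}$; indeed it intertwines the $\pi_i$-actions, and since the $\pi_i$ generate $H_0(W)$ it intertwines the action of all of $H_0(W)$.

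For the statement about the decomposition map: the map $G_0(\Mzero(W))\to G_0(H_0(W))$ sends the class $[\Szero_w]$ of a simple module to the class of its restriction. Since $\Szero_w$ is one-dimensional, its restriction to $H_0(W)$ is already simple (a one-dimensional module is automatically irreducible), so the restriction equals its own composition series and $[\Szero_w]_{H_0(W)}=[\,\text{Res}\,\Szero_w\,]=[\SHzero_{\Des(w)}]$ by the isomorphism just established. By the theorem quoted in Section~\ref{ss.representation_theory}, $\{\Szero_w\}_{w\in W}$ is a complete set of simple $\K\Mzero(W)$-modules (this is the $\Mzero$ analogue of the result for $\Mone$), and $\{\SHzero_J\}_{J\subseteq I}$ is a complete set of simple $H_0(W)$-modules~\cite{Norton.1979}, so the formula $[\Szero_w]\mapsto[\SHzero_{\Des(w)}]$ determines the decomposition map on a basis of $G_0(\Mzero(W))$, as claimed.

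There is essentially no obstacle here: the proof is a one-line unwinding of definitions once one notices that $w.\pi_i=w\iff i\in\Des(w)$. The only point requiring a small amount of care is the bookkeeping of basis-vector names in the definition of $\SHzero_J$ (the vector $\mu_I$ versus the subscript $J$), and the observation that restriction of a one-dimensional module needs no further decomposition; neither of these is genuinely hard.
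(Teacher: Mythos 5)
Your proof is correct and takes essentially the same approach as the paper, whose entire argument is the one-line observation that $w.\pi_i=w$ if and only if $i\in\Des(w)$; your version simply spells out the resulting identification of the two one-dimensional modules and the (immediate) consequence for the decomposition map.
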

\begin{proof}
  By definition of the action, $w.\pi_i = w$ if and only if $i\in\Des(W)$.
\end{proof}

\subsubsection{The tower of $\Mone(\sg[n])$ monoids (type $A$)}

\begin{problem}
  The monoids $\Mone(\sg[n])$, for $n\in \N$, form a tower of monoids with the
  natural embeddings
  $\Mone(\sg[n])\times\Mone(\sg[m])\hookrightarrow\Mone(\sg[m+n])$. Due to the
  involution of Section~\ref{subsection.involution}, one has also embeddings
  $\Mzero(\sg[n])\times\Mzero(\sg[m])\hookrightarrow\Mzero(\sg[m+n])$.  As
  outlined in the introduction, it would hence be interesting to understand
  the induction and restriction functors in this setting, and in particular to
  describe the bialgebra obtained from the associated Grothendieck
  groups. This would give a representation theoretic interpretation of some
  bases of $\fqsym$.
\end{problem}

In this context, Proposition~\ref{proposition.restriction.M.H0} provides an
interpretation of the surjective coalgebra morphism
$\fqsym\twoheadrightarrow\qsym$, through the restriction along the
following commutative diagram of monoid inclusions (see
\cite{Duchamp_Hivert_Thibon.2002} for more details):
\begin{equation*}
  \begin{tikzpicture}[description/.style={fill=white,inner sep=2pt}]
    \matrix (m) [matrix of math nodes, row sep=3em, column sep=2.5em]{
      H_0(\sg[n]) \times H_0(\sg[m]) & \Mzero(\sg[n]) \times \Mzero(\sg[m]) \\
      H_0(\sg[n+m])                  & \Mzero(\sg[n+m]) \, .\\
    };
    \draw[right hook->] (m-1-1) -- (m-1-2);
    \draw[right hook->] (m-1-1) -- (m-2-1);
    \draw[right hook->] (m-1-2) -- (m-2-2);
    \draw[right hook->] (m-2-1) -- (m-2-2);
  \end{tikzpicture}
\end{equation*}

\section{Translation modules and $w$-biHecke algebras}
\label{section.translation_modules}

The main purpose of this section is to pave the ground for the
construction of the simple modules $S_w$ of the biHecke monoid
$\biheckemonoid:=\biheckemonoid(W)$ in
Section~\ref{subsection.M.simple_modules}.

As for any aperiodic monoid, each such simple module is associated
with some regular $\JJ$-class $D$ of the monoid, and can be
constructed as a quotient of the span $\K\RR(f)$ of the $\RR$-class
of any idempotent $f$ in $D$, endowed with its natural right
$\K M$-module structure (see Section~\ref{ss.representation_theory_monoid}).

In Section~\ref{subsection.translation}, we endow the interval
$[1,w]_R$ with a natural structure of a combinatorial
$\K \biheckemonoid$-module $T_w$, called \emph{translation module}, and
show that, for any $f\in \biheckemonoid$, regular or not, the
right $\K M$-module $\K\RR(f)$ is always isomorphic to some $T_w$.

The translation modules will play an ubiquitous role for the
representation theory of $\K\biheckemonoid$ in
Section~\ref{section.M.representation_theory}: indeed $T_w$ can be
obtained by induction from the simple modules $S_w$ of
$\K\biheckemonoid$, and the right regular representation of
$\K\biheckemonoid$ admits a filtration in terms of the $T_w$
which mimics the composition series of the right regular
representation of $\K\Mzero$ in terms of its simple modules
$S_w$. Reciprocally $T_w$, and therefore the right regular
representation of $\K\biheckemonoid$, restricts naturally to $\Mzero$.
Finally, $T_w$ is closely related to the projective module $P_w$ of
$\K\biheckemonoid$ (Corollary~\ref{corollary.translation_module}).

By taking the quotient of $\K\biheckemonoid$ through its
representation on $T_w$, we obtain a $w$-analogue
$\wbiheckealgebra{w}$ of the biHecke algebra $\biheckealgebra$. This
algebra turns out to be interesting in its own right, and we proceed
by generalizing most of the results
of~\cite{Hivert_Thiery.HeckeGroup.2007} on the representation theory
of $\biheckealgebra$.

As a first step, we introduce in
Section~\ref{subsection.translation.P} a collection of submodules
$P_J^{(w)}$ of $T_w$, which are analogues of the projective modules of
$\biheckealgebra$. Unlike for $\biheckealgebra$, not any subset $J$ of
$I$ yields such a submodule, and this is where the combinatorics of
the blocks of $w$ as introduced in Section~\ref{section.blocks_cutting_poset}
enters the game. In a second step, we derive in
Section~\ref{subsection.translation.triangular} a lower bound on the
dimension of $\wbiheckealgebra{w}$; this requires a (fairly involved)
combinatorial construction of a family of functions on $[1,w]_R$ which
is triangular with respect to Bruhat order.
In Section~\ref{subsection.bihecke} we combine these results to derive
the dimension and representation theory of $\wbiheckealgebra{w}$:
projective and simple modules, Cartan matrix, quiver, etc
(see Theorem~\ref{theorem.wBihecke.representations}).

\subsection{Translation modules and $w$-biHecke algebras}
\label{subsection.translation}

The goal of this section is to study the combinatorics of the right class
modules for the biHecke monoid, in particular a combinatorial
model for them. Indeed, we show that the right class modules correspond to uniform translations of
image sets, hence the name ``translation modules''.

Fix $f\in \biheckemonoid$. Recall from
Definition~\ref{definition.right_class_module} that the right class module
associated to $f$ is defined as the quotient
\begin{equation*}
  \KRR(f) := \K f \biheckemonoid / \KRR_<(f)\,.
\end{equation*}
The basis of $\KRR(f)$ is the right class $\RR(f)$ which is described in
Proposition~\ref{proposition.characterization_R_class}.
Recall from there that $f_u$ denote the unique element of $\biheckemonoid(W)$
such that $\fibers(f_u)=\fibers(f)$ and $\one.f_u=u$.
\begin{proposition} \label{proposition.combinatorial_translation}
  Set $w=\type(f)^{-1}w_0$. Then $(f_u)_{u\in [1,w]_R}$ forms a basis
  of $\KRR(f)$ such that:
  \begin{equation}
  \label{equation.action_pi_translation}
  \begin{aligned}
    f_u.\pi_i &=
    \begin{cases}
      f_u       & \text{if $i\in \Des(u)$}\\
      f_{u s_i} & \text{if $i\not\in \Des(u)$ and $us_i\in [1,w]_R$}\\
      0         & \text{otherwise;}
    \end{cases}\\
    f_u.\opi_i &=
    \begin{cases}
      f_{u s_i} & \text{if $i\in \Des(u)$}\\
      f_u       & \text{if $i\not\in \Des(u)$ and $us_i\in [1,w]_R$}\\
      0         & \text{otherwise.}
    \end{cases}
  \end{aligned}
  \end{equation}
  In particular, the action of any $g\in \biheckemonoid$ on a basis
  element $f_u$ of the right class module either annihilates $f_u$ or
  agrees with the usual action on $W$: $f_u.g = f_{u.g}$.
\end{proposition}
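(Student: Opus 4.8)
The plan is to establish the explicit action formulas \eqref{equation.action_pi_translation} first, and then read off the final statement as an immediate consequence. The starting point is Proposition~\ref{proposition.characterization_R_class}, which gives the basis: $\RR(f)=\{f_u\mid u\in[1,\type(f)^{-1}w_0]_R\}=\{f_u\mid u\in[1,w]_R\}$ with $w=\type(f)^{-1}w_0$, where $f_u$ is the unique element of $\biheckemonoid$ with $\fibers(f_u)=\fibers(f)$ and $\one.f_u=u$. Since the right class module is $\KRR(f)=\K f\biheckemonoid/\KRR_<(f)$ and $\KRR_<(f)=\K\{b\in f\biheckemonoid\mid b<_\RR f\}$, the images of the $f_u$ (for $u$ ranging over the stated interval) form a basis, and the module structure is: $f_u.g$ is $f_{u'}$ if $f_u g$ still has the same rank (equivalently same fibers, by Lemma~\ref{lemma.right_module}) as $f$, where $u'=\one.(f_ug)$, and is $0$ in $\KRR(f)$ otherwise (i.e. when $\rank(f_u g)<\rank(f)$, so that $f_ug\in\KRR_<(f)$).

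Next I would compute $f_u.\pi_i$ and $f_u.\opi_i$ case by case. Fix $u\in[1,w]_R$ and $i\in I$. By Proposition~\ref{proposition.fibers_image_set}, an element of $f_u\biheckemonoid$ with the same fibers as $f$ is determined by the image of $\one$; and $\one.(f_u\pi_i)=(\one.f_u).\pi_i=u.\pi_i$ by definition of the right action, which equals $u$ if $i\in\Des(u)$ and $us_i$ otherwise (from \eqref{equation.antisorting_action}). Similarly $\one.(f_u\opi_i)=u.\opi_i$ equals $us_i$ if $i\in\Des(u)$ and $u$ otherwise. The only remaining point is to decide, in each of these subcases, whether $\fibers(f_u g)=\fibers(f)$ (so the result is the corresponding $f_{u'}$) or $\fibers(f_ug)$ is strictly coarser (so the result is $0$ in $\KRR(f)$). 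Here I use Lemma~\ref{lemma.right_module}: $\fibers(f_u\pi_i)=\fibers(f)$ iff $\rank(f_u\pi_i)=\rank(f_u)=\rank(f)=|[1,w]_R|$. When $i\in\Des(u)$, the operator $\pi_i$ fixes $u=\one.f_u$; one checks using Proposition~\ref{proposition.weak_order2}\eqref{item.image_set} and Remark~\ref{remark.interval} that $f_u\pi_i=f_u$ (no contraction), giving $f_u$. When $i\notin\Des(u)$, there are two subcases according to whether $us_i\in[1,w]_R$: if it is, then the target interval $[1,w]_R$ is large enough to accommodate the shifted image set, no fiber is contracted, $\rank$ is preserved, and we get $f_{us_i}$; if $us_i\notin[1,w]_R$, then applying $\pi_i$ must merge the fiber over $u$ with another, strictly lowering the rank, so $f_u\pi_i\in\KRR_<(f)$ and the result is $0$. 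The analysis for $\opi_i$ is entirely parallel (using \eqref{equation.sorting_action}), swapping the roles of the two non-zero cases.

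Having the two action formulas, the final statement follows directly: any $g\in\biheckemonoid$ is a product of generators $\pi_i,\opi_i$, and each generator, by \eqref{equation.action_pi_translation}, sends a basis vector $f_u$ either to $0$ or to $f_{u.\pi_i}$ (resp. $f_{u.\opi_i}$), i.e. to $f_{u.(\text{generator})}$, which is exactly the behaviour of the generator acting on $u\in W$ (when the result lands inside $[1,w]_R$). Composing, one gets by a trivial induction on the length of a word for $g$ that $f_u.g$ is either $0$ or $f_{u.g}$; the $0$ occurs precisely when the $W$-orbit trajectory $u\mapsto u.g$ leaves $[1,w]_R$ at some intermediate step. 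The one subtlety to address carefully is coherence of the trajectory: I must check that if $u.g\in[1,w]_R$ but some intermediate product $u.(s_{i_1}\cdots s_{i_j})$ (as a point of $W$ under the monoid action) leaves $[1,w]_R$, then $f_u.g=0$ anyway — this is consistent, since once a generator sends $f_{u'}$ to $0$ the product stays $0$, and conversely if no intermediate step leaves the interval then no fiber is ever contracted. So the statement ``$f_u.g=0$ or $f_u.g=f_{u.g}$'' is exactly right.

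The main obstacle I anticipate is the bookkeeping in the case $i\notin\Des(u)$, $us_i\in[1,w]_R$: one must verify that applying $\pi_i$ to $f_u$ genuinely preserves all fibers (equivalently the rank), rather than merging two of them while still landing in $[1,w]_R$. This requires invoking the structure of $f_u$ as a uniform translate of the image set $[\one.f,w_0.f]_L$ together with Proposition~\ref{proposition.weak_order2} and the explicit description of $\RR(f)$; intuitively, since $f_u$ and $f_{us_i}$ both exist in $\RR(f)$ with the same fibers, the generator must realize the transition between them without loss. Making this rigorous — rather than hand-waving ``the target is large enough'' — is where I would spend the most care, likely by using Remark~\ref{remark.interval} to transport everything to the normalized interval $[1,\cdot]_R$ and tracking $\one.f_u$ through the elementary step. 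Everything else is routine case analysis driven by \eqref{equation.antisorting_action}, \eqref{equation.sorting_action}, and Lemma~\ref{lemma.right_module}.
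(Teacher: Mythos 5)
Your overall strategy matches the paper's: take the basis from Proposition~\ref{proposition.characterization_R_class}, observe via Lemma~\ref{lemma.right_module} and Proposition~\ref{proposition.fibers_image_set} that $f_u.g$ in the quotient is either the unique $f_{u'}$ with $u'=\one.(f_ug)$ (when the rank of $f_ug$ equals that of $f$) or zero (when the rank drops, so that $f_ug\in\KRR_<(f)$), then do the case analysis and finish by induction over a word for $g$. The point you flag as delicate — rank preservation when $i\notin\Des(u)$ and $us_i\in[1,w]_R$ — is indeed left essentially unargued in the paper as well; if you want to close it, the cleanest route is the equivalence (ii)$\Leftrightarrow$(iv) of Lemma~\ref{lemma.right_module}: with $t=\type(f)$, the rank is preserved iff $\len\bigl((tu).\pi_i\bigr)-\len(us_i)=\len(t)$, i.e.\ iff $i\notin\Des(tu)$, and a short length computation shows this is exactly the condition $us_i\le_R t^{-1}w_0$.

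The one genuine gap is the claim that the annihilation case for $\opi_i$ is ``entirely parallel'' to the one for $\pi_i$. For $\pi_i$ the argument is clean: if the rank were preserved, $f_u\pi_i$ would lie in $\RR(f)$ and hence equal $f_v$ with $v=\one.(f_u\pi_i)=us_i$, contradicting $us_i\notin[1,w]_R$ by Proposition~\ref{proposition.characterization_R_class}. That argument does \emph{not} transfer to $\opi_i$: when $i\notin\Des(u)$ one has $\one.(f_u\opi_i)=u.\opi_i=u$, which \emph{is} in $[1,w]_R$, so nothing about the image of $\one$ forces the rank to drop, and the naive guess would be $f_u.\opi_i=f_u$ rather than $0$. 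This is precisely the point the paper's proof singles out: using $\opi_i=\pi_i\opi_i$ one gets $f_u\opi_i=(f_u\pi_i)\opi_i$, hence $\rank(f_u\opi_i)\le\rank(f_u\pi_i)<\rank(f_u)$, so $f_u\opi_i\in\KRR_<(f)$ and its class is $0$. (Alternatively one can show directly that the rank drops for $\pi_i$ iff $\im(f_u)$ contains a pair $\{y,ys_i\}$, a condition manifestly symmetric between $\pi_i$ and $\opi_i$; but your stated mechanism — the index of the new basis element leaving $[1,w]_R$ — is not that argument.) With this point repaired, the rest of your proof goes through as in the paper.
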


\begin{proof}
  By Definition~\ref{definition.right_class_module} and
  Proposition~\ref{proposition.characterization_R_class}, $(f_u)_{u\in [1,w]_R}$ form a
  basis of $\KRR(f)$.

  The action of $\pi_i$ agrees with right multiplication, except when the index $v$ of the
  new $f_v$ is no longer in $[1,w]_R$, in which case the element is annihilated.
  The action of $\opi_i$ also agrees with right multiplication. However, due to the relations
  $\pi_i \opi_i = \opi_i$ and $\opi_i \pi_i=\pi_i$, we need that $\opi_i$ annihilates
  $f_u$ if $i\not\in \Des(u)$ and $us_i \not \in [1,w]_R$.

  The last statement follows by induction writing $f \in \biheckemonoid$ in terms of the
  generators $\pi_i$ and $\opi_i$ and using~\eqref{equation.action_pi_translation}.
\end{proof}

Proposition~\ref{proposition.combinatorial_translation} gives a combinatorial model for
right class modules. It is clear that two functions with the same type yield
isomorphic right class modules. The converse also holds:

\begin{proposition}
\label{proposition.isom_translation}
  For any $f,f'\in \biheckemonoid$, the right class modules $\KRR(f)$ and $\KRR(f')$ are isomorphic
  if and only if $\type(f) = \type(f')$.
\end{proposition}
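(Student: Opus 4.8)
The implication ``$\type(f)=\type(f')$ implies $\KRR(f)\cong\KRR(f')$'' is immediate from Proposition~\ref{proposition.combinatorial_translation}: if $\type(f)=\type(f')=t$, then both modules have a basis indexed by $[1,w]_R$ with $w=t^{-1}w_0$, and the action of every generator $\pi_i$ and $\opi_i$ on this basis is described by exactly the same formula~\eqref{equation.action_pi_translation}, so the map $f_u\mapsto f'_u$ is a $\K\biheckemonoid$-module isomorphism. The substantive direction is the converse: assuming $\KRR(f)\cong\KRR(f')$, we must recover $\type(f)=\type(f')$. The plan is to extract $\type(f)$ from the module $\KRR(f)$ in a way that is manifestly invariant under isomorphism.

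First I would observe that, as a vector space, $\dim\KRR(f)=|[1,w]_R|=|[1,\type(f)^{-1}w_0]_R|$, which already pins down $\type(f)$ up to finitely many possibilities but not uniquely. To do better, I would look at the action of the idempotents $e_w$ (or equivalently the $\tilde e_w$) of Proposition~\ref{proposition.uniqueness_fix1}, or more simply at the submonoid $H_0(W)=\langle\pi_i\rangle$ and its opposite $\overline H_0(W)=\langle\opi_i\rangle$. Concretely: in the translation module with basis $(f_u)_{u\in[1,w]_R}$, the element $f_w$ (top of the interval) is characterized intrinsically as the unique basis vector that is not annihilated by $\pi_{w_0}$ — indeed $f_u.\pi_{w_0}$ is $f_w$ if $u$ can be sorted up to $w$ inside $[1,w]_R$, i.e.\ always, so $f_u.\pi_{w_0}=f_w$ for all $u$, and $f_w$ spans the image of $\pi_{w_0}$. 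Dually $f_1$ spans the image of $\opi_{w_0}$. Then $\type(f)$ should be readable off as follows: $w=\type(f)^{-1}w_0$, and $w$ itself is recovered as the unique element such that the image of $\pi_{w_0}$ acting on $\KRR(f)$ is one-dimensional, spanned by a vector $v_w$ with $v_w.\opi_{a}=v_w$ exactly when $\ell(wa)<\ell(w)$ appropriately — more robustly, the left stabilizer behaviour of $f_w$ encodes $w$. Since $\type(f)$ is determined by $w$ via $\type(f)=w^{-1}w_0$ (note $w=\type(f)^{-1}w_0$ gives $\type(f)=w_0w^{-1}$; one must be careful which convention, and the statement $w=\type(f)^{-1}w_0$ in Proposition~\ref{proposition.combinatorial_translation} must be used literally), it suffices to recover $w$ from the abstract module.

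The cleanest route, which I would adopt, is: the element $\opi_{w_0}$ acts on $\KRR(f)$ with one-dimensional image $\K f_1$, and for this distinguished vector $f_1$ we have $f_1.\pi_i=f_{s_i}\ne 0$ iff $s_i\in[1,w]_R$, i.e.\ iff $i$ is a right ascent available from $1$ inside $[1,w]_R$; iterating, the set $\{u\in W: f_1.\pi_{u}\ne 0\}$ (where $\pi_u=\pi_{i_1}\cdots\pi_{i_k}$ for a reduced word) is exactly $[1,w]_R$, and moreover $f_1.\pi_u=f_u$ on this set. Thus from the abstract module $\KRR(f)$ together with the distinguished line $\im(\opi_{w_0})$ we reconstruct the poset $[1,w]_R$ as a colored poset, hence $w$ (it is the maximum of this poset), hence $\type(f)=w_0w^{-1}$ (resp.\ the value dictated by the stated convention). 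An isomorphism $\KRR(f)\cong\KRR(f')$ carries $\im(\opi_{w_0})$ to $\im(\opi_{w_0})$ and intertwines the $\pi_u$-actions, hence carries $[1,w]_R$ onto $[1,w']_R$ as colored posets, forcing $w=w'$ and therefore $\type(f)=\type(f')$.

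\textbf{Main obstacle.} The delicate point is making ``reconstruct $[1,w]_R$ as a colored poset from the abstract module'' rigorous: one must verify that the distinguished line $\im(\opi_{w_0})$ is genuinely canonical (one-dimensional and isomorphism-stable), and that the reachability sets $\{f_1.\pi_u\}$ together with the vanishing pattern recover both the elements and the covering relations with their colors $i$, with no collapsing. This amounts to checking that distinct $u\in[1,w]_R$ give linearly independent (indeed basis) vectors $f_u$ — which is Proposition~\ref{proposition.combinatorial_translation} — and that the color $i$ of a cover $u\to us_i$ is detected by which $\pi_i$ realizes the transition, which is again~\eqref{equation.action_pi_translation}. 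Once these bookkeeping facts are assembled, the argument is routine; I do not expect any genuinely hard step, only care with the left/right and $w_0$-conjugation conventions so that the final identity reads $\type(f)=\type(f')$ and not its $w_0$-twist.
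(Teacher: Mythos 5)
Your forward direction is exactly the paper's, and your overall strategy for the converse (extract $\type(f)$ from the abstract module via an isomorphism-invariant) is also the right one in spirit. However, the specific invariant you propose rests on a false claim. You assert that $\opi_{w_0}$ acts on $T_w$ with one-dimensional image $\K f_1$ and that $f_u.\pi_{w_0}=f_w$ for all $u$. Both statements fail whenever $w\ne w_0$: by the very rules of~\eqref{equation.action_pi_translation}, $f_u.\pi_i=0$ and $f_u.\opi_i=0$ as soon as $i\notin\Des(u)$ and $us_i\notin[1,w]_R$, so any reduced word for $w_0$ eventually steps outside the interval and annihilates everything. Concretely, take $W=\sg[3]$ and $w=s_1$, so $T_w$ has basis $\{f_1,f_{s_1}\}$. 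Then $f_1.\opi_2=0$ (since $2\notin\Des(1)$ and $s_2\notin[1,s_1]_R$), hence $\opi_{w_0}=\opi_2\opi_1\opi_2$ kills all of $T_{s_1}$; similarly $f_{s_1}.\pi_2=0$, so $\pi_{w_0}$ also acts as zero. Thus the ``distinguished line'' $\im(\opi_{w_0})$ that your reconstruction of the colored poset $[1,w]_R$ hinges on is generically the zero subspace, and the argument does not go through as written. (Your subsidiary claim that $\{u : f_1.\pi_u\ne 0\}=[1,w]_R$ with $f_1.\pi_u=f_u$ is correct, but you have no isomorphism-stable way to locate $f_1$.)

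The paper's proof repairs exactly this point by replacing $\pi_{w_0}$ with $\pi_w$ for a well-chosen $w$: assuming $\type(f)\ne\type(f')$, set $w=\type(f)^{-1}w_0$ and $w'=\type(f')^{-1}w_0$ with, say, $\len(w)\ge\len(w')$. Then $f_1.\pi_w=f_w\ne 0$ in $T_w$ (every prefix of a reduced word for $w$ stays in $[1,w]_R$), whereas $\pi_w$ annihilates $T_{w'}$: if $f'_u.\pi_w=f'_{u.\pi_w}\ne 0$ then $u.\pi_w\ge_B w$ (Bruhat preservation) and $u.\pi_w\in[1,w']_R$ force $\len(w)\le\len(u.\pi_w)\le\len(w')\le\len(w)$ and hence $w=u.\pi_w=w'$, a contradiction. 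Whether a fixed monoid element acts as zero is preserved by any module isomorphism, so $\KRR(f)\not\cong\KRR(f')$. If you want to salvage your write-up, this single operator $\pi_w$ is the invariant to use; the full reconstruction of the colored poset is not needed.
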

\begin{proof}
  By Proposition~\ref{proposition.combinatorial_translation}, it is clear that if $\type(f)=\type(f')$,
  then $\KRR(f) \cong \KRR(f')$.

  Conversely, suppose $\type(f)\neq \type(f')$. Then we also have $w\neq w'$, where
  $w=\type(f)^{-1} w_0$ and $w'=\type(f')^{-1} w_0$. Without loss of generality, we may assume
  that $\ell(w)\ge \ell(w')$. Using the combinatorial model
  of Proposition~\ref{proposition.combinatorial_translation}, we then have
  \begin{equation*}
  	f_1.\pi_w = f_w \neq 0 \qquad \text{and} \qquad f_1'.\pi_w = 0 \; ,
  \end{equation*}
  so that $\KRR(f)\not \cong \KRR(f')$.
\end{proof}

Note that it is not obvious from the combinatorial action of $\pi_i$ and $\opi_i$ of
Proposition~\ref{proposition.combinatorial_translation} that the result indeed gives a module.
However, since it agrees with the right action on the quotient space as in
Definition~\ref{definition.right_class_module}, this is true.
By Proposition~\ref{proposition.isom_translation}, we may choose a canonical representative
for right class modules.
\begin{definition}
\label{definition.translation_module}
The module $T_w := \KRR(e_{w,w_0})$ for all $w\in W$ is called the
\emph{translation module associated to $w$}. We identify its basis with
$[1,w]_R$ via $u\mapsto f_u$, where $f=e_{w,w_0}$.
\end{definition}
For the remainder of this section for $f\in \biheckemonoid$ and $u\in [1,w]_R$, unless
otherwise specified, $u.f$ means the action of $f$ on $u$ in the translation module $T_w$.

\begin{definition}
  The $w$-biHecke algebra $\wbiheckealgebra{w}$ is the natural
  quotient of $\K\biheckemonoid$ through its representation on
  $T_w$.  In other words, it is the subalgebra of $\End(T_w)$
  generated by the operators $\pi_i$ and $\opi_i$ of
  Proposition~\ref{proposition.combinatorial_translation}.
\end{definition}

\subsection{Left antisymmetric submodules}
\label{subsection.translation.P}

By analogy with the simple reflections in the Hecke group algebra, we
define for each $i\in I$ the operator $s_i := \pi_i + \opi_i -1$.
For $u\in [1,w]_R$, the action on the translation module $T_w$ is given by
\begin{equation}
  u.s_i=
  \begin{cases}
    us_i & \text{ if $us_i\in [1,w]_R$,}\\
    -u   & \text{ otherwise.}
  \end{cases}
\end{equation}
These operators are still involutions, but do not always satisfy the braid relations.

\begin{example}
Take $W$ of type $A_2$ and $w=s_1$. The translation module $T_w$ has two
basis elements $B=(1,s_1)$ and the matrices for $s_1$ and $s_2$ on this basis are given by
\begin{equation*}
	s_1 = \begin{pmatrix} 0&1\\1&0\end{pmatrix} \qquad \text{and} \qquad
	s_2 = \begin{pmatrix} -1&0\\0&-1\end{pmatrix} \; .
\end{equation*}
It is not hard to check that then $s_1s_2s_1 \neq s_2 s_1 s_2$.
\end{example}

Similarly, one can define operators $\ls[s_i]$ acting on the left on the translation module $T_w$:
\begin{equation}
  \ls[s_i].u =
  \begin{cases}
    s_iu & \text{ if $s_i u \in [1,w]_R$,}\\
    -u   & \text{ otherwise.}
  \end{cases}
\end{equation}

\begin{definition}
  For $J\subseteq I$, set $P^{(w)}_J := \{ v\in T_w \suchthat \ls[s_i].v=-v\quad
  \forall i \in J\}$.
\end{definition}
For $w=w_0$, these are the projective modules $P_J$ of the biHecke
algebra~\cite{Hivert_Thiery.HeckeGroup.2007}.

\begin{proposition}
  \label{proposition.cond_Jtiling}
  Take $w\in W$ and $J\subseteq I$. Then, the following are
  equivalent:
  \begin{enumerate}[(i)]
  \item \label{enumerate.tiling} $\lcoset{w}{J}$ is a cutting point of $w$;
  \item \label{enumerate.submodule} $P^{(w)}_J$ is an $\K \biheckemonoid$-submodule of $T_w$.
  \end{enumerate}
  Furthermore, when any, and therefore all, of the above hold,
  $P^{(w)}_J$ is isomorphic to $T_{\lcoset{w}{J}}$, and its basis is
  indexed by $[1, {\lcoset{w}{J}}]_R$, that is, assuming $J\in
  \JJs{w}$, $\{v \in [1,w]_R, J\subset \Jblock{w}(v)\}$.
\end{proposition}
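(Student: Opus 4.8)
The plan is to prove the equivalence of~\eqref{enumerate.tiling} and~\eqref{enumerate.submodule}, and then the "furthermore" part, by exploiting the explicit combinatorial action of $\pi_i$ and $\opi_i$ on $T_w$ given in Proposition~\ref{proposition.combinatorial_translation}, together with the characterization of blocks (Proposition~\ref{proposition.equiv_block}) and the tiling bijection (Proposition~\ref{proposition.tiling}). First I would record a concrete description of $P^{(w)}_J$: an element $v = \sum_{u\in[1,w]_R} c_u\, u$ lies in $P^{(w)}_J$ iff for every $i\in J$ and every $u$, the coefficients satisfy a cancellation relation forced by $\ls_i.v = -v$; concretely, pairing up $u$ and $s_i u$ when both lie in $[1,w]_R$, one gets $c_{s_i u} = -c_u$, and $c_u$ arbitrary (with $\ls_i.u = -u$ automatically) when $s_i u\notin[1,w]_R$. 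From this one sees that $P^{(w)}_J$ has a basis indexed by a distinguished set of "lowest in each left-$W_J$-orbit" representatives, and I would identify this indexing set, in the tiling case, with $[1,\lcoset{w}{J}]_R$ via the decomposition $u = u' v$ with $u'\in W_J$, $v\in[1,\lcoset{w}{J}]_R$ of Proposition~\ref{proposition.tiling}.

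For \eqref{enumerate.tiling}$\Rightarrow$\eqref{enumerate.submodule}, assuming $\lcoset{w}{J}$ is a cutting point (so $J$ may be taken to be a left block, or at least $\lcoset{w}{J}=\lcoset{w}{J'}$ for a left block $J'$), I would show $P^{(w)}_J$ is stable under each $\pi_i$ and $\opi_i$ acting on the right. The key point is that the right action of $\pi_i,\opi_i$ commutes with the left operators $\ls_j$ up to the boundary behavior — this is essentially Proposition~\ref{proposition.weak_order} (the right action preserves left order), reincarnated on $T_w$. Using Proposition~\ref{proposition.tiling}, the set $[1,w]_R$ is the "rectangle" $[1,w_J]_R\times[1,\lcoset{w}{J}]_R$, and right multiplication by $s_i$ only ever modifies the second ($[1,\lcoset{w}{J}]_R$) coordinate or skew-commutes $s_i$ past $\lcoset{w}{J}$ into the first coordinate (Lemma~\ref{lemma.action_on_decomposition} and Lemma~\ref{lemma.skew_commutation}); tracking this on a basis element $u=u'v$ of $P^{(w)}_J$ (i.e.\ with the first coordinate "antisymmetrized away") shows the image is again in $P^{(w)}_J$, and in fact the induced action on the $v$-coordinate is exactly the translation-module action on $T_{\lcoset{w}{J}}$. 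This simultaneously proves the module isomorphism $P^{(w)}_J\cong T_{\lcoset{w}{J}}$ and the stated basis.

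For the converse \eqref{enumerate.submodule}$\Rightarrow$\eqref{enumerate.tiling}, I would argue by contraposition: if $\lcoset{w}{J}$ is \emph{not} a cutting point of $w$, then by Proposition~\ref{proposition.tiling} the canonical map $[1,w_J]_R\times[1,\lcoset{w}{J}]_R\to[1,w]_R$ fails to restrict to a bijection onto $[1,w]_R$, so there is some $i\in J$ and some $u\in[1,w]_R$ witnessing a "defect" — concretely a $u$ for which the left-$s_i$-orbit structure inside $[1,w]_R$ is incompatible with the required sign pattern, so that applying a suitable $\pi_i$ or $\opi_i$ to an antisymmetric vector produces a vector violating the defining relation $\ls_j.v=-v$ for some $j\in J$. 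Making this witness explicit — i.e.\ pinpointing which element $u$ and which generator break stability — is the step I expect to be the main obstacle, since it requires carefully teasing apart exactly how non-cutting-points fail the tiling. I would look for the cleanest formulation using Proposition~\ref{proposition.reduced_block} (reduced block $\Leftrightarrow$ $\rcoset{w}{K}\le_L w$) and the short-nondescent description of $U_u$ from Proposition~\ref{proposition.cutting.U}, possibly reducing first to the case where $J$ is reduced. Finally, the last sentence of the proposition (the basis is $\{v\in[1,w]_R : J\subseteq\Jblock{w}(v)\}$ when $J\in\JJs{w}$) follows by combining the basis description obtained above with Lemma~\ref{lemma.condition_interval}\eqref{item.rec} and Definition~\ref{definition.max_block}: $u\in[1,\lcoset{w}{J}]_R$ iff $\Rec(u)\cap J=\emptyset$ iff $J\subseteq\Jblock{w}(u)$.
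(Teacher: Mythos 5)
Your treatment of \eqref{enumerate.tiling}$\Rightarrow$\eqref{enumerate.submodule} and of the basis/isomorphism statement follows essentially the same route as the paper: antisymmetrize over the $W_J$-factor of the tiling $[1,w_J]_R\times[1,\lcoset{w}{J}]_R\cong[1,w]_R$ and check that the right action of $\pi_i,\opi_i$ permutes (or kills) the resulting vectors exactly as in $T_{\lcoset{w}{J}}$; the identification of the basis with $\{v\in[1,w]_R : J\subseteq\Jblock{w}(v)\}$ via Lemma~\ref{lemma.condition_interval} is also correct. The genuine gap is exactly where you flag it: the converse \eqref{enumerate.submodule}$\Rightarrow$\eqref{enumerate.tiling}. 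You propose to find ``some $i\in J$ and some $u$'' such that a single generator $\pi_i$ or $\opi_i$ breaks stability, and you do not produce it. The paper's witness is not a single generator but the composite element $\pi_{\lcoset{w}{J}}\,\opi_{\lcoset{w}{J}^{-1}}$ applied to the alternating sum
\begin{equation*}
  v^{w}_J \;=\; \sum_{u\in [1,w_J]_R} (-1)^{\len(u)-\len(w_J)}\, u\,,
\end{equation*}
which is, up to scalar, the \emph{unique} vector of $P^{(w)}_J$ supported in $[1,w_J]_R$. Applying $\pi_{\lcoset{w}{J}}$ and then $\opi_{\lcoset{w}{J}^{-1}}$ returns the same alternating sum restricted to those $u$ with $u\lcoset{w}{J}\in[1,w]_R$. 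If $\lcoset{w}{J}$ is not a cutting point, then $\lcoset{w}{J}\not\le_R w$ (Proposition~\ref{proposition.reduced_block}), so the term $u=1$ drops out while the term $u=w_J$ survives (since $w_J\lcoset{w}{J}=w$); the result is a nonzero vector with support strictly contained in $[1,w_J]_R$, hence not proportional to $v^w_J$ and not in $P^{(w)}_J$. Without this (or an equivalent) explicit construction your contrapositive argument does not close: the mere failure of the tiling bijection does not by itself identify a generator whose action leaves $P^{(w)}_J$, and hunting for a one-letter witness is strictly harder than using the composite element above.
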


\begin{proof}
  \eqref{enumerate.submodule} $\Rightarrow$ \eqref{enumerate.tiling}:
  Set
  \begin{equation*}
      v^{w}_J := \sum_{u\in [1,w_J]_R} (-1)^{\len(u)-\len(w_J)} u.
  \end{equation*}
  Up to a scalar factor, this is the unique vector in $P^{(w)}_J$ with
  support contained in $[1,w_J]_R$. Then,
  \begin{equation*}
  \begin{split}
    v^{w}_J.\pi_{\lcoset{w}{J}} &= \sum_{\substack{u\in [1,w_J]_R\\ \text{s.t. } u\lcoset{w}{J} \in [1,w]_R}}
    (-1)^{\len(uv)-\len(w_Jv)}u\lcoset{w}{J},\\
    v^{w}_J.\pi_v\opi_{v^{-1}} &= \sum_{\substack{u\in [1,w_J]_R\\ \text{s.t. } u\lcoset{w}{J} \in [1,w]_R}}
    (-1)^{\len(u)-\len(w_J)}u.
  \end{split}
  \end{equation*}
  Therefore, if $\lcoset{w}{J} \not\leq_R w$, then
  $v^{w}_J.\pi_{\lcoset{w}{J}}\opi_{\lcoset{w}{J}^{-1}}$ is a nonzero
  vector with support strictly included in $[1,w_J]_R$ and therefore
  not in $P_J^{(w)}$. By Proposition~\ref{proposition.reduced_block}
  this proves \eqref{enumerate.submodule} $\Rightarrow$
  \eqref{enumerate.tiling}.

  \eqref{enumerate.tiling} $\Rightarrow$ \eqref{enumerate.submodule}:
  If \eqref{enumerate.tiling} holds, then the action of $\pi_i$
  (resp. $\opi_i$) on $v^{w}_J.\pi_{v}$ either leaves it unmodified,
  kills it (if $vs_i=s_jv$ for some $j$) or maps it to
  $v^{w}_J.\pi_{vs_i}$. The vectors $(v^{w}_J.\pi_{v})_{v\in
    [1,\lcoset{w}{J}]_R}$ form a basis of $P^{(w)}_J$ which is stable
  by $\biheckemonoid$.

  The last statement follows straightforwardly.
\end{proof}

It is clear from the definition that for $J_1, J_2 \subseteq I$,
$P^{(w)}_{J_1\cup J_2} = P^{(w)}_{J_1} \cap P^{(w)}_{J_2}$.  Since the
set $\RJs{w}$ of left blocks of $w$ is stable under union, the set of
$\K \biheckemonoid$-modules $(P^{(w)}_J)_{J\in \RJs{w}}$ is stable under intersection.
On the other hand, unless $J_1$ and $J_2$ are comparable,
$P^{(w)}_{J_1\cup J_2}$ is a strict subspace of $P^{(w)}_{J_1} +
P^{(w)}_{J_2}$. This motivates the following definition.

\begin{definition}
  \label{definition.Sw}
  For $J\in \JJs{w}$, we define the module
  \begin{equation}
    S^{(w)}_J:=P^{(w)}_J / \sum_{J'\supsetneq J, J' \in \RJs{w}} P^{(w)}_{J'},
  \end{equation}
\end{definition}
\begin{remark}
  \label{remark.Sw}
  By the last statement of Proposition~\ref{proposition.cond_Jtiling},
  and the triangularity of the natural basis of the modules
  $P^{(w)}_{J'}$, the basis of $S^{(w)}_J$ is given by
  \begin{equation}
    [1,\lcoset{w}{J}]_R \backslash \bigcup_{v \cut \lcoset{w}{J}} [1,v]_R =
    \{v \in [1,w]_R, J\subset \Jblock{w}(v)\}
    \,.
  \end{equation}
\end{remark}

\subsection{A (maximal) Bruhat-triangular family of the $w$-biHecke algebra}
\label{subsection.translation.triangular}

Consider the submonoid $F$ in $\wbiheckealgebra{w}$ generated by the
operators $\pi_i$, $\opi_i$, and $s_i$, for $i\in I$. For $f\in F$ and
$u\in[1,w]_R$, we have $u.f = ± v$ for some $v\in[1,w]_R$. For our
purposes, the signs can be ignored and $f$ be considered as a function
from $[1,w]_R$ to $[1,w]_R$.

\begin{definition}
  For $u,v \in [1,w]_R$, a function $f\in F$ is called
  \emph{$(u,v)$-triangular} (for Bruhat order) if $v$ is the unique
  minimal element of $\im(f)$ and $u$ is the unique maximal element of
  $f^{-1}(v)$ (all minimal and maximal elements in this context are
  with respect to Bruhat order).
\end{definition}

Recall the notion of maximal reduced right block $\Kblock{w}(u)$ of
Definition~\ref{definition.max_block}.

\begin{proposition}
  \label{proposition.triangular}
  Take $u,v \in [1,w]_R$ such $\Kblock{w}(u) \subseteq
  \Kblock{w}(v)$. Then, there exists a $(u,v)$-triangular function
  $f_{u,v}$ in $F$.
\end{proposition}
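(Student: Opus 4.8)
The goal is to build, for any pair $u,v\in[1,w]_R$ with $\Kblock{w}(u)\subseteq\Kblock{w}(v)$, an element $f_{u,v}$ of the monoid $F$ generated by the $\pi_i,\opi_i,s_i$ acting on $T_w$ which is $(u,v)$-triangular for Bruhat order, i.e. sends $u$ to $v$, has $v$ as unique Bruhat-minimal element of its image, and has $u$ as the unique Bruhat-maximal element of the fiber $f_{u,v}^{-1}(v)$. The natural strategy is to factor the construction through two extreme cases and then compose: first a function that contracts everything down toward the bottom of the relevant cutting interval while keeping $u$ as large as possible, and then a function that translates that contracted picture over to $v$. Concretely, I would first reduce to the case $v = \lcoset{w}{J}$ where $J = \Kblock{w}(v)$ (a cutting point of $w$), handle the map $u \mapsto \lcoset{w}{J}$, and then separately handle moving within the fiber structure using the hypothesis $\Kblock{w}(u)\subseteq\Kblock{w}(v)$.

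First I would recall the tools already available: the idempotents $e_{a,b}=\opi_{a^{-1}}e_{ba^{-1}}\pi_a$ with image set $[a,b]_L$ (Proposition~\ref{proposition.uniqueness_fix1}), the explicit formula $u.e_w=\max_{\le_B}([1,u]_B\cap[1,w]_L)$, and the translation-module action of Proposition~\ref{proposition.combinatorial_translation}, under which any $g\in\biheckemonoid$ acts on $T_w$ by $f_u.g = f_{u.g}$ or annihilates $f_u$. The key structural input is Lemma~\ref{lemma.condition_interval}: $u\in[1,\lcoset{w}{J}]_R$ iff $\Rec(u)\cap J=\emptyset$, together with Definition~\ref{definition.max_block} which says $\Jblock{w}(u)$ is the index of the \emph{lowest} cutting point $\lcoset{w}{J}$ whose interval $[1,\lcoset{w}{J}]_R$ contains $u$. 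So the plan is: (1) show there is a product of $\pi_i$'s and $\opi_i$'s (a "descent-killing then re-sorting" composite, analogous to the proof of Theorem~\ref{theorem.M1.generating_set}) that, acting on $[1,w]_R$, realizes the retraction onto the sub-translation-module $P^{(w)}_{\Kblock{w}(u)}\cong T_{\lcoset{w}{J_u}}$ in a way that fixes $u$ and sends the minimal element to itself; (2) within that retracted module, use $s_i$-operators to move $u$ to $v$ along a chain of cutting points, which is possible precisely because $\Kblock{w}(u)\subseteq\Kblock{w}(v)$ means $v$ lies in the cutting interval $[1,\lcoset{w}{J_u}]_\cuteq$ above... wait, the inclusion goes the other way on blocks, so $\lcoset{w}{J_v}\cuteq\lcoset{w}{J_u}$ by Corollary~\ref{corollary.interval}; hence $v\le_R \lcoset{w}{J_u}$ and one can push $u$ up-or-down to $v$ inside that interval. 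Then (3) compose the two and check the triangularity bookkeeping: that the image has $v$ as unique Bruhat minimum (use Proposition~\ref{proposition.image_set.idempotent} / Corollary giving image of a regular element is a left-order interval, plus Bruhat-preservation of Proposition~\ref{proposition.bruhat}), and that $u$ is the unique Bruhat-maximal preimage of $v$ (use Corollary~\ref{corollary.e}: $[1,u]_B\cap[1,w]_L$ has a unique Bruhat-maximal element).

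The main obstacle I anticipate is step (3), specifically pinning down that $u$ is the \emph{unique} Bruhat-maximal element of the fiber over $v$ and that $v$ is the \emph{unique} Bruhat-minimal element of the image — uniqueness, not just existence. Existence of extremal elements follows softly from the general image/fiber results (Proposition~\ref{proposition.weak_order2}, Proposition~\ref{proposition.image_set.idempotent}), but uniqueness for Bruhat order is delicate, since Bruhat-order fibers need not be intervals (cf. the Remark after Proposition~\ref{proposition.bruhat}). I expect this will require choosing the composite $f_{u,v}$ very carefully — most likely of the explicit shape $e_{a,b}$-type idempotents sandwiched with $\pi$'s and $\opi$'s so that the fiber over $v$ is exactly $[v, u']_? \cap [1,w]_L$-like and then invoking Corollary~\ref{corollary.e} to get the single Bruhat-maximal element, adjusting $a,b$ so that this element is $u$ — and similarly arranging the image to be a single left-order interval $[v,\,\cdot\,]_L$ whose Bruhat-minimum is forced to be its left-order minimum $v$ by regressiveness. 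A secondary subtlety is handling the sign issue: $f_{u,v}$ as an element of $F$ is only a signed function, but as the proposition and its intended use (building a triangular family spanning $\wbiheckealgebra{w}$) only care about supports, I would state at the outset that signs are irrelevant here, exactly as the paragraph introducing $F$ already does. I would organize the write-up as: a lemma producing the "retract to $P^{(w)}_{J}$" map, a lemma producing the "slide $u\to v$ inside a cutting interval" map, and then the composition plus the two uniqueness verifications.
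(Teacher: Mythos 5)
Your plan correctly identifies the shape of the target (factor through a cutting point, then verify Bruhat-uniqueness of the extremal elements of image and fiber), and your ``contract at $u$, then translate to $v$'' composite is essentially the paper's easy half: $f_{u,u}:=e_{u,w_0}=\opi_{u^{-1}}\pi_u$ is $(u,u)$-triangular with image $[u,w_0]_L\cap[1,w]_R$, and post-composing with $\opi_{u^{-1}v}$ translates that image isomorphically (Remark~\ref{remark.interval}), giving a $(u,v)$-triangular map. But this only works when $u>_R v$. The hypothesis $\Kblock{w}(u)\subseteq\Kblock{w}(v)$ does not put $u$ and $v$ in right order; it only guarantees a path in the digraph $G^{(w)}$, which may require going \emph{up} in right order from $u$. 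That ascent is the technical heart of the paper's argument (Lemma~\ref{lemma.triangular.up}): either $u$ is already a cutting point of $w$, or one picks a minimal-length sequence $i_1,\dots,i_k$ of nondescents escaping $U_u\cap[1,w]_R$ and shows that $f_{u,u}\,\sigma\,\pi_{i_1}$, with $\sigma=s_{i_2}\cdots s_{i_k}\cdots s_{i_2}$ palindromic, is $(u,us_{i_1})$-triangular --- the image bound coming from Remark~\ref{remark.bound_s} ($x.\opi_S\le_R x.s_{i_1}\cdots s_{i_k}$) and the fiber computation from the minimality of the escape sequence. Iterating reaches a cutting point $w^J$ with $J\subseteq\Kblock{w}(u)\subseteq\Kblock{w}(v)$, hence $w^J\ge_R v$, and only then does your descent step apply. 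Your proposal contains no mechanism for this ascent.

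The substitute you offer, a ``retraction onto $P^{(w)}_{\Kblock{w}(u)}\cong T_{\lcoset{w}{J_u}}$ fixing $u$,'' does not fill the gap. The idempotents actually available in the monoid ($e_{a,b}$ and their translates) have images of the form $[a,b]_L\cap[1,w]_R$, i.e.\ left-order intervals, not right-order lower sets like $[1,\lcoset{w}{J_u}]_R$; a map with the latter image would have $1$ as the Bruhat-minimum of its image, destroying triangularity toward $v$; and even granting such a retraction, your step (2) --- ``move $u$ to $v$ with $s_i$-operators inside the interval'' --- is precisely the original problem restated, since the $s_i$ are signed bijections on $[1,w]_R$ away from the boundary and do not by themselves produce the required collapse of fibers onto a unique Bruhat-maximal preimage. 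So the uniqueness issues you flag in step (3) are real, but they are resolved in the paper not by bookkeeping at the end but by the specific form of each factor in the ascent; without that construction the proof does not go through.
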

For example, for $w=4312$ in $\sg[4]$, the condition on $u$ and $v$ is
equivalent to the existence of a path from $u$ to $v$ in the digraph
$G^{(4312)}$ (see Figure~\ref{figure.4312} and Section~\ref{subsection.bihecke}).

The proof of Proposition~\ref{proposition.triangular} relies on several remarks and
lemmas that are given in the sequel of this section.
The construction of $f_{u,v}$ is explicit, and the triangularity
derives from $f_{u,v}$ being either in $\biheckemonoid$, or close enough to be
bounded below by an element of $\biheckemonoid$.  It follows from the upcoming
Theorem~\ref{theorem.wBihecke.representations} that the condition on
$u$ and $v$ is not only sufficient but also necessary.

\begin{remark}
  \label{remark.triangular.composition}
  If $f$ is $(u,v)$-triangular and $g$ is $(v,v')$-triangular, then
  $fg$ is $(u,v')$-triangular.
\end{remark}

\begin{remark}
  \label{remark.bound_s}
  Take $x\in [1,w]_R$ and let $i\in I$. Then, $x.\opi_i \leq_R x.s_i$.

  By repeated application, for $S\subseteq I$, and $i_1,\dots,i_k\in S$,
  $x.\opi_S \leq_R x.s_{i_1}\cdots s_{i_k}$, where recall that $\opi_S$ is
  the longest element in the generators $\{\opi_j \mid j\in S\}$.
\end{remark}

\begin{lemma}
  \label{lemma.triangular.uu}
  Take $u\in [1,w]_R$, and define $f_{u,u} :=
  e_{u,w_0}=\opi_{u^{-1}}\pi_{u}$. Then:
  \begin{enumerate}[(i)]
  \item \label{i.triangular}
  	$f_{u,u}$ is $(u,u)$-triangular.
  \item \label{i.bruhat}
  	For $v\in[1,w]_R$, either $v.f_{u,u}=0$ or $v.f_{u,u}\geq_B v$.
  \item \label{i.image}
  	$\im(f_{u,u}) = [u,w_0]_L \cap [1,w]_R$.
  \end{enumerate}
\end{lemma}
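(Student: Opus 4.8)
Recall $f_{u,u} = e_{u,w_0} = \opi_{u^{-1}}\pi_u$, which by Proposition~\ref{proposition.uniqueness_fix1}~\eqref{e.shift} (with $a=u$, $b=w_0$) is an idempotent of $\biheckemonoid(W)$ with image set $[u,w_0]_L$. Since $f_{u,u}\in \biheckemonoid(W)$, its action on the translation module $T_w$ agrees with the usual action on $W$ whenever the latter stays inside $[1,w]_R$, and annihilates a basis vector otherwise (Proposition~\ref{proposition.combinatorial_translation}). So all three claims should reduce to statements about the action of the idempotent $e_{u,w_0}$ of $\biheckemonoid(W)$ on $W$, intersected with the constraint of staying in $[1,w]_R$.

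\smallskip

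First I would prove~\eqref{i.image}. Since $e_{u,w_0}$ is an idempotent of $\biheckemonoid(W)$ with image set $[u,w_0]_L$ when acting on $W$, in the translation module $T_w$ a basis element $f_v$ ($v\in[1,w]_R$) is sent either to $0$ or to $f_{v.e_{u,w_0}}$. Using Proposition~\ref{proposition.combinatorial_translation} and the fact (Proposition~\ref{proposition.weak_order2}~\eqref{item.image_set}, applied along the factorization $\opi_{u^{-1}}$ then $\pi_u$, cf.\ Remark~\ref{remark.interval}) that the operators $\opi_{u^{-1}}$ and $\pi_u$ act as bijections on appropriate intervals, I get that $v.f_{u,u}\neq 0$ exactly when the whole computation stays inside $[1,w]_R$; and an element $x\in[u,w_0]_L$ is a fixed point of $e_{u,w_0}$, hence is in the image as soon as $x\in[1,w]_R$. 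Conversely any nonzero image lies in $[u,w_0]_L\cap[1,w]_R$ because the image set of $e_{u,w_0}$ on $W$ is $[u,w_0]_L$. This gives $\im(f_{u,u})=[u,w_0]_L\cap[1,w]_R$.

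\smallskip

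For~\eqref{i.bruhat}: $e_{u,w_0}$ fixes $w_0$ (its image set $[u,w_0]_L$ contains $w_0$, and it is idempotent), so by Corollary~\ref{corollary.Bruhat_regressive}~\eqref{item.extensive} it is extensive for Bruhat order on $W$: $x.e_{u,w_0}\geq_B x$ for all $x\in W$. Transporting through the identification of the $T_w$-action with the $W$-action (Proposition~\ref{proposition.combinatorial_translation}), if $v.f_{u,u}\neq 0$ then $v.f_{u,u}=v.e_{u,w_0}\geq_B v$. For~\eqref{i.triangular}: by~\eqref{i.image}, $u$ itself lies in $\im(f_{u,u})$ (it is in $[u,w_0]_L$ and in $[1,w]_R$), and by~\eqref{i.bruhat} every nonzero image is $\geq_B$ its preimage, while $u$ is the minimum of $[u,w_0]_L$ in left order hence in Bruhat order among the image; combining, $u$ is the unique $\leq_B$-minimal element of $\im(f_{u,u})$. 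For the fibre: $f_{u,u}^{-1}(u)$ consists of those $v\in[1,w]_R$ with $v.e_{u,w_0}=u$; since $e_{u,w_0}$ is order- and Bruhat-preserving and regressive relative to $w_0$-fixing data, applying Proposition~\ref{proposition.uniqueness.idempotents} (the idempotent $e_{u,w_0}$ is determined by its image set $[u,w_0]_L$, and $f^{-1}(v)=\upperset{v}\setminus\bigcup_{v'\succ v}\upperset{v'}$ in the appropriate order) together with Corollary~\ref{corollary.Bruhat_regressive} shows that $u$ is the unique $\leq_B$-maximal element of this fibre, because any $v$ with $v.e_{u,w_0}=u$ satisfies $v\leq_B$ something mapping to $u$, and $u$ maps to $u$. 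Hence $f_{u,u}$ is $(u,u)$-triangular.

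\smallskip

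\textbf{Main obstacle.} The delicate point is that the triangularity statement~\eqref{i.triangular}, in particular the uniqueness of the $\leq_B$-maximal element of the fibre $f_{u,u}^{-1}(u)$, is a statement \emph{inside} $[1,w]_R$ with its induced Bruhat order, whereas the clean facts about $e_{u,w_0}$ (idempotency, image $[u,w_0]_L$, extensivity) live on all of $W$. I would need to check carefully that intersecting with $[1,w]_R$ — which amounts to killing some basis vectors in $T_w$ rather than changing where the survivors go — does not create a second maximal preimage of $u$; this follows because killing vectors only shrinks fibres and the surviving part of $f_{u,u}^{-1}(u)$ is exactly $(f_{u,u}^{-1}(u)\text{ in }W)\cap[1,w]_R$, which still has $u$ as its unique $\leq_B$-maximum since $u\in[1,w]_R$. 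Making this last intersection argument rigorous, using Proposition~\ref{proposition.combinatorial_translation} to control exactly which $f_v$ are annihilated, is where the real work lies; everything else is a direct transcription of Proposition~\ref{proposition.uniqueness_fix1}, Corollary~\ref{corollary.Bruhat_regressive}, and Proposition~\ref{proposition.uniqueness.idempotents}.
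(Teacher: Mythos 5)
Your proposal is correct and follows essentially the same route as the paper: identify $f_{u,u}$ with the idempotent $\tilde e_u=e_{u,w_0}$ of Proposition~\ref{proposition.uniqueness_fix1}, establish \eqref{i.bruhat} and \eqref{i.image} from its action on $W$ transported through Proposition~\ref{proposition.combinatorial_translation}, and deduce \eqref{i.triangular}. Two spots should be tightened. First, for the fibre in \eqref{i.triangular} you do not need Proposition~\ref{proposition.uniqueness.idempotents} (which concerns \emph{regressive} idempotents and would have to be dualized), and the sentence ``any $v$ with $v.e_{u,w_0}=u$ satisfies $v\leq_B$ something mapping to $u$'' is not a valid deduction; the correct one-liner, which is what the paper does, is that \eqref{i.bruhat} gives $v\le_B v.f_{u,u}=u$ directly, so the fibre is contained in $[1,u]_B$ and $u$, being fixed, is its unique Bruhat-maximal element. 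Second, in \eqref{i.image} the claim that a fixed point $x\in[u,w_0]_L\cap[1,w]_R$ of the $W$-action survives in $T_w$ rather than being annihilated at an intermediate step is exactly the point you flag as the main obstacle but do not resolve; it is settled by the explicit computation: writing $x=x'u$, one has $x.\opi_{u^{-1}}\pi_u=x'.\pi_u=x'u=x$, and every intermediate element lies in $[1,x]_R\subseteq[1,w]_R$, so no basis vector is killed. With these two repairs your argument coincides with the paper's proof.
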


\begin{proof}
  First consider the case $w=w_0$. Then, \eqref{i.bruhat} and~\eqref{i.image} hold by
  Lemma~\ref{proposition.uniqueness_fix1}.

  Now take any $w\in W$. By Proposition~\ref{proposition.combinatorial_translation}
  the action of $f\in \biheckemonoid$ on the translation module $T_w$ either agrees with the
  action on $W$ or yields $0$. Hence in particular Proposition~\ref{proposition.bruhat}
  still applies, which yields~\eqref{i.bruhat}. This also implies the inclusion
  $\im(f_{u,u}) \backslash \{0\} \subset [u,w_0]_L \cap [1,w]_R$. The reverse inclusion is
  straightforward: if $u' = xu$, then $u'.f_{u,u}= xu.\opi_{u^{-1}}\pi_{u}=x\pi_{u}=xu=u'$.
  Therefore~\eqref{i.image} holds as well.

  Finally, \eqref{i.image} implies that $u$ is the unique minimal element of
  $\im(f_{u,u})$, and~\eqref{i.bruhat} implies that $u$ is the unique maximal element in
  $f_{u,u}^{-1}(u)$; therefore~\eqref{i.triangular} holds.
\end{proof}

\begin{lemma}
  \label{lemma.triangular.down}
  If $u>_R v$, then $f_{u,v} := f_{u,u}\opi_{u^{-1}v}$ is $(u,v)$-triangular.
\end{lemma}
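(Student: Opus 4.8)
\textbf{Proof proposal for Lemma~\ref{lemma.triangular.down}.}

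The plan is to reduce the claim to Lemma~\ref{lemma.triangular.uu} by tracking what happens when we post-compose $f_{u,u}$ with the sorting operator $\opi_{u^{-1}v}$. First I would recall that by Lemma~\ref{lemma.triangular.uu}, $f_{u,u}$ is $(u,u)$-triangular, its image is $\im(f_{u,u}) = [u,w_0]_L \cap [1,w]_R$, and moreover $x.f_{u,u}$ is either $0$ or $\geq_B x$ for every $x\in[1,w]_R$. Since $u>_R v$, the element $u^{-1}v$ is well-defined with $\ell(u^{-1}v) = \ell(u)-\ell(v)$ in the sense that $u = v\cdot(v^{-1}u)$ is reduced; hence the operator $\opi_{u^{-1}v}$ applied inside $T_w$ agrees (up to sign, which we ignore here as in the setup preceding Definition~\ref{definition.translation_module}) with the action of $\opi_{u^{-1}v}$ on $W$ where defined, and by Remark~\ref{remark.interval} (or directly by the definition of $\opi$ on $W$) it sends $u$ to $v$: indeed $u.\opi_{u^{-1}v} = v$ because $u = v(v^{-1}u)\ge_R v$, so successively cancelling the descents brings $u$ down to $v$.

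Next I would verify the two triangularity conditions for $f_{u,v} = f_{u,u}\,\opi_{u^{-1}v}$. For the image: $\im(f_{u,v}) = \im(f_{u,u}).\opi_{u^{-1}v} = \bigl([u,w_0]_L\cap[1,w]_R\bigr).\opi_{u^{-1}v}$, with the convention that elements sent to $0$ are dropped. By Proposition~\ref{proposition.weak_order2}\eqref{item.image_set} applied to the operator $\opi_{u^{-1}v}$ (viewed inside $\biheckemonoid$ acting on $T_w$, using Proposition~\ref{proposition.combinatorial_translation} so its action agrees with that on $W$ or is $0$), the image of the left-order interval $[u,w_0]_L$ has a unique $\le_L$-minimal element $u.\opi_{u^{-1}v}=v$; intersecting with $[1,w]_R$ and noting $v\in[1,w]_R$ (since $v\le_R u\le_R w$), we get that $v$ is the unique minimal element of $\im(f_{u,v})$ for left order, hence in particular for Bruhat order since $\le_L$ refines $\le_B$ — but more carefully I would argue directly that $v \le_B y$ for all $y\in\im(f_{u,v})$: any such $y$ equals $x.\opi_{u^{-1}v}$ for some $x\in\im(f_{u,u})$ with $x\ge_L u$, and then $y = x.\opi_{u^{-1}v}\ge_B$ ... here I would instead use that $\opi_{u^{-1}v}$ preserves Bruhat order (Proposition~\ref{proposition.bruhat}) together with $u.\opi_{u^{-1}v} = v$ and $x\ge_B u$ to conclude $y = x.\opi_{u^{-1}v}\ge_B u.\opi_{u^{-1}v}=v$.

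For the fiber condition I would show $u$ is the unique $\le_B$-maximal element of $f_{u,v}^{-1}(v)$. First $u.f_{u,v} = (u.f_{u,u}).\opi_{u^{-1}v} = u.\opi_{u^{-1}v} = v$, so $u\in f_{u,v}^{-1}(v)$. Now suppose $x\in[1,w]_R$ with $x.f_{u,v}=v$ and $x\not\le_B u$ is maximal in its fiber; I must derive a contradiction. Set $y = x.f_{u,u}$, so $y.\opi_{u^{-1}v}=v$ and $y\ne 0$, whence $y\ge_B x$ by Lemma~\ref{lemma.triangular.uu}\eqref{i.bruhat} and $y\in[u,w_0]_L\cap[1,w]_R$. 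Since $\opi_{u^{-1}v}$ is regressive-ish on the relevant interval and $y\ge_L u$, I would use Remark~\ref{remark.interval}: $\opi_{u^{-1}v}$ restricted to $[u,w_0]_L$ is injective when composed appropriately — more precisely, $[u, w_0]_L \xrightarrow{\opi_{u^{-1}v}}$ is, on the sub-interval $[u, v\,\text{(something)}]$... The cleanest route: compose further with $\pi_{u^{-1}v}$; by the isomorphism statements in Remark~\ref{remark.interval}, $\opi_{u^{-1}v}$ followed by $\pi_{u^{-1}v}$ acts as $f_{v,v}$-like behaviour on $[u,w_0]_L$, and in particular $y.\opi_{u^{-1}v}=v$ with $y\ge_L u$ forces, via Proposition~\ref{proposition.uniqueness_fix1} and the fact that $\opi_{u^{-1}v}\pi_{u^{-1}v}$ fixes $[u,w_0]_L$ pointwise, that $y\le_L u\cdot(\text{image}) $ hence $y$ lies in a controlled range; combined with $y\ge_B x$ and $x\not\le_B u$ this contradicts $u$ being the unique maximal preimage of $u$ under $f_{u,u}$.

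I expect the genuine obstacle to be exactly this last paragraph: showing that no element strictly Bruhat-above $u$ (or Bruhat-incomparable to $u$) can map to $v$ under $f_{u,v}$. The subtlety is that $\opi_{u^{-1}v}$ is many-to-one, so $f_{u,v}^{-1}(v)$ is genuinely larger than $f_{u,u}^{-1}(u)$; one must use that every new preimage $x$ satisfies $x.f_{u,u}=y$ for some $y\in f_{u,u}^{-1}$-image lying in $[u,w_0]_L$ with $y.\opi_{u^{-1}v}=v$, and that the set of such $y$ is precisely $[u, \text{(the fiber of } v \text{ under }\opi_{u^{-1}v})]$, which by Proposition~\ref{proposition.weak_order2}\eqref{item.image_set} and Lemma~\ref{proposition.uniqueness_fix1}-type fiber descriptions has unique Bruhat-maximum achieved by taking $x=u$ itself (since $f_{u,u}$ is $(u,u)$-triangular, the maximal $x$ with $x.f_{u,u}\in f_{u,u}(\{u\})=\{u\}$ is $u$, and enlarging the target fiber upward in left order does not enlarge it upward in Bruhat order beyond what the $f_{u,u}$-triangularity permits). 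Making this precise will likely require invoking Lemma~\ref{lemma.triangular.uu}\eqref{i.bruhat} once more together with Proposition~\ref{proposition.bruhat} applied to $\opi_{u^{-1}v}$, plus Remark~\ref{remark.triangular.composition} to combine the two triangular pieces conceptually: indeed, a slicker packaging of the whole proof is to observe that $\opi_{u^{-1}v}$, as a function $[u,w_0]_L\cap[1,w]_R \to [v,w_0]_L\cap[1,w]_R$, is itself $(u,v)$-triangular on that sub-domain, and then apply Remark~\ref{remark.triangular.composition} to $f_{u,u}$ and $\opi_{u^{-1}v}$; I would try to make that the spine of the argument, reducing the work to checking the $(u,v)$-triangularity of the single operator $\opi_{u^{-1}v}$ restricted to the image of $f_{u,u}$.
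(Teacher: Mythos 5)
Your treatment of the image condition is essentially right: Bruhat preservation of $\opi_{u^{-1}v}$ (Proposition~\ref{proposition.bruhat}) together with $x\ge_B u$ for all $x\in\im(f_{u,u})$ and $u.\opi_{u^{-1}v}=v$ does give $v\le_B y$ for every $y$ in the image of $f_{u,v}$. The fiber condition is where the proposal breaks, and it breaks at your very first assertion there: you claim that ``$\opi_{u^{-1}v}$ is many-to-one, so $f_{u,v}^{-1}(v)$ is genuinely larger than $f_{u,u}^{-1}(u)$.'' This is false, and the fact that it is false is exactly the content of the paper's three-line proof. While $\opi_{u^{-1}v}$ is many-to-one on all of $[1,w]_R$, its restriction to the left-order interval $[u,w_0]_L$ --- which by Lemma~\ref{lemma.triangular.uu}~\eqref{i.image} contains $\im(f_{u,u})$ --- is injective: by Remark~\ref{remark.interval} it translates $[u,w_0]_L$ isomorphically onto $[v,w_0u^{-1}v]_L$ via $xu\mapsto xv$ (each $xu$ with $x\in[1,w_0u^{-1}]_L$ factors reducedly as $(xv)(v^{-1}u)$, so every letter of $\opi_{u^{-1}v}$ is applied at a descent and nothing is killed in $T_w$). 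Hence the only $y\in\im(f_{u,u})$ with $y.\opi_{u^{-1}v}=v$ is $y=u$, so $f_{u,v}^{-1}(v)=f_{u,u}^{-1}(u)$ exactly, and the unique Bruhat-maximal element $u$ of this fiber is inherited from Lemma~\ref{lemma.triangular.uu}~\eqref{i.triangular} with no further work.

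Your closing suggestion --- check that $\opi_{u^{-1}v}$ restricted to $\im(f_{u,u})$ is itself $(u,v)$-triangular and invoke Remark~\ref{remark.triangular.composition} --- correctly names what has to be proved, but you never prove it; the intervening attempts (composing with $\pi_{u^{-1}v}$, the appeal to Proposition~\ref{proposition.uniqueness_fix1}, the ``controlled range'' for $y$) do not establish the needed injectivity and cannot be assembled into an argument as written. The single missing ingredient is the isomorphism statement of Remark~\ref{remark.interval}; once it is in place, both halves of the triangularity follow at once, and the separate Bruhat-order argument you develop for the image becomes unnecessary (though it is not incorrect).
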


\begin{proof}
  By Lemma~\ref{lemma.triangular.uu}~\eqref{i.image}, the image set of $f_{u,u}$
  is a subset of $[u,w_0]_L$. Therefore, by Remark~\ref{remark.interval}, $\opi_{u^{-1}v}$
  translates it isomorphically to the interval $[v,w_0u^{-1}v]_L$. In particular,
  the fibers are preserved: $f_{u,v}^{-1}(v) = f_{u,u}^{-1}(u)$, and
  the triangularity of $f_{u,v}$ follows.
\end{proof}

\begin{lemma}
  \label{lemma.triangular.up}
  Take $u\in [1,w]_R$. Then, either $u$ is a cutting point of $w$, or
  there exists a $(u,v)$-triangular function $f_{u,v}$ in $F$ with $u<_R
  v\leq_R w$.
\end{lemma}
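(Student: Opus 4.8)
The plan is to argue by contraposition on the dichotomy: suppose $u\in[1,w]_R$ is \emph{not} a cutting point of $w$, and produce a $(u,v)$-triangular function in $F$ with $u<_R v\le_R w$. First I would recall from Definition~\ref{definition.short_nondescents} and Proposition~\ref{proposition.cutting.U} that $u$ is a cutting point of $w$ if and only if $U_u=uW_{K(u)}$, the translate above $u$ of the parabolic subgroup on the short right nondescents of $u$, is \emph{not} contained in $[1,w]_R$ would fail -- more precisely, $u\cuteq w$ exactly when there is a (reduced right) block $K$ with $u=w^K$, equivalently when $uW_K\subseteq[1,w]_R$ (using Proposition~\ref{proposition.reduced_block} and the tiling of Proposition~\ref{proposition.tiling}). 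So the hypothesis that $u$ is not a cutting point means: for \emph{every} right-reduced block candidate, and in particular taking $K=K(u)$ the full set of short right nondescents, the coset $uW_K$ escapes $[1,w]_R$. Hence there is some short right nondescent $i$ of $u$ -- i.e. $s_ju=us_i$ reduced for some $j\in I$ -- such that $us_i\le_R w$ still holds but $u$ is not a cutting point; I need to be a little careful here. The cleaner statement to extract is: since $u$ is not a cutting point of $w$, there exists a short right nondescent $i$ of $u$ with $us_i\in[1,w]_R$ and such that $u\ne w^{K}$ for the relevant $K$. The existence of \emph{some} $i\notin\Des(u)$ with $us_i\in[1,w]_R$ is automatic unless $u=w$ (and $w$ is always a cutting point of itself), so the real content is choosing $i$ to be \emph{short}.

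Here is the mechanism I would use. Because $u$ is not a cutting point, the set $K(u)$ of short right nondescents cannot be a block of $w$ realizing $u$; combined with Proposition~\ref{proposition.cutting.U}, this forces $U_u\not\subseteq[1,w]_R$ to be the wrong phrasing, so instead I would use: there is a nonempty subset of $K(u)$ whose corresponding coset element is still $\le_R w$ but for which the ``skew-commutation'' bijection of Lemma~\ref{lemma.skew_commutation} cannot be completed inside $[1,w]_R$. Concretely, pick $i\in K(u)$ with $v:=us_i\le_R w$. Then $u<_R v\le_R w$. I claim $f_{u,v}:=f_{u,u}\,\pi_i$ (with $f_{u,u}=e_{u,w_0}=\opi_{u^{-1}}\pi_u$ as in Lemma~\ref{lemma.triangular.uu}) does the job, or if not a plain product of $\pi$'s then the signed operator $f_{u,u}s_i$. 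By Lemma~\ref{lemma.triangular.uu}\eqref{i.image}, $\im(f_{u,u})=[u,w_0]_L\cap[1,w]_R$, and composing with $\pi_i$ translates this image forward; since $i$ is a nondescent of $u$, $u.\pi_i=us_i=v$, while for $x\in\im(f_{u,u})$ we have $x\ge_L u$, hence $x$ has the same behaviour under $\pi_i$ as dictated by whether $i\in\Des(x)$. The key point: because $i$ is a \emph{short} nondescent of $u$, the left multiplication picture (Proposition~\ref{proposition.weak_order}, Lemma~\ref{lemma.weak_order}) shows that $\pi_i$ acts injectively on enough of $[u,w_0]_L\cap[1,w]_R$ that $v$ becomes the unique Bruhat-minimal element of $\im(f_{u,v})$, and $u$ the unique Bruhat-maximal element of $f_{u,v}^{-1}(v)$. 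Triangularity then follows as in the proof of Lemma~\ref{lemma.triangular.down}, via Remark~\ref{remark.triangular.composition}: $f_{u,u}$ is $(u,u)$-triangular and right-multiplication by $\pi_i$ (restricted appropriately) is $(u,v)$-triangular, so the composite is $(u,v)$-triangular.

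The main obstacle, and the step I would spend the most care on, is the passage from ``$u$ is not a cutting point of $w$'' to ``there is a \emph{short} right nondescent $i$ of $u$ with $us_i\le_R w$''. Non-cutting-point-ness is a statement about \emph{all} blocks, so I need to convert it into an explicit witness. The right tool is the combination of Proposition~\ref{proposition.cutting.U} (which says $u\cuteq w$ iff there is a block $K$ with $u=w^K$, iff $w\in U_u$) together with Lemma~\ref{lemma.skew_commutation} and Lemma~\ref{lemma.condition_interval}: if $u$ were such that every short right nondescent $i$ had $us_i\not\le_R w$, then the maximal reduced right block of $u$ inside $w$ -- via $\Kblock{w}(u)$ of Definition~\ref{definition.max_block} -- would force $u=\lcoset{w}{J}$ for the corresponding $J$, i.e. $u$ a cutting point, contradiction. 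So the argument is: take $J:=\Jblock{w}(u)$; if $u=\lcoset{w}{J}$ we are in the cutting-point case; otherwise $\lcoset{w}{J}>_R u$, and I can find $i$ with $v:=us_i\le_R\lcoset{w}{J}\le_R w$, and moreover $i$ can be chosen short by looking at a reduced word for $u^{-1}\lcoset{w}{J}$ and invoking Lemma~\ref{lemma.action_on_decomposition}. Once $i$ is pinned down, the triangularity verification is routine bookkeeping with Lemma~\ref{lemma.triangular.uu}, Remark~\ref{remark.interval}, and Remark~\ref{remark.triangular.composition}, exactly parallel to Lemmas~\ref{lemma.triangular.down} and~\ref{lemma.triangular.uu}. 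I would also double-check the edge case $u=w$ (then $w$ is trivially a cutting point, block $\emptyset$) so that the ``otherwise'' branch genuinely has $u<_R v\le_R w$ with $v\ne u$.
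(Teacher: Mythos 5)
Your construction fails at its central step. Suppose $i$ is a \emph{short} right nondescent of $u$ with $v:=us_i\in[1,w]_R$, and set $f:=f_{u,u}\pi_i$. Since $i$ is short, $us_i=s_ju>_L u$ for some $j$, so $v\in[u,w_0]_L\cap[1,w]_R=\im(f_{u,u})$ and $v$ is a fixed point of the idempotent $f_{u,u}$; moreover $i\in\Des(v)$, so $v.\pi_i=v$. Hence $v.f=v$, i.e.\ $v\in f^{-1}(v)$ with $v>_B u$, and $u$ is \emph{not} the unique Bruhat-maximal element of $f^{-1}(v)$: $f$ is not $(u,v)$-triangular. The signed fallback $f_{u,u}s_i$ fares no better (then $v.s_i=u$, so $u\in\im(f_{u,u}s_i)$ and $v$ is not the unique Bruhat-minimal element of the image). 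Choosing $i$ short is therefore exactly the wrong move, and the difficulty you single out as ``the main obstacle'' (producing a short nondescent $i$ with $us_i\le_R w$) is not where the real work lies. There is also a secondary confusion: by Proposition~\ref{proposition.cutting.U}, $u\cuteq w$ is equivalent to $w\in U_u$, not to any containment of $U_u$ in $[1,w]_R$, so your translation of ``$u$ is not a cutting point'' into an explicit witness is not correct as stated.

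The paper's proof is built precisely to evade the failure above. It sets $V:=U_u\cap[1,w]_R$ (the set of $w'\in[1,w]_R$ with $u\cuteq w'$), takes a \emph{shortest} sequence $i_1,\dots,i_k$ of nondescents of $u$ with $v':=us_{i_1}\cdots s_{i_k}\notin V$ (so that $i_1,\dots,i_{k-1}$ are short but $i_k$ is not), rules out the case $v'\in\im(f_{u,u})$ by a separate argument (it would force $u\cuteq v'$, a contradiction), and only then defines $f_{u,v}:=f_{u,u}\,\sigma\,\pi_{i_1}$ with $v=us_{i_1}$ and $\sigma=s_{i_2}\cdots s_{i_{k-1}}s_{i_k}s_{i_{k-1}}\cdots s_{i_2}$. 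The conjugating factor $\sigma$ is the ingredient your plan lacks: it fixes $u$ but sends $v$ to an element $v.\sigma^{-1}\notin\im(f_{u,u})$, so that of the two preimages $\pi_{i_1}^{-1}(v)=\{u,v\}$ only $u$ contributes to the fiber, which becomes exactly $f_{u,u}^{-1}(u)$ and has $u$ as its unique Bruhat-maximal element. Establishing that $v$ is the unique Bruhat-minimal element of the image then requires Remark~\ref{remark.bound_s} to control the non-order-preserving operators $s_i$ appearing in $\sigma$; this step is also absent from your sketch. Without these two devices the lemma does not follow.
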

\begin{proof}
  Let $J$ be the set of short nondescents $i$ of $u$, and set $V:=U_u
  \cap [1,w]_R$ (recall from Definition~\ref{definition.short_nondescents} that
  $U_u := uW_J$). By Proposition~\ref{proposition.cutting.U}, $V$ is the set of $w'\in [1,w]_R$
  such that $u\cuteq w'$. Furthermore, $V$ is a lattice (it is the
  intersection of the two lattices $(uW_J, <_R)$ and $[1,w]_R$) with
  $u$ as unique minimal element; in particular, $V\subset
  [u,w]_R$.

  If $w\in V$ (which includes the case $u=w$ and $J=\{\}$), then $u$
  is a cutting point for $w$ and we are done.

  Otherwise, consider a shortest sequence $i_1,\dots,i_k$ such that
  $\{i_1,\dots,i_k\}$ does not intersect $\Des(u)$, and $v' = u s_{i_1}\cdots
  s_{i_k}\not \in V$. Such a sequence must exist since $w\not \in
  V$. Set $S:=\{i_1,\dots,i_k\}$. Note that $i_1,\dots,i_{k-1}$ are in
  $J$ but $i_k$ is not. Furthermore, $u\not \cuteq v'$ while $u = v'^S$
  because $v'\in u W_S$ and $S\cap\Des(u)=\emptyset$.

  Case 1: $v'\in \im(f_{u,u})$. Then, $u<_L v'$. Combining this with
  $u=v'^S$ yields that $u\cuteq v'$, a contradiction.

  Case 2: $v'\not\in \im(f_{u,u})$. Set $v:= us_{i_1}$, and define
  $f_{u,v}:=f_{u,u} \sigma \pi_{i_1}$, where
  \begin{equation}
    \sigma := s_{i_2}\cdots s_{i_{k-1}}s_{i_k} s_{i_{k-1}}\cdots s_{i_2}\,.
  \end{equation}
  Note that for $k=1$, we have $\sigma=\one$.
  We now prove that $f_{u,v}$ is $(u,v)$-triangular.

  First, we consider the fiber $f_{u,v}^{-1}(v)$. By minimality of
  $k$, and up to sign, $s_{i_k}$ fixes all the elements of $V$ at
  distance at most $k-2$ of $u$. Hence, $\sigma^{-1}(u) =
  u$. Simultaneously,
  \begin{equation}
    v.\sigma^{-1}= v.s_{i_2}\cdots s_{i_{k-1}}s_{i_k} s_{i_{k-1}}\cdots s_{i_2}
    = v'.s_{i_{k-1}}\cdots s_{i_2} \in v'W_J\,.
  \end{equation}
  Hence, $v.\sigma^{-1}\not\in \im(f_{u,u})$ because $v'\not\in \im(f_{u,u})$
  and $\im(f_{u,u})$ is stable under right multiplication by $s_j$ for
  $j\in J$.
  Putting everything together:
  \begin{equation}
    f_{u,v}^{-1}(v)
    = f_{u,u}^{-1}(\sigma^{-1}(\pi_{i_1}^{-1}(v)))
    = f_{u,u}^{-1}(\sigma^{-1}(\{u,v\}))
    = f_{u,u}^{-1}(\{u\})
    = [1,u]_B \cap [1,w]_R\,.
  \end{equation}
  Therefore, $u$ is the unique length maximal element of
  $f_{u,v}^{-1}(v)$, as desired.

  We take now $x\in \im(f_{u,u})$, and apply
  Proposition~\ref{proposition.bruhat} repeatedly. To start with:
  \begin{equation}
    u = \one.f_{u,u} \leq_B x.f_{u,u}\,.
  \end{equation}
  Using Remark~\ref{remark.bound_s}:
  \begin{equation}
    u = u.\opi_S \leq_B (x.f_{u,u}).\opi_S \leq_B (x.f_{u,u}).\sigma =
    x.f_{u,u}.\sigma\,.
  \end{equation}
  It follows that:
  \begin{equation}
    v = u.\pi_{i_1} \leq_B  (x.f_{u,u}.\sigma).\pi_{i_1} = x.f_{u,v}\,.
  \end{equation}
  In particular, $v$ is the unique Bruhat minimal element of $\im(f_{u,v})$,
  as desired.
\end{proof}

\begin{proof}[Proof of Proposition~\ref{proposition.triangular}]
  Since $W$ is finite, repeated application of
  Lemma~\ref{lemma.triangular.up} yields a finite sequence of
  triangular functions
  \begin{equation*}
    f_{u,u_1},\dots,f_{u_{k-1},u_k}\,,\quad\text{where}\quad
    u<_R u_1 <_R \dots <_R u_k
  \end{equation*}
  and $u_k$ is a cutting point $w^J$ of $w$. Since
  $u<_R w^J$, one has $J\subset \Kblock{w}(u) \subset
  \Kblock{w}(v)$, and therefore $u_k=w^J >_R v$. Then, applying
  Lemma~\ref{lemma.triangular.down} one can construct a
  $(u_k,v)$-triangular function $f_{u_k,v}$. Finally, by
  Remark~\ref{remark.triangular.composition}, composing all these
  triangular functions gives a $(u,v)$-triangular function
  $f_{u,u_1}\cdots f_{u_{k-1},u_k}f_{u_k,v}$.
\end{proof}

\subsection{Representation theory of the $w$-biHecke algebra}
\label{subsection.bihecke}

Consider the digraph $G^{(w)}$ on $[1,w]_R$ with an edge $u\mapsto v$
if $u=vs_i$ for some $i$ and $\Jblock{w}(u) \subseteq \Jblock{w}(v)$.
Up to orientation, this is the Hasse diagram of right order
(see for example Figure~\ref{figure.4312}).
The following theorem is a generalization of~\cite[Section 3.3]{Hivert_Thiery.HeckeGroup.2007}.
\begin{theorem}
  \label{theorem.wBihecke.representations}
  $\wbiheckealgebra{w}$ is the maximal algebra stabilizing all modules $P_J^{(w)}$ for
  $J\in \RJs{w}$
  \begin{displaymath}
    \wbiheckealgebra{w} = \{ f\in \End(T_w) \suchthat f(P_J^{(w)}) \subseteq P_J^{(w)} \}
  \end{displaymath}

  The elements $f_{u,v}$ of Proposition~\ref{proposition.triangular}
  form a basis $\wbiheckealgebra{w}$; in particular,
  \begin{equation}
    \dim \wbiheckealgebra{w} = |\{ (u,v) \suchthat \Jblock{w}(u)
    \subseteq \Jblock{w}(v) \}| \; .
  \end{equation}

  $\wbiheckealgebra{w}$ is the digraph algebra of the graph $G^{(w)}$.

  The family $(P^{(w)}_J)_{J\in \RJs{w}}$ forms a complete system of
  representatives of the indecomposable projective modules of
  $\wbiheckealgebra{w}$.

  The family $(S^{(w)}_J)_{J\in \RJs{w}}$ forms a complete system of
  representatives of the simple modules of $\wbiheckealgebra{w}$. The
  dimension of $S^{(w)}_J$ is the size of the corresponding $w$-nondescent class.

  $\wbiheckealgebra{w}$ is Morita equivalent to the poset algebra of
  the lattice $[1,w]_\cuteq$. In particular, its Cartan matrix is the
  incidence matrix and its quiver the Hasse diagram of this lattice.
\end{theorem}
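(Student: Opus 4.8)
The plan is to establish the six assertions of Theorem~\ref{theorem.wBihecke.representations} in a chain, following the blueprint of~\cite[Section 3.3]{Hivert_Thiery.HeckeGroup.2007} but with the combinatorics of descents replaced by that of the blocks $\Jblock{w}$. First I would prove the dimension lower bound: by Proposition~\ref{proposition.triangular}, for every pair $(u,v)$ with $\Kblock{w}(u)\subseteq\Kblock{w}(v)$ there is a $(u,v)$-triangular operator $f_{u,v}\in F\subseteq\wbiheckealgebra{w}$, and a family of operators on $[1,w]_R$ that is triangular with respect to (the partial order induced by) Bruhat order on the fibers is linearly independent. Hence $\dim\wbiheckealgebra{w}\ge|\{(u,v)\mid\Jblock{w}(u)\subseteq\Jblock{w}(v)\}|$, using the identification $\Kblock{w}(u)\subseteq\Kblock{w}(v)\iff\lcoset{w}{\Jblock{w}(v)}\le_R\lcoset{w}{\Jblock{w}(u)}$, i.e. the $\cuteq$-order between cutting points, which is the edge relation of the transitive closure of $G^{(w)}$.

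Next I would prove the upper bound and the characterization $\wbiheckealgebra{w}=\{f\in\End(T_w)\mid f(P_J^{(w)})\subseteq P_J^{(w)}\ \forall J\in\RJs{w}\}$. The inclusion $\subseteq$ is Proposition~\ref{proposition.cond_Jtiling}: each $P_J^{(w)}$ for $J\in\RJs{w}$ is a $\K\biheckemonoid$-submodule, hence stabilized by the image $\wbiheckealgebra{w}$ of $\K\biheckemonoid$ in $\End(T_w)$. For the reverse inclusion, I would show that any $f\in\End(T_w)$ stabilizing all $P_J^{(w)}$, $J\in\RJs{w}$, lies in the span of the $f_{u,v}$: the modules $P_J^{(w)}$ together with Remark~\ref{remark.Sw} exhibit $T_w$ with a ``flag'' whose associated graded pieces are the $S^{(w)}_J$, indexed by the $w$-nondescent classes, so that stabilizing the $P_J^{(w)}$ forces $f$ to respect the block structure; concretely, writing $f$ in the natural basis $[1,w]_R$, the stabilization conditions say precisely that $u.f$ is supported on $\{v\mid\Jblock{w}(u)\subseteq\Jblock{w}(v)\}$ (this is where one checks that $v\in[1,\lcoset{w}{J}]_R\iff\Jblock{w}(v)\supseteq J$ from Lemma~\ref{lemma.condition_interval}). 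This shows $\dim\wbiheckealgebra{w}\le|\{(u,v)\mid\Jblock{w}(u)\subseteq\Jblock{w}(v)\}|$; combined with the lower bound, both are equalities, the $f_{u,v}$ form a basis, and $\wbiheckealgebra{w}$ is exactly the stabilizer algebra, which by construction is the digraph algebra (incidence algebra of the preorder) of $G^{(w)}$.

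Then the projective/simple module statements follow formally from the digraph-algebra description: for the incidence algebra of a preorder whose associated poset is $[1,w]_\cuteq$ (the $\cuteq$-interval, by Corollary~\ref{corollary.interval} a distributive lattice), the indecomposable projectives are indexed by the vertices modulo the equivalence $u\sim v\iff\Jblock{w}(u)=\Jblock{w}(v)$, i.e. by $\RJs{w}$, with $P^{(w)}_J$ the projective cover; the simples are the $S^{(w)}_J$ with $\dim S^{(w)}_J$ equal to the size of the $w$-nondescent class of $J$ by Remark~\ref{remark.Sw}; and the Cartan matrix is $c_{J,J'}=|\{(u,v)\mid\Jblock{w}(u)=J,\Jblock{w}(v)=J'\}|/(|J\text{-class}|\cdot|J'\text{-class}|)$, which collapses to the incidence matrix of $[1,w]_\cuteq$, while the quiver is its Hasse diagram. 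Morita equivalence with the poset algebra of $[1,w]_\cuteq$ is then the standard statement that an incidence algebra of a preorder is Morita equivalent to that of its poset quotient. I expect the main obstacle to be the upper bound argument — showing that stabilizing the $P_J^{(w)}$ for $J$ ranging only over the \emph{reduced} left blocks $\RJs{w}$ (not all of $\booleanlattice{I}$) is already enough to confine $f$ to the $f_{u,v}$-span; this requires carefully using that $\RJs{w}$ is stable under union (so the intersections $P^{(w)}_{J_1\cup J_2}=P^{(w)}_{J_1}\cap P^{(w)}_{J_2}$ stay in the family) and that the bases of the $P^{(w)}_{J}$ are Bruhat-triangular, exactly as in the $\biheckealgebra$ case but with blocks in place of descent sets.
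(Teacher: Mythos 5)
Your proposal follows essentially the same route as the paper's proof: the lower bound on $\dim\wbiheckealgebra{w}$ comes from the Bruhat-triangular family $f_{u,v}$ of Proposition~\ref{proposition.triangular}, the upper bound and the stabilizer characterization come from the stability of the $P_J^{(w)}$ for $J\in\RJs{w}$, and the projective/simple/Morita statements are read off from the resulting digraph (incidence-algebra) description exactly as in the Hecke group algebra case. The only imprecision is that stabilizing the $P_J^{(w)}$ constrains the matrix of $f$ in a basis adapted to the flag of these submodules (obtained from the natural basis of $[1,w]_R$ by the triangular change of basis implicit in Remark~\ref{remark.Sw}), not literally in the natural basis itself; this does not affect the dimension count or the rest of the argument.
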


\begin{proof}
  From Proposition~\ref{proposition.triangular}, one derives by
  triangularity that $\dim \wbiheckealgebra{w} \geq \{ (u,v) \suchthat
  \Kblock{w}(u) \subseteq \Kblock{w}(v) \}$.
  The stability of all the
  subspaces $P^{(w)}_J$ imposes the converse equality. Hence,
  $\wbiheckealgebra{w}$ is exactly the subalgebra of $\End(T_w)$
  stabilizing each $P^{(w)}_J$. The remaining statements follow
  straightforwardly, as in~\cite[Section 3.3]{Hivert_Thiery.HeckeGroup.2007}.
  See also
  e.g.~\cite[Section 3.7.4]{Denton_Hivert_Schilling_Thiery.JTrivialMonoids}
  for the Cartan matrix and quiver of a poset algebra.
\end{proof}

\section{Representation theory of $\biheckemonoid(W)$}
\label{section.M.representation_theory}

In this section, we gather all results of the preceding sections in
order to describe the representation theory of
$\biheckemonoid:=\biheckemonoid(W)$. The main result is
Theorem~\ref{theorem.simple} which gives the simple modules of
$\K \biheckemonoid$. We further relate the representation theory of
$\K \biheckemonoid$ to the representation theory of $\K \Mzero$. In
particular, we prove that the translation modules are exactly the
modules induced by the simple modules of $\K \Mzero$. We then conclude by
computing some characters and the decomposition map from
$\K \biheckemonoid$ to $\K \Mzero$.

\subsection{Simple modules}
\label{subsection.M.simple_modules}

We now study the simple modules of the biHecke monoid $\K \biheckemonoid$
and also show that the translation modules are indecomposable.

\begin{theorem}\mbox{}
  \label{theorem.simple}
  \begin{enumerate}[(i)]
  \item \label{item.simples_of_M}
    The biHecke monoid $\biheckemonoid$ admits $|W|$ non-isomorphic
    simple modules $(S_w)_{w\in W}$ (resp.  projective indecomposable
    modules $(P_w)_{w\in W}$).
  \item \label{item.simples_translation}
    The simple module $S_w$ is isomorphic to the top simple module
    \begin{equation*}
    	S^{(w)}_{\{\}} = T_w / \sum_{v\cut w} T_v
    \end{equation*}
    of the translation module $T_w$. Its dimension is given by
    \begin{equation*}
      \dim S_w = \Big| [1,w]_R \backslash \bigcup_{v \cut w} [1,v]_R \Big| \; .
    \end{equation*}
    In general, the simple quotient module $S^{(w)}_{J}$ of $T_w$ is
    isomorphic to $S_{\lcoset{w}{J}}$ of $\biheckemonoid$.
  \end{enumerate}
\end{theorem}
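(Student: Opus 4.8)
The plan is to combine the general construction of simple modules for aperiodic monoids (Theorem~\ref{theorem.simple_modules.aperiodic}) with the detailed analysis of the translation modules $T_w$ and the $w$-biHecke algebras $\wbiheckealgebra{w}$ from Section~\ref{section.translation_modules}.

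For part~\eqref{item.simples_of_M}: the monoid $\biheckemonoid$ is aperiodic by Proposition~\ref{proposition.pseudo_idempotent}, and by Corollary~\ref{corollary.transversal} the idempotents $(e_w)_{w\in W}$ form a complete transversal of its regular $\JJ$-classes. Hence by Theorem~\ref{theorem.simple_modules.aperiodic} there are exactly $|W|$ isomorphism classes of simple $\K\biheckemonoid$-modules, and correspondingly $|W|$ indecomposable projectives. The indexing by $W$ will be fixed through the translation modules in the next step.

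For the first part of~\eqref{item.simples_translation}: since $e_{w,w_0}$ is an idempotent (Proposition~\ref{proposition.uniqueness_fix1}\eqref{e.shift}), its $\JJ$-class is regular and $T_w=\KRR(e_{w,w_0})$ is a regular right class module; by Corollary~\ref{corollary.right_class_modules_quotient_of_projective.aperiodic} it is indecomposable with simple top, which I take as the definition of $S_w$. As $\type(e_{w,w_0})=w_0w^{-1}$, Lemma~\ref{lemma.conjugate} shows these idempotents lie in pairwise distinct $\JJ$-classes, so $(T_w)_{w\in W}$ exhausts the regular right class modules and $(S_w)_{w\in W}$ is a complete, non-redundant family of simple modules; this pins down the indexing in~\eqref{item.simples_of_M}, with $P_w$ the projective cover of $S_w$. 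To identify $S_w$ with $S^{(w)}_{\{\}}$, note $T_w=P^{(w)}_{\{\}}$, which by Theorem~\ref{theorem.wBihecke.representations} is the indecomposable projective $\wbiheckealgebra{w}$-module with top $S^{(w)}_{\{\}}$. Since $\wbiheckealgebra{w}$ is the image of $\K\biheckemonoid$ in $\End(T_w)$, the image of $\rad(\K\biheckemonoid)$ is nilpotent, hence contained in $\rad(\wbiheckealgebra{w})$; this gives a surjection from the top of $T_w$ as a $\K\biheckemonoid$-module onto its top as a $\wbiheckealgebra{w}$-module, i.e. $S_w\twoheadrightarrow S^{(w)}_{\{\}}$, which is an isomorphism because $S_w$ is simple. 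As $S^{(w)}_{\{\}}=T_w/\sum_{v\cut w}T_v$ by Definition~\ref{definition.Sw}, Remark~\ref{remark.Sw} (with $J=\{\}$) gives $\dim S_w=\bigl|\,[1,w]_R\setminus\bigcup_{v\cut w}[1,v]_R\,\bigr|$.

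For the general statement $S^{(w)}_J\cong S_{\lcoset{w}{J}}$: put $x:=\lcoset{w}{J}$, a cutting point of $w$. By Proposition~\ref{proposition.cond_Jtiling}, $P^{(w)}_J\cong T_x$ as $\K\biheckemonoid$-modules, with basis $[1,x]_R$. For reduced left blocks $J'\supsetneq J$, the submodule $P^{(w)}_{J'}$ has basis $[1,\lcoset{w}{J'}]_R\subseteq[1,x]_R$, and by Lemma~\ref{lemma.subblocks} (together with Proposition~\ref{proposition.cutting_points.sublattice}) the elements $\lcoset{w}{J'}$ range exactly over the strict cutting points of $x$; hence under the isomorphism $P^{(w)}_J\cong T_x$ the submodule $\sum_{J'\supsetneq J}P^{(w)}_{J'}$ corresponds to $\sum_{v\cut x}T_v$, so $S^{(w)}_J\cong T_x/\sum_{v\cut x}T_v=S^{(x)}_{\{\}}\cong S_x$ by the previous paragraph. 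Equivalently, Remark~\ref{remark.Sw} already identifies the basis of $S^{(w)}_J$ with $[1,x]_R\setminus\bigcup_{v\cut x}[1,v]_R$ and the monoid action matches that on $S^{(x)}_{\{\}}$, giving the isomorphism directly.

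The combinatorial bookkeeping (matching bases and verifying that the actions agree) is routine once Proposition~\ref{proposition.cond_Jtiling} and Remark~\ref{remark.Sw} are in hand. The one delicate point is relating the $\K\biheckemonoid$-module structure on $T_w$ to its $\wbiheckealgebra{w}$-module structure—handled by the nilpotency/radical-inclusion argument above—and, alongside it, using Lemma~\ref{lemma.conjugate} to ensure the regular $\JJ$-classes realized by the idempotents $e_{w,w_0}$ are pairwise distinct, so that the $|W|$ modules $S_w$ genuinely form the complete list of simple $\K\biheckemonoid$-modules.
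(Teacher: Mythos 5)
Your proposal is correct and follows essentially the same route as the paper: aperiodicity plus the Clifford--Munn--Ponizovski\u{\i} construction and Corollary~\ref{corollary.transversal} for part (i), and identification of $S_w$ with the top of the translation module $T_w$ via Theorem~\ref{theorem.wBihecke.representations} and Remark~\ref{remark.Sw} for part (ii). You merely spell out two steps the paper leaves as ``straightforward''---the radical-inclusion argument comparing the top of $T_w$ over $\K\biheckemonoid$ with that over $\wbiheckealgebra{w}$, and the matching of $\sum_{J'\supsetneq J}P^{(w)}_{J'}$ with $\sum_{v\cut \lcoset{w}{J}}T_v$---and both are handled correctly.
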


\begin{proof}
  Since $\biheckemonoid$ is aperiodic
  (Proposition~\ref{proposition.pseudo_idempotent}), we may apply the
  special form of Clifford, Munn, and Ponizovski\v \i's construction
  of the simple modules (see
  Theorem~\ref{theorem.simple_modules.aperiodic}). Namely, the simple
  modules are indexed by the regular $\JJ$- classes of
  $\biheckemonoid$; by Corollary~\ref{corollary.transversal}, there
  are $|W|$ of them. Using that, for any finite-dimensional algebra,
  the simple and indecomposable projective modules share the same
  indexing set (see~\cite[Corollary 54.14]{Curtis_Reiner.1962}), this
  yields \eqref{item.simples_of_M}.

  Clifford, Munn, and Ponizovski{\v \i} further construct $S_w$ as the
  top of the right class modules, that is in our case, of the
  translation module $T_w$. Our explicit description of the radical of
  $T_w$ as $\sum_{v\cut w} T_v$ in~\eqref{item.simples_translation} is
  a straightforward application of
  Theorem~\ref{theorem.wBihecke.representations}. The dimension
  formula follows using Remark~\ref{remark.Sw}.
\end{proof}

For a direct proof of the equality $\rad T_w = \sum_{v\cut w} T_v$,
without using Theorem~\ref{theorem.wBihecke.representations}, one
would want to show that $\sum_{v\cut w} T_v$ is exactly the
annihilator of $\JJ(e_{w,w_0})$. One inclusion is easy, thanks to the
following remark.
\begin{remark}
  The submodule $T_v$ is annihilated by $\JJ(e_{w,w_0})=\JJ(\opi_w)$.
\end{remark}
\begin{proof}
  Fix $w$ and take $v$ such that $v\cut w$. Then $\opi_w$ annihilates $T_v\subset
  T_w$. Indeed, combining $\opi_w(w)=1$ with
  Propositions~\ref{proposition.combinatorial_translation}
  and~\ref{proposition.bruhat}, one obtains that $\opi_w$ either
  annihilates $f_u$ or maps it to $f_1$. Take now $x\in T_v$, and
  write $x.\opi_w=\lambda f_1$. Since $T_v$ is a submodule, $\lambda
  f_1$ lies in $T_v$; however the basis elements of $T_v$ have
  disjoint support and since $v\cut w$ none of them are collinear to
  $f_1$. Therefore $x.\opi_w=0$.
\end{proof}

\begin{table}
  \newcommand{\petit}[1]{{\scriptstyle #1}}
  \begin{displaymath}
    \begin{array}{|l|r|r|r|r|r|}
      \hline
      \text{Type} & |W| & |\Mzero| & |\biheckemonoid|  &\!(\dim S_w)_w\!&
      \sum_w\!\dim S_w\\\hline
      A_0        &   1&      1 &       1 & 1                  &       1 \\\hline
      A_1        &   2&      2 &       3 & 1^2                &       2 \\\hline
      A_2=I_2(3) &   6&      8 &      23 & 1^42^2          &       8 \\\hline
      A_3        &  24&     71 &     477 & 1^82^43^44^65^2    &      62 \\\hline
      A_4        & 120&   1646 &   31103 & 1^{16}2^{10}3^84^{16}5^{16}6^6%
                                                 \cdots20^6       &     770\\\hline
      A_5        & 720& 118929 & 7505009 & 1^{32}2^{24}3^{20}4^{42}5^{38}6^{40}%
                                                 \cdots120^{2}   & 13080\\\hline  %

      B_2=I_2(4) &   8&     14 &      49 & 1^42^23^2       &      14\\\hline
      B_3        &  48&    498 &    5455 & 1^8 2^4 3^4 4^6 5^7 6^4 7^4 8^4 9^1 10^2 11^2 12^2 &     246\\\hline
      B_4        & 384& 149622 &6664553 & 1^{16}2^{10}3^{10}4^{14}5^{17}6^{16}%
                                                 \cdots80^{2} & 6984\\\hline
      G_2=I_2(6) &  12&     32 &     153 & 1^42^23^24^25^2 &      32\\\hline
      H_3        & 120&     87 &    1039 & 1^{8}2^{4}3^{4}4^{8}5^{6}6^{7}%
                                                 \cdots36^{2} &      1404\\\hline
      A_1\times A_1& 4&    4 &       9 & 1^21^2             &       4\\\hline
      I_2(p)     &  2p&p^2\!-\!p\!+\!2&\frac23 p^3\!+\!\frac43p\!+\!1& 1^42^2\cdots(p-1)^2&p^2\!-\!p\!+\!2\\\hline
    \end{array}
  \end{displaymath}
  \caption{Statistics on the biHecke monoids $\biheckemonoid:=\biheckemonoid(W)$ for
    the small Coxeter groups. In column four, $1^82^4\cdots5^2$ means that there are
    $8$ simple modules of dimension $1$, $4$ of dimension $2$, and so
    on. The sequence $p^2\!-\!p\!+\!2$ is \#A014206 in~\cite{OEIS}.}
  \label{table.dim_simple}
\end{table}

\begin{example}
  \label{example.S4312}
The simple module $S_{4312}$ is of dimension $3$, with
basis indexed by $\{4312,4132,1432\}$ (see
Figure~\ref{figure.4312}). The other simple modules $S_{3412}$,
$S_{4123}$, and $S_{1234}$ are of dimension $5$, $3$, and $1$,
respectively. See also Table~\ref{table.dim_simple}.
\end{example}

In general, the two extreme cases are, on the one hand, when $w$ is
the maximal element of a parabolic subgroup, in which case the simple
module is of dimension $1$ and, on the other hand, when $w$ is an
immediate successor of $\one$ in the cutting poset (see
Example~\ref{example.cutting_poset.almost_minimal}), in which case the
simple module is of dimension $|T_w|-1$.
In the other cases, one can use Theorem~\ref{theorem.cutting} to
calculate the dimension of $S_w$ by inclusion-exclusion from the sizes
of the intervals $[1,\lcoset{w}{J}]_R$ where $\lcoset{w}{J}$ runs
through the free sublattice at the top of the interval $[1,w]_\cuteq$
of the cutting poset.
Note that the sizes of the intervals in $W$ can also be computed by a
similar inclusion-exclusion (the M\"obius function for right order is
given by $\mu(u,w)=(-1)^k$ if the interval $[u,w]_R$ is isomorphic to
some $W_J$ with $|J|=k$, and $0$ otherwise). This may open the door for
some generating series manipulations to derive statistics like the sum
of the dimension of the simple modules.

\begin{corollary}
  \label{corollary.translation_module}
  The translation module $T_w$ is an indecomposable
  $\K \biheckemonoid$-module, quotient of the projective module $P_w$
  of $\K \biheckemonoid$.
\end{corollary}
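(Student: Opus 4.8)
The plan is to combine two facts that are essentially already in place: the identification of $T_w$ as a right class module $\KRR(e_{w,w_0})$, and the general structure theory of right class modules for aperiodic monoids recalled in Section~\ref{ss.representation_theory_monoid}. Concretely, I would argue as follows.

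First I would recall from Definition~\ref{definition.translation_module} that $T_w = \KRR(e_{w,w_0})$, the right class module of the element $e_{w,w_0}\in\biheckemonoid$. Now Corollary~\ref{corollary.right_class_modules_quotient_of_projective.aperiodic} applies to \emph{regular} right class modules only; so the key point is to reduce to the regular case. The element $e_{w,w_0}$ has type $\type(e_{w,w_0}) = w_0 w^{-1}$ (it has image set $[w,w_0]_L$), hence by Proposition~\ref{proposition.isom_translation} the right class module $\KRR(e_{w,w_0})$ depends only on this type, i.e. only on $w$. The idempotent $e_w=\pi_{w^{-1}w_0}\opi_{w_0 w}$ of Proposition~\ref{proposition.uniqueness_fix1} has image set $[1,w]_L$, hence type $w$; by Example~\ref{example.e.etilde} (or directly Lemma~\ref{lemma.conjugate}), $e_w$ is $\JJ$-equivalent to $\tilde e_{w^{-1}w_0}=e_{w^{-1}w_0,w_0}$. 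Wait --- more directly: the idempotents $(e_w)_{w\in W}$ form a transversal of the regular $\JJ$-classes (Corollary~\ref{corollary.transversal}), and the regular $\JJ$-class attached to the parameter "$w$'' is exactly the one containing $e_{w,w_0}$ up to the type bookkeeping; since right class modules of $\JJ$-equivalent idempotents are isomorphic and since, for a fixed type, $\KRR(f)$ does not depend on the choice of $f$ in its $\JJ$-class, $T_w$ is isomorphic to the regular right class module $\KRR_J$ for $J$ the regular $\JJ$-class of type $w$. Then Corollary~\ref{corollary.right_class_modules_quotient_of_projective.aperiodic} gives immediately that $T_w$ is indecomposable and a quotient of the projective cover $P_{S_w}=P_w$ of the corresponding simple module $S_w$.

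Alternatively, and perhaps cleaner to write, I would bypass the type-matching and simply invoke Theorem~\ref{theorem.simple}\eqref{item.simples_translation}: it is proved there that $S_w$ is the top $S^{(w)}_{\{\}}=T_w/\sum_{v\cut w}T_v$ of $T_w$, so $T_w$ has simple top $S_w$; a finite-dimensional module with simple top is indecomposable and is a quotient of the projective cover of that top, namely $P_w$ (see~\cite[Corollary 54.14]{Curtis_Reiner.1962}, exactly as cited for Corollary~\ref{corollary.right_class_modules_quotient_of_projective.aperiodic}). This is really a one-line deduction once Theorem~\ref{theorem.simple} is granted.

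The main (very mild) obstacle is purely bookkeeping: making sure the parameter $w$ indexing $T_w$ is matched with the parameter $w$ indexing $S_w$ and $P_w$ consistently --- i.e. that the regular $\JJ$-class whose right class module is $T_w$ is the same one whose simple module is $S_w$. But this is already settled inside the proof of Theorem~\ref{theorem.simple}, where $S_w$ is literally constructed as the top of $T_w$, so in the final write-up I would just cite that theorem together with the standard fact that a module with simple top is indecomposable and a quotient of the corresponding projective indecomposable, and be done.

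\begin{proof}
  By Theorem~\ref{theorem.simple}\eqref{item.simples_translation}, the
  translation module $T_w$ has simple top $S_w = T_w/\sum_{v\cut w}T_v$.
  A finite-dimensional module over a finite-dimensional algebra having a
  simple top is necessarily indecomposable, and is a quotient of the
  projective cover of that top, namely of the indecomposable projective
  module $P_w$ associated with $S_w$
  (see~\cite[Corollary 54.14]{Curtis_Reiner.1962}; compare
  Corollary~\ref{corollary.right_class_modules_quotient_of_projective.aperiodic},
  of which this is the special case $\KRR_J \cong T_w$ coming from
  $T_w=\KRR(e_{w,w_0})$ and Proposition~\ref{proposition.isom_translation}).
\end{proof}
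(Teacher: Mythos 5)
Your proof is correct and is essentially the paper's own argument: the paper simply says ``direct application of Corollary~\ref{corollary.right_class_modules_quotient_of_projective.aperiodic}'', which is the same content as your final write-up (both rest on $T_w$ having simple top $S_w$ plus \cite[Corollary 54.14]{Curtis_Reiner.1962}). The only superfluous step is your worry about ``reducing to the regular case'': since $e_{w,w_0}$ is itself an idempotent by Proposition~\ref{proposition.uniqueness_fix1}~\eqref{e.shift}, its $\JJ$-class is regular and $T_w=\KRR(e_{w,w_0})$ is directly a regular right class module, so no type-matching detour is needed.
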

\begin{proof}
  Direct application of
  Corollary~\ref{corollary.right_class_modules_quotient_of_projective.aperiodic}
\end{proof}

\subsection{From $\Mzero(W)$ to $\biheckemonoid(W)$}
\label{subsection.translation.M1}
In this section, we use our knowledge of $\Mzero$ to learn more about $\biheckemonoid$.
\begin{proposition}
  \label{proposition.translation_module_induced_from_M0}
  The translation module $T_w$ is isomorphic to the induction to
  $\K \biheckemonoid$ of the simple module $\Szero_w$ of $\K \Mzero$.
\end{proposition}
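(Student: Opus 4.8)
The plan is to identify the induced module $\induced{\Szero_w}{\K\Mzero}{\K\biheckemonoid} = \Szero_w \otimes_{\K\Mzero} \K\biheckemonoid$ with $T_w$ by writing down an explicit $\K\biheckemonoid$-module isomorphism and checking it on generators. First I would recall that, by Definition~\ref{definition.translation_module}, $T_w = \KRR(e_{w,w_0})$, with basis $(f_u)_{u\in[1,w]_R}$ and the action given combinatorially in Proposition~\ref{proposition.combinatorial_translation}; note that $e_{w,w_0}$ fixes $w_0$, so $e_{w,w_0}\in\Mzero$, and more precisely $w_0.e_{w,w_0} = w_0$ while $1.e_{w,w_0} = w$, so $e_{w,w_0}$ lies in the regular $\JJ$-class whose idempotent indexes the simple $\Mzero$-module $\Szero_w$ (here I use that, for $\Mzero$ and its submonoid structure, $e_{w,w_0}$ fixes exactly $[w,w_0]_L$, hence $w_0.e_{w,w_0}=w_0$). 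The key structural input is that $\Szero_w$ is the top of the right class module $\KRR_{\Mzero}(e_{w,w_0})$ in the sense of Theorem~\ref{theorem.simple_modules.aperiodic}, and by the general adjunction between restriction and induction, $\Top{\induced{\Szero_w}{\K\Mzero}{\K\biheckemonoid}}$ is a simple $\K\biheckemonoid$-module; combined with our dimension count this will pin things down, but the cleaner route is a direct comparison.

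The direct approach: consider the $\K\biheckemonoid$-linear map
\begin{equation*}
  \Phi : \Szero_w \otimes_{\K\Mzero} \K\biheckemonoid \longrightarrow T_w, \qquad
  \epsilon_w \otimes g \longmapsto f_1.g\,,
\end{equation*}
where $f_1$ is the basis element of $T_w$ indexed by $1\in[1,w]_R$ (recall $T_w$ is generated as a $\K\biheckemonoid$-module by $f_1$, since $f_1.\pi_v = f_v$ for $v\in[1,w]_R$ by Proposition~\ref{proposition.combinatorial_translation}, and any basis element $f_u$ is reached this way). For this to be well-defined one must check that $\epsilon_w\otimes (mg) \mapsto f_1.(mg)$ agrees with $(\epsilon_w.m)\otimes g$ for $m\in\Mzero$: if $w.m = w$ then $\epsilon_w.m = \epsilon_w$ and one needs $f_1.m = f_1$; if $w.m\neq w$ then $\epsilon_w.m = 0$ and one needs $f_1.m = 0$. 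Both follow from Proposition~\ref{proposition.combinatorial_translation} together with the fact that, in the translation module $T_w$, the action of $m\in\biheckemonoid$ sends $f_u$ to $f_{u.m}$ or to $0$, and from the observation that $1.m = w.(e_{w,w_0}m)$: indeed $e_{w,w_0}$ maps $1\mapsto w$ (wait — rather, since $f_1 = e_{w,w_0}$ as an element of $\RR(e_{w,w_0})$ and $\one.e_{w,w_0} = w$), so $f_1.m$ is governed by the image of $w$ under $m$, which is exactly the condition distinguishing $w.m = w$ from $w.m\neq w$. Since $\Phi$ is visibly surjective ($f_1$ generates $T_w$), and since $\dim\bigl(\Szero_w\otimes_{\K\Mzero}\K\biheckemonoid\bigr) = \dim \Szero_w\cdot [\biheckemonoid : \text{certain cosets}]$ is a priori only an upper bound, I would instead finish by exhibiting the inverse map $f_u \mapsto \epsilon_w\otimes \pi_u$ and checking it is well-defined and $\K\biheckemonoid$-linear, or equivalently by checking $\dim \induced{\Szero_w}{\K\Mzero}{\K\biheckemonoid} \leq |[1,w]_R|$ via the fact that $\K\biheckemonoid = \bigoplus_u e_{w,w_0}\K\biheckemonoid$-style decomposition is not needed — rather every element of $\induced{\Szero_w}{}{}$ is a combination of $\epsilon_w\otimes \pi_u$, $u\in[1,w]_R$, because $e_{w,w_0}\biheckemonoid$ as a right module has basis indexed by $\RR(e_{w,w_0})\cup(\text{lower }\RR\text{-classes})$ and the lower part is killed in the quotient defining the induced simple.

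The main obstacle I anticipate is the bookkeeping in the well-definedness check: one must carefully track how the identification $f_1 \leftrightarrow e_{w,w_0}$ (an element of the $\RR$-class, not of $\Mzero$ in general — though here it is in $\Mzero$) interacts with left multiplication by $\Mzero$ versus the module structure, and verify that the submodule $\KRR_<(e_{w,w_0})$ modded out in forming $T_w = \K e_{w,w_0}\biheckemonoid/\KRR_<(e_{w,w_0})$ corresponds exactly to the relations imposed by tensoring $\Szero_w$ (rather than $\KRR_{\Mzero}(e_{w,w_0})$) over $\K\Mzero$ — i.e. that inducing the simple quotient and then taking the analogous quotient of $\K e_{w,w_0}\biheckemonoid$ are compatible. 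This is a diagram-chase using that $\Szero_w = \KRR_{\Mzero}(e_{w,w_0})/\rad$, that $\Mzero\subseteq\biheckemonoid$, and Proposition~\ref{proposition.combinatorial_translation}; it is routine but requires care, and is the step where I would spend the most words.
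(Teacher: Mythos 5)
Your overall strategy (an explicit isomorphism pinned down on a canonical generator) is a concrete version of what the paper does structurally, but as written there are two genuine problems. First, the map $\Phi:\epsilon_w\otimes g\mapsto f_1.g$ is not well-defined, and the error sits inside your own well-definedness check. The natural image of the generator $\epsilon_w\otimes 1$ is the class of $e_{w,w_0}$ in $T_w=\K e_{w,w_0}\biheckemonoid/\KRR_<(e_{w,w_0})$, and since $\one.e_{w,w_0}=w$ (the image set of $e_{w,w_0}$ is $[w,w_0]_L$), that class is $f_w$, not $f_1$; your parenthetical identification ``$f_1=e_{w,w_0}$'' is exactly the false step. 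Concretely, for $i\in\Des(w)$ one has $\pi_i\in\Mzero$ and $w.\pi_i=w$, so $\epsilon_w.\pi_i=\epsilon_w$, yet $f_1.\pi_i=f_{s_i}\ne f_1$; hence $\Phi(\epsilon_w\otimes\pi_i g)\ne\Phi\bigl((\epsilon_w.\pi_i)\otimes g\bigr)$ in general. Replacing $f_1$ by $f_w$ repairs this: if $w.m=w$ with $m\in\Mzero$ then $m$ fixes $[w,w_0]_L$ pointwise and $e_{w,w_0}m=e_{w,w_0}$, while if $w.m\ne w$ then $w.m>_B w$ (elements of $\Mzero$ are \extensive for Bruhat order), the rank drops, and $f_w.m=0$ in $T_w$.

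Second, the injectivity step is where the real content lies, and your sketch skips it. Since $-\otimes_{\K\Mzero}\K\biheckemonoid$ is only right exact, writing $\Szero_w=\K e_{w,w_0}\Mzero/\KRR^{\Mzero}_<(e_{w,w_0})$ and tensoring yields $\Szero_w\otimes_{\K\Mzero}\K\biheckemonoid\cong \K e_{w,w_0}\biheckemonoid/\bigl(\KRR^{\Mzero}_<(e_{w,w_0})\cdot\K\biheckemonoid\bigr)$, so one must prove the equality $\KRR^{\Mzero}_<(e_{w,w_0})\cdot\biheckemonoid=\KRR^{\biheckemonoid}_<(e_{w,w_0})$; your phrase ``the lower part is killed in the quotient'' asserts precisely this without proof. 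The inclusion $\supseteq$ requires that every $\RR$-class of $\biheckemonoid$ meets $\Mzero$ and that $\RR$-order on $\Mzero$ is induced from that of $\biheckemonoid$ --- this is the paper's Lemma~\ref{lemma.conditions}, feeding into Lemmas~\ref{lemma.induction.algebrapart} and~\ref{lemma.induction} --- and Example~\ref{example.rees counterexample} shows that without such hypotheses the induced module can even vanish, so the dimension bound $\dim\le|[1,w]_R|$ is not automatic. Any correct version of your argument has to reproduce this verification.
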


The proof of this proposition follows from the upcoming lemmas giving
some simple conditions on a general inclusion of monoids $B\subseteq A$
under which the (regular) right class modules of $\K A$ are induced from
those of $\K B$.

\begin{lemma}
  \label{lemma.induction.algebrapart}
  Let $B \subseteq A$ be two finite monoids and $f\in B$. If
  $$\KRR^A_<(f)=\KRR^B_<(f)A\,,$$ then the right class module $\KRR^A(f)$ is
  isomorphic to the induction from $\K B$ to $\K A$ of the right class module
  $\KRR^B(f)$:
  \begin{equation*}
    \KRR^A(f) \cong \induced{\KRR^B(f)}{\K B}{\K A}\,.
  \end{equation*}
\end{lemma}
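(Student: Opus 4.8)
The plan is to unwind the definitions on both sides and exhibit an explicit isomorphism. Recall that $\KRR^B(f) = \K fB / \KRR^B_<(f)$ as a right $\K B$-module, and that induction is $\induced{\KRR^B(f)}{\K B}{\K A} = \KRR^B(f) \otimes_{\K B} \K A$. The first step is to produce a surjective right $\K A$-module map from $\K fA$ onto $\induced{\KRR^B(f)}{\K B}{\K A}$ whose kernel is exactly $\KRR^A_<(f)$; by the hypothesis $\KRR^A_<(f) = \KRR^B_<(f)A$, this will identify $\KRR^A(f) = \K fA / \KRR^A_<(f)$ with the induced module. The natural candidate is to use the fact that $\K fA = (\K fB)\K A$ (since $f = f \cdot 1_B$ and $1_B A = A$, noting $1_B$ acts as identity on $\K fA$ because $f\in B$ and $f = f 1_B$), which gives a canonical surjection $\K fB \otimes_{\K B} \K A \twoheadrightarrow \K fA$, $fb \otimes a \mapsto fba$. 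Composing with the quotient $\K fB \to \KRR^B(f)$ tensored with $\K A$ we would, after checking well-definedness, get the desired map.

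Concretely, I would first check that the multiplication map $\mu : \K fB \otimes_{\K B} \K A \to \K fA$ sending $x \otimes a \mapsto xa$ is an isomorphism of right $\K A$-modules: surjectivity is clear since every element of $fA$ is of the form $f a$ with $fa = (f)\cdot a = \mu(f \otimes a)$; injectivity is the slightly delicate point and follows because $\K fB$ is a free right $\K B$-module? — no, it need not be free, so instead I would argue directly. Actually the cleanest route avoids claiming $\mu$ is an isomorphism on $\K fB$: instead consider the composite $\K fB \otimes_{\K B}\K A \xrightarrow{p \otimes \id} \KRR^B(f)\otimes_{\K B}\K A$ where $p : \K fB \to \KRR^B(f)$ is the projection, and separately the composite $\K fA \xrightarrow{q} \KRR^A(f)$. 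The plan is to build the isomorphism $\KRR^A(f) \cong \KRR^B(f)\otimes_{\K B}\K A$ directly: define $\psi : \KRR^B(f)\otimes_{\K B}\K A \to \KRR^A(f)$ on generators by $\overline{fb}\otimes a \mapsto \overline{fba}$ (overlines denoting the appropriate quotient classes), check it is well-defined (the relation $\KRR^B_<(f)$ maps into $\KRR^B_<(f)A \subseteq \KRR^A_<(f)$ by hypothesis, and the $\K B$-balancing is automatic), check it is a $\K A$-module map, check surjectivity (every class $\overline{fa}$ is hit), and finally check injectivity by a dimension/rank count or by constructing an inverse.

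For injectivity I would exhibit an inverse $\phi : \KRR^A(f) \to \KRR^B(f)\otimes_{\K B}\K A$ by $\overline{fa}\mapsto \overline{f}\otimes a$, which is well-defined precisely because $\KRR^A_<(f) = \KRR^B_<(f)A$: an element of $\KRR^A_<(f)$ is a sum $\sum \overline{(fb_i)}'a_i$ with $\overline{(fb_i)}'\in \KRR^B_<(f)$ meaning $\overline{fb_i} = 0$ in $\KRR^B(f)$, hence $\sum \overline{f b_i}\otimes a_i = \sum (\overline{fb_i})\otimes a_i = 0$ in the tensor product. Then $\phi$ and $\psi$ are mutually inverse on generators, completing the proof. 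I expect the main obstacle to be the bookkeeping around well-definedness of $\phi$ — one must be careful that the expression $\overline{f}\otimes a$ genuinely only depends on the class $\overline{fa}\in \KRR^A(f)$ and not on the chosen representative $a$; this is exactly where the hypothesis $\KRR^A_<(f) = \KRR^B_<(f)A$ is used in an essential way, and it is worth spelling out that $fa = fa'$ in $fA$ (same element, not just same class) forces nothing, so one really is checking that the \emph{difference lies in} $\KRR^B_<(f)A$. Once both maps are seen to be well-defined $\K A$-module homomorphisms that are inverse to each other on a spanning set, we are done.
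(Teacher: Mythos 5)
Your overall plan — exhibit explicit mutually inverse maps $\psi$ and $\phi$ between $\KRR^B(f)\otimes_{\K B}\K A$ and $\KRR^A(f)$ — is essentially the paper's own argument made concrete: the paper tensors the defining exact sequence of $\KRR^B(f)$ with $\K A$ and compares it to the sequence for $\KRR^A(f)$ via the multiplication map $\mu: \K fB\otimes_{\K B}\K A\to\K fA$, $fb\otimes a\mapsto fba$, and your $\phi$ is exactly the inverse of $\mu$ passed to the quotients. The construction of $\psi$ and the check that it kills $\KRR^B_<(f)A$ are fine. The genuine gap is precisely the point you flag and then wave away: to define $\phi$ on the basis $fA$ of $\K fA$ you must show that $fa=fa'$ \emph{as elements of the monoid} forces $\overline{f}\otimes a=\overline{f}\otimes a'$ in the tensor product. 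Your remark that ``one really is checking that the difference lies in $\KRR^B_<(f)A$'' is vacuous at this step — when $fa=fa'$ the difference is the zero vector, which lies in every subspace — and it says nothing about whether $a$ and $a'$ can be related across $\otimes_{\K B}$, which requires mediation by elements of $B$.

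This gap cannot be closed from the stated hypothesis alone, because the statement as literally given is false. Take $A=\{1,f,g,a,a',z\}$ with $z$ a zero element, $f^2=fg=gf=g^2=g$, and every other product of non-identity elements equal to $z$; let $B=\{1,f,g\}$ and keep this $f$. Then $fA=\{f,g,z\}$, $\RR^A(f)=\{f\}$, and $\KRR^A_<(f)=\K\{g,z\}=\KRR^B_<(f)A$ (since $\KRR^B_<(f)=\K g$ and $gA=\{g,z\}$), so the hypothesis holds and $\KRR^A(f)$ is one-dimensional; but $\KRR^B(f)\otimes_{\K B}\K A\cong\K A/\K\{f,g,z\}$ is three-dimensional, spanned by $\overline{f}\otimes 1$, $\overline{f}\otimes a$, $\overline{f}\otimes a'$ — and indeed $fa=fa'=z$ while $\overline{f}\otimes a\neq\overline{f}\otimes a'$. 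The paper's own proof shares this lacuna: it asserts without justification that $\mu$ is an isomorphism, and $\mu$ fails to be injective in this example. What rescues the result in its actual use are the additional hypotheses of Lemma~\ref{lemma.induction} — every $\RR$-class of $A$ meets $B$, and $\RR$-order on $B$ is induced from $A$ — which are verified for $\Mzero\subseteq\biheckemonoid$ in Lemma~\ref{lemma.conditions} and which exclude configurations like the one above; any complete write-up must invoke hypotheses of this kind to establish the injectivity of $\mu$, equivalently the well-definedness of your $\phi$.
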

\begin{proof}
  Recall that for a $\K B$-module $Y$, the module $\induced{Y}{\K B}{\K A}$
  induced by $Y$ from $\K B$ to $\K A$ is given by $\induced{Y}{\K B}{\K A} :=
  Y\tensorBA$.

  By construction of the right class modules (see
  Definition~\ref{definition.right_class_module}), we have the following exact
  sequences:
  \begin{gather}
    \label{equ.B.exact}
    0 \rightarrow \KRR^B_<(f) \rightarrow \K fB \rightarrow \KRR^B(f) \rightarrow 0\; ,\\
    \label{equ.A.exact}
    0 \rightarrow \KRR^A_<(f) \rightarrow \K fA \rightarrow \KRR^A(f) \rightarrow 0 \; .
  \end{gather}

  Consider now the sequence obtained by tensoring~\eqref{equ.B.exact} by $\K A$:
  \begin{equation}\label{equ.tensor.exact}
    0 \rightarrow \KRR^B_<(f)\tensorBA \rightarrow \K fB\tensorBA
    \rightarrow \KRR^B(f)\tensorBA \rightarrow 0 \; .
  \end{equation}
  We want to prove that it is exact and isomorphic to~\eqref{equ.A.exact}.

  First note that, since $\K B$ is a subalgebra of $\K A$, we have $b \otimes a =
  1 \otimes ba$ for $b\in B$ and $a\in A$. Therefore the product map
  \begin{displaymath}
    \mu\ :\ \begin{cases}
      \K fB\tensorBA &\longrightarrow \K fA\\
      fb \otimes a      &\longmapsto fba
    \end{cases}
  \end{displaymath}
  is an isomorphism of $\K A$-modules.

  Next consider the restriction of $\mu$ to $\KRR^B_<(f)\tensorBA$. Its image
  set is $\KRR^B_<(f)A$, which is equal to $\KRR^A_<(f)$ by
  hypothesis. Therefore, $\mu$ restricts to an $A$-module isomorphism from
  $\KRR^B_<(f)\otimes_{\K B} \K A$ to $\KRR^A_<(f)$. As a consequence, the
  following diagram is commutative, all vertical arrows being isomorphisms
  (for short we write here $\otimes$ for $\otimes_{\K B}$):
  \def\Ind#1{{#1}\otimes\K A}
  \begin{equation*}
    \begin{CD}
      0 @>>> \Ind{\KRR^B_<(f)} @>>> \Ind{\K fB}
      @>>> \Ind{\KRR^B(f)} @>>> 0 \\
      @.  @V\mu VV     @V\mu VV     @V\id VV     @.  \\
      0 @>>> \KRR^A_<(f) @>>> \K fA
      @>>> \Ind{\KRR^B(f)} @>>> 0\\
    \end{CD}
  \end{equation*}
  It is a well-known fact that the functor $? \otimes_{\K B}\K A$ is right
  exact, so that the middle and right part of the top sequence is exact. The
  left part of the bottom sequence is clearly exact. Therefore they are both
  exact sequences.

  Comparing with~\eqref{equ.A.exact}, we obtain that
  \begin{equation*}
    \KRR^A(f) \cong \KRR^B(f)\tensorBA\,,
  \end{equation*}
  where the latter is isomorphic to $\induced{\KRR^B(f)}{\K B}{\K A}$ by
  definition.
\end{proof}

In the following lemma we denote by $<_{\RR^A}$ the strict right preorder
on a monoid $A$; namely $x<_{\RR^A} y$ if $x\leq_{\RR^A} y$ but
$x\notin\RR^A(y)$.
\begin{lemma}
  \label{lemma.induction}
  Let $B \subseteq A$ be two finite monoids and assume that:
  \begin{enumerate}[(i)]
  \item \label{item.RR_induction}%
    $\RR$-order on $B$ is induced by $\RR$-order on $A$; that is, for
    all $x,y\in B$,
    \begin{displaymath}
      x<_{\RR^A} y \qquad \Longleftrightarrow \qquad x<_{\RR^B} y\,.
    \end{displaymath}
  \item \label{item.RR_intersection}
  Any $\RR$-class of $A$ intersects $B$.
  \end{enumerate}
  Then, for any $f\in B$, the equality $\KRR^B_<(f)A= \KRR^A_<(f)$
  holds. In particular,
  \begin{displaymath}
    \KRR^A(f) \cong \induced{\KRR^B(f)}{\K B}{\K A}\,.
  \end{displaymath}
  Moreover, Condition~\eqref{item.RR_induction} may be replaced be the
  following stronger condition:

  \newcounter{foobarcounter}
  \renewcommand{\thefoobarcounter}{coucou\value{foobarcounter}'}

  \begin{enumerate}[(i)]
    \gdef\theenumi{\roman{enumi}'}
  \item\label{item.RR_induction_strong}
    \hspace{2.5cm}
    $x\leq_{\RR^A} y \qquad \Longleftrightarrow \qquad x\leq_{\RR^B} y\,.$
  \end{enumerate}
\end{lemma}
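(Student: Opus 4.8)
The statement to prove is Lemma~\ref{lemma.induction}. The plan is to first establish the key equality $\KRR^B_<(f)A = \KRR^A_<(f)$ for $f\in B$, and then invoke Lemma~\ref{lemma.induction.algebrapart} to obtain the claimed isomorphism $\KRR^A(f)\cong \induced{\KRR^B(f)}{\K B}{\K A}$ for free. So the real content is the set equality of these two subspaces of $\K fA$, together with the remark that Condition~\eqref{item.RR_induction} may be strengthened to Condition~\eqref{item.RR_induction_strong} without affecting the argument.

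For the inclusion $\KRR^B_<(f)A \subseteq \KRR^A_<(f)$: take a spanning element $b\in fB$ with $b<_{\RR^B} f$, and $a\in A$. By hypothesis~\eqref{item.RR_induction}, $b<_{\RR^B} f$ implies $b<_{\RR^A} f$ (this is the ``only if'' direction of the equivalence, which is the easy one and is exactly what the strengthened form \eqref{item.RR_induction_strong} also gives, since $<$ is derived from $\le$ and non-$\RR$-equivalence). Then $ba\le_{\RR^A} b <_{\RR^A} f$, so $ba\in \KRR^A_<(f)$ since $\RR$-order is a preorder and $\RR^A$-classes are linearly pre-ordered below $f$; more precisely $ba \le_{\RR^A} f$ and $ba\notin\RR^A(f)$ because $ba\le_{\RR^A}b<_{\RR^A}f$ forces $ba<_{\RR^A} f$. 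Hence $ba\in fA$ with $ba<_{\RR^A}f$, i.e. $ba\in\KRR^A_<(f)$, and the inclusion follows by linearity. For the reverse inclusion $\KRR^A_<(f)\subseteq \KRR^B_<(f)A$: take $c\in fA$ with $c<_{\RR^A} f$. By Condition~\eqref{item.RR_intersection}, the $\RR^A$-class of $c$ meets $B$, so there is $c'\in B$ with $c\;\RR^A\;c'$, whence $c = c' a'$ for some $a'\in A$ and $c' = c a''$ for some $a''\in A$. From $c = fa$ and $c'\le_{\RR^A} f$ we get $c'\le_{\RR^A} f$; and $c'\notin\RR^A(f)$ since $c'\;\RR^A\;c$ and $c\notin\RR^A(f)$. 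Thus $c'<_{\RR^A} f$, and since $c'\in B$, Condition~\eqref{item.RR_induction} (equivalence) gives $c'<_{\RR^B} f$. We still need $c'\in fB$: indeed $c' = ca'' = faa''$, but this only shows $c'\le_{\RR^A}f$; to land in $fB$ note that $c'\;\RR^A\;c\le_{\RR^A}f$ and $c'\in B$ together with $f\in B$ and Condition~\eqref{item.RR_induction} force $c'\le_{\RR^B}f$, i.e. $c' = f b$ for some $b\in B$ after possibly adjusting — here one uses that $c'\le_{\RR^B} f$ exactly means $c'\in fB$. Then $c = c' a' \in (fB)A$ with $c' <_{\RR^B} f$, so $c\in \KRR^B_<(f)A$.

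The subtle point — and the step I expect to be the main obstacle — is the verification that the element $c'$ produced by Condition~\eqref{item.RR_intersection} genuinely lies in $fB$ (not merely in $B$ with $c'\le_{\RR^A} f$), and correspondingly that strict $\RR^A$-comparison with $f$ transfers to strict $\RR^B$-comparison. This is where one must be careful about the difference between the preorder $\le_\RR$ and the associated equivalence: one needs that $c'\;\RR^A\;c$ and $c\le_{\RR^A} f$ with $c\notin\RR^A(f)$ imply $c'<_{\RR^A} f$, which is immediate from transitivity of $\le_{\RR^A}$ and the definition of $<_{\RR^A}$, and then Condition~\eqref{item.RR_induction} applied to the pair $(c',f)$ in $B$. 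Writing $c'<_{\RR^B} f$ unwinds to $c'\in fB$ and $c'\notin\RR^B(f)$, giving exactly membership of $c'$ in the spanning set of $\KRR^B_<(f)$.

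Finally, for the last sentence of the lemma: Condition~\eqref{item.RR_induction_strong} ($x\le_{\RR^A}y \Leftrightarrow x\le_{\RR^B}y$ for $x,y\in B$) implies Condition~\eqref{item.RR_induction}, because the strict relations are defined from the non-strict ones by excluding $\RR$-equivalence, and $\RR$-equivalence of $x,y\in B$ also transfers back and forth under \eqref{item.RR_induction_strong} (from $x\le_{\RR^B}y$ and $y\le_{\RR^B}x$ iff $x\le_{\RR^A}y$ and $y\le_{\RR^A}x$). Hence the whole argument goes through verbatim with \eqref{item.RR_induction_strong} in place of \eqref{item.RR_induction}, and this is all that is needed; Lemma~\ref{lemma.induction.algebrapart} then delivers the induced-module isomorphism, completing the proof.
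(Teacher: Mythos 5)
Your proof is correct and follows essentially the same route as the paper: the easy inclusion $\KRR^B_<(f)A\subseteq\KRR^A_<(f)$ via $ba\le_{\RR^A}b<_{\RR^A}f$ using the $B\to A$ direction of~(i), the reverse inclusion by picking a representative $c'\in B$ of the $\RR^A$-class of $c$ via~(ii) and transferring $c'<_{\RR^A}f$ to $c'<_{\RR^B}f$ via the $A\to B$ direction of~(i), and then Lemma~\ref{lemma.induction.algebrapart} for the module isomorphism. The point you flag as subtle — that $c'<_{\RR^B}f$ already unwinds to $c'\in fB$ — is resolved exactly as you say, and your closing remark that~\eqref{item.RR_induction_strong} implies~\eqref{item.RR_induction} is also correct.
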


\begin{proof}\mbox{}

  Inclusion $\subseteq$: Take $b\in B$ with $b<_{\RR^B} f$ and $a\in
  A$. Then, using~\eqref{item.RR_induction}, $ba\in \KRR^A_<(f)$:
  \begin{displaymath}
    ba \leq_{\RR^A} b < _{\RR^A} f\,.
  \end{displaymath}

  Inclusion $\supseteq$: Take $a\in A$ with $a<_{\RR^A}f$.
  Using~\eqref{item.RR_intersection} choose an element $b\in B$ such
  that $b \; \RR^A \; a$. Then $b\leq_{\RR^A} a <_{\RR^A} f$ and
  therefore, by~\eqref{item.RR_induction}, $b\in \KRR^B_<(f)$. It
  follows that $a\in \KRR^B_<(f) A$.

  The statement $\KRR^A(f) \cong \induced{\KRR^B(f)}{\K B}{\K A}$ follows from 
  Lemma~\ref{lemma.induction.algebrapart}.
\end{proof}

Here is an example of what can go wrong when
Condition~\eqref{item.RR_induction} fails.
\begin{example}
  \label{example.rees counterexample}
  Let $A$ be the (multiplicative) submonoid of $M_2(\Z)$ with elements given
  by the matrices:
  \begin{displaymath}
    1    :=\left(\begin{smallmatrix}1&0\\0&1\end{smallmatrix}\right),\,
    b_{11}:=\left(\begin{smallmatrix}1&0\\0&0\end{smallmatrix}\right),\,
    b_{12}:=\left(\begin{smallmatrix}0&1\\0&0\end{smallmatrix}\right),\,
    a_{21}:=\left(\begin{smallmatrix}0&0\\1&0\end{smallmatrix}\right),\,
    b_{22}:=\left(\begin{smallmatrix}0&0\\0&1\end{smallmatrix}\right),\,
    0    :=\left(\begin{smallmatrix}0&0\\0&0\end{smallmatrix}\right)\,.
  \end{displaymath}
  Alternatively, $A$ is the aperiodic Rees matrix monoids (see
  Definition~\ref{definition.rees}) whose non-trivial $\JJ$-class is
  described by:
  \begin{displaymath}
    \begin{pmatrix}
      b_{11}^* & b_{12}  \\
      a_{21}   & b_{22}^*\\
    \end{pmatrix}\,,
  \end{displaymath}
  where the ${}^*$ marks the elements which are idempotent. In other
  words $A=M(P)$, where
  $P:=\left(\begin{smallmatrix}1&0\\0&1\end{smallmatrix}\right)$, and
  for convenience the above matrix specifies names for the elements of
  the non-trivial $\JJ$-class. Recall that the non-trivial left and
  right classes of $A$ are given respectively by the columns and rows
  of this matrix.

  Let $B$ be the submonoid $\{1,b_{11},b_{12},b_{22},0\}$. Then $B$
  satisfies Condition~\eqref{item.RR_intersection} but not
  Condition~\eqref{item.RR_induction}: indeed $b_{11}\; \RR^A\;
  b_{12}$ whereas $b_{11}<_{\RR^B} b_{12}$. Then, taking $f=b_{11}$,
  one obtains $\RR^B_<(b_{11}) = \{0, b_{12}\}$ so that
  $\RR^B_<(b_{11})A = \{0, b_{11}, b_{12}\}$, and therefore:
  \begin{displaymath}
    \K\{0\} = \KRR^A_<(b_{11}) \subset \KRR^B_<(b_{11})A = \K\{0,b_{11},b_{12}\}\,.
  \end{displaymath}
  Now $\KRR^B(b_{11}) = \K\{0, b_{11}, b_{12}\} / \K\{0, b_{12}\}$, so
  that $\KRR^B(b_{11})$ is one-dimensional, spanned by $x := b_{11}
  \mod (\K\{0, b_{12}\})$. The action of $B$ is given by $x.1 =
  x.b_{11} = x$ and $x.m = 0$ for any $m\in B \setminus \{1, b_{11}\}$.

  We claim that
  \begin{displaymath}
  \induced{\KRR^B(b_{11})}{\K B}{\K A} = \KRR^B(b_{11})\tensorBA  = 0\,.
  \end{displaymath}
  Indeed, $x \otimes 1 = x.b_{11} \otimes 1 = x \otimes b_{11} =
  x \otimes b_{12} a_{21} = x.b_{12} \otimes a_{21} = 0$. Thus
  \begin{equation*}
    \KRR^A(b_{11}) \not\cong \induced{\KRR^B(b_{11})}{\K B}{\K A}\,.
  \end{equation*}
\end{example}

\medskip

As shown in the following example,
Condition~\eqref{item.RR_induction_strong} may be strictly stronger
than Condition~\eqref{item.RR_induction} because $<_\RR$ is only a preorder.
\begin{example} \label{example.rees}
  Let $A$ be the aperiodic Rees matrix monoid with non-trivial $\JJ$-class
  given by:
  \begin{displaymath}
    \begin{pmatrix}
      a_{11}^* & b_{12} & b_{13}\\
      a_{21}^* & b_{22}^* & a_{23}\\
      a_{31}^* & a_{32} & b_{33}^*\\
    \end{pmatrix}\,,
  \end{displaymath}
  Let $B$ be the submonoid $\{1,b_{12},b_{13},b_{22},b_{33},0\}$. Then
  $B$ satisfies Conditions~\eqref{item.RR_induction} and~\ref{item.RR_intersection}, but not
  Condition~\eqref{item.RR_induction_strong}: $b_{12}$ and $b_{13}$ are
  incomparable for $\leq_{\RR^B}$ whereas they are in the same right
  class for $A$.
\end{example}
\medskip

We now turn to the proof of
Proposition~\ref{proposition.translation_module_induced_from_M0} by showing
that $\Mzero(W) \subseteq \biheckemonoid(W)$ satisfy the conditions of
Lemma~\ref{lemma.induction}. We use the stronger
condition~\eqref{item.RR_induction_strong}.
\begin{lemma}
  \label{lemma.conditions}
  The biHecke monoid and its Borel submonoid $\Mzero(W) \subseteq \biheckemonoid(W)$
  satisfy conditions~\eqref{item.RR_induction_strong} and~\eqref{item.RR_intersection} of
  Lemma~\ref{lemma.induction}.
\end{lemma}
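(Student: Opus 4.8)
I need to verify two conditions for the inclusion $\Mzero(W) \subseteq \biheckemonoid(W)$: the strong condition $(\text{i}')$ that $\le_{\RR^A}$ restricts to $\le_{\RR^{\Mzero}}$ on $\Mzero$, and condition (ii) that every $\RR$-class of $\biheckemonoid$ meets $\Mzero$. The plan is to exploit the combinatorial characterization of $\RR$-classes in $\biheckemonoid$ from Proposition~\ref{proposition.characterization_R_class}: two elements are $\RR$-related iff they have the same fibers, and more generally $f \le_\RR g$ (for $f = gh$) is controlled by $\fibers(g) \preceq \fibers(f)$ together with the data of $\one.f$, via Lemma~\ref{lemma.right_module}.

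\begin{proof}[Proof of Lemma~\ref{lemma.conditions}]
Throughout, write $A := \biheckemonoid(W)$ and $B := \Mzero(W)$, and recall that $B = \{f \in A \mid w_0.f = w_0\}$.

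\emph{Condition~\eqref{item.RR_intersection}.} Let $f \in A$. By
Proposition~\ref{proposition.characterization_R_class} the $\RR$-class of $f$
is $\{f_u \mid u \in [1, \type(f)^{-1}w_0]_R\}$, where $f_u$ is the unique
element of $A$ with $\fibers(f_u) = \fibers(f)$ and $\one.f_u = u$. We must
exhibit one $f_u$ fixing $w_0$. By
Proposition~\ref{proposition.weak_order2}\eqref{item.image_set} applied to
$[\one, w_0]_L$, the image of any such $f_u$ is the interval $[u.f_u,
w_0.f_u]_L$ of the same type $\type(f)$; taking $u$ maximal, namely $u =
\type(f)^{-1}w_0$, forces $w_0.f_u = w_0$, so $f_u \in B$ and meets $\RR(f)$.
(Equivalently, $e_{u, w_0}$ with $u = \type(f)^{-1}w_0$ lies in $B \cap \RR(f)$.)

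\emph{Condition~\eqref{item.RR_induction_strong}.} The implication
$x \le_{\RR^B} y \Rightarrow x \le_{\RR^A} y$ is immediate since $B \subseteq A$.
For the converse, suppose $x, y \in B$ and $x = ya$ for some $a \in A$. By
Lemma~\ref{lemma.right_module}, $\fibers(y) \preceq \fibers(x)$ and
$\rank(x) \le \rank(y)$. Consider the idempotent $e_{\one.x}$ of $B$ and
set $b := \opi_{(w_0.y)^{-1} w_0}\,\pi_{(w_0.x)^{-1} w_0} \in B$ (using
Remark~\ref{remark.interval} to translate the image interval of $y$ to that of
$x$); then $yb \in yB$ has the same fibers as $y$ restricted along the relevant
interval and $\one.(yb) = \one.x$. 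More directly: since $x \in yA$ and both lie
in $B$, write $x = yg$ with $g \in A$; replacing $g$ by $g\,e_{\one.x}$ if
necessary we may assume $\one.g = \one$, i.e. $g \in \Mone$, but we actually want
$g \in \Mzero$. For this, observe $w_0.x = w_0$ and $w_0.y = w_0$, so
$w_0.(yg) = (w_0.y).g = w_0.g = w_0$, hence $g$ fixes $w_0$ on the image of
$y$, which contains $w_0$; thus replacing $g$ by $g' := e_{\type(x)^{-1}w_0,\,w_0}\,g$
we obtain $g' \in B$ with $x = yg'$, giving $x \le_{\RR^B} y$. Finally,
conditions~\eqref{item.RR_induction_strong} and~\eqref{item.RR_intersection}
of Lemma~\ref{lemma.induction} are exactly what we have verified.
\end{proof}

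\textbf{The main obstacle.} The delicate point is the converse direction of $(\text{i}')$: from $x = ya$ with $a \in \biheckemonoid$ arbitrary, I must produce $a' \in \Mzero$ with $x = ya'$. The issue is that $a$ need not fix $w_0$, so I cannot simply use it; I have to modify $a$ on the part of $W$ outside $\im(y)$ (which is where $ya$ and $ya'$ are allowed to differ) while preserving the action on $\im(y)$ and arranging $w_0 \mapsto w_0$. The right tool is precomposing or postcomposing with the idempotents $e_{a,b}$ of Proposition~\ref{proposition.uniqueness_fix1}\eqref{e.shift}, using Remark~\ref{remark.interval} to see that such a correction is an isomorphism on the relevant interval and hence does not change $ya$ as an element of $yA$; one must just check carefully that the corrected element genuinely fixes $w_0$ and that $y \cdot (\text{corrected } a) = x$, which reduces to tracking $\one.x$ and $w_0.x$ via Lemma~\ref{lemma.right_module} and Proposition~\ref{proposition.fibers_image_set}. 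I expect no essential difficulty beyond this bookkeeping, since all the structural facts (fibers determine $\RR$-classes, image sets are intervals, translation by $\pi$/$\opi$ realizes interval isomorphisms) are already in hand.
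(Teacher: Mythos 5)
Your verification of condition~\eqref{item.RR_intersection} is correct, and it takes a slightly different route from the paper: you locate the representative of $\RR(f)$ in $\Mzero$ directly, as the element $f_u$ with $u=\type(f)^{-1}w_0$ maximal in the index set of Proposition~\ref{proposition.characterization_R_class}, and check $w_0.f_u=\type(f)u=w_0$. The paper instead produces the unique representative in $\Mone$ and transports it to $\Mzero$ via the bar involution; both work, and yours is arguably more explicit. (Your parenthetical that $e_{u,w_0}$ itself lies in $\RR(f)$ is unjustified --- it has the right type but need not have the same fibers as $f$ --- but you do not use it.)

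Your treatment of condition~\eqref{item.RR_induction_strong} contains the correct computation but you fail to recognize that it already finishes the proof, and the ``corrections'' you wrap around it are both unnecessary and, as stated, false. Writing $x=yg$ with $x,y\in\Mzero$, the chain $w_0.g=(w_0.y).g=w_0.(yg)=w_0.x=w_0$ is an unconditional statement about the value of the function $g$ at the point $w_0$; it says $g\in\Mzero$ by definition, so $x\le_{\RR^{\Mzero}}y$ and you are done --- this is exactly the paper's one-line argument. Your gloss that ``$g$ fixes $w_0$ \emph{on the image of} $y$'' misreads this (no restriction to $\im(y)$ is involved), and your closing paragraph confirms you believe $g$ must still be modified to land in $\Mzero$, which is a genuine conceptual error. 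Moreover the modifications do not work: replacing $g$ by $g\,e_{\one.x}$ does not preserve the product, since $e_{\one.x}$ has image $[1,\one.x]_L$ by Proposition~\ref{proposition.uniqueness_fix1} and therefore moves the maximal elements of $\im(x)=\im(yg)$ (e.g. $w_0.e_{\one.x}=\one.x\neq w_0$ in general), so $y(g\,e_{\one.x})\neq yg$; and the final substitution $g':=e_{\type(x)^{-1}w_0,\,w_0}\,g$ would require $e_{\one.x,w_0}$ to fix all of $\im(y)=[\one.y,w_0]_L$, which fails whenever $\one.y<_L\one.x$. Deleting everything except the displayed computation of $w_0.g$ yields a correct proof.
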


\begin{proof}
  By Proposition~\ref{proposition.characterization_R_class}, for any
  $f\in \biheckemonoid$ there exists a unique $f_1\in \RR(f) \cap
  \Mone$. Using the bar involution of
  Section~\ref{subsection.involution}, one finds similarly, a unique
  $\overline{f}_1\in \RR(f)\cap \Mzero$. This proves
  condition~\eqref{item.RR_intersection}.

  We now prove the non-trivial implication in
  Condition~\eqref{item.RR_induction_strong}. Take $f,g\in \Mzero$ with
  $f\leq_{\RR^{\biheckemonoid}} g$. Then, $f=gx$ for some $x\in
  \biheckemonoid$. Note that $w_0.f = w_0.g=w_0$, which implies that
  $w_0.x = w_0$ as well. Hence $x$ is in fact in $\Mzero$ and
  $f\leq_{\RR^{\Mzero}} g$.
\end{proof}

\begin{proof}[Proof of Proposition~\ref{proposition.translation_module_induced_from_M0}]
  Let $g_w:= e_{w,w_0}$. By definition, the translation module is the
  quotient $T_w = \K g_w\biheckemonoid / \KRR_<(g_w)$, whereas
  $\Szero_w= \K g_w\Mzero / \KRR_<^{w_0}(g_w)$. By
  Lemma~\ref{lemma.conditions}, $\Mzero \subseteq \biheckemonoid$
  satisfy the two conditions of Lemma~\ref{lemma.induction};
  Proposition~\ref{proposition.translation_module_induced_from_M0}
  follows.
\end{proof}

\begin{theorem}
  The right regular representation of $\K\biheckemonoid$ admits a
  filtration with factors all isomorphic to translation
  modules, and its character is given by
  \begin{equation}
    [\K \biheckemonoid] = \sum_{f\in \Mzero} [T_{\one.f}]\,.
  \end{equation}
\end{theorem}
\begin{proof}
  As any monoid algebra, $\K \biheckemonoid$ admits a filtration where each
  composition factor is given by (the linear span of) a $\RR$-class of
  $\biheckemonoid$. By Proposition~\ref{proposition.isom_translation},
  each such composition factor is isomorphic to the translation module
  $T_{\one.f}$, where $f$ is the unique element of the $\RR$-class
  which lies in $\Mzero$. The character formula
  follows. Alternatively, it can be obtained using
  Proposition~\ref{proposition.translation_module_induced_from_M0} and
  the character formula for the right regular representation of
  $\Mzero$ (see Remark~\ref{remark.characters.M1}):
  \begin{equation}
    [\K \Mzero]_\Mzero = \sum_{f\in \Mzero} [\Szero_{\one.f}]_\Mzero\,.\qedhere
  \end{equation}
\end{proof}

\begin{proposition}
  \label{proposition.character_T_S0}
  For any $w\in W$ the translation module $T_w$ is multiplicity-free
  as an $\K\Mzero$-module and its character is given by
  \begin{equation}
    [T_w]_\Mzero = \sum_{u\in [1,w]_R} [\Szero_u]_{\Mzero}\,.
  \end{equation}
\end{proposition}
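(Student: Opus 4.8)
The plan is to compute the character of $T_w$ restricted to $\Mzero$ directly, using the combinatorial model of the translation module provided by Proposition~\ref{proposition.combinatorial_translation}. Recall that $T_w$ has basis $(f_u)_{u\in[1,w]_R}$, and that the action of any $g\in\biheckemonoid$ on this basis either sends $f_u$ to $0$ or to $f_{u.g}$. We restrict to $g\in\Mzero$; since $w_0.g=w_0$, the element $g$ is extensive for Bruhat order (Corollary~\ref{corollary.Bruhat_regressive}~\eqref{item.extensive}), hence in particular order-preserving and length-non-contracting in the appropriate directions. The first step is to observe that, because $\Mzero\subseteq\biheckemonoid$ and the restriction of a module's character depends only on a composition series, it suffices to exhibit a filtration of $T_w$ by $\K\Mzero$-submodules whose successive quotients are the simple modules $\Szero_u$ for $u\in[1,w]_R$, each occurring exactly once.

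The cleanest route is to order the basis $\{f_u : u\in[1,w]_R\}$ by a linear extension of Bruhat order on $[1,w]_R$ and set $V_{\le u} := \operatorname{span}\{f_{u'} : u'\in[1,w]_R,\ u'\le_B u\}$ (and the strict version $V_{<u}$). First I would check that each $V_{\le u}$ is a $\K\Mzero$-submodule of $T_w$: given $f\in\Mzero$ and $u'\le_B u$, either $f_{u'}.f=0$ (fine) or $f_{u'}.f=f_{u'.f}$ with $u'.f\in[1,w]_R$; by Proposition~\ref{proposition.bruhat} the action of any element of $\biheckemonoid$ on $W$ preserves Bruhat order, and $\Mzero$-elements being extensive means $u'.f\ge_B u'$ — but we need $u'.f\le_B u$, which is \emph{not} automatic. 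So the naive Bruhat filtration fails; instead I would use the \emph{reverse} Bruhat order, i.e. filter by $V_{\ge u}:=\operatorname{span}\{f_{u'}:u'\ge_B u\}\cup\{0\}$, which \emph{is} stable under $\Mzero$ precisely because $\Mzero$-elements are extensive (or annihilate). Concretely: one builds a chain $0=V_0\subset V_1\subset\cdots\subset V_{|[1,w]_R|}=T_w$ of $\K\Mzero$-submodules where $V_k/V_{k-1}$ is one-dimensional, spanned by the image of $f_{u_k}$, with $u_1,\dots$ a linear extension of \emph{reverse} Bruhat order on $[1,w]_R$.

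The second step identifies the one-dimensional quotient $V_k/V_{k-1}$ as a simple $\K\Mzero$-module. On this quotient, $f\in\Mzero$ acts on the class of $f_{u_k}$ by the scalar $1$ if $u_k.f=u_k$ and by $0$ otherwise (if $u_k.f\ne u_k$ then since $f$ is extensive $u_k.f>_B u_k$, so $f_{u_k.f}\in V_{k-1}$, giving $0$ in the quotient). Comparing with the defining action of $\Szero_{u_k}$ in Proposition~\ref{proposition.restriction.M.H0} (or the definition of $\Szero_w$ in Section~\ref{section.M1}), this quotient is exactly $\Szero_{u_k}$. Summing over $k$ gives $[T_w]_\Mzero=\sum_{u\in[1,w]_R}[\Szero_u]_\Mzero$, and in particular the character is multiplicity-free. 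Alternatively, and perhaps more in the spirit of the paper, one could derive this from Proposition~\ref{proposition.translation_module_induced_from_M0} together with the character formula $[\K\Mzero]_\Mzero=\sum_{f\in\Mzero}[\Szero_{\one.f}]_\Mzero$ from Remark~\ref{remark.characters.M1}, combined with a Frobenius-reciprocity / double-coset bookkeeping argument; but the direct filtration argument above is more self-contained and avoids having to track the induction functor at the level of characters.

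The main obstacle is verifying stability of the filtration, i.e. pinning down the correct direction (reverse Bruhat order) and checking carefully that $\Mzero$-elements acting on $T_w$ via the model of Proposition~\ref{proposition.combinatorial_translation} are genuinely extensive-or-annihilating on the \emph{indices} $u\in[1,w]_R$ — this uses that the action agrees with the action on $W$ whenever nonzero, and that $w_0.f=w_0$ forces extensivity on all of $[1,w]_R$, not merely on $W$. Once that is in place, everything else is a routine unwinding of the one-dimensional quotients.
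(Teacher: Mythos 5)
Your strategy is essentially the paper's: filter $T_w$ by one-dimensional $\K\Mzero$-steps indexed by $[1,w]_R$ and identify the successive quotients with the simple modules $\Szero_u$. Your insistence on orienting the filtration by Bruhat (equivalently length) \emph{upper} sets is correct and well placed: since elements of $\Mzero$ act on the indices of $T_w$ extensively-or-annihilatingly, those are indeed the stable subspaces (the paper's own write-up states the chain with the opposite orientation, so your care here is not wasted).

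There is, however, a gap in the identification of $V_k/V_{k-1}$ with $\Szero_{u_k}$. You prove that any $g\in\Mzero$ with $u_k.g\neq u_k$ acts by $0$ on the quotient, but you merely assert that any $g$ with $u_k.g=u_k$ acts by $1$, and this is precisely the nontrivial half. Proposition~\ref{proposition.combinatorial_translation} only guarantees that $f_{u_k}.g$ is either $0$ or $f_{u_k.g}$; the annihilation branch for $\opi_i$ occurs exactly when $i\notin\Des(u)$ and $us_i\notin[1,w]_R$, a situation in which $u.\opi_i=u$ \emph{in $W$}. So a priori some $g\in\Mzero$ fixing $u_k$ in $W$ could still kill $f_{u_k}$ in $T_w$, in which case the quotient would be a different one-dimensional module and the character formula would fail. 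Two ways to close this: (a) follow the paper and test only the idempotent generators $e_{v,w_0}$ of $\Mzero$, using Lemma~\ref{lemma.triangular.uu}~(iii) — the image of $e_{v,w_0}$ on $T_w$ is $[v,w_0]_L\cap[1,w]_R$, a set of fixed points of this idempotent, so $e_{v,w_0}$ fixes $f_{u_k}$ exactly when $v\leq_L u_k$, i.e.\ exactly when $u_k.e_{v,w_0}=u_k$; since both modules are one-dimensional, agreement on generators suffices. Or (b) argue directly for arbitrary $g$: if $u_k.g=u_k$, then extensivity of $g$ at the top element $w_0w^{-1}u_k$ of $\im(f_{u_k})$ combined with the length-contraction of Proposition~\ref{proposition.weak_order2} forces $g$ to map $\im(f_{u_k})$ isomorphically, whence $\rank(f_{u_k}g)=\rank(f_{u_k})$ and $f_{u_k}.g=f_{u_k}$ by Proposition~\ref{proposition.characterization_R_class}. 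With either of these inserted, your proof is complete.
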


\begin{proof}
  Let $f$ be an element in $\biheckemonoid$ which yields the translation module $T_w$, and
  define $f_u$ as in Proposition~\ref{proposition.characterization_R_class}.

  Take some sequence $u_1,\dots,u_m$ (for $m=|[1,w]_R|$) of the elements of
  $[1,w]_R$ which is length increasing, and define the corresponding sequence
  of subspaces by $X_i:=\K \{u_1,\dots,u_i\}$. Using
  Lemma~\ref{lemma.triangular.uu}, each such subspace is stable by $\Mzero$,
  and $X_0 \subset \dots \subset X_m$ forms an $\Mzero$-composition series of
  $T_w$ since $X_i/X_{i-1}$ is of dimension $1$.

  Consider now a composition factor $X_i/X_{i-1}$. Again, by
  Lemma~\ref{lemma.triangular.uu}, $e_{v,w_0}$ fixes $u_i$ if and only
  if $v\leq_L u_i$ (that is if the image set $[u_i,w^{-1}w_0u_i]_L$ of $f_{u_i}$
  is contained in the image set $[v,w_0]_L$ of $e_{v,w_0}$), and kills
  it otherwise. Hence, $X_i/X_{i-1}$ is isomorphic to
  $\Szero_{u_i}$.
\end{proof}

\begin{theorem}
  The decomposition map of $\K\biheckemonoid$ over $\K\Mzero$ is lower uni-triangular
  for right order, with $0,1$ entries. More explicitly,
  \begin{equation}
    [S_w]_\Mzero = \sum_{u\in [1,w]_R \backslash \bigcup_{v \cut w} [1,v]_R} [\Szero_u]_\Mzero\,.
  \end{equation}
\end{theorem}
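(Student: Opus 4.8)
The plan is to combine the two preceding results about characters over $\K\Mzero$ in a short homological argument in the Grothendieck group $G_0(\Mzero)$. Recall that by Proposition~\ref{proposition.character_T_S0} we have, for every $w\in W$,
\begin{equation*}
  [T_w]_\Mzero = \sum_{u\in [1,w]_R} [\Szero_u]_\Mzero\,,
\end{equation*}
and that by Theorem~\ref{theorem.simple}\eqref{item.simples_translation} the translation module $T_w$ has a submodule $\sum_{v\cut w} T_v$ with quotient $S_w$. First I would use the exact sequence
\begin{equation*}
  0 \longrightarrow \sum_{v\cut w} T_v \longrightarrow T_w \longrightarrow S_w \longrightarrow 0
\end{equation*}
to write, in $G_0(\Mzero)$,
\begin{equation*}
  [S_w]_\Mzero = [T_w]_\Mzero - \Bigl[\sum_{v\cut w} T_v\Bigr]_\Mzero\,.
\end{equation*}

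\textbf{Key steps.} The first substantial step is to identify the character of the submodule $\sum_{v\cut w} T_v$. As a vector space it is spanned by the basis elements $f_u$ for $u$ in $\bigcup_{v\cut w}[1,v]_R$ (this is exactly the description of $\rad T_w$ used in the proof of Theorem~\ref{theorem.simple}, via Remark~\ref{remark.Sw}). Using the same length-increasing composition series as in the proof of Proposition~\ref{proposition.character_T_S0}, restricted to this submodule — each one-dimensional quotient indexed by $u$ is isomorphic to $\Szero_u$ — one obtains
\begin{equation*}
  \Bigl[\sum_{v\cut w} T_v\Bigr]_\Mzero = \sum_{u\in \bigcup_{v\cut w}[1,v]_R} [\Szero_u]_\Mzero\,.
\end{equation*}
Subtracting from $[T_w]_\Mzero = \sum_{u\in[1,w]_R}[\Szero_u]_\Mzero$ and noting that $\bigcup_{v\cut w}[1,v]_R \subseteq [1,w]_R$ (each $v\cut w$ satisfies $v\le_R w$ by Theorem~\ref{theorem.cutting_poset}), the $[\Szero_u]_\Mzero$ with $u$ in the union cancel, leaving
\begin{equation*}
  [S_w]_\Mzero = \sum_{u\in [1,w]_R \backslash \bigcup_{v\cut w}[1,v]_R} [\Szero_u]_\Mzero\,,
\end{equation*}
as desired. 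The triangularity claim then follows formally: $w$ itself lies in $[1,w]_R$ but in no $[1,v]_R$ for $v\cut w$ (since $v<_R w$ strictly), so $[\Szero_w]_\Mzero$ appears with coefficient $1$; and every other $u$ appearing satisfies $u\le_R w$, giving lower-unitriangularity for right order with $0,1$ entries. One should also remark that the $\Szero_u$ are pairwise non-isomorphic (they are the simples of $\K\Mzero$, indexed by $W$), so the multiplicities in the stated sum are genuinely the decomposition numbers.

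\textbf{Main obstacle.} The only delicate point is justifying that the composition series of the submodule $\sum_{v\cut w}T_v$ really does have factors $\Szero_u$ for precisely the $u\in\bigcup_{v\cut w}[1,v]_R$ — equivalently, that restricting the length-increasing flag of Proposition~\ref{proposition.character_T_S0} to this submodule behaves well. This is handled by the same application of Lemma~\ref{lemma.triangular.uu}: $e_{v,w_0}$ fixes $f_u$ iff $v\le_L u$ and kills it otherwise, and the submodule is spanned by exactly the $f_u$ with $u$ in the union, which by Remark~\ref{remark.Sw} is a $\le_L$-lower set within $[1,w]_R$, hence $\Mzero$-stable and amenable to a length-increasing filtration. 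An alternative, perhaps cleaner, route avoiding any re-examination of submodules is purely Grothendieck-theoretic: apply the additivity of characters directly to the filtration $0\subseteq \sum_{v\cut w}T_v \subseteq T_w$ and invoke that $G_0(\Mzero)$ is free on $\{[\Szero_u]\}_{u\in W}$, so the identity $[S_w]_\Mzero = [T_w]_\Mzero - [\sum_{v\cut w}T_v]_\Mzero$ combined with the two known character formulas forces the stated answer — provided one has the character of $\sum_{v\cut w}T_v$, which brings us back to the same computation. Either way the argument is short; the bookkeeping about which route to present is the main choice to make.
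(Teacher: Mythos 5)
Your overall route is the paper's: realize $S_w$ as the quotient of $T_w$ by $\sum_{v\cut w}T_v$, invoke Proposition~\ref{proposition.character_T_S0} for $[T_w]_\Mzero$, and subtract the character of the submodule. The gap is in your justification of that submodule's character. You assert that $\sum_{v\cut w}T_v$ is spanned by the basis elements $f_u$ for $u\in\bigcup_{v\cut w}[1,v]_R$, and that this coordinate subspace is $\Mzero$-stable because its index set is a $\le_L$-lower set. Both claims are false. Each $T_v$ sits inside $T_w$ as $P^{(w)}_J$ (Proposition~\ref{proposition.cond_Jtiling}), whose natural basis consists of the alternating sums $v^{w}_J.\pi_x$ supported on whole tiles $[1,w_J]_R\, x$; Remark~\ref{remark.Sw} only says that this basis is \emph{triangular} with respect to the $f_u$, which identifies a basis of the quotient but does not make the submodule a coordinate subspace. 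Already for $W=\sg[2]$ and $w=w_0=s_1$ one has $\sum_{v\cut w_0}T_v = P^{(w_0)}_{\{1\}} = \K(f_1-f_{s_1})$, not $\K f_1$; and $\K f_1$ is not even $\Mzero$-stable, since $\pi_1\in\Mzero$ and $f_1.\pi_1 = f_{s_1}$. So the computation obtained by restricting the flag to this alleged coordinate subspace does not go through as written.

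The identity you want, $\bigl[\sum_{v\cut w}T_v\bigr]_\Mzero = \sum_{u\in\bigcup_{v\cut w}[1,v]_R}[\Szero_u]_\Mzero$, is nevertheless true, and the theorem can be reached without computing it as an equality. Since each $T_v\cong P^{(w)}_J$ is itself isomorphic to a translation module, Proposition~\ref{proposition.character_T_S0} applied to $T_v$ shows that $\sum_{v\cut w}T_v$ contains at least one composition factor $\Szero_u$ for every $u\in\bigcup_{v\cut w}[1,v]_R$; multiplicity-freeness of $T_w$ as a $\K\Mzero$-module then forces $S_w$ to contain none of these factors; finally the dimension formula $\dim S_w = \bigl|[1,w]_R\setminus\bigcup_{v\cut w}[1,v]_R\bigr|$ from Theorem~\ref{theorem.simple}, together with $\dim\Szero_u=1$, forces every remaining $\Szero_u$ to occur with multiplicity exactly one. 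Replacing your middle step by this inequality-plus-counting argument repairs the proof; the rest of your write-up (the exact sequence in the Grothendieck group, and the deduction of lower unitriangularity with $0,1$ entries) is fine.
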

\begin{proof}
  Since $S_w$ is a quotient of $T_w$, its composition factors form a
  subset of the composition factors for $T_w$. Hence, using
  Proposition~\ref{proposition.character_T_S0}, the decomposition
  matrix of $\biheckemonoid$ over $\Mzero$ is lower triangular for right order,
  with $0,1$ entries. Furthermore, by construction (see
  Remark~\ref{remark.Sw} and Theorem~\ref{theorem.simple} (ii)), $S_w
  = T_w / \sum_{v\cut w} T_v$; using
  Proposition~\ref{proposition.character_T_S0} the sum on the right
  hand side contains at least one composition factor isomorphic to
  $\Szero_u$ for each $u$ in $[1,v]_R$ with $v\cut w$; therefore $S_w$
  has no such composition factor. We conclude using the dimension
  formula of Theorem~\ref{theorem.simple} (ii).
\end{proof}
\begin{example}
  Following up on Example~\ref{example.S4312}, the decomposition of
  the $\K\biheckemonoid$-simple module $S_{4312}$ over $\K\Mzero$ is given by
  $[S_{4312}]_\Mzero =
  [\Szero_{4312}]+[\Szero_{4132}]+[\Szero_{1432}]$.  See also
  Figure~\ref{figure.4312} and the decomposition matrices given in
  Appendix~\ref{appendix.decomposition.matrices}.
\end{example}

\subsection{Example: the rank $2$ Coxeter groups}

We now give a complete description of the representation theory of the
biHecke monoid for each rank $2$ Coxeter group $I_p$. The proofs are
left as exercises for the reader.

\begin{example}
  \label{example.bihecke.Ip}
  Let $\biheckemonoid$ be the biHecke monoid for the dihedral group
  $W:=I_p$ of order $2p$. Then, $\biheckemonoid$ is a regular monoid.

  The right class module $\KRR_w:=\KRR(e_{w,w_0})$ is the translation
  module spanned by $[1,w]_R$. It is of dimension $2p$ for $w=w_0$,
  and $\len(w)$ otherwise. The left class modules $\KLL_1$ and
  $\KLL_{w_0}$ are respectively the trivial module spanned by $1$ and
  the zero module spanned by $w_0$. For $w\ne1,w_0$, the left class
  module $\KLL_w$ is of dimension $\len(w)-1$, and its structure is as
  in Figure~\ref{fig.left_right_class_module.Ip}. In particular,
  \begin{displaymath}
    |\biheckemonoid| = 2p+1 + 2 \sum_{k=1}^{p-1} k (k+1) = \frac{2}{3}p^3 + \frac{4}{3}p + 1\,.
  \end{displaymath}
  \begin{figure}
    \centering
    \mbox{\begin{tikzpicture}[>=latex,line join=bevel,
  every node/.style={draw,draw=none,inner sep=.5ex},
  every loop/.style={shorten >= 1pt, looseness=.5},
  yscale=1]

  \node (one)  at (0,4) {$\phantom{\opi_{2121}}\pi_{1212}$};
  \node (1)    at (0,3) {$\phantom{\scriptstyle 121}\opi_{1}\pi_{1212}$};
  \node (21)   at (0,2) {$\phantom{\scriptstyle 11}\opi_{21}\pi_{1212}$};
  \node (121)  at (0,1) {$\phantom{\scriptstyle 1}\opi_{121}\pi_{1212}$};
  \node (2121) at (0,0) {$\opi_{2121}\pi_{1212}$};

  \draw [->,color=blue] (one) to [bend right=10] node [left] {$\overline 1$} (1);
  \draw [->,color=red ] (1)   to [bend right=10] node [left] {$\overline 2$} (21);
  \draw [->,color=blue] (21)  to [bend right=10] node [left] {$\overline 1$} (121);
  \draw [->,color=red ] (121) to [bend right=10] node [left] {$\overline 2$} (2121);

  \draw [<-,color=red ] (one) to [bend left =10] node [right] {$2$} (1);
  \draw [<-,color=blue] (1)   to [bend left =10] node [right] {$1$} (21);
  \draw [<-,color=red ] (21)  to [bend left =10] node [right] {$2$} (121);
  \draw [<-,color=blue] (121) to [bend left =10] node [right] {$1$} (2121);

  \draw [<-,color=red ] (one) to [loop right] node [right] {$2$} (one);
  \draw [<-,color=blue] (1)   to [loop right] node [right] {$1$} (1);
  \draw [<-,color=red ] (21)  to [loop right] node [right] {$2$} (21);
  \draw [<-,color=blue] (121) to [loop right] node [right] {$1$} (121);
  \draw [<-,color=red ] (2121)to [loop right] node [right] {$2$} (2121);

  \draw [<-,color=red ] (one) to [loop left] node [left] {$\overline 2$} (one);
  \draw [<-,color=blue] (1)   to [loop left] node [left] {$\overline 1$} (1);
  \draw [<-,color=red ] (21)  to [loop left] node [left] {$\overline 2$} (21);
  \draw [<-,color=blue] (121) to [loop left] node [left] {$\overline 1$} (121);
  \draw [<-,color=red ] (2121)to [loop left] node [left] {$\overline 2$} (2121);
\end{tikzpicture}

  every node/.style={draw,draw=none,inner sep=.5ex},
  every loop/.style={shorten >= 1pt, looseness=.5},
  yscale=1]

  \node (one)  at (0,4) {$\pi_{1212}\phantom{\opi_{2121}}$};
  \node (1)    at (0,3) {$\pi_{1212}\opi_{2\phantom{121}}$};
  \node (21)   at (0,2) {$\pi_{1212}\opi_{21\phantom{21}}$};
  \node (121)  at (0,1) {$\pi_{1212}\opi_{212\phantom{1}}$};
  \node (2121) at (0,0) {$\pi_{1212}\opi_{2121}$};

  \draw [->,color=red ] (one) to [bend right=10] node [left] {$\overline 2$} (1);
  \draw [->,color=blue] (1)   to [bend right=10] node [left] {$\overline 1$} (21);
  \draw [->,color=red ] (21)  to [bend right=10] node [left] {$\overline 2$} (121);
  \draw [->,color=blue] (121) to [bend right=10] node [left] {$\overline 1$} (2121);

  \draw [<-,color=red ] (one) to [bend left =10] node [right] {$2$} (1);
  \draw [<-,color=blue] (1)   to [bend left =10] node [right] {$1$} (21);
  \draw [<-,color=red ] (21)  to [bend left =10] node [right] {$2$} (121);
  \draw [<-,color=blue] (121) to [bend left =10] node [right] {$1$} (2121);

  \draw [<-,color=red ] (one) to [loop right] node [right] {$2$} (one);
  \draw [<-,color=blue] (1)   to [loop right] node [right] {$1$} (1);
  \draw [<-,color=red ] (21)  to [loop right] node [right] {$2$} (21);
  \draw [<-,color=blue] (121) to [loop right] node [right] {$1$} (121);

  \draw [<-,color=red ] (1)   to [loop left] node [left] {$\overline 2$} (21);
  \draw [<-,color=blue] (21)  to [loop left] node [left] {$\overline 1$} (21);
  \draw [<-,color=red ] (121) to [loop left] node [left] {$\overline 2$} (121);
  \draw [<-,color=blue] (2121)to [loop left] node [left] {$\overline 1$} (2121);
\end{tikzpicture}

    \caption{The left and right class modules indexed by
      $w:=s_1s_2s_1s_2$ for the biHecke monoid $\biheckemonoid(I_p)$
      with $p\geq 5$.  The left picture also describes the left simple
      module $S_w$ of $\biheckemonoid(I_p)$, and the projective module
      $P^{w_0}_w$ of the Borel submonoid $\Mzero(I_p)$. }
    \label{fig.left_right_class_module.Ip}
  \end{figure}
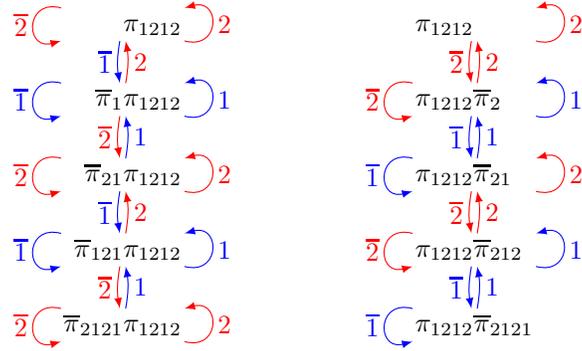

  The simple right module $S_w$ can be constructed from the cutting
  poset. Namely, $S_1$ is the trivial module spanned by $1$, while
  $S_{w_0}$ is the zero module spanned by $w_0$ and, for $w\ne 1,w_0$,
  $S_w$ is the quotient of the right class module by the line spanned
  by alternating sum of $[1,w]_R$. The simple left module $S_w$ is
  directly given by the left class module $L_w$.

  The quiver of $\biheckemonoid$ is given by the cutting poset (see
  Figure~\ref{figure.quiver.Ip}). The $q$-Cartan matrix is given by
  the path algebra of this quiver; namely there is an extra arrow from
  $1$ to $w_0$ with weight $q^2$. In particular, it is upper
  unitriangular and of determinant $1$.
\end{example}
\begin{figure}
  \centering
  \begin{tikzpicture}[>=latex,line join=bevel,]
  \node (one) at ( 0,0) [draw,draw=none] {$1$};
  \node (1)   at (-2,1) [draw,draw=none] {$s_1$};
  \node (2)   at ( 2,1) [draw,draw=none] {$s_2$};
  \node (12)  at (-3,1) [draw,draw=none] {$s_1s_2$};
  \node (21)  at ( 3,1) [draw,draw=none] {$s_2s_1$};
  \node (121)  at (-4,1) [draw,draw=none] {$s_1s_2s_1$};
  \node (212)  at ( 4,1) [draw,draw=none] {$s_2s_1s_2$};
  \node (w01) at (-1,1) [draw,draw=none] {$w_0s_1$};
  \node (w02) at ( 1,1) [draw,draw=none] {$w_0s_2$};
  \node (w0)  at ( 0,2) [draw,draw=none] {$w_0$};

  \draw [->] (one) -- (1);
  \draw [->] (one) -- (2);
  \draw [->] (one) -- (12);
  \draw [->] (one) -- (21);
  \draw [->] (one) -- (121);
  \draw [->] (one) -- (212);
  \draw [->] (one) -- (w01);
  \draw [->] (one) -- (w02);
  \draw [->] (w01) -- (w0);
  \draw [->] (w02) -- (w0);
\end{tikzpicture}

  \caption{The Hasse diagram of the cutting poset for the dihedral
    group $W:=I_5$. This is also the quiver of the biHecke monoid for
    that group.}
  \label{figure.quiver.Ip}
\end{figure}
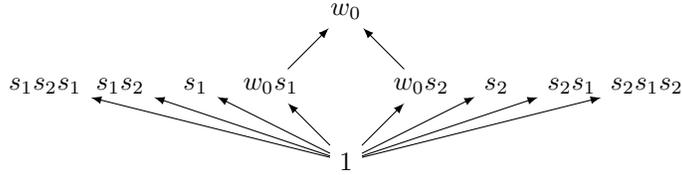
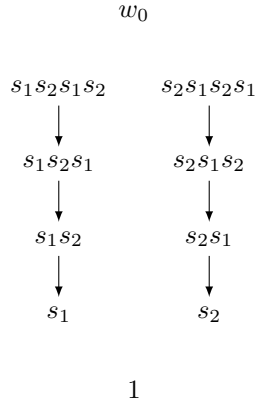
\begin{figure}
  \centering
  \begin{tikzpicture}[>=latex,line join=bevel,]
  \node (one) at ( 0,0) [draw,draw=none] {$1$};
  \node (1)   at (-1,1) [draw,draw=none] {$s_1$};
  \node (2)   at ( 1,1) [draw,draw=none] {$s_2$};
  \node (12)  at (-1,2) [draw,draw=none] {$s_1s_2$};
  \node (21)  at ( 1,2) [draw,draw=none] {$s_2s_1$};
  \node (121) at (-1,3) [draw,draw=none] {$s_1s_2s_1$};
  \node (212) at ( 1,3) [draw,draw=none] {$s_2s_1s_2$};
  \node (1212)at (-1,4) [draw,draw=none] {$s_1s_2s_1s_2$};
  \node (2121)at ( 1,4) [draw,draw=none] {$s_2s_1s_2s_1$};
  \node (w0)  at ( 0,5) [draw,draw=none] {$w_0$};

  \draw [<-] (1) -- (12);
  \draw [<-] (2) -- (21);
  \draw [<-] (12) -- (121);
  \draw [<-] (21) -- (212);
  \draw [<-] (121) -- (1212);
  \draw [<-] (212) -- (2121);
\end{tikzpicture}

  \caption{The quiver of the Borel submonoid $\Mzero(I_5)$ of the
    biHecke monoid for the dihedral group $I_5$.}
  \label{figure.quiver.M0.Ip}
\end{figure}

\begin{example}
  \label{example.M0.Ip}
  Let $\Mzero$ be the Borel submonoid of the biHecke monoid for the
  dihedral group $W:=I_p$ of order $2p$.

  The projective module $P_w$ of $\Mzero$ is given by the left simple
  modules $S_w$, or equivalently the left-class-module $L_w$ of
  $\biheckemonoid$. In particular,
  \begin{displaymath}
    |\Mzero| = 1 + 1 + 2\sum_{k=1}^{p-1} k = p^2-p+2\,.
  \end{displaymath}

  The quiver of $\Mzero$ is given by the cover relations in Bruhat
  order (or equivalently right order) which are not covers in left
  order (see Figure~\ref{figure.quiver.M0.Ip}); this gives two chains
  of length $p-1$. The monoid algebra is isomorphic to the path
  algebra of this quiver, which gives right away its radical
  filtration. Combinatorially speaking, every non-idempotent element
  $f$ of the monoid admits a unique minimal factorization $e_we_u$,
  with $\len(u)<\len(w)$ and $u\not\le_L w$; namely, $u:=f(1)$ and $w$
  is the smallest element such that $f(w)=1$.
\end{example}
\section{Research in progress}
\label{section.in_progress}

Our guiding problem is the search for a formula for the cardinality of
the biHecke monoid. Using a standard result of the representation theory
of finite-dimensional algebras together with the results of this paper,
we can now write
\begin{displaymath}
  |\biheckemonoid(W)| = \sum_{w\in W} \dim S_w \dim P_w\,,
\end{displaymath}
where $\dim S_w$ is given by an inclusion-exclusion formula. It
remains to determine the dimensions of the projective modules $P_w$.

While studying the representation theory of the Borel submonoid
$\Mone$ as an intermediate step, the authors realized that many of the combinatorial
ingredients that arose were well-known in the semigroup community (for
example the Green's relations and related classes, automorphism groups,
etc.), and hence the representation theory of $\Mone$ is naturally
expressed in the context of $\JJ$-trivial monoids
(see~\cite{Denton_Hivert_Schilling_Thiery.JTrivialMonoids}). This
sparked their interest in the representation theory of more general
classes of monoids, in particular aperiodic monoids.

At the current stage, it appears that the Cartan matrix of an
aperiodic monoid (and therefore the composition series of its
projective modules, and by consequence their dimensions) is completely
determined by the knowledge of the composition series for both left
and right class modules. In other words, the study in this paper of
right class modules (i.e. translation modules), whose original purpose
was to construct the simple modules
using~\cite[Theorem~7]{Ganyushkin_Mazorchuk_Steinberg.2009}, turns out
to complete half of this program. The remaining half, in progress, is
the decomposition of left class modules.

At the combinatorial level, this requires to control
$\LL$-order. Loosely speaking, $\LL$-order is essentially given by
left and right order in $W$; however, within $\LL$-classes the
structure seems more elusive, in particular because fibers are more
difficult to describe than image sets. Another difficulty is that,
unlike for $\RR$-class modules, $\LL$-class modules are not all
isomorphic to regular ones (i.e. classes containing idempotents).

Yet, the general theory gives that the decomposition matrix should be
upper triangular for left-right order for regular classes, and upper
triangular for Bruhat order for nonregular ones, with no left-right
``arrow'' for left-right order. Pushing this further gives that the
Cartan matrix has determinant $1$.

We conclude by illustrating the above for $W=\sg[4]$ in
Figure~\ref{figure.cartan_matrix.S4}.  The blue arrows are the
covering relations of the cutting poset, which encode the composition
series of the translation modules (i.e. right class modules).
Namely, the character of $T_w$ is given by the sum of $q^k [S_u]$ for
$u$ below $w$ in the cutting poset, with $k$ the distance from $u$ to
$w$ in that poset. For example:
\begin{align*}
  [T_{2143}] &= [S_{2143}] + q [S_{1243}] + q [S_{2134}] + q^2 [S_{1234}]\\
  [T_{2341}] &= [S_{2341}] + q [S_{1234}]\\
  [T_{4123}] &= [S_{4123}] + q [S_{4123}].
\end{align*}
Similarly the black (resp. red) arrows encode the composition series
of regular (resp. nonregular) left classes. In this simple example,
the $q$-character of a right projective module $P_w$ is then given by
\begin{equation*}
  [P_w] = [T_w] + \sum_u q [T_u],
\end{equation*}
where $(u,w)$ is a black or red arrow in the graph. For example,
\begin{equation*}
  \begin{split}
    [P_{2143}] &= [T_{2143}] + q [T_{2341}] + q [T_{4123}]\\
    &= [S_{2143}] + q [S_{1243}] + q [S_{2134}] + q [S_{2341}] + q [S_{4123}] + 3q^2 [S_{1234}]\,.
  \end{split}
\end{equation*}

\begin{figure}
  \begin{bigcenter}
    \scalebox{.5}{\begin{tikzpicture}[>=latex,line join=bevel,]
  \node (1234) at (365bp,0bp) [draw,draw=none,blue] {$\text{1234: 1  1  1}$};

  \node (1423) at (60bp,50bp) [draw,draw=none,blue] {$\text{1423: 2  3  3}$};
  \node (1342) at (125bp,50bp) [draw,draw=none,blue] {$\text{1342: 2  3  3}$};
  \node (3412) at (365bp,50bp) [draw,draw=none,blue] {$\text{3412: 5  6  6}$};
  \node (2341) at (450bp,50bp) [draw,draw=none,blue] {$\text{2341: 3  4  4}$};
  \node (3124) at (281bp,50bp) [draw,draw=none,blue] {$\text{3124: 2  3  3}$};
  \node (2314) at (209bp,50bp) [draw,draw=none,blue] {$\text{2314: 2  3  3}$};
  \node (4123) at (703bp,50bp) [draw,draw=none,blue] {$\text{4123: 3  4  4}$};
  \node (2413) at (770bp,50bp) [draw,draw=none,blue] {$\text{2413: 4  5  5}$};
  \node (3142) at (835bp,50bp) [draw,draw=none,blue] {$\text{3142: 4  5  5}$};

  \node (1432) at (59bp,100bp) [draw,draw=none] {$\text{1432: 1  6 12}$};
  \node (3214) at (209bp,100bp) [draw,draw=none] {$\text{3214: 1  6 12}$};
  \node (1243) at (134bp,100bp) [draw,draw=none] {$\text{1243: 1  2  8}$};
  \node (2431) at (450bp,100bp) [draw,draw=none,blue] {$\text{2431: 4  8  8}$};
  \node (4312) at (520bp,100bp) [draw,draw=none,blue] {$\text{4312: 3 12 12}$};
  \node (2134) at (284bp,100bp) [draw,draw=none] {$\text{2134: 1  2  8}$};
  \node (3241) at (600bp,100bp) [draw,draw=none,blue] {$\text{3241: 4  8  8}$};
  \node (3421) at (365bp,100bp) [draw,draw=none,blue] {$\text{3421: 3 12 12}$};
  \node (4231) at (703bp,100bp) [draw,draw=none,blue] {$\text{4231: 5 12 12}$};
  \node (4132) at (770bp,100bp) [draw,draw=none,blue] {$\text{4132: 4  8  8}$};
  \node (4213) at (835bp,100bp) [draw,draw=none,blue] {$\text{4213: 4  8  8}$};

  \node (2143) at (365bp,150bp) [draw,draw=none] {$\text{2143: 1  4 12}$};
  \node (4321) at (520bp,150bp) [draw,draw=none,blue] {$\text{4321: 1 24 24}$};
  \node (1324) at (703bp,150bp) [draw,draw=none] {$\text{1324: 1  2 22}$};

  \draw [->,blue] (1234) to [out=172,in=-62] (1243);
  \draw [->,blue] (1234) to [bend left=7] (1342);
  \draw [->,blue] (1234) to (2314);
  \draw [->,blue] (1234) to (3124);
  \draw [->,blue] (1234) to (3412);
  \draw [->,blue] (1234) to (2341);

  \draw [->,blue] (1234) to (2134);
  \draw [->,blue] (1234) to [bend left=6] (1423);
  \draw [->,blue] (1234) to [bend right=17] (1324);
  \draw [->,blue] (1234) to [bend right=2] (4123);
  \draw [->,blue] (1234) to [bend right=2] (2413);
  \draw [->,blue] (1234) to [bend right=4] (3142);

  \draw [->,blue] (1243) to (2143);

  \draw [->,blue] (1342) to (1432);

  \draw [->,blue] (1423) to (1432);

  \draw [->,blue] (2134) to (2143);

  \draw [->,blue] (2314) to (3214);

  \draw [->,blue] (2341) to (3421);
  \draw [->,red ] (2341) to (2143);
  \draw [->,blue] (2341) to (3241);
  \draw [->,blue] (2341) to (2431);
  \draw [->     ] (2341) to [bend right=18] (1324);
  \draw [->,blue] (2341) to [bend right=8] (4231);

  \draw [->,blue] (3124) to (3214);

  \draw [->,red ] (3412) to (1432);
  \draw [->     ] (3412) to (1243);
  \draw [->,red ] (3412) to (3214);
  \draw [->     ] (3412) to (2134);
  \draw [->,blue] (3412) to (3421);
  \draw [->,blue] (3412) to (4312);

  \draw [->,blue] (3421) to (4321);

  \draw [->,blue] (4123) to (4312);
  \draw [->,red ] (4123) to [out=136, in=-5] (2143);
  \draw [->,blue] (4123) to (4231);
  \draw [->     ] (4123) to [bend right=60] (1324);
  \draw [->,blue] (4123) to [bend right=9] (4132);
  \draw [->,blue] (4123) to [bend right=5] (4213);

  \draw [->,blue] (4231) to (4321);
  \draw [->     ] (4231) to (1324);

  \draw [->,blue] (4312) to (4321);

\end{tikzpicture}}
  \end{bigcenter}
  \caption{Graph encoding the characters of left and right class
    modules, and therefore the Cartan invariant matrix for
    $\biheckemonoid(\sg[4])$. See the text for details.}
  \label{figure.cartan_matrix.S4}
\end{figure}
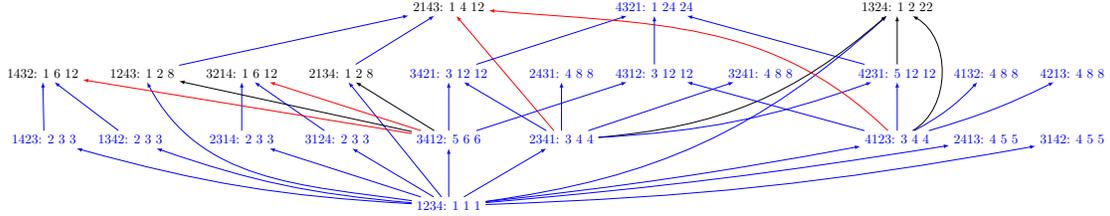

\appendix

\section{Monoid of edge surjective morphism of a colored graph}
\label{appendix.colored_graphs}

Let $C$ be a set whose elements are called colors. We consider colored
simple digraphs without loops. More precisely, a $C$-colored graph is a
triple $G=(V, E, c)$, where $V$ is the set of vertices of $G$,
$E\subset V\times V / \{(x,x) \mid x\in V\}$ is the set of (oriented)
edges of $G$, and $c:E\to C$ is the coloring map.

\begin{definition}
  Let $G=(V, E, c)$ and $G'=(V', E', c')$ be two colored graphs. An
  \emph{edge surjective} morphism (or ES-morphism) from $G$ to $G'$
  is a map $f:V\to V'$ such that
  \begin{itemize}
  \item For any edge $(a,b)\in E$, either $f(a)=f(b)$, or
    $(f(a),f(b))\in E'$ and $c(a,b) = c'(f(a), f(b))$.
  \item For any edge $(a',b')\in E'$ with $a'$ and $b'$ in the image set
    of $f$ there exists an edge $(a,b)\in E$ such that
    $f(a) = a'$ and $f(b) = b'$.
  \end{itemize}
\end{definition}
Note that by analogy to categories, instead of ES-morphism, we can speak
about full morphisms.

The following proposition shows that colored graphs together with edge
surjective morphisms form a category.
\begin{proposition}
  For any colored graphs $G, G_1,G_2,G_3$:
  \begin{itemize}
  \item The identity $\id:G\to G$ is an ES-morphism;
  \item For any ES-morphism $f:G_1\to G_2$ and $g:G_2\to G_3$ the
    composed function $g \circ f:G_1\to G_3$ is an ES-morphism.
  \end{itemize}
\end{proposition}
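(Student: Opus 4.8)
The plan is to verify the two bullet points directly from the definition of an ES-morphism, since this is a purely formal check. First I would dispose of the identity: for $\id:G\to G$ and any edge $(a,b)\in E$, we have $\id(a)=a$, $\id(b)=b$, so $(\id(a),\id(b))=(a,b)\in E$ with $c(a,b)=c(\id(a),\id(b))$, satisfying the first condition; and for any edge $(a',b')\in E$ with both endpoints in the image of $\id$ (which is all of $V$), the edge $(a',b')$ itself maps onto it, satisfying the second condition.

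The substantive part is closure under composition. Suppose $f:G_1\to G_2$ and $g:G_2\to G_3$ are ES-morphisms, with $G_i=(V_i,E_i,c_i)$. I would check the first condition for $g\circ f$: take $(a,b)\in E_1$. Since $f$ is an ES-morphism, either $f(a)=f(b)$, in which case $(g\circ f)(a)=(g\circ f)(b)$ and we are done, or $(f(a),f(b))\in E_2$ with $c_1(a,b)=c_2(f(a),f(b))$. In the latter case, applying the first condition for $g$ to the edge $(f(a),f(b))$ gives either $g(f(a))=g(f(b))$, again done, or $(g(f(a)),g(f(b)))\in E_3$ with $c_2(f(a),f(b))=c_3(g(f(a)),g(f(b)))$; chaining the two color equalities yields $c_1(a,b)=c_3((g\circ f)(a),(g\circ f)(b))$, as required.

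For the second (edge-surjectivity) condition for $g\circ f$, take $(a',b')\in E_3$ with $a'$ and $b'$ both in the image of $g\circ f$. Then in particular $a',b'$ lie in the image of $g$, so applying edge-surjectivity of $g$ produces an edge $(\bar a,\bar b)\in E_2$ with $g(\bar a)=a'$, $g(\bar b)=b'$. The one point requiring a little care is that I then need $\bar a$ and $\bar b$ to lie in the image of $f$ in order to invoke edge-surjectivity of $f$; this is where I would be slightly careful, but it follows: since $a'=(g\circ f)(x)$ for some $x\in V_1$, the vertex $f(x)$ satisfies $g(f(x))=a'=g(\bar a)$, and one can simply replace $\bar a$ by $f(x)$ — more precisely, rerun the edge-surjectivity of $g$ knowing that preimages in the image of $f$ are available, or observe that the edge $(\bar a,\bar b)$ can be chosen with endpoints in $\operatorname{im}(f)$ because $a',b'\in\operatorname{im}(g\circ f)\subseteq g(\operatorname{im}(f))$ and edge-surjectivity of $g$ only uses that the endpoints are hit. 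Granting that, edge-surjectivity of $f$ applied to $(\bar a,\bar b)$ (now with endpoints in $\operatorname{im} f$, assuming $f(\bar a)=a'$-type reductions) produces $(a,b)\in E_1$ with $f(a)=\bar a$, $f(b)=\bar b$, hence $(g\circ f)(a)=a'$ and $(g\circ f)(b)=b'$. This minor bookkeeping about which preimages to pick is the only mild obstacle; everything else is immediate unwinding of definitions.
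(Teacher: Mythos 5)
Your handling of the identity map and of the first (edge/color) condition for $g\circ f$ is correct. The problem is exactly at the point you flag, and neither of the patches you sketch actually works. Edge-surjectivity of $g$ applied to $(a',b')$ produces \emph{some} edge $(\bar a,\bar b)\in E_2$ with $g(\bar a)=a'$ and $g(\bar b)=b'$; it gives you no control whatsoever over \emph{which} preimages of $a'$ and $b'$ occur as its endpoints, so you cannot ``choose'' $(\bar a,\bar b)$ to have endpoints in $\im(f)$. Nor can you ``replace $\bar a$ by $f(x)$'': the pair $(f(x),\bar b)$ need not be an edge of $G_2$ merely because $g(f(x))=g(\bar a)$ -- the edge relation is not constant on the fibers of $g$.

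This is not a bookkeeping issue: at the stated level of generality the claim is false, so no argument of this shape can close the gap. Take $G_1$ with vertices $\{x,y\}$ and no edges; $G_2$ with vertices $\{p,q,r,s\}$ and the single edge $(q,r)$ of color $c$; $G_3$ with vertices $\{u,v\}$ and the single edge $(u,v)$ of color $c$. Let $f(x)=p$, $f(y)=s$, and $g(p)=g(q)=u$, $g(r)=g(s)=v$. Both conditions for $f$ are vacuous (there are no edges in $G_1$, and the unique edge of $G_2$ has no endpoint in $\im(f)=\{p,s\}$), and $g$ satisfies both conditions via the edge $(q,r)$; yet $(u,v)$ has both endpoints in $\im(g\circ f)=\{u,v\}$ and has no preimage edge in the empty $E_1$, so $g\circ f$ fails edge-surjectivity. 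The composition statement does hold when $f$ is surjective on vertices (then $\im(f)=V_2$ and your argument goes through verbatim), and for the graphs the paper actually uses one would have to invoke their additional structure ($C$-regularity, connectedness, the ranking); but as a statement about arbitrary colored graphs and arbitrary ES-morphisms it requires such an extra hypothesis, and a correct proof must say where it is used.
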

\begin{corollary}
  For any colored graph $G$, the set of ES-morphisms from $G$ to $G$
  is a submonoid of the monoid of the functions from $G$ to $G$.
\end{corollary}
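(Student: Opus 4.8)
The statement to prove is that for any colored graph $G$, the set $\operatorname{ES}(G)$ of ES-morphisms from $G$ to itself is a submonoid of the monoid $\operatorname{Fun}(V,V)$ of all functions from the vertex set of $G$ to itself. The plan is to deduce this directly from the Proposition immediately preceding it, which asserts that colored graphs with ES-morphisms form a category. To show $\operatorname{ES}(G)$ is a submonoid of $\operatorname{Fun}(V,V)$, I only need to check three things: (1) $\operatorname{ES}(G) \subseteq \operatorname{Fun}(V,V)$, (2) $\operatorname{ES}(G)$ contains the identity element of $\operatorname{Fun}(V,V)$, and (3) $\operatorname{ES}(G)$ is closed under the composition operation of $\operatorname{Fun}(V,V)$.

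\textbf{Step 1 (containment).} By definition, an ES-morphism from $G$ to $G$ is in particular a map $f\colon V\to V$, so $\operatorname{ES}(G)\subseteq \operatorname{Fun}(V,V)$ tautologically; there is nothing to verify here beyond unwinding the definition of ES-morphism.

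\textbf{Step 2 (identity).} The identity of the monoid $\operatorname{Fun}(V,V)$ is the function $\id\colon V\to V$. By the first bullet of the preceding Proposition, $\id\colon G\to G$ is an ES-morphism, hence $\id \in \operatorname{ES}(G)$. \textbf{Step 3 (closure under composition).} Given $f,g\in\operatorname{ES}(G)$, i.e.\ two ES-morphisms $G\to G$, the second bullet of the preceding Proposition (applied with $G_1=G_2=G_3=G$) yields that $g\circ f\colon G\to G$ is again an ES-morphism, so $g\circ f\in\operatorname{ES}(G)$. Since composition of functions is associative on $\operatorname{Fun}(V,V)$, it is in particular associative on the subset $\operatorname{ES}(G)$, and together with Steps 1 and 2 this establishes that $\operatorname{ES}(G)$ is a submonoid of $\operatorname{Fun}(V,V)$.

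\textbf{Main obstacle.} There is essentially no obstacle: the corollary is a formal consequence of the category structure established in the Proposition, and the only content is matching the submonoid axioms (contains identity, closed under the ambient associative operation) against the category axioms (identity morphisms exist, morphisms compose). The one point worth stating explicitly, to avoid circularity, is that the composition used in the category of colored graphs is literally the composition of the underlying vertex maps — which is immediate from how composition of ES-morphisms was defined in the Proposition — so that closure of $\operatorname{ES}(G)$ under categorical composition really is closure under the monoid operation of $\operatorname{Fun}(V,V)$.
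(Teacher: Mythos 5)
Your proof is correct and matches the paper's intent exactly: the paper states this corollary without proof as an immediate consequence of the preceding Proposition (identity is an ES-morphism, ES-morphisms are closed under composition), which is precisely the argument you give. The only added content, noting that categorical composition is literally composition of the underlying vertex maps, is a reasonable explicit check but changes nothing.
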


Here are some general properties of ES-morphisms:
\begin{proposition}
  Let $G_1$ and $G_2$ be two colored graphs and $f$ an ES-morphism
  from $G_1$ to $G_2$. Then the image of any path in $G_1$ is a path
  in $G_2$.
\end{proposition}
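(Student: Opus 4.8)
The claim is that an ES-morphism $f\colon G_1\to G_2$ of colored graphs sends paths to paths. The plan is to reduce immediately to the case of a single edge and then invoke the definition of ES-morphism directly, after which the general case follows by a trivial induction on the length of the path.

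First I would fix a path $x_0\to x_1\to\cdots\to x_k$ in $G_1$, meaning that $(x_{i-1},x_i)\in E_1$ for each $i=1,\dots,k$, and consider the sequence of images $f(x_0),f(x_1),\dots,f(x_k)$ in $V_2$. For each individual edge $(x_{i-1},x_i)\in E_1$, the first bullet in the definition of an ES-morphism gives exactly two alternatives: either $f(x_{i-1})=f(x_i)$, or $(f(x_{i-1}),f(x_i))\in E_2$. So after passing from the sequence $f(x_0),\dots,f(x_k)$ to the subsequence obtained by deleting every term equal to its predecessor (i.e.\ collapsing the stationary steps), consecutive terms are always joined by an edge of $G_2$. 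This collapsed sequence is therefore a path in $G_2$ (of length at most $k$), and it has the same endpoints $f(x_0)$ and $f(x_k)$ as the original image sequence.

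I would phrase this cleanly as an induction on $k$: for $k=0$ a single vertex is a (degenerate) path, and for the inductive step one applies the definition to the last edge $(x_{k-1},x_k)$, distinguishing the two cases — if $f(x_{k-1})=f(x_k)$ the image path is unchanged, and if $(f(x_{k-1}),f(x_k))\in E_2$ we append this edge to the path obtained from the inductive hypothesis. In either case the result is a path in $G_2$ from $f(x_0)$ to $f(x_k)$.

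There is essentially no obstacle here: the only thing to be a little careful about is what one means by ``path'' — whether stationary steps are allowed, and whether one insists the image be a path of the same length. The cleanest reading is that the image of the path is a walk that, after removing repeated consecutive vertices, is a genuine path; the proof above covers this. The second bullet of the ES-morphism definition (the edge-surjectivity condition) is not needed for this statement at all; only the first bullet is used. I would keep the write-up to a couple of sentences, noting explicitly that the coloring condition $c_2(f(a),f(b))=c_1(a,b)$ comes along for free from the first bullet but plays no role in the path claim.

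\begin{proof}
  Let $x_0\to x_1\to\cdots\to x_k$ be a path in $G_1$, so that
  $(x_{i-1},x_i)\in E_1$ for all $1\le i\le k$. We argue by induction on
  $k$. If $k=0$ there is nothing to prove, $f(x_0)$ being a one-vertex
  path in $G_2$. Assume $k\ge 1$ and that the image of
  $x_0\to\cdots\to x_{k-1}$ is a path in $G_2$ from $f(x_0)$ to
  $f(x_{k-1})$. Applying the definition of an ES-morphism to the edge
  $(x_{k-1},x_k)\in E_1$, either $f(x_{k-1})=f(x_k)$, in which case the
  image of $x_0\to\cdots\to x_k$ coincides with that of
  $x_0\to\cdots\to x_{k-1}$ and is a path in $G_2$ by the induction
  hypothesis, or $(f(x_{k-1}),f(x_k))\in E_2$, in which case appending
  this edge to the path obtained from the induction hypothesis yields a
  path in $G_2$ from $f(x_0)$ to $f(x_k)$. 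In both cases the image of
  the path is a path in $G_2$.
\end{proof}
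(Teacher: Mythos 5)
The paper states this proposition without proof, and your argument is exactly the evident intended one: induct on the length of the path and apply the first clause of the ES-morphism definition to each edge, collapsing the stationary steps. Your explicit remark that only the first bullet is used and that ``path'' must be read up to removal of repeated consecutive vertices is a sensible clarification of what the paper leaves implicit.
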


In our particular case, we have some more properties:
\begin{enumerate}[(i)]
\item The graph is acyclic, with unique source and sink.
  In particular, it is (weakly) connected.
\item The graph is ranked by the integers, and edges occur only between two
  consecutive ranks.
\item The graph is $C$-regular, which means that for any vertex $v$
  and any color $c$, there is exactly one edge entering or leaving $v$
  with color $c$.
\end{enumerate}

\begin{remarks}
  Proposition~\ref{proposition.weak_order} gives that our monoid is
  a submonoid of the $\biheckemonoid(G)$ monoid for left order.

  Propositions~\ref{proposition.weak_order2}
  and~\ref{proposition.fibers_image_set} are generic, and would apply
  to any $\biheckemonoid(G)$. For the latter, we just need that $G$ is
  $C$-regular.
\end{remarks}

A natural source of colored graphs are crystal graphs. A question that arises
is how the $G$-monoid of a crystal looks like.

\newpage
\section{Tables}
\label{appendix.tables}

\subsection{$q$-Cartan invariant matrices}
We give the Cartan invariant matrix for $\K \biheckemonoid_{w_0}$ and $\K \biheckemonoid$
in types $A_1$, $A_2$ and $A_3$. The $q$-parameter records the layer in the radical filtration. The
extra rows and columns entitled ``Simp.'' and ``Proj.'' give the dimension of
the simple and projective modules, on the right for right modules and below for
left modules. When all simple modules are one-dimensional, the column is
omitted.

Using~\cite{Thiery.CartanMatrixMonoid}, it is possible to go further,
and compute for example the Cartan invariant matrix for
$\biheckemonoid$ in type $A_4$ in about one hour (though at $q=1$
only).\bigskip \newcommand{\pc}[1]{\rotatebox[origin=Bl]{90}{#1}}

$q$-Cartan invariant matrix of $\biheckemonoid_{w_0}(\sg[2])$ (type $A_1$):
\medskip

{\small
\begin{displaymath}
\begin{array}{c|p{0.3cm}@{\,}p{0.3cm}@{}|c}
  &\pc{12}&\pc{21}& \text{Proj.} \\
\hline
12&1&.& 1  \\
21&.&1& 1  \\
\hline
\text{Proj.} &1&1\\
\end{array}
\end{displaymath}}
\bigskip

$q$-Cartan invariant matrix of $\biheckemonoid_{w_0}(\sg[3])$ (type $A_2$):
\medskip

{\small
\begin{displaymath}
\begin{array}{c|p{0.3cm}@{\,}p{0.3cm}@{\,}p{0.3cm}@{\,}p{0.3cm}@{\,}p{0.3cm}@{\,}p{0.3cm}@{}|c}
  &\pc{123}&\pc{132}&\pc{213}&\pc{231}&\pc{312}&\pc{321} & \text{Proj.} \\
\hline
123&1&.&.&.&.&.&  1  \\
132&.&1&.&.&$q$&.&  2  \\
213&.&.&1&$q$&.&.&  2  \\
231&.&.&.&1&.&.&  1  \\
312&.&.&.&.&1&.&  1  \\
321&.&.&.&.&.&1&  1  \\
\hline
\text{Proj.} &1&1&1&2&2&1\\
\end{array}
\end{displaymath}}
\bigskip

$q$-Cartan invariant matrix of $\biheckemonoid_{w_0}(\sg[4])$ (type $A_3$):
\medskip

{
\tiny
\begin{displaymath}
\begin{array}{c|p{0.3cm}@{\,}p{0.3cm}@{\,}p{0.3cm}@{\,}p{0.3cm}@{\,}p{0.3cm}@{\,}p{0.3cm}@{\,}p{0.3cm}@{\,}p{0.3cm}@{\,}p{0.3cm}@{\,}p{0.3cm}@{\,}p{0.3cm}@{\,}p{0.3cm}@{\,}p{0.3cm}@{\,}p{0.3cm}@{\,}p{0.3cm}@{\,}p{0.3cm}@{\,}p{0.3cm}@{\,}p{0.3cm}@{\,}p{0.3cm}@{\,}p{0.3cm}@{\,}p{0.3cm}@{\,}p{0.3cm}@{\,}p{0.3cm}@{\,}p{0.3cm}@{}|c}
     & \pc{1234}&\pc{1243}&\pc{1324}&\pc{1342}&\pc{1423}&\pc{1432}&\pc{2134}&\pc{2143}&\pc{2314}&\pc{2341}&\pc{2413}&\pc{2431}&\pc{3124}&\pc{3142}&\pc{3214}&\pc{3241}&\pc{3412}&\pc{3421}&\pc{4123}&\pc{4132}&\pc{4213}&\pc{4231}&\pc{4312}&\pc{4321}& \text{Proj.} \\
\hline
1234 & 1&.&.&.&.&.&.&.&.&.&.&.&.&.&.&.&.&.&.&.&.&.&.&.        & 1  \\
1243 & .&1&.&.&$q$&.&.&.&.&.&$q$&$q^2$&.&.&.&.&$q$&.&$q^2$&.&.&.&.&.& 6  \\
1324 & .&.&1&$q$&.&.&.&.&.&$q$&.&.&$q$&$q^2$&.&$q^2$&$q^3$&.&$q$&$q^2$&.&$q$&.&.& 10 \\
1342 & .&.&.&1&.&.&.&.&.&.&.&.&.&$q$&.&.&$q^2$&.&.&$q$&.&.&.&.& 4  \\
1423 & .&.&.&.&1&.&.&.&.&.&.&.&.&.&.&.&.&.&$q$&.&.&.&.&.    & 2  \\
1432 & .&.&.&.&.&1&.&.&.&.&.&.&.&.&.&.&$q$&.&.&$q$&.&.&$q^2$&.& 4  \\
2134 & .&.&.&.&.&.&1&.&$q$&$q^2$&$q$&.&.&.&.&.&$q$&.&.&.&$q^2$&.&.&.& 6  \\
2143 & .&.&.&.&.&.&.&1&.&$q$&$q$&$q^2$&.&.&.&.&.&.&$q$&.&$q^2$&$q^3$&.&.& 7  \\
2314 & .&.&.&.&.&.&.&.&1&$q$&.&.&.&.&.&.&.&.&.&.&.&.&.&.    & 2  \\
2341 & .&.&.&.&.&.&.&.&.&1&.&.&.&.&.&.&.&.&.&.&.&.&.&.    & 1  \\
2413 & .&.&.&.&.&.&.&.&.&.&1&$q$&.&.&.&.&.&.&.&.&$q$&$q^2$&.&.& 4  \\
2431 & .&.&.&.&.&.&.&.&.&.&.&1&.&.&.&.&.&.&.&.&.&$q$&.&.    & 2  \\
3124 & .&.&.&.&.&.&.&.&.&.&.&.&1&$q$&.&$q$&$q^2$&.&.&.&.&.&.&.& 4  \\
3142 & .&.&.&.&.&.&.&.&.&.&.&.&.&1&.&.&$q$&.&.&.&.&.&.&.    & 2  \\
3214 & .&.&.&.&.&.&.&.&.&.&.&.&.&.&1&$q$&$q$&$q^2$&.&.&.&.&.&.& 4  \\
3241 & .&.&.&.&.&.&.&.&.&.&.&.&.&.&.&1&.&$q$&.&.&.&.&.&.    & 2  \\
3412 & .&.&.&.&.&.&.&.&.&.&.&.&.&.&.&.&1&.&.&.&.&.&.&.    & 1  \\
3421 & .&.&.&.&.&.&.&.&.&.&.&.&.&.&.&.&.&1&.&.&.&.&.&.    & 1  \\
4123 & .&.&.&.&.&.&.&.&.&.&.&.&.&.&.&.&.&.&1&.&.&.&.&.    & 1  \\
4132 & .&.&.&.&.&.&.&.&.&.&.&.&.&.&.&.&.&.&.&1&.&.&$q$&.    & 2  \\
4213 & .&.&.&.&.&.&.&.&.&.&.&.&.&.&.&.&.&.&.&.&1&$q$&.&.    & 2  \\
4231 & .&.&.&.&.&.&.&.&.&.&.&.&.&.&.&.&.&.&.&.&.&1&.&.    & 1  \\
4312 & .&.&.&.&.&.&.&.&.&.&.&.&.&.&.&.&.&.&.&.&.&.&1&.    & 1  \\
4321 & .&.&.&.&.&.&.&.&.&.&.&.&.&.&.&.&.&.&.&.&.&.&.&1    & 1  \\
\hline
\text{Proj.} &1&1&1&2&2&1&1&1&2&5&4&4&2&4&1&4&9&3&5&4&4&6&3&1\\
\end{array}
\end{displaymath}}

\newpage

$q$-Cartan invariant matrix of $\biheckemonoid(\sg[2])$ (type $A_1$):
\medskip

{\small
\begin{displaymath}
\begin{array}{c|p{0.3cm}@{\,}p{0.3cm}@{}|cc}
&\pc{12}&\pc{21}& \text{Simp.} & \text{Proj.} \\
\hline
12&1&.& 1 & 1  \\
21&$q$&1& 1 & 2 \\
\hline
\text{Simp.} &1&1\\
\text{Proj.} &2&1\\
\end{array}
\end{displaymath}}
\bigskip

$q$-Cartan invariant matrix of $\biheckemonoid(\sg[3])$ (type $A_2$):
\medskip

{\small
\begin{displaymath}
\begin{array}{c|p{0.3cm}@{\,}p{0.3cm}@{\,}p{0.3cm}@{\,}p{0.3cm}@{\,}p{0.3cm}@{\,}p{0.3cm}@{}|cc}
  &\pc{123}&\pc{132}&\pc{213}&\pc{231}&\pc{312}&\pc{321} & \text{Simp.} & \text{Proj.} \\
\hline
123&1    &.&.&.&.&        &  1  & 1\\
132&$q$  &1&.&.&.&        &  1  & 2\\
213&$q$  &.&1&.&.&        &  1  & 2\\
231&$q$  &.&.&1&.&        &  2  & 3\\
312&$q$  &.&.&.&1&        &  2  & 3\\
321&$q^2$&.&.&$q$&$q$&1   &  1  & 6\\
\hline
\text{Simp.} &1&1&1&2&2&1\\
\text{Proj.}  &8&1&1&3&3&1\\
\end{array}
\end{displaymath}}
\bigskip

$q$-Cartan invariant matrix of $\biheckemonoid(\sg[4])$ (type $A_3$):
\medskip

{
\tiny
\begin{displaymath}
\begin{array}{c|p{1.5cm}@{\,}p{0.3cm}@{\,}p{0.3cm}@{\,}p{0.3cm}@{\,}p{0.3cm}@{\,}p{0.3cm}@{\,}p{0.3cm}@{\,}p{0.3cm}@{\,}p{0.3cm}@{\,}p{0.7cm}@{\,}p{0.3cm}@{\,}p{0.3cm}@{\,}p{0.3cm}@{\,}p{0.3cm}@{\,}p{0.3cm}@{\,}p{0.3cm}@{\,}p{0.3cm}@{\,}p{0.3cm}@{\,}p{0.7cm}@{\,}p{0.3cm}@{\,}p{0.3cm}@{\,}p{0.3cm}@{\,}p{0.3cm}@{\,}p{0.3cm}@{}|cc}
     & \pc{1234}&\pc{1243}&\pc{1324}&\pc{1342}&\pc{1423}&\pc{1432}&\pc{2134}&\pc{2143}&\pc{2314}&\pc{2341}&\pc{2413}&\pc{2431}&\pc{3124}&\pc{3142}&\pc{3214}&\pc{3241}&\pc{3412}&\pc{3421}&\pc{4123}&\pc{4132}&\pc{4213}&\pc{4231}&\pc{4312}&\pc{4321}&  \text{Simp.} &\text{Proj.} \\
\hline
1234 &  $1$&.&.&.&.&.&.&.&.&.&.&.&.&.&.&.&.&.&.&.&.&.&.&.  & 1 & 1 \\
1243 &  $q^2 + q$&$1$&.&.&.&.&.&.&.&.&.&.&.&.&.&.&$q$&.&.&.&.&.&.&.  & 1 & 8 \\
1324 &  $q^3 + 2q^2 + q$&.&$1$&.&.&.&.&.&.&$q^2 + q$&.&.&.&.&.&.&.&.&$q^2 + q$&.&.&$q$&.&.  & 1 & 22 \\
1342 &  $q$&.&.&$1$&.&.&.&.&.&.&.&.&.&.&.&.&.&.&.&.&.&.&.&.  & 2 & 3 \\
1423 &  $q$&.&.&.&$1$&.&.&.&.&.&.&.&.&.&.&.&.&.&.&.&.&.&.&.  & 2 & 3 \\
1432 &  $2q^2$&.&.&$q$&$q$&$1$&.&.&.&.&.&.&.&.&.&.&$q$&.&.&.&.&.&.&.  & 1 & 12 \\
2134 &  $q^2 + q$&.&.&.&.&.&$1$&.&.&.&.&.&.&.&.&.&$q$&.&.&.&.&.&.&.  & 1 & 8 \\
2143 &  $3q^2$&$q$&.&.&.&.&$q$&$1$&.&$q$&.&.&.&.&.&.&.&.&$q$&.&.&.&.&.  & 1 & 12 \\
2314 &  $q$&.&.&.&.&.&.&.&$1$&.&.&.&.&.&.&.&.&.&.&.&.&.&.&.  & 2 & 3 \\
2341 &  $q$&.&.&.&.&.&.&.&.&$1$&.&.&.&.&.&.&.&.&.&.&.&.&.&.  & 3 & 4 \\
2413 &  $q$&.&.&.&.&.&.&.&.&.&$1$&.&.&.&.&.&.&.&.&.&.&.&.&.  & 4 & 5 \\
2431 &  $q^2$&.&.&.&.&.&.&.&.&$q$&.&$1$&.&.&.&.&.&.&.&.&.&.&.&.  & 4 & 8 \\
3124 &  $q$&.&.&.&.&.&.&.&.&.&.&.&$1$&.&.&.&.&.&.&.&.&.&.&.  & 2 & 3 \\
3142 &  $q$&.&.&.&.&.&.&.&.&.&.&.&.&$1$&.&.&.&.&.&.&.&.&.&.  & 4 & 5 \\
3214 &  $2q^2$&.&.&.&.&.&.&.&$q$&.&.&.&$q$&.&$1$&.&$q$&.&.&.&.&.&.&.  & 1 & 12 \\
3241 &  $q^2$&.&.&.&.&.&.&.&.&$q$&.&.&.&.&.&$1$&.&.&.&.&.&.&.&.  & 4 & 8 \\
3412 &  $q$&.&.&.&.&.&.&.&.&.&.&.&.&.&.&.&$1$&.&.&.&.&.&.&.  & 5 & 6 \\
3421 &  $q^2$&.&.&.&.&.&.&.&.&$q$&.&.&.&.&.&.&$q$&$1$&.&.&.&.&.&.  & 3 & 12 \\
4123 &  $q$&.&.&.&.&.&.&.&.&.&.&.&.&.&.&.&.&.&$1$&.&.&.&.&.  & 3 & 4 \\
4132 &  $q^2$&.&.&.&.&.&.&.&.&.&.&.&.&.&.&.&.&.&$q$&$1$&.&.&.&.  & 4 & 8 \\
4213 &  $q^2$&.&.&.&.&.&.&.&.&.&.&.&.&.&.&.&.&.&$q$&.&$1$&.&.&.  & 4 & 8 \\
4231 &  $q^2$&.&.&.&.&.&.&.&.&$q$&.&.&.&.&.&.&.&.&$q$&.&.&$1$&.&.  & 5 & 12 \\
4312 &  $q^2$&.&.&.&.&.&.&.&.&.&.&.&.&.&.&.&$q$&.&$q$&.&.&.&$1$&.  & 3 & 12 \\
4321 &  $q^3$&.&.&.&.&.&.&.&.&$q^2$&.&.&.&.&.&.&$q^2$&$q$&$q^2$&.&.&$q$&$q$&$1$  & 1 & 24 \\
\hline
\text{Simp.} &1&1&1&2&2&1&1&1&2&3&4&4&2&4&1&4&5&3&3&4&4&5&3&1 \\
\text{Proj.}  &71&2&1&3&3&1&2&1&3&23&4&4&3&4&1&4&16&4&23&4&4&7&4&1\\

\end{array}
\end{displaymath}}
\bigskip

\newpage

\subsection{Decomposition matrices}
\label{appendix.decomposition.matrices}

Since $\biheckemonoid_{w_0}$ is a submonoid of $\biheckemonoid$, any
simple $\biheckemonoid$-module is also a simple
$\biheckemonoid_{w_0}$-module.  The following matrices give the
(generalized) $\biheckemonoid_{w_0}$ character of the simple
$\biheckemonoid$-module. The table reads as follows: for any two
permutations $\sigma, \tau$, the coefficient $m_{\sigma,\tau}$ gives
the Jordan-H\"older multiplicity of the $\biheckemonoid_{w_0}$-module
$S^{w_0}_\tau$ in the $\biheckemonoid$-module $S_\sigma$. In
particular, since the simple $\biheckemonoid_{w_0}$-modules are of
dimension $1$, summing each line one recovers the dimension of the
simple $\biheckemonoid$-modules, as shown.

\bigskip

Decomposition matrix of $\biheckemonoid(\sg[2])$ on $\biheckemonoid_{w_0}(\sg[2])$ (type $A_1$):
\medskip
{\small
\begin{displaymath}
\begin{array}{c|p{0.3cm}@{\,}p{0.3cm}|c}
& \pc{12}&\pc{21}&\text{Simp.}\\
\hline
12&1&.&1 \\
21&.&1&1 \\
\hline
\end{array}
\end{displaymath}}

Decomposition matrix of $\biheckemonoid(\sg[3])$ on $\biheckemonoid_{w_0}(\sg[3])$ (type $A_2$):
{\small
\begin{displaymath}
\begin{array}{c|p{0.3cm}@{\,}p{0.3cm}@{\,}p{0.3cm}@{\,}p{0.3cm}@{\,}p{0.3cm}@{\,}p{0.3cm}@{}|c}
  &\pc{123}&\pc{132}&\pc{213}&\pc{231}&\pc{312}&\pc{321}&\text{Simp.}\\
\hline
123&1&.&.&.&.&. &1 \\
132&.&1&.&.&.&. &1 \\
213&.&.&1&.&.&. &1 \\
231&.&.&1&1&.&. &2 \\
312&.&1&.&.&1&. &2 \\
321&.&.&.&.&.&1 &1 \\
\hline
\end{array}
\end{displaymath}}

Decomposition matrix of $\biheckemonoid(\sg[4])$ on $\biheckemonoid_{w_0}(\sg[4])$ (type $A_3$):
{
\tiny
\begin{displaymath}
\begin{array}{c|p{0.3cm}@{\,}p{0.3cm}@{\,}p{0.3cm}@{\,}p{0.3cm}@{\,}p{0.3cm}@{\,}p{0.3cm}@{\,}p{0.3cm}@{\,}p{0.3cm}@{\,}p{0.3cm}@{\,}p{0.3cm}@{\,}p{0.3cm}@{\,}p{0.3cm}@{\,}p{0.3cm}@{\,}p{0.3cm}@{\,}p{0.3cm}@{\,}p{0.3cm}@{\,}p{0.3cm}@{\,}p{0.3cm}@{\,}p{0.3cm}@{\,}p{0.3cm}@{\,}p{0.3cm}@{\,}p{0.3cm}@{\,}p{0.3cm}@{\,}p{0.3cm}@{}|c}
     & \pc{1234}&\pc{1243}&\pc{1324}&\pc{1342}&\pc{1423}&\pc{1432}&\pc{2134}&\pc{2143}&\pc{2314}&\pc{2341}&\pc{2413}&\pc{2431}&\pc{3124}&\pc{3142}&\pc{3214}&\pc{3241}&\pc{3412}&\pc{3421}&\pc{4123}&\pc{4132}&\pc{4213}&\pc{4231}&\pc{4312}&\pc{4321}&\text{Simp.}\\
\hline
1234 & 1&.&.&.&.&.&.&.&.&.&.&.&.&.&.&.&.&.&.&.&.&.&.&. &1\\
1243 & .&1&.&.&.&.&.&.&.&.&.&.&.&.&.&.&.&.&.&.&.&.&.&. &1\\
1324 & .&.&1&.&.&.&.&.&.&.&.&.&.&.&.&.&.&.&.&.&.&.&.&. &1\\
1342 & .&.&1&1&.&.&.&.&.&.&.&.&.&.&.&.&.&.&.&.&.&.&.&. &2\\
1423 & .&1&.&.&1&.&.&.&.&.&.&.&.&.&.&.&.&.&.&.&.&.&.&. &2\\
1432 & .&.&.&.&.&1&.&.&.&.&.&.&.&.&.&.&.&.&.&.&.&.&.&. &1\\
2134 & .&.&.&.&.&.&1&.&.&.&.&.&.&.&.&.&.&.&.&.&.&.&.&. &1\\
2143 & .&.&.&.&.&.&.&1&.&.&.&.&.&.&.&.&.&.&.&.&.&.&.&. &1\\
2314 & .&.&.&.&.&.&1&.&1&.&.&.&.&.&.&.&.&.&.&.&.&.&.&. &2\\
2341 & .&.&.&.&.&.&1&.&1&1&.&.&.&.&.&.&.&.&.&.&.&.&.&. &3\\
2413 & .&1&.&.&.&.&1&1&.&.&1&.&.&.&.&.&.&.&.&.&.&.&.&. &4\\
2431 & .&1&.&.&.&.&.&1&.&.&1&1&.&.&.&.&.&.&.&.&.&.&.&. &4\\
3124 & .&.&1&.&.&.&.&.&.&.&.&.&1&.&.&.&.&.&.&.&.&.&.&. &2\\
3142 & .&.&1&1&.&.&.&.&.&.&.&.&1&1&.&.&.&.&.&.&.&.&.&. &4\\
3214 & .&.&.&.&.&.&.&.&.&.&.&.&.&.&1&.&.&.&.&.&.&.&.&. &1\\
3241 & .&.&1&.&.&.&.&.&.&.&.&.&1&.&1&1&.&.&.&.&.&.&.&. &4\\
3412 & .&.&1&1&.&.&.&.&.&.&.&.&1&1&.&.&1&.&.&.&.&.&.&. &5\\
3421 & .&.&.&.&.&.&.&.&.&.&.&.&.&.&1&1&.&1&.&.&.&.&.&. &3\\
4123 & .&1&.&.&1&.&.&.&.&.&.&.&.&.&.&.&.&.&1&.&.&.&.&. &3\\
4132 & .&.&1&1&.&1&.&.&.&.&.&.&.&.&.&.&.&.&.&1&.&.&.&. &4\\
4213 & .&.&.&.&.&.&1&1&.&.&1&.&.&.&.&.&.&.&.&.&1&.&.&. &4\\
4231 & .&.&.&.&.&.&.&1&.&.&1&1&.&.&.&.&.&.&.&.&1&1&.&. &5\\
4312 & .&.&.&.&.&1&.&.&.&.&.&.&.&.&.&.&.&.&.&1&.&.&1&. &3\\
4321 & .&.&.&.&.&.&.&.&.&.&.&.&.&.&.&.&.&.&.&.&.&.&.&1 &1\\
\hline
\end{array}
\end{displaymath}}

\newpage

\bibliographystyle{web-alpha}

\bibliography{main}

\end{document}